\documentclass{book}

\usepackage{geometry}                % See geometry.pdf to learn the layout options. There are lots.
\geometry{a4paper}
\usepackage{graphicx}

\usepackage{epstopdf}

\usepackage{enumerate}

\usepackage{amsmath}
\usepackage{amssymb}
\usepackage{amsthm}
\usepackage{url}
\usepackage{color}

\usepackage{tikz-cd}

\usepackage{pifont}% http://ctan.org/pkg/pifont
\usepackage{stmaryrd}
\usepackage{dsfont}
\usepackage{textcomp}
\usepackage{verbatim}

\newtheorem{thm}{Theorem}[section]
\newtheorem{prop}[thm]{Proposition}
\newtheorem{lemma}[thm]{Lemma}
\newtheorem{cor}[thm]{Corollary}

\theoremstyle{definition}

\newtheorem{dfn}[thm]{Definition}
\newtheorem{ex}[thm]{Example}

\newtheorem{constr}[thm]{Construction}
\newtheorem{ax}[thm]{Axioms}
\newtheorem{ass}[thm]{Assumption}
\newtheorem{ntn}[thm]{Notation}
\newtheorem{que}[thm]{Question}
\newtheorem{rem}[thm]{Remark}

%Objects
\newcommand{\pow}{\mathcal P}

%Sequences of objects
\newcommand{\xs}{x_1, \dots, x_{n}}

\newcommand{\as}{a_1, \dots, a_{n}}

%Operators
\newcommand{\rnk}{\mathrm{rank}}

\newcommand{\id}{\mathrm{id}}

\newcommand{\dom}{\mathrm{dom}}

\newcommand{\setmany}{\text{\reflectbox{$\mathsf{S}$}}}

%Predicates and relations

\newcommand{\df}{\mathrm{df}}
\newcommand{\epsin}{\mathrel{\varepsilon}}

%Text in math-mode

\title{Self-similarity in the Foundations}
\author{
Paul K. Gorbow \\
\\
\\
\\
\\
Thesis submitted for the degree of Ph.D. in Logic, defended on June 14, 2018. \\
\\
Supervisors: \\
Ali Enayat (primary)\\ 
Peter LeFanu Lumsdaine (secondary)\\
Zachiri McKenzie (secondary)
\\
\\
	\small University of Gothenburg \\
	\small Department of Philosophy, Linguistics, and Theory of Science \\
	\small Box 200, 405 30 G\"OTEBORG, Sweden \\
}
\date{}                                           % 

\begin{document}

\maketitle

\tableofcontents

\chapter{Introduction}

\section{Introduction to a general audience}

Zooming in on a fern, one may be struck by how its parts resemble the whole, and zooming in further, how the parts of the parts again exhibit a similar structure. Likewise, when learning about atoms, solar systems and galaxies, one may naturally be struck by the analogy between a typical spiral galaxy and the accretion disc of an infant solar system, as well as the analogy between how the planets stand to the sun and how the electrons stand to the atomic nucleus. These analogies are far from perfect, but nonetheless inevitable. In the mathematical realm, idealized forms of this phenomenon have become well known to the broad public through the theory of fractals, a field from which a rich variety of beautiful images has spun off, to the delight of the human mind.

The term {\em self-similarity} has come to be used quite broadly for this phenomenon. More formal mathematical umbrella-terms are {\em endomorphism}, the slightly narrower {\em self-embedding} and the even more narrow {\em automorphism}. In either case, the words {\em morphism} and {\em embedding} are largely left open to be defined as appropriate for the domain of study. Generally, an endomorphism exhibits a part with similar structure as the whole, but where some details of the structure may be lost; with a self-embedding the part and whole have the same structure; and with an automorphism no proper part is involved, but the structure of the whole emerges in several different ways on the same whole. Since the whole is in a trivial sense a part of itself, any automorphism is also a self-embedding, and moreover any self-embedding is an endomorphism.  In some fields, e.g. the field of the present monograph, so much structure needs to be preserved that the notions of endomorphism and self-embedding coincide. On the other hand, for finite structures the notions of self-embedding and automorphism end up coinciding.

Let us consider a circular clock from $0$ to $12$ as an example. The clock has only an hour-dial, and this dial takes just one step forward every hour, never passing in between two numbers on the clock: We have $0$ (considered equal to $12$) at the top of the clock. If one does addition and subtraction on this clock,  then we have $2 +3 = 5$, $5+8 = 1$ and $5 - 8 = 9$, for example. It turns out that this mathematical structure (a finite group called $C_{12}$) has a few self-embeddings. For example, notice that the even numbers of the clock exhibit a similar structure to that of the whole clock (this {\em substructure} is a {\em subgroup} of $C_{12}$ called $C_6$). Let us define a function $f$ from the clock to itself by $f\hspace{2pt}(x) = x + x$, with the addition done in $C_{12}$, of course. So $f\hspace{2pt}(0) = 0$, $f\hspace{2pt}(3) = 6$ and $f\hspace{2pt}(7) = 2$, for example. This is an endomorphism of $C_{12}$, which reveals that $C_{12}$ has $C_6$ as a similar part of itself. The essential criterion that $f$ passes to attain the status of endomorphism, is that $f\hspace{2pt}(x + y) = f\hspace{2pt}(x) + f\hspace{2pt}(y)$, for all $x, y$ in $C_{12}$. The function $f$ is neither a self-embedding nor an automorphism, because $C_6$ is not the same as $C_{12}$ (half of the details of the structure were left out). However, it turns out the function $g(x) = x+x+x+x+x$ is an automorphism. For if you renumber your clock by this function $g$, then you will go around the clock with the numbers $0$ at the top, and then $5$, $10$, $3$, $8$, $1$, $6$, $11$, $4$, $9$, $2$, $7$, and finally come back again to $0$. If you then just change the mechanics so that the hour-dial takes $5$ steps each hour instead of just $1$, then this clock will also show the time correctly, stepping first from $0$ to $1$, then from $1$ to $2$, and so on. Thus, the same structure as the original structure, is present on the whole in a very different way.

In this monograph, structures for the foundations of mathematics are studied, particularly structures satisfying axioms of set theory. By a key theorem of Tennenbaum from the 1960:s, these structures are so complex that it is impossible to devise a theoretical algorithm or computer program to describe them, even if it were allowed to run for infinite time. Remarkably, the existence of these complex structures follows from rather innocent looking axioms of set theory. In particular, this monograph studies structures of set theory and structures of category theory of such extraordinary complexity. To give a hunch of the kind of structures studied, here follows a sketch of a construction of a non-standard model of arithmetic:

In arithmetic we have a first order predicate language with numerals $0,1,2,3, \dots$ denoting the {\em standard} natural numbers, and we have the operations of $+$ and $\times$ as well as the relation $<$ for comparing size. We now add a name $c$ to this language. There is no way of showing that $c < t$ for any numeral $t = 0, 1, 2, \dots$ in the original language, because this name may be assigned to denote any numeral. This means that it is consistent that $0 < c$, $1 < c$, $2 < c$, and so on. Taking all these sentences together, one finds that it is consistent with arithmetic that $c$ is larger than every standard natural number. G\"odel's Completeness Theorem says that for any consistent set of sentences, there is a structure that satisfies those sentences. So there is a structure satisfying all the sentences of arithmetic, which still has numbers greater than all of the standard natural numbers $0, 1, 2, \dots$. By Tennenbaum's Theorem, this non-standard structure is not algorithmically describable. Thus, we find ourselves in a situation where we can show the existence of structures which are so complex that our prospects for describing them are severely limited. 

Nonetheless, a rich mathematical theory has emerged from the study of these structures, especially for those structures of smallest possible infinite size (countably infinite structures). Chapters \ref{ch prel emb} and \ref{ch emb set theory} of this monograph are concerned with such structures of set theory. A theorem of Friedman from the 1970:s states roughly that {\em every} countable non-standard structure of the conventional theory of arithmetic, known as Peano Arithmetic ($\mathrm{PA}$), has a proper self-embedding. Friedman also proved this for a fragment of Zermelo-Fraenkel set theory ($\mathrm{ZF}$), the conventional set theory. So the phenomenon of self-similarity is abundant in the theory of non-standard foundational structures. In Section \ref{Existence of embeddings between models of set theory}, theorems along these lines are refined and generalized for the setting of a fairly weak fragment of $\mathrm{ZF}$. 

$\mathrm{ZF}$ axiomatizes a hierarchical conception of sets. Each set can be assigned a rank, such that if $x$ is a member of $y$, then the rank of $x$ is less than the rank of $y$. Thus it makes sense to ask whether a structure of $\mathrm{ZF}$ can be extended to a larger structure, such that new sets are only added at higher level of this rank-hierarchy. Such an extension is called a {\em rank-end-extension}. In Section \ref{Existence of models of set theory with automorphisms} it is shown that each of a certain class of structures of $\mathrm{ZF}$ can be rank-end-extended to a structure with a non-trivial automorphism such that the sets that are not moved by the automorphism are precisely the sets in the original structure. This constitutes a set-theoretic generalization of an arithmetic result by Gaifman.

In Section \ref{Characterizations}, the Friedman- and Gaifman-style theorems are combined in various ways to obtain several new theorems about non-standard countable structures of set theory.

In Chapters \ref{ch prel NF cat} and \ref{ch equicon set cat NF} we turn to category theoretic foundations of mathematics. While set theory is concerned with the relation of {\em membership}, category theory is concerned with {\em transformations}. Since these languages are so different, leading to different branches of the foundations of mathematics, it is of interest to relate the two. Chapter \ref{ch equicon set cat NF} develops a new category theoretic system and establishes a bridge between this system and an alternative set theory called New Foundations ($\mathrm{NF}$). An interesting feature of $\mathrm{NF}$ is that it provides another perspective on the self-similarity phenomena studied in Chapter \ref{ch emb set theory}. From a structure of $\mathrm{ZF}$ with a non-trivial automorphism one can actually obtain a structure of a version of $\mathrm{NF}$ called $\mathrm{NFU}$ (a weaker theory allowing so called urelements or atoms). It is shown in Chapter \ref{ch equicon set cat NF} that $\mathrm{NF}$ and $\mathrm{NFU}$ can be expressed in category theory (in the technical sense that the category theoretic version is equiconsistent to and interprets the set theoretic version). Thus a bridge is built between these two branches of the foundations of mathematics.

\section{Introduction for logicians}

This monograph is a study of self-similarity in foundational structures of set theory and category theory. Chapters \ref{ch prel emb} and \ref{ch emb set theory} concern the former and Chapters \ref{ch prel NF cat} and \ref{ch equicon set cat NF} concern the latter. In Chapter \ref{ch tour}, we take a tour of the theories considered, and in Chapter \ref{ch mot} we look at the motivation behind the research. Chapter \ref{ch where to go} looks ahead to further research possibilities.

Chapter \ref{ch prel emb} is a detailed presentation of basic definitions and results relevant to the study of non-standard models of set theory and embeddings between such models. Chapter \ref{ch emb set theory} contains the main contributions of the author to this field. It will help to state these contributions in the context of previous results that it builds upon. In \cite{EM56}, it is shown that any first-order theory with an infinite model has a model with a non-trivial automorphism. This theorem can be used to show that there are models of $\mathrm{PA}$, $\mathrm{ZFC}$, etc. with non-trivial automorphisms. Later on, Gaifman refined this technique considerably in the domain of models of arithmetic, showing that any countable model $\mathcal{M}$ of $\mathrm{PA}$ can be elementarily end-extended to a model $\mathcal{N}$ with an automorphism $j : \mathcal{N} \rightarrow \mathcal{N}$ whose set of fixed points is precisely $\mathcal{M}$ \cite{Gai76}. This was facilitated by the technical break-through of iterated ultrapowers, introduced by Gaifman and later adapted by Kunen to a set theoretical setting. In \cite{Ena04}, Gaifman's results along these lines were partly generalized to models of set theory. They are further refined and generalized in Section \ref{Existence of models of set theory with automorphisms} of the present monograph. The gist is that Gaifman's result also holds for models of the theory $\mathrm{GBC} + \text{``$\mathrm{Ord}$ is weakly compact''}$, where $\mathrm{GBC}$ is the G\"odel-Bernays theory of classes with the axiom of choice. (The result in arithmetic is actually most naturally stated for models of $\mathrm{ACA}_0$, a theory which essentially stands to $\mathrm{PA}$ as $\mathrm{GBC}$ stands to $\mathrm{ZFC}$.)

Only a few years prior to Gaifman's result, Friedman showed that {\em every} non-standard countable model of a certain fragment of $\mathrm{ZF}$ (or $\mathrm{PA}$) has a proper self-embedding \cite{Fri73}. He actually proved a sharper and more general result, and his discovery lead to several similar results, which are refined and generalized in the present monograph. These results require a few definitions: 

The {\em Takahashi hierarchy}, presented e.g. in \cite{Tak72} (and in Chapter \ref{ch prel emb} of the present monograph), is similar to the well-known L\'evy hierarchy, but any quantifiers of the forms $\exists x \in y, \exists x \subseteq y, \forall x \in y, \forall x \subseteq y$ are considered bounded. $\Delta_0^\mathcal{P}$ is the set of set-theoretic formulae with only bounded quantifiers in that sense, and $\Sigma_n^\mathcal{P}$ and $\Pi_n^\mathcal{P}$ are then defined recursively in the usual way for all $n \in \mathbb{N}$. $\mathrm{KP}^\mathcal{P}$ is the set theory axiomatized by Extensionality, Pair, Union, Powerset, Infinity, $\Delta_0^\mathcal{P} \textnormal{-Separation}$, $\Delta_0^\mathcal{P} \textnormal{-Collection}$ and $\Pi_1^\mathcal{P} \textnormal{-Foundation}$.

Let us now go through some notions of substructure relevant to set theory. A {\em rank-initial} substructure $\mathcal{S}$ of a model $\mathcal{M} \models \mathrm{KP}^\mathcal{P}$ is a submodel that is downwards closed in ranks (so if $s \in \mathcal{S}$ and $\mathcal{M} \models \mathrm{rank}(m) \leq \mathrm{rank}(s)$, then $m \in \mathcal{S}$). It is a {\em rank-cut} if, moreover, there is an infinite strictly descending downwards cofinal sequence of ordinals in $\mathcal{M} \setminus \mathcal{S}$. It is a {\em strong} rank-cut if, moreover, for every function $f : \mathrm{Ord}^\mathcal{S} \rightarrow \mathrm{Ord}^\mathcal{M}$ coded in $\mathcal{M}$ (in the sense that $\mathcal{M}$ believes there is a function $\hat f$ whose externalization restricted to $\mathrm{Ord}^\mathcal{S}$ equals $f$), there is an ordinal $\mu \in \mathcal{M} \setminus \mathcal{S}$ such that $f\hspace{2pt}(\xi) \not\in \mathcal{S} \Leftrightarrow f\hspace{2pt}(\xi) > \mu$. Note that these notions for substructures also make sense for embeddings. 

If (the interpretation of the element-relation in) $\mathcal{M}$ is well-founded, then we say that $\mathcal{M}$ is a {\em standard} model, and otherwise we say that it is non-standard. The largest well-founded rank-initial substructure of $\mathcal{M}$ exists. It is called the {\em well-founded part} of $\mathcal{M}$ and is denoted $\mathrm{WFP}(\mathcal{M})$. It turns out that $\mathrm{WFP}(\mathcal{M})$ is a rank-cut of $\mathcal{M}$. 

Lastly, let us go through the notion of standard system. Suppose that $\mathcal{M}$ is a model of $\mathrm{KP}^\mathcal{P}$ with a proper rank-cut $\mathcal{S}$. If $A \subseteq \mathcal{S}$, then $A$ is {\em coded} in $\mathcal{M}$ if there is $a \in \mathcal{M}$ such that $\{x \in \mathcal{S} \mid \mathcal{M} \models x \in a\} = A$. The {\em standard system} of $\mathcal{M}$ over $\mathcal{S}$, denoted $\mathrm{SSy}_\mathcal{S}(\mathcal{M})$ is the second order structure obtained by expanding $\mathcal{S}$ with all the subsets of $\mathcal{S}$ coded in $\mathcal{M}$. We define $\mathrm{SSy}(\mathcal{M}) = \mathrm{SSy}_{\mathrm{WFP}(\mathcal{M})}(\mathcal{M})$.

Friedman showed that for any countable non-standard models $\mathcal{M}$ and $\mathcal{N}$ of 
$$\mathrm{KP}^\mathcal{P} + \Sigma_1 \textnormal{-Separation} + \textnormal{Foundation},$$ 
and $\mathcal{S}$ such that 
$$\mathcal{S} = \mathrm{WFP}(\mathcal{M}) = \mathrm{WFP}(\mathcal{N}),$$ 
there is a proper rank-initial embedding of $\mathcal{M}$ into $\mathcal{N}$ iff the $\Sigma_1^\mathcal{P}$-theory of $\mathcal{M}$ with parameters in $\mathcal{S}$ is included in the corresponding theory of $\mathcal{N}$ and $\mathrm{SSy}_\mathcal{S}(\mathcal{M}) = \mathrm{SSy}_\mathcal{S}(\mathcal{N})$. In Section \ref{Existence of embeddings between models of set theory} Friedman's result is refined in multiple ways. Firstly, we show that it holds for any common rank-cut $\mathcal{S}$ of $\mathcal{M}$ and $\mathcal{N}$ (not just for the standard cut), secondly, we show that it holds for all countable non-standard models of $\mathrm{KP}^\mathcal{P} + \Sigma_1 \textnormal{-Separation}$, and thirdly, we show that the embedding can be constructed so as to yield a rank-cut of the co-domain.

Friedman's insight lead to further developments in this direction in the model-theory of arithmetic. In particular, it was established for countable non-standard models of $\mathrm{I}\Sigma_1$. 

Ressayre showed, conversely, that if $\mathcal{M} \models \mathrm{I}\Sigma_0 + \mathrm{exp}$, and for every $a \in \mathcal{M}$ there is a proper initial self-embedding of $\mathcal{M}$ which fixes every element $\mathcal{M}$-below $a$, then $\mathcal{M} \models \mathrm{I}\Sigma_1$ \cite{Res87b}. In Section \ref{Characterizations}, we prove a set theoretic version of this optimality result, to the effect that if $\mathcal{M} \models \mathrm{KP}^\mathcal{P}$, and for every $a \in \mathcal{M}$ there is a proper rank-initial self-embedding of $\mathcal{M}$ which fixes every element that is an $\mathcal{M}$-member of $a$, then $\mathcal{M} \models \mathrm{KP}^\mathcal{P} + \Sigma_1 \textnormal{-Separation}$.

Wilkie showed that for every countable non-standard model $\mathcal{M} \models\mathrm{PA}$ and for every element $a$ of $\mathcal{M}$, there is a proper initial self-embedding whose image includes $a$ \cite{Wil77}. In Section \ref{Existence of embeddings between models of set theory}, this result is generalized to set theory in refined form: For every countable non-standard model $\mathcal{M} \models \mathrm{KP}^\mathcal{P} + \Sigma_2^\mathcal{P}\textnormal{-Separation} + \Pi_2^\mathcal{P}\textnormal{-Foundation}$ and for every element $a$ of $\mathcal{M}$, there is a proper initial self-embedding whose image includes $a$. Moreover, Wilkie showed that every model of $\mathrm{PA}$ has continuum many self-embeddings \cite{Wil73}. In Section \ref{Existence of embeddings between models of set theory}, the analogous result is established for countable non-standard models of $\mathrm{KP}^\mathcal{P} + \Sigma_1 \textnormal{-Separation}$.

Yet another result in this vein is that the isomorphism types of countable recursively saturated models of $\mathrm{PA}$ only depends on their theory and standard system. In Section \ref{Existence of embeddings between models of set theory}, we provide a new proof of a generalization of this result for models of $\mathrm{ZF}$, that was first established in \cite{Res87a}.

Once these results have been established for set theory, we are able in Section \ref{Characterizations} to prove a number of new results about non-standard models of set theory. 

Kirby and Paris essentially showed in \cite{KP77} that any cut $\mathcal{S}$ of a model $\mathcal{M} \models \mathrm{I}\Delta_0$ is strong iff $\mathrm{SSy}_\mathcal{S}(\mathcal{M}) \models \mathrm{ACA}_0$. In Section \ref{Characterizations} this is generalized to set theory. It turns out that any rank-cut $\mathcal{S}$ including $\omega^\mathcal{M}$ of an ambient model $\mathcal{M} \models \mathrm{KP}^\mathcal{P} + \mathrm{Choice}$ is strong iff $\mathrm{SSy}_\mathcal{S}(\mathcal{M}) \models \mathrm{GBC} + \textnormal{``$\mathrm{Ord}$ is weakly compact''}$. This result is given a new proof relying on our refined and generalized versions of the Friedman and Gaifman theorems. In particular, the Gaifman theorem needed to be further generalized for this proof to work. A similar technique was used in \cite{Ena07} to reprove the result of Kirby and Paris in the context of arithmetic.

An interesting feature of \ref{ch emb set theory} is that the Friedman- and Gaifman-style theorems are combined in diverse new ways to arrive at several important theorems, thus establishing this as a viable technique. A proof using such a combination was pioneered by Bahrami and Enayat in \cite{BE18}. Generalizing their result to set theory, we show that for any countable model $\mathcal{M} \models \mathrm{KP}^\mathcal{P}$, and any rank-cut $\mathcal{S}$ of $\mathcal{M}$: there is a self-embedding of $\mathcal{M}$ whose set of fixed points is precisely $\mathcal{S}$ iff $\mathcal{S}$ is a $\Sigma_1^\mathcal{P}$-elementary  strong rank-cut of $\mathcal{M}$. In \cite{BE18} the analogous result is shown for models of $\mathrm{I}\Sigma_1$. 

The result of Bahrami and Enayat was inspired by an analogous result in the context of countable recursively saturated models of $\mathrm{PA}$ \cite{KKK91}, namely that any cut of such a model is the fixed point set of an automorphism iff it is an elementary strong cut. In Section \ref{Characterizations}, we generalize this result to set theory by means of a new proof, again relying on a combination of our Friedman- and Gaifman-style theorems. It is shown that for any rank-cut $\mathcal{S}$ of a countable recursively saturated model of $\mathrm{ZFC} + V = \mathrm{HOD}$: $\mathcal{S}$ is the fixed point set of an automorphism of $\mathcal{M}$ iff it is an elementary strong rank-cut.

Finally, in Section \ref{Characterizations} we are also able to combine the Friedman- and Gaifman-style theorems to show the new result that for any countable non-standard $\mathcal{M} \models \mathrm{KP}^\mathcal{P} + \Sigma_1^\mathcal{P} \textnormal{-Separation} + \textnormal{Choice}$: $\mathcal{M}$ has a strong rank-cut isomorphic to $\mathcal{M}$ iff $\mathcal{M}$ expands to a model of $\mathrm{GBC} + \textnormal{``}\mathrm{Ord}$ is weakly compact''. 

Chapters \ref{ch prel NF cat} and \ref{ch equicon set cat NF} provide a category theoretic characterization of the set theory New Foundations, and some of its weaker variants. The former chapter contains the necessary preliminaries, and the latter contains the main new results.

New Foundations ($\mathrm{NF}$) is a set theory obtained from naive set theory by putting a stratification constraint on the comprehension schema; for example, it proves that there is a universal set $V$, and the natural numbers are implemented in the Fregean way (i.e. $n$ is implemented as the set of all sets with $n$ many elements). $\mathrm{NFU}$ ($\mathrm{NF}$ with atoms) is known to be consistent through its remarkable connection with models of conventional set theory that admit automorphisms. This connection was discovered by Jensen, who established the equiconsistency of $\mathrm{NFU}$ with a weak fragment of $\mathrm{ZF}$, and its consistency with the axiom of choice \cite{Jen69}. (So in the $\mathrm{NF}$-setting atoms matter; Jensen's consistency proof for $\mathrm{NFU}$ does not work for $\mathrm{NF}$.) 

This part of the monograph aims to lay the ground for an algebraic approach to the study of $\mathrm{NF}$. A first-order theory, $\mathrm{ML}_\mathsf{Cat}$, in the language of categories is introduced and proved to be equiconsistent to $\mathrm{NF}$. $\mathrm{ML}_\mathsf{Cat}$ is intended to capture the categorical content of the predicative version of the class theory $\mathrm{ML}$ of $\mathrm{NF}$. The main result, for which this research is motivated, is that $\mathrm{NF}$ is interpreted in $\mathrm{ML}_\mathsf{Cat}$ through the categorical semantics. This enables application of category theoretic techniques to meta-mathematical problems about $\mathrm{NF}$-style set theory. Conversely, it is shown that the class theory $\mathrm{ML}$ interprets $\mathrm{ML}_\mathsf{Cat}$, and that a model of $\mathrm{ML}$ can be obtained constructively from a model of $\mathrm{NF}$. Each of the results in this paragraph is shown for the versions of the theories with and without atoms, both for intuitionistic and classical logic.\footnote{Due to the lack of knowledge about the consistency strength of $\mathrm{INF(U)}$, the non-triviality of the statement $\mathrm{Con(INF(U)) \Rightarrow Con(IML(U))}$ needs to be taken as conditional, see Remark \ref{remML}.} Therefore, we use the notation $\mathrm{(I)NF(U)}$ and $\mathrm{(I)ML(U)}$, where the $\mathrm{I}$ stands for the intuitionistic version and the $\mathrm{U}$ stands for the version with atoms. Thus four versions of the theories are considered in parallel. An immediate corollary of these results is that $\mathrm{(I)NF}$ is equiconsistent to $\mathrm{(I)NFU} + |V| = |\mathcal{P}(V)|$. For the classical case, this has already been proved in \cite{Cra00}, but the intuitionistic case appears to be new. Moreover, the result becomes quite transparent in the categorical setting. 

Just like a category of classes has a distinguished subcategory of small morphisms (cf. \cite{ABSS14}), a category modeling $\mathrm{(I)ML(U)}_\mathsf{Cat}$ has a distinguished subcategory of type-level morphisms. This corresponds to the distinction between sets and proper classes in $\mathrm{(I)NF(U)}_\mathsf{Set}$. With this in place, the axiom of power objects familiar from topos theory can be appropriately reformulated for $\mathrm{(I)ML(U)}_\mathsf{Cat}$. It turns out that the subcategory of type-level morphisms contains a topos as a natural subcategory.

Section \ref{Stratified set theory and class theory} introduces the set theories $\mathrm{(I)NF(U)}_\mathsf{Set}$ and the class theories $\mathrm{(I)ML(U)}_\mathsf{Class}$. Here we also establish that $\mathrm{NF(U)}_\mathsf{Set}$ is equiconsistent to $\mathrm{ML(U)}_\mathsf{Class}$, through classical model theory.

In Section \ref{CatAxioms}, categorical semantics is explained in the context of Heyting and Boolean categories. This semantics is applied to show generally that $\mathrm{(I)NF(U)}_\mathsf{Set}$ is equiconsistent to $\mathrm{(I)ML(U)}_\mathsf{Class}$.

The axioms of the novel categorical theory $\mathrm{(I)ML(U)}_\mathsf{Cat}$ are given in Section \ref{Formulation of ML_CAT}, along with an interpretation of $\mathrm{(I)ML(U)}_\mathsf{Cat}$ in $\mathrm{(I)ML(U)}_\mathsf{Class}$. 

It is only after this that the main original results are proved. Most importantly, in Section \ref{interpret set in cat}, category theoretic reasoning is used to validate the axioms of $\mathrm{(I)NF(U)}_\mathsf{Set}$ in the internal language of $\mathrm{(I)ML(U)}_\mathsf{Cat}$ through the categorical semantics. This means that $\mathrm{(I)NF(U)}_\mathsf{Set}$ is interpretable in $\mathrm{(I)ML(U)}_\mathsf{Cat}$. The equiconsistency of $\mathrm{(I)NF}_\mathsf{Set}$ and $\mathrm{(I)NFU} + |V| = |\mathcal{P}(V)|$ is obtained as a corollary. 

In Section \ref{subtopos}, it is shown that every $\mathrm{(I)ML(U)}_\mathsf{Cat}$-category contains a topos as a subcategory.

\chapter{Tour of the theories considered}\label{ch tour}

In this chapter we give overviews of the main theories studied in this monograph. More strictly mathematical introductions to these theories are given in Chapters \ref{ch prel emb} and \ref{ch prel NF cat}. Here we present them in a semi-formal style, attempting to explain some of their underlying intuitions and pointing to some prior research.

\section{Power Kripke-Platek set theory}\label{tour KP}

The set theory $\mathrm{KP}^\mathcal{P}$ may be viewed as the natural extension of Kripke-Platek set theory $\mathrm{KP}$ ``generated\hspace{1pt}'' by adding the Powerset axiom. A strictly mathematical introduction to these theories is given in Section \ref{Power Kripke-Platek set theory}.

\begin{ax}[{\em Power Kripke-Platek set theory}, $\mathrm{KP}^\mathcal{P}$] $\mathrm{KP}^\mathcal{P}$ is the $\mathcal{L}^0$-theory given by these axioms and axiom schemata:
	\[
	\begin{array}{ll}
	\textnormal{Extensionality} & \forall x . \forall y . ((\forall u . u \in x \leftrightarrow u \in y) \rightarrow x = y) \\
\textnormal{Pair} & \forall u . \forall v . \exists x . \forall w . (w \in x \leftrightarrow (w = u \vee w = v)) \\
\textnormal{Union} & \forall x . \exists u . \forall r . (r \in u \leftrightarrow \exists v \in x . r \in v) \\
\textnormal{Powerset} & \forall u . \exists x . \forall v . (v \in x \leftrightarrow v \subseteq u) \\
\textnormal{Infinity} & \exists x . (\varnothing \in x \wedge \forall u \in x . \{u\} \in x) \\
\Delta_0^\mathcal{P} \textnormal{-Separation} & \forall x . \exists y . \forall u . (u \in y \leftrightarrow (u \in x \wedge  \phi(u))) \\
\Delta_0^\mathcal{P} \textnormal{-Collection} & \forall x . (\forall u \in x . \exists v . \phi(u, v) \rightarrow \exists y . \forall u \in x . \exists v \in y . \phi(u, v)) \\
\Pi_1^\mathcal{P} \textnormal{-Foundation} & \exists x . \phi(x) \rightarrow \exists y . (\phi(y) \wedge \forall v \in y . \neg \phi(v)) \\
	\end{array}
	\]
\end{ax}

Apart from adding the Powerset axiom, $\mathrm{KP}^\mathcal{P}$ differs from $\mathrm{KP}$ in that the schemata of Separation, Collection and Foundation are extended to broader sets of formulae, using the Takahashi hierarchy instead of the L\'evy hierarchy. ($\mathrm{KP}$ has $\Delta_0 \textnormal{-Separation}$, $\Delta_0 \textnormal{-Collection}$ and $\Pi_1 \textnormal{-Foundation}$.) As explained in the introduction, and rigorously defined in Section \ref{Basic logic and model theory}, the difference lies in that not only are quantifiers of the forms $\exists x \in y$ and $\forall x \in y$ considered bounded, as in the L\'evy hierarchy, but quantifiers of the forms $\exists x \subseteq y$ and $\forall x \subseteq y$ are also considered bounded in the Takahashi hierarchy. Since 
\[
\mathcal{P}(u) = y \Leftrightarrow (\forall v \subseteq u . v \in y) \wedge \forall v \in y . \forall r \in v . r \in u),
\]
the Takahashi hierarchy may be viewed as the result of treating the powerset operation as a bounded operation. It is in this sense that $\mathrm{KP}^\mathcal{P}$ is ``generated\hspace{1pt}'' from $\mathrm{KP}$ by adding powersets.

The theory $\mathrm{KP}$ has received a great deal of attention, because of its importance to G\"odel's $L$ (the hierarchy of constructible sets), definability theory, recursion theory and infinitary logic. The ``bible'' on this subject is \cite{Bar75}. The main sources on $\mathrm{KP}^\mathcal{P}$ seem to be the papers by Friedman and Mathias that are discussed later on in this section.

There is of course much to say about what can and cannot be proved in these theories. Both theories enjoy a decent recursion theorem. In $\mathrm{KP}$ we have $\Sigma_1$-Recursion and in $\mathrm{KP}^\mathcal{P}$ we have $\Sigma_1^\mathcal{P}$-Recursion. This is quite important to the present monograph in that it enables $\mathrm{KP}$ to prove the totality of the rank-function, but (in the absence of Powerset) it is not sufficient to establish that the function $\alpha \mapsto V_\alpha$ is total on the ordinals. $\mathrm{KP}^\mathcal{P}$ does however prove the latter claim, and this is needed for certain arguments in Chapter \ref{ch emb set theory}, particularly in the proof of our Friedman-style embedding theorem. Also of interest, though not used in this monograph, is that neither of the theories proves the existence of an uncountable ordinal. (This may be seen from the short discussion about the Church-Kleene ordinal later on in the section.) However, $\mathrm{KP}^\mathcal{P}$ augmented with the axiom of Choice proves the existence of an uncountable ordinal, essentially because Choice gives us that $\mathcal{P}(\omega)$ can be well-ordered.

There is also a philosophical reason for considering $\mathrm{KP}$ and $\mathrm{KP}^\mathcal{P}$, in that they encapsulate a more parsimonious ontology of sets than $\mathrm{ZF}$. If $\mathcal{M}$ is a model of $\mathrm{KP}$, $a$ is an element of  $\mathcal{M}$, and $\phi(x)$ is a $\Delta_0$-formula of set theory, then it is fairly easy to see that 
\[
\mathcal{M} \models \phi(a) \Leftrightarrow \mathrm{TC}(\{a\})_\mathcal{M} \models \phi(a),
\]
where $\mathrm{TC}(\{a\})_\mathcal{M}$ is the substructure of $\mathcal{M}$ on the set 
\[\{x \in \mathcal{M} \mid \mathcal{M} \models \text{``$x$ is in the transitive closure of $\{a\}$''}\},\]
where the transitive closure $\mathrm{TC}(x)$ of a set $x$ is its closure under elements, i.e. the least superset of $x$ such that $\forall u \in \mathrm{TC}(x) . \forall r \in u . r \in \mathrm{TC}(x)$. The reason for this equivalence is that a $\Delta_0$-formula can only quantify over elements in the transitive closure of $\{a\}$. 

Similarly, if $\mathcal{M}$ is a model of $\mathrm{KP}^\mathcal{P}$, $a$ is an element of  $\mathcal{M}$, and $\phi(x)$ is a $\Delta_0^\mathcal{P}$-formula of set theory, then 
\[
\mathcal{M} \models \phi(a) \Leftrightarrow \mathrm{STC}(\{a\})_\mathcal{M} \models \phi(a),
\]
where $\mathrm{STC}(\{a\})_\mathcal{M}$ is the substructure of $\mathcal{M}$ on the set 
\[
\{x \in \mathcal{M} \mid \mathcal{M} \models \text{``$x$ is in the supertransitive closure of $\{a\}$''}\},
\]
where the supertransitive closure $\mathrm{STC}(x)$ of a set $x$ is its closure under elements and subsets of elements, i.e. the least superset of $x$ such that $\forall u \in \mathrm{STC}(x) . \forall r \in u . r \in \mathrm{STC}(x)$ and $\forall u \in \mathrm{STC}(x) . \forall r \subseteq u . r \in \mathrm{STC}(x)$.

Thus, the Separation and Collection schemata of $\mathrm{KP}$ and $\mathrm{KP}^\mathcal{P}$ only apply to formulae whose truth depends exclusively on the part of the model which is below the parameters and free variables appearing in the formula (in the respective senses specified above). 

Heuristically, it is often helpful to picture a model of set theory as a triangle $\triangledown$, with the empty set at the bottom and with each rank of the cumulative hierarchy as an imagined horizontal line through the $\triangledown$, with higher ranks higher up in the $\triangledown$. With this picture in mind, since $\mathrm{KP}$ does not include Powerset, it may be viewed as allowing ``thin'' models; on the other hand since it includes $\Delta_0$-Collection, its models are quite ``tall''. The models of $\mathrm{KP}^\mathcal{P}$ on the other hand, are all fairly ``thick'', since this theory includes Powerset.

Friedman's groundbreaking paper \cite{Fri73}, established several important results in the model theory of $\mathrm{KP}^\mathcal{P}$. Section \ref{Existence of embeddings between models of set theory} is concerned with generalizing and refining one of these results (Theorem 4.1 of that paper), as well as related results. In its simplest form, this result is that every countable non-standard model of $\mathrm{KP}^\mathcal{P} + \Sigma_1^\mathcal{P} \textnormal{-Separation}$ has a proper self-embedding. 

A second important result of Friedman's paper (its Theorem 2.3) is that every countable standard model of $\mathrm{KP}^\mathcal{P}$ is the well-founded part of a non-standard model of $\mathrm{KP}^\mathcal{P}$.

Thirdly, let us also consider Theorem 2.6 of Friedman's paper. This theorem says that any countable model of $\mathrm{KP}$ can be extended to a model of $\mathrm{KP}^\mathcal{P}$ with the same ordinals. The {\em ordinal height} of a standard model of $\mathrm{KP}$ is the ordinal representing the order type of the ordinals of the model. An ordinal is said to be {\em admissible} if it is the ordinal height of some model of $\mathrm{KP}$. This notion turns out to be closely connected with recursion theory. For example, the first admissible ordinal is the {\em Church-Kleene ordinal} $\omega_1^{\mathrm{CK}}$, which may also be characterized as the least ordinal which is not order-isomorphic to a recursive well-ordering of the natural numbers. So in particular, Friedman's theorem shows that every countable admissible ordinal is also the ordinal height of some model of $\mathrm{KP}^\mathcal{P}$. 

Another important paper on $\mathrm{KP}^\mathcal{P}$ is Mathias's \cite{Mat01}, which contains a large body of results on weak set theories. See its Section 6 for results on $\mathrm{KP}^\mathcal{P}$. One of many results established there is its Theorem 6.47, which shows that $\mathrm{KP}^\mathcal{P} + V = L$ proves the consistency of $\mathrm{KP}^\mathcal{P}$, where $V = L$ is the statement that every set is G\"odel constructible.

From the perspective of this monograph, the main $\mathrm{KP}^\mathcal{P}$-style set theory of interest is $\mathrm{KP}^\mathcal{P} + \Sigma_1^\mathcal{P} \textnormal{-Separation}$, because it is for non-standard countable models of this theory that Friedman's embedding theorem holds universally. As we saw above, the truth of a $\Delta_0^\mathcal{P}$-sentence with parameters only depends on sets which are in a sense ``below'' the parameters appearing in the sentence. On the other hand, for any $\mathcal{M} \models \mathrm{KP}$ and for any $\Sigma_1^\mathcal{P}$-sentence $\sigma$, written out as $\exists x . \delta(x)$ for some $\delta_0^\mathcal{P}$-formula $\delta(x)$, we have for each element $a$ in $\mathcal{M}$ such that $\mathcal{M} \models \delta(a)$, that there is an element $b$ in $\mathcal{M}$ such that 
\[
\mathcal{M} \models \delta(a) \Leftrightarrow b_\mathcal{M} \models \delta(a),
\]
where $b_\mathcal{M}$ is the substructure of $\mathcal{M}$ on the set
\[
\{x \in \mathcal{M} \mid \mathcal{M} \models x \in b\}.
\]
Thus, all we can say is that the truth of a $\Sigma_1^\mathcal{P}$-sentence only depends on sets appearing ``below'' {\em some} set. For $\Delta_0^\mathcal{P}$-sentences, ``some set'' may be replaced by ``the parameters appearing in the sentence''.

\section{Stratified set theory}\label{tour NF}

The set theory of {\em New Foundations} ($\mathrm{NF}$) evolved in the logicist tradition from the system of Frege in his {\em Grundlagen der Arithemtik} \cite{Fre84} and the system of Russell and Whitehead in their {\em Principia Mathematica} \cite{RW10}. New Foundations was introduced by Quine in his \cite{Qui37}. 

Russell had shown that Frege's system is inconsistent. In set theoretic terms, Russell's paradox arises from the Comprehension axiom schema, which says that for all formulae $\phi(u)$ (possibly with other free variables, but not $x$ free) in the language of set theory: 
\[
\begin{array}{ll}
\text{Comprehension schema} & \exists x . \forall u . (u \in x \leftrightarrow \phi(u))
\end{array}
\]
The instance of this schema for $\phi \equiv u \not\in u$ gives the existence of the set of all sets which are not self-membered. From this a contradiction follows. The argument is quite general in that it goes through intuitionistically and that it only uses this one axiom. In particular, extensionality is not used, so it may be interpreted at least as much as a paradox about properties and exemplification as a paradox about sets and membership:

\begin{prop}[Russell's paradox]
$\exists x . \forall u . (u \in x \leftrightarrow u \not\in u) \vdash \bot$
\end{prop}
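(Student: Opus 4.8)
The plan is to instantiate the universally quantified variable $u$ with the very witness $x$ supplied by the existential hypothesis, thereby obtaining the self-referential biconditional $x \in x \leftrightarrow x \not\in x$, and then to derive $\bot$ from this biconditional using only intuitionistically valid inference steps (so as to confirm the remark preceding the statement that the argument goes through intuitionistically and without Extensionality).

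First I would apply existential instantiation to the hypothesis $\exists x . \forall u . (u \in x \leftrightarrow u \not\in u)$, fixing a witness $x$ satisfying $\forall u . (u \in x \leftrightarrow u \not\in u)$. Next I would instantiate the universal quantifier at $u := x$, which is the crux of the derivation: the substitution is legitimate since $x$ is a term of the language, and it yields
\[
x \in x \leftrightarrow x \not\in x.
\]
Writing $\phi$ for the formula $x \in x$, the entire task now reduces to establishing the purely propositional fact that $\phi \leftrightarrow \neg \phi \vdash \bot$.

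To close this intuitionistically, I would reason as follows. From the left-to-right direction $\phi \to \neg\phi$, one obtains $\phi \to \bot$ by discharging the assumption $\phi$ (assuming $\phi$ yields both $\phi$ and, via the implication, $\neg\phi$, hence $\bot$); that is, $\neg\phi$ is derived. Feeding $\neg\phi$ into the right-to-left direction $\neg\phi \to \phi$ produces $\phi$. But $\phi$ together with the already-derived $\neg\phi$ gives $\bot$. No appeal to the law of excluded middle or to double-negation elimination is made at any stage, so the derivation is valid in intuitionistic logic, and since no use is made of the Extensionality axiom, the result is genuinely a paradox about comprehension alone.

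There is essentially no serious obstacle here; the argument is short and elementary. The only two points deserving attention are the self-application step $u := x$ — precisely where the impredicativity of the unrestricted Comprehension schema does its damage — and the discipline of carrying out the final contradiction without excluded middle, so as to substantiate the claimed intuitionistic and extensionality-free strength of the paradox.
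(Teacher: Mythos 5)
Your proposal is correct and matches the paper's own proof essentially step for step: universal instantiation at $u := x$, then the intuitionistic derivation of $\neg(x \in x)$ from the rightward implication, followed by using the leftward implication to recover $x \in x$ and hence $\bot$. The only cosmetic difference is that you make the existential instantiation explicit, which the paper leaves implicit.
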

\begin{proof}
Since this result is fundamental to the developments of mathematical logic, we prove it with a detailed informal natural deduction in {\em intuitionistic} logic. We take $\neg \phi$ to abbreviate $\phi \rightarrow \bot$. By universal elimination (substituting $x$ for $u$), we obtain $x \in x \leftrightarrow (x \in x \rightarrow \bot)$.

Firstly, we show that $x \in x \rightarrow \bot$: Assume that $x \in x$. By implication elimination (in the rightwards direction), we get $x \in x \rightarrow \bot$, whence by implication elimination again, we derive $\bot$. So by implication introduction, $x \in x \rightarrow \bot$, as claimed.

Secondly, now that we know that $x \in x \rightarrow \bot$, by implication elimination (in the leftwards direction) we obtain $x \in x$. Now by implication elimination we derive $\bot$, as required.
\end{proof}

Let us record the axioms of the set theory based on the comprehension schema.

\begin{ax}[{\em Naive set-theory}]
{\em Naive set-theory} is the theory in first-order logic axiomatized by the universal closures of the following formulae. For each well-formed formula $\phi(u)$ (possibly with other free variables, but not with $x$ free):
\[
\begin{array}{ll}
\text{Extensionality} & (u \in x \leftrightarrow u \in y) \rightarrow x = y \\
\text{Comprehension schema} & \exists x . \forall u . (u \in x \leftrightarrow \phi(u))
\end{array}
\]
\end{ax}

Russel and Whitehead aimed to provide a foundation for mathematics, which would be consistent, philosophically conservative and mathematically workable. It turned out, however, to be difficult to combine these three properties. Russel's paradox was avoided by organizing the system in a hierarchy of types. Philosophical conservativity was materialized in the form of predicativity: Roughly, a set of a certain type $t$, as the extension of a formula $\phi(u)$, can only be constructed if the parameters and quantifiers in the formula ranges over types below $t$ in the hierarchy. The resulting system is quite weak, and it is impractical to found common mathematics on it: If you want to do analysis on the real numbers, you have to move fairly high up in the hierarchy. So Russel and Whitehead introduced an axiom of reducibility which affirms the existence of common mathematical objects on the lowest level. It turns out, however, that the axiom of reducibility makes the system impredicative, in the end. 

From the standpoint of the mathematical logician, who studies logical systems, the resulting system is unworkably complicated. Most of its meta-mathematical features are embodied in a much simpler system, namely {\em The Simple Theory of Types} ($\mathrm{TST}$):

$\mathrm{TST}$ is formulated in $\omega$-sorted first order logic. I.e. we have a sort, $\mathrm{type}^n$, for each $n < \omega$. For each $\mathrm{type}^n$, where $n < \omega$, we have:
\begin{itemize}
\item A countable infinity of variables $v_0^n, v_1^n, \dots$ ranging over $\mathrm{type}^n$. (For simplicity we also use symbols such as $x^n, u^n, \dots$ to refer to these variables.)
\item A binary relation symbol $=^n$ on the derived sort $\mathrm{type}^n \times \mathrm{type}^n$, and atomic formulae $x_k^n =^n x_{k'}^n$, for all $k, k' < \omega$.
\item A binary relation symbol $\in^n$ on the derived sort $\mathrm{type}^n \times \mathrm{type}^{n+1}$, and atomic formulae $x_k^n \in^n x_{k'}^{n+1}$, for all $k, k' < \omega$.
\end{itemize}
The well-formed formulae are obtained as usual, by structural recursion, from these atomic formulae. 

\begin{ax}[{\em The simple theory of types}, $\mathrm{TST}$]
The axioms of $\mathrm{TST}$ are the universal closures of the following formulae, for each $n < \omega$ and for each well-formed formula $\phi(u^n)$ (possibly with other free variables of any sorts, but not with $x^{n+1}$ free):
\[
\begin{array}{ll}
\textnormal{Extensionality}^n & (u^n \in^n x^{n+1} \leftrightarrow u^n \in^n y^{n+1}) \rightarrow x^{n+1} =^{n+1} y^{n+1} \\
\textnormal{Comprehension}^n & \exists x^{n+1} . \forall u^n . (u^n \in^n x^{n+1} \leftrightarrow \phi(u^n))
\end{array}
\]
\end{ax}

Note that if $\phi$ is a formula in this language, then a well-formed formula $\phi^+$ is obtained by replacing each variable $v_k^n$, each symbol $=^n$, and each symbol $\in^n$, by $v_k^{n+1}$, $=^{n+1}$ and $\in^{n+1}$, respectively, for each $n \in \mathbb{N}$. Similarly, if $p$ is a proof in this system, then we obtain a proof $p^+$ by replacing each formula $\phi$ in $p$ by $\phi^+$. So any proof on one ``level'', can be shifted upwards to any higher ``level''. Thus, when actually writing proofs in $\mathrm{TST}$, one is tempted to leave out the type-superscripts, and simply take care that all the symbols appearing in the proof can be typed to yield a formal proof in the system. This motivated Quine to simply remove the sorts from the system! The resulting system has come to be called New Foundations ($\mathrm{NF}$), now to be presented. 

\begin{dfn}
A formula $\phi$ in the (one-sorted) language of set theory is {\em stratified}, if it can be turned into a well-formed formula of $\mathrm{TST}$ by putting type-superscripts on all instances of the variables and all instances of the symbols $=$ and $\in$, such that for each variable $x$, each instance of $x$ receives the same superscript.
\end{dfn}

An analogous definition could be made for {\em stratified proof}, such that any stratified proof from Extensionality and the Comprehension schema can be turned into a proof in $\mathrm{TST}$ by assigning type-superscripts to the variables and relation-symbols appearing in the proof. Obviously, the proof of Russell's paradox is not stratified. Indeed, it seems that there are proof-theoretic interpretations back-and-forth between $\mathrm{TST}$ and a certain restriction of Naive set theory to a logic with only stratified formulae and stratified proofs. This latter theory is where one would de facto be {\em formally} working if one were {\em informally} working without type-superscripts in $\mathrm{TST}$. (Of course, in order to be formal, one would also need to define that restricted logic.) In contrast, $\mathrm{NF}$ is obtained from Naive set-theory, not by restricting the whole logic to stratified formulae and stratified proofs, but only restricting the theory to stratified formulae:

\begin{ax}[{\em New Foundations}, $\mathrm{NF}$]
$\mathrm{NF}$ is the theory in first-order logic axiomatized by the universal closures of the following formulae. For each stratified formula $\phi(u)$ (possibly with other free variables, but not with $x$ free):
\[
\begin{array}{ll}
\text{Extensionality} & (u \in x \leftrightarrow u \in y) \rightarrow x = y \\
\text{Stratified Comprehension schema} & \exists x . \forall u . (u \in x \leftrightarrow \phi(u))
\end{array}
\]
\end{ax}

Connecting to the paragraph preceding the axiomatization, every stratified theorem of $\mathrm{NF}$ has a stratified proof \cite{Cra78}.

\begin{ex}
$\mathrm{NF}$ proves 
\begin{enumerate}
\item $\exists V . \forall u . u \in V$ (a universal set exists),
\item $\exists x . x\in x$ (a self-membered set exists),
\item $\forall x . \exists y . \forall u . (u \in y \leftrightarrow u \not\in x)$ (complements exist),
\item $\forall x . \exists y . \forall u . (u \in y \leftrightarrow u \subseteq x)$ (powersets exist),
\item $\exists x . \forall u . (u \in x \leftrightarrow (\exists p, q \in u . (p \neq q \wedge \forall r \in u . (r = p \vee r = q))))$ (the set of all sets of cardinality 2 exists).
\end{enumerate}
\end{ex}

Examples 1, 3, 4 and 5 above are instances of Stratified Comprehension, while 2 follows from 1. The set whose existence is affirmed by 5 above is the implementation of the number $2$ in $\mathrm{NF}$.

It is natural to axiomatize $\mathrm{NF}$ as above, with Comprehension for all stratified formulae $\phi$. But note that all the axioms of this theory are themselves stratified, so:
\[
\mathrm{NF} = \text{The set of stratified axioms of Naive set-theory}
\]

Therefore, $\mathrm{TST}$ can easily be proof-theoretically interpreted in $\mathrm{NF}$, simply by dropping the type-superscripts. But since $\mathrm{NF}$ is a theory in the usual first-order logic, we are free to make non-stratified proofs in $\mathrm{NF}$. For instance, as seen above, $\mathrm{NF} \vdash \exists x . x \in x$, which is not stratified.

That $\mathrm{NF}$ proves Infinity was shown in \cite{Spe53}. Specker even showed that $\mathrm{NF}$ proves the negation of Choice. So since it can be shown that every finite set satisfies choice, it follows that there must be an infinite set. Since $\mathrm{NF}$ proves Infinity, ordered pairs can be implemented in such a way that the formula ``$p$ is the ordered pair of $x$ and $y$'' is stratified with the same type assigned to both $p$, $x$ and $y$. (Note that for any stratification of the Kuratowski ordered pair $p = \{\{x\}, \{x, y\}\}$, the variable $p$ is assigned a type $2$ higher than that of $x$ and $y$. This is important for obtaining a workable implementation of functions in $\mathrm{NF}$.

An introduction to $\mathrm{NF}$ is given in \cite{For95}. For any basic claims about $\mathrm{NF}$, we implicitly refer to that monograph.

$\mathrm{NFU}$ is a version of $\mathrm{NF}$ that allows for atoms (`U' stands for Urelemente). In this monograph, $\mathrm{NFU}$ is expressed in a language containing a unary predicate symbol of sethood $S$ and a binary function symbol of ordered pair $\langle - , - \rangle$. The notion of stratification for a formula $\phi$ is extended to this language by adding the requirements that every term is assigned a type, and that for any subformula $p = \langle s, t \rangle$ of $\phi$, the same type is assigned to each of the terms $p, s, t$. This is spelled out in more detain in Section \ref{Stratified set theory and class theory}.

\begin{ax}[{\em New Foundations with urelements}, $\mathrm{NFU}$]
$\mathrm{NFU}$ is the theory in first-order logic axiomatized by the universal closures of the following formulae. For each stratified formula $\phi(u)$ (possibly with other free variables, but not with $x$ free):
\[
\begin{array}{ll}
\textnormal{Extensionality for Sets} & (S(x) \wedge S(y) \wedge \forall z . z \in x \leftrightarrow z \in y) \rightarrow x = y \\
\textnormal{Stratified Comprehension} & \exists x . (S(x) \wedge (\forall u . (u \in x \leftrightarrow \phi(u)))) \\
\textnormal{Ordered Pair} & \langle x, y \rangle = \langle x\hspace{1pt}', y\hspace{1pt}' \rangle \rightarrow ( x = x\hspace{1pt}' \wedge y = y\hspace{1pt}' ) \\
\textnormal{Sethood} & z \in x \rightarrow S(x) \\
\end{array}
\]
\end{ax}

Some authors have taken $\mathrm{NFU}$ to refer to a weaker system, in a language with only the relation symbol $\in$, axiomatized by Extensionality for non-empty sets and Stratified comprehension. Let us temporarily call that system $\mathrm{NFU}^*$. If an appropriate axiom of Infinity is added to that system, then we obtain a system $\mathrm{NFU}^* + \textnormal{Infinity}$ that interprets $\mathrm{NFU}$ (but it does not prove the general existence of type-level ordered pairs). Conversely, $\mathrm{NFU}$ proves all axioms of $\mathrm{NFU}^* +  \textnormal{Infinity}$. The two formulations are convenient in different circumstances. When using a theory as a foundation for mathematics, in particular for implementing relations and functions, it is convenient to have the type-level ordered pair of $\mathrm{NFU}$. However, when proving meta-mathematical results, it can be convenient to work with the simpler language of $\mathrm{NFU}^* (+ \textnormal{Infinity})$. The practice of axiomatizing $\mathrm{NFU}$ with an axiom of ordered pair, and extending the stratification requirements accordingly, originates in the work of Randall Holmes, see e.g. \cite{Hol98}. Jensen, who initiated the study of New Foundations with urelements (see below), worked with $\mathrm{NFU}^*$ and extensions thereof, and does not appear to have been aware of the issue of type-level ordered pairs.

Although the problem of proving the consistency of $\mathrm{NF}$ in terms of a traditional $\mathrm{ZF}$-style set theory turned out to be difficult, Jensen proved the consistency of the system $\mathrm{NFU}^* + \textnormal{Infinity} + \textnormal{Choice}$ in \cite{Jen69}. Jensen used Ramsey\hspace{1pt}'s theorem to obtain a particular  model of Mac Lane set theory with an automorphism, and it is relatively straightforward to obtain a model of $\mathrm{NFU}^* + \textnormal{Infinity} + \textnormal{Choice}$ from that model. There are various interesting axioms that can be added to $\mathrm{NFU}$ to increase its consistency strength. As the understanding of automorphisms of non-standard models of $\mathrm{ZF}$-style set theories has increased, several results on the consistency strength of such extensions of $\mathrm{NFU}$ have been obtained in the work of Solovay \cite{Sol97}, Enayat \cite{Ena04} and McKenzie \cite{McK15}. $\mathrm{NFU}$ proves Infinity and is equiconsistent with Mac Lane set theory; Ext$_S$ + SC$_S$ is weaker and does not prove Infinity. From now on we define $\mathrm{NF}$ as $\mathrm{NFU}$ + ``everything is a set'', which (in classical logic) is equivalent to the axiomatization given above. An introduction to $\mathrm{NFU}$ and extended systems is given in \cite{Hol98}. For any basic claims about $\mathrm{NFU}$, we implicitly refer to that monograph.

The theories $\mathrm{NF}$ and $\mathrm{NFU}$ in intuitionistic logic will be referred to as $\mathrm{INF}$ and $\mathrm{INFU}$, respectively. Note that the way $\mathrm{NFU}$ and $\mathrm{NF}$ are axiomatized in this monograph, the intuitionistic versions $\mathrm{INFU}$ and $\mathrm{INF}$ also satisfy e.g. the axiom of ordered pair. But if $\mathrm{INF}$ were axiomatized as $\mathrm{Ext} + \mathrm{SC}$ with intuitionistic logic, as done e.g. in \cite{Dzi95}, it is not clear that the resulting intuitionsitic theory would be as strong.

As shown in \cite{Hai44}, $\mathrm{NF}$ and $\mathrm{NFU}$ also have finite axiomatizations, which clarify that their ``categories of sets and functions'' are Boolean categories. In this monograph certain extentions of the theories of (Heyting) Boolean categories (in the language of category theory) are proved equiconsistent to $\mathrm{(I)NF(U)}$, respectively.

\section{Categorical semantics and algebraic set theory}

Recall that the first-order theories of Heyting algebras and Boolean algebras are precisely what we need to obtain semantics for propositional intuitionistic and classical logic, respectively. Analogously, the theories of Heyting and Boolean categories are first-order theories in the language of category theory which give us semantics for first-order intuitionistic and classical logic, respectively. Actually, any Heyting or Boolean algebra may be considered as partial order, and any partial order may be considered as a category, both steps without loss of information, so category theory provides a convenient framework for the semantics of both propositional and first-order logic. 

Any classical model of set theory, considered as a category with sets as objects and functions as morphisms, is a Boolean category. Thus, since the theory of topoi may be viewed as a ``categorification'' of intuitionistic set theory, it is not surprising that every topos is a Heyting category. In fact, we may view the theory of Heyting categories as the fragment of the the theory of topoi needed for first-order semantics. 

Any object $A$ of a Heyting category $\mathbf{C}$ may be viewed as the domain of a model of first-order intuitionistic logic. For example, if $m : R \rightarrowtail A \times A$ is a monic morphism in $\mathbf{C}$, then $R$ may be viewed as a binary relation on $A$. Thus, $A$ and $m : R \rightarrowtail A \times A$ are sufficient to specify a first-order structure in the language of a single binary relation. Of course, this may not be a structure in the traditional sense where $A$ is a set and $R$ is a set of ordered pairs. In the categorical setting there are generally no elements to talk about, only morphisms between objects, whose behavior may be axiomatized in the language of category theory. In the field of algebraic set theory, one typically extends the theory of Heyting categories with additional axioms to ensure that it has an object which is model of some set theory. In Section \ref{CatAxioms} such axioms are given with stratified set theory in mind.

Algebraic set theory, categorical semantics and categorical logic more generally, have been developed by a large number of researchers. An early pioneering paper of categorical logic is \cite{Law63}; Joyal and Moerdijk started out algebraic set theory with \cite{JM91} and wrote a short book on the subject \cite{JM95}. Chapter \ref{ch equicon set cat NF} is influenced by the comprehensive work of Awodey, Butz, Simpson and Streicher embodied in \cite{ABSS14}. It parallels their approach to an algebraic set theory of categories of classes. The most important difference is that the $\mathrm{NF}$ context leads to a different reformulation of the power object axiom. 

A category of classes $\mathbf{C}$ is a (Heyting) Boolean category with a subcategory $\mathbf{S}$, satisfying various axioms capturing the notion of `smallness', and with a universal object $U$, such that every object is a subobject of $U$. While the axiomatization of categories of classes naturally focuses on the notion of {\em smallness}, the axiomatization in Chapter \ref{ch equicon set cat NF} focuses on the notion of {\em type-level stratification}. Like in \cite{ABSS14}, a restricted notion of power object is needed, which facilitates interpretation of set theory in the categorical semantics. However, to get a ``categorification'' of $\mathrm{NF}$, the power object axiom needs to be restricted in quite a different way, involving an endofunctor.

\chapter{Motivation}\label{ch mot}

Here we shall go through some of the motivation behind the research of the present monograph. The first section concerns the research in Chapter \ref{ch emb set theory}, the second concerns the research in Chapter \ref{ch equicon set cat NF}, and the third concerns how the results of these two chapters connect with each other.

\section{Motivation behind research on embeddings between models of set theory}

It is a common theme throughout mathematics to study structures (in a wide sense) and how these structures relate to each other. Usually structures are related to each other by functions from one structure (the domain) to another (the co-domain), which preserve some of the structure involved. Usually, such a function exhibits that some of the structure of the domain is present in the co-domain as well. Since the study of such functions has turned out to be very fruitful in many branches of mathematics, it makes sense to apply this methodology to models of set theory as well.

When we consider models of such expressive theories as set theories, it is natural to compare structures by means of embeddings. Any embedding exhibits the domain as a substructure of the co-domain, but we can ask various questions about ``how nicely\hspace{1pt}'' the domain can be embedded in the co-domain: Firstly, for any first-order structure we can ask if the embedding is elementary, i.e. whether the truth of every first-order sentence with parameters in the domain is preserved by the embedding. Secondly, for structures of set theory we can ask whether the domain is embedded ``initially\hspace{1pt}'' in the co-domain. For set theory, the intuition of ``initiality\hspace{1pt}'' may be captured by various different formal notions, of different strengths (see Section \ref{Models of set theory}). The weakest notion of this form is called {\em initiality} and requires simply that the image of the embedding is downwards closed under $\in$, i.e. if $b$ is in the image, and the co-domain satisfies that $c \in b$, then $c$ is also in the image. The research in Chapter \ref{ch emb set theory} is concerned with {\em rank-initial} embeddings, defined by the stronger property that for every value of the embedding, every element of the co-domain of rank less than or equal to the rank of that value is also a value of the embedding. As noted in Section \ref{tour KP}, $\mathrm{KP}^\mathcal{P}$ proves that the function $(\alpha \mapsto V_\alpha)$ is total on the ordinals, so it makes sense to consider a notion of embedding which preserves this structure $(\alpha \mapsto V_\alpha)$. And indeed, an embedding $i : \mathcal{M} \rightarrow \mathcal{N}$ is rank-initial iff $i$ is initial and $i(V_\alpha^{\hspace{2pt}\mathcal{M}}) = V_{i(\alpha)}^{\hspace{2pt}\mathcal{N}}$, for every ordinal $\alpha$ in $\mathcal{M}$ (see Corollary \ref{rank-init equivalences in KPP}), so the choice to study the notion of rank-initial embedding is quite a natural in the setting of $\mathrm{KP}^\mathcal{P}$.

Between well-founded structures, all initial embeddings are trivial: This follows from the Mostowski collapse theorem (Theorem \ref{Mostowski collapse} and Proposition \ref{unique embedding well-founded}). In particular, if $i : \mathcal{M} \rightarrow \mathcal{N}$ is an initial embedding between well-founded extensional structures, then $\mathcal{M}$ and $\mathcal{N}$ are isomorphic to transitive sets (with the inherited $\in$-structure) $\mathcal{M}'$ and $\mathcal{N}'$, respectively, and $i$ is induced by the inclusion function of $\mathcal{M}'$ into $\mathcal{N}'$. So for a study of initial embeddings of models of set theory to yield any insight, we must turn our attention to non-standard models. As explained in the introduction, for non-standard models of arithmetic, several interesting results have been obtained that are either directly about initial embeddings between such models, or are proved by means of considering such embeddings. Thus a major motivation for the work in Chapter \ref{ch emb set theory} is to determine whether these generalize to the set theoretic setting, and if so, for which particular set theory. For example, while the results in \cite{BE18} are largely concerned with the theory $\mathrm{I}\Sigma_1$ (the fragment of Peano Arithmetic that restricts induction to $\Sigma_1$-formulae), it is established in Chapter \ref{ch emb set theory} that the corresponding set theory (for this context) is $\mathrm{KP}^\mathcal{P} + \Sigma_1^\mathcal{P}\textnormal{-Separation}$ (see Theorems \ref{Friedman thm} and \ref{Ressayre characterization Sigma_1-Separation}). Thus, the $\Sigma_1$-Induction schema of arithmetic corresponds in a natural way to the combination of the schemata of $\Sigma_1^\mathcal{P}$-Separation, $\Delta_0^\mathcal{P}$-Collection and $\Pi_1$-Foundation in set theory.

Several results and proofs of Section \ref{Characterizations} testify to the importance of studying embeddings. As noted in more detail in the introduction, there are interesting relationships between the $\mathrm{GBC}$ class theory with weakly compact class of ordinals, the notion of strong cut, rank-initial embeddings, and the fixed point set of rank-initial embeddings. Moreover, as indicated in Section \ref{tour NF}, self-embeddings of models of set theory are strongly connected to models of $\mathrm{NFU}$. Thus, all in all, it is intriguing to think about what more results might be obtained from research on embeddings between non-standard models.

\section{Motivation behind stratified algebraic set theory}

$\mathrm{NF}$ corresponds closely with the simple theory of types, $\mathrm{TST}$, an extensional version of higher order logic which Chwistek and Ramsey independently formulated as a simplification of Russell and Whitehead\hspace{1pt}'s system in Principia Mathematica. It was from contemplation of $\mathrm{TST}$ that Quine introduced $\mathrm{NF}$ \cite{Qui37}. Essentially, $\mathrm{NF}$ is obtained from $\mathrm{TST}$ by forgetting the typing of the relations while retaining the restriction on comprehension induced by the typing (thus avoiding Russell's paradox). This results in the notion of stratification, see Definition \ref{DefStrat} below. Thus $\mathrm{NF}$ and $\mathrm{NFU}$ resolve an aspect of type theory which may be considered philosophically dissatisfying: Ontologically, it is quite reasonable to suppose that there are relations which can take both individuals and relations as relata. The simplest example is probably the relation of identity. But in type theory, it is not possible to relate entities of different types. We cannot even say that they are unequal. Since the universe of $\mathrm{NF}$ or $\mathrm{NFU}$ is untyped, such issues disappear. It is therefore not surprising that stratified set theory has attracted attention from philosophers. For example, Cocchiarella applied these ideas to repair Frege's system \cite{Coc85} (a similar result is obtained in \cite{Hol15}), and Cantini applied them to obtain an interesting type-free theory of truth \cite{Can15}. Along a similar line of thought, the categorical version of $\mathrm{NF}$ and $\mathrm{NFU}$ brought forth in this monograph may well be helpful for transferring the ideas of stratified set theory to research in formal ontology. In a formal ontology, one may account for what individuals, properties, relations and tropes exist, where properties and relations are considered in an intensional rather than an extensional sense. Roughly, in category theory the objects are non-extensional, but the morphisms are extensional, and this is arguably fitting to the needs of formal ontology.

$\mathrm{NFU}$ is also intimately connected with the field of non-standard models of arithmetic and set theory. Out of this connection, Feferman proposed a version of $\mathrm{NFU}$ as a foundation for category theory, allowing for such unlimited categories as the category of all sets, the category of all groups, the category of all topological spaces, the category of all categories, etc \cite{Fef06}. This line of research was further pursued by Enayat, McKenzie and the author in \cite{EGM17}. In short, conventional category theory works perfectly fine in a subdomain of the $\mathrm{NFU}$-universe, but the unlimited categories live outside of this subdomain, and their category theoretic properties are unconventional. Even though they are unconventional (usually failing to be Cartesian closed), one might argue that nothing is lost by including them in our mathematical universe. These categories remain to be systematically studied.

The need for a categorical understanding of stratified set theory is especially pressing since very little work has been done in this direction. It is shown in \cite{McL92} that the ``category of sets and functions'' in $\mathrm{NF}$ is not Cartesian closed. However, several positive results concerning this category were proved in an unpublished paper by Forster, Lewicki and Vidrine \cite{FLV14}: In particular, they showed that it has a property they call ``pseudo-Cartesian closedness''. Similarly, Thomas showed in \cite{Tho17} that it has a property he calls ``stratified Cartesian closedness''. The moral is that it is straightforward to show in $\mathrm{INFU}$, that if $A$ and $B$ are sets, which are respectively isomorphic to sets of singletons $A\hspace{1pt}'$ and $B\hspace{1pt}'$, then the set of functions from $\bigcup A\hspace{1pt}'$ to $\bigcup B\hspace{1pt}'$ is an exponential object of $A$ and $B$. ($V$ is not isomorphic to any set of singletons.) In \cite{FLV14} a generalization of the notion of topos was proposed, with ``the category of sets and functions'' of $\mathrm{NF}$ as an instance. It has however not been proved that the appropriate extension $T$ of this theory (which $\mathrm{NF}$ interprets) satisfies $\mathrm{Con}(T) \Rightarrow \mathrm{Con}(\mathrm{NF})$. Using the results of Section \ref{interpret set in cat} of this monograph, it seems within reach to obtain that result by canonically extending a model of $T$ to a model of the categorical theory $\mathrm{ML}_\mathsf{Cat}$ introduced here. That line of research would also help carve out exactly what axioms of $T$ are necessary for that result. Moreover, in \cite{FLV14} it was conjectured that any model of $T$ has a subcategory which is a topos. In Section \ref{subtopos} of this monograph, it is proved that every model of $\mathrm{(I)ML(U)}_\mathsf{Cat}$ has a subcategory which is a topos.

A related direction of research opened up by the present monograph is to generalize the techniques of automorphisms of models of conventional set theory, in order to study automorphisms of topoi. The author expects that a rich landscape of models of $\mathrm{(I)ML(U)}_\mathsf{Cat}$ would be uncovered from such an enterprise. For example, just like there is a topos in which every function on the reals is continuous, a similar result may be obtainable for $\mathrm{IMLU}_\mathsf{Cat}$ by finding such a topos with an appropriate automorphism. Given the intriguing prospects for founding category theory in stratified set theory, this would open up interesting possibilities for stratified category theoretic foundation of mathematics.

The categorical approach to $\mathrm{NF}$ is also promising for helping the metamathematical study of $\mathrm{NF}$. As stated in the introduction, the main result of this research has the immediate corollary that $\mathrm{(I)NF}$ is equiconsistent to $\mathrm{(I)NFU} + |V| = |\mathcal{P}(V)|$. A major open question in the metamathematics of $\mathrm{NF}$ is whether $\mathrm{NF}$ (or even its intuitionistic counterpart, which has not been shown to be equiconsistent to $\mathrm{NF}$) is consistent relative to a system of conventional set theory (independent proof attempts by Gabbay and Holmes have recently been put forth). So yet a motivation for introducing $\mathrm{ML}_\mathsf{Cat}$ is simply that the flexibility of category theory may make it easier to construct models of $\mathrm{ML}_\mathsf{Cat}$, than to construct models of $\mathrm{NF}$, thus aiding efforts to prove and/or simplify proofs of $\mathrm{Con(NF)}$.

Since categorical model theory tends to be richer in the intuitionistic setting, an intriguing line of research is to investigate the possibilities for stratified dependent type theory. Dependent type theory is commonly formulated with a hierarchy of universes. In a sense, this hierarchy is inelegant and seemingly redundant, since any proof on one level of the hierarchy can be shifted to a proof on other levels of the hierarchy. Model-theoretically, this can be captured in a model with an automorphism. Since the semantics of type theory tends to be naturally cast in category theory, the understanding arising from the present paper would be helpful in such an effort.

In conclusion, ``categorification'' tends to open up new possibilities, as forcefully shown by the fruitfulness of topos theory as a generalization of set theory. In the present paper it has already resulted in a simple intuitive proof of the old result of Crabb\'e stated above. So given the relevance of $\mathrm{NF}$ and $\mathrm{NFU}$ to type theory, philosophy, non-standard models of conventional set theory and the foundations of category theory, it is important to investigate how $\mathrm{NF}$ and $\mathrm{NFU}$ can be expressed as theories in the language of category theory.

%------------------------------------------------------------

\chapter{Logic, set theory and non-standard models}\label{ch prel emb}

\section{Basic logic and model theory}\label{Basic logic and model theory}

We work with the usual first-order logic. A {\em signature} is a set of constant, function and relation symbols. The {\em language} of a signature is the set of well-formed formulas in the signature. The arity of function symbols, $f$, and relation symbols, $R$, are denoted $\mathrm{arity}(f\hspace{2pt})$ and $\mathrm{arity}(R)$, respectively. Models in a language are written as $\mathcal{M}$, $\mathcal{N}$, etc. They consist of interpretations of the symbols in the signature; for each symbol $S$ in the signature, its interpretation in $\mathcal{M}$ is denoted $S^\mathcal{M}$. If $X$ is a term, relation or function definable in the language over some theory under consideration, then $X^\mathcal{M}$ denotes its interpretation in $\mathcal{M}$. 

The domain of $\mathcal{M}$ is also denoted $\mathcal{M}$, so $a \in \mathcal{M}$ means that $a$ is an element of the domain of $\mathcal{M}$. Finite tuples are written as $\vec{a}$, and the tuple $\vec{a}$ considered as a set (forgetting the ordering of the coordinates) is also denoted $\vec{a}$. Moreover, $\vec{a} \in \mathcal{M}$ means that each coordinate of $\vec{a}$ is an element of the domain of $\mathcal{M}$. $\mathrm{length}(\vec{a})$ denotes the number of coordinates in $\vec{a}$. For each natural number $k \in \{ 1, \dots, \mathrm{length}(\vec{a}) \}$, $\pi_k(\vec{a})$ is the $k$-th coordinate of $\vec{a}$. When a function $f : A \rightarrow B$ is applied as $f\hspace{2pt}(\vec{a})$ to a tuple $\vec{a} \in A^n$, where $n \in \mathbb{N}$, then it is evaluated coordinate-wise, so $f\hspace{2pt}(\as) = (f\hspace{2pt}(a_1), \dots, f\hspace{2pt}(a_n))$.
If $\Gamma$ is a set of formulae in a language and $n \in \mathbb{N}$, then $\Gamma[x_1, \dots, x_n]$ denotes the subset of $\Gamma$ of formulae all of whose free variables are in $\{x_1, \dots, x_n\}$.

The {\em theory} of a model $\mathcal{M}$, denoted $\mathrm{Th}(\mathcal{M})$, is the set of formulae in the language satisfied by $\mathcal{M}$. If $\Gamma$ is a subset of the language and $S \subseteq \mathcal{M}$, then 
\[\mathrm{Th}_{\Gamma, S}(\mathcal{M}) =_\df \{ \phi(\vec{s}) \mid \phi \in \Gamma \wedge \vec{s} \in S \wedge (\mathcal{M}, \vec{s}) \models \phi(\vec{s}) \}.\]

The standard model of arithmetic is denoted $\mathbb{N}$.

$\mathcal{L}^0$ is the language of set theory,  i.e. the set of all well-formed formulae generated by $\{\in\}$.

$\mathcal{L}^1$ is defined as a two-sorted language in the single binary relation symbol $\{\in\}$; we have a sort $\mathsf{Class}$ of classes (which covers the whole domain and whose variables and parameters are written in uppercase $X, Y, Z, A, B, C,$ etc.) and a sort $\mathsf{Set}$ of sets (which is a subsort of $\mathsf{Class}$ and whose variables and parameters are written in lowercase $x, y, z, a, b, c,$ etc.). The relation $\in$ is a predicate on the derived sort $\mathsf{Set} \times \mathsf{Class}$. 

Models in $\mathcal{L}^1$ are usually written in the form $(\mathcal{M}, \mathcal{A})$, where $\mathcal{M}$ is an $\mathcal{L}^0$-structure on the domain of sets, and $\mathcal{A}$ is a set of classes. An $\mathcal{L}^1$-structure may reductively be viewed as an $\mathcal{L}^0$-structure. If $(\mathcal{M}, \mathcal{A})$ is an $\mathcal{L}^1$-structure, then (unless otherwise stated), by {\em an element of} $(\mathcal{M}, \mathcal{A})$, is meant an element of sort $\mathsf{Set}$.

Let $\mathcal{M}, \mathcal{N}$ be $\mathcal{L}$-structures. $\mathcal{M}$ is a {\em substructure} of $\mathcal{N}$ if $\mathcal{M} \subseteq \mathcal{N}$ and for every constant symbol $c$, relation symbol $R$ and function symbol $f$ of $\mathcal{L}$, we have 
\begin{align*}
c^\mathcal{M} &= c^\mathcal{N}, \\
R^\mathcal{M} &= R^\mathcal{N} \cap \mathcal{M}^{\mathrm{arity}(R)}, \\
f^\mathcal{M} &= f^\mathcal{N} \cap \mathcal{M}^{\mathrm{arity}(f\hspace{2pt}) + 1}.
\end{align*}
Note that since $\mathcal{M}$ is an $\mathcal{L}$-structure, the condition on function symbols implies that $\mathcal{M}$, as a subset of $\mathcal{N}$, is closed under $f^\mathcal{N}$. We also say that $\mathcal{N}$ is an {\em extension} of $\mathcal{M}$. The substructure is {\em proper} if its domain is a proper subset of the domain of the extension, in which case we write $\mathcal{M} < \mathcal{N}$. Note that $\mathcal{M} \leq \mathcal{N} \Leftrightarrow \mathcal{M} < \mathcal{N} \vee \mathcal{M} = \mathcal{N}$, whence this defines a partial order.

If a subset $S$ of $\mathcal{N}$ is closed under the interpretation of all constant and function symbols of $\mathcal{N}$, then we have a substructure $\mathcal{N}\restriction_S$ of $\mathcal{N}$, called {\em the restriction of $\mathcal{N}$ to $S$}, on the domain $S$ defined by $c^\mathcal{N}\restriction_S = c^\mathcal{N}$, $R^\mathcal{N}\restriction_S = R^\mathcal{N} \cap S^{\mathrm{arity}(R)}$ and $f^\mathcal{N}\restriction_S = f^\mathcal{N} \cap S^{\mathrm{arity}(f\hspace{2pt}) + 1}$, for all constant, relation and function symbols, $c$, $R$, $f$, respectively.

An {\em embedding} $f : \mathcal{M} \rightarrow \mathcal{N}$ from an $\mathcal{L}$-structure $\mathcal{M}$ to an $\mathcal{L}$-structure $\mathcal{N}$ is a function $f : \mathcal{M} \rightarrow \mathcal{N}$, such that for each atomic $\mathcal{L}$-formula $\phi(\vec{x})$ and for each $\vec{m} \in \mathcal{M}$, we have 
$$\mathcal{M} \models \phi(\vec{m}) \Leftrightarrow \mathcal{N} \models \phi(f\hspace{2pt}(\vec{m})).$$ 
It follows that the same equivalence holds for quantifier free $\phi$. Since embeddings preserve the formula $x \neq y$, they are injective. So category theoretically, embeddings tend to be monic in the categories where they appear. In particular, if $T$ is a theory, then the category of models of $T$ with embeddings as morphisms has only monic morphisms. In category theory monics are representatives of subobjects, and it is sometimes convenient to talk about embeddings as if they are the actual subobjects. Indeed, note that the domain $\mathcal{M}$ of the embedding $f$ is isomorphic to its image $f\hspace{2pt}(\mathcal{M})$, which is a substructure of its co-domain $\mathcal{N}$; and conversely, any substructure can be thought of as an embedding by considering the inclusion function. Hence, most notions of embeddings also make sense for substructures, and vice versa. Definitions below pertaining to embeddings are thus implicitly extended to substructures, by applying them to the inclusion function.

In accordance with this category theoretic viewpoint, we write $\mathcal{M} \leq \mathcal{N}$, if there is an embedding from $\mathcal{M}$ to $\mathcal{N}$. The set of embeddings from $\mathcal{M}$ to $\mathcal{N}$ is denoted $\llbracket\mathcal{M} \leq \mathcal{N}\rrbracket$. If $h \in \llbracket \mathcal{S} \leq \mathcal{N}\rrbracket$, and there is $f \in \llbracket \mathcal{M} \leq \mathcal{N} \rrbracket$, such that for some $g \in \llbracket \mathcal{S} \leq \mathcal{M} \rrbracket$, we have $h = f \circ g$, then we say that {\em $f$ is an embedding over $g$}  (or that {\em $f$ is an embedding over $\mathcal{S}$}), and we write $\mathcal{M} \leq_h \mathcal{N}$ (or $\mathcal{M} \leq_S \mathcal{N}$). We denote the set of such embeddings by $\llbracket \mathcal{M} \leq_h \mathcal{N}\rrbracket$ (or by $\llbracket \mathcal{M} \leq_\mathcal{S} \mathcal{N}\rrbracket$). Note that if $\mathcal{S} \subseteq \mathcal{M} \cap \mathcal{N}$, then $f \in \llbracket \mathcal{M} \leq \mathcal{N} \rrbracket$ is in $\llbracket \mathcal{M} \leq_\mathcal{S} \mathcal{N} \rrbracket$ iff $f\hspace{2pt}(s) = s$ for each $s \in \mathcal{S}$. Moreover, $\mathcal{M} \leq_\mathcal{S} \mathcal{N} \Rightarrow \mathcal{S} \leq \mathcal{M} \wedge \mathcal{S} \leq \mathcal{N}$. As we progress to define various types of embeddings, the denotation of all these variations of the notation will not always be specified explicitly. The ambition is that, when the denotation cannot be easily inferred from the context, it will be given explicitly. An embedding is {\em proper} if it is not onto; for proper embeddings, we use the symbol `$<$' in all the contexts above.

It is also of interest to consider partial embeddings. For any cardinal $\kappa$, $\llbracket \mathcal{M} \leq_\mathcal{S} \mathcal{N} \rrbracket^{<\kappa}$ denotes the set of partial functions from $\mathcal{M}$ to $\mathcal{N}$ whose domain has cardinality less than $\kappa$, and such that for each $\vec{m} \in \mathcal{M}$ and for each atomic $\mathcal{L}$-formula $\phi(\vec{x})$,
$$\mathcal{M} \models \phi(\vec{m}) \Leftrightarrow \mathcal{N} \models \phi(f\hspace{2pt}(\vec{m})).$$ 

Let $\mathbb{P} = \llbracket \mathcal{M} \leq_\mathcal{S} \mathcal{N} \rrbracket^{<\kappa}$. We endow $\mathbb{P}$ with the following partial order. For any $f,g \in \mathbb{P}$,
\[
f \leq^\mathbb{P} g \Leftrightarrow f\restriction_{\dom(g)} = g.
\]
In particular, we will be concerned with subposets of $\llbracket \mathcal{M} \leq \mathcal{N} \rrbracket^{<\omega}$, consisting of finite partial embeddings.

An {\em isomorphism} is an embedding that has an inverse embedding, or equivalently an embedding that is onto. We write $\mathcal{M} \cong \mathcal{N}$ if $\mathcal{M}$ and $\mathcal{N}$ are isomorphic, and $\llbracket \mathcal{M} \cong \mathcal{N} \rrbracket$ denotes the set of isomorphisms between $\mathcal{M}$ and $\mathcal{N}$. $\mathcal{M}$ is isomorphic to $\mathcal{N}$ {\em over} $\mathcal{S}$, if there is $f \in \llbracket \mathcal{M} \cong \mathcal{N} \rrbracket$ such that $f \in \llbracket \mathcal{M} \leq_\mathcal{S} \mathcal{N} \rrbracket$. We decorate the symbol `$\cong$' with subscripts, just as we do for embeddings.

Let $f : \mathcal{M} \rightarrow \mathcal{N}$ and $f\hspace{2pt}' : \mathcal{M'} \rightarrow \mathcal{N}'$ be embeddings. For each symbol $\vartriangleleft_\mathcal{S}$ among $\leq_\mathcal{S}, <_\mathcal{S}, \cong_\mathcal{S}$, etc., used to compare structures, we write $f\hspace{2pt}' \vartriangleleft f$ if $f\hspace{2pt}'(\mathcal{M}) \vartriangleleft_\mathcal{S} f\hspace{2pt}(\mathcal{M})$. Note that $f\hspace{2pt}' \leq f \wedge f \leq f\hspace{2pt}' \Leftrightarrow f \cong f\hspace{2pt}'$. If $\mathcal{N} = \mathcal{N}'$, then $f\hspace{2pt}' \leq f \wedge f \leq f\hspace{2pt}' \Leftrightarrow f\hspace{2pt}(\mathcal{M}) = f\hspace{2pt}'(\mathcal{M})$, but in general $f\hspace{2pt}' \leq f \wedge f \leq f\hspace{2pt}' \not \Rightarrow f = f\hspace{2pt}'$ even if they have the same domain.

An embedding $f : \mathcal{M} \rightarrow \mathcal{N}$ of $\mathcal{L}$-structures is $\Gamma${\em -elementary}, for some $\Gamma \subseteq \mathcal{L}$, if for each formula $\phi(\vec{x})$ in $\Gamma$, and for each $\vec{m} \in \mathcal{M}$, 
\[
\mathcal{M} \models \phi(\vec{m}) \Leftrightarrow \mathcal{N} \models \phi(f\hspace{2pt}(\vec{m})) .
\]
If there is such an embedding we write $\mathcal{M} \preceq_\Gamma \mathcal{N}$. $f$ is {\em elementary} if it $\mathcal{L}$-elementary. As above, $\mathcal{M} \preceq_{\Gamma, \mathcal{S}} \mathcal{N}$ if there is a $\Gamma$-elementary embedding over $\mathcal{S}$, and $\llbracket \mathcal{M} \preceq_{\Gamma, \mathcal{S}} \mathcal{N} \rrbracket$ denotes the set of witnesses. Like carpenter to hammer, so model theorist to:

\begin{lemma}[The Tarski Test]\label{Tarski test}
	Let $f : \mathcal{M} \rightarrow \mathcal{N}$ be an embedding of $\mathcal{L}$-structures and suppose that $\Gamma \subseteq \mathcal{L}$ is closed under subformulae. If for all $\vec{m} \in \mathcal{M}$ and for all $\psi(\vec{y}) \in \Gamma$ of the form $\psi(\vec{y}) \equiv \exists x . \phi(x, \vec{y})$, we have
	\[
	\mathcal{N} \models \exists x . \phi(x, f\hspace{2pt}(\vec{m})) \Rightarrow \exists m' \in \mathcal{M} \text{, such that } \mathcal{N} \models \phi(f\hspace{2pt}(m'), f\hspace{2pt}(\vec{m})),
	\]
	then $f$ is $\Gamma$-elementary.
\end{lemma}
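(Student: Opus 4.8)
The plan is to prove the apparently stronger statement that for every $\phi(\vec{x}) \in \Gamma$ and every $\vec{m} \in \mathcal{M}$ we have $\mathcal{M} \models \phi(\vec{m}) \Leftrightarrow \mathcal{N} \models \phi(f\hspace{2pt}(\vec{m}))$, proceeding by induction on the structure of $\phi$. The assumption that $\Gamma$ is closed under subformulae is exactly what makes this induction self-contained: every subformula met along the way again lies in $\Gamma$, so the induction hypothesis is always applicable. For the base case $\phi$ is atomic, and the required equivalence is nothing but the defining property of an embedding. I would then dispatch the Boolean connectives ($\phi \equiv \neg \chi$, $\phi \equiv \chi_1 \wedge \chi_2$, and so on) routinely, since satisfaction commutes with the connectives and the relevant subformulae lie in $\Gamma$, so the induction hypothesis applies to each constituent.

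The crux is the existential case $\psi(\vec{y}) \equiv \exists x . \phi(x, \vec{y})$, with $\psi \in \Gamma$. For the left-to-right direction, if $\mathcal{M} \models \exists x . \phi(x, \vec{m})$ then there is a witness $m' \in \mathcal{M}$ with $\mathcal{M} \models \phi(m', \vec{m})$; as $\phi$ is a subformula of $\psi$ and hence in $\Gamma$, the induction hypothesis gives $\mathcal{N} \models \phi(f\hspace{2pt}(m'), f\hspace{2pt}(\vec{m}))$, so that $\mathcal{N} \models \exists x . \phi(x, f\hspace{2pt}(\vec{m}))$; this direction uses only the induction hypothesis. The reverse direction is where the hypothesis of the lemma does its work: if $\mathcal{N} \models \exists x . \phi(x, f\hspace{2pt}(\vec{m}))$, the assumed criterion supplies an $m' \in \mathcal{M}$ with $\mathcal{N} \models \phi(f\hspace{2pt}(m'), f\hspace{2pt}(\vec{m}))$, and the induction hypothesis then yields $\mathcal{M} \models \phi(m', \vec{m})$, whence $\mathcal{M} \models \exists x . \phi(x, \vec{m})$.

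Thus the main obstacle, and indeed the only point where anything beyond bookkeeping is needed, is the backward direction of the existential step: a priori a witness for $\exists x . \phi$ in $\mathcal{N}$ need not lie in the image of $f$, and the entire content of the criterion is to guarantee that a witness can always be found among the values $f\hspace{2pt}(m')$. To keep the induction closed under a single primitive quantifier, I would adopt the convention that $\exists$ is the only primitive quantifier, treating $\forall x . \phi$ as an abbreviation for $\neg \exists x . \neg \phi$; then closure of $\Gamma$ under subformulae supplies $\exists x . \neg \phi$ and $\neg \phi$ exactly when they are needed, the universal quantifier is absorbed into the negation and existential cases already handled, and no separate argument for $\forall$ is required. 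This is the decisive structural choice, since a naive forward treatment of a primitive $\forall x . \phi$ would force a claim about arbitrary elements of $\mathcal{N}$, not merely those in the image of $f$, which the hypothesis does not provide.
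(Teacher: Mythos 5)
Your proof is correct and follows essentially the same route as the paper: structural induction on formulae in $\Gamma$ proving the full equivalence, with the atomic case from the embedding property, propositional connectives commuting with $\models$, and the existential case combining the lemma's hypothesis (for the backward direction) with the induction hypothesis (for the forward direction). Your explicit remark about treating $\forall$ as $\neg\exists\neg$ is a sensible bookkeeping convention that the paper leaves implicit, but it does not change the substance of the argument.
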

\begin{proof}
	This is proved by structural induction on the formulae in $\Gamma$. For atomic formulae it follows from that $f$ is an embedding. Moreover, the inductive cases for the propositional connectives follow from that these commute with the satisfaction relation $\models$. So we concentrate on the inductive case for the existential quantifier:
	
	Let $\psi(\vec{y}) \in \Gamma$ be of the form $\psi(\vec{y}) \equiv \exists x . \phi(x, \vec{y})$ and inductively assume that $f$ is $\phi$-elementary. Let $\vec{m} \in \mathcal{M}$ be of the same length as $\vec{y}$. Note that by the condition of the Lemma
	\[
	\begin{array}{rcl}
	&&\mathcal{N} \models \exists x . \phi(x, f\hspace{2pt}(\vec{m})) \\ 
	\Leftrightarrow& \exists m' \in \mathcal{M} \textnormal{, such that} &\mathcal{N} \models \phi(f\hspace{2pt}(m'), f\hspace{2pt}(\vec{m})) \\
	\Leftrightarrow& \exists m' \in \mathcal{M} \textnormal{, such that} &\mathcal{M} \models \phi(m', \vec{m}) \\
	\Leftrightarrow& &\mathcal{M} \models \exists x . \phi(x, \vec{m}), 
	\end{array}
	\]
	as desired.
\end{proof}

If $\kappa$ is a cardinal, then $\llbracket \mathcal{M} \preceq_{\Gamma, \mathcal{S}} \mathcal{N} \rrbracket^{< \kappa}$ denotes the set of partial functions $f$ from $\mathcal{M}$ to $\mathcal{N}$, with domain of cardinality $< \kappa$, such that for all $\phi(\vec{x}) \in \Gamma$ and for all $\vec{m} \in \mathcal{M}$,
\[
\mathcal{M} \models \phi(\vec{m}) \Leftrightarrow \mathcal{N} \models \phi(f\hspace{2pt}(\vec{m})) .
\]

In Section \ref{Models of set theory} we will introduce definitions for more types of embeddings that are relevant to the study of models of set theory.

The {\em uniquely existential quantifier} $\exists! x . \phi(x)$ is defined as 
$$\exists x . (\phi(x) \wedge (\forall y . \phi(y) \rightarrow y = x)).$$
The {\em bounded quantifiers}, $\forall u \in y . \phi(u, y)$ and $\exists u \in y . \phi(u, y)$, are defined as $\forall u . (u \in y \rightarrow \phi(u, y))$ and $\exists u . (u \in y \wedge \phi(u, y))$, respectively. Suppose a background $\mathcal{L}^0$-theory $T$ is given. $\bar\Delta_0 \subseteq \mathcal{L}^0$ is the set of formulae all of whose quantifiers are bounded; $\Delta_0 \subseteq \mathcal{L}^0$ is the set of formulae provably equivalent in $T$ to a formula in $\Delta_0$. $\bar\Sigma_0$ and $\bar\Pi_0$ are defined as equal to $\bar\Delta_0$; $\Sigma_0$ and $\Pi_0$ are defined as equal to $\Delta_0$. Recursively, for every $n \in \mathbb{N}$: $\bar\Sigma_{n+1} \subseteq \mathcal{L}$ is the set of formulae of the form $\exists x . \phi$, where $\phi$ is in $\bar\Pi_n$; and dually, $\bar\Pi_{n+1} \subseteq \mathcal{L}$ is the set of formulae of the form $\forall x . \phi$, where $\phi$ is in $\bar\Sigma_n$. $\Sigma_{n+1} \subseteq \mathcal{L}$ is the set of formulae provably equivalent in $T$ to a formula of the form $\exists x . \phi$, where $\phi$ is in $\Pi_n$; and dually, $\Pi_{n+1} \subseteq \mathcal{L}$ is the set of formulae provably equivalent in $T$ to a formula of the form $\forall x . \phi$, where $\phi$ is in $\Sigma_n$.  Moreover, $\bar\Delta_{n+1} =_\df \bar\Sigma_{n+1} \cap \bar\Pi_{n+1}$ and $\Delta_{n+1} =_\df \Sigma_{n+1} \cap \Pi_{n+1}$. These sets of formulae are collectively called {\em the L\'evy hierarchy}, and we say that they measure a formula's {\em L\'evy complexity}.

If $\phi$ is an $\mathcal{L}^0$-formula and $t$ is an $\mathcal{L}^0$-term, such that none of the variables of $t$ occur in $\phi$, then $\phi^t$ denotes the formula obtained from $\phi$ by replacing each quantifier of the form `$\boxminus x$' by `$\boxminus x \in t$', where $\boxminus \in \{\exists, \forall\}$.

The $\mathcal{P}$-{\em bounded quantifiers} $\forall x \subseteq y . \phi(x, y)$ and $\exists x \subseteq y . \phi(x, y)$ are defined as $\forall x . (x \subseteq y \rightarrow \phi(x, y))$ and $\exists x . (x \subseteq y \wedge \phi(x, y))$, respectively. For each $n \in \mathbb{N}$, we define sets $\bar\Sigma^\mathcal{P}_n$, $\bar\Pi^\mathcal{P}_n$, $\bar\Delta^\mathcal{P}_n$,  $\Sigma^\mathcal{P}_n$, $\Pi^\mathcal{P}_n$ and $\Delta^\mathcal{P}_n$ analogously as above, but replacing ``bounded\hspace{1pt}'' by ``bounded or $\mathcal{P}$-bounded\hspace{1pt}''. These sets of formulae are called {\em the Takahashi hierarchy}, and we say that they measure a formula's {\em Takahashi complexity}. 

When a set of formulae is denoted with a name that includes free variables, for example $p(\vec{x})$, then it is assumed that each formula in the set has at most the free variables $\vec{x}$. Moreover, if $\vec{a}$ are terms or elements of a model, then $p(\vec{a}) = \{\phi(\vec{a}) \mid \phi(\vec{x}) \in p(\vec{x})\}$.  

A {\em type} $p(\vec{x})$ {\em over a theory} $T$ (in a language $\mathcal{L}$) is a set of formulae, such that $T \cup p(\vec{a})$ is a consistent theory in the language $\mathcal{L} \cup \vec{a}$, where $\vec{a}$ are new constant symbols. Given a subset $\Gamma \subseteq \mathcal{L}$, a $\Gamma${\em -type} is a type all of whose formulae are in $\Gamma$. 

Given a model $\mathcal{M}$ in a language $\mathcal{L}$, and $\vec{b} \in M$, a {\em type over} $\mathcal{M}$ is a set of formulae $p(\vec{x}, \vec{b})$, such that for every finite subset 
$$\{\phi_1(\vec{x}, \vec{b}), \dots, \phi_n(\vec{x}, \vec{b})\} \subseteq p(\vec{x}, \vec{b}),$$ 
there are $\vec{a} \in M$, for which $\mathcal{M} \models \phi_1(\vec{a}, \vec{b}) \wedge \dots \wedge \phi_n(\vec{a}, \vec{b})$. The type is {\em realized} in $\mathcal{M}$ if there are $\vec{a} \in M$, such that $\mathcal{M} \models \phi(\vec{a}, \vec{b})$, for every $\phi(\vec{x}, \vec{b}) \in p(\vec{x}, \vec{b})$. Given a fixed G\"odel numbering of the formulae in $\mathcal{L}$, a type $p(\vec{x}, \vec{b})$ over $\mathcal{M}$ is {\em recursive} if $\{ \ulcorner \phi(\vec{x}, \vec{y}) \urcorner  \mid \phi(\vec{x}, \vec{b}) \in p(\vec{x}, \vec{b})\}$ is a recursive set, where $\ulcorner \phi(\vec{x}, \vec{y}) \urcorner$ denotes the G\"odel code of $\phi(\vec{x}, \vec{y})$ (henceforth formulae will usually be identified with their G\"odel codes). $\mathcal{M}$ is {\em recursively $\Gamma$-saturated} if it realizes every recursive $\Gamma$-type over $\mathcal{M}$.

Given a model $\mathcal{M}$ in a language $\mathcal{L}$, a tuple $\vec{a} \in \mathcal{M}$, a subset $\Gamma \subseteq \mathcal{L}$ and a subset $S \subseteq \mathcal{M}$, {\em the $\Gamma$-type of $\vec{a}$ over $\mathcal{M}$ with parameters in $S$} is the set $\{\phi(\vec{x}, \vec{b}) \mid \phi \in \Gamma \wedge \vec{b} \in S \wedge \mathcal{M} \models \phi(\vec{a}, \vec{b})\}$, denoted $\mathrm{tp}_{\Gamma, S}(\vec{a})$.

\section{Order theory and category theory}

A {\em poset} (or {\em partial order}) is a structure $\mathbb{P}$ in the signature $\{\leq\}$ (i.e. a set endowed with a binary relation $\leq^\mathbb{P}$), which satisfies $\forall x . x \leq x$, $\forall x . \forall y . ((x \leq y \wedge y \leq x) \rightarrow x = y)$ and $\forall x . \forall y . \forall z . ((x \leq y \wedge y \leq z) \rightarrow x \leq z)$. A poset $\mathbb{P}$ is {\em linear} (or {\em total}) if it satisfies $\forall x . \forall y . (x \leq y \vee y \leq x)$. We introduce a defined relation-symbol by $x < y \leftrightarrow (x \leq y \wedge x \neq y)$

An {\em embedding $i : \mathbb{P} \rightarrow \mathbb{P}'$ of posets}, is just a special case of embeddings of structures, i.e. it is an embedding of $\{\leq\}$-structures. Let $i : \mathbb{P} \rightarrow \mathbb{P}'$ be an embedding of posets. $y \in \mathbb{P}'$ is an {\em upper bound} of $i$ if $\forall x \in \mathbb{P} . i(x) < y$. If such a $y$ exists then $i$ is {\em bounded above}. $i$ is {\em topless} if it is bounded above but does not have a $\mathbb{P}'$-least upper bound.

A self-embedding $i : \mathbb{P} \rightarrow \mathbb{P}$ is {\em proper} if it is not surjective. A self-embedding $i : \mathbb{P} \rightarrow \mathbb{P}$ is {\em contractive} if for all $x \in \mathbb{P}$, we have $i(x) <_\mathbb{P} x$. 

Let $\mathbb{P}$ be a poset. Given $x \in \mathbb{P}$, define $\mathbb{P}_{\leq x}$ as the substructure of $\mathbb{P}$ on $\{y \in \mathbb{P} \mid y \leq_\mathbb{P} x\}$; and similarly, if $X \in \mathbb{P}$, define $\mathbb{P}_{\leq X}$ as the substructure of $\mathbb{P}$ on $\{y \in \mathbb{P} \mid \exists x \in X . y \leq_\mathbb{P} x\}$. We have analogous definitions for when `$\leq$' is replaced by `$<$', `$\geq$' or `$>$'.

For any ordinal $\alpha$ and linearly ordered set $(\mathbb{L}, <^\mathbb{L})$, the set $\mathbb{L}^{<\alpha}$ of $\mathbb{L}$-valued sequences of length less than $\alpha$, can be {\em lexicographically} ordered by putting 
$$f <^\mathrm{lex} g \Leftrightarrow_\df \exists \gamma < \alpha . (f\hspace{2pt}(\gamma) <^\mathbb{L} g(\gamma) \wedge \forall \xi < \gamma . f\hspace{2pt}(\xi) = g(\xi)).$$
It is easily verified that the lexicographic order is a linear order.

Let $\mathbb{P}$ be a poset. A subset $\mathcal{D} \subseteq \mathbb{P}$ is {\em dense} if for any $x \in \mathbb{P}$ there is $y \in \mathcal{D}$ such that $y \leq x$. A {\em filter} $\mathcal{F}$ on $\mathbb{P}$ is a non-empty subset of $\mathbb{P}$, such that $\forall x, y \in \mathbb{P} . ((x \in \mathcal{F} \wedge x \leq y) \rightarrow y \in \mathcal{F})$ ({\em upwards closed}) and $\forall x, y \in \mathcal{F} . \exists z \in \mathcal{F} . (z \leq x \wedge z \leq y)$ ({\em downwards directed}). A filter $\mathcal{F}$ is an {\em ultrafilter} if it is {\em maximal}, i.e. if there is no filter $\mathcal{F}'$ on $\mathbb{P}$ such that $\mathcal{F} \subsetneq \mathcal{F}'$. Let $\mathbf{D}$ be a set of dense subsets of $\mathbb{P}$. A filter $\mathcal{F}$ is $\mathbf{D}${\em -generic}, if $\forall \mathcal{D} \in \mathbf{D} . \mathcal{D} \cap \mathcal{F} \neq \varnothing$.

\begin{lemma}\label{generic filter existence}
	Let $\mathbb{P}$ be a poset with an element $p$. If $\mathbf{D}$ is a countable set of dense subsets of $\mathbb{P}$, then there is a $\mathbf{D}$-generic filter $\mathcal{F}$ on $\mathbb{P}$ containing $p$.
\end{lemma}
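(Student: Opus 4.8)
The plan is to use the classical Rasiowa--Sikorski construction: build a descending chain $p = p_0 \geq_\mathbb{P} p_1 \geq_\mathbb{P} \cdots$ meeting each dense set in turn, and then take $\mathcal{F}$ to be its upward closure.

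First I would enumerate the countable family as $\mathbf{D} = \{\mathcal{D}_n \mid n \in \mathbb{N}\}$ (if $\mathbf{D}$ is empty or finite, one may repeat entries to obtain such a sequence, or treat that case directly with the principal filter generated by $p$). Then I would recursively construct a sequence $(p_n)_{n \in \mathbb{N}}$ in $\mathbb{P}$ by setting $p_0 = p$, and, given $p_n$, applying the density of $\mathcal{D}_n$ to $p_n$ to obtain some $p_{n+1} \in \mathcal{D}_n$ with $p_{n+1} \leq_\mathbb{P} p_n$. This invokes the paper's definition of density, which provides, for any $x$, an element $y$ of the dense set with $y \leq_\mathbb{P} x$, exactly in the direction needed to keep the chain descending.

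Next I would define $\mathcal{F} = \{q \in \mathbb{P} \mid \exists n \in \mathbb{N} . p_n \leq_\mathbb{P} q\}$ and verify the three defining conditions of a filter. Non-emptiness and $p \in \mathcal{F}$ hold since $p = p_0 \leq_\mathbb{P} p_0$. Upward closure is immediate from transitivity of $\leq_\mathbb{P}$. Downward directedness is where the descending chain does the work: given $q, r \in \mathcal{F}$ witnessed by $p_n \leq_\mathbb{P} q$ and $p_m \leq_\mathbb{P} r$, the element $z = p_{\max(n,m)}$ lies below both $p_n$ and $p_m$, hence below $q$ and $r$, and $z \in \mathcal{F}$. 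Finally, $\mathbf{D}$-genericity follows because $p_{n+1} \in \mathcal{D}_n$ while also $p_{n+1} \in \mathcal{F}$, so $\mathcal{D}_n \cap \mathcal{F} \neq \varnothing$ for every $n$.

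There is no serious obstacle here; the construction is routine once the recursion is set up. The only points demanding care are orientation and directedness: one must check that the paper's convention for density (for each $x$ there is $y \leq_\mathbb{P} x$ in the dense set) matches the descending direction of the chain, and that downward directedness of $\mathcal{F}$ genuinely relies on the $(p_n)$ forming a $\leq_\mathbb{P}$-chain rather than an arbitrary collection of conditions. The countability of $\mathbf{D}$ is precisely what permits a single recursion indexed by $\mathbb{N}$ that meets all the dense sets.
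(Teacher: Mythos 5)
Your proof is correct and follows essentially the same approach as the paper's: a Rasiowa--Sikorski-style descending chain meeting each dense set in turn, with the filter taken as the upward closure of the chain. The only cosmetic differences are that you seed the chain with $p$ itself (the paper instead picks $d_0 \leq p$ in $\mathcal{D}_0$ and recovers $p$ by upward closure) and that you explicitly note the finite/empty-$\mathbf{D}$ case, which the paper leaves implicit.
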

\begin{proof}
	Let $\mathcal{D}_0, \mathcal{D}_1, \dots$ be an enumeration of $\mathbf{D}$. Recursively, and using choice and density, construct a sequence $d_k$ such that for each $k < \omega$,
	\begin{align*}
	d_k &\in \mathcal{D}_k,\\
	d_0 &\leq p,\\
	d_{k+1} &\leq d_k.
	\end{align*}
	Let $\mathcal{F} = \mathbb{P}_{\geq \{d_0, d_1, \dots\}}$. By construction $\mathcal{F}$ is upwards closed, contains $p$, and intersects every $\mathcal{D} \in \mathbf{D}$. If $x, y \in \mathcal{F}$, then we may assume that there are $k \leq l < \omega$ such that $x = d_k$ and $y = d_l$. So $d_l \leq x$ and $d_l \leq y$, whence $\mathcal{F}$ is downwards directed.
\end{proof}

\begin{lemma}\label{ultrafilter existence}
	Let $\mathbb{P}$ be a poset and let $\mathcal{F}$ be a filter on $\mathbb{P}$. There is an ultrafilter $\mathcal{U}$ such that $\mathcal{F} \subseteq \mathcal{U}$.
\end{lemma}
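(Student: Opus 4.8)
The plan is to invoke Zorn's Lemma applied to the collection of filters extending $\mathcal{F}$, ordered by inclusion. Concretely, let $\mathbb{F}$ be the set of all filters $\mathcal{G}$ on $\mathbb{P}$ with $\mathcal{F} \subseteq \mathcal{G}$, partially ordered by $\subseteq$. This collection is non-empty, since $\mathcal{F}$ itself belongs to $\mathbb{F}$. To apply Zorn's Lemma, I must check that every chain in $\mathbb{F}$ has an upper bound in $\mathbb{F}$. The natural candidate is the union of the chain, and the bulk of the work is to verify that this union is again a filter containing $\mathcal{F}$.

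So let $\mathcal{C} \subseteq \mathbb{F}$ be a chain (linearly ordered by $\subseteq$) and put $\mathcal{G} = \bigcup \mathcal{C}$. If $\mathcal{C}$ is empty, then $\mathcal{F}$ serves as an upper bound, so assume $\mathcal{C}$ is non-empty; then $\mathcal{G} \supseteq \mathcal{F}$, and in particular $\mathcal{G}$ is non-empty. Upwards closure passes through the union without difficulty: if $x \in \mathcal{G}$ and $x \leq y$, then $x$ lies in some member of $\mathcal{C}$, which is upwards closed, so $y$ lies in that same member and hence in $\mathcal{G}$. The step requiring the chain hypothesis is downwards directedness: given $x, y \in \mathcal{G}$, pick members $\mathcal{G}_1, \mathcal{G}_2 \in \mathcal{C}$ with $x \in \mathcal{G}_1$ and $y \in \mathcal{G}_2$; since $\mathcal{C}$ is a chain, one of these is contained in the other, say $\mathcal{G}_1 \subseteq \mathcal{G}_2$, so $x, y \in \mathcal{G}_2$, and downwards directedness of the single filter $\mathcal{G}_2$ yields $z \in \mathcal{G}_2 \subseteq \mathcal{G}$ with $z \leq x$ and $z \leq y$. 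Thus $\mathcal{G}$ is a filter extending $\mathcal{F}$, i.e. $\mathcal{G} \in \mathbb{F}$, and it is clearly an upper bound for $\mathcal{C}$.

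Zorn's Lemma then furnishes a maximal element $\mathcal{U}$ of $\mathbb{F}$, and it remains to see that $\mathcal{U}$ is genuinely an ultrafilter, i.e. maximal among \emph{all} filters on $\mathbb{P}$, not merely among those extending $\mathcal{F}$. This is immediate: any filter $\mathcal{U}'$ with $\mathcal{U} \subsetneq \mathcal{U}'$ would satisfy $\mathcal{F} \subseteq \mathcal{U} \subsetneq \mathcal{U}'$, placing $\mathcal{U}'$ in $\mathbb{F}$ and contradicting the maximality of $\mathcal{U}$ in $\mathbb{F}$. Hence $\mathcal{U}$ is an ultrafilter with $\mathcal{F} \subseteq \mathcal{U}$, as required. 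The only genuinely non-routine point is the verification of downwards directedness for the union of the chain, where linearity of the chain is exactly what is needed to place two given elements into a common filter; everything else is bookkeeping.
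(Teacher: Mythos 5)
Your proof is correct and takes essentially the same approach as the paper's: Zorn's Lemma applied to filters extending $\mathcal{F}$, with the key verification that the union of a chain of filters is again a filter. You are in fact slightly more careful than the paper, which glosses over both the restriction to filters containing $\mathcal{F}$ and the final observation that maximality among such filters yields maximality among all filters.
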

\begin{proof}
	By Zorn's lemma it suffices to show that for any ordinal $\alpha$ and for any $\subseteq$-increasing sequence $\mathcal{F} = (\mathcal{F}_\xi)_{\xi < \alpha}$ of filters on $\mathbb{P}$, the union $\mathcal{G} = \bigcup_{\xi < \alpha} \mathcal{F}_\xi$ is a filter. But this follows from that $\forall x, y \in \mathcal{G} . \exists \xi < \alpha . x \in \mathcal{F}_\xi \wedge y \in \mathcal{F}_\xi$, and from that each $\mathcal{F}_\xi$ is a filter.
\end{proof}

We shall also make use of a notion from category theory. A {\em category} is a set of {\em objects} and a set of {\em morphisms}, along with a partial binary operation of {\em composition} of morphisms, denoted $\circ$, satisfying the following requirements: Each morphism has a {\em domain} and {\em co-domain} which are objects. A morphism $f$ may be written $f : A \rightarrow B$, to indicate that its domain is $A$ and its co-domain is $B$. For any morphisms $f : A \rightarrow B$ and $g : B \rightarrow C$, $g \circ f$ exists and we have $g \circ f : A \rightarrow C$. For every object $A$ there is an {\em identity morphism} $\id_A$, such that for any $f : A \rightarrow B$ and any $g : B \rightarrow A$, we have $f \circ \id_A = f$ and $\id_A \circ g = g$. Finally, for any $f : A \rightarrow B$, $g : B \rightarrow C$ and $h : C \rightarrow D$, we have $(h \circ g) \circ f = h \circ (g \circ f\hspace{2pt})$.

An {\em equalizer } of a pair of morphisms $j, j\hspace{1pt}' : Y \rightarrow Z$ is a morphism $i : X \rightarrow Y$, such that $j \circ i = j\hspace{1pt}' \circ i$, and such that for any $i\hspace{1pt}' : X\hspace{1pt}' \rightarrow Y$ with this property, there is $u : X\hspace{1pt}' \rightarrow X$ such that $i\hspace{1pt}' = i \circ u$. The following examples are easily established:
\begin{enumerate}
\item The category of linear orders with embeddings as morphisms, has equalizers: Given embeddings $j, j\hspace{1pt}' : \mathbb{Y} \rightarrow \mathbb{Z}$, the linear suborder $\mathbb{X}$ of $\mathbb{Y}$ on $\{y \in \mathbb{Y} \mid j(y) = j\hspace{1pt}'(y)\}$, along with the inclusion function $i : X \hookrightarrow Y$, is an equalizer of $j, j\hspace{1pt}'$.
\item In the category of models of a complete theory $T$ with elementary embeddings as morphisms, given elementary embeddings $j, j\hspace{1pt}' : \mathcal{Y} \rightarrow \mathcal{Z}$, if the inclusion function $i : \mathcal{X} \rightarrow \mathcal{Y}$ of the submodel $\mathcal{X}$ of $\mathcal{Y}$ on $\{y \in \mathcal{Y} \mid j(y) = j\hspace{1pt}'(y)\}$ is an elementary embedding, then it is an equalizer of $j, j\hspace{1pt}'$. \end{enumerate}

\section{Power Kripke-Platek set theory}\label{Power Kripke-Platek set theory}

\begin{dfn}[Axioms of set theory]\label{axioms of set theory}
Some common axioms of set theory, in the language $\mathcal{L}^0$, are listed below. Let $\Gamma \subseteq \mathcal{L}^0$. The schemata of $\Gamma \textnormal{-Separation}$, $\Gamma \textnormal{-Collection}$, $\Gamma \textnormal{-Replacement}$ and $\Gamma \textnormal{-Set Induction}$ refer to the set of all instances, where $\phi$ ranges over $\Gamma$. In the former three schemata, $y$ is assumed to be not free in $\phi$.
\[
\begin{array}{ll}
\textnormal{Extensionality} & \forall x . \forall y . ((\forall u . u \in x \leftrightarrow u \in y) \rightarrow x = y) \\
\textnormal{Pair} & \forall u . \forall v . \exists x . \forall w . (w \in x \leftrightarrow (w = u \vee w = v)) \\
\textnormal{Union} & \forall x . \exists u . \forall r . (r \in u \leftrightarrow \exists v \in x . r \in v) \\
\textnormal{Powerset} & \forall u . \exists x . \forall v . (v \in x \leftrightarrow v \subseteq u) \\
\textnormal{Infinity} & \exists x . (\varnothing \in x \wedge \forall u \in x . \{u\} \in x) \\
\Gamma \textnormal{-Separation} & \forall x . \exists y . \forall u . (u \in y \leftrightarrow (u \in x \wedge  \phi(u))) \\
\Gamma \textnormal{-Collection} & \forall x . (\forall u \in x . \exists v . \phi(u, v) \rightarrow \exists y . \forall u \in x . \exists v \in y . \phi(u, v)) \\
\textnormal{Set Foundation} & \forall x . (x \neq \varnothing \rightarrow \exists u \in x . u \cap x = \varnothing) \\
\Gamma \textnormal{-Foundation} & \exists x . \phi(x) \rightarrow \exists y . (\phi(y) \wedge \forall v \in y . \neg \phi(v)) \\
\Gamma \textnormal{-Set Induction} & \big( \forall x . (\forall u \in x . \phi(u) \rightarrow \phi(x, p))\big) \rightarrow \forall x . \phi(x) \\
\end{array}
\]
We also consider $\textnormal{Strong }\Gamma\textnormal{-Collection}$,
$$\forall x . \exists y . \forall u \in x . (\exists v . \phi(u, v) \rightarrow \exists v\hspace{1pt}' \in y . \phi(u, v\hspace{1pt}')),$$
$\Gamma \textnormal{-Replacement}$,
$$\forall x . (\forall u \in x . \exists! v . \phi(u, v) \rightarrow \exists y . \forall v . (v \in y \leftrightarrow \exists u \in x . \phi(u, v))),$$
Transitive Containment,
$$\forall u . \exists x . (u \in x \wedge \forall v \in x . \forall r \in v . r \in x),$$
and Choice,
$$\forall x . ((\forall u \in x . u \neq \varnothing) \rightarrow \exists f : x \rightarrow \bigcup x . \forall u \in x . f\hspace{2pt}(u) \in u).$$

When the $\Gamma$ is omitted, it is assumed to be the whole language $\mathcal{L}^0$. A set $x$ is {\em transitive} if $\forall u \in x . u \subseteq x$.
\end{dfn}

Assuming Extensionality, we have for each $n \in \mathbb{N}$: 
\begin{itemize}
\item $\Sigma_n \textnormal{-Separation}$, $\Pi_n \textnormal{-Separation}$ and $\mathrm{B}(\Sigma_n) \textnormal{-Separation}$ are all equivalent, where $\mathrm{B}(\Sigma_n)$ is the Boolean closure of $\Sigma_n$: It follows from $\Delta_0$-Separation that the subsets of any set are closed under the boolean operations of intersection, union and relative complement.

\item $\Delta_n \textnormal{-Separation} + \Delta_n \textnormal{-Collection}$ implies $\Sigma_n \textnormal{-Replacement}$: If the $\Sigma_n$-formula $\phi(u, v)$ defines a function with domain $x$, then $\phi$ is actually $\Delta_n$, as seen by observing that the formula $\phi\hspace{1pt}'(u, v) \equiv_\df \forall v\hspace{1pt}' . (\phi(u, v\hspace{1pt}') \rightarrow v = v\hspace{1pt}')$ is equivalent to $\phi(u, v)$. By $\Delta_n \textnormal{-Collection}$ there is $y$ containing all values of this function. Now it easily follows from $\Delta_n \textnormal{-Separation}$ on $y$ that the image of the function is a set.

\item $\Sigma_n$-Set induction is equivalent to $\Pi_n$-Foundation.

\item The analogous claims for the Takahashi hierarchy, in place of the L\'evy hierarchy, are also true and proved with analogous arguments.
\end{itemize}

\begin{ax}[{\em Kripke-Platek set theory}, $\mathrm{KP}$] $\mathrm{KP}$ is the $\mathcal{L}^0$-theory given by these axioms and axiom schemata:
	\[
	\begin{array}{l}
	\textnormal{Extensionality} \\
	\textnormal{Pair} \\
	\textnormal{Union} \\
	\textnormal{Infinity} \\
	\Delta_0 \textnormal{-Separation} \\
	\Delta_0 \textnormal{-Collection} \\
	\Pi_1 \textnormal{-Foundation} \\	
	\end{array}
	\]
\end{ax}
$\mathrm{KP}$ proves $\Delta_1$-Separation, $\Sigma_1$-Collection, $\Sigma_1$-Replacement and Transitive containment. (Note the absence of Powerset!)

\begin{ax}[{\em Power Kripke-Platek set theory}, $\mathrm{KP}^\mathcal{P}$] $\mathrm{KP}^\mathcal{P}$ is the $\mathcal{L}^0$-theory given by these axioms and axiom schemata:
	\[
	\begin{array}{l}
	\textnormal{Extensionality} \\
	\textnormal{Pair} \\
	\textnormal{Union} \\
	\textnormal{Powerset} \\
	\textnormal{Infinity} \\
	\Delta_0^\mathcal{P} \textnormal{-Separation} \\
	\Delta_0^\mathcal{P} \textnormal{-Collection} \\
	\Pi_1^\mathcal{P} \textnormal{-Foundation} \\	
	\end{array}
	\]
\end{ax}
The bible on $\mathrm{KP}$ is \cite{Bar75}, which witnesses that a fair amount of mathematics can be conducted within this theory. Also see \cite{Mat01} for a detailed discussion of $\mathrm{KP}^\mathcal{P}$.

$\mathrm{KP}^\mathcal{P}$ proves $\Delta_1^\mathcal{P}$-Separation, $\Sigma_1^\mathcal{P}$-Collection and $\Sigma_1^\mathcal{P}$-Replacement. $\mathrm{KP}$ also proves that the usual arithmetic operations on $\omega$ make it a model of $\mathrm{PA}$. It is a rather weak set theory, in the sense that $L_{\omega_1^\mathrm{CK}} \models \mathrm{KP}$, where $\omega_1^\mathrm{CK}$, known as the {\em Church-Kleene ordinal}, is the least ordinal which is not order-isomorphic to a recursive well-ordering, and $L$ denotes the hieararchy of G\"odel's constructible sets. $\mathrm{KP}^\mathcal{P}$ proves the existence of $\beth_\alpha(A)$, for each set $A$ and ordinal $\alpha$. In particular, for each ordinal $\alpha$, a model of $\alpha$:th order arithmetic, $\mathrm{Z}_\alpha$, can be constructed in the natural way on $\beth_\alpha(\omega)$. 

$\mathrm{Trans}(x)$ is the formula $\forall u \in x . \forall r \in u . r \in x$. $\mathrm{Ord}(x)$ is the formula $\mathrm{Trans}(x) \wedge \forall u, v \in x . (u \in v \vee v \in u)$. Note that both are $\Delta_0$.

In this context, the ordered pair $p = \langle u, v \rangle$ is defined by $p = \{u, \{u, v\}\}$. Note that ordered pair, the projection functions on ordered pairs, and union are $\Delta_0$-notions. In particular, for each $\boxminus \in \{\forall, \exists\}$, we can define $\boxminus \langle u, v \rangle \in x . \phi$ by 
\[
\boxminus p \in x . \exists u, v \in p \cup (\bigcup p) . (p = \langle u, v \rangle \wedge \phi).
\]
So the L\'evy and Takahashi complexities of $\boxminus \langle u, v \rangle \in x . \phi$ are no greater than those of $\phi$. This turns out to be useful:

\begin{prop}
	$\mathrm{KP}^\mathcal{P} \vdash  \Sigma_1^\mathcal{P} \textnormal{-Collection}$
\end{prop}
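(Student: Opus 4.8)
The plan is to reduce $\Sigma_1^\mathcal{P}$-Collection to the axiom schema $\Delta_0^\mathcal{P}$-Collection, exploiting the fact recorded just above the statement that the ordered pair $\langle u, v\rangle = \{u, \{u,v\}\}$ and its projections are $\Delta_0$-notions. The idea is the usual one for $\mathrm{KP}$: the matrix of a $\Sigma_1^\mathcal{P}$-formula carries a leading existential quantifier which can be absorbed into the existential quantifier that Collection itself produces, by bundling the two witnesses into a single ordered pair.

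Concretely, I would fix a $\Sigma_1^\mathcal{P}$-formula $\phi(u,v)$ (possibly with further parameters, which I suppress) and write it as $\phi(u,v) \equiv \exists w . \psi(u,v,w)$ with $\psi \in \Delta_0^\mathcal{P}$. Assume the hypothesis of Collection, $\forall u \in x . \exists v . \phi(u,v)$, which unfolds to $\forall u \in x . \exists v . \exists w . \psi(u,v,w)$. First I would contract the two witnesses into one, defining
$$\chi(u, p) \equiv \exists v, w \in p \cup \textstyle\bigcup p . \big( p = \langle v, w\rangle \wedge \psi(u,v,w)\big).$$
Since $\langle v,w\rangle$ and the projections are $\Delta_0$ and $\psi$ is $\Delta_0^\mathcal{P}$, the formula $\chi$ is again $\Delta_0^\mathcal{P}$, and (using Pair to form $\langle v,w\rangle$ for each chosen $v,w$) the hypothesis yields $\forall u \in x . \exists p . \chi(u, p)$.

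Next I would apply $\Delta_0^\mathcal{P}$-Collection to $\chi$, obtaining a set $y$ with $\forall u \in x . \exists p \in y . \chi(u, p)$. Finally I would recover a collecting set for the original formula by taking $y' = \bigcup y$, which is a set by Union. Indeed, for each $u \in x$ there are $p \in y$ and $v, w$ with $p = \langle v, w\rangle$ and $\psi(u,v,w)$; as $v \in \{v, \{v,w\}\} = p \in y$ we get $v \in y'$, while $\exists w . \psi(u,v,w)$ witnesses $\phi(u,v)$. Hence $\forall u \in x . \exists v \in y' . \phi(u,v)$, which is exactly the conclusion of $\Sigma_1^\mathcal{P}$-Collection for $\phi$.

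The only step requiring genuine care, and the place where the preceding remarks are essential, is verifying that the contracted formula $\chi$ is honestly $\Delta_0^\mathcal{P}$, i.e. that the witnesses $v, w$ can be trapped under a $\Delta_0$-term in $p$. This rests on the $\Delta_0$-definability of pairing and projections together with the observations that $v \in p$ and $w \in \bigcup p$, so that $v, w \in p \cup \bigcup p$. Everything else is bookkeeping; in particular, no appeal to Powerset, Separation, or Foundation is needed beyond Union and $\Delta_0^\mathcal{P}$-Collection, so the same argument is what lets one read off $\Sigma_1$-Collection in plain $\mathrm{KP}$ as well.
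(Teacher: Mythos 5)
Your proof is correct and takes essentially the same approach as the paper's: both bundle the existential witness and the collected value into a $\Delta_0$-definable ordered pair, apply $\Delta_0^\mathcal{P}$-Collection to the resulting $\Delta_0^\mathcal{P}$ formula, and then unbundle. The only immaterial difference is the final step, where you recover the collecting set with one application of Union, while the paper instead forms the set of second coordinates of the pairs.
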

\begin{proof}
	Suppose that for all $u \in x$, there is $v$ such that $\exists r . \delta(r, u, v)$, where $\delta \in \Delta_0^\mathcal{P}[r, u, v]$. By $\Delta_0^\mathcal{P}$-Collection, there is $y\hspace{1pt}'$ such that for all $u \in x$, there is $\langle r, v \rangle \in y\hspace{1pt}'$ such that $\delta(r, u, v)$. By $\Delta_0$-Collection, there is $y = \{v \mid \exists \langle r, v \rangle \in y\hspace{1pt}'\}$. It follows that for all $u \in x$, there is $v \in y$ such that $\exists r . \delta(r, u, v)$.
\end{proof}

\begin{prop}\label{Strong Delta_0 Coll proves Strong Sigma_1 Coll}
	$\mathrm{KP}^\mathcal{P} + \textnormal{Strong }\Delta_0^\mathcal{P} \textnormal{-Collection} \vdash  \textnormal{Strong }\Sigma_1^\mathcal{P} \textnormal{-Collection}$
\end{prop}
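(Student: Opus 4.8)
The plan is to mirror the proof of the preceding Proposition (that $\mathrm{KP}^\mathcal{P} \vdash \Sigma_1^\mathcal{P}\textnormal{-Collection}$), but to route every step through the \emph{Strong} form of Collection. First I would put the given $\Sigma_1^\mathcal{P}$-instance into normal form: over $\mathrm{KP}^\mathcal{P}$ any $\Sigma_1^\mathcal{P}$-formula $\phi(u,v)$ is provably equivalent to one of the shape $\exists r . \delta(r,u,v)$ with $\delta \in \Delta_0^\mathcal{P}$, a leading block of existential quantifiers having been contracted to a single $\exists r$ via the fact that tupling is a $\Delta_0$-notion. Any parameters of $\phi$ are carried silently throughout.

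The key device is a single $\Delta_0^\mathcal{P}$-formula recording a witnessing pair. I would set
\[
\theta(u, w) \equiv \exists r, v \in w \cup \bigcup w . (w = \langle r, v \rangle \wedge \delta(r, u, v)),
\]
so that $\theta(u,w)$ asserts that $w = \langle r, v\rangle$ is a pair whose first coordinate witnesses $\delta(r,u,v)$ and whose second coordinate $v$ therefore witnesses $\phi(u,v)$. Since the components of an ordered pair lie in $w \cup \bigcup w$, and both the pairing relation and the projections are $\Delta_0$, the two leading quantifiers are genuinely bounded, so $\theta$ is $\Delta_0^\mathcal{P}$ of no greater Takahashi complexity than $\delta$. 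The crucial equivalence, for each fixed $u$, is then
\[
\exists v . \phi(u, v) \Leftrightarrow \exists r . \exists v . \delta(r, u, v) \Leftrightarrow \exists w . \theta(u, w).
\]

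Now I would apply $\textnormal{Strong }\Delta_0^\mathcal{P}\textnormal{-Collection}$ to $\theta$ and the given set $x$ to obtain a single $y\hspace{1pt}'$ with $\forall u \in x . (\exists w . \theta(u,w) \rightarrow \exists w\hspace{1pt}' \in y\hspace{1pt}' . \theta(u, w\hspace{1pt}'))$; thus $y\hspace{1pt}'$ is a set of witnessing pairs covering exactly those $u \in x$ for which $\phi(u, \cdot)$ is satisfiable. Finally I would form the set of second coordinates $y = \{ v \mid \exists r . \langle r, v\rangle \in y\hspace{1pt}' \}$, which exists already in $\mathrm{KP}$ since the projection is $\Delta_0$ (so $y$ is obtained by $\Delta_0^\mathcal{P}$-Separation from $\bigcup \bigcup y\hspace{1pt}'$). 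For any $u \in x$ with $\exists v . \phi(u,v)$, the covering property of $y\hspace{1pt}'$ yields $\langle r, v\rangle \in y\hspace{1pt}'$ with $\delta(r,u,v)$, whence $v \in y$ and $\phi(u,v)$; so $\exists v\hspace{1pt}' \in y . \phi(u, v\hspace{1pt}')$, which is precisely Strong $\Sigma_1^\mathcal{P}$-Collection.

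The only real obstacle is the complexity bookkeeping of the second paragraph: one must check that pushing both the witness $r$ and the value $v$ under a single bounded quantifier over $w \cup \bigcup w$ keeps $\theta$ inside $\Delta_0^\mathcal{P}$, rather than covertly reintroducing an unbounded quantifier. This is exactly the point emphasized before the previous Proposition, namely that ordered pair, its projections, and union are $\Delta_0$, so that $\boxminus \langle r, v\rangle \in w . \psi$ has the same complexity as $\psi$. Granting this, every remaining step is a routine transcription of the non-strong argument with ``Strong'' inserted.
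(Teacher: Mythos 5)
Your proposal is correct and is essentially the paper's own proof: the paper likewise rewrites the $\Sigma_1^\mathcal{P}$-instance as $\exists r . \delta(r,u,v)$, applies Strong $\Delta_0^\mathcal{P}$-Collection to the pair-witnessing formula (written there with the defined notation $\exists \langle r, v \rangle \in y\hspace{1pt}' . \delta(r,u,v)$), and then takes $y = \{v \mid \exists \langle r, v \rangle \in y\hspace{1pt}'\}$. The only difference is presentational: you spell out the $\Delta_0^\mathcal{P}$ bookkeeping (bounding $r, v$ by $w \cup \bigcup w$, and obtaining $y$ by $\Delta_0^\mathcal{P}$-Separation from $\bigcup\bigcup y\hspace{1pt}'$) that the paper delegates to its earlier remark that pair, projections, and union are $\Delta_0$-notions.
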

\begin{proof}
	This is similar as the previous proof. By Strong $\Delta_0$-Collection, for any $\delta(r, u, v) \in \Delta_0^\mathcal{P}[r, u, v]$:
	$$\forall x . \exists y\hspace{1pt}' . \forall u \in x . (\exists \langle r, v \rangle . \delta(r, u, v) \rightarrow \exists \langle r, v \rangle \in y\hspace{1pt}' . \delta(r, u, v)).$$
	Letting $y= \{v \mid \exists \langle r, v \rangle \in y\hspace{1pt}'\}$, it follows that
	$$\forall x . \exists y . \forall u \in x . (\exists v . \exists r . \delta(r, u, v) \rightarrow \exists v \in y . \exists r . \delta(r, u, v)),$$
	as desired.
\end{proof}

\begin{prop}
	$\mathrm{KP}^\mathcal{P} \vdash \Delta_1^\mathcal{P}\textnormal{-Separation}$
\end{prop}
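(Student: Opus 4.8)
The plan is to reduce $\Delta_1^\mathcal{P}$-Separation to $\Delta_0^\mathcal{P}$-Separation by invoking $\Delta_0^\mathcal{P}$-Collection to bound, uniformly over the elements of a given set, the existential witnesses provided by the two representations of the defining formula. So suppose $\phi(u)$ is $\Delta_1^\mathcal{P}$ over $\mathrm{KP}^\mathcal{P}$. By definition this means there are $\Delta_0^\mathcal{P}$-formulae $\alpha(r, u)$ and $\beta(s, u)$ such that $\mathrm{KP}^\mathcal{P}$ proves both $\phi(u) \leftrightarrow \exists r . \alpha(r, u)$ and $\phi(u) \leftrightarrow \forall s . \beta(s, u)$. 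In particular, whenever $\phi(u)$ holds there is a witness $r$ for $\alpha(r, u)$, and whenever $\neg \phi(u)$ holds there is a witness $s$ for $\neg \beta(s, u)$, since in that case $\neg \forall s . \beta(s, u)$.

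First I would fix a set $x$ and observe that $\forall u \in x . \exists w . (\alpha(w, u) \vee \neg \beta(w, u))$. Indeed, by excluded middle either $\phi(u)$ or $\neg \phi(u)$; in the first case the $\Sigma_1^\mathcal{P}$-representation yields a $w$ with $\alpha(w, u)$, and in the second the $\Pi_1^\mathcal{P}$-representation yields a $w$ with $\neg \beta(w, u)$. The matrix $\alpha(w, u) \vee \neg \beta(w, u)$ is $\Delta_0^\mathcal{P}$, as $\Delta_0^\mathcal{P}$ is closed under the Boolean connectives. Hence $\Delta_0^\mathcal{P}$-Collection applies and produces a set $y\hspace{1pt}'$ such that $\forall u \in x . \exists w \in y\hspace{1pt}' . (\alpha(w, u) \vee \neg \beta(w, u))$.

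The crucial step is then to verify that, for $u \in x$, this bounding set lets us replace the unbounded $\Sigma_1^\mathcal{P}$-formula by a bounded one:
\[
\phi(u) \leftrightarrow \exists w \in y\hspace{1pt}' . \alpha(w, u).
\]
For the forward direction, if $\phi(u)$ then $\forall s . \beta(s, u)$ by the $\Pi_1^\mathcal{P}$-representation, so the $w \in y\hspace{1pt}'$ guaranteed above cannot satisfy $\neg \beta(w, u)$ and must therefore satisfy $\alpha(w, u)$. For the backward direction, if $\exists w \in y\hspace{1pt}' . \alpha(w, u)$ then a fortiori $\exists r . \alpha(r, u)$, so $\phi(u)$ by the $\Sigma_1^\mathcal{P}$-representation.

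Finally, the formula $\exists w \in y\hspace{1pt}' . \alpha(w, u)$ is $\Delta_0^\mathcal{P}$, being a bounded existential quantification of the $\Delta_0^\mathcal{P}$-formula $\alpha$ over the parameter $y\hspace{1pt}'$, so $\Delta_0^\mathcal{P}$-Separation produces $y = \{u \in x \mid \exists w \in y\hspace{1pt}' . \alpha(w, u)\}$, which by the displayed equivalence equals $\{u \in x \mid \phi(u)\}$, as required. The step carrying the weight is the forward direction of the equivalence, where the $\Pi_1^\mathcal{P}$-representation is what forces the collected witness to be an $\alpha$-witness rather than a $\neg \beta$-witness; this is precisely the point at which both halves of $\Delta_1^\mathcal{P}$ are needed, mirroring the classical $\mathrm{KP}$ argument for $\Delta_1$-Separation.
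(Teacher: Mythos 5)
Your proof is correct and follows essentially the same route as the paper's: both arguments collect, over the given set, witnesses for the $\Delta_0^\mathcal{P}$ disjunction of the positive and negative witnesses ($\alpha(w,u) \vee \neg\beta(w,u)$ in your notation, $\phi(x,y) \vee \psi(x,y)$ in the paper's, where $\psi$ plays the role of $\neg\beta$), then use the resulting bounding set to replace the unbounded existential by a bounded one, and finish with $\Delta_0^\mathcal{P}$-Separation. The only cosmetic differences are that the paper states the hypothesis as a local equivalence on the ambient set rather than a provable equivalence, and routes the collection step through $\Sigma_1^\mathcal{P}$-Collection, whereas you apply $\Delta_0^\mathcal{P}$-Collection directly to the matrix.
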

\begin{proof}
Let $a$ be a set and let $\phi(x,y)$ and $\psi(x,y)$ be $\Delta_0^\mathcal{P}$-formulae such that $\forall x \in a . (\exists y . \phi(y) \leftrightarrow \neg \exists y . \psi(y))$. We need to show that $\exists b . \forall x . (x \in b \leftrightarrow (x \in a \wedge \exists y . \phi(x, y)))$. Note that $\exists y . \phi(x,y) \vee \exists y . \psi(x,y)$ is $\Sigma_1^\mathcal{P}$, equivalent to $\exists y . ( \phi(x,y) \vee \psi(x,y))$. Thus, $\forall x \in a . \exists y . ( \phi(x,y) \vee \psi(x,y))$, and by $\Sigma_1^\mathcal{P}$-Collection,  there is $c$ such that $\forall x \in a . \exists y \in c. ( \phi(x,y) \vee \psi(x,y))$. It follows that $\forall x \in a . (\exists y . \phi(x, y) \leftrightarrow \exists y \in c . \phi(x, y))$. But the right-hand side is $\Delta_0^\mathcal{P}$, so we obtain the desired $b$ by applying $\Delta_0^\mathcal{P}$-Separation to $\exists y \in c . \phi(x, y)$.
\end{proof}

\begin{prop}
	$\mathrm{KP}^\mathcal{P} + \Sigma_1^\mathcal{P} \textnormal{-Separation} \vdash  \textnormal{Strong }\Sigma_1^\mathcal{P} \textnormal{-Collection}$
\end{prop}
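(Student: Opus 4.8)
The plan is to derive $\textnormal{Strong }\Sigma_1^\mathcal{P} \textnormal{-Collection}$ by first \emph{restricting} the domain $x$ to exactly those elements that actually possess a witness, using the new hypothesis of $\Sigma_1^\mathcal{P} \textnormal{-Separation}$, and then applying the ordinary collection already available in $\mathrm{KP}^\mathcal{P}$. The guiding observation is that the only difference between plain and strong collection is that the strong form must ignore those $u \in x$ for which no $v$ with $\phi(u,v)$ exists; once such $u$ have been separated away, plain collection over the remaining set suffices.

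So, fix $\phi(u,v) \in \Sigma_1^\mathcal{P}$ and a set $x$. First I would observe that the formula $\psi(u) \equiv \exists v . \phi(u,v)$ is again $\Sigma_1^\mathcal{P}$, since prefixing an existential quantifier to a $\Sigma_1^\mathcal{P}$-formula preserves $\Sigma_1^\mathcal{P}$-complexity: writing $\phi(u,v) \equiv \exists w . \delta(w,u,v)$ with $\delta \in \Delta_0^\mathcal{P}$, the two unbounded existentials $\exists v . \exists w$ contract to a single existential over an ordered pair, exactly as in the ordered-pair machinery recorded above (and as already exploited in the proof that $\mathrm{KP}^\mathcal{P} \vdash \Sigma_1^\mathcal{P} \textnormal{-Collection}$). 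Applying $\Sigma_1^\mathcal{P} \textnormal{-Separation}$ to $\psi$ and $x$ then yields a set
\[
x\hspace{1pt}' = \{ u \in x \mid \exists v . \phi(u,v) \}.
\]
By construction every $u \in x\hspace{1pt}'$ satisfies $\exists v . \phi(u,v)$, so the antecedent of ordinary collection is met on $x\hspace{1pt}'$; invoking the earlier result that $\mathrm{KP}^\mathcal{P} \vdash \Sigma_1^\mathcal{P} \textnormal{-Collection}$, I obtain a set $y$ with $\forall u \in x\hspace{1pt}' . \exists v \in y . \phi(u,v)$.

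It then remains to check that this $y$ witnesses the strong form over all of $x$. For any $u \in x$, if $\exists v . \phi(u,v)$ holds then $\psi(u)$ holds, hence $u \in x\hspace{1pt}'$, and therefore $\exists v \in y . \phi(u,v)$. This is precisely the required implication, so $\forall x . \exists y . \forall u \in x . (\exists v . \phi(u,v) \rightarrow \exists v\hspace{1pt}' \in y . \phi(u,v\hspace{1pt}'))$, as desired.

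I do not expect a genuine obstacle here: the substantive content is entirely localized in the single application of $\Sigma_1^\mathcal{P} \textnormal{-Separation}$, which is exactly the ingredient that upgrades plain collection to its strong form. As an alternative route, one could instead derive $\textnormal{Strong }\Delta_0^\mathcal{P} \textnormal{-Collection}$ by the same separation trick applied to a $\Delta_0^\mathcal{P}$-matrix and then appeal to Proposition \ref{Strong Delta_0 Coll proves Strong Sigma_1 Coll}; but the direct argument above is shorter and avoids the detour.
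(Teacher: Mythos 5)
Your proof is correct and rests on the same key idea as the paper's: use $\Sigma_1^\mathcal{P}$-Separation to carve out the subset of elements that actually possess witnesses, then apply ordinary collection to that subset. The only difference is routing — the paper first reduces to Strong $\Delta_0^\mathcal{P}$-Collection (via Proposition \ref{Strong Delta_0 Coll proves Strong Sigma_1 Coll}) and runs the trick with plain $\Delta_0^\mathcal{P}$-Collection, whereas you work directly at the $\Sigma_1^\mathcal{P}$ level using quantifier contraction and $\mathrm{KP}^\mathcal{P} \vdash \Sigma_1^\mathcal{P}\textnormal{-Collection}$; the ``alternative route'' you mention at the end is precisely the paper's own proof.
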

\begin{proof}
By Proposition \ref{Strong Delta_0 Coll proves Strong Sigma_1 Coll}, it suffices to prove Strong $\Delta_0$-Collection. Let $a$ be a set and let $\delta(x, y)$ be $\Delta_0^\mathcal{P}$. By $\Sigma_1$-Separation, there is $a\hspace{1pt}' \subseteq a$ such that $\forall x \in a . (x \in a\hspace{1pt}' \leftrightarrow \exists y . \delta(x, y))$. Hence, by $\Delta_0$-Collection, there is $b$ such that $\forall x \in a\hspace{1pt}' . \exists y \in b . \delta(x, y)$. By construction of $a\hspace{1pt}'$, we have $\forall x \in a . (\exists y . \delta(x, y) \rightarrow \exists y \in b . \delta(x, y))$, as desired.
\end{proof}

We shall now show that various operations are available in $\mathrm{KP}$ and $\mathrm{KP}^\mathcal{P}$. If $F(x, y) \in \mathcal{L}^0$ and some $\mathcal{L}^0$-theory $T$ proves that $\forall x . \forall y . \forall y\hspace{1pt}' . ((F(x, y) \wedge F(x, y\hspace{1pt}')) \rightarrow y = y\hspace{1pt}')$, then we say that $F$ is functional (over $T$), and we use functional notation, writing $F(x) = y$ for the formula $F(x, y)$, in the context of $T$. If, additionally, $T \vdash \forall x . \exists y . F(x) = y$, then we say that $F$ is total (over $T$).

We shall now present some results about introducing defined terms, functions and relations into $\mathrm{KP}$ and $\mathrm{KP}^\mathcal{P}$. A thorough examination is found e.g. in ch. 1 of \cite{Bar75}, working in $\mathrm{KP} + \textnormal{Foundation}$ (in our terminology), but it is easily seen that only $\mathrm{KP}$ is used. 

\begin{prop}\label{transitive closure}
	$\mathrm{KP} \vdash \textnormal{Transitive Containment}$. Moreover, there is a $\Sigma_1$-formula $\mathrm{TC}$, such that
	\begin{itemize}
		\item $\mathrm{KP} \vdash \forall x . \forall y . \forall y\hspace{1pt}' . ((\mathrm{TC}(x, y) \wedge \mathrm{TC}(x, y\hspace{1pt}')) \rightarrow y = y\hspace{1pt}'),$ 
		\item $\mathrm{KP} \vdash \forall x . \exists t . \mathrm{TC}(x) = t$,
		\item $\mathrm{KP} \vdash \forall x . (x \subseteq \mathrm{TC}(x) \wedge \text{``$\mathrm{TC}(x)$ is transitive''})$,
		\item $\mathrm{KP} \vdash \forall x . \forall t . ((x \subseteq t \wedge \text{``$t$ is transitive''}) \rightarrow \mathrm{TC}(x) \subseteq t)$.
	\end{itemize}
\end{prop}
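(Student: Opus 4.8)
The plan is to define $\mathrm{TC}(x)$ as the union of the finite iterates of the union operation applied to $x$, informally $\mathrm{TC}(x) = \bigcup_{n \in \omega} U^n(x)$ where $U^0(x) = x$ and $U^{n+1}(x) = \bigcup U^n(x)$, and then to read off Transitive Containment as a corollary. The two ingredients that make this work in $\mathrm{KP}$ are that the union operation $x \mapsto \bigcup x$ is total and $\Delta_0$, and that $\omega$ exists and is $\Delta_0$-characterizable (being a limit ordinal all of whose members are $0$ or successors), so that recursions and inductions along $\omega$ are legitimately available.

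First I would invoke the recursion theorem along $\omega$ (a special case of the $\Sigma_1$-recursion available in $\mathrm{KP}$) to obtain, for each $x$, a unique set function $f$ with $\dom(f) = \omega$, $f(0) = x$ and $f(n+1) = \bigcup f(n)$ for all $n \in \omega$. Applying $\Sigma_1$-Replacement to the map $n \mapsto f(n)$ gives that $\rng(f)$ is a set, and then Union gives that $\bigcup \rng(f)$ is a set. I would take $\mathrm{TC}(x) = y$ to abbreviate
\[
\mathrm{TC}(x) = y \quad:\equiv\quad \exists f . \big( \text{``$f$ is a function with $\dom(f) = \omega$''} \wedge f(0) = x \wedge (\forall n \in \omega . f(n+1) = \bigcup f(n)) \wedge y = \bigcup \rng(f) \big).
\]
Binding $\omega$ by an existential with a $\Delta_0$ matrix and noting that ``$f$ is a function'', $f(0) = x$, the recursion clause, and $y = \bigcup \rng(f)$ are all $\Delta_0$ once $f$ is fixed, this formula is $\Sigma_1$. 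Functionality and totality of $\mathrm{TC}$ are then exactly the uniqueness and existence of $f$, the former proved by induction along $\omega$.

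The four asserted properties I would verify as follows. Since $x = f(0) \subseteq \bigcup \rng(f)$ we get $x \subseteq \mathrm{TC}(x)$; and if $r \in s \in \mathrm{TC}(x)$ then $s \in f(n)$ for some $n$, whence $r \in \bigcup f(n) = f(n+1) \subseteq \mathrm{TC}(x)$, so $\mathrm{TC}(x)$ is transitive. For minimality, given transitive $t$ with $x \subseteq t$, a $\Delta_0$-induction along $\omega$ shows $f(n) \subseteq t$ for all $n$ (the base case is $f(0) = x \subseteq t$, and the step uses $\bigcup t \subseteq t$), so $\mathrm{TC}(x) \subseteq t$. Finally, Transitive Containment follows by instantiating: $\mathrm{TC}(\{u\})$ is transitive and contains $\{u\}$, hence contains $u$.

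The main obstacle is foundational bookkeeping rather than genuine mathematical difficulty: one must ensure that the recursion along $\omega$ is available in $\mathrm{KP}$ without circularity (so that totality of $f$ does not secretly presuppose the very transitive-closure machinery being built), and one must track the $\Sigma_1$ complexity of the graph carefully, keeping the defining conditions $\Delta_0$ under the leading existential over $f$ and $\omega$. If one prefers to avoid the recursion theorem for Transitive Containment itself, it can be proved directly by $\in$-induction, i.e. $\Sigma_1$-Set Induction, which is equivalent to the $\Pi_1$-Foundation axiom of $\mathrm{KP}$, collecting transitive supersets of the members of $x$ via $\Sigma_1$-Collection and $\Delta_0$-Separation; the minimal $\mathrm{TC}(x)$ can then alternatively be extracted from any such transitive $t$ by $\Delta_1$-Separation, using that ``$r$ lies on a finite $\in$-chain into $x$'' is $\Sigma_1$ while ``$r$ belongs to every transitive subset of $t$ containing $x$'' is $\Pi_1$.
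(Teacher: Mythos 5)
Your main construction is correct and establishes everything the proposition asserts, but it takes a genuinely different route from the one the paper relies on. The paper gives no proof of its own: it defers to ch.\ 1 of \cite{Bar75}, where $\mathrm{TC}$ is obtained from the $\Sigma$-Recursion Theorem as the $\in$-recursion $\mathrm{TC}(a) = a \cup \bigcup\{\mathrm{TC}(u) \mid u \in a\}$, with totality proved by $\Sigma_1$-induction (i.e.\ $\Pi_1$-Foundation) plus $\Sigma_1$-Collection. You instead iterate $\bigcup$ along $\omega$. The trade-off is this: your proof needs only recursion along $\omega$ (the approximating functions live on von Neumann naturals, which are already transitive, so the circularity you worry about genuinely does not arise), and it avoids appealing to the general $\Sigma_1$-Recursion Theorem, which in this paper is stated only \emph{after} the proposition; on the other hand, it leans essentially on Infinity, i.e.\ on the existence of $\omega$ as a set. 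That is harmless here, since the paper's $\mathrm{KP}$ includes Infinity and uses $\omega$ freely, but it is exactly why the cited Barwise argument (whose $\mathrm{KP}$ has no Infinity axiom) goes by $\in$-recursion: that route is Infinity-free and hence more general. (Pedantically, one must also first extract the von Neumann $\omega$ from the paper's singleton-style Infinity axiom, a step the paper takes for granted elsewhere.)

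One caution about your closing aside: the proposed $\Delta_1$-Separation extraction of the minimal $\mathrm{TC}(x)$ from a transitive $t \supseteq x$ has a gap. $\Delta_1$-Separation applies only once $\mathrm{KP}$ \emph{proves} the equivalence of the $\Sigma_1$ formula (``$r$ lies on a finite $\in$-chain into $x$'') with the $\Pi_1$ formula (``$r$ belongs to every transitive subset of $t$ containing $x$''). The direction $\Sigma_1 \Rightarrow \Pi_1$ is a routine induction along the chain, but the direction $\Pi_1 \Rightarrow \Sigma_1$ requires exhibiting a transitive superset of $x$ omitting a given point that is on no finite chain --- and the only available witness is (essentially) the transitive closure itself, which is the very set being constructed; forming the set of finite-chain elements outright would need $\Sigma_1$-Separation, which $\mathrm{KP}$ lacks. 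So that alternative is circular unless one has already run the $\omega$-iteration or the $\in$-recursion, and it cannot serve as an independent fallback. The first half of the aside is fine: Transitive Containment alone does follow directly by $\in$-induction ($\Sigma_1$-Set Induction, i.e.\ $\Pi_1$-Foundation) together with $\Sigma_1$-Collection and $\Delta_0$-Separation, with no recursion theorem needed.
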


{\em Remark.} Note that if $F$ is $\Sigma_1$ as well as functional and total over $\mathrm{KP}$, then $F$ is $\Delta_1$ over $\mathrm{KP}$: This is seen by considering the formula
\[
F'(x, y) \equiv \forall y\hspace{1pt}' . (F(x, y\hspace{1pt}') \rightarrow y = y\hspace{1pt}').
\]
$F'$ is clearly $\Pi_1$ over $\mathrm{KP}$. By functionality, $F(x, y) \Rightarrow F'(x, y)$, and by totality $F'(x, y) \Rightarrow F(x, y)$, so $F'$ is equivalent to $F$, showing that $F$ is $\Delta_1$ over $\mathrm{KP}$. Therefore, working in $\mathrm{KP}$, if $A$ is a set and $F$ is $\Sigma_1$ as well as functional and total, then by $\Delta_1$-Separation and $\Sigma_1$-Collection, $F\restriction_A =_\df \{ \langle x, y \rangle \mid F(x) = y \wedge x \in A \}$ exists as a set.

\begin{thm}[$\Sigma_1$-Recursion]
	Let $G(x, y)$ be a $\Sigma_1$-formula such that 
	\begin{itemize}
		\item $\mathrm{KP} \vdash \forall x . \forall y . \forall y\hspace{1pt}' . ((G(x, y) \wedge G(x, y\hspace{1pt}')) \rightarrow y = y\hspace{1pt}'),$
		\item $\mathrm{KP} \vdash \forall x . \exists y . G(x) = y.$
	\end{itemize}
	Then there is a $\Delta_1$-formula $F$, such that:
	\begin{itemize}
		\item $\mathrm{KP} \vdash \forall x . \forall y . \forall y\hspace{1pt}' . ((F(x, y) \wedge F(x, y\hspace{1pt}')) \rightarrow y = y\hspace{1pt}'),$
		\item $\mathrm{KP} \vdash \forall x . \exists y . F(x) = y,$
		\item $\mathrm{KP} \vdash \forall x . F(x) = G(F\restriction_x).$
	\end{itemize}
\end{thm}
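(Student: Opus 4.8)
The plan is to build $F$ by the standard method of transfinite $\in$-recursion, using \emph{$G$-approximations}: a set $h$ is a $G$-approximation if $h$ is a function, $\dom(h)$ is transitive, and $G(h\restriction_x) = h(x)$ for every $x \in \dom(h)$. Since $G$ is functional and total over $\mathrm{KP}$, the Remark preceding the theorem shows its graph is $\Delta_1$ over $\mathrm{KP}$; as the assignments $x \mapsto h\restriction_x$ and $x \mapsto h(x)$ are $\Delta_0$, and bounded quantification preserves $\Delta_1$ over $\mathrm{KP}$ (the $\Sigma_1$ side by $\Sigma_1$-Collection, the $\Pi_1$ side trivially), the predicate ``$h$ is a $G$-approximation'' is $\Delta_1$. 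I would then define $F(x) = y$ by $\exists h . (h \textnormal{ is a } G\textnormal{-approximation} \wedge x \in \dom(h) \wedge h(x) = y)$, which is therefore $\Sigma_1$.

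First I would prove a uniqueness lemma: any two $G$-approximations $h_1, h_2$ agree on $\dom(h_1) \cap \dom(h_2)$. This is an $\in$-induction, carried out via $\Pi_1$-Foundation applied to the $\Delta_0$ predicate $x \in \dom(h_1) \cap \dom(h_2) \wedge h_1(x) \neq h_2(x)$. At an $\in$-minimal counterexample $x$, transitivity of the two domains forces every $u \in x$ into the intersection with $h_1(u) = h_2(u)$, hence $h_1\restriction_x = h_2\restriction_x$, and then $h_1(x) = G(h_1\restriction_x) = G(h_2\restriction_x) = h_2(x)$ by functionality of $G$, a contradiction. This immediately gives functionality of $F$.

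The main obstacle is totality, i.e. $\forall x . \exists h . (h \textnormal{ is a } G\textnormal{-approximation} \wedge x \in \dom(h))$, which I would establish by $\Sigma_1$-Set Induction (available in $\mathrm{KP}$, being equivalent to $\Pi_1$-Foundation, and applicable since the matrix is $\Sigma_1$). Assuming the statement for every $u \in x$, I would use $\Sigma_1$-Collection to gather a set $w$ witnessing it for all $u \in x$ simultaneously; by the uniqueness lemma the $G$-approximations in $w$ are pairwise compatible, so their union $\bar h$ is again a $G$-approximation with $x \subseteq \dom(\bar h)$. The delicate bookkeeping here is verifying that $\bar h$ is a single-valued function whose domain (a union of transitive sets) is transitive, and that each point still satisfies the approximation equation. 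Finally I would adjoin the pair $\langle x, G(\bar h\restriction_x) \rangle$ (legitimate since $x \subseteq \dom(\bar h)$, so $\bar h\restriction_x$ is a set and $G(\bar h\restriction_x)$ exists by totality of $G$) and check that the one-step extension remains a $G$-approximation defined at $x$; the only subtle case is the equation at $x$ itself, which holds because $x \notin x$ forces the extension to agree with $\bar h$ below $x$.

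Functionality together with totality make $F$ functional and total. For the recursion equation, given any $x$ I would pick a $G$-approximation $h$ with $x \in \dom(h)$; transitivity yields $x \subseteq \dom(h)$, and uniqueness yields $h\restriction_x = F\restriction_x$ (the latter existing as a set by the Remark), so $F(x) = h(x) = G(h\restriction_x) = G(F\restriction_x)$. Since $F$ is thus $\Sigma_1$, functional and total over $\mathrm{KP}$, the Remark gives that $F$ is $\Delta_1$, completing the proof.
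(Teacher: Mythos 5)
Your proof is correct and is essentially the canonical argument for this theorem: the paper does not prove it at all, but instead defers to Ch.~1 of \cite{Bar75} (remarking that only $\mathrm{KP}$ is needed), and the proof there is exactly your approximation-based one — define $F(x)=y$ via the existence of a $G$-approximation ("good function"), get agreement of approximations from Foundation, get totality from $\Sigma_1$-Set Induction together with $\Sigma_1$-Collection, and extract $\Delta_1$-ness from the fact that a total functional $\Sigma_1$ relation is $\Delta_1$ over $\mathrm{KP}$. The bookkeeping points you flag (separating the approximations out of the collected set $w$ by $\Delta_1$-Separation, and checking that adjoining $\langle x, G(\bar h\restriction_x)\rangle$ preserves the approximation equation at points $u\in\dom(\bar h)$ because $x\notin u$ whenever $x\notin\dom(\bar h)$) all go through, so there is no gap to report.
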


Here is another important consequence of $\Sigma_1$-Recursion:

\begin{prop}\label{rank}
There is a $\Sigma_1$-formula $\rnk$, such that
\begin{itemize}
\item $\mathrm{KP} \vdash \forall x . \forall y . \forall y\hspace{1pt}' . ((\rnk(x, y) \wedge \rnk(x, y\hspace{1pt}')) \rightarrow y = y\hspace{1pt}'),$ 
\item $\mathrm{KP} \vdash \forall x . \exists \rho . \rnk(x) = \rho$,
\item $\mathrm{KP} \vdash \forall x . \mathrm{Ord}(\rnk(x))$,
\item $\mathrm{KP} \vdash \forall x . \rnk(x) = \sup\{\rnk(u)+1 \mid u \in x\}$.
\end{itemize}
\end{prop}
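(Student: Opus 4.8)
The plan is to obtain $\rnk$ as an instance of the $\Sigma_1$-Recursion theorem stated just above. First I would write down the step-operation $G$ whose fixed point is the rank function: given a set $g$ (intended as a partial function into the ordinals), set
\[
G(g) = \bigcup \{\, v \cup \{v\} \mid \exists u.\,(\langle u, v\rangle \in g \wedge \mathrm{Ord}(v)) \,\}.
\]
I would check that $G$ is functional and total over $\mathrm{KP}$ and that its graph is at worst $\Sigma_1$. The set of second coordinates of $g$ is a $\Delta_0$-definable subset of $\bigcup\bigcup g$, hence a set by $\Delta_0$-Separation; collecting the successors $v \cup \{v\}$ into a set (using the $\Sigma_1$-Replacement that $\mathrm{KP}$ proves) and then applying Union are operations provably total over $\mathrm{KP}$. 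Thus $G$ meets the hypotheses of the $\Sigma_1$-Recursion theorem, which yields a $\Delta_1$-formula $F$ — in particular a $\Sigma_1$-formula, since $\Delta_1 \subseteq \Sigma_1$ — that is functional and total over $\mathrm{KP}$ and satisfies $\mathrm{KP} \vdash \forall x.\, F(x) = G(F\restriction_x)$. I would take $\rnk$ to be this $F$, so that the first two bullet points of the proposition merely restate functionality and totality.

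It then remains to verify the two bullet points asserting that $\rnk(x)$ is always an ordinal and that it obeys the stated recurrence. Unwinding the recursion equation gives
\[
\rnk(x) = G(\rnk\restriction_x) = \bigcup \{\, \rnk(u) \cup \{\rnk(u)\} \mid u \in x \wedge \mathrm{Ord}(\rnk(u)) \,\}.
\]
To see that $\mathrm{Ord}(\rnk(x))$ holds for every $x$, I would argue by $\in$-induction. Since $\rnk$ is $\Delta_1$, the formula $\mathrm{Ord}(\rnk(x))$ is equivalent over $\mathrm{KP}$ to a $\Sigma_1$-formula, so $\Sigma_1$-Set Induction — available in $\mathrm{KP}$ because it is equivalent to the axiom schema of $\Pi_1$-Foundation — applies. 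In the inductive step, assuming $\mathrm{Ord}(\rnk(u))$ for all $u \in x$, the guard $\mathrm{Ord}(v)$ in the definition of $G$ becomes vacuous, so $\rnk(x)$ is the union of a set of ordinals and hence itself an ordinal.

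Finally, once ordinal-ness is established, the last bullet point $\rnk(x) = \sup\{\rnk(u)+1 \mid u \in x\}$ drops out of the displayed equation, using that for a set of ordinals the union coincides with the supremum (least upper bound) and that $v \cup \{v\} = v + 1$. I expect no serious obstacle: the only point requiring care is the bookkeeping of Lévy complexity — ensuring $G$ is genuinely $\Sigma_1$ (so that $\Sigma_1$-Recursion applies) and that the induction formula $\mathrm{Ord}(\rnk(x))$ stays within the reach of $\Sigma_1$-Set Induction — after which everything reduces to a direct application of the recursion theorem already in hand.
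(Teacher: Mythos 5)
Your proof is correct and follows exactly the route the paper intends: the proposition is presented there as an immediate consequence of the $\Sigma_1$-Recursion theorem (with the details deferred to the standard development in Barwise, ch.~1, rather than written out), and your argument --- applying $\Sigma_1$-Recursion to the step operation $G$, noting $\Delta_1 \subseteq \Sigma_1$, and then verifying $\mathrm{Ord}(\rnk(x))$ by $\Sigma_1$-Set Induction (available since $\mathrm{KP}$ has $\Pi_1$-Foundation) so that the union becomes a supremum of ordinals --- is precisely that standard argument. The complexity bookkeeping you flag ($G$ being $\Sigma_1$, functional and total via $\Delta_0$-Separation, $\Sigma_1$-Replacement and Union, and the induction formula being $\Sigma_1$) is handled correctly, so there is no gap.
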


The following two results are proved in \cite{Mat01}, the latter being a direct consequence of the former. (The former is stated in the strong form, that $F$ is $\Delta_1^\mathcal{P}$, using the same trick as in the remark above the $\Sigma_1$-recursion theorem.)

\begin{thm}[$\Sigma_1^\mathcal{P}$-Recursion]
	Let $G(x, y)$ be a $\Sigma_1^\mathcal{P}$-formula such that 
	\begin{itemize}
		\item $\mathrm{KP}^\mathcal{P} \vdash \forall x . \forall y . \forall y\hspace{1pt}' . ((G(x, y) \wedge G(x, y\hspace{1pt}')) \rightarrow y = y\hspace{1pt}'),$
		\item $\mathrm{KP}^\mathcal{P} \vdash \forall x . \exists y . G(x) = y.$
	\end{itemize}
	Then there is a $\Delta_1^\mathcal{P}$-formula $F$, such that:
	\begin{itemize}
		\item $\mathrm{KP}^\mathcal{P} \vdash \forall x . \forall y . \forall y\hspace{1pt}' . ((F(x, y) \wedge F(x, y\hspace{1pt}')) \rightarrow y = y\hspace{1pt}'),$
		\item $\mathrm{KP}^\mathcal{P} \vdash \forall x . \exists y . F(x) = y,$
		\item $\mathrm{KP}^\mathcal{P} \vdash \forall x . F(x) = G(F\restriction_x).$
	\end{itemize}
\end{thm}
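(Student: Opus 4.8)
The plan is to adapt the classical construction of $\Sigma_1$-recursion in $\mathrm{KP}$ (as in \cite{Bar75}) to the Takahashi hierarchy, using the power-admissible schemata that $\mathrm{KP}^\mathcal{P}$ provides. Call a function $g$ an \emph{approximation} if $\dom(g)$ is transitive and $g(u) = G(g\restriction_u)$ holds for every $u \in \dom(g)$, where $g\restriction_u$ denotes the restriction of $g$ to the $\in$-members of $u$ (all of which lie in $\dom(g)$ by transitivity). Since $G$ is $\Sigma_1^\mathcal{P}$, functional and total, the functional--totality trick from the remark preceding the $\Sigma_1$-recursion theorem adapts verbatim to show that $G$ is $\Delta_1^\mathcal{P}$ over $\mathrm{KP}^\mathcal{P}$. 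As $\Delta_1^\mathcal{P}$ is closed under bounded and $\mathcal{P}$-bounded quantification (by $\Sigma_1^\mathcal{P}$-Collection), the predicate ``$g$ is an approximation'' is itself $\Delta_1^\mathcal{P}$. I would then define $F(x) = y$ to mean that there is an approximation $g$ with $x \in \dom(g)$ and $g(x) = y$; this defining formula is $\Sigma_1^\mathcal{P}$.

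First I would establish a uniqueness lemma: any two approximations $g, g\hspace{1pt}'$ agree on $\dom(g) \cap \dom(g\hspace{1pt}')$. For fixed $g, g\hspace{1pt}'$ the statement ``$u \in \dom(g) \cap \dom(g\hspace{1pt}') \rightarrow g(u) = g\hspace{1pt}'(u)$'' is $\Delta_0^\mathcal{P}$ in $u$, so it follows by $\Delta_0^\mathcal{P}$-Set Induction, which $\mathrm{KP}^\mathcal{P}$ proves, since $\Pi_1^\mathcal{P}$-Foundation is equivalent to $\Sigma_1^\mathcal{P}$-Set Induction. This at once yields functionality of $F$.

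The heart of the argument is totality, which I would prove by $\Sigma_1^\mathcal{P}$-Set Induction applied to the $\Sigma_1^\mathcal{P}$-statement ``there is an approximation $g$ with $x \subseteq \dom(g)$''. For the induction step, assume that every $u \in x$ lies in the domain of some approximation. Because being an approximation is $\Delta_1^\mathcal{P}$ and $\mathrm{TC}$ is total (Proposition \ref{transitive closure}), the predicate ``$u$ lies in the domain of some approximation'' is $\Sigma_1^\mathcal{P}$, so $\Sigma_1^\mathcal{P}$-Collection yields a set $A$ of approximations covering every $u \in x$. By the uniqueness lemma the members of $A$ cohere, so $g^* = \bigcup A$ is again an approximation, its domain is transitive (a union of transitive sets) and contains every member of $x$. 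Finally, since $G$ is total, $G(g^*\restriction_x)$ exists, and adjoining the pair $\langle x, G(g^*\restriction_x)\rangle$ when $x \notin \dom(g^*)$ produces an approximation whose domain contains $x$.

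To conclude, $F$ is $\Sigma_1^\mathcal{P}$, functional and total, hence $\Delta_1^\mathcal{P}$ by the same functional--totality trick. The recursion equation follows because for any $x$ an approximation $g$ with $x \in \dom(g)$ satisfies $F\restriction_x = g\restriction_x$ by uniqueness (noting that $F\restriction_x$ exists as a set by $\Delta_1^\mathcal{P}$-Separation together with $\Sigma_1^\mathcal{P}$-Collection, exactly as in the remark), and $g(x) = G(g\restriction_x)$, so $F(x) = G(F\restriction_x)$. I expect the main obstacle to be the totality step: one must check that the collected family genuinely coheres into a single approximation, and, more delicately, that each intermediate formula stays within $\Sigma_1^\mathcal{P}$ so that $\Sigma_1^\mathcal{P}$-Collection and $\Sigma_1^\mathcal{P}$-Set Induction really apply. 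This bookkeeping is precisely where the power-admissible strengthenings of the $\mathrm{KP}$ schemata are essential, and it is the reason the proof requires $\mathrm{KP}^\mathcal{P}$ rather than $\mathrm{KP}$.
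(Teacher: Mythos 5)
Your proof is correct, and it is essentially the intended one: the paper itself gives no proof of this theorem (it defers to Mathias \cite{Mat01}), and your argument is exactly the standard Barwise-style attempt/approximation proof of $\Sigma_1$-recursion adapted to the Takahashi hierarchy, which is what the cited source does — $\Delta_1^\mathcal{P}$-ness of ``approximation'' via the functional-totality trick and $\Sigma_1^\mathcal{P}$-Collection, uniqueness by Foundation, totality by $\Sigma_1^\mathcal{P}$-Set Induction (equivalently $\Pi_1^\mathcal{P}$-Foundation) plus Collection, and then $\Delta_1^\mathcal{P}$-ness of $F$ by the same trick.

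One presentational slip worth fixing: you declare the induction formula to be ``there is an approximation $g$ with $x \subseteq \dom(g)$'' but then use as induction hypothesis ``every $u \in x$ lies in the domain of some approximation,'' which is the formula ``there is an approximation $g$ with $u \in \dom(g)$.'' These are not the same, although your final adjoining step (adding $\langle x, G(g^*\restriction_x)\rangle$) is precisely what converts the first into the second, so the argument repairs itself; the cleanest arrangement is to run the $\Sigma_1^\mathcal{P}$-Set Induction directly on ``$x \in \dom(g)$ for some approximation $g$,'' in which case your induction step goes through verbatim (with a $\Delta_1^\mathcal{P}$-Separation step to discard non-approximations from the collected set before taking the union).
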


\begin{prop}
	There is a $\Sigma_1^\mathcal{P}$-formula $V(x, y)$, such that
	\begin{itemize}
		\item $\mathrm{KP}^\mathcal{P} \vdash \forall x . \forall y . \forall y\hspace{1pt}' . ((V(x, y) \wedge V(x, y\hspace{1pt}')) \rightarrow y = y\hspace{1pt}')$, we write $V_x = y$ for $V(x, y)$,
		\item $\mathrm{KP}^\mathcal{P} \vdash \forall \rho \in \mathrm{Ord} . \exists v . V_\rho = v.$
		\item $\mathrm{KP}^\mathcal{P} \vdash \forall \rho \in \mathrm{Ord} . \forall x . (x \in V_\rho \leftrightarrow \rnk(x) < \rho),$
		\item $\mathrm{KP}^\mathcal{P} \vdash V_0 = \varnothing \wedge \forall \rho \in \mathrm{Ord} . (\mathcal{P}(V_\rho) = V_{\rho + 1}) \wedge \forall \rho \in \mathrm{Ord} . V_\rho = \bigcup_{\xi < \rho} V_{\xi + 1}.$
	\end{itemize}
\end{prop}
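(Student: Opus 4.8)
The plan is to obtain $V$ from the $\Sigma_1^\mathcal{P}$-Recursion theorem and then to read off the four clauses, the rank characterisation being the only one requiring real induction. First I would fix the functional
\[
G(f) = \bigcup \{ \mathcal{P}(v) \mid \exists u . \langle u, v \rangle \in f \},
\]
defined for every set $f$. Since Powerset is available and $\mathcal{P}(v) = w$ is $\Delta_0^\mathcal{P}$ (by the equivalence recorded in Section \ref{tour KP}), the graph $G(f) = y$ is $\Sigma_1^\mathcal{P}$, and $G$ is functional and total over $\mathrm{KP}^\mathcal{P}$: the image $\{\mathcal{P}(v) \mid \exists u . \langle u, v \rangle \in f\}$ is a set by $\Sigma_1^\mathcal{P}$-Replacement, and Union makes its union a set. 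Applying $\Sigma_1^\mathcal{P}$-Recursion to $G$ produces a $\Delta_1^\mathcal{P}$-formula, which I take as $V(x,y)$; it is functional and total (giving the first two clauses at once) and satisfies $V_\rho = G(V\restriction_\rho)$, which for an ordinal $\rho$ unwinds to the single recursion equation $V_\rho = \bigcup_{\xi < \rho} \mathcal{P}(V_\xi)$, using that $\dom(V\restriction_\rho) = \rho$ with $(V\restriction_\rho)(\xi) = V_\xi$.

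Monotonicity, $\xi \leq \rho \Rightarrow V_\xi \subseteq V_\rho$, then falls out of this equation with no induction: for ordinals $\xi \leq \rho$ one has $\{\zeta \mid \zeta < \xi\} \subseteq \{\zeta \mid \zeta < \rho\}$, so the union defining $V_\xi$ is a subunion of the one defining $V_\rho$. The fourth clause is now cheap: $V_0 = \bigcup \varnothing = \varnothing$; in $V_{\rho+1} = \bigcup_{\xi \leq \rho} \mathcal{P}(V_\xi)$ every term $\mathcal{P}(V_\xi)$ with $\xi \le \rho$ is contained in $\mathcal{P}(V_\rho)$ by monotonicity, and $\mathcal{P}(V_\rho)$ itself occurs, so $V_{\rho+1} = \mathcal{P}(V_\rho)$; substituting this back gives $V_\rho = \bigcup_{\xi<\rho}\mathcal{P}(V_\xi) = \bigcup_{\xi<\rho} V_{\xi+1}$.

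The rank characterisation $\forall \rho \in \Ord . \forall x . (x \in V_\rho \leftrightarrow \rnk(x) < \rho)$ is where the work lies, and I expect the main obstacle to be a complexity one: the biconditional, with its outer unbounded quantifier, is naively $\Pi_1^\mathcal{P}$, whereas $\mathrm{KP}^\mathcal{P}$ supplies only $\Pi_1^\mathcal{P}$-Foundation, i.e.\ $\Sigma_1^\mathcal{P}$-Set Induction. The remedy is to split the two directions and phrase each as an induction on a genuinely $\Sigma_1^\mathcal{P}$ property, exploiting that $V_\rho$ and $V_{\rnk(x)+1}$ are actual sets furnished by totality. For the forward direction I would do set induction on $\rho$ with the property $\beta(\rho) \equiv \exists v . (V_\rho = v \wedge \forall x \in v . \rnk(x) < \rho)$ (guarded by $\Ord(\rho)$); this is $\Sigma_1^\mathcal{P}$ because fixing the witness $v = V_\rho$ turns the quantifier over $x$ into a genuinely bounded one, and $\mathrm{KP}^\mathcal{P}$ closes $\Sigma_1^\mathcal{P}$ under bounded quantification via $\Sigma_1^\mathcal{P}$-Collection. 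The step rewrites $x \in V_\rho$ as $\exists \xi < \rho . x \subseteq V_\xi$ and then applies the hypothesis at $\xi$ together with $\rnk(x) = \sup\{\rnk(u)+1 \mid u \in x\}$ to get $\rnk(x) \leq \xi < \rho$.

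For the backward direction it suffices, by monotonicity, to prove $\forall x . x \in V_{\rnk(x)+1}$, since then $\rnk(x) < \rho$ gives $V_{\rnk(x)+1} \subseteq V_\rho$. Here I would run set induction on $x$ with the $\Sigma_1^\mathcal{P}$ property $\delta(x) \equiv x \in V_{\rnk(x)+1}$: for $u \in x$ one has $\rnk(u) < \rnk(x)$, so the hypothesis $u \in V_{\rnk(u)+1}$ plus monotonicity gives $u \in V_{\rnk(x)}$, whence $x \subseteq V_{\rnk(x)}$, i.e.\ $x \in \mathcal{P}(V_{\rnk(x)}) = V_{\rnk(x)+1}$. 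Both inductions thus live at the $\Sigma_1^\mathcal{P}$ level and are licensed by $\Pi_1^\mathcal{P}$-Foundation, so the argument stays inside plain $\mathrm{KP}^\mathcal{P}$; the delicate point throughout is precisely this bookkeeping that keeps each inductive property $\Sigma_1^\mathcal{P}$ by routing unbounded quantifiers through the defined sets $V_\rho$.
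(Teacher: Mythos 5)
Your proof is correct and follows exactly the route the paper indicates: the paper gives no proof of its own, citing Mathias and remarking that the proposition is ``a direct consequence'' of the $\Sigma_1^\mathcal{P}$-Recursion Theorem, which is precisely the derivation you carry out. Your explicit functional $G$, the induction-free monotonicity argument, and the two set inductions on $\Sigma_1^\mathcal{P}$-bounded properties (licensed by $\Pi_1^\mathcal{P}$-Foundation) supply the details that the paper leaves to the citation.
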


The $V$-hierarchy given by the previous Proposition is very useful. For example, it enables the following result.

\begin{prop}
	For each $1 \leq k < \omega$, $\mathrm{KP}^\mathcal{P} + \Sigma_k^\mathcal{P} \textnormal{-Separation} \vdash \mathrm{B}(\Sigma_k^\mathcal{P}) \textnormal{-Foundation}$, where $\mathrm{B}(\Sigma_k^\mathcal{P})$ is the Boolean closure of $\Sigma_k^\mathcal{P}$.
\end{prop}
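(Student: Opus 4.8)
The plan is to reduce $\mathrm{B}(\Sigma_k^\mathcal{P})\textnormal{-Foundation}$ to an instance of Set Foundation localized inside a single level of the $V$-hierarchy, exploiting that $\mathrm{KP}^\mathcal{P}$ proves $\rho \mapsto V_\rho$ total and that $\Sigma_k^\mathcal{P}\textnormal{-Separation}$ is equivalent to $\mathrm{B}(\Sigma_k^\mathcal{P})\textnormal{-Separation}$.

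First I would record the two ingredients already supplied. By the bullet following Definition \ref{axioms of set theory} (in its Takahashi-hierarchy version), $\Sigma_k^\mathcal{P}\textnormal{-Separation}$ implies $\mathrm{B}(\Sigma_k^\mathcal{P})\textnormal{-Separation}$, so we may separate out any $\mathrm{B}(\Sigma_k^\mathcal{P})$-definable subset of a given set. Moreover, applying the axiom $\Pi_1^\mathcal{P}\textnormal{-Foundation}$ to the $\Delta_0$ formula $u \in x$ (with $x$ a parameter) yields Set Foundation: every nonempty set $x$ has an $\in$-minimal element, i.e.\ some $u \in x$ with $u \cap x = \varnothing$.

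Then comes the core argument. Let $\phi \in \mathrm{B}(\Sigma_k^\mathcal{P})$ and suppose $\phi(a)$ holds for some $a$. Put $\rho = \rnk(a)$, which exists by Proposition \ref{rank}, and form $V_{\rho+1}$, which exists since $\mathrm{KP}^\mathcal{P}$ proves the $V$-hierarchy total on $\mathrm{Ord}$. As $\rnk(a) = \rho < \rho+1$, we have $a \in V_{\rho+1}$. By $\mathrm{B}(\Sigma_k^\mathcal{P})\textnormal{-Separation}$, the set
\[
B = \{x \in V_{\rho+1} \mid \phi(x)\}
\]
exists, and it is nonempty since $a \in B$. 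Applying Set Foundation to $B$ yields $y \in B$ with $y \cap B = \varnothing$. I would then check that $y$ is the desired witness: we have $\phi(y)$ because $y \in B$; and for any $v \in y$, from $y \in V_{\rho+1}$ we get $\rnk(y) \leq \rho$, hence $\rnk(v) < \rnk(y) \leq \rho < \rho+1$, so $v \in V_{\rho+1}$. Since $y \cap B = \varnothing$ forces $v \notin B$, we conclude $\neg\phi(v)$. Thus $\phi(y) \wedge \forall v \in y.\, \neg\phi(v)$, which is exactly the conclusion of Foundation for $\phi$.

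The main obstacle is conceptual rather than computational: the naive route ``take the least rank realizing $\phi$'' stumbles on the fact that the class of such ranks is only $\mathrm{B}(\Sigma_k^\mathcal{P})$, not $\Delta_0$, so one cannot directly invoke the available foundation on ordinals. Collecting all $\phi$-witnesses of rank at most $\rnk(a)$ into the single set $B \subseteq V_{\rho+1}$ sidesteps this by converting the problem into an instance of Set Foundation. The totality of the $V$-hierarchy—the distinctive strength of $\mathrm{KP}^\mathcal{P}$ over $\mathrm{KP}$, coming from Powerset—is precisely what legitimizes this bounding step.
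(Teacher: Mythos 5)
Your proof is correct, and it shares the paper's crucial idea: use $\mathrm{B}(\Sigma_k^\mathcal{P})\textnormal{-Separation}$ together with the totality of the $V$-hierarchy to collect all $\phi$-witnesses of rank at most $\rnk(a)$ into a single set $B \subseteq V_{\rnk(a)+1}$. Where you diverge is in how minimality is then extracted. The paper forms, via $\Sigma_1$-Separation (available under the hypothesis since $\Sigma_1 \subseteq \Sigma_1^\mathcal{P} \subseteq \Sigma_k^\mathcal{P}$), the auxiliary set of ordinals $R = \{\xi < \rnk(a)+1 \mid \exists x \in B \,.\, \rnk(x) = \xi\}$, takes its least element $\rho$, and picks a witness of rank $\rho$; minimality of rank then gives that no element of the witness satisfies $\phi$. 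You instead apply Set Foundation ($\in$-minimality) directly to $B$, having first noted that Set Foundation is derivable in $\mathrm{KP}^\mathcal{P}$ by instantiating $\Pi_1^\mathcal{P}\textnormal{-Foundation}$ at the $\Delta_0$ formula $u \in x$. Your route is slightly leaner: it dispenses with the auxiliary rank set and with any appeal to $\Sigma_1$-Separation, and the verification that an $\in$-minimal element of $B$ works goes through because any element $v$ of such a $y$ automatically has $\rnk(v) < \rnk(y) \leq \rnk(a)$, hence lies in $V_{\rnk(a)+1}$ and would belong to $B$ if it satisfied $\phi$. Both arguments deliver the same conclusion; the paper's version makes the role of rank-minimization explicit (which mirrors how Foundation is often used elsewhere in the text), while yours is the more economical derivation from the axioms as stated.
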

\begin{proof}
	Recall that $\Sigma_k^\mathcal{P} \textnormal{-Separation}$ implies $\mathrm{B}(\Sigma_k^\mathcal{P}) \textnormal{-Separation}$. 
	Let $\phi(x) \in \mathrm{B}(\Sigma_k^\mathcal{P}[x])$. Suppose there is $a$ such that $\phi(a)$. By $\mathrm{B}(\Sigma_k^\mathcal{P})$-Separation, let 
	$$A = \{ x \in V_{\rnk(a) + 1} \mid \phi(x) \},$$
	and note that $a \in A.$
	By $\Sigma_1$-Separation, let 
	\[R = \{ \xi < \rnk(a) + 1 \mid \exists x \in A . \rnk(x) = \xi \} . \]
	Since $R$ is a non-empty set of ordinals, it has a least element $\rho$. Let $a\hspace{1pt}' \in A$ such that $\rnk(a\hspace{1pt}') = \rho$. Then we have $\forall x \in a . \neg \phi(x)$, as desired.
\end{proof}

Many more facts about the Takahashi hierarchy in the context of $\mathrm{ZFC}$ are established in \cite{Tak72}. It appears like these results also hold in the context of $\mathrm{KP}^\mathcal{P}$ (apart from its Theorem 6, which might require $\mathrm{KP}^\mathcal{P} + \textnormal{Choice}$).

\section{First-order logic and partial satisfaction relations internal to $\mathrm{KP}^\mathcal{P}$}

By the $\Sigma_1$-Recursion Theorem above, it is straightforward to develop the machinery of first order logic within $\mathrm{KP}$. In the meta-theory, let $D$ be a recursive definition of a first order language $\mathcal{L}^*$. The recursive definition $D$ can be employed within $\mathrm{KP}$ to prove the existence (as a set) of the language defined by $D$, which we denote $\mathcal{L}$, in effect introducing a new constant symbol to the object language $\mathcal{L}^0$ of $\mathrm{KP}$. 

Now to clarify matters, let us distinguish between variables, terms, formulae, etc. of $\mathcal{L}^*$ in the meta-theory, and variables, terms, formulae, etc. of $\mathcal{L}$ in the object-theory. From the perspective of the meta-theory, $\mathcal{L}^*$ is a set equipped with appropriate structure that makes it an implementation of a first-order language, and its variables, terms, formulae, etc. are elements found in that structure. On the other hand, the meta-theory views $\mathcal{L}$ as a constant symbol (of the object language $\mathcal{L}^0$ of $\mathrm{KP}$) associated with a bunch of proofs in $\mathrm{KP}$ to the effect that $\mathcal{L}$ represents a first-order language in $\mathrm{KP}$. By a {\em standard} natural number is meant a natural number in the meta-theory. If $k$ is a standard natural number, then $\dot k$ denotes an introduced term for the implementation of that number as a set in the object-theory $\mathrm{KP}$. Similarly, by a {\em standard} variable, term, formula, etc. of $\mathcal{L}$, is meant a variable, term, formula, etc. of $\mathcal{L}^*$. From now on, we shall not mention $\mathcal{L}^*$. Instead, we talk about $\mathcal{L}$ and use the attribute {\em standard} when considering syntactical objects in $\mathcal{L}^*$.

Natural features of $\mathcal{L}$ can be implemented in $\mathrm{KP}$ as subsets of $V_\omega$. In particular, working in the object-theory $\mathrm{KP}$, let us highlight some important features:
\begin{enumerate}
	\item There is an infinite set of distinct variables $\mathrm{Var} = \{x_k \mid k \in \mathbb{N}\}$.
	\item If $x_k$ is a standard variable, then $\dot x_k$ is an introduced term for its representation $x_{\dot k}$.
	\item There are functions mapping function symbols and relation symbols to their respective arities in $\mathbb{N}$.
	\item To each function or relation symbol $S$ corresponds an introduced term $\dot S$.
	\item The set of terms, denoted $\mathrm{Term}_\mathcal{L}$, can be constructed recursively in such a manner that: 
	\begin{enumerate}
		\item $\mathrm{Var} \subseteq \mathrm{Term}$.
		\item For each $k$-ary function symbol $f$ (constants are considered to be $0$-ary functions), there is a function with domain $\mathrm{Term}^k$, sending tuples of terms to terms.
		\item There are functions by means of which terms can conversely be unpacked into immediate function symbol and immediate subterms.
		\item If $t = f\hspace{2pt}(\vec{x})$ is a standard term, then $\dot t$ is introduced to denote the term of the form $\dot f\hspace{2pt}(\dot\vec{x})$.
	\end{enumerate}
	\item For each $k$-ary relation symbol $R$, there is a function with domain $\mathrm{Term}^k_\mathcal{L}$, sending tuples of terms to atomic formulae (its range is denoted $\mathrm{Atom}_\mathcal{L}$).
	\item There are functions by means of which atomic formulae can conversely be unpacked into relation symbol and immediate subterms.
	\item The set of formulae, denoted $\mathrm{Form}_\mathcal{L}$ or simply $\mathcal{L}$, can be constructed recursively in such a manner that: 
	\begin{enumerate}
		\item Each standard formula $\phi$ of $\mathcal{L}$ has a representation as an introduced term $\dot\phi$.
		\item $\mathrm{Atom}_\mathcal{L} \subseteq \mathrm{Form}_\mathcal{L}$.
		\item For each $k$-ary propositional connective $\star$, there is a function $\dot\star : \mathrm{Form}^k_\mathcal{L} \rightarrow \mathrm{Form}_\mathcal{L}$, such that if $\phi$, $\psi$ and $\theta \cong \phi \wedge \psi$ are standard formulae, then $\dot\theta = \dot\phi \dot\wedge \dot\psi$, and similarly for the other connectives.
		\item For each quantifier $\boxminus$, there is a function $\dot\boxminus : \mathrm{Form}_\mathcal{L} \times \mathrm{Var} \rightarrow \mathrm{Form}_\mathcal{L}$, such that if $\phi$ and $\theta \cong \exists x_0 . \phi$ are standard formulae, then $\dot\theta = \dot\exists \dot x_0 \dot\phi$, and similarly for the other quantifier.
		\item There are functions by means of which formulae can be unpacked into immediate connective, or quantifier and bound variable, and immediate subformulae.
		\item For any formula, the occurrences of free and bound variables in it can be distinguished.
	\end{enumerate}
	\item There is a function of substitution from $\mathrm{Form}_\mathcal{L} \times \mathrm{Var} \times \mathrm{Term}_\mathcal{L}$ to $\mathrm{Form}_\mathcal{L}$, which substitutes a particular term for each free occurrence of a particular variable.

In the special case $\mathcal{L} = \mathcal{L}^0$, there are additional features worth highlighting:

 	\item The representations of equality and membership are denoted $\dot=$ and $\dot\in$, respectively.
	\item The respective sets of all $\bar\Sigma_k$-, $\bar\Pi_k$-, $\bar\Sigma_k^\mathcal{P}$- and $\bar\Pi_k^\mathcal{P}$-formulae exist, for all $k \in \mathbb{N}$, and they correspond in the natural way to their counterparts for standard $k$.
	\item For each $k \in \mathbb{N}$, there is a function $\dot\sim : \bar\Pi_k^\mathcal{P} \rightarrow \bar\Sigma_k^\mathcal{P}$, such that for any $\phi \in \bar\Pi_k^\mathcal{P}$, $\dot\sim\phi$ is the result of pushing the $\neg$-symbol in $\dot\neg\phi$ through all the unbounded quantifiers in the front of $\phi$, thus obtaining that $\dot\sim\phi$ is a $\bar \Sigma_k^\mathcal{P}$-formula equivalent to $\dot\neg\phi$ over $\mathrm{KP}$.
\end{enumerate}

Having examined the syntactical side of first-order logic internal to $\mathrm{KP}$, let us now look at the semantical side. 

In $\mathrm{KP}$, the satisfaction relation $\models$, between structures $\mathcal{M}$ and formulae of the language $\mathcal{L}$ of $\mathcal{M}$, can be defined in the usual way by $\Sigma_1$-recursion over the complexity of formulae. This is worked out in detail in Chapter III, Section 1 of \cite{Bar75}, even for the language $\mathcal{L}_{\omega_1, \omega}$, where countable disjunctions and conjunctions are allowed. In particular, $\mathrm{KP}$ proves that for any first-order language $\mathcal{L}$, for any $\phi \in \mathcal{L}$ and for any $\mathcal{L}$-structure $\mathcal{M}$, the compositional theory of satisfaction holds for $\mathcal{M} \models \phi$, that is to say: $\mathrm{KP}$ proves that if $\phi(\vec{x}), \psi(\vec{y}) \in \mathcal{L}$, $\star \in \{\vee, \wedge, \rightarrow \}$, $\boxminus \in \{\exists, \forall\}$, $\vec{m}, \vec{n}, m' \in \mathcal{M}$, and $f$ is a function symbol and $R$ is a relation symbol of $\mathcal{L}$, then
	\[
	\begin{array}{rcl}
	\mathcal{M} \models f\hspace{2pt}(\vec{m}) \dot = m' &\Leftrightarrow& f^\mathcal{M}(\vec{m}) = m' \\
	\mathcal{M} \models R(\vec{m}) &\Leftrightarrow& \vec{m} \in R^\mathcal{M} \\
	\mathcal{M} \models \dot\neg\phi(\vec{m}) &\Leftrightarrow& \neg \mathcal{M} \models \phi(\vec{m}) \\
	\mathcal{M} \models \phi(\vec{m}) \dot\star \psi(\vec{n}) &\Leftrightarrow& \big(\mathcal{M} \models \phi(\vec{m}) \big) \star \big(\mathcal{M} \models \psi(\vec{n}) \big) \\
	\mathcal{M} \models \dot\boxminus x . \phi(x, \vec{m}) &\Leftrightarrow& \boxminus m' \in \mathcal{M} . \mathcal{M} \models \phi(m', \vec{m}) . \\
	\end{array}
	\]

Note that in $\mathrm{KP}$, if $M$ is a set, then the structure $\mathcal{M} = (M, \in\restriction_M)$ can be constructed, where $\in_M = \{\langle x, y \rangle \in M^2 \mid x \in y\}$. Thus, for any standard $\mathcal{L}^0$-formula $\phi$ it makes sense to ask about the relationship between $\phi^M$ and $\mathcal{M} \models \dot\phi$:

\begin{lemma}\label{absoluteness with restricted domain}
	For any formula $\phi \in \mathcal{L}^0$, $\mathrm{KP} \vdash \forall M . \forall \vec{m} \in M . \big( \phi^M(\vec{m}) \leftrightarrow (M, \in\restriction_M) \models \dot\phi(\vec{m}) \big)$.
\end{lemma}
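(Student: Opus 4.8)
The plan is to prove this as a \emph{theorem schema}: for each fixed standard formula $\phi \in \mathcal{L}^0$ we exhibit a single $\mathrm{KP}$-proof of the displayed biconditional, and we generate these proofs uniformly by an \emph{external} induction on the syntactic build-up of $\phi$ in the meta-theory. The induction must be external, because the relativization $\phi^M$ is a meta-level syntactic operation defined only on standard formulae (there is no internal truth predicate with which to relativize an arbitrary internal formula), whereas $\mathcal{M} \models \dot\phi(\vec m)$, writing $\mathcal{M} = (M, \in\restriction_M)$, is evaluated through the internally available compositional satisfaction clauses recorded just above the lemma. Thus at each inductive step we combine three ingredients, all working inside $\mathrm{KP}$: the recursion clauses defining the representation $\dot\phi$ (items 5--9 of the preceding list, e.g.\ $\dot\theta = \dot\phi \mathbin{\dot\wedge} \dot\psi$ when $\theta \equiv \phi \wedge \psi$, and $\dot\theta = \dot\exists\,\dot x\,\dot\phi$ when $\theta \equiv \exists x . \phi$), the compositional theory of satisfaction, and the inductive hypothesis applied to the immediate subformulae.

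First I would do the base case of atomic $\phi$. For $\phi \equiv x_i \mathbin{\dot=} x_j$ relativization does nothing, so $\phi^M(\vec m)$ is just $m_i = m_j$, while the atomic clause gives $\mathcal{M} \models m_i \mathbin{\dot=} m_j \Leftrightarrow m_i = m_j$. For $\phi \equiv x_i \mathbin{\dot\in} x_j$, again $\phi^M(\vec m)$ is $m_i \in m_j$, whereas the atomic clause yields $\mathcal{M} \models m_i \mathbin{\dot\in} m_j \Leftrightarrow \langle m_i, m_j \rangle \in (\dot\in)^\mathcal{M} = {\in\restriction_M}$; since $\vec m \in M$ and ${\in\restriction_M} = \{\langle x, y \rangle \in M^2 \mid x \in y\}$, this is exactly $m_i \in m_j$. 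So the base case reduces to unwinding the definition of $\in\restriction_M$.

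The propositional and negation cases then go through immediately: relativization commutes with the connectives ($(\psi \star \chi)^M \equiv \psi^M \star \chi^M$ and $(\neg \psi)^M \equiv \neg(\psi^M)$), and the compositional clauses say that $\mathcal{M} \models \dot\psi(\vec m) \mathbin{\dot\star} \dot\chi(\vec m)$ is equivalent to the corresponding Boolean combination of $\mathcal{M} \models \dot\psi(\vec m)$ and $\mathcal{M} \models \dot\chi(\vec m)$, each of which is disposed of by the inductive hypothesis. I expect the genuinely load-bearing case, and hence the main obstacle, to be the quantifier step. Take $\phi \equiv \exists x . \psi(x, \vec y)$; by the definition of relativization $\phi^M(\vec m)$ is $\exists x \in M . \psi^M(x, \vec m)$, while the compositional clause for the existential quantifier reads $\mathcal{M} \models \dot\exists x . \dot\psi(x, \vec m) \Leftrightarrow \exists m' \in M . \mathcal{M} \models \dot\psi(m', \vec m)$, since the domain of the structure $\mathcal{M}$ is precisely $M$. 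The crux is that the \emph{internal} domain-restriction $\exists m' \in M$ produced by the satisfaction clause lines up exactly with the \emph{syntactic} bound $\exists x \in M$ introduced by relativization; applying the inductive hypothesis to $\psi$ for each witness $m'$ then closes the equivalence, and the universal case is dual. Some care is needed with the substitution bookkeeping --- that $\dot\phi(\vec m)$ really denotes the formula obtained by plugging $\vec m$ in for the free variables via the substitution function of item 9 --- but this is routine once the representation clauses have been invoked.
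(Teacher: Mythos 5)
Your proposal is correct and follows essentially the same route as the paper: an external (meta-theoretic) structural induction on the standard formula $\phi$, with the atomic cases handled by unwinding $\in\restriction_M$, the connective cases by compositionality of the internal satisfaction relation, and the quantifier case by matching the syntactic bound $\exists x \in M$ produced by relativization against the domain-restricted clause $\exists m' \in M$ of the compositional satisfaction theory before applying the induction hypothesis. The paper treats only the existential quantifier explicitly, exactly as you describe, so no further comparison is needed.
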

\begin{proof}
	This is proved by induction on the structure of $\phi$. In the atomic cases, $\phi^M$ equals $\phi$, and $(M, \in\restriction_M) \models \dot\phi$ is equivalent to $\phi$. In the inductive cases of the propositional connectives, the result follows by inspection from the compositionality of satisfaction explained above.
	
	For the existential quantifier case, suppose that $\phi(\vec{y})$ is $\exists x . \psi(x, \vec{y})$, and assume inductively that the result holds for $\psi(x, \vec{y})$. Note that $\phi^M$ is the formula $\exists x \in M . \psi^M(x, \vec{y})$, and $\mathrm{KP} \vdash \dot\phi(\vec{y}) = \dot\exists x . \dot\psi(x, \vec{y})$. On the other hand, by compositionality, $\mathrm{KP}$ proves
	\[
	\forall \vec{m} \in M . \big( (M, \in\restriction_M) \models \dot\exists x . \dot\psi(x, \vec{m}) \leftrightarrow \exists x \in M . (M, \in\restriction_M) \models \dot\psi(x, \vec{m}) \big).
	\]
	So by the induction hypothesis, $\mathrm{KP}$ proves
	\[
	\forall \vec{m} \in M . \big( (M, \in\restriction_M) \models \dot\exists x . \dot\psi(x, \vec{m}) \leftrightarrow \exists x \in M . \psi^M(x, \vec{m}) \big),
	\]
	as desired.
\end{proof}

We will now use the fact that the satisfaction relation $\mathcal{M} \models \phi$ is $\Sigma_1$ to show that appropriate partial satisfaction relations  are available for the Takahashi hierarchy in $\mathrm{KP}^\mathcal{P}$.

A set $a$ is {\em supertransitive} if it is transitive and $\forall x \in a . \forall y \subseteq x . y \in a$. Working in $\mathrm{KP}^\mathcal{P}$, note that supertransitivity is $\Delta_0^\mathcal
P$, and that the {\em supertransitive closure} of $a$, defined as 
\[
\mathrm{STC}(a) =_\df \bigcup \{ \mathcal{P}(x) \mid x \in \mathrm{TC}(a) \},
\] 
is the $\subseteq$-least supertransitive set such that $a \subseteq \mathrm{STC}(a)$. To see that $\mathrm{KP}^\mathcal{P}$ proves the existence of $\mathrm{STC}(a)$, recall that $\mathrm{KP}$ proves the existence of $\mathrm{TC}(a)$ and observe that the operation $x \mapsto \mathcal{P}(x)$ may be defined by a $\Delta_0^\mathcal{P}$-formula:
\[
\mathcal{P}(x) = y \Leftrightarrow (\forall z \subseteq x . z \in y) \wedge (\forall z \in y . z \subseteq x).
\]
So by $\Sigma_1^\mathcal{P}$-Replacement and the union axiom, $\mathrm{STC}(a)$ exists.

To make the definition of partial satisfaction relations more concise, we temporarily introduce the notation $\bar\Sigma_{k+1}^\mathcal{P} / \bar\Pi_{k}^\mathcal{P}$ for the set of pairs $\langle \phi, \psi \rangle$, such that $\phi \in \bar\Sigma_{k+1}^\mathcal{P}$ and $\psi \in \bar\Pi_{k}^\mathcal{P}$ and there is $p \in \mathbb{N}$ and a $p$-tuple $\vec{v} \in \mathrm{Var}$, such that $\phi = \dot\exists \pi_1(\vec{v}) . \dot\exists \pi_2(\vec{v}) . \dots . \dot\exists \pi_p(\vec{v}) . \psi$ (by $\Sigma_1$-recursion, $\mathrm{KP}$ proves the existence of this set). The formulae for partial satisfaction are defined as follows, by recursion over $k < \omega$: 
\begin{align*}
\mathrm{Sat}_{\Delta_0^\mathcal{P}}(\phi, \vec{m}) &\equiv_\df \phi \in \bar\Delta_0^\mathcal{P} \wedge \exists M . \big( (\text{``$M$ is supertransitive''} \wedge \vec{m} \in M) \wedge \\
&(M, \in\restriction_M) \models \phi(\vec{m}) \big) \\
\mathrm{Sat}'_{\Delta_0^\mathcal{P}}(\phi, \vec{m}) &\equiv_\df \phi \in \bar\Delta_0^\mathcal{P} \wedge \forall M . \big((\text{``$M$ is supertransitive''} \wedge \vec{m} \in M) \rightarrow \\
&(M, \in\restriction_M) \models \phi(\vec{m}) \big) \\
\mathrm{Sat}_{\Sigma_{k+1}^\mathcal{P}}(\phi, \vec{m}) &\equiv_\df \exists \psi \in \bar\Pi_{k}^\mathcal{P} . \exists \vec{n} . \big( \langle \phi, \psi \rangle \in \bar\Sigma_{k+1}^\mathcal{P} / \bar\Pi_{k}^\mathcal{P} \wedge \mathrm{Sat}_{\Pi_k^\mathcal{P}}(\psi, \vec{n}, \vec{m}) \big) \\
\mathrm{Sat}_{\Pi_{k+1}^\mathcal{P}}(\phi, {m}) &\equiv_\df \phi \in \bar\Pi_{k+1}^\mathcal{P} \wedge \neg \mathrm{Sat}_{\Sigma_{k+1}^\mathcal{P}}(\dot\sim\phi, {m}).
\end{align*}

\begin{prop}\label{complexity of satisfaction predicates}
	Let $k < \omega$. $\mathrm{Sat}_{\Delta_0^\mathcal{P}}$ is $\Delta_1$, $\mathrm{Sat}_{\Sigma_k^\mathcal{P}}$ is $\Sigma_k^\mathcal{P}$ and $\mathrm{Sat}_{\Pi_k^\mathcal{P}}$ is $\Pi_k^\mathcal{P}$ over $\mathrm{KP}^\mathcal{P}$. In particular, 
	\[
	\mathrm{KP}^\mathcal{P} \vdash \forall \phi . \forall {m} . (\mathrm{Sat}_{\Delta_0^\mathcal{P}}(\phi, {m}) \leftrightarrow \mathrm{Sat}'_{\Delta_0^\mathcal{P}}(\phi, {m})).
	\]
\end{prop}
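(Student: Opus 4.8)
The plan is to treat the final ``in particular'' equivalence as the substantive content and to let the three complexity claims follow from it together with routine quantifier counting. Once $\mathrm{KP}^\mathcal{P}$ proves $\mathrm{Sat}_{\Delta_0^\mathcal{P}}(\phi,\vec m)\leftrightarrow\mathrm{Sat}'_{\Delta_0^\mathcal{P}}(\phi,\vec m)$, the relation $\mathrm{Sat}_{\Delta_0^\mathcal{P}}$ is simultaneously given by its own existential form and by the universal form $\mathrm{Sat}'_{\Delta_0^\mathcal{P}}$; since the internal satisfaction relation $\models$ is $\Delta_1$ (it is the graph of a total function defined by $\Sigma_1$-recursion, with the compositionality clauses recorded above), the existential form is $\Sigma_1$ and the universal form is $\Pi_1$, whence $\mathrm{Sat}_{\Delta_0^\mathcal{P}}$ is $\Delta_1$. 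For the higher levels I would argue by an outer induction on $k$: assuming $\mathrm{Sat}_{\Pi_k^\mathcal{P}}$ is $\Pi_k^\mathcal{P}$, the defining formula of $\mathrm{Sat}_{\Sigma_{k+1}^\mathcal{P}}$ prefixes it with the unbounded existentials $\exists\psi\,\exists\vec n$ and a $\Delta_1$ syntactic side condition $\langle\phi,\psi\rangle\in\bar\Sigma_{k+1}^\mathcal{P}/\bar\Pi_k^\mathcal{P}$, so it is $\Sigma_{k+1}^\mathcal{P}$; dually, $\mathrm{Sat}_{\Pi_{k+1}^\mathcal{P}}$ is the conjunction of a $\Delta_1$ condition with the negation of the $\Sigma_{k+1}^\mathcal{P}$-relation $\mathrm{Sat}_{\Sigma_{k+1}^\mathcal{P}}$ applied to $\dot\sim\phi$, hence $\Pi_{k+1}^\mathcal{P}$.

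The equivalence itself I would derive from an absoluteness lemma: for any two supertransitive sets $W,N$ with $\vec m\in W\cap N$ and any $\phi\in\bar\Delta_0^\mathcal{P}$, one has $(W,\in\restriction_W)\models\phi(\vec m)\leftrightarrow(N,\in\restriction_N)\models\phi(\vec m)$. Since the union of two supertransitive sets is again supertransitive, it suffices to prove this for a nested pair $W\subseteq M$ and then compare arbitrary $W,N$ through $W\cup N$. The direction $\mathrm{Sat}\Rightarrow\mathrm{Sat}'$ then follows immediately from absoluteness, while $\mathrm{Sat}'\Rightarrow\mathrm{Sat}$ uses in addition that $\mathrm{KP}^\mathcal{P}$ proves the existence of the $\subseteq$-least supertransitive set $\mathrm{STC}(\{\vec m\})$ containing $\vec m$, so that the universal statement $\mathrm{Sat}'$ is not vacuous. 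The nested absoluteness I would prove by induction on the internal build-up of $\phi$ via the compositionality clauses for $\models$: the atomic and propositional cases are immediate, the $\in$-bounded quantifier cases use transitivity of $W$ and $M$ (so $\forall u\in y$ ranges over the same $\in$-members in both), and the decisive case is the $\mathcal{P}$-bounded quantifiers $\forall y\subseteq x$ and $\exists y\subseteq x$, where supertransitivity is exactly the hypothesis guaranteeing that $W$ and $M$ contain the same subsets of $x$, so the quantifier ranges over identical witnesses in both structures.

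The main obstacle is running this induction inside $\mathrm{KP}^\mathcal{P}$ at a complexity that the available foundation can handle. The trick is that, because $\models$ is $\Delta_1$, for a fixed nested pair $W\subseteq M$ of supertransitive sets the induction hypothesis $C_{W,M}(\phi)\equiv\forall\vec m\in W.\big((W,\in\restriction_W)\models\phi(\vec m)\leftrightarrow(M,\in\restriction_M)\models\phi(\vec m)\big)$ is $\Delta_1$, a bounded Boolean combination of $\Delta_1$ satisfaction statements. Hence $\phi\in\bar\Delta_0^\mathcal{P}\rightarrow C_{W,M}(\phi)$ is $\Delta_1$, and the structural induction on $\phi$ can be carried out as $\in$-induction on the well-founded syntactic structure of formulae, which is licensed by the $\Sigma_1$-set-induction that $\mathrm{KP}^\mathcal{P}$ proves (being equivalent to the $\Pi_1$-Foundation contained in its $\Pi_1^\mathcal{P}$-Foundation). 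Quantifying over $W$ and $M$ only after the induction is complete keeps the induction formula low, so no instance of Foundation beyond what is available is invoked. The one point demanding care is confirming that each immediate subformula produced by the compositionality clauses is $\in$-below $\phi$ in the coding of $\mathrm{Form}_{\mathcal{L}^0}$, so that the well-founded recursion genuinely applies.
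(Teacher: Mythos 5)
Your proposal follows essentially the same route as the paper's proof: the complexity claims are read off the definitions by an outer induction on $k$, and the substantive content is the equivalence of $\mathrm{Sat}_{\Delta_0^\mathcal{P}}$ with $\mathrm{Sat}'_{\Delta_0^\mathcal{P}}$, proved by showing that satisfaction of $\bar\Delta_0^\mathcal{P}$-formulae is absolute between supertransitive sets containing the parameters, via a structural induction whose decisive cases use transitivity (for $\in$-bounded quantifiers) and supertransitivity (for $\mathcal{P}$-bounded quantifiers), with $\mathrm{STC}(\vec{m})$ supplying the witness needed for the direction $\mathrm{Sat}' \Rightarrow \mathrm{Sat}$. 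The only structural differences are cosmetic: the paper compares an arbitrary supertransitive $M \ni \vec{m}$ directly with the least one, $\mathrm{STC}(\vec{m})$, instead of passing through $W \cup N$; and your explicit care about how the structural induction is licensed inside $\mathrm{KP}^\mathcal{P}$ (the $\Delta_1$-ness of the induction formula for a fixed nested pair $W \subseteq M$, quantifying over $W, M$ only afterwards, and the appeal to the available foundation schema) is detail the paper leaves implicit; it is correct and worth recording.

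One slip should be fixed: your complexity count for $\mathrm{Sat}_{\Delta_0^\mathcal{P}}$. Supertransitivity is only $\Delta_0^\mathcal{P}$, not $\Delta_0$: its definition contains the $\mathcal{P}$-bounded quantifier $\forall y \subseteq x$, which is an \emph{unbounded} universal in the L\'evy hierarchy. Hence the existential form of $\mathrm{Sat}_{\Delta_0^\mathcal{P}}$ is $\Sigma_1^\mathcal{P}$ (not $\Sigma_1$), the universal form $\mathrm{Sat}'_{\Delta_0^\mathcal{P}}$ is $\Pi_1^\mathcal{P}$ (not $\Pi_1$), and what the equivalence yields is $\Delta_1^\mathcal{P}$ rather than $\Delta_1$. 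This is exactly what the paper's own proof establishes (``$\mathrm{Sat}_{\Delta_0^\mathcal{P}}$ is $\Sigma_1^\mathcal{P}$ and $\mathrm{Sat}'_{\Delta_0^\mathcal{P}}$ is $\Pi_1^\mathcal{P}$ over $\mathrm{KP}^\mathcal{P}$''), and it is how the proposition is invoked later (e.g.\ in the Friedman-style lemma, where $\mathrm{Sat}_{\Delta_0^\mathcal{P}} \in \Delta_1^\mathcal{P}$ is used); the ``$\Delta_1$'' in the statement should be read as $\Delta_1^\mathcal{P}$, and your justification as written does not prove the literal $\Delta_1$ claim. Your treatment of the higher levels is unaffected by this correction, since $\Delta_1^\mathcal{P} \subseteq \Sigma_k^\mathcal{P} \cap \Pi_k^\mathcal{P}$ for $k \geq 1$ and the quantifier-prefix manipulations go through as you describe.
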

\begin{proof}
	Since supertransitivity is $\Delta_0^\mathcal{P}$ and the satisfaction relation $\models$ is $\Delta_1$, we have that $\mathrm{Sat}_{\Delta_0^\mathcal{P}}$ is $\Sigma_1^\mathcal{P}$ and $\mathrm{Sat}'_{\Delta_0^\mathcal{P}}$ is $\Pi_1^\mathcal{P}$ over $\mathrm{KP}^\mathcal{P}$. Moreover, by definition of $\mathrm{Sat}_{\Sigma_k^\mathcal{P}}$ and $\mathrm{Sat}_{\Pi_k^\mathcal{P}}$, the result follows by induction on $k < \omega$ once we have established that
	\[
	\mathrm{KP}^\mathcal{P} \vdash \forall \phi . \forall {m} . (\mathrm{Sat}_{\Delta_0^\mathcal{P}}(\phi, {m}) \leftrightarrow \mathrm{Sat}'_{\Delta_0^\mathcal{P}}(\phi, {m})).
	\]
	
	We work in $\mathrm{KP}^\mathcal{P}$. Let $\phi \in \Delta_0^\mathcal{P}$ and let ${m}$ be a tuple of sets. As seen above, there is a $\subseteq$-least supertransitive set $\mathrm{STC}({m})$ containing ${m}$. Let $M$ be any supertransitive set containing ${m}$. It suffices to show that
	\[
	(M, \in\restriction_M)) \models \phi({m}) \Leftrightarrow (\mathrm{STC}({m}), \in\restriction_{\mathrm{STC}({m})})) \models \phi({m}),
	\]
	and we do so by induction on the complexity of $\phi$. For the atomic cases, this is immediate; and for the inductive cases of the propositional connectives, it follows from that these connectives commute with $\models$. Suppose that $\phi({x}) \equiv \exists x\hspace{1pt}' \in x . \psi(x\hspace{1pt}', {x})$. Then by induction hypothesis and transitivity,
	\begin{align*}
	\text{ }& (M, \in\restriction_M)) \models \phi({m}) \\
	\Leftrightarrow \text{ }& \exists m' \in m . (M, \in\restriction_M)) \models \psi(m', m) \\
	\Leftrightarrow \text{ }& \exists m' \in m . (\mathrm{STC}({m}), \in\restriction_{\mathrm{STC}({m})})) \models \psi(m', m) \\
	\Leftrightarrow \text{ }& (\mathrm{STC}({m}), \in\restriction_{\mathrm{STC}({m})})) \models \phi({m}).
	\end{align*}
	Suppose that $\phi({x}) \equiv \exists x\hspace{1pt}' \subseteq x . \psi(x\hspace{1pt}', {x})$.
	Then, similarly as above, we have by induction hypothesis and supertransitivity that
	\begin{align*}
	\text{ }& (M, \in\restriction_M)) \models \phi({m}) \\
	\Leftrightarrow \text{ }& \exists m' \subseteq m . (M, \in\restriction_M)) \models \psi(m', m) \\
	\Leftrightarrow \text{ }& \exists m' \subseteq m . (\mathrm{STC}({m}), \in\restriction_{\mathrm{STC}({m})})) \models \psi(m', m) \\
	\Leftrightarrow \text{ }& (\mathrm{STC}({m}), \in\restriction_{\mathrm{STC}({m})})) \models \phi({m}),
	\end{align*}
	as desired.
\end{proof}

\begin{thm}[Partial satisfaction relations]\label{partial satisfaction classes}
	For each $k < \omega$, each $\sigma \in \bar\Sigma_k^\mathcal{P}$ and each $\pi \in \bar\Pi_k^\mathcal{P}$,
	\begin{align*}
	\mathrm{KP}^\mathcal{P} &\vdash \sigma(\vec{x}) \leftrightarrow \mathrm{Sat}_{\Sigma_k^\mathcal{P}}(\dot\sigma, \vec{x}) \\
	\mathrm{KP}^\mathcal{P} &\vdash \pi(\vec{x}) \leftrightarrow \mathrm{Sat}_{\Pi_k^\mathcal{P}}(\dot\pi, \vec{x}).
	\end{align*}
\end{thm}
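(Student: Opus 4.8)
The plan is to argue by external induction on $k$, establishing for each standard $k$ the two schemata together (the $\bar\Sigma_k^\mathcal{P}$-biconditional and the $\bar\Pi_k^\mathcal{P}$-biconditional). The base case $k = 0$ is the heart of the matter, since the inductive cases will reduce almost formally to the defining clauses of $\mathrm{Sat}_{\Sigma_{k+1}^\mathcal{P}}$ and $\mathrm{Sat}_{\Pi_{k+1}^\mathcal{P}}$ together with the induction hypothesis. For the base case I would prove, for each standard $\phi \in \bar\Delta_0^\mathcal{P}$, that $\mathrm{KP}^\mathcal{P} \vdash \phi(\vec x) \leftrightarrow \mathrm{Sat}_{\Delta_0^\mathcal{P}}(\dot\phi, \vec x)$ by combining Lemma~\ref{absoluteness with restricted domain} with a $\Delta_0^\mathcal{P}$-absoluteness fact between $V$ and supertransitive sets.

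First I would record the absoluteness fact: for each standard $\phi \in \bar\Delta_0^\mathcal{P}$, $\mathrm{KP}^\mathcal{P}$ proves that $\phi^M(\vec m) \leftrightarrow \phi(\vec m)$ for every supertransitive $M$ containing $\vec m$. This is a routine structural induction on $\phi$ — atomic formulae are absolute, the propositional connectives commute with relativization, and for a bounded quantifier $\exists u \in w$ (resp.\ $\exists u \subseteq w$) with $w$ ranging in $M$, any witness is a member (resp.\ subset) of an element of $M$, hence lies in $M$ by transitivity (resp.\ supertransitivity). Since $\mathrm{STC}(\vec m)$ is the $\subseteq$-least supertransitive set containing $\vec m$ and exists by the discussion preceding Proposition~\ref{complexity of satisfaction predicates}, this gives both directions: if $\phi(\vec m)$ holds then $M = \mathrm{STC}(\vec m)$ witnesses $\mathrm{Sat}_{\Delta_0^\mathcal{P}}(\dot\phi, \vec m)$ via Lemma~\ref{absoluteness with restricted domain}, while conversely any supertransitive $M \ni \vec m$ with $(M, \in\restriction_M) \models \dot\phi(\vec m)$ forces $\phi^M(\vec m)$, hence $\phi(\vec m)$.

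For the inductive step, suppose the result holds for $k$. Given a standard $\sigma \in \bar\Sigma_{k+1}^\mathcal{P}$, write it as $\exists \vec v . \psi$ with $\psi \in \bar\Pi_k^\mathcal{P}$ standard. The key bookkeeping point is that $\dot\sigma$ is a standard syntax object whose decomposition into a maximal block of leading unbounded existential quantifiers followed by a $\bar\Pi_k^\mathcal{P}$-matrix is unique, and that matrix is precisely $\dot\psi$; thus the internal formula quantified in the definition of $\mathrm{Sat}_{\Sigma_{k+1}^\mathcal{P}}$ is pinned to $\dot\psi$. Hence $\mathrm{Sat}_{\Sigma_{k+1}^\mathcal{P}}(\dot\sigma, \vec x)$ unpacks to $\exists \vec n . \mathrm{Sat}_{\Pi_k^\mathcal{P}}(\dot\psi, \vec n, \vec x)$, which by the induction hypothesis applied to the standard $\bar\Pi_k^\mathcal{P}$-formula $\psi(\vec v, \vec x)$ is equivalent to $\exists \vec n . \psi(\vec n, \vec x)$, i.e.\ to $\sigma(\vec x)$. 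For a standard $\pi \in \bar\Pi_{k+1}^\mathcal{P}$, the definition gives $\mathrm{Sat}_{\Pi_{k+1}^\mathcal{P}}(\dot\pi, \vec x) \leftrightarrow \neg \mathrm{Sat}_{\Sigma_{k+1}^\mathcal{P}}(\dot\sim\dot\pi, \vec x)$; since $\dot\sim\dot\pi$ is the representation of the standard $\bar\Sigma_{k+1}^\mathcal{P}$-formula $\sim\pi$, the $\bar\Sigma_{k+1}^\mathcal{P}$-case just proved yields $\mathrm{Sat}_{\Sigma_{k+1}^\mathcal{P}}(\dot\sim\dot\pi, \vec x) \leftrightarrow (\sim\pi)(\vec x)$, and using that $\sim\pi$ is equivalent to $\neg\pi$ over $\mathrm{KP}$ together with classical double negation we conclude $\mathrm{Sat}_{\Pi_{k+1}^\mathcal{P}}(\dot\pi, \vec x) \leftrightarrow \pi(\vec x)$.

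I expect the main obstacle to be not any single computation but the careful management of the standard-versus-internal distinction: one must check that the matrix witness in the definition of $\mathrm{Sat}_{\Sigma_{k+1}^\mathcal{P}}$, although quantified internally over all $\bar\Pi_k^\mathcal{P}$-formulae, is forced to be the standard matrix $\dot\psi$ whenever the input is a standard $\dot\sigma$ (so that the induction hypothesis, which speaks only of standard formulae, is genuinely applicable), and likewise that $\dot\sim\dot\pi$ coincides with the representation of the standard formula $\sim\pi$. These coherence facts follow from the construction of $\mathrm{Form}_{\mathcal{L}^0}$ and the operations $\dot\exists$ and $\dot\sim$ laid out earlier in this section, but they are exactly where one has to be vigilant.
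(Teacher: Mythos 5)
Your proposal is correct and follows essentially the same route as the paper's proof-sketch: induction on $k$, with the base case obtained by combining Lemma~\ref{absoluteness with restricted domain} with $\Delta_0^\mathcal{P}$-absoluteness for supertransitive sets (via $\mathrm{STC}(\vec m)$), and the inductive step by unpacking the definition of $\mathrm{Sat}_{\Sigma_{k+1}^\mathcal{P}}$ and applying the induction hypothesis. You are in fact somewhat more thorough than the paper, which only sketches the existential case and leaves implicit both the $\Pi_{k+1}$ case via $\dot\sim$ and the uniqueness-of-decomposition point ensuring the internal matrix is the standard $\dot\pi$.
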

\begin{proof}[Proof-sketch]
	This theorem is essentially a consequence of the properties of the satisfaction relation $\models$ between structures and formulae. This is seen by combining Lemma \ref{absoluteness with restricted domain} with the definitions above of the formulae $\mathrm{Sat}_{\Delta_0^\mathcal{P}}$, $\mathrm{Sat}_{\Sigma_k^\mathcal{P}}$ and $\mathrm{Sat}_{\Pi_k^\mathcal{P}}$ in terms of the $\Delta_1$-relation $\models$.
	
	The proof is an induction on $k$. We start with the base case $k = 0$. We work in $\mathrm{KP}$: Let $\delta \in \bar\Delta_0^\mathcal{P}$ and let $\vec{m}$ be a tuple whose length matches the number of free variables of $\delta$. Let $M = \mathrm{STC}(\vec{m})$. It follows from supertransitivity that the range of any bounded quantifier in $\delta(\vec{m})$, as a set, is an element of $M$. Therefore, we have $\delta(\vec{m}) \leftrightarrow \delta^M(\vec{m})$. Now it follows from Lemma \ref{absoluteness with restricted domain} and the definition of $\mathrm{Sat}_{\Delta_0^\mathcal{P}}$ that $\delta(\vec{x}) \leftrightarrow \mathrm{Sat}_{\Delta_0^\mathcal{P}}(\dot\delta, \vec{x})$.
	
	For the inductive step, we concentrate on verifying the case of existential quantification. We work in $\mathrm{KP}$: Let $\sigma(\vec{y}) \in \bar\Sigma_{k+1}^\mathcal{P}[\vec{y}]$ be of the form $\exists \vec{x} . \pi(\vec{x}, \vec{y})$, where $\pi(\vec{x}, \vec{y}) \in \bar\Pi_k^\mathcal{P}[\vec{x}, \vec{y}]$. Let $\vec{m}$ be an arbitrary tuple of the same length as $\vec{y}$. First by definition of $\mathrm{Sat}_{\Sigma_{k+1}^\mathcal{P}}$, then by induction hypothesis, we have
	\[
	\begin{array}{rl}
	& \mathrm{Sat}_{\Sigma_{k+1}^\mathcal{P}}(\dot\exists \dot{\vec{x}} . \dot\pi, \vec{m}) \\ 
	\Leftrightarrow & \exists \vec{x} .  \mathrm{Sat}_{\Pi_k^\mathcal{P}}(\dot\pi, \vec{x}, \vec{m})  \\
	\Leftrightarrow & \exists \vec{x} . \pi(\vec{x}, \vec{m}),  \\
	\end{array}
	\]
	as desired.
\end{proof}

{\em Remark.} There is also a more general result to the effect that the partial satisfaction relations satisfy a compositional theory of satisfaction. 

The $\mathrm{Sat}$-relations have been defined so that they apply to formulae in the sets $\bar\Sigma_k^\mathcal{P}$ and $\bar\Pi_k^\mathcal{P}$, where all the unbounded quantifiers are in front. If we wish to apply them to an arbitrary formula $\phi$, we must first replace $\phi$ by an equivalent formula of such a form. But as this is a rather tedious step, we will usually consider that step to be done implicitly. We will only need to do so for $\Sigma_1^\mathcal{P}$-formulae. For these implicit steps, we rely on the following lemma, wherein $\Sigma^\mathcal{P}$ is defined as the least superset of $\Delta_0^\mathcal{P}$ closed under conjunction, disjunction, bounded quantifiers, $\mathcal{P}$-bounded quantifiers and existential quantification. 

\begin{lemma}
If $\phi$ is $\Sigma^\mathcal{P}$, then $\phi$ is $\Sigma_1^\mathcal{P}$, i.e. there is a $\bar\Sigma_1^\mathcal{P}$-formula $\phi\hspace{1pt}'$, such that $\mathrm{KP}^\mathcal{P} + \Sigma_1^\mathcal{P}\textnormal{-Separation} \vdash \phi \leftrightarrow\phi\hspace{1pt}'$.
\end{lemma}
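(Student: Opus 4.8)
The plan is to argue by induction in the meta-theory on the build-up of the $\Sigma^\mathcal{P}$-formula $\phi$, carrying the invariant that at each stage I produce a genuinely prenex $\bar\Sigma_1^\mathcal{P}$-formula — a block of leading existential quantifiers followed by a $\bar\Delta_0^\mathcal{P}$-matrix — that is provably equivalent to the formula in hand. Since ordered pairs, $n$-tuples and their projection functions are $\Delta_0$ (as recorded in Section \ref{Power Kripke-Platek set theory}), a block $\exists x_1 . \dots . \exists x_n . \delta$ with $\delta \in \bar\Delta_0^\mathcal{P}$ contracts to a single existential $\exists p . \delta\hspace{1pt}''$ with $\delta\hspace{1pt}'' \in \bar\Delta_0^\mathcal{P}$, so I may freely treat $\bar\Sigma_1^\mathcal{P}$ as allowing a finite block of leading existentials and only contract at the very end.

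For the base case, if $\phi \in \Delta_0^\mathcal{P}$ then by definition there is $\delta \in \bar\Delta_0^\mathcal{P}$ with $\mathrm{KP}^\mathcal{P} \vdash \phi \leftrightarrow \delta$, and $\exists x . \delta$ (with $x$ not occurring in $\delta$) serves as $\phi\hspace{1pt}'$, using that the domain is non-empty so the vacuous quantifier is harmless. The inductive cases for $\wedge$, $\vee$, and the three existential operations (bounded $\exists u \in y$, $\mathcal{P}$-bounded $\exists u \subseteq y$, and unbounded $\exists z$) I would settle by pure first-order prenexing: assuming $\phi_i \leftrightarrow \exists \vec{x}_i . \delta_i$ with the $\vec{x}_i$ renamed apart and fresh, one pulls all leading existentials to the front, using the non-emptiness of the domain to introduce vacuous quantifiers where $\vee$ demands it, and using that a bounded or $\mathcal{P}$-bounded existential commutes with the unbounded ones, as in $\exists u \in y . \exists \vec{x} . \delta \leftrightarrow \exists \vec{x} . \exists u \in y . \delta$ (and likewise for $\subseteq$), the point being that the now-innermost bounded quantifier stays inside a $\bar\Delta_0^\mathcal{P}$-matrix.

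The only real work is in the two universal cases, and this is where Collection enters. Suppose $\phi \equiv \forall u \in y . \psi$ with $\psi \leftrightarrow \exists p . \delta\hspace{1pt}'(u, p)$, $\delta\hspace{1pt}' \in \bar\Delta_0^\mathcal{P}$; then $\phi$ asserts $\forall u \in y . \exists p . \delta\hspace{1pt}'(u, p)$, and by $\Delta_0^\mathcal{P}$-Collection, which is an axiom of $\mathrm{KP}^\mathcal{P}$, there is a set $w$ with $\forall u \in y . \exists p \in w . \delta\hspace{1pt}'(u, p)$, while conversely $\exists p \in w$ trivially yields $\exists p$. Hence
\[
\mathrm{KP}^\mathcal{P} \vdash \phi \leftrightarrow \exists w . \forall u \in y . \exists p \in w . \delta\hspace{1pt}'(u, p),
\]
whose matrix is $\bar\Delta_0^\mathcal{P}$, giving the desired $\bar\Sigma_1^\mathcal{P}$-form. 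For $\phi \equiv \forall u \subseteq y . \psi$ I would first use Powerset to name $\mathcal{P}(y)$ as a set and rewrite $\forall u \subseteq y$ as $\forall u \in \mathcal{P}(y)$, then apply $\Delta_0^\mathcal{P}$-Collection to the set $\mathcal{P}(y)$ exactly as above, obtaining $\phi \leftrightarrow \exists w . \forall u \subseteq y . \exists p \in w . \delta\hspace{1pt}'(u, p)$ with a $\bar\Delta_0^\mathcal{P}$-matrix (a $\mathcal{P}$-bounded $\forall$ over a bounded $\exists$).

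The hard part will be precisely these universal cases: recognising that a bounded or $\mathcal{P}$-bounded universal quantifier can be absorbed back into the $\bar\Delta_0^\mathcal{P}$-matrix only after Collection has supplied a single uniform witnessing set $w$, and that in the $\mathcal{P}$-bounded case one must route through $\mathcal{P}(y)$ so that ordinary $\Delta_0^\mathcal{P}$-Collection applies. I would remark in passing that the entire argument uses nothing beyond the axioms of $\mathrm{KP}^\mathcal{P}$ (Powerset and $\Delta_0^\mathcal{P}$-Collection), so $\Sigma_1^\mathcal{P}$-Separation, although present in the ambient theory, is not in fact invoked in establishing the equivalence.
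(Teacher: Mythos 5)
Your proposal is correct and is essentially the proof the paper intends: the paper omits the argument with a pointer to the corresponding L\'evy-hierarchy result in Ch.~1 of Barwise, and your induction on the build-up of the formula --- prenexing the existential and propositional cases, contracting quantifier blocks via $\Delta_0$ pairing, and using $\Delta_0^\mathcal{P}$-Collection to absorb $\forall u \in y$, with the one genuinely new Takahashi-hierarchy ingredient of routing $\forall u \subseteq y$ through $\mathcal{P}(y)$ (available by Powerset) so that ordinary $\Delta_0^\mathcal{P}$-Collection applies --- is exactly that adaptation. Your closing remark that only the axioms of $\mathrm{KP}^\mathcal{P}$ are used (so that $\Sigma_1^\mathcal{P}$-Separation is not actually invoked) is also accurate, modulo the small caveat that the base case appeals to whatever background theory the equivalences defining $\Delta_0^\mathcal{P}$ are taken over.
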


The proof is omitted. It follows the corresponding proof for the L\'evy hierarchy given in Ch. 1 of \cite{Bar75}.

\section{Zermelo-Fraenkel set theory and G\"odel-Bernays class theory}

\begin{ax}[{\em Zermelo-Fraenkel set theory}, $\mathrm{ZF}$] 
	$\mathrm{ZF}$ is the $\mathcal{L}^0$-theory given by Extensionality, Pair, Union, Powerset, Infinity, Separation, Replacement, and Set Foundation.
\end{ax}
$\mathrm{ZFC}$ is $\mathrm{ZF} + \textnormal{Choice}$.

If $\mathcal{L}$ is an expansion of $\mathcal{L}^0$ with more symbols, then $\mathrm{ZF}(\mathcal{L})$ denotes the theory 
$$\mathrm{ZF} + \mathcal{L} \textnormal{-Separation} + \mathcal{L} \textnormal{-Replacement},$$ 
by which is meant that the schemata of Separation and Replacement are extended to all formulae in $\mathcal{L}$. $\mathrm{ZFC}(\mathcal{L})$ is defined analogously.

The following theorem schema of $\mathrm{ZF}$ will be useful for us.

\begin{thm}[Reflection]\label{reflection thm}
	For any formula $\phi(\vec{x}) \in \mathcal{L}^0$,
	$$\mathrm{ZF} \vdash \forall \alpha_0 \in \mathrm{Ord} . \exists \alpha \in \mathrm{Ord} . (\alpha_0 < \alpha \wedge \forall \vec{x} \in V_{\alpha} . (\phi(\vec{x}) \leftrightarrow \phi^{V_{\alpha}}(\vec{x}))).$$
\end{thm}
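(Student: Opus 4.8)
The plan is to prove this schema instance-by-instance via the standard Lévy--Montague closure argument. I would fix the formula $\phi$ and, without loss of generality, assume its only logical symbols are $\neg$, $\wedge$ and $\exists$ (handling $\forall$, $\vee$, $\rightarrow$ by the usual abbreviations). Enumerate the finitely many subformulae of $\phi$ as $\phi_1, \dots, \phi_n$ with $\phi_n \equiv \phi$. For each index $i$ for which $\phi_i$ has the form $\exists y . \phi_j(y, \vec{x})$, I would introduce a witness-rank function
\[
G_i(\vec{x}) = \text{the least } \beta \in \mathrm{Ord} \text{ such that } \big( \exists y . \phi_j(y, \vec{x}) \rightarrow \exists y \in V_\beta . \phi_j(y, \vec{x}) \big),
\]
which is a class function, total on all tuples $\vec{x}$ (if no witness exists, $\beta = 0$ works; otherwise $V_{\rnk(y)+1}$ contains some witness) and definable in $\mathcal{L}^0$, using that $\mathrm{ZF}$ proves the $V$-hierarchy is total on the ordinals (the Proposition on $V(x,y)$).

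The heart of the argument is to build, by recursion on $k < \omega$ inside $\mathrm{ZF}$, a strictly increasing sequence of ordinals $\alpha_0 < \alpha_1 < \dots$ starting at the given $\alpha_0$ and \emph{closed} under the finitely many functions $G_i$. Having obtained $\alpha_k$, I apply Replacement to the set $V_{\alpha_k}$: for each relevant $i$ the image $\{ G_i(\vec{x}) \mid \vec{x} \in V_{\alpha_k} \}$ is a set of ordinals, so the supremum over the (finitely many) $i$ exists; I take $\alpha_{k+1}$ to be an ordinal exceeding both this supremum and $\alpha_k$. Setting $\alpha = \sup_{k < \omega} \alpha_k$ yields a limit ordinal with $\alpha_0 < \alpha$ and, crucially, $V_\alpha = \bigcup_{k < \omega} V_{\alpha_k}$. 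Hence whenever $\vec{x} \in V_\alpha$ there is some $k$ with $\vec{x} \in V_{\alpha_k}$, so $G_i(\vec{x}) \le \alpha_{k+1} \le \alpha$, meaning any witness for an existential subformula already lives in $V_\alpha$.

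It then remains to verify $\forall \vec{x} \in V_\alpha . (\phi_i(\vec{x}) \leftrightarrow \phi_i^{V_\alpha}(\vec{x}))$ for every $i$, which I would establish by a metatheoretic induction on the subformula structure. The atomic cases are immediate since $\in$ and $=$ are absolute under relativization, and the cases of $\neg$ and $\wedge$ follow because relativization to $V_\alpha$ commutes with the propositional connectives. The only substantial case is $\phi_i \equiv \exists y . \phi_j(y, \vec{x})$: the implication $\phi_i^{V_\alpha}(\vec{x}) \rightarrow \phi_i(\vec{x})$ is immediate from the induction hypothesis applied to the tuple $(y, \vec{x}) \in V_\alpha$, while for $\phi_i(\vec{x}) \rightarrow \phi_i^{V_\alpha}(\vec{x})$ one uses closure of $\alpha$ under $G_i$ to place a witness $y$ inside $V_\alpha$, and then the induction hypothesis to pass from $\phi_j(y, \vec{x})$ to $\phi_j^{V_\alpha}(y, \vec{x})$. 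Taking $i = n$ gives the theorem.

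I expect the main obstacle to be the closure construction: arranging the recursion and the appeal to Replacement so that the single limit ordinal $\alpha$ is genuinely closed under \emph{all} the witness functions simultaneously. The finiteness of the set of subformulae (fixed once $\phi$ is fixed) is exactly what makes both the simultaneous closure and the concluding induction legitimate within a single $\mathrm{ZF}$-proof, so that no internal satisfaction predicate or coding of the metatheoretic induction is needed.
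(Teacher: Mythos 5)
Your proof is correct: it is the standard L\'evy--Montague reflection argument (definable witness-rank functions, closure along a strictly increasing $\omega$-chain of ordinals obtained via Replacement, a limit stage $\alpha$ with $V_\alpha = \bigcup_{k<\omega} V_{\alpha_k}$, and a metatheoretic induction on the finitely many subformulae of the fixed $\phi$). The paper states Theorem \ref{reflection thm} without proof, treating it as a well-known theorem schema of $\mathrm{ZF}$, so your argument supplies exactly the standard proof being relied upon; the only cosmetic detail to add in a fully formal write-up is that the recursion defining $\alpha_{k+1}$ should make a canonical choice (e.g.\ the least ordinal exceeding $\alpha_k$ and the relevant suprema) so that the sequence $k \mapsto \alpha_k$ is a definable class function.
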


The following class theory is closely associated with $\mathrm{ZF}$.

\begin{ax}[{\em G\"odel-Bernays set theory}, $\mathrm{GB}$] $\mathrm{GB}$ is an $\mathcal{L}^1$-theory. Recall that in $\mathcal{L}^1$ we have a sort $\mathsf{Set}$ (over which lowercase variables range) and a sort $\mathsf{Class}$ (over which uppercase variables range); moreover $\mathsf{Set}$ is a subsort of $\mathsf{Class}$. The axioms presented in Definition \ref{axioms of set theory}, were all given with lowercase variables, so in the present context they are axioms on the sort $\mathsf{Set}$. $\mathrm{GB}$ may be given by these axioms and axiom schemata:
	\[
	\begin{array}{ll}
	\textnormal{Class Extensionality} & \forall X . \forall Y . \big((\forall u . ( u \in X \leftrightarrow u \in Y)) \rightarrow X = Y\big) \\
	\textnormal{Pair} & \\
	\textnormal{Union} & \\
	\textnormal{Powerset} & \\
	\textnormal{Infinity} & \\
%	\textnormal{Separation} & \\
%	\textnormal{Replacement} & \\
	\textnormal{Extended Separation} & \forall x . \exists y . \forall u . (u \in y \leftrightarrow (u \in x \wedge \phi(u))) \\
	\textnormal{Class Comprehension} & \exists Y . \forall u . (u \in Y \leftrightarrow \phi(u)) \\
	\textnormal{Class Replacement} & \forall F . \big( (\exists x . \exists Y . F : x \rightarrow Y) \rightarrow \\
	&\exists y . \forall v . (v \in y \leftrightarrow \exists u \in x . F(u) = v) \big) \\
	\textnormal{Class Foundation} & \forall X . (X \neq \varnothing \rightarrow \exists u \in X . u \cap X = \varnothing) \\	
	\end{array}
	\]
In the axiom schemata of $\textnormal{Extended Separation}$ and $\textnormal{Class Comprehension}$, $\phi$ ranges over $\mathcal{L}^1$-formulae in which all variables of sort $\mathsf{Class}$ are free, and in which the variables $y$ and $Y$ do not appear. $V$ denotes the class $\{x \mid \top \}$. $\mathrm{GBC}$ is $\mathrm{GB}$ plus this axiom:
\[
\begin{array}{ll}
	\textnormal{Global Choice} & \exists F . ( (F : V \setminus \{\varnothing\} \rightarrow V) \wedge \forall x \in V \setminus \{\varnothing\} . F(x) \in x ) \\
\end{array}
\]
It is well known that $\mathrm{GBC}$ is conservative over $\mathrm{ZFC}$.

We will also consider this axiom ``$\mathrm{Ord}$ is weakly compact'', in the context of $\mathrm{GBC}$. It is defined as ``Every binary tree of height $\mathrm{Ord}$ has a branch.'' The new notions used in the definiens are now to be defined. Let $\alpha$ be an ordinal. A {\em binary tree} is a (possibly class) structure $\mathcal{T}$ with a binary relation $<_\mathcal{T}$, such that:
\begin{enumerate}[(i)]
	\item Every element of $\mathcal{T}$ (called a {\em node}) is a function from an ordinal to $2$;
	\item For every $f \in \mathcal{T}$ and every ordinal $\xi < \dom(f\hspace{2pt})$, we have $f\restriction_\xi \in \mathcal{T}$;
	\item For all $f, g \in \mathcal{T}$, 
	\[
	f <_\mathcal{T} g \Leftrightarrow \dom(f\hspace{2pt}) < \dom(g) \wedge g\restriction_{\dom(f\hspace{2pt})} = f;
	\]
\end{enumerate}
Suppose that $\mathcal{T}$ is a binary tree. The {\em height} of $\mathcal{T}$, denoted $\mathrm{height}(\mathcal{T})$, is $\{ \xi \mid \exists f \in \mathcal{T} . \dom(f\hspace{2pt}) = \xi \}$ (which is either an ordinal or the class $\mathrm{Ord}$). A {\em branch} in $\mathcal{T}$ is a (possibly class) function $F : \mathrm{Ord} \rightarrow \mathcal{T}$, such that for all ordinals $\xi \in \mathrm{height}(\mathcal{T})$, $\dom(F(\alpha)) = \alpha$, and for all ordinals $\alpha < \beta \in \mathrm{height}(\mathcal{T})$, $F(\alpha) <_\mathcal{T} F(\beta)$. Moreover, for each ordinal $\alpha \in \mathrm{height}(\mathcal{T})$, we define 
\[
\mathcal{T}_\alpha =_\df \mathcal{T}\restriction_{\{ f \in \mathcal{T} \mid \dom(f\hspace{2pt}) < \alpha \}}.
\]
\end{ax}

In \cite{Ena04} it is shown that the $\mathcal{L}^0$-consequences of $\mathrm{GBC} + \textnormal{``$\mathrm{Ord}$ is weakly compact''}$ are the same as for $\mathrm{ZFC} + \Phi$, where 
$$\Phi = \{ \exists \kappa . \textnormal{``$\kappa$ is $n$-Mahlo and $V_\kappa \prec_{\Sigma_n} V$''} \mid n \in \mathbb{N}\}.$$ 
In particular, they are equiconsistent. Let us therefore define $n$-Mahlo and explain why $V_\kappa \prec_{\Sigma_n} V$ can be expressed as a sentence. 

$V_\kappa \prec_{\Sigma_n} V$ is expressed by a sentence saying that for all $\Sigma_n$-formulae $\phi(\vec{x})$ of set theory and for all $\vec{a} \in V_\kappa$ matching the length of $\vec{x}$, we have $((V_\kappa, \in\restriction_{V_\kappa}) \models \phi(\vec{a})) \leftrightarrow \mathrm{Sat}_{\Sigma_n}(\phi, \vec{a})$. Here we utilize the partial satisfaction relations $\mathrm{Sat}_{\Sigma_n}$, introduced to set theory in \cite{Lev65}.

Let $\kappa$ be a cardinal. $\kappa$ is {\em regular} if there is no unbounded function from a proper initial segment of $\kappa$ to $\kappa$. $\kappa$ is inaccessible if it is regular and for all cardinals $\lambda < \kappa$, we have $2^\lambda < \kappa$. If $\kappa$ is a regular cardinal, then we define that $C \subseteq \kappa$ is a {\em club} of $\kappa$ if it is unbounded in $\kappa$ and closed under suprema; and we define that $S \subseteq \kappa$ is {\em stationary} in $\kappa$ if it has non-empty intersection with every club of $\kappa$. $\kappa$ is {\em Mahlo} if it is inaccessible and the set $\{ \lambda < \kappa \mid \textnormal{``$\lambda$ is inaccessible''}\}$ is stationary in $\kappa$. $\kappa$ is $0${\em -Mahlo} if it is inaccessible. Recursively, for ordinals $\alpha > 0$, we define that $\kappa$ is $\alpha${\em -Mahlo} if for each $\beta < \alpha$ the set $\{ \lambda < \kappa \mid \textnormal{``$\lambda$ is $\beta$-Mahlo''}\}$ is stationary in $\kappa$. For example, $\kappa$ is Mahlo iff it is $1$-Mahlo.

In contrast to the result above, Enayat has communicated to the author that there are countable models of $\mathrm{ZFC} + \Phi$ that do not expand to models of $\mathrm{GBC} + \textnormal{``$\mathrm{Ord}$ is weakly compact''}$. In particular, such is the fate of {\em Paris models}, i.e. models of $\mathrm{ZFC}$ each of whose ordinals is definable in the model. An outline of a proof: Let $\mathcal{M}$ be a Paris model. There is no ordinal $\alpha$ in $\mathcal{M}$, such that $\mathcal{M}_\alpha \prec \mathcal{M}$, where $\mathcal{M}_\alpha =_\df ((V_\alpha^\mathcal{M})_\mathcal{M}, \in^\mathcal{M}\restriction_{(V_\alpha^\mathcal{M})_\mathcal{M}})$, because that would entail that a full satisfaction relation is definable in $\mathcal{M}$ contradicting Tarski\hspace{1pt}'s well-known theorem on the undefinability of truth. Suppose that $\mathcal{M}$ expands to a model $(\mathcal{M}, \mathcal{A}) \models \mathrm{GBC} + \textnormal{``$\mathrm{Ord}$ is weakly compact''}$. Then by the proof of Theorem 4.5(i) in \cite{Ena01}, $\mathcal{M}$ has a full satisfaction relation $\mathrm{Sat} \in \mathcal{A}$ (see the definition preceding Lemma \ref{rec sat reflection}) below. But then there is, by Lemma \ref{rec sat reflection}, unboundedly many ordinals $\alpha$ in $\mathcal{M}$ such that $\mathcal{M}_\alpha \prec \mathcal{M}$.

Moreover, it is shown in \cite{EH17} that for every model $\mathcal{M} \models \mathrm{ZFC}$ and the collection $\mathcal{A}$ of definable subsets of $\mathcal{M}$, we have $(\mathcal{M}, \mathcal{A}) \models \mathrm{GBC} + \neg \textnormal{``$\mathrm{Ord}$ is weakly compact''}$.

\section{Non-standard models of set theory}\label{Models of set theory}

If $\mathcal{M}$ is an $\mathcal{L}^0$-structure, and $a$ is a set or class in $\mathcal{M}$, then $a_\mathcal{M}$ denotes $\{x \in \mathcal{M} \mid x \in^\mathcal{M} a\}$. If $E = \in^\mathcal{M}$, then the notation $a_E$ is also used for $a_\mathcal{M}$. If $f \in \mathcal{M}$ codes a function internal to $\mathcal{M}$, then $f_\mathcal{M}$ also denotes the externalization of this function: $\forall x, y \in \mathcal{M} . (f_\mathcal{M}(x) = y \leftrightarrow \mathcal{M} \models f\hspace{2pt}(x) = y)$. Moreover, if $a \in \mathcal{M}$ codes a structure internal to $\mathcal{M}$, then $a_\mathcal{M}$ also denotes the externalization of this structure; in particular, if $R$ codes a relation in $\mathcal{M}$, then $\forall x, y \in \mathcal{M} . ( x R_\mathcal{M} y \leftrightarrow \mathcal{M} \models \langle x, y \rangle \in R)$. For example, recall that $\mathbb{N}^\mathcal{M}$ denotes the interpretation of $\mathbb{N}$ in $\mathcal{M}$ (assuming that $\mathcal{M}$ satisfies that the standard model of arithmetic exists); then $\mathbb{N}^\mathcal{M}_\mathcal{M}$ denotes the externalization of this model (which might be non-standard). 

Let $\mathcal{M} \models \mathrm{KP}$. Then by Proposition \ref{rank}, every element of $\mathcal{M}$ has a rank; so for any $\alpha \in \mathrm{Ord}^\mathcal{M}$, we can define 
\[
\mathcal{M}_\alpha =_\df \{m \in \mathcal{M} \mid \mathcal{M} \models \rnk(m) < \alpha\}.
\]
We say that an embedding $i : \mathcal{S} \rightarrow \mathcal{M}$ of an $\mathcal{L}^0$-structure $\mathcal{S}$ into $\mathcal{M}$ is {\em bounded} (by $\alpha \in \mathrm{Ord}^\mathcal{M}$) if $i(\mathcal{S}) \subseteq \mathcal{M}_\alpha$.

\begin{dfn}
Let $\mathcal{M} = (M, E)$ be a model in $\mathcal{L}^0$. It is {\em standard} if $E$ is well-founded. Assume that $\mathcal{M}$ is a model of $\mathrm{KP}$. Then the usual rank-function $\rnk : M \rightarrow \mathrm{Ord}^\mathcal{M}$ is definable in $\mathcal{M}$. Therefore $\mathcal{M}$ is non-standard iff $E \restriction_{\mathrm{Ord}^\mathcal{M}}$ is not well-founded. $m \in \mathcal{M}$ is {\em standard in} $\mathcal{M}$ if $\mathcal{M}\restriction_{m_E}$ is standard. 
\begin{itemize}
\item The {\em ordinal standard part} of $\mathcal{M}$, denoted $\mathrm{OSP}(\mathcal{M})$, is defined:
\[
\mathrm{OSP}(\mathcal{M}) =_\df \{\alpha \in \mathcal{M} \mid \text{``}\alpha \text{ is a standard ordinal of $\mathcal{M}$''}\}. 
\]
\item The {\em well-founded part} of $\mathcal{M}$, denoted $\mathrm{WFP}(\mathcal{M})$, is the substructure of $\mathcal{M}$ on the elements of standard rank:
\[
\mathrm{WFP}(\mathcal{M}) =_\df \mathcal{M}\restriction_{\{x \in \mathcal{M} \mid \text{``$x$ is standard in $\mathcal{M}$''}\}}.
\]
\item A set of the form $c_E \cap A$, where $c \in M$ and $A \subseteq M$, is said to be a subset of $A$ {\em coded in} $\mathcal{M}$. This notion is extended in the natural way to arbitrary injections into $M$. We define:
\[
\mathrm{Cod}_A(\mathcal{M}) =_\df \{c_E \cap A \mid c \in M\}.
\]
\item The {\em standard system of $\mathcal{M}$ over $A \subseteq \mathcal{M}$}, denoted $\mathrm{SSy}_A(\mathcal{M})$, is obtained by expanding $\mathcal{M}\restriction_A$ to an $\mathcal{L}^1$-structure, adding $\mathrm{Cod}_A(\mathcal{M})$ as classes:
\begin{align*}
\mathrm{SSy}_A(\mathcal{M}) &=_\df (\mathcal{M}\restriction_A, \mathrm{Cod}_A(\mathcal{M})), \\
x \in^{\mathrm{SSy}_A(\mathcal{M})} C &\Leftrightarrow_\df x \in^\mathcal{M} c, 
\end{align*}
for any $x \in A$, $c \in \mathcal{M}$ and $C \in \mathrm{Cod}_A(\mathcal{M})$, such that $C = c_E \cap A$.

Moreover, we define 
\[
\mathrm{SSy}(\mathcal{M}) =_\df \mathrm{SSy}_{\mathrm{WFP}(\mathcal{M})}(\mathcal{M}).
\]
\end{itemize}
\end{dfn}

Let $i : (\mathcal{M}, \mathcal{A}) \rightarrow (\mathcal{N, \mathcal{B}})$ be an embedding between $\mathcal{L}^1$-structures. Then $\forall x \in \mathcal{M} . \forall X \in \mathcal{A} . x \in^{(\mathcal{M}, \mathcal{A})} X \Leftrightarrow i(x) \in^{(\mathcal{N}, \mathcal{B})} i(X_\mathcal{M})$. Thus we can relate the two objects $i(X)$ and $i(X_\mathcal{M})$ as follows. Note that $i(X)$ is $i$ applied to the class $X$ as a member of $\mathcal{A}$, while $i(X_\mathcal{M})$ is the set $\{i(x) \mid x \in X_\mathcal{M}\}$ (i.e. the pointwise application of $i$ to $X_\mathcal{M}$ as a subset of $\mathcal{M}$). Indeed, we have $i(X_\mathcal{M}) = (i(X))_\mathcal{N} \cap i(\mathcal{M})$, for all $X \in \mathcal{A}$. 

Let $i : \mathcal{M} \rightarrow \mathcal{N}$ be an embedding that extends to an embedding of the $\mathcal{L}^1$-structures under consideration. Then $(\mathcal{M}, \mathcal{A}) \leq_i (\mathcal{N}, \mathcal{B})$. By the above, we have
\[
(\mathcal{M}, \mathcal{A}) \leq_i (\mathcal{N}, \mathcal{B}) \Leftrightarrow \forall X \in \mathcal{A} . \exists Y \in \mathcal{B} . i(X_\mathcal{M}) = Y_\mathcal{N} \cap i(\mathcal{M}).
\]
If $i : \mathcal{M} \rightarrow \mathcal{N}$ is an isomorphism that extends to an isomorphism of the $\mathcal{L}^1$-structures under consideration, then $(\mathcal{M}, \mathcal{A}) \cong_i (\mathcal{N}, \mathcal{B})$. By the above, we have 
\[
(\mathcal{M}, \mathcal{A}) \cong_i (\mathcal{N}, \mathcal{B}) \Leftrightarrow \forall X \in \mathcal{A} . i(X_\mathcal{M}) \in \mathcal{B} \wedge \forall Y \in \mathcal{B} . i^{-1}(Y_\mathcal{N}) \in \mathcal{A}.
\]
Note that if $(\mathcal{M}, \mathcal{A}) \leq_i (\mathcal{N}, \mathcal{B})$ and $(\mathcal{M}, \mathcal{A}) \geq_i (\mathcal{N}, \mathcal{B})$, then $(\mathcal{M}, \mathcal{A}) \cong_i (\mathcal{N}, \mathcal{B})$.

\begin{dfn}\label{special embeddings}
Let $i : \mathcal{M} \rightarrow \mathcal{N}$ be an embedding of $\mathcal{L}^0$-structures, where $\mathcal{N} \models \mathrm{KP}$.
\begin{itemize}
\item $i$ is {\em cofinal}, if 
\[
\forall  n \in \mathcal{N} . \exists m \in \mathcal{M} . n \in^\mathcal{N} i(m).
\]
We write $\mathcal{M} \leq^\mathrm{cf} \mathcal{N}$ if there is such an embedding. Moreover, $\llbracket \mathcal{M} \leq^\mathrm{cf} \mathcal{N} \rrbracket$ denotes the set of all such embeddings.
\item $i$ is {\em initial}, if
\[
\forall m \in \mathcal{M} . \forall n \in \mathcal{N} . (n \in^\mathcal{N} i(m) \rightarrow n \in i(\mathcal{M})).
\]
This is equivalent to:
\[
\forall m \in \mathcal{M} . i(m_\mathcal{M}) = i(m)_\mathcal{N}.
\]
We write $\mathcal{M} \leq^\mathrm{initial} \mathcal{N}$ if there is such an embedding. Moreover, $\llbracket \mathcal{M} \leq^\mathrm{initial} \mathcal{N} \rrbracket$ denotes the set of all such embeddings.
%\item $i$ is $H${\em -initial} (or {\em hereditarily inital}), if 
%\[
%\forall m \in \mathcal{M} . \forall n \in \mathcal{N} . (n \in^\mathcal{N} \mathrm
%{TC}^\mathcal{N}(m) \rightarrow n \in i(\mathcal{M})).
%\]
%We write $\mathcal{M} \leq^H \mathcal{N}$ if there is such an embedding. Moreover, $\llbracket \mathcal{M} \leq^H \mathcal{N} \rrbracket$ denotes the set of all such embeddings.
\item $i$ is {\em $\mathcal{P}$-initial} (or {\em power-initial}), if it is initial and {\em powerset preserving} in the sense:
\[
\forall m \in \mathcal{M} . \forall n \in \mathcal{N} . (n \subseteq^\mathcal{N} m \rightarrow n \in i(\mathcal{M})).
\]
We write $\mathcal{M} \leq^\mathcal{P} \mathcal{N}$ if there is such an embedding. Moreover, $\llbracket \mathcal{M} \leq^\mathcal{P} \mathcal{N} \rrbracket$ denotes the set of all such embeddings.
\item $i$ is {\em rank-initial}, if 
\[
\forall m \in \mathcal{M} . \forall n \in \mathcal{N} . (\rnk^\mathcal{N}(n) \leq^\mathcal{N} \rnk^\mathcal{N}(i(m)) \rightarrow n \in i(\mathcal{M})).
\]
Note that $\forall x \in \mathcal{M} . i(\rnk^\mathcal{M}(x)) = \rnk^\mathcal{N}(i(x))$. So the above is equivalent to 
\[
\forall \mu \in \mathrm{Ord}^\mathcal{M} . \forall n \in \mathcal{N} . (\rnk^\mathcal{N}(n) \leq^\mathcal{N} i(\mu) \rightarrow n \in i(\mathcal{M})).
\]
We write $\mathcal{M} \leq^\rnk \mathcal{N}$ if there is such an embedding. Moreover, $\llbracket \mathcal{M} \leq^\rnk \mathcal{N} \rrbracket$ denotes the set of all such embeddings.
\item $i$ is {\em topless}, if it is bounded and
\[
\forall \beta \in \mathrm{Ord}^\mathcal{N} \setminus i(\mathcal{M}) . \exists \beta\hspace{1pt}' \in \mathrm{Ord}^\mathcal{N} \setminus i(\mathcal{M}) . \beta\hspace{1pt}' <^\mathcal{N} \beta. 
\]
We write $\mathcal{M} \leq^\mathrm{topless} \mathcal{N}$ if there is such an embedding. Moreover, $\llbracket \mathcal{M} \leq^\mathrm{topless} \mathcal{N} \rrbracket$ denotes the set of all such embeddings.
\item $i$ is {\em strongly topless}, if it is bounded and for each $f \in \mathcal{N}$ with $\alpha, \beta \in \mathrm{Ord}^\mathcal{N}$ satisfying
\[
(\mathcal{N} \models f : \alpha \rightarrow \beta) \wedge \alpha_\mathcal{N} \supseteq i(\mathrm{Ord}^\mathcal{M}),
\]
there is $\nu \in \mathrm{Ord}^\mathcal{N} \setminus i(\mathcal{M})$ such that for all $\xi \in i(\mathrm{Ord}^\mathcal{M})$,
\[
f_\mathcal{N}(\xi) \not\in i(\mathcal{M}) \Leftrightarrow \mathcal{N} \models f\hspace{2pt}(\xi) > \nu. 
\]
We write $\mathcal{M} \leq^{\textnormal{s-topless}} \mathcal{N}$ if there is such an embedding. Moreover, $\llbracket \mathcal{M} \leq^{\textnormal{s-topless}} \mathcal{N} \rrbracket$ denotes the set of all such embeddings.
\item $i$ is $\omega${\em -topless}, if it is bounded and not $\omega${\em -coded from above}, meaning that for each $f \in \mathcal{N}$ with $\alpha, \beta \in \mathrm{Ord}^\mathcal{N}$ satisfying
\[
(\mathcal{N} \models f : \alpha \rightarrow \beta) \wedge \alpha_\mathcal{N} \supseteq \omega \wedge \forall k < \omega . f_\mathcal{N}(k) \in \mathrm{Ord}^\mathcal{N} \setminus i(\mathcal{M}),
\]
there is $\nu \in \mathrm{Ord}^\mathcal{N} \setminus i(\mathcal{M})$ such that $\nu <^\mathcal{N} f_\mathcal{N}(k)$, for all $k < \omega$.

We write $\mathcal{M} \leq^{\omega\textnormal{-topless}} \mathcal{N}$ if there is such an embedding. Moreover, $\llbracket \mathcal{M} \leq^{\omega\textnormal{-topless}} \mathcal{N} \rrbracket$ denotes the set of all such embeddings.
\end{itemize}

The notions of initiality are often combined with some notion of toplessness, yielding notions of {\em cut}. In particular, an embedding $i$ is a {\em rank-cut} if it is topless and rank-initial, and $i$ is a {\em strong rank-cut} if it is strongly topless and rank-initial.

For any $\mathrm{notion} \in \{\mathrm{initial}, H, \mathcal{P}, \rnk, \mathrm{topless}, \dots\}$, the symbols `$\leq^\mathrm{notion}$' and `$\llbracket \mathcal{M} \leq^\mathrm{notion} \mathcal{N} \rrbracket$' may be decorated as done in Section \ref{Basic logic and model theory}. For example, we will be concerned with the set $\llbracket \mathcal{M} \leq_\mathcal{S}^\rnk \mathcal{N} \rrbracket$ of rank-initial embeddings over $\mathcal{S}$ from $\mathcal{M}$ to $\mathcal{N}$, where $\mathcal{S}$ is some structure embeddable into both $\mathcal{M}$ and $\mathcal{N}$.

Note that the definitions of initiality and $\mathcal{P}$-initiality also make sense when $\mathcal{N}$ is a mere $\mathcal{L}^0$-structure, not necessarily satisfying $\mathrm{KP}$.
\end{dfn}

It is easily seen that if $i$ is rank-initial and proper, then it is bounded, so the first condition of toplessness is satisfied.

We immediately obtain the following implications:
\[
\text{$i$ is initial } \Leftarrow \text{$i$ is $\mathcal{P}$-initial $\Leftarrow$ $i$ is rank-initial},
\]
\[
\text{$i$ is topless} \Leftarrow \text{$i$ is $\omega$-topless} \Leftarrow \text{$i$ is strongly topless}.
\]

\begin{lemma}[$\Sigma_1^\mathcal{P}$-Overspill]
Suppose that $\mathcal{S}$ is a rank-initial topless substructure of $\mathcal{M} \models \mathrm{KP}^\mathcal{P}$, that $m \in \mathcal{M}$, and that $\sigma(x, y) \in \Sigma_1^\mathcal{P}[x, y]$. If for every $\xi \in \mathrm{Ord}(\mathcal{S})$, there is $\xi < \zeta \in \mathrm{Ord}(\mathcal{S})$ such that $\mathcal{M} \models \sigma(\zeta, m)$, then there is an ordinal $\mu \in \mathcal{M} \setminus \mathcal{S}$, such that $\mathcal{M} \models \sigma(\mu, m)$.
\end{lemma}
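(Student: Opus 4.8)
The plan is to reduce the statement to a single application of $\Pi_1^\mathcal{P}$-Foundation, which $\mathrm{KP}^\mathcal{P}$ provides. First I would fix, over $\mathrm{KP}^\mathcal{P}$, a $\Delta_0^\mathcal{P}$-formula $\delta$ with $\sigma(x,y)\leftrightarrow\exists z.\delta(z,x,y)$, and introduce the auxiliary formula
$$\psi(\alpha)\ :\equiv\ \exists z.\exists\zeta.(\alpha<\zeta\wedge\mathrm{Ord}(\zeta)\wedge\delta(z,\zeta,m)),$$
which asserts that some ordinal above $\alpha$ satisfies $\sigma(\cdot,m)$. Since $\alpha<\zeta$ and $\mathrm{Ord}(\zeta)$ are $\Delta_0$ and $\delta$ is $\Delta_0^\mathcal{P}$, the formula $\psi$ is $\Sigma_1^\mathcal{P}$, so $\neg\psi$ is $\Pi_1^\mathcal{P}$. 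The hypothesis of the lemma says exactly that for each $\xi\in\mathrm{Ord}(\mathcal{S})$ there is a larger $\zeta\in\mathrm{Ord}(\mathcal{S})$ with $\mathcal{M}\models\sigma(\zeta,m)$, and hence $\mathcal{M}\models\psi(\xi)$ for every $\xi\in\mathrm{Ord}(\mathcal{S})$.

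Next I would record two facts about $\mathcal{S}$ viewed through its inclusion into $\mathcal{M}$: since the inclusion is rank-initial and the rank of an ordinal is itself, $\mathrm{Ord}(\mathcal{S})$ is an initial segment of $\mathrm{Ord}^\mathcal{M}$; and since the inclusion is bounded, $\mathrm{Ord}^\mathcal{M}\setminus\mathrm{Ord}(\mathcal{S})$ is nonempty. Now I split on whether $\psi$ holds at any ordinal outside $\mathcal{S}$. If $\mathcal{M}\models\psi(\alpha)$ for some $\alpha\in\mathrm{Ord}^\mathcal{M}\setminus\mathrm{Ord}(\mathcal{S})$, then because $\mathrm{Ord}(\mathcal{S})$ is an initial segment such an $\alpha$ lies above all of $\mathrm{Ord}(\mathcal{S})$; unpacking $\psi(\alpha)$ yields $\zeta>\alpha$ with $\mathcal{M}\models\sigma(\zeta,m)$, and this $\zeta$ is the required ordinal $\mu\in\mathcal{M}\setminus\mathcal{S}$.

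Otherwise $\neg\psi$ holds at every ordinal above the cut, so the $\Pi_1^\mathcal{P}$-formula $\chi(x):\equiv\mathrm{Ord}(x)\wedge\neg\psi(x)$ is satisfiable in $\mathcal{M}$ (by any ordinal witnessing boundedness) and, using that $\psi$ holds throughout $\mathrm{Ord}(\mathcal{S})$, is satisfied precisely by the ordinals in $\mathrm{Ord}^\mathcal{M}\setminus\mathrm{Ord}(\mathcal{S})$. Applying $\Pi_1^\mathcal{P}$-Foundation to $\chi$ (with parameter $m$) produces a $y$ with $\chi(y)$ and $\forall v\in y.\neg\chi(v)$; since $y$ is an ordinal this says $y\notin\mathrm{Ord}(\mathcal{S})$ while every ordinal below $y$ lies in $\mathrm{Ord}(\mathcal{S})$, i.e. $y$ is the least ordinal above the cut. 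This contradicts toplessness, which forbids $\mathrm{Ord}^\mathcal{M}\setminus\mathrm{Ord}(\mathcal{S})$ from having a least element; hence this case cannot occur and the first case delivers the desired $\mu$. The main obstacle is choosing the auxiliary formula $\psi$ correctly: it must stay $\Sigma_1^\mathcal{P}$, so that its negation falls under the available $\Pi_1^\mathcal{P}$-Foundation, while still encoding the unboundedness of $\sigma$ below the cut, and one must notice that Foundation applied to $\neg\psi$ pins down the least ordinal above the cut, which is exactly what toplessness excludes. The remaining steps—the complexity bookkeeping for $\psi$ and $\chi$, and the elementary observation that rank-initiality makes $\mathrm{Ord}(\mathcal{S})$ an initial segment—are routine.
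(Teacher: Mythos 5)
Your proof is correct and follows essentially the same route as the paper's: both introduce the auxiliary $\Sigma_1^\mathcal{P}$-formula asserting that some ordinal above $x$ satisfies $\sigma(\cdot,m)$, apply $\Pi_1^\mathcal{P}$-Foundation (equivalently $\Sigma_1^\mathcal{P}$-Set Induction) to its negation to obtain a minimal failure point, and use toplessness of $\mathcal{S}$ to conclude. The only difference is cosmetic: the paper extracts the witness directly by stepping below the minimal failure point via toplessness, whereas you run the same minimality argument as a contradiction (the minimal failure would be a least ordinal above the cut) and extract the witness from the surviving case.
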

\begin{proof}
Let $\sigma\hspace{1pt}'(x, y)$ be the $\Sigma_1^\mathcal{P}[x, y]$-formula 
$$\mathrm{Ord}(x) \wedge \exists x\hspace{1pt}' . (\mathrm{Ord}(x\hspace{1pt}') \wedge x\hspace{1pt}' > x \wedge \sigma(x\hspace{1pt}', y)).$$ 
The antecedent of the claim implies that $\mathcal{M} \models \sigma\hspace{1pt}'(\xi, m)$, for all ordinals $\xi \in \mathcal{S}$. If $\mathcal{M} \models \forall \xi . (\mathrm{Ord}(\xi) \rightarrow \sigma\hspace{1pt}'(\xi, m))$, then we are done; so suppose not. By $\Sigma_1^\mathcal{P}$-Set induction, there is an ordinal $\alpha \in \mathcal{M} \setminus \mathcal{S}$, such that $\mathcal{M} \models \neg \sigma\hspace{1pt}'(\alpha, m)$, but $\mathcal{M} \models \sigma\hspace{1pt}'(\xi, m)$ for all ordinals $\xi < \alpha$ in $\mathcal{M}$. So since $\mathcal{S}$ is topless, there is a an ordinal $\alpha > \beta \in \mathcal{M} \setminus \mathcal{S}$ such that $\mathcal{M} \models \sigma\hspace{1pt}'(\beta, m)$, whence there is an ordinal $\mu > \beta$ in $\mathcal{M}$ such that $\mathcal{M} \models \sigma(\mu, m)$, as desired.
\end{proof}

\begin{prop}\label{elem initial is rank-initial}
	Let $\mathcal{M} \models \mathrm{KP}^\mathcal{P}$ and suppose that $i : \mathcal{M} \rightarrow \mathcal{N}$ is an elementary embedding. Then $i$ is initial if, and only if, it is rank-initial.
\end{prop}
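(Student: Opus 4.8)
The plan is to dispose of the forward direction immediately: the chain of implications recorded just before the statement already gives that every rank-initial embedding is $\mathcal{P}$-initial, hence initial, with no appeal to elementarity. So all the content lies in the converse, namely that an \emph{initial} elementary $i$ is automatically rank-initial.

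For the converse I would work with the cumulative hierarchy given by the $V$-function, which $\mathrm{KP}^\mathcal{P}$ proves total on the ordinals. Using the equivalent formulation of rank-initiality stated in Definition \ref{special embeddings}, it suffices to fix $\mu \in \mathrm{Ord}^\mathcal{M}$ and $n \in \mathcal{N}$ with $\rnk^\mathcal{N}(n) \leq^\mathcal{N} i(\mu)$ and then show $n \in i(\mathcal{M})$. The idea is to exhibit a single element $a \in \mathcal{M}$ whose image $i(a)$ already contains $n$; initiality then finishes the job.

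Concretely, take $a = V_{\mu+1}^\mathcal{M}$, which exists in $\mathcal{M}$ since $\mathrm{KP}^\mathcal{P}$ proves that $V_\rho$ exists for every ordinal $\rho$. Because the $V$-function and the successor operation are definable and $i$ is elementary, I would transport them across $i$ to obtain $i(\mu+1) = i(\mu)+1$ and $i(a) = i(V_{\mu+1}^\mathcal{M}) = V_{i(\mu)+1}^\mathcal{N}$. Now apply the defining property $x \in V_\rho \leftrightarrow \rnk(x) < \rho$ (provable in $\mathrm{KP}^\mathcal{P}$) inside $\mathcal{N}$ at $\rho = i(\mu)+1$: the hypothesis $\rnk^\mathcal{N}(n) \leq^\mathcal{N} i(\mu)$, i.e.\ $\rnk^\mathcal{N}(n) <^\mathcal{N} i(\mu)+1$, becomes $n \in^\mathcal{N} V_{i(\mu)+1}^\mathcal{N} = i(a)$. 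Since $a \in \mathcal{M}$ and $i$ is initial, $n \in^\mathcal{N} i(a)$ yields $n \in i(\mathcal{M})$, as required.

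The only delicate point is the transport step $i(V_{\mu+1}^\mathcal{M}) = V_{i(\mu)+1}^\mathcal{N}$, but this is routine: the $V$-hierarchy is given by a $\Delta_1^\mathcal{P}$ (in particular first-order) formula that $\mathrm{KP}^\mathcal{P}$ proves functional and total, and full elementarity of $i$ preserves the graphs of all such definable total functions, while $i$ visibly commutes with ordinal successor. I expect no genuine obstacle beyond bookkeeping with the defining formulas; in fact $\Delta_1^\mathcal{P}$-elementarity would already suffice, so the full strength of the elementarity hypothesis is not essential to the argument.
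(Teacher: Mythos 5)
Your proposal is correct and follows essentially the same route as the paper: both dispose of the forward direction via the already-recorded implication chain, and for the converse both transport the cumulative hierarchy across the elementary embedding ($i(V_\alpha^\mathcal{M}) = V_{i(\alpha)}^\mathcal{N}$) and then apply initiality to the single element $V_\alpha^\mathcal{M}$ containing $n$. Your use of $V_{\mu+1}$ to handle the non-strict inequality $\rnk^\mathcal{N}(n) \leq^\mathcal{N} i(\mu)$ is just a slightly more explicit bookkeeping of the same step.
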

\begin{proof}
	Suppose that $\beta \in \mathrm{Ord}^\mathcal{N}$ is in the image of $i$, so that $i(\alpha) = \beta$ for some $\beta \in \mathrm{Ord}^\mathcal{M}$. Let $n \in \mathcal{N}$, such that $\mathcal{N} \models \rnk(n) < \beta$. Since $i$ is elementary  $i(V_\alpha^\mathcal{M}) = V_{i(\alpha)}^\mathcal{N} = V_\beta^\mathcal{N}$. So by initiality, $n \in i(V_\alpha^\mathcal{M})$ as desired.
\end{proof}

\begin{prop}\label{WFP rank-initial topless}
Let $\mathcal{M} \models \mathrm{KP}$. An element $m_0 \in \mathcal{M}$ is standard iff $\rnk^\mathcal{M}(m_0)$ is standard. If $\mathcal{M}$ is non-standard, then $\mathrm{WFP}(\mathcal{M})$ is a topless rank-initial substructure of $\mathcal{M}$.
\end{prop}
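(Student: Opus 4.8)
The plan is to reduce everything to the rank function $\rnk$ furnished by Proposition \ref{rank}, whose defining recursion $\rnk(x) = \sup\{\rnk(u)+1 \mid u \in x\}$ and totality over $\mathrm{KP}$ are essentially the only facts I will use, together with the characterization that $\mathcal{M}$ is non-standard precisely when $\in^\mathcal{M}$ restricted to $\mathrm{Ord}^\mathcal{M}$ is ill-founded. First I would establish the biconditional, that $m_0$ is standard iff $\rnk^\mathcal{M}(m_0)$ is standard, and then read off both rank-initiality and toplessness of $\mathrm{WFP}(\mathcal{M})$ from it.

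For the biconditional, recall that $m_0$ being standard means there is no infinite $\in^\mathcal{M}$-descending sequence below $m_0$ (equivalently, $\mathcal{M}$ restricted to the transitive closure of $\{m_0\}$ is well-founded), and that a standard ordinal is one with no infinite descending sequence of $\mathcal{M}$-ordinals beneath it. The forward direction is immediate: given a descending chain $m_0 \ni^\mathcal{M} z_1 \ni^\mathcal{M} z_2 \ni^\mathcal{M} \cdots$, applying $\rnk$ and using that $u \in^\mathcal{M} x$ forces $\rnk(u) < \rnk(x)$ yields an infinite descending sequence $\rnk(m_0) > \rnk(z_1) > \cdots$ of ordinals below $\rnk(m_0)$, witnessing that $\rnk(m_0)$ is non-standard; contrapositively, if $\rnk(m_0)$ is standard then $m_0$ is standard. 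The reverse direction is the main technical point: from an infinite descending sequence $\alpha_0 > \alpha_1 > \cdots$ of ordinals below $\rnk(m_0)$ I must manufacture an infinite descending $\in^\mathcal{M}$-chain from $m_0$. I would do this by a recursion (using dependent choice in the meta-theory): given $z_i$ with $\rnk(z_i)$ non-standard and still dominating a tail of the $\alpha_j$, the identity $\rnk(z_i) = \sup\{\rnk(u)+1 \mid u \in z_i\}$ supplies some $u \in^\mathcal{M} z_i$ whose rank exceeds the next $\alpha_j$ and is therefore again non-standard; taking $z_{i+1} = u$ continues the chain and shows $m_0$ non-standard. The subtlety to flag here is that this forces ``standard'' to be read through the transitive closure rather than the literal set of immediate members, since a singleton of a non-standard ordinal would otherwise be a counterexample.

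Granting the biconditional, Part 2 is a short deduction from Definition \ref{special embeddings} applied to the inclusion $\mathrm{WFP}(\mathcal{M}) \hookrightarrow \mathcal{M}$. For rank-initiality I must show that if $m$ is standard and $\rnk^\mathcal{M}(n) \leq \rnk^\mathcal{M}(m)$ then $n$ is standard: by the biconditional $\rnk(m)$ is a standard ordinal, any ordinal below it is standard (a descending witness for $\rnk(n)$ would lie below $\rnk(m)$), so $\rnk(n)$ is standard and hence $n$ is standard. For toplessness I need boundedness and the absence of a least bounding ordinal. Boundedness: since $\mathcal{M}$ is non-standard there is a non-standard ordinal $\nu$, and every standard ordinal lies below $\nu$ (otherwise a descending witness beneath $\nu$ would make that ordinal non-standard), so every standard element, having standard rank below $\nu$, lies in $\mathcal{M}_\nu$. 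Finally, $\mathrm{Ord}^\mathcal{M} \setminus \mathrm{WFP}(\mathcal{M})$ is exactly the set of non-standard ordinals, and this set has no least element, since any non-standard ordinal $\beta$ admits an infinite descending sequence beneath it whose first term is a strictly smaller non-standard ordinal. This is precisely the toplessness clause, completing the proof. The only genuine obstacle is the recursive chain-construction in the reverse direction of the biconditional; everything else is bookkeeping with the rank function.
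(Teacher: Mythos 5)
Your proof is correct and takes essentially the same route as the paper's: both reduce standardness of an element to standardness of its rank via the existence of infinite descending chains, and then read off rank-initiality and toplessness directly from that biconditional. The only difference is one of detail: you spell out the construction of the $\in^\mathcal{M}$-chain from a descending sequence of ordinals (using $\rnk(x) = \sup\{\rnk(u)+1 \mid u \in x\}$ together with dependent choice in the meta-theory), a step the paper's chain of equivalences leaves implicit, and you correctly flag that ``standard in $\mathcal{M}$'' must be read through the transitive closure rather than the literal set of immediate members.
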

\begin{proof}
$m_0$ is non-standard iff there is an infinite sequence 
$$m_0 \ni^\mathcal{M} m_1 \ni^\mathcal{M} \dots$$ 
of elements of $\mathcal{M}$. This holds iff 
$$\rnk^\mathcal{M}(m_0) >^\mathcal{M} \rnk^\mathcal{M}(m_1) >^\mathcal{M} \dots,$$ 
which in turn holds iff 
$$\rnk^\mathcal{M}(m_0), \rnk^\mathcal{M}(m_1), \dots$$ 
are all non-standard. This immediately yields the first claim and toplessness. It also yields rank-initiality: If $m \in \mathcal{M}$ and $\rnk^\mathcal{M}(m) < \rho$, for some standard $\rho$, then $\rnk^\mathcal{M}(m)$ is standard, whence $m$ is standard.
\end{proof}

\begin{prop}\label{comp emb}
	Let $i : \mathcal{M} \rightarrow \mathcal{N}$ and $j : \mathcal{N} \rightarrow \mathcal{O}$ be embeddings of models of $\mathrm{KP}$. 
\begin{enumerate}[{\normalfont (a)}]
	\item\label{comp emb topless} If $i$ is topless and $j$ is initial, then $j \circ i : \mathcal{M} \rightarrow \mathcal{O}$ is topless.
	\item\label{comp emb strongly topless} If $i$ is strongly topless and $j$ is rank-initial, then $j \circ i : \mathcal{M} \rightarrow \mathcal{O}$ is strongly topless.
\end{enumerate}		
\end{prop}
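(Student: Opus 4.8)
The plan is to handle both parts by transporting a witness from the inner embedding $i$ through $j$, relying on a single preservation fact: any embedding into a model of $\mathrm{KP}$ preserves rank, so $\rnk^\mathcal{O}(j(x)) = j(\rnk^\mathcal{N}(x))$ for all $x \in \mathcal{N}$ (and similarly for $i$), as recorded when rank-initial embeddings were defined in Definition \ref{special embeddings}; the underlying reason is that $\rnk$ is $\Delta_1$ and total over $\mathrm{KP}$, so its $\Sigma_1$-definition persists upward under embeddings. In particular $j$ sends ordinals to ordinals preserving $<^{}$, and boundedness transports: if $i(\mathcal{M}) \subseteq \mathcal{N}_\alpha$ for some $\alpha \in \mathrm{Ord}^\mathcal{N}$, then $\rnk^\mathcal{O}((j\circ i)(m)) = j(\rnk^\mathcal{N}(i(m))) <^\mathcal{O} j(\alpha)$, so $(j\circ i)(\mathcal{M}) \subseteq \mathcal{O}_{j(\alpha)}$. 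Hence in both parts $j\circ i$ is bounded, and it remains to verify the respective toplessness clauses.

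For part (a) I would take $\gamma \in \mathrm{Ord}^\mathcal{O} \setminus (j\circ i)(\mathcal{M})$ and split on whether $\gamma$ lies in the initial segment $j(\mathcal{N})$. If $\gamma = j(\beta)$, then $\beta \in \mathrm{Ord}^\mathcal{N} \setminus i(\mathcal{M})$ (otherwise $\gamma$ would be in the image), so toplessness of $i$ supplies $\beta' <^\mathcal{N} \beta$ with $\beta' \notin i(\mathcal{M})$, and $\gamma' = j(\beta')$ is the required smaller ordinal, still outside the image by injectivity of $j$. If $\gamma \notin j(\mathcal{N})$, then initiality of $j$ makes $j(\mathcal{N})$ downward closed in $\mathcal{O}$, so $\gamma$ exceeds every ordinal of $j(\mathcal{N})$, in particular the bound $j(\alpha)$; and $j(\alpha)$ is an ordinal of rank $j(\alpha)$, hence lies outside the rank-bounded set $(j\circ i)(\mathcal{M})$, so $\gamma' = j(\alpha)$ works.

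Part (b) is the substantial case, and it is precisely here that rank-initiality of $j$ (not mere initiality) is needed. Given $f \in \mathcal{O}$ with $\mathcal{O} \models f : \alpha \to \beta$ and $\alpha_\mathcal{O} \supseteq (j\circ i)(\mathrm{Ord}^\mathcal{M})$, I fix $\alpha_0 \in \mathrm{Ord}^\mathcal{N}$ bounding $i(\mathrm{Ord}^\mathcal{M})$ and set $\delta = \min^\mathcal{O}(\alpha, j(\alpha_0))$; since $\delta \leq^\mathcal{O} j(\alpha_0)$, rank-initiality of $j$ forces $\delta \in j(\mathcal{N})$, say $\delta = j(\delta_0)$ with $\delta_0$ still dominating $i(\mathrm{Ord}^\mathcal{M})$. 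The key move is to clip $f$ in both domain and values: define $\tilde f \in \mathcal{O}$ by $\tilde f(\xi) = \min^\mathcal{O}(f(\xi), \delta)$ for $\xi <^\mathcal{O} \delta$. Then $\tilde f$ codes a function $\delta \to \delta + 1$, so $\rnk^\mathcal{O}(\tilde f) \leq^\mathcal{O} j(\delta_0 + 3) = \rnk^\mathcal{O}(j(\delta_0 + 3))$, and rank-initiality of $j$ places $\tilde f \in j(\mathcal{N})$, say $\tilde f = j(g)$ with $\mathcal{N} \models g : \delta_0 \to \delta_0 + 1$. Applying the strong toplessness of $i$ to $g$ (whose domain $\delta_0$ covers $i(\mathrm{Ord}^\mathcal{M})$) yields $\nu_0 \in \mathrm{Ord}^\mathcal{N} \setminus i(\mathcal{M})$, and I set $\nu = j(\nu_0)$, which lies outside $(j\circ i)(\mathcal{M})$ by injectivity.

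It then remains to verify, for each $\mu \in \mathrm{Ord}^\mathcal{M}$ and $\xi = (j\circ i)(\mu)$, the biconditional $f(\xi) \notin (j\circ i)(\mathcal{M}) \Leftrightarrow \mathcal{O} \models f(\xi) > \nu$. Writing $\theta_\mu = g_\mathcal{N}(i(\mu))$, preservation of application gives $j(\theta_\mu) = \tilde f(\xi) = \min^\mathcal{O}(f(\xi), j(\delta_0))$, and I split on whether $f(\xi) <^\mathcal{O} j(\delta_0)$. In the first case $f(\xi) = j(\theta_\mu)$ sits inside $j(\mathcal{N})$, so both membership in $(j\circ i)(\mathcal{M})$ and comparison with $\nu = j(\nu_0)$ transport through $j$ and reduce exactly to the defining property of $\nu_0$. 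In the second case $f(\xi) \geq^\mathcal{O} j(\delta_0)$ exceeds all of $(j\circ i)(\mathrm{Ord}^\mathcal{M})$, so the left side is automatically true; and since $\theta_\mu = \delta_0 \notin i(\mathcal{M})$ forces $\nu_0 <^\mathcal{N} \delta_0$ via strong toplessness, the right side $f(\xi) \geq^\mathcal{O} j(\delta_0) >^\mathcal{O} j(\nu_0) = \nu$ holds too. The main obstacle, and the only genuinely delicate point, is the value-clipping step: without capping the values of $f$ at $\delta$ the reflected graph would have rank above the $j$-cut and fail to land in $j(\mathcal{N})$; clipping exactly at $\delta = j(\delta_0)$ is safe precisely because $\delta_0$ already dominates $i(\mathrm{Ord}^\mathcal{M})$, so every clipped value is read correctly by the strong toplessness of $i$.
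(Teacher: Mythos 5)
Your proof is correct and takes essentially the same approach as the paper's: in (a) the same transport of witnesses through the initial embedding $j$ (splitting on whether the given ordinal lies under the image of $j$), and in (b) the same key move of truncating $f$ below a cap that rank-initiality of $j$ reflects into $\mathcal{N}$, applying strong toplessness of $i$ to the reflected function, and pushing the resulting witness $\nu_0$ forward through $j$. The only difference is cosmetic: you clip overflow values to $\delta$ where the paper sends them to $0$, which if anything slightly streamlines the overflow case $f(\xi) \geq^{\mathcal{O}} j(\delta_0)$, since $\theta_\mu = \delta_0 \notin i(\mathcal{M})$ then yields $\nu_0 <^{\mathcal{N}} \delta_0$ directly from the strong-toplessness biconditional, a point the paper leaves implicit.
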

\begin{proof}
	(\ref{comp emb topless}) Let $\gamma \in \mathrm{Ord}(\mathcal{O}) \setminus j\circ i(\mathcal{M})$. By toplessness, there are $\beta\hspace{1pt}' <^\mathcal{N} \beta \in \mathrm{Ord}(\mathcal{N}) \setminus i(\mathcal{M})$. If $j(\beta\hspace{1pt}') <^\mathcal{O} \gamma$, then we are done. Otherwise, $\gamma <^\mathcal{O} j(\beta)$, so that by initiality $\gamma \in j(\mathcal{N})$. By toplessness, this yields $\beta\hspace{1pt}'' \in \mathrm{Ord}^\mathcal{N} \setminus i(\mathcal{M})$ such that $j(\beta\hspace{1pt}'') < \gamma$.
	
	(\ref{comp emb strongly topless}) Let $f \in \mathcal{O}$ with $\alpha, \beta \in \mathrm{Ord}^\mathcal{O}$ satisfying
\[
(\mathcal{O} \models f : \alpha \rightarrow \beta) \wedge \alpha_\mathcal{O} \supseteq (j \circ i)(\mathrm{Ord}^\mathcal{M}).
\]
By toplessness of $i$ and initiality of $j$, there is $\gamma \in \mathrm{Ord}^\mathcal{N}$, such that $\gamma_\mathcal{N} \supseteq i(\mathrm{Ord}^\mathcal{M})$ and $j(\gamma) \leq^\mathcal{O} \alpha$. In $\mathcal{O}$, let $f\hspace{2pt}' : j(\gamma) \rightarrow j(\gamma)$ be the ``truncation'' of $f$ defined by $f\hspace{2pt}'(\xi) = f\hspace{2pt}(\xi)$, if $f\hspace{2pt}(\xi) < j(\gamma)$, and by $f\hspace{2pt}'(\xi) = 0$, if $f\hspace{2pt}(\xi) \geq j(\gamma)$. Note that $\mathcal{O} \models \rnk(f\hspace{2pt}') \leq j(\gamma) + 2$. So by rank-initiality, there is $g\hspace{1pt}' \in \mathcal{N}$, such that $j(g\hspace{1pt}') = f\hspace{2pt}'$. Consequently, $\mathcal{N} \models g\hspace{1pt}' : \gamma \rightarrow \gamma$, and by initiality of $j$, $(j(g\hspace{1pt}'))_\mathcal{O} = f\hspace{2pt}'_\mathcal{O}$. By strong toplessness of $i$, there is $\nu \in \mathrm{Ord}^\mathcal{N} \setminus i(\mathcal{M})$ such that for all $\xi \in i(\mathrm{Ord}^\mathcal{M})$,
\[
g\hspace{1pt}'_\mathcal{N}(\xi) \not\in i(\mathcal{M}) \Leftrightarrow \mathcal{N} \models g\hspace{1pt}'(\xi) > \nu. 
\]
It follows that for all $\xi \in (j \circ i)(\mathrm{Ord}^\mathcal{M})$,
\[
f_\mathcal{O}(\xi) \not\in (j \circ i)(\mathcal{M}) \Leftrightarrow \mathcal{O} \models f\hspace{2pt}(\xi) > j(\nu). 
\]
So $j \circ i$ is strongly topless.
\end{proof} 

\begin{lemma}\label{omega-topless existence}
	Let $\mathcal{M} \models \mathrm{KP}^\mathcal{P}$ be $\omega$-non-standard and let $\alpha_0 \in \mathrm{Ord}^\mathcal{M}$. For each $k < \omega$, let $\alpha_k \in \mathrm{Ord}^\mathcal{M}$ such that $\mathcal{M} \models \alpha_k = \alpha_0 + k^\mathcal{M}$. Then $\bigcup_{k < \omega} \mathcal{M}_{\alpha_k}$ is an $\omega$-topless rank-initial substructure of $\mathcal{M}$. 
\end{lemma}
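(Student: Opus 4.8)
The plan is to verify in turn the three defining properties of an $\omega$-topless rank-initial substructure for $\mathcal{S} := \bigcup_{k<\omega}\mathcal{M}_{\alpha_k}$, where the inclusion $\mathcal{S}\hookrightarrow\mathcal{M}$ plays the role of the embedding. Since $\mathcal{L}^0=\{\in\}$ has no function or constant symbols, $\mathcal{S}=\mathcal{M}\restriction_\mathcal{S}$ is automatically a substructure, so the only issues are rank-initiality, boundedness, and the failure of being $\omega$-coded from above.

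Rank-initiality and boundedness are immediate. Each $\mathcal{M}_{\alpha_k}$ is downward closed in rank, hence so is the union $\mathcal{S}$; thus if $m\in\mathcal{S}$ and $\rnk^\mathcal{M}(n)\le\rnk^\mathcal{M}(m)$ then $n\in\mathcal{S}$, which is exactly rank-initiality of the inclusion. For boundedness, note $\alpha_0+\omega^\mathcal{M}\in\mathrm{Ord}^\mathcal{M}$ and that every element of $\mathcal{S}$ has rank $<\alpha_0+k<\alpha_0+\omega^\mathcal{M}$ for some standard $k$, so $\mathcal{S}\subseteq\mathcal{M}_{\alpha_0+\omega^\mathcal{M}}$. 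Observe also that the ordinals of $\mathcal{S}$ are exactly $\{\gamma\in\mathrm{Ord}^\mathcal{M}:\gamma<\alpha_0+k\text{ for some standard }k\}$, so an ordinal $\gamma$ lies outside $\mathcal{S}$ iff $\gamma\ge\alpha_0+j$ for every standard $j$; in particular $\alpha_0+e\notin\mathcal{S}$ for every nonstandard $e<\omega^\mathcal{M}$ (which exist since $\mathcal{M}$ is $\omega$-nonstandard), and the complement of $\mathcal{S}$ among the ordinals has no least element.

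The substance is to show $\mathcal{S}$ is not $\omega$-coded from above. Suppose $f\in\mathcal{M}$ with $\mathcal{M}\models f:\alpha\to\beta$, $\alpha_\mathcal{M}\supseteq\omega$, and $f_\mathcal{M}(k)\in\mathrm{Ord}^\mathcal{M}\setminus\mathcal{S}$ for every standard $k$. I would pass to the internal running minimum $h(n)=\min\{f(i):i\le n\}$, which by $\mathrm{KP}^\mathcal{P}$ ($\Pi_1^\mathcal{P}$-Foundation gives the minimum of a nonempty set of ordinals, $\Delta_0^\mathcal{P}$-Separation gives the relevant bounded sets) is a coded, non-increasing function on the initial segment $D=\min(\alpha,\omega^\mathcal{M})$ of the $\mathcal{M}$-naturals; note $D\supseteq\omega$ and $D$ contains nonstandard elements. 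For standard $k$, every $i\le k$ is standard, so each $f_\mathcal{M}(i)\notin\mathcal{S}$ satisfies $f_\mathcal{M}(i)\ge\alpha_0+j$ for all standard $j$; taking the minimum and then $j=k$ gives $h(k)\ge\alpha_0+k$. Hence the internal initial-segment set $A=\{n\in D:\forall m\le n\,(h(m)\ge\alpha_0+m)\}$ contains every standard natural. Since $\mathcal{M}$ is $\omega$-nonstandard the standard cut $\omega$ is not coded in $\mathcal{M}$ (equivalently, overspill holds over $\omega$), so $A$ must contain some nonstandard $c$, whence $h(c)\ge\alpha_0+c$.

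This $c$ does the job. Because $c$ is nonstandard, $\alpha_0+c\notin\mathcal{S}$, so $h(c)\ge\alpha_0+c$ forces $h(c)\notin\mathcal{S}$; and because every standard $k$ satisfies $k\le c$, we get $h(c)=\min_{i\le c}f_\mathcal{M}(i)\le f_\mathcal{M}(k)$. Taking $\nu=\alpha_0+(c-1)$ (the predecessor $c-1$ exists and is nonstandard) yields $\nu\notin\mathcal{S}$ and $\nu<\alpha_0+c\le h(c)\le f_\mathcal{M}(k)$ for every standard $k$, which is precisely the required witness, so $\mathcal{S}$ is $\omega$-topless. I expect the main obstacle to be exactly this last part: the naive choice $\nu=h(c)$ for an arbitrary nonstandard $c$ can fall inside $\mathcal{S}$, since the internal minimum of $f$ over the $\mathcal{M}$-naturals may be attained at a nonstandard index where $f$ is small. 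The point of overspilling the property $h(n)\ge\alpha_0+n$ is to locate a nonstandard $c$ at which $h(c)$ is provably still above $\mathcal{S}$ while remaining below every standardly-indexed value $f_\mathcal{M}(k)$.
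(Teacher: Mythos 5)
Your proof is correct, and its skeleton matches the paper's own: rank-initiality and boundedness are dispatched exactly as you do them, and $\omega$-toplessness is obtained by a $\Delta_0^\mathcal{P}$-overspill argument over the cut $\omega$. The genuine difference lies in \emph{what} gets overspilled. The paper overspills the statement $\forall \xi \leq k \,.\, f(\xi) > \alpha_0 + \xi$ about $f$ itself, obtains a non-standard $\mathring{k}$, and declares $\alpha_0 +^\mathcal{M} \mathring{k}$ to be the required witness; as literally printed, that final step does not quite follow, since for standard $k$ the overspilled formula only yields $f(k) > \alpha_0 + k$, not $f(k) > \alpha_0 + \mathring{k}$ (the step goes through if one instead overspills $\forall \xi \leq k \,.\, f(\xi) > \alpha_0 + k$, with the index bound $k$ rather than $\xi$ on the right). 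Your running-minimum $h$ is precisely a device for closing this gap from the other side: overspilling $\forall m \leq n \,.\, h(m) \geq \alpha_0 + m$ gives a non-standard $c$ with $\alpha_0 + c \leq h(c) \leq f(k)$ for every standard $k$, where the second inequality is automatic because $h$ is a running minimum --- exactly the link between the value at the non-standard index and the standardly-indexed values $f(k)$ that the paper's printed formula does not supply. What your version costs is the construction of one more internal object ($h$, via $\Delta_0^\mathcal{P}$-Separation and Foundation, and the internal set $A$); what it buys is a self-contained, airtight conclusion, and your closing remark correctly identifies why the naive witness can fail. The paper's proof also separately checks toplessness (no supremum of the $\alpha_k$ exists in $\mathcal{M}$), which is not required by the definition of $\omega$-toplessness (bounded plus not $\omega$-coded from above) and which you rightly omit, noting only that the complement of $\mathcal{S}$ in $\mathrm{Ord}^\mathcal{M}$ has no least element.
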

\begin{proof}
	$\bigcup_{k < \omega} \mathcal{M}_{\alpha_k}$ is obviously rank-initial in $\mathcal{M}$. Let $m \in^\mathcal{M} \omega^\mathcal{M}$ be non-standard. Since $\alpha_0 +^\mathcal{M} m \in \mathcal{M}$, we have that $\bigcup_{k < \omega} \mathcal{M}_{\alpha_k}$ is bounded in $\mathcal{M}$. Moreover, $\bigcup_{k < \omega} \mathcal{M}_{\alpha_k}$ is topless, because otherwise $\gamma =_\df \sup\{\alpha_k \mid k < \omega \}$ exists in $\mathcal{M}$ and $\mathcal{M} \models \gamma - \alpha_0 = \omega$, which contradicts that $\mathcal{M}$ is $\omega$-non-standard. 
	
	Let $f : \alpha \rightarrow \beta$ be a function in $\mathcal{M}$, where $\alpha, \beta \in \mathrm{Ord}^\mathcal{M}$ and 
	\[
	\alpha_\mathcal{M} \supseteq \omega \wedge \forall k < \omega . f_\mathcal{M}(k) \in \mathrm{Ord}^\mathcal{M} \setminus \bigcup_{k < \omega} \mathcal{M}_{\alpha_k}.
	\] 
	Note that for all $k < \omega$,
	\[
	\mathcal{M} \models \forall \xi \leq k . f\hspace{2pt}(\xi) > \alpha_0 + \xi.
	\]
	So by $\Delta_0^\mathcal{P}$-Overspill, there is a non-standard $\mathring{k} \in^\mathcal{M} \omega^\mathcal{M}$, such that 
	\[
	\mathcal{M} \models \forall \xi \leq \mathring{k} . f\hspace{2pt}(\xi) > \alpha_0 + \xi.
	\]
	Hence, $\alpha_0 +^\mathcal{M} \mathring{k}$ witnesses $\omega$-toplessness of $\bigcup_{k < \omega} \mathcal{M}_{\alpha_k}$.
\end{proof}

The following classic result is proved as Theorem 6.15 in \cite{Jeh02}:

\begin{thm}[Mostowski\hspace{1pt}'s Collapse]\label{Mostowski collapse}
If $\mathcal{M}$ is a well-founded model of $\textnormal{Extensionality}$, then there is a unique isomorphism $\mathrm{Mos} : \mathcal{M} \rightarrow (T, \in\restriction_T)$, such that $T$ is transitive. Moreover, 
$$\forall x \in \mathcal{M} . \mathrm{Mos}(x) = \{\mathrm{Mos}(u) \mid u \in^\mathcal{M} x\}.$$
\end{thm}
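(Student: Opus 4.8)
The plan is to construct $\mathrm{Mos}$ directly by recursion on the well-founded relation $\in^\mathcal{M}$, and then verify in turn that its image is transitive, that it is an isomorphism, and that it is unique. Since $\mathcal{M}$ is well-founded, $\in^\mathcal{M}$ is a well-founded (set) relation on the domain of $\mathcal{M}$, so the well-founded recursion theorem of the meta-theory lets me define a function $\mathrm{Mos}$ on $\mathcal{M}$ by the prescribed equation $\mathrm{Mos}(x) = \{\mathrm{Mos}(u) \mid u \in^\mathcal{M} x\}$. I would then set $T = \{\mathrm{Mos}(x) \mid x \in \mathcal{M}\}$ and equip it with the inherited membership relation. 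Transitivity of $T$ is immediate from the defining equation: any element of some $\mathrm{Mos}(x) \in T$ is of the form $\mathrm{Mos}(u)$ with $u \in^\mathcal{M} x$, hence again lies in $T$.

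Next I would check that $\mathrm{Mos}$ is an isomorphism $\mathcal{M} \to (T, \in\restriction_T)$. Surjectivity holds by the definition of $T$, and $\in$-preservation (i.e. $u \in^\mathcal{M} x \Rightarrow \mathrm{Mos}(u) \in \mathrm{Mos}(x)$) is built into the defining equation. The crux is injectivity, and this is where Extensionality and well-foundedness are both needed. I would argue by contradiction: using the rank function $\rho$ associated with the well-founded relation $\in^\mathcal{M}$, pick $x$ of least rank for which there is some $y \neq x$ with $\mathrm{Mos}(x) = \mathrm{Mos}(y)$. By Extensionality, without loss of generality there is $u \in^\mathcal{M} x$ with $u \notin^\mathcal{M} y$. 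Then $\mathrm{Mos}(u) \in \mathrm{Mos}(x) = \mathrm{Mos}(y)$, so $\mathrm{Mos}(u) = \mathrm{Mos}(v)$ for some $v \in^\mathcal{M} y$; here $u \neq v$ because $v \in^\mathcal{M} y$ while $u \notin^\mathcal{M} y$. But $\rho(u) < \rho(x)$, contradicting the minimal choice of $x$. Once injectivity is in hand, $\in$-reflection follows cheaply: if $\mathrm{Mos}(u) \in \mathrm{Mos}(x)$, then $\mathrm{Mos}(u) = \mathrm{Mos}(v)$ for some $v \in^\mathcal{M} x$, whence $u = v \in^\mathcal{M} x$.

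For uniqueness, I would show that any isomorphism $\mathrm{Mos}' : \mathcal{M} \to (T', \in\restriction_{T'})$ onto a transitive $T'$ must satisfy the same recursion equation, and then appeal to the uniqueness clause of the recursion theorem. Indeed, fix $x$; by transitivity of $T'$ every element of $\mathrm{Mos}'(x)$ lies in $T'$, hence is $\mathrm{Mos}'(u)$ for some $u$, and $\mathrm{Mos}'(u) \in \mathrm{Mos}'(x)$ forces $u \in^\mathcal{M} x$ since $\mathrm{Mos}'$ reflects $\in$; conversely $u \in^\mathcal{M} x$ gives $\mathrm{Mos}'(u) \in \mathrm{Mos}'(x)$. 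Thus $\mathrm{Mos}'(x) = \{\mathrm{Mos}'(u) \mid u \in^\mathcal{M} x\}$, so $\mathrm{Mos}'$ and $\mathrm{Mos}$ are both solutions of the recursion and therefore coincide.

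The main obstacle is the injectivity argument: this is the only place where Extensionality is indispensable, and the minimal-rank counterexample must be set up carefully so that the element $u$ it produces genuinely has smaller $\in^\mathcal{M}$-rank than $x$ while witnessing a fresh failure of injectivity at lower rank. Everything else — existence, transitivity, preservation and reflection of $\in$, and uniqueness — is a routine unwinding of the defining equation together with the recursion theorem.
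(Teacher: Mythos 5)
Your proof is correct, and it is essentially the canonical argument: the paper itself gives no proof of this theorem, deferring to Theorem 6.15 of \cite{Jeh02}, and your construction — definition of $\mathrm{Mos}$ by well-founded recursion, transitivity and surjectivity read off from the defining equation, injectivity via a minimal-rank counterexample using Extensionality, and uniqueness by showing that any isomorphism onto a transitive set satisfies the same recursion equation — is exactly that standard proof. The only point needing care is the ``without loss of generality'' in your injectivity step, since $x$ was chosen of minimal rank while $y$ was not; but both cases of Extensionality do go through, because even when the witness is $v \in^\mathcal{M} y$ with $v \notin^\mathcal{M} x$, the membership $\mathrm{Mos}(v) \in \mathrm{Mos}(y) = \mathrm{Mos}(x)$ yields some $u \in^\mathcal{M} x$ with $\mathrm{Mos}(u) = \mathrm{Mos}(v)$ and $u \neq v$, so this $u$ is a failure of injectivity of rank below $\rho(x)$, contradicting minimality exactly as in your first case.
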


This theorem motivates the following simplifying assumption:

\begin{ass}
Every well-founded $\mathcal{L}^0$-model $\mathcal{M}$ of $\textnormal{Extensionality}$ is a transitive set, or more precisely, is of the form $(T, \in\restriction_T)$ where $T$ is transitive and unique. Every embedding between well-founded $\mathcal{L}^0$-models of $\textnormal{Extensionality}$ is an inclusion function. 
\end{ass}

In particular, for any model $\mathcal{M}$ of $\mathrm{KP}$, $\mathrm{WFP}(\mathcal{M})$ is a transitive set and $\mathrm{OSP}(\mathcal{M})$ is an ordinal.

\begin{prop}\label{unique embedding well-founded}
If $\mathcal{M}, \mathcal{N}$ are well-founded models of $\textnormal{Extensionality}$ and there is an initial embedding $i : \mathcal{M} \rightarrow \mathcal{N}$, then $\forall x \in \mathcal{M} . i(x) = x$.
\end{prop}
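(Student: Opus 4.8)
The plan is to exploit the Simplifying Assumption immediately preceding the statement, which lets us treat $\mathcal{M}$ and $\mathcal{N}$ as transitive sets carrying the genuine membership relation, and $i$ as an honest function between them that preserves and reflects $\in$. Under this identification the initiality of $i$—which the Definition records as the equality $i(m_\mathcal{M}) = i(m)_\mathcal{N}$ for every $m \in \mathcal{M}$—simplifies considerably. Since $\mathcal{N}$ is transitive, every member of $i(m)$ already lies in $\mathcal{N}$, so $i(m)_\mathcal{N}$ is just $i(m)$ regarded as its own set of elements; and $i(m_\mathcal{M})$ is the pointwise image $\{i(u) \mid u \in^\mathcal{M} m\}$. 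Thus initiality becomes the clean recursion
\[
i(m) = \{\, i(u) \mid u \in^\mathcal{M} m \,\}, \quad \text{for all } m \in \mathcal{M}.
\]

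With this identity in hand, I would prove $i(x) = x$ by $\in$-induction, which is available precisely because $\mathcal{M}$ is well-founded. For the inductive step, assume $i(u) = u$ for every $u \in^\mathcal{M} x$. Applying the displayed identity and then the induction hypothesis gives
\[
i(x) = \{\, i(u) \mid u \in^\mathcal{M} x \,\} = \{\, u \mid u \in^\mathcal{M} x \,\} = x,
\]
the final equality holding because $\mathcal{M}$ is a transitive set. The base case—$x$ an $\in$-minimal element, in particular $x = \varnothing$—is the instance with no $u$, yielding $i(\varnothing) = \varnothing$.

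There is essentially no hard step: the whole content is the translation of the abstract initiality condition into the concrete recursion $i(x) = \{i(u) \mid u \in^\mathcal{M} x\}$, after which the conclusion is a one-line transfinite induction. The only point requiring a little care is the reduction of $i(m)_\mathcal{N}$ to $i(m)$ itself, which rests on the transitivity of $\mathcal{N}$ (guaranteed by Mostowski's Collapse via the Simplifying Assumption), ensuring that passing to the set of $\mathcal{N}$-members of $i(m)$ changes nothing. I would also flag explicitly that it is the well-foundedness of $\mathcal{M}$, rather than of $\mathcal{N}$, that licenses the induction.
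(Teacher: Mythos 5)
Your proof is correct and is essentially the paper's own argument: both identify $\mathcal{M}$ and $\mathcal{N}$ with transitive sets via the simplifying Assumption and then run an $\in$-induction on $x \in \mathcal{M}$. The only difference is presentational—where you invoke the packaged reformulation of initiality $i(m_\mathcal{M}) = i(m)_\mathcal{N}$ (already recorded in the paper's definition) to get the recursion $i(x) = \{i(u) \mid u \in x\}$, the paper unpacks the same content inside the induction as the two inclusions $x \subseteq i(x)$ (embedding) and $i(x) \subseteq x$ (initiality plus induction hypothesis).
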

\begin{proof}
By Assumption 4.6.9, $\mathcal{M}$ and $\mathcal{N}$ are transitive models. Let $x \in \mathcal{M}$. By induction, we may assume that $\forall u \in x . i(u) = u$. So since $i$ is an embedding, $x \subseteq i(x)$. Conversely, let $v \in i(x)$. Since $i$ is an initial embedding, $x \ni i^{-1}(v) = v$. Thus $i(x) \subseteq x$. So $i(x) = x$.
\end{proof}

\begin{prop}\label{emb pres}
If $i : \mathcal{M} \rightarrow \mathcal{N}$ is an initial embedding between models of $\mathrm{KP}$, then:
\begin{enumerate}[{\normalfont (a)}]
\item\label{emb pres fixed} $i(x) = x$, for all $x \in \mathrm{WFP}(\mathcal{M})$.
\item\label{emb pres Delta_0} It is $\Delta_0$-elementary. In particular, for every $\sigma(\vec{x}) \in \Sigma_1[\vec{x}]$, and for every $\vec{a} \in \mathcal{M}$, 
\[
(\mathcal{M}, \vec{a}) \models \sigma(\vec{a}) \Rightarrow (\mathcal{N}, i(\vec{a})) \models \sigma(i(\vec{a})).
\]
\item\label{emb pres Delta_0^P} If $i$ is $\mathcal{P}$-initial, then it is $\Delta_0^\mathcal{P}$-elementary. In particular, for every $\sigma(\vec{x}) \in \Sigma_1^\mathcal{P}[\vec{x}]$, and for every $\vec{a} \in \mathcal{M}$, 
\[
(\mathcal{M}, \vec{a}) \models \sigma(\vec{a}) \Rightarrow (\mathcal{N}, i(\vec{a})) \models \sigma(i(\vec{a})).
\]
\item\label{emb pres SSy_S} If $\mathcal{S}$ is a substructure of $\mathcal{M}$, then $\mathrm{SSy}_\mathcal{S}(\mathcal{M}) \leq \mathrm{SSy}_{i(\mathcal{S})}(\mathcal{N})$.
\item\label{emb pres SSy} $\mathrm{SSy}(\mathcal{M}) \leq \mathrm{SSy}(\mathcal{N})$.
\item\label{emb pres SSy_S eq} If $i$ is rank-initial, $\mathcal{N} \models \mathrm{KP}^\mathcal{P}$ and $\mathcal{S}$ is a topless substructure of $\mathcal{M}$, then $\mathrm{SSy}_\mathcal{S}(\mathcal{M}) \cong \mathrm{SSy}_{i(\mathcal{S})}(\mathcal{N})$.
\item\label{emb pres SSy eq} If $i$ is rank-initial, $\mathcal{M}$ is non-standard and $\mathcal{N} \models \mathrm{KP}^\mathcal{P}$, then $\mathrm{SSy}(\mathcal{M}) \cong \mathrm{SSy}(\mathcal{N})$.
\end{enumerate}
\end{prop}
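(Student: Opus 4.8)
The plan is to establish the seven clauses in order, as each leans on its predecessors. Clauses (a)--(c) are the structural core, (d)--(e) bookkeeping about standard systems, and (f)--(g) the substantive refinements. For (a), restrict $i$ to $\mathrm{WFP}(\mathcal{M})$: since $i$ is initial it induces, via $i(m_\mathcal{M}) = i(m)_\mathcal{N}$ and induction, an isomorphism of the externalized $\in$-structures below $m$ and below $i(m)$, so $m$ is standard iff $i(m)$ is; hence $i$ maps $\mathrm{WFP}(\mathcal{M})$ into $\mathrm{WFP}(\mathcal{N})$ and $i\restriction_{\mathrm{WFP}(\mathcal{M})}$ is an initial embedding of well-founded models of Extensionality, which Proposition \ref{unique embedding well-founded} forces to be the identity (so also $\mathrm{WFP}(\mathcal{M}) \subseteq \mathrm{WFP}(\mathcal{N})$). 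For (b) I would apply the Tarski Test (Lemma \ref{Tarski test}) with $\Gamma = \bar\Delta_0$, the formulae with all quantifiers bounded: the only nontrivial case is $\exists x \in y.\,\phi$, where a witness $n \in^\mathcal{N} i(b)$ is by initiality of the form $i(m')$ with $m' \in^\mathcal{M} b$, so the witness lies in the image; $\Delta_0$-elementarity follows since $\Delta_0$-formulae are $\mathrm{KP}$-provably $\bar\Delta_0$. Clause (c) runs the same induction with the two extra cases $\exists x \subseteq y.\,\phi$ and $\forall x \subseteq y.\,\phi$, where power-initiality supplies a preimage $m'$ for each $n \subseteq^\mathcal{N} i(b)$. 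In each clause the ``in particular'' statement is immediate upward preservation: a witness in $\mathcal{M}$ to the leading existential of a $\Sigma_1$- (resp. $\Sigma_1^\mathcal{P}$-) sentence is sent by $i$ to a witness in $\mathcal{N}$, and the matrix is handled by $\Delta_0$- (resp. $\Delta_0^\mathcal{P}$-) elementarity.

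For (d) I would use the criterion recorded before Definition \ref{special embeddings}: an $\mathcal{L}^1$-embedding over $i\restriction_\mathcal{S}$ exists iff each coded class has its pointwise $i$-image of the form $Y_\mathcal{N} \cap i(\mathcal{S})$. Sending a coded set $C = c_\mathcal{M} \cap \mathcal{S}$ to the code $i(c)$, one checks using only that $i$ is an embedding that $\{i(x) \mid x \in \mathcal{S},\ x \in^\mathcal{M} c\} = i(c)_\mathcal{N} \cap i(\mathcal{S})$, which is exactly the required image. For (e), apply (d) with $\mathcal{S} = \mathrm{WFP}(\mathcal{M})$ and use (a) to replace $i(\mathrm{WFP}(\mathcal{M}))$ by $\mathrm{WFP}(\mathcal{M})$, giving $\mathrm{SSy}(\mathcal{M}) \leq \mathrm{SSy}_{\mathrm{WFP}(\mathcal{M})}(\mathcal{N})$; since $\mathrm{WFP}(\mathcal{M}) \subseteq \mathrm{WFP}(\mathcal{N})$, a same-code argument shows $\mathrm{SSy}_{\mathrm{WFP}(\mathcal{M})}(\mathcal{N}) \leq \mathrm{SSy}_{\mathrm{WFP}(\mathcal{N})}(\mathcal{N}) = \mathrm{SSy}(\mathcal{N})$, and composing the two embeddings gives the claim.

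The crux is (f). The forward inclusion of standard systems is (d), so the new content is the converse: every subset of $i(\mathcal{S})$ coded in $\mathcal{N}$ is the $i$-image of a subset of $\mathcal{S}$ coded in $\mathcal{M}$. Here rank-initiality, boundedness of $\mathcal{S}$, and $\mathcal{N} \models \mathrm{KP}^\mathcal{P}$ combine. As $\mathcal{S}$ is topless it is bounded, say $\mathcal{S} \subseteq \mathcal{M}_\alpha$ for some $\alpha \in \mathrm{Ord}^\mathcal{M}$, and since $i$ preserves rank this gives $i(\mathcal{S}) \subseteq V^\mathcal{N}_{i(\alpha)}$. Given a code $d \in \mathcal{N}$, I would truncate it inside $\mathcal{N}$ to $d\hspace{1pt}' = d \cap V^\mathcal{N}_{i(\alpha)}$, available because $\mathrm{KP}^\mathcal{P}$ proves the $V$-hierarchy total. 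Then $\rnk^\mathcal{N}(d\hspace{1pt}') \leq i(\alpha)$, so rank-initiality places $d\hspace{1pt}'$ in $i(\mathcal{M})$, say $d\hspace{1pt}' = i(c)$; and because $i(\mathcal{S}) \subseteq V^\mathcal{N}_{i(\alpha)}$ the truncation changes no membership below the cut, so $i(c)_\mathcal{N} \cap i(\mathcal{S}) = d_\mathcal{N} \cap i(\mathcal{S})$. Thus the given coded set descends from $\mathcal{M}$, and together with (d) and the two-sided isomorphism criterion before Definition \ref{special embeddings} this yields $\mathrm{SSy}_\mathcal{S}(\mathcal{M}) \cong \mathrm{SSy}_{i(\mathcal{S})}(\mathcal{N})$. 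I expect this truncation to be the main obstacle, as it is the only step that must manufacture a code in the domain and is precisely where rank-initiality and Powerset are indispensable.

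Finally, for (g) I would instantiate (f) at $\mathcal{S} = \mathrm{WFP}(\mathcal{M})$, which by Proposition \ref{WFP rank-initial topless} is a topless rank-initial substructure because $\mathcal{M}$ is non-standard, obtaining $\mathrm{SSy}(\mathcal{M}) \cong \mathrm{SSy}_{i(\mathrm{WFP}(\mathcal{M}))}(\mathcal{N})$. It remains to show $\mathrm{WFP}(\mathcal{M}) = \mathrm{WFP}(\mathcal{N})$: the inclusion from left to right is (a). For the reverse, take standard $y \in \mathrm{WFP}(\mathcal{N})$; non-standardness of $\mathcal{M}$ yields a non-standard ordinal $\nu \in \mathrm{Ord}^\mathcal{M}$, and initiality transports an infinite $\in$-descending sequence below $\nu$ to one below $i(\nu)$, so $i(\nu)$ is non-standard in $\mathcal{N}$ and hence $\rnk^\mathcal{N}(y) < i(\nu)$. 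Rank-initiality then gives $y = i(m)$, and the $\in$-structure isomorphism from (a)'s argument transfers well-foundedness back, so $m$ is standard and $y = i(m) = m \in \mathrm{WFP}(\mathcal{M})$. With $\mathrm{WFP}(\mathcal{M}) = \mathrm{WFP}(\mathcal{N})$ and $i$ fixing it pointwise, the isomorphism of (f) is exactly $\mathrm{SSy}(\mathcal{M}) \cong \mathrm{SSy}(\mathcal{N})$.
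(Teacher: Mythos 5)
Your proof is correct, and for clauses (a)--(e) it is essentially the paper's argument: (a) via Proposition \ref{unique embedding well-founded} after checking that $i$ carries $\mathrm{WFP}(\mathcal{M})$ initially into $\mathrm{WFP}(\mathcal{N})$; (b)--(c) by the induction on bounded formulae in which initiality (resp.\ $\mathcal{P}$-initiality) pulls witnesses of bounded (resp.\ $\mathcal{P}$-bounded) quantifiers back into the image --- your routing through the Tarski Test (Lemma \ref{Tarski test}) is only a repackaging of the paper's direct induction --- and (d)--(e) by the same code-pushing via $c \mapsto i(c)$. The genuine divergence is in (f). The paper's converse direction first argues that $i(\mathcal{S})$ is topless in $\mathcal{N}$, uses that to truncate the given code $d$ at a level $V_\beta^\mathcal{N}$ with $\beta$ strictly between $i(\mathcal{S})$ and some $\nu \in i(\mathcal{M})$, and only then invokes rank-initiality; it thus spends both the toplessness of $\mathcal{S}$ in $\mathcal{M}$ and its transfer to $\mathcal{N}$. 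You instead truncate directly at $V_{i(\alpha)}^\mathcal{N}$, where $\alpha \in \mathrm{Ord}^\mathcal{M}$ is any bound for $\mathcal{S}$, and exploit that rank-initiality is stated with $\leq$: from $\rnk^\mathcal{N}(d\hspace{1pt}') \leq i(\alpha) = \rnk^\mathcal{N}(i(\alpha))$ you get $d\hspace{1pt}' \in i(\mathcal{M})$ at once. This is simpler, and it shows that clause (f) needs only that $\mathcal{S}$ be \emph{bounded} in $\mathcal{M}$ rather than topless --- a mild strengthening of the statement that the paper's route does not make visible. Finally, in (g) you supply explicitly the step $\mathrm{WFP}(\mathcal{M}) = \mathrm{WFP}(\mathcal{N})$ (standard $y$ has standard rank below the non-standard $i(\nu)$, so rank-initiality and the Mostowski-type transfer from (a) put $y$ in $\mathrm{WFP}(\mathcal{M})$); the paper leaves this buried in its unproved assertion that $\mathrm{WFP}(\mathcal{M})$ is topless in both models before applying (f), so your version closes a gap in exposition rather than in substance.
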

\begin{proof}
(\ref{emb pres fixed}) is clear from Proposition \ref{unique embedding well-founded}.

(\ref{emb pres Delta_0}) Suppose that $\sigma$ is of the form $\exists y . \varepsilon(y, \vec{x})$, where $\varepsilon \in \Delta_0[y, \vec{x}]$, and that $\mathcal{M} \models \exists y . \varepsilon(y, \vec{a})$. Let $b \in \mathcal{M}$ be a witness of that, so $\mathcal{M} \models \varepsilon(b, \vec{a})$. It suffices to show that $\mathcal{N} \models \varepsilon(i(b), i(\vec{a}))$. Hence, we may assume without loss of generality that $\sigma \in \Delta_0$. Moreover, by bundling existential quantifiers, we can even assume without loss of generality that $\sigma$ has no bounded existential quantifier in front. 

We proceed by induction on the complexity of $\sigma$: The atomic cases follow from that $i$ is an embedding. The cases of the propositional connectives follow from that these connectives ``commute with $\models$''. The only case remaining is bounded universal quantification. 

Suppose that $\sigma$ is $\forall y \in b . \varepsilon(y, \vec{a})$, for some $b \in \vec{a}$. Let $v \in^\mathcal{N} i(b)$. Since $i$ is an initial embedding, there is $u \in \mathcal{M}$, such that $i(u) = v$ and $u \in^\mathcal{M} b$. So $\mathcal{M} \models \varepsilon(u, \vec{a})$, whence by induction hypothesis, $\mathcal{N} \models \varepsilon(i(u), i(\vec{a}))$.

(\ref{emb pres Delta_0^P}) is proved like (\ref{emb pres Delta_0}). The only case remaining is the induction-step for $\mathcal{P}$-bounded universal quantification. For this we need the embedding to be $\mathcal{P}$-initial. Suppose that $\sigma$ is $\forall y \subseteq b . \varepsilon(y, \vec{a})$, for some $\varepsilon \in \Delta_0^\mathcal{P}[y, \vec{x}]$ and some $b \in \vec{a}$. Let $v \subseteq^\mathcal{N} i(b)$. Since $i$ is a $\mathcal{P}$-initial embedding, there is $u \in \mathcal{M}$, such that $i(u) = v$ and $u \subseteq^\mathcal{M} b$. So $\mathcal{M} \models \varepsilon(u, \vec{a})$, whence by induction hypothesis, $\mathcal{N} \models \varepsilon(i(u), i(\vec{a}))$.

For (\ref{emb pres SSy_S}), let $X$ be a class in $\mathrm{SSy}_\mathcal{S}(\mathcal{M})$ that is coded by $c \in \mathcal{M}$. Then we have, for all $x \in \mathcal{S}$, that $x \in c \leftrightarrow i(x) \in i(c)$, whence $i(c)$ codes $i(X_\mathcal{M}) \subseteq i(\mathcal{S})$ in $\mathcal{N}$ and $i(X_\mathcal{M})$ is a class in $\mathrm{SSy}_{i(\mathcal{S})}(\mathcal{N})$. So $i$ induces a witness of $\mathrm{SSy}_\mathcal{S}(\mathcal{M}) \leq \mathrm{SSy}_{i(\mathcal{S})}(\mathcal{N})$.

(\ref{emb pres SSy}) By Proposition \ref{unique embedding well-founded}, $\mathrm{WFP}(\mathcal{M})$ is an inital substructure of $\mathcal{M}$ and $\mathcal{N}$ pointwise fixed by $i$. Apply (\ref{emb pres SSy_S}).

(\ref{emb pres SSy_S eq}) By (\ref{emb pres SSy_S}), $\mathrm{SSy}_\mathcal{S}(\mathcal{M}) \leq \mathrm{SSy}_\mathcal{S}(\mathcal{N})$. Conversely, suppose that $Y \in \mathrm{SSy}_i(\mathcal{S})(\mathcal{N})$ is coded by $d$ in $\mathcal{N}$. By initiality of $i$ and toplessness of $\mathcal{S}$ in $\mathcal{M}$, $i(\mathcal{S})$ is topless in $\mathcal{N}$. So there are codes in $\mathcal{N}$ for $Y$ of arbitrarily small rank above $\mathrm{Ord}^{i(\mathcal{S})}$, found by intersecting $d$ with arbitrarily small $V_\alpha \supseteq \mathcal{S}$ internal to $\mathcal{N}$ ($\mathcal{N} \models \textnormal{Powerset}$, so we have the cumulative hierarchy). By toplessness of $\mathcal{S}$ in $\mathcal{M}$, some $\nu \in \mathrm{Ord}^\mathcal{N} \setminus i(\mathcal{S})$ is in $i(\mathcal{M})$. Thus, by rank-initiality of $i$, there is a code $d\hspace{1pt}' \in i(\mathcal{M})$ for $Y$ in $\mathcal{N}$. Since $i$ is an embedding that fixes $\mathcal{S}$ pointwise, $i^{-1}(d\hspace{1pt}')$ codes $i^{-1}(Y_\mathcal{N})$ in $\mathcal{M}$. We conclude that $\mathrm{SSy}_\mathcal{S}(\mathcal{M}) \cong \mathrm{SSy}_{i(\mathcal{S})}(\mathcal{N})$.

(\ref{emb pres SSy eq}) By Proposition \ref{unique embedding well-founded}, $\mathrm{WFP}(\mathcal{M}) \subseteq \mathrm{WFP}(\mathcal{N})$. So since $\mathcal{M}$ is non-standard, and $i$ is a rank-initial embedding, $\mathrm{WFP}(\mathcal{M})$ is a topless substructure of $\mathcal{M}$ and $\mathcal{N}$. By (a), $i$ is point-wise fixed on $\mathrm{WFP}(\mathcal{M})$. Apply (c).
\end{proof}

\begin{cor}\label{rank-init equivalences in KPP}
If $i: \mathcal{M} \rightarrow \mathcal{N}$ is an embedding between models of $\mathrm{KP}^\mathcal{P}$, then the following are equivalent:
\begin{enumerate}[{\normalfont (a)}]
\item $i$ is rank-initial.
\item $i$ is $\mathcal{P}$-initial.
\item $i$ is initial, and for each ordinal $\xi$ in $\mathcal{M}$, $i(V_\xi^\mathcal{M}) = V_{i(\xi)}^\mathcal{M}$.
\end{enumerate}
\end{cor}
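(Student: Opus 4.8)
The plan is to establish the cycle (a) $\Rightarrow$ (b) $\Rightarrow$ (c) $\Rightarrow$ (a), since two of the three links are nearly immediate from material already in place. The implication (a) $\Rightarrow$ (b) requires no work at all: it is exactly one of the implications recorded just after Definition \ref{special embeddings}, namely that every rank-initial embedding is $\mathcal{P}$-initial.

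For (b) $\Rightarrow$ (c), I would assume $i$ is $\mathcal{P}$-initial. Then $i$ is in particular initial (again by the displayed implications after Definition \ref{special embeddings}), giving the first half of (c). For the second half, I would invoke the Proposition establishing the cumulative hierarchy, which supplies a $\Sigma_1^\mathcal{P}$-formula $V(x,y)$ that is functional and total over $\mathrm{KP}^\mathcal{P}$ and expresses $V_\rho = y$. Fixing $\xi \in \mathrm{Ord}^\mathcal{M}$ and setting $a := V_\xi^\mathcal{M}$, we have $\mathcal{M} \models V(\xi, a)$; since $V$ is $\Sigma_1^\mathcal{P}$ and $i$ is $\mathcal{P}$-initial, Proposition \ref{emb pres}(\ref{emb pres Delta_0^P}) yields $\mathcal{N} \models V(i(\xi), i(a))$. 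By functionality and totality of $V$ in $\mathcal{N}$ this is precisely $V_{i(\xi)}^\mathcal{N} = i(a) = i(V_\xi^\mathcal{M})$, which is the remaining half of (c).

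For (c) $\Rightarrow$ (a), I would assume $i$ is initial and $i(V_\xi^\mathcal{M}) = V_{i(\xi)}^\mathcal{N}$ for all $\xi \in \mathrm{Ord}^\mathcal{M}$, and use the second equivalent formulation of rank-initiality from Definition \ref{special embeddings}. So I fix $\mu \in \mathrm{Ord}^\mathcal{M}$ and $n \in \mathcal{N}$ with $\rnk^\mathcal{N}(n) \leq^\mathcal{N} i(\mu)$ and aim to show $n \in i(\mathcal{M})$. Because $i$ is initial it is $\Delta_0$-elementary by Proposition \ref{emb pres}(\ref{emb pres Delta_0}), and since the successor-ordinal operation $\mu \mapsto \mu \cup \{\mu\}$ is $\Delta_0$-definable, $i(\mu + 1) = i(\mu) + 1$; hence $\rnk^\mathcal{N}(n) \leq^\mathcal{N} i(\mu) <^\mathcal{N} i(\mu + 1)$. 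The defining property $x \in V_\rho \leftrightarrow \rnk(x) < \rho$ of the $V$-hierarchy, applied in $\mathcal{N}$, then gives $n \in^\mathcal{N} V_{i(\mu+1)}^\mathcal{N}$, and by hypothesis $V_{i(\mu+1)}^\mathcal{N} = i(V_{\mu+1}^\mathcal{M})$, so $n \in^\mathcal{N} i(V_{\mu+1}^\mathcal{M})$. Initiality of $i$ then forces $n \in i(\mathcal{M})$, completing the cycle.

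I expect no genuine obstacle here; the argument is driven entirely by the $\Sigma_1^\mathcal{P}$-definability of the $V$-hierarchy together with the preservation lemmas of Proposition \ref{emb pres}. The only points needing care are bookkeeping ones: confirming that the direction of $\Sigma_1^\mathcal{P}$-preservation furnished by Proposition \ref{emb pres}(\ref{emb pres Delta_0^P}) is the upward one used in (b) $\Rightarrow$ (c), and handling the passage from $\rnk^\mathcal{N}(n) \leq^\mathcal{N} i(\mu)$ to $\rnk^\mathcal{N}(n) <^\mathcal{N} i(\mu+1)$ in (c) $\Rightarrow$ (a), which is what makes the $V$-preservation hypothesis and then initiality applicable. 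Throughout, one must keep straight in which of $\mathcal{M}$ and $\mathcal{N}$ each of $V$, $\rnk$, and the successor is being computed.
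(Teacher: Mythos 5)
Your proposal is correct and follows essentially the same route as the paper: (a)$\Rightarrow$(b) immediate from the definitions, (b)$\Rightarrow$(c) via the $\Sigma_1^\mathcal{P}$-definability of the $V$-hierarchy and Proposition \ref{emb pres}(\ref{emb pres Delta_0^P}), and (c)$\Rightarrow$(a) by placing the low-rank element inside $V_{i(\cdot)}^\mathcal{N} = i(V_\cdot^\mathcal{M})$ and invoking initiality. The only difference is cosmetic: the paper runs (c)$\Rightarrow$(a) through an arbitrary element $a$ and its rank, whereas you run it through an ordinal $\mu$ and successor-preservation, which amounts to the same argument.
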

\begin{proof}
(a) $\Rightarrow$ (b) is immediate from the definition.

(b) $\Rightarrow$ (c): The formula $x = V_\alpha$ is $\Sigma_1^\mathcal{P}$, so by Proposition \ref{emb pres} (\ref{emb pres Delta_0^P}), $i(V_\alpha^\mathcal{M}) = V_{i(\alpha)}^\mathcal{N}$, for all ordinals $\alpha$ in $\mathcal{M}$.

(c) $\Rightarrow$ (a): Let $a \in \mathcal{M}$ with $\mathcal{M}$-rank $\alpha$, let $b = i(a)$ with $\mathcal{N}$-rank $\beta$, and let $c \in \mathcal{N}$ be of $\mathcal{N}$-rank $\gamma \leq^\mathcal{N} \beta$. Since $i$ is an embedding preserving $(\xi \mapsto V_\xi)$, we have that $b \in^\mathcal{N} i(V_{\alpha + 1})$ and $i(V_{\alpha + 1}) = V_{\beta\hspace{1pt}'}$, for some $\beta\hspace{1pt}' >^\mathcal{N} \beta$. Therefore $c \in i(V_{\alpha+1})$, so since $i$ is initial, we get that $c \in i(\mathcal{M})$.
\end{proof}

\begin{lemma}\label{hierarchical type coded}
	Suppose that $\mathcal{M} \models \mathrm{KP}^\mathcal{P}$ is non-standard, and let $\mathcal{S}$ be a bounded substructure of $\mathcal{M}$. For each $\vec{a} \in \mathcal{M}$, we have that $\mathrm{tp}_{\Sigma_1^\mathcal{P}, \mathcal{S}}(\vec{a})$ and $\mathrm{tp}_{\Pi_1^\mathcal{P}, \mathcal{S}}(\vec{a})$ are coded in $\mathcal{M}$.
\end{lemma}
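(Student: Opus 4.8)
The plan is to code the $\Sigma_1^\mathcal{P}$-type first and then derive the $\Pi_1^\mathcal{P}$-type by a complementation trick. Using that $\mathcal{S}$ is bounded, I would first fix an ordinal $\alpha \in \mathrm{Ord}^\mathcal{M}$ with $\mathcal{S} \subseteq V_\alpha^\mathcal{M}$, and then work with the internal partial satisfaction relation $\mathrm{Sat}_{\Sigma_1^\mathcal{P}}$ of Theorem \ref{partial satisfaction classes}. Writing an internal $\bar\Sigma_1^\mathcal{P}$-formula as $\exists \vec z . \delta$ with $\delta \in \bar\Delta_0^\mathcal{P}$, I observe that for every ordinal $\gamma$ the relation ``$\psi \in \bar\Sigma_1^\mathcal{P}$, $\vec b \in V_\alpha$, and the matrix of $\psi(\vec a, \vec b)$ has a witnessing tuple of rank $< \gamma$'' is $\Delta_1^\mathcal{P}$ with parameters $\vec a, V_\alpha, V_\gamma$ (its witness-quantifier is bounded by the set $V_\gamma$, and $\mathrm{Sat}_{\Delta_0^\mathcal{P}}$ is $\Delta_1$ by Proposition \ref{complexity of satisfaction predicates}). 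Hence, by the $\Delta_1^\mathcal{P}$-Separation available in $\mathrm{KP}^\mathcal{P}$, there is a set
\[
c_\gamma = \{\langle \psi, \vec b\rangle \in \bar\Sigma_1^\mathcal{P} \times V_\alpha \mid \exists \vec z \in V_\gamma . \mathrm{Sat}_{\Delta_0^\mathcal{P}}(\delta_\psi, \vec z, \vec a, \vec b)\},
\]
and these sets increase with $\gamma$.

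The heart of the argument is to produce a single ordinal $\mu$ for which $c_\mu$ already contains every pair $\langle \dot\psi, \vec b\rangle$ with $\psi$ a \emph{standard} $\Sigma_1^\mathcal{P}$-formula, $\vec b \in \mathcal{S}$, and $\mathcal{M} \models \psi(\vec a, \vec b)$. For false instances nothing is needed: if $\mathcal{M} \not\models \psi(\vec a, \vec b)$ then $\psi(\vec a, \vec b)$ has no witness at all, so $\langle \dot\psi, \vec b\rangle \notin c_\gamma$ for every $\gamma$. For the true instances I must bound the witnessing ranks uniformly, and this is exactly where non-standardness enters. I would apply $\Sigma_1^\mathcal{P}$-Overspill relative to a topless rank-cut of $\mathcal{M}$ (available by Proposition \ref{WFP rank-initial topless} since $\mathcal{M}$ is non-standard): the $\Sigma_1^\mathcal{P}$-statement ``some true standard instance lacks a witness of rank $< \zeta$'' cannot hold cofinally below the cut beyond a point, so overspill yields a single $\mu$ outside the cut for which every true standard instance over $\mathcal{S}$ has a witness in $V_\mu$. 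Setting $c := c_\mu$ then gives $c_E \cap A = \mathrm{tp}_{\Sigma_1^\mathcal{P}, \mathcal{S}}(\vec a)$, where $A$ is the ambient collection of pairs $\langle \dot\psi, \vec b\rangle$ with $\psi$ standard $\Sigma_1^\mathcal{P}$ and $\vec b \in \mathcal{S}$.

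For the $\Pi_1^\mathcal{P}$-type I would exploit the negation operator $\dot\sim : \bar\Pi_1^\mathcal{P} \to \bar\Sigma_1^\mathcal{P}$ and the defining identity $\mathrm{Sat}_{\Pi_1^\mathcal{P}}(\pi, \vec a, \vec b) \leftrightarrow \neg \mathrm{Sat}_{\Sigma_1^\mathcal{P}}(\dot\sim\pi, \vec a, \vec b)$. Since $\dot\sim$ is a $\Delta_1$-function and $c$ is a set, the collection
\[
c' = \{\langle \pi, \vec b\rangle \in \bar\Pi_1^\mathcal{P} \times V_\alpha \mid \langle \dot\sim\pi, \vec b\rangle \notin c\}
\]
exists by $\Delta_0^\mathcal{P}$-Separation with parameter $c$. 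For standard $\pi$ and $\vec b \in \mathcal{S}$ (so that $\dot\sim\pi$ is again a standard $\Sigma_1^\mathcal{P}$-formula, correctly classified by $c$) one computes $\langle \dot\pi, \vec b\rangle \in c' \Leftrightarrow \langle \dot\sim\pi, \vec b\rangle \notin c \Leftrightarrow \mathcal{M} \not\models (\dot\sim\pi)(\vec a, \vec b) \Leftrightarrow \mathcal{M} \models \pi(\vec a, \vec b)$, using Theorem \ref{partial satisfaction classes} and that $\dot\sim\pi$ is equivalent to $\neg\pi$ over $\mathrm{KP}$; hence $c'$ codes $\mathrm{tp}_{\Pi_1^\mathcal{P}, \mathcal{S}}(\vec a)$.

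I expect the main obstacle to be the uniform witness-bound $\mu$ in the second paragraph. The delicate point is that, for even a single $\Sigma_1^\mathcal{P}$-formula, the witnessing ranks over parameters in $V_\alpha$ can be \emph{cofinal} in the ordinals of a well-founded model (as happens for the $\Sigma_1$-projecta of admissible ordinals), so that a naive application of $\Sigma_1^\mathcal{P}$-Collection or Separation cannot succeed in $\mathrm{KP}^\mathcal{P}$. The bound therefore genuinely relies on $\mathcal{M}$ being non-standard: the standard ordinals form a topless cut, and $\Sigma_1^\mathcal{P}$-Overspill converts the family of per-formula bounds into one ordinal $\mu$ lying above them all. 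Some care is also needed to rewrite the overspill predicate in the $\bar\Sigma_1^\mathcal{P}$ prenex normal form demanded by the $\mathrm{Sat}$-machinery, and to check that restricting the ambient set to standard formulas and to parameters in $\mathcal{S}$ (rather than all of $V_\alpha$) suffices to make $c_\mu$ and $c'$ code precisely the two types.
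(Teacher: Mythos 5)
Your proposal takes a genuinely different route from the paper, and the divergence is exactly where the trouble lies. The paper's own proof is short and never uses non-standardness: fix an ordinal $\alpha$ with $\mathcal{S} \subseteq \mathcal{M}_\alpha$ and apply $\Sigma_1^\mathcal{P}$-Separation and $\Pi_1^\mathcal{P}$-Separation \emph{inside} $\mathcal{M}$ to the predicates $\mathrm{Sat}_{\Sigma_1^\mathcal{P}}$ and $\mathrm{Sat}_{\Pi_1^\mathcal{P}}$ over $\bar\Sigma_1^\mathcal{P} \times V_\alpha$ (resp. $\bar\Pi_1^\mathcal{P} \times V_\alpha$); correctness on standard formulae with parameters in $\mathcal{S}$ is then exactly Theorem \ref{partial satisfaction classes}. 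You instead try to get by with the $\Delta_1^\mathcal{P}$-Separation that bare $\mathrm{KP}^\mathcal{P}$ actually proves, via the rank-bounded approximations $c_\gamma$ plus overspill, and then obtain the $\Pi_1^\mathcal{P}$-type by complementation. Two parts of this are fine: the sets $c_\gamma$ are indeed $\Delta_1^\mathcal{P}$ (bounded quantifiers preserve $\Delta_1^\mathcal{P}$ over $\mathrm{KP}^\mathcal{P}$, using $\Delta_0^\mathcal{P}$-Collection), and the complementation step correctly reduces the $\Pi_1^\mathcal{P}$-type to a code for the $\Sigma_1^\mathcal{P}$-type.

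The gap is the overspill step, and it is genuine. The property you propose to overspill, ``every true \emph{standard} instance with parameters in $\mathcal{S}$ has a witness of rank $< \zeta$'', is not an internal formula: it quantifies over standard formulae and over the external set $\mathcal{S}$, so neither $\Sigma_1^\mathcal{P}$- nor $\Pi_1^\mathcal{P}$-Overspill can be applied to it. Its natural internal surrogate, ``every pair $\langle \psi, \vec{b} \rangle \in \bar\Sigma_1^\mathcal{P} \times V_\alpha$ possessing a witness has one of rank $< \zeta$'', is precisely an instance of Strong $\Sigma_1^\mathcal{P}$-Collection, which $\mathrm{KP}^\mathcal{P}$ does not prove --- this is the very obstacle you yourself flagged, and overspill does not remove it: overspill transfers an internal property that holds cofinally \emph{below} a topless cut to a point above it, but here there is no cut below which either version of the property can be verified in the first place. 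Over $\mathrm{WFP}(\mathcal{M})$, for instance, the premise fails in general: a true standard $\Sigma_1^\mathcal{P}$-instance may have only witnesses of non-standard rank (G\"odel-style instances), and even when all least witnesses are standard, there are $|\mathcal{S}|$-many instances, so their least witness ranks need not lie below any fixed standard ordinal. Hence non-standardness alone does not produce your $\mu$. It is worth knowing how the paper lives with this: its proof in effect invokes $\Sigma_1^\mathcal{P}$- and $\Pi_1^\mathcal{P}$-Separation beyond what bare $\mathrm{KP}^\mathcal{P}$ provides, and correspondingly every application of the full lemma in the paper is to a model satisfying $\Sigma_1^\mathcal{P}$-Separation; where only $\mathrm{KP}^\mathcal{P}$ is available (e.g. for $\mathcal{N}$ in Lemma \ref{Friedman lemma} and in Theorem \ref{Wilkie theorem}), only rank-bounded variants of the types, such as $\{\delta \mid \mathcal{N} \models \forall x \in V_\beta . \delta\}$, are used --- and those are $\Delta_1^\mathcal{P}$, which your $c_\gamma$-construction does deliver.
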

\begin{proof}
	Let $\alpha$ be a ordinal in $\mathcal{M}$, such that $\mathcal{S} \subseteq \mathcal{M}_\alpha$. Using $\Sigma_1^\mathcal{P}$-Separation and $\Pi_1^\mathcal{P}$-Separation in $\mathcal{M}$, let 
	\begin{align*}
	s &= \{ \phi(\vec{x}, \vec{v}) \mid \rnk(\vec{v}) < \alpha \wedge \mathrm{Sat}_{\Sigma_1^\mathcal{P}}( \phi(\vec{x}, \vec{y}), \vec{a}, \vec{v})\}^\mathcal{M}, \\
	p &= \{ \phi(\vec{x}, \vec{v}) \mid \rnk(\vec{v}) < \alpha \wedge \mathrm{Sat}_{\Pi_1^\mathcal{P}}( \phi(\vec{x}, \vec{y}), \vec{a}, \vec{v})\}^\mathcal{M}.
	\end{align*}
	By the properties of $\mathrm{Sat}_{\Sigma_1^\mathcal{P}}$, $s$ codes $\mathrm{tp}_{\Sigma_1^\mathcal{P}, \mathcal{S}}(\vec{a})$, and $p$ codes $\mathrm{tp}_{\Pi_1^\mathcal{P}, \mathcal{S}}(\vec{a})$.
\end{proof}

The following characterizations of recursively saturated models of $\mathrm{ZF}$ are sometimes useful. To state it we introduce this definition: Let $\mathcal{L}^0_\mathrm{Sat}$ be the language obtained by adding a new binary predicate $\mathrm{Sat}$ to $\mathcal{L}^0$. We say that $\mathcal{M}$ {\em admits a full satisfaction relation} if $\mathcal{M}$ expands to an $\mathcal{L}^0_\mathrm{Sat}$-structure $(\mathcal{M}, \mathrm{Sat}^\mathcal{M})$, such that
\begin{enumerate}[{\normalfont (i)}]
	\item $(\mathcal{M}, \mathrm{Sat}^\mathcal{M}) \models \mathrm{ZF}(\mathcal{L}^0_\mathrm{Sat})$,
	\item $(\mathcal{M}, \mathrm{Sat}^\mathcal{M}) \models \forall \sigma \in \bar\Sigma_n[x] . (\mathrm{Sat}(\sigma, x) \leftrightarrow \mathrm{Sat}_{\Sigma_n}(\sigma, x) \wedge \mathrm{Sat}(\neg\sigma, x) \leftrightarrow \mathrm{Sat}_{\Pi_n}(\neg\sigma, x))$, for each $n \in \mathbb{N}$.
\end{enumerate}
We say that $\mathrm{Sat}^\mathcal{M}$ is a {\em full satisfaction relation} on $\mathcal{M}$.

The following two results first appeared as Theorems 3.2 and 3.4 in \cite{Sch78}.

\begin{lemma}\label{rec sat reflection}
	Let $\mathcal{M}$ be a model of $\mathrm{ZF}$ that admits a full satisfaction relation. For each $\alpha_0 \in \mathrm{Ord}^\mathcal{M}$, there is $\alpha \in \mathrm{Ord}^\mathcal{M}$, such that $\alpha_0 <^\mathcal{M} \alpha$ and $\mathcal{M}_\alpha \preceq \mathcal{M}.$
\end{lemma}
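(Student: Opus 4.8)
The plan is to obtain a single ordinal $\alpha$ that works \emph{uniformly} for all standard formulae, and the device that supplies this uniformity is to reflect one $\mathcal{L}^0_\mathrm{Sat}$-formula inside the amenable expansion $(\mathcal{M}, \mathrm{Sat}^\mathcal{M})$, rather than reflecting each standard $\mathcal{L}^0$-formula separately (which would yield a different $\alpha$ for each and could not be amalgamated). Fix the expansion $(\mathcal{M}, \mathrm{Sat}^\mathcal{M}) \models \mathrm{ZF}(\mathcal{L}^0_\mathrm{Sat})$ guaranteed by the hypothesis. Since $\mathrm{ZF}(\mathcal{L}^0_\mathrm{Sat})$ includes $\mathcal{L}^0_\mathrm{Sat}$-Separation and $\mathcal{L}^0_\mathrm{Sat}$-Replacement, the Reflection schema (Theorem \ref{reflection thm}) holds in $(\mathcal{M}, \mathrm{Sat}^\mathcal{M})$ for all $\mathcal{L}^0_\mathrm{Sat}$-formulae, in particular for the two-variable formula $\chi(\sigma, a) \equiv \exists z . \mathrm{Sat}(\sigma, a')$, where $a'$ denotes the internal assignment extending $a$ by the value $z$. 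Applying Reflection to $\chi$ (and, after replacing $\alpha_0$ by $\max(\alpha_0, \omega)$, arranging $\alpha$ to be a limit $> \omega$ so that all formula-codes and all assignments into $V_\alpha$ lie in $V_\alpha$) yields an $\alpha >^\mathcal{M} \alpha_0$ such that, for every coded formula $\sigma$ and every assignment $a$ with values in $V_\alpha^\mathcal{M}$,
\[
\exists z . \mathrm{Sat}(\sigma, a') \ \Leftrightarrow\ \exists z \in V_\alpha^\mathcal{M} . \mathrm{Sat}(\sigma, a').
\]
This is precisely a Tarski--Vaught closure condition phrased through $\mathrm{Sat}$.

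Next I would verify $\mathcal{M}_\alpha \preceq \mathcal{M}$ by means of the Tarski Test (Lemma \ref{Tarski test}) applied to the inclusion $\iota : \mathcal{M}_\alpha \hookrightarrow \mathcal{M}$, with $\Gamma = \mathcal{L}^0$ (closed under subformulae). Since $\iota$ is the inclusion of the substructure $(V_\alpha^\mathcal{M})_\mathcal{M}$, it is an embedding and $\iota(\vec m) = \vec m$, so it suffices to show: for each \emph{standard} $\psi(\vec y) \equiv \exists x . \phi(x, \vec y)$ and each $\vec m \in \mathcal{M}_\alpha$, if $\mathcal{M} \models \exists x . \phi(x, \vec m)$ then there is $m' \in \mathcal{M}_\alpha$ with $\mathcal{M} \models \phi(m', \vec m)$. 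The bridge between genuine truth in $\mathcal{M}$ and the predicate $\mathrm{Sat}$ is supplied by condition (ii) in the definition of a full satisfaction relation together with the fact that the partial satisfaction relations $\mathrm{Sat}_{\Sigma_n}$ correctly decide standard formulae (the $\mathrm{ZF}$/L\'evy analogue of Theorem \ref{partial satisfaction classes}): for any standard $\phi$, choosing a standard $n$ with $\phi \in \Sigma_n \cap \Pi_n$ gives $\mathrm{Sat}(\phi, \vec a) \Leftrightarrow \mathcal{M} \models \phi(\vec a)$ for all $\vec a$. Thus from $\mathcal{M} \models \exists x . \phi(x, \vec m)$ we get $\chi(\dot\phi, \vec m)$; the reflection equivalence above relativises the witness to $V_\alpha^\mathcal{M}$, giving $\exists m' \in V_\alpha^\mathcal{M} . \mathrm{Sat}(\dot\phi, \cdot)$, and condition (ii) turns this back into a genuine witness $m' \in \mathcal{M}_\alpha$ with $\mathcal{M} \models \phi(m', \vec m)$. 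The Tarski Test then delivers $\mathcal{M}_\alpha \preceq \mathcal{M}$.

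The main obstacle, and the place where the hypothesis is genuinely used, is the passage from ``one $\alpha$ reflects the single predicate-formula $\chi$'' to ``$\alpha$ reflects every standard first-order formula''. The subtlety is that Reflection applied to $\chi$ quantifies internally over \emph{all} coded $\sigma$, including nonstandard ones, yet elementarity is only an external scheme over standard formulae; the argument is arranged so that I never need compositionality of $\mathrm{Sat}$ on nonstandard formulae — only the agreement of $\mathrm{Sat}$ with genuine truth on standard formulae (condition (ii)) and the single reflected instance. One must also take care that the reflected $\alpha$ is a limit exceeding $\omega$, so that formula-codes (living in $V_\omega$) and finite assignments into $V_\alpha$ are themselves members of $V_\alpha$, making the relativised quantifier $\exists z \in V_\alpha$ behave as intended; this is a routine adjustment available within the Reflection argument.
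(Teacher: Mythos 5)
Your proposal is correct and takes essentially the same route as the paper's proof: both exploit that $(\mathcal{M}, \mathrm{Sat}^\mathcal{M}) \models \mathrm{ZF}(\mathcal{L}^0_\mathrm{Sat})$ to apply the Reflection Theorem, inside the expansion, to a single formula involving the $\mathrm{Sat}$ predicate, and both then use correctness of $\mathrm{Sat}$ on standard syntax to convert the reflected statement into genuine elementarity of $\mathcal{M}_\alpha$ in $\mathcal{M}$. The only divergence is bookkeeping: the paper reflects $\mathrm{Sat}$-truth itself, so that $\mathrm{Sat}$ agrees with satisfaction relativised to $V_\alpha$ for all internal formulas, and reads off $\mathcal{M}_\alpha \preceq \mathcal{M}$ directly, whereas you reflect only the existential-witness (Tarski--Vaught) condition and finish with the Tarski Test, which is a cosmetic variant of the same argument.
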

\begin{proof}
	Let $\alpha_0 \in \mathrm{Ord}^\mathcal{M}$ be arbitrary. We work in $(\mathcal{M}, \mathrm{Sat}^\mathcal{M})$: By the Reflection Theorem, there is $\alpha > \sup\{\alpha_0, \omega\}$, such that for all $\vec{a} \in V_\alpha$ and for all $\delta(\vec{x}) \in \mathcal{L}^0$,
	$$\mathrm{Sat}(\delta, \vec{a}) \leftrightarrow
	\mathrm{Sat}^{V_\alpha}(\delta, \vec{a}).$$
	Now, by the properties of $\mathrm{Sat}$, we obtain
	$$\mathrm{Sat}(\delta, \vec{a}) \leftrightarrow \mathrm{Sat}(\delta^{V_\alpha}, \vec{a}).$$
	
	We now switch to working in the meta-theory: By correctness of $\mathrm{Sat}^\mathcal{M}$ for standard syntax, and by our work inside $\mathcal{M}$, for every standard $\delta$, and every $\vec{a} \in \mathcal{M}_\alpha$ of standard length:
	\[ \mathcal{M} \models \delta(\vec{a}) \leftrightarrow \mathcal{M}_\alpha \models \delta(\vec{a}). \]
	So $\mathcal{M}_\alpha \preceq \mathcal{M},$ as desired.
\end{proof}

\begin{thm}\label{rec sat char}
Let $\mathcal{M} \models \mathrm{ZF}$ be countable. The following conditions are equivalent:
\begin{enumerate}[{\normalfont (a)}]
\item\label{rec sat char rec} $\mathcal{M}$ is recursively saturated.
\item\label{rec sat char truth} $\mathcal{M}$ is $\omega$-non-standard and admits a full satisfaction relation.
\end{enumerate}
Moreover, even if $\mathcal{M}$ is not assumed countable, we have  (\ref{rec sat char truth}) $\Rightarrow$ (\ref{rec sat char rec}).
\end{thm}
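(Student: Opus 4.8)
The plan is to prove the two directions by quite different methods: (b)$\Rightarrow$(a) by an overspill argument that never uses countability, and (a)$\Rightarrow$(b) by resplendence, which is where countability enters. In both directions $\omega$-nonstandardness is the easy auxiliary fact. For (b)$\Rightarrow$(a), let $\mathrm{Sat}^\mathcal{M}$ be a full satisfaction relation on the $\omega$-nonstandard $\mathcal{M}$, and let $p(x,\vec b)=\{\phi_i(x,\vec y):i\in\mathbb{N}\}$ be a recursive, finitely satisfiable type over $\mathcal{M}$. Since the map $i\mapsto\ulcorner\phi_i\urcorner$ is recursive it is represented in $\mathcal{M}\models\mathrm{ZF}$ by a definable function $F$ with $F(\dot i)=\dot{\ulcorner\phi_i\urcorner}$ for each standard $i$. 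Consider the $\mathcal{L}^0_{\mathrm{Sat}}$-formula $\theta(k)\equiv\exists x.\forall i\le k.\,\mathrm{Sat}(F(i),\langle x,\vec b\rangle)$. Finite satisfiability together with condition (ii) of the definition gives $(\mathcal{M},\mathrm{Sat}^\mathcal{M})\models\theta(\dot k)$ for every standard $k$, and $\theta$ is downward closed in $k$. Because $(\mathcal{M},\mathrm{Sat}^\mathcal{M})\models\mathrm{ZF}(\mathcal{L}^0_{\mathrm{Sat}})$ it has induction for $\mathcal{L}^0_{\mathrm{Sat}}$-formulae, so the standard cut of $\omega^\mathcal{M}$ is not $\mathcal{L}^0_{\mathrm{Sat}}$-definable; hence the definable downward-closed class $\{k:\theta(k)\}$, which contains all standard naturals, contains some nonstandard $\hat k$. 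Any witness $x^\ast$ for $\theta(\hat k)$ then satisfies $\phi_i(x^\ast,\vec b)$ for all standard $i$, realizing $p$. This argument is uniform in $\mathcal{M}$, yielding the unconditional implication (b)$\Rightarrow$(a).

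For (a)$\Rightarrow$(b), first note that the recursive type $\{x\in\omega\}\cup\{x\neq\dot n:n\in\mathbb{N}\}$ is finitely satisfiable in $\mathcal{M}\models\mathrm{ZF}$, so recursive saturation forces $\mathcal{M}$ to be $\omega$-nonstandard. The remaining task is to expand $\mathcal{M}$ by a full satisfaction relation. I would obtain this from the standard fact that a countable recursively saturated model is resplendent: if $\Phi(R)$ is a recursively enumerable theory in $\mathcal{L}^0\cup\{R\}$ that is consistent with the elementary diagram $\mathrm{ElDiag}(\mathcal{M})$, then $\mathcal{M}$ expands to a model of $\Phi(R)$. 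Take $\Phi(R)$ to be the (recursive) theory asserting that $R$ is a full satisfaction relation, i.e. the compositional Tarskian clauses, the schema $\mathrm{ZF}(\mathcal{L}^0_{\mathrm{Sat}})$, and the agreement conditions $\forall\sigma\in\bar\Sigma_n.\,(R(\sigma,x)\leftrightarrow\mathrm{Sat}_{\Sigma_n}(\sigma,x))$ for every $n$. By resplendence it then suffices to prove that $\Phi(R)$ is consistent with $\mathrm{ElDiag}(\mathcal{M})$.

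To verify that consistency I would construct an elementary extension $\mathcal{N}\succeq\mathcal{M}$ that carries such a class. Adjoin a new constant $\theta$ and the sentences $\{V_\theta\prec_{\Sigma_n}V:n\in\mathbb{N}\}$ (each expressible with the $\mathrm{Sat}_{\Sigma_n}$ predicates) to $\mathrm{ElDiag}(\mathcal{M})$. Any finite fragment mentions only finitely many $n\le N$, and the Reflection Theorem \ref{reflection thm} produces, inside $\mathcal{M}$, an ordinal $\alpha$ above all occurring parameters with $V_\alpha\prec_{\Sigma_N}V$; interpreting $\theta$ by $\alpha$ in $\mathcal{M}$ satisfies the fragment. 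By compactness there is $\mathcal{N}\succeq\mathcal{M}$ with an ordinal $\theta$ satisfying $V_\theta\prec_{\Sigma_n}\mathcal{N}$ for all standard $n$. Working inside $\mathcal{N}$, the set $V_\theta^\mathcal{N}$ is an internal structure, so its internal satisfaction relation $\mathrm{Sat}^{(V_\theta^\mathcal{N})}$ is an $\mathcal{N}$-definable set; externally it is compositional for all (possibly nonstandard) internal formulae, and by the $\Sigma_n$-reflection it agrees with $\mathrm{Sat}_{\Sigma_n}$ on standard formulae. Since this $R$ is $\mathcal{N}$-definable (with parameter $\theta$), every instance of $\mathrm{ZF}(\mathcal{L}^0_{\mathrm{Sat}})$ reduces to an $\mathcal{L}^0$-instance and holds in $\mathcal{N}\models\mathrm{ZF}$. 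Thus $(\mathcal{N},R)\models\mathrm{ElDiag}(\mathcal{M})\cup\Phi(R)$, establishing the required consistency and, via resplendence, producing the satisfaction relation on $\mathcal{M}$ itself.

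The main obstacle is the consistency step of (a)$\Rightarrow$(b): a full satisfaction relation must be compositional for \emph{all} internal formulae, so it cannot be a definable partial truth predicate on $\mathcal{M}$ directly, and one genuinely needs to pass to an extension where an internal reflecting rank $V_\theta$ supplies a totally compositional class. Care is required to check that $\mathrm{Sat}^{(V_\theta^\mathcal{N})}$ satisfies clause (ii) for every standard $n$ (this is exactly where $V_\theta\prec_{\Sigma_n}\mathcal{N}$ and the definition of $\mathrm{Sat}_{\Sigma_n}$ interlock) and that $\Phi(R)$ is genuinely recursively enumerable so that resplendence applies. The overspill direction, by contrast, is routine once the non-definability of the standard cut is invoked.
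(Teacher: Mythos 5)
A preliminary remark on the comparison: the paper gives no proof of Theorem \ref{rec sat char} at all; it is quoted from Schlipf \cite{Sch78} (Theorems 3.2 and 3.4), so your proposal is measured against the standard argument rather than a proof in the text. Your direction (b)$\Rightarrow$(a) is correct and is the standard one: represent the recursive type by a definable function, use clause (ii) to convert finite satisfiability into $\mathrm{Sat}$-statements about standard codes, and overspill using Separation for $\mathcal{L}^0_\mathrm{Sat}$-formulae (available by clause (i)); as you note, countability is never used, which is exactly what the ``moreover'' clause demands. Reducing (a)$\Rightarrow$(b) to resplendence for r.e.\ theories (legitimate for countable recursively saturated structures, cf.\ \cite{KS06}) plus a consistency claim is also the right strategy, and your treatment of $\omega$-non-standardness is fine.

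The gap is in the consistency verification, and it is structural: the two halves of your check are jointly impossible. You verify clause (i) for $(\mathcal{N},R)$ on the grounds that $R$ is $\mathcal{N}$-definable; but \emph{no} $\mathcal{N}$-definable relation can satisfy clause (ii) over $\mathcal{N}$. Indeed, clause (ii) together with the provable correctness of the L\'evy predicates $\mathrm{Sat}_{\Sigma_n}$ on standard formulae makes $R$ a truth predicate for all standard $\mathcal{L}^0$-formulae over $\mathcal{N}$, and Tarski's diagonal argument rules out a definable such predicate --- the paper itself invokes exactly this fact in its discussion of Paris models. Concretely, your witness $R=\mathrm{Sat}^{(V_\theta^\mathcal{N})}$ fails clause (ii) because that relation only concerns assignments \emph{into} $V_\theta^\mathcal{N}$, whereas clause (ii), $(\mathcal{N},R)\models\forall\sigma\in\bar\Sigma_n[x]\,.\,(\mathrm{Sat}(\sigma,x)\leftrightarrow\mathrm{Sat}_{\Sigma_n}(\sigma,x)\wedge\dots)$, lets $x$ range over all of $\mathcal{N}$: take $\sigma$ the code of $x=x$ and $x:=V_\theta^\mathcal{N}$ itself; then $\mathrm{Sat}_{\Sigma_n}^\mathcal{N}(\sigma,x)$ holds but $R(\sigma,x)$ fails. (Your compositional Tarski clauses fail outside $V_\theta^\mathcal{N}$ for the same reason.) Passing instead to the cut-down structure $((V_\theta^\mathcal{N})_\mathcal{N},\in^\mathcal{N},R)$ does not rescue the argument: your justification of clause (i) via definability works only for expansions of $\mathcal{N}$, and in the cut-down structure Replacement for $\mathrm{Sat}$-formulae requires that no $R$-definable function map a set cofinally into $\theta$ --- a regularity of $\theta$ that nothing in your compactness construction provides.

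The repair is easier than your construction, precisely because the paper's notion of full satisfaction relation requires only (i) and (ii), not compositionality. Consistency needs only \emph{finite} satisfiability, and a finite fragment of $\Phi(R)\cup\mathrm{ElDiag}(\mathcal{M})$ involves clause (ii) only for finitely many $n\leq N$; such a fragment \emph{can} be witnessed by a definable relation, namely by interpreting $\mathrm{Sat}$ in $\mathcal{M}$ itself as the definable amalgam of L\'evy predicates up to level $N$ (agreeing with $\mathrm{Sat}_{\Sigma_k}$ on $\bar\Sigma_k$-formulae and with $\mathrm{Sat}_{\Pi_k}$ on their negations, $k\leq N$, and false elsewhere): clause (i) instances hold by definability, the finitely many clause (ii) instances hold by the provable coherence of the partial satisfaction predicates \cite{Lev65}, and $\mathrm{ElDiag}(\mathcal{M})$ is automatic. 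Compactness then gives consistency, and resplendence concludes --- no elementary extension, no use of Theorem \ref{reflection thm}, and no $V_\theta$ are needed. Note that dropping the Tarskian clauses is not cosmetic: once full compositionality is demanded, even finite fragments admit no definable witness (by the same Tarski argument), and one is forced into a $V_\theta$-style construction on the cut-down structure, where the Replacement problem above becomes the real mathematical content.
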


While we are on the subject of recursively saturated models, it is worth giving the following theorem:

\begin{thm}
If $\mathcal{M} \models \mathrm{ZFC}$ is recursively saturated and $\mathcal{M} \preceq^\mathrm{cf} \mathcal{N}$, then $\mathcal{N}$ is recursively saturated.
\end{thm}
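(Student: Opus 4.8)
The plan is to reduce to the satisfaction-relation characterization of recursive saturation in Theorem \ref{rec sat char}. Since that theorem yields the implication (\ref{rec sat char truth}) $\Rightarrow$ (\ref{rec sat char rec}) with no countability hypothesis, it suffices to show that $\mathcal{N}$ is $\omega$-non-standard and admits a full satisfaction relation. Fix a cofinal elementary $i : \mathcal{M} \rightarrow \mathcal{N}$ witnessing $\mathcal{M} \preceq^\mathrm{cf} \mathcal{N}$. The $\omega$-non-standardness of $\mathcal{N}$ is immediate from elementarity alone: any nonstandard $m \in \omega^\mathcal{M}$ maps to $i(m) \in \omega^\mathcal{N}$, and $i(m)$ cannot be any standard numeral, since $i$ fixes the numerals and is injective. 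The real work is therefore to manufacture a full satisfaction relation $\mathrm{Sat}^\mathcal{N}$ on $\mathcal{N}$ out of the given $\mathrm{Sat}^\mathcal{M}$.

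Working in $(\mathcal{M}, \mathrm{Sat}^\mathcal{M}) \models \mathrm{ZF}(\mathcal{L}^0_\mathrm{Sat})$, let $C$ be the class of ordinals $\alpha$ with $(V_\alpha, \mathrm{Sat}^\mathcal{M}\!\restriction V_\alpha) \prec (\mathcal{M}, \mathrm{Sat}^\mathcal{M})$; the Reflection Theorem \ref{reflection thm} in the expanded language makes $C$ unbounded. For $\alpha \in C$, $\mathcal{L}^0_\mathrm{Sat}$-Separation makes $s_\alpha =_\df \mathrm{Sat}^\mathcal{M}\!\restriction V_\alpha^\mathcal{M}$ a set, and elementarity forces $s_\alpha$ to be the internal Tarskian satisfaction relation of the set model $(V_\alpha^\mathcal{M}, \in)$. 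This last description is $\mathcal{L}^0$-definable and provably functional and total over $\mathrm{ZF}$; writing $\chi(s, \alpha)$ for ``$s$ is the satisfaction relation of the set $V_\alpha$'', we have $\mathcal{M} \models \chi(s_\alpha, \alpha)$, so by $\mathcal{L}^0$-elementarity of $i$, $i(s_\alpha)$ is the internal satisfaction relation of $(V_{i(\alpha)}^\mathcal{N}, \in)$ in $\mathcal{N}$. Cofinality guarantees that $i(\mathrm{Ord}^\mathcal{M})$, and hence $i(C)$, is cofinal in $\mathrm{Ord}^\mathcal{N}$, so every tuple $\vec n \in \mathcal{N}$ lies in some $V_{i(\alpha)}^\mathcal{N}$ with $\alpha \in C$; I then define $\mathrm{Sat}^\mathcal{N}(\sigma, \vec n)$ to hold iff $\mathcal{N} \models \langle \sigma, \vec n \rangle \in i(s_\alpha)$ for such an $\alpha$. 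Well-definedness rests on coherence: for fixed $\alpha < \beta$ in $C$ the single $\mathcal{L}^0$-sentence (with parameters $s_\alpha, s_\beta, \alpha$) asserting that $s_\beta$ and $s_\alpha$ agree on parameters drawn from $V_\alpha$ is true in $\mathcal{M}$ and transfers across $i$.

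The heart of the argument is that $\mathrm{Sat}^\mathcal{N}$ turns $i$ into an $\mathcal{L}^0_\mathrm{Sat}$-elementary embedding. For a standard $\mathcal{L}^0_\mathrm{Sat}$-formula $\Phi$ and $\vec a \in V_\alpha^\mathcal{M}$ with $\alpha \in C$, a short chain of equivalences — using $(V_\alpha, s_\alpha) \prec (\mathcal{M}, \mathrm{Sat}^\mathcal{M})$, then $\mathcal{L}^0$-elementarity of $i$ applied to the $\mathcal{L}^0$-definable internal satisfaction of the set structure $(V_\alpha, s_\alpha)$ — gives $(\mathcal{M}, \mathrm{Sat}^\mathcal{M}) \models \Phi(\vec a) \Leftrightarrow (V_{i(\alpha)}^\mathcal{N}, i(s_\alpha)) \models \Phi(i(\vec a))$. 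Separately, an induction on $\Phi$ shows $(\mathcal{N}, \mathrm{Sat}^\mathcal{N}) \models \Phi(\vec n) \Leftrightarrow (V_{i(\alpha)}^\mathcal{N}, i(s_\alpha)) \models \Phi(\vec n)$ for all $\alpha \in C$ and $\vec n \in V_{i(\alpha)}^\mathcal{N}$: the atomic $\mathrm{Sat}$-clause is exactly the definition of $\mathrm{Sat}^\mathcal{N}$, and the existential step is a Tarski–Vaught argument (Lemma \ref{Tarski test}) in which a witness supplied by cofinality at some level $\beta \geq \alpha$ in $C$ is pulled back to level $\alpha$ by the coherence of the $i(s_\gamma)$, transferred across $i$ one standard formula at a time. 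Composing the two equivalences yields $\mathcal{L}^0_\mathrm{Sat}$-elementarity of $i$, whence $(\mathcal{N}, \mathrm{Sat}^\mathcal{N}) \equiv_{\mathcal{L}^0_\mathrm{Sat}} (\mathcal{M}, \mathrm{Sat}^\mathcal{M})$. The defining conditions of a full satisfaction relation are a set of $\mathcal{L}^0_\mathrm{Sat}$-sentences holding in $(\mathcal{M}, \mathrm{Sat}^\mathcal{M})$, so they hold in $(\mathcal{N}, \mathrm{Sat}^\mathcal{N})$ as well; with Theorem \ref{rec sat char} this finishes the proof.

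The main obstacle, and the reason the construction is indirect, is Tarski's undefinability of truth: one cannot transport $\mathrm{Sat}^\mathcal{M}$ directly along $i$, since $i$ is only $\mathcal{L}^0$-elementary while $\mathrm{Sat}$ is not $\mathcal{L}^0$-definable. The device that circumvents this is to work only at the reflecting levels $\alpha \in C$, where the externally given fragment $\mathrm{Sat}^\mathcal{M}\!\restriction V_\alpha$ coincides with the $\mathcal{L}^0$-definable internal satisfaction relation of the set $V_\alpha$, so that exactly this fragment of truth can be shipped across $i$. The delicate point is that both the coherence between reflecting levels and the reflection property itself must be invoked one standard formula at a time, because the quantification over all internal (possibly nonstandard) formulas hidden inside $\mathrm{Sat}$ is never $\mathcal{L}^0$-expressible.
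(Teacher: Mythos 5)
Your first and last steps coincide with the paper's: reduce via Theorem \ref{rec sat char} to showing that $\mathcal{N}$ is $\omega$-non-standard and admits a full satisfaction relation (your direct transfer of $\omega$-non-standardness is correct, and a bit cleaner than the paper's device of adjoining a nonstandard constant $c$). The difference is the middle: the paper simply invokes Theorem \ref{cofinal expands} (quoted from \cite{EKM18}, Theorem 6.3) to expand $\mathcal{N}$, treating the transfer of the predicate $\mathrm{Sat}$ along the cofinal embedding as a black box, whereas you attempt to prove the needed instance of that theorem from scratch. Your construction has the right architecture (local restrictions $s_\alpha$, transport along $i$, cofinality for totality, coherence, Tarski--Vaught for elementarity), but its pivotal claim is unjustified: you assert that at reflecting levels ``elementarity forces $s_\alpha$ to be the internal Tarskian satisfaction relation of $(V_\alpha^\mathcal{M}, \in)$'', so that $\mathcal{M} \models \chi(s_\alpha, \alpha)$ and the fragment can be shipped across the merely $\mathcal{L}^0$-elementary $i$. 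Under the paper's definition of full satisfaction relation, conditions (i) and (ii) tie $\mathrm{Sat}^\mathcal{M}$ to the $\mathcal{L}^0$-definable L\'evy predicates only on internal formulas of \emph{standard} complexity; since $\mathcal{M}$ is $\omega$-non-standard, there are internal formulas of non-standard complexity, and on those nothing in (i), (ii), or the level's elementarity (which concerns only standard formulas of the expanded language) forces $\mathrm{Sat}^\mathcal{M} \restriction V_\alpha^\mathcal{M}$ to agree with the internal truth relation of $V_\alpha$. So $\mathcal{M} \models \chi(s_\alpha, \alpha)$ can fail, and the step ``by $\mathcal{L}^0$-elementarity of $i$, $i(s_\alpha)$ is the internal satisfaction relation of $V_{i(\alpha)}^\mathcal{N}$'' collapses.

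There is a second, related gap: the unboundedness of your class $C$. The Reflection Theorem \ref{reflection thm} (legitimately applied in the expanded language, given $\mathrm{ZF}(\mathcal{L}^0_\mathrm{Sat})$) is a schema: for each standard formula it yields a club of levels reflecting that formula, but it does not yield a single level $\alpha$ with $(V_\alpha^\mathcal{M}, s_\alpha) \prec (\mathcal{M}, \mathrm{Sat}^\mathcal{M})$ for \emph{all} standard $\mathcal{L}^0_\mathrm{Sat}$-formulas simultaneously; note also that the paper's Lemma \ref{rec sat reflection} gives $\mathcal{M}_\alpha \preceq \mathcal{M}$ in $\mathcal{L}^0$ only, not elementarity of the expanded level structures. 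Both gaps are repairable, and the repair is essentially the proof of the theorem the paper black-boxes: define $\mathrm{Sat}^\mathcal{N}$ from the plain restrictions $s_\alpha$ for \emph{all} $\alpha \in \mathrm{Ord}^\mathcal{M}$ (their coherence is an $\mathcal{L}^0$-fact about sets and transfers along $i$, so no reflection is needed for well-definedness), and then verify $\mathcal{L}^0_\mathrm{Sat}$-elementarity of $i$ one standard formula $\Phi$ at a time, using the club of $\Phi$-reflecting levels together with the observation that satisfaction of a standard formula in the two-relation set structure $(V_\alpha, \in, s_\alpha)$ is $\mathcal{L}^0$-expressible with $s_\alpha$ as a set parameter; this replaces your appeal to $\chi(s_\alpha, \alpha)$ while keeping every transfer within the $\mathcal{L}^0$-elementarity of $i$.
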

\begin{proof}
By the forward direction of Theorem \ref{rec sat char}, $\mathcal{M}$ expands to a structure $(\mathcal{M}, \mathrm{Sat}^\mathcal{M}, c^\mathcal{M})$ so that (\ref{rec sat char truth}) of that theorem holds for $\mathcal{M}$, and 
$$(\mathcal{M}, \mathrm{Sat}^\mathcal{M}, c^\mathcal{M}) \models k < c < \omega,$$ 
for all standard $k \in \mathbb{N}$. By Theorem \ref{cofinal expands} below, $\mathcal{N}$ also expands to $(\mathcal{N}, \mathrm{Sat}^\mathcal{N}, c^\mathcal{N})$ so that $(\mathcal{M}, \mathrm{Sat}^\mathcal{M}, c^\mathcal{M}) \preceq (\mathcal{N}, \mathrm{Sat}^\mathcal{N}, c^\mathcal{N})$ (to make that theorem applicable, the relation $\mathrm{Sat}$ and the constant $c$ can be formally merged into a single unary predicate that applies to ordered triples). This ensures that $\mathcal{N}$ is $\omega$-non-standard and expands as in the statement of Theorem \ref{rec sat char}(\ref{rec sat char truth}). So by the backward direction of Theorem \ref{rec sat char} applied to $(\mathcal{N}, \mathrm{Sat}^\mathcal{N})$, we have that $\mathcal{N}$ is recursively saturated.
\end{proof}

\begin{thm}\label{cofinal expands}
Let $\mathcal{L}^0_X$ be the language obtained by adding a new unary predicate $X$ to $\mathcal{L}^0$. If $(\mathcal{M}, X^\mathcal{M}) \models \mathrm{ZFC}(X)$ and $\mathcal{M} \preceq^\mathrm{cf} \mathcal{N}$, then there is $X^\mathcal{N} \subseteq \mathcal{N}$ such that $(\mathcal{M}, X^\mathcal{M}) \preceq (\mathcal{N}, X^\mathcal{N})$.
\end{thm}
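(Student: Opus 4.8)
The plan is to define $X^{\mathcal{N}}$ by \emph{piecewise coding} and then to verify elementarity by reducing every $\mathcal{L}^0_X$-formula to the pure language $\mathcal{L}^0$, in which $\mathcal{M}$ is already elementary in $\mathcal{N}$. First I would set up the coding. Since $(\mathcal{M}, X^{\mathcal{M}}) \models \mathrm{ZFC}(X)$, Separation for $\mathcal{L}^0_X$-formulae provides, for each $a \in \mathcal{M}$, a set $\chi_a \in \mathcal{M}$ with $\mathcal{M} \models \chi_a = \{u \in a : X(u)\}$. Identifying $\mathcal{M}$ with a substructure of $\mathcal{N}$ via the cofinal elementary embedding, define
\[ X^{\mathcal{N}} = \{\, n \in \mathcal{N} : n \in^{\mathcal{N}} \chi_a \text{ for some } a \in \mathcal{M} \text{ with } n \in^{\mathcal{N}} a \,\}. \]
Coherence (independence of the choice of $a$) follows because $\mathcal{M} \models \forall u \in a \cap b\,(u \in \chi_a \leftrightarrow u \in \chi_b)$ transfers to $\mathcal{N}$ by $\mathcal{L}^0$-elementarity; the same transfer gives $m \in X^{\mathcal{M}} \Leftrightarrow m \in X^{\mathcal{N}}$ for $m \in \mathcal{M}$, so the inclusion $(\mathcal{M}, X^{\mathcal{M}}) \hookrightarrow (\mathcal{N}, X^{\mathcal{N}})$ preserves all atomic $\mathcal{L}^0_X$-formulae, i.e. is an embedding. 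Cofinality is used once more and crucially: it forces $\mathrm{Ord}^{\mathcal{M}}$ to be cofinal in $\mathrm{Ord}^{\mathcal{N}}$ and the sets $(V_\alpha^{\mathcal{M}})_{\mathcal{N}}$, for $\alpha \in \mathrm{Ord}^{\mathcal{M}}$, to exhaust $\mathcal{N}$.

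The engine of the argument is internal satisfaction on the coded pieces. For $\alpha \in \mathrm{Ord}^{\mathcal{M}}$ let $\mathfrak{V}_\alpha$ be the structure coded by the pair $(V_\alpha^{\mathcal{M}}, \chi_{V_\alpha^{\mathcal{M}}})$, interpreting $X$ by $\chi_{V_\alpha^{\mathcal{M}}}$. By the set-model satisfaction relation available in $\mathrm{KP}$ (the construction behind Lemma \ref{absoluteness with restricted domain}), the statement ``$\mathfrak{V}_\alpha \models \ulcorner\psi\urcorner[\vec{v}]$'' is, for each standard $\mathcal{L}^0_X$-formula $\psi$, an $\mathcal{L}^0$-formula with parameters $V_\alpha^{\mathcal{M}}, \chi_{V_\alpha^{\mathcal{M}}} \in \mathcal{M}$, and it correctly computes the relativized truth of $\psi$ in $(\mathcal{N}, X^{\mathcal{N}})$ over that piece. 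Being an $\mathcal{L}^0$-statement with parameters in $\mathcal{M}$, it transfers verbatim across $\mathcal{M} \preceq \mathcal{N}$, yielding piece-elementarity $\mathfrak{V}_\alpha^{\mathcal{M}} \preceq \mathfrak{V}_\alpha^{\mathcal{N}}$ for $\mathcal{L}^0_X$-formulae. On the $\mathcal{M}$-side, the Reflection schema of $\mathrm{ZFC}(X)$ (the $\mathcal{L}^0_X$-version of Theorem \ref{reflection thm}) lets me, for any $\phi(\vec{v}) \in \mathcal{L}^0_X$ and $\vec{m} \in \mathcal{M}$, pick $\alpha$ with $\vec{m} \in V_\alpha^{\mathcal{M}}$ reflecting $\phi$; combining these three facts gives $(\mathcal{M}, X^{\mathcal{M}}) \models \phi(\vec{m}) \Leftrightarrow (\mathcal{N}, X^{\mathcal{N}}) \models \phi^{V_\alpha^{\mathcal{M}}}(\vec{m})$.

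The main obstacle is closing the remaining gap: passing from the \emph{relativized} truth $\phi^{V_\alpha^{\mathcal{M}}}$ in $(\mathcal{N}, X^{\mathcal{N}})$ to the \emph{unrelativized} $\phi$, i.e. securing reflection in the target structure, which I cannot yet assume satisfies $\mathrm{ZFC}(X)$. I plan to resolve this by an induction on the quantifier complexity $n$ of $\phi$ that simultaneously establishes (i) truth-preservation between $(\mathcal{M}, X^{\mathcal{M}})$ and $(\mathcal{N}, X^{\mathcal{N}})$ at $\mathcal{M}$-parameters, and (ii) the existence of cofinally many pieces $\mathfrak{V}_\gamma^{\mathcal{N}}$ reflecting $\phi$ in $(\mathcal{N}, X^{\mathcal{N}})$. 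The base case of bounded formulae is immediate from the previous paragraph, since bounded truth localizes to any sufficiently large piece. The hard direction of the inductive step is the existential quantifier: given a witness $c \in \mathcal{N}$ for $\exists x\,\pi(x, \vec{m})$ in $(\mathcal{N}, X^{\mathcal{N}})$, cofinality places $c$ in some piece $(V_\gamma^{\mathcal{M}})_{\mathcal{N}}$, the inductively available $\Pi_n$-reflection in $\mathcal{N}$ localizes $\pi(c, \vec{m})$ to that piece, and piece-elementarity (applicable because $\vec{m} \in \mathcal{M}$) then returns an $\mathcal{M}$-witness. The genuinely delicate point I expect to be decisive is obtaining a \emph{uniform} ordinal bound on witnesses, so that reflection of $\exists x\,\pi$ holds for all parameters in a piece at once; for this I would use full $X$-Collection in $(\mathcal{M}, X^{\mathcal{M}})$ to define internally a witness-bounding class function and push it into $\mathcal{N}$ through the internal satisfaction of coded pieces. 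With (i) secured for all $n$, the Tarski Test (Lemma \ref{Tarski test}) delivers $(\mathcal{M}, X^{\mathcal{M}}) \preceq (\mathcal{N}, X^{\mathcal{N}})$, and $(\mathcal{N}, X^{\mathcal{N}}) \models \mathrm{ZFC}(X)$ then follows, each axiom being an $\mathcal{L}^0_X$-sentence true in $(\mathcal{M}, X^{\mathcal{M}})$.
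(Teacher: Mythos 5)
Note first that the paper does not prove Theorem \ref{cofinal expands} at all: it is stated with a citation to [EKM18, Theorem 6.3]. So your proposal must be judged against the standard Gaifman-style argument that the cited proof adapts to set theory, and your architecture is indeed that argument: piecewise coding of $X$ via $\mathcal{L}^0_X$-Separation, coherence of the codes, transfer of the internal satisfaction of coded pieces along the $\mathcal{L}^0$-elementary embedding, Reflection in $(\mathcal{M}, X^{\mathcal{M}})$, and cofinality to exhaust $\mathcal{N}$ by pieces. You also correctly isolate the crux: some form of reflection must be secured on the $\mathcal{N}$-side, where $\mathrm{ZFC}(X)$ is not yet available.

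The gap is in the inductive bookkeeping that is supposed to deliver this. Two concrete problems. First, your clause (ii) demands only ``cofinally many'' pieces $\mathfrak{V}_\gamma^{\mathcal{N}}$ reflecting $\phi$; but the only coherence facts you can transport from $\mathcal{M}$ to $\mathcal{N}$ are statements with \emph{set} parameters and \emph{bounded} quantifiers, namely the pairwise facts ``$(V_\gamma, \chi_{V_\gamma}) \prec_\phi (V_\delta, \chi_{V_\delta})$'' for $\gamma < \delta$ ranging over the $\mathcal{L}^0_X$-definable club $C_\phi$ of $(\mathcal{M}, X^{\mathcal{M}})$-reflecting levels. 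In the existential step you need a level lying simultaneously in $C_\pi$ (to invoke the transferred coherence) and in the cofinal set produced by (ii) for $\pi$ (to invoke $\mathcal{N}$-side reflection); a club and an arbitrary cofinal subclass of $\mathrm{Ord}^{\mathcal{M}}$ need not meet at all (limits versus successors), so the step can fail as stated. Second, the proposed mechanism --- ``define internally a witness-bounding class function and push it into $\mathcal{N}$'' --- does not typecheck: that function is only $\mathcal{L}^0_X$-definable, and the embedding is elementary only for $\mathcal{L}^0$, so the function has no image in $\mathcal{N}$; $\mathcal{L}^0_X$-Replacement does let you move its set-sized restrictions across, but their witness-bounding \emph{meaning} is again an unbounded $\mathcal{L}^0_X$-statement, returning you to the same point. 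Both problems have one forced repair: strengthen (ii) so that the $\mathcal{N}$-side reflecting levels are exactly the levels of the $\mathcal{M}$-side club. Fix $\phi$, let $C$ be the $(\mathcal{M}, X^{\mathcal{M}})$-club of levels reflecting $\phi$ and all its subformulas; for $\gamma < \delta$ in $C$ the statement ``$(V_\gamma, \chi_{V_\gamma}) \prec_\phi (V_\delta, \chi_{V_\delta})$'' is a bounded-quantifier $\mathcal{L}^0$-fact with set parameters, hence transfers to $\mathcal{N}$; the $\mathcal{N}$-pieces indexed by $C$ therefore form a $\phi$-elementary chain whose union is all of $(\mathcal{N}, X^{\mathcal{N}})$ by cofinality, and Tarski's elementary-chain argument (your induction, with ``for every $\gamma \in C$'' replacing ``cofinally many'') gives that each such piece is $\phi$-elementary in $(\mathcal{N}, X^{\mathcal{N}})$. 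Combining this with $\mathcal{M}$-side reflection and piece-transfer yields your clause (i), and with that modification your proof goes through.
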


In \cite{EKM18}, this is proved as Theorem 6.3.

\begin{dfn}
	Let $i : \mathcal{M} \rightarrow \mathcal{M}$ be a self-embedding of a model $\mathcal{M}$ of $\mathrm{KP}$.
	\begin{itemize}
		\item $x \in \mathcal{M}$ is a {\em fixed point} of $i$, if $i(x) = x$. The substructure of $\mathcal{M}$ of fixed points of $i$ is denoted $\mathrm{Fix}(i)$. 
		\item $X \subseteq \mathcal{M}$ is {\em pointwise fixed} by $i$, if every $x \in X$ is fixed by $i$. $x \in \mathcal{M}$ is {\em pointwise fixed} by $i$ (or an {\em initial fixed point} of $i$), if $x_\mathcal{M}$ is pointwise fixed by $i$. The substructure of $\mathcal{M}$ of elements pointwise fixed by $i$, is denoted $\mathrm{Fix}^\mathrm{initial}(i)$. 
		\item $x \in \mathcal{M}$ is an {\em $H$-initial  fixed point} of $\mathcal{M}$, if $\mathrm{TC}(x)_\mathcal{M}$ is pointwise fixed by $i$. The substructure of $\mathcal{M}$ of $H$-initial fixed points of $i$ is denoted $\mathrm{Fix}^H(i)$.
		\item $x \in \mathcal{M}$ is a {\em rank-initial  fixed point} of $\mathcal{M}$, if $\{y \in \mathcal{M} \mid \rnk(y) \leq \rnk(x)\}$ is pointwise fixed by $i$. The substructure of $\mathcal{M}$ of rank-initial fixed points of $i$ is denoted $\mathrm{Fix}^\rnk(i)$.
	\end{itemize}
	
	We say that $i$ is {\em contractive on $A \subseteq \mathcal{M}$} if for all $x \in A$, we have $\mathcal{M} \models \rnk(i(x)) < \rnk(x)$.
\end{dfn}

Assume that $\mathcal{M}$ is extensional and $i : \mathcal{M} \rightarrow \mathcal{M}$ is initial. Then $x \in \mathcal{M}$ is a fixed point of $i$ if it is pointwise fixed by $i$. It follows that 
$$\mathrm{Fix}(i) \supseteq \mathrm{Fix}^\mathrm{initial}(i) \supseteq \mathrm{Fix}^H(i) \supseteq \mathrm{Fix}^\rnk(i).$$

\begin{lemma}\label{rank-initial Fix is Sigma_1}
	Suppose that $\mathcal{M} \models \mathrm{KP}^\mathcal{P}$ and that $i$ is a rank-initial self-embedding of $\mathcal{M}$ such that $\mathcal{S} = \mathcal{M}\restriction_{\mathrm{Fix}(i)}$ is a rank-initial substructure of $\mathcal{M}$. Then $\mathcal{S} \preceq_{\Sigma_1^\mathcal{P}} \mathcal{M}$.
\end{lemma}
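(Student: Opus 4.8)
The plan is to show that the inclusion $\iota:\mathcal{S}\hookrightarrow\mathcal{M}$ is $\Sigma_1^\mathcal{P}$-elementary. Since every $\Sigma_1^\mathcal{P}$-formula is, over $\mathrm{KP}^\mathcal{P}$, equivalent to one of the form $\exists y\,\delta(y,\vec{x})$ with $\delta\in\Delta_0^\mathcal{P}$ (bundling the existential block with the $\Delta_0^\mathcal{P}$ pairing), it suffices, in the spirit of the Tarski Test (Lemma \ref{Tarski test}), to verify two things for $\vec{s}\in\mathcal{S}$: that $\iota$ is $\Delta_0^\mathcal{P}$-elementary, and that whenever $\mathcal{M}\models\exists y\,\delta(y,\vec{s})$ there is a witness already lying in $\mathcal{S}$. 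For the first point I would note that rank-initiality of $\mathcal{S}$ gives $\mathcal{P}$-initiality of $\iota$, and then run exactly the structural induction from the proof of Proposition \ref{emb pres}(\ref{emb pres Delta_0^P}); that induction uses only initiality and $\mathcal{P}$-initiality of the embedding (to transfer bounded and $\mathcal{P}$-bounded witnesses), never any axiom of the domain, so it applies even though we do not yet know $\mathcal{S}\models\mathrm{KP}$. Granting $\Delta_0^\mathcal{P}$-elementarity, the direction $\mathcal{S}\models\exists y\,\delta\Rightarrow\mathcal{M}\models\exists y\,\delta$ is immediate.

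For the witness direction the natural move is to pass to a witness of least rank. Working in $\mathcal{M}\models\mathrm{KP}^\mathcal{P}$, fix any witness $b_0$ with $\beta=\rnk(b_0)$; since $\mathrm{KP}^\mathcal{P}$ proves the totality of $(\xi\mapsto V_\xi)$, the set $A=\{y\in V_{\beta+1}\mid\delta(y,\vec{s})\}$ exists by $\Delta_0^\mathcal{P}$-Separation, and $\{\rnk(y)\mid y\in A\}$ exists by $\Sigma_1$-Replacement. This sidesteps the fact that $\Sigma_1^\mathcal{P}$-Separation is not available: I never form the class of all witness-ranks, only its intersection with a set cut off below $b_0$. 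Taking the least element $\rho_0$ of this nonempty set of ordinals yields a genuine minimal-rank witness $b\in A$ with $\rnk(b)=\rho_0$, and no witness has rank below $\rho_0$.

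The crux is then to show that $\rho_0$ is fixed, i.e. $\rho_0\in\mathcal{S}$. Here I would use that $i$ is $\Delta_0^\mathcal{P}$-elementary (Proposition \ref{emb pres}(\ref{emb pres Delta_0^P}), applied to the self-embedding, both copies modelling $\mathrm{KP}$), fixes $\vec{s}$, and satisfies $\rnk(i(x))=i(\rnk(x))$. Then $i(b)$ is again a witness, of rank $i(\rho_0)$, so minimality forces $i(\rho_0)\geq\rho_0$. Suppose $i(\rho_0)>\rho_0$. Because $\rnk(b)=\rho_0<i(\rho_0)=\rnk(i(\rho_0))$, rank-initiality of $i$ puts $b$ in the image: $b=i(c)$ for some $c\in\mathcal{M}$. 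Applying $\Delta_0^\mathcal{P}$-elementarity of $i$ backwards, $c$ is a witness, so $\rnk(c)\geq\rho_0$ by minimality; but $i(\rnk(c))=\rnk(i(c))=\rnk(b)=\rho_0$, while monotonicity of $i$ on ordinals gives $i(\rnk(c))\geq i(\rho_0)>\rho_0$, a contradiction. Hence $i(\rho_0)=\rho_0$ and $\rho_0\in\mathcal{S}$. Since $\rnk(b)=\rho_0=\rnk(\rho_0)$ and $\mathcal{S}$ is rank-initial, the witness $b$ itself lies in $\mathcal{S}$; finally $\Delta_0^\mathcal{P}$-elementarity of $\iota$ turns $\mathcal{M}\models\delta(b,\vec{s})$ into $\mathcal{S}\models\delta(b,\vec{s})$, so $\mathcal{S}\models\exists y\,\delta(y,\vec{s})$, as required.

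The main obstacle is precisely this last step, locating a witness whose rank is a fixed point. The key observation that makes it go through is that one does not need a general monotonicity principle ``$i(\alpha)\geq\alpha$ for all ordinals'': minimality of $\rho_0$ supplies $i(\rho_0)\geq\rho_0$ for free, and rank-initiality of $i$ then lets one pull a minimal-rank witness back across $i$ to contradict strictness. A secondary technical point, to be handled with care, is the construction of $\rho_0$ inside $\mathrm{KP}^\mathcal{P}$ in the absence of $\Sigma_1^\mathcal{P}$-Separation, which is resolved by cutting off below a known witness as above.
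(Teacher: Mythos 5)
Your proof is correct and takes essentially the same route as the paper's: both reduce, via the Tarski test together with the $\Delta_0^\mathcal{P}$-elementarity of $\mathcal{P}$-initial embeddings, to producing a witness in $\mathcal{S}$ for an unbounded existential with $\Delta_0^\mathcal{P}$ matrix, both pass to a witness of least rank, and both prove that this least rank $\rho_0$ is fixed by $i$ through the same two-case argument (minimality rules out $i(\rho_0)<\rho_0$; rank-initiality of $i$ plus $\Delta_0^\mathcal{P}$-elementarity rules out $i(\rho_0)>\rho_0$). The only divergences are minor and both in your favor: you justify the existence of the least witness rank inside $\mathrm{KP}^\mathcal{P}$ explicitly (via $\Delta_0^\mathcal{P}$-Separation below a fixed witness and $\Sigma_1$-Replacement on ranks, which the paper leaves implicit), and at the end you conclude directly from $\rho_0 = \rnk(\rho_0) \in \mathcal{S}$ and rank-initiality of $\mathcal{S}$ that the witness $b$ lies in $\mathcal{S}$, whereas the paper takes the slightly longer path of forming the set $D$ of least-rank witnesses, proving $i(D)=D$, and then invoking downward closure of $\mathcal{S}$.
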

\begin{proof}
	We verify $\mathcal{S} \preceq_{\Sigma_1^\mathcal{P} } \mathcal{M}$ using The Tarski Test (it applies since $\Sigma_1^\mathcal{P}$ is closed under subformulae). Let $\delta(x, \vec{y}) \in \Delta_0^\mathcal{P}[x, \vec{y}]$, let $\vec{s} \in \mathcal{S}$, and assume that $\mathcal{M} \models \exists x . \delta(x, \vec{s})$. We shall now work in $\mathcal{M}$: Let $\xi$ be the least ordinal such that $\exists x \in V_{\xi + 1} . \delta(x, \vec{s})$. We shall show that $i(\xi) = \xi$. Suppose not, then either $i(\xi) < \xi$ or $i(\xi) > \xi$. If $i(\xi) < \xi$, then $\exists x \in V_{i(\xi) + 1} . \delta(x, \vec{s})$, contradicting that $\xi$ is the least ordinal with this property. If $i(\xi) > \xi$, then by rank-initiality there is an ordinal $\zeta < \xi$ such that $i(\zeta) = \xi$. But then $\exists x \in V_{\zeta + 1} . \delta(x, \vec{s})$, again contradicting that $\xi$ is the least ordinal with this property.
	
	By $\Delta_0^\mathcal{P}$-Separation in $\mathcal{M}$, let $D = \{ x \in V_{\xi + 1} \mid \delta(x, \vec{s}) \}^\mathcal{M}$. Since $i$ is $\Delta_0^\mathcal{P}$-elementary and $\xi, \vec{s} \in \mathrm{Fix}(i)$, we have 
	\[
	\mathcal{M} \models \big( (\rnk(x) = \xi \wedge \delta(x, \vec{s})) \leftrightarrow (\rnk(i(x)) = \xi \wedge \delta(i(x), \vec{s})) \big).
	\]
	It immediately follows that $i(D) \subseteq D$. But by rank-initiality, every $x$ of rank $\xi$ in $\mathcal{M}$ is a value of $i$, so we even get that $i(D) = D$. Let $d \in D$. By initiality and $D \in \mathrm{Fix}(i)$, we have $d \in \mathrm{Fix}(i)$; and by construction of $D$, $\mathcal{M} \models \delta(d, \vec{s})$, as desired.
\end{proof}

\begin{lemma}\label{rank-initial Fix is elementary}
	Suppose that $\mathcal{M} \models \mathrm{KP}^\mathcal{P}$ has definable Skolem functions and that $i$ is an automorphism of $\mathcal{M}$ such that $\mathcal{S} = \mathcal{M}\restriction_{\mathrm{Fix}(i)}$. Then $\mathcal{S} \preceq \mathcal{M}$.
\end{lemma}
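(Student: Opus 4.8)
The plan is to verify $\mathcal{S} \preceq \mathcal{M}$ by means of the Tarski Test (Lemma \ref{Tarski test}), applied with $\Gamma = \mathcal{L}^0$, which is trivially closed under subformulae and to which the inclusion $\mathcal{S} \hookrightarrow \mathcal{M}$ applies since $\mathcal{S}$ is by hypothesis a substructure. The only thing to check is the existential step: given $\vec{s} \in \mathcal{S}$ and a formula $\psi(\vec{y}) \equiv \exists x . \phi(x, \vec{y})$ with $\mathcal{M} \models \exists x . \phi(x, \vec{s})$, I must produce a witness lying in $\mathcal{S} = \mathrm{Fix}(i)$.

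First I would invoke the hypothesis that $\mathcal{M}$ has definable Skolem functions: for the formula $\phi$ there is an $\mathcal{L}^0$-definable function $f_\phi$ such that $\mathcal{M} \models \exists x . \phi(x, \vec{y}) \rightarrow \phi(f_\phi(\vec{y}), \vec{y})$. Setting $s' = f_\phi^{\mathcal{M}}(\vec{s})$, the Skolem property immediately gives $\mathcal{M} \models \phi(s', \vec{s})$, so $s'$ is a witness; it remains only to show $s' \in \mathcal{S}$.

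The key observation is that an automorphism commutes with every definable function. Indeed, if $f_\phi$ is defined by the formula $\theta(\vec{y}, z)$, then since $i$ preserves satisfaction of $\theta$ we have $f_\phi^{\mathcal{M}}(\vec{a}) = b \Leftrightarrow f_\phi^{\mathcal{M}}(i(\vec{a})) = i(b)$, that is, $i(f_\phi^{\mathcal{M}}(\vec{a})) = f_\phi^{\mathcal{M}}(i(\vec{a}))$ for all $\vec{a} \in \mathcal{M}$. Applying this with $\vec{a} = \vec{s}$ and using $i(\vec{s}) = \vec{s}$ (as $\vec{s} \in \mathrm{Fix}(i)$) yields $i(s') = f_\phi^{\mathcal{M}}(\vec{s}) = s'$, so $s' \in \mathrm{Fix}(i) = \mathcal{S}$. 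This completes the existential step, and the Tarski Test then delivers $\mathcal{S} \preceq \mathcal{M}$.

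I do not anticipate a serious obstacle: the argument is essentially the classical fact that the fixed-point set of an automorphism of a model with definable Skolem functions is an elementary submodel, and no feature of $\mathrm{KP}^\mathcal{P}$ beyond the assumed existence of definable Skolem functions is used. The only points requiring (routine) care are the bookkeeping that $i$, being an automorphism, respects the defining formula $\theta$ of $f_\phi$, and the tacit fact that $\mathcal{S}$ really is a substructure so that the inclusion is an embedding — which is granted by the statement $\mathcal{S} = \mathcal{M}\restriction_{\mathrm{Fix}(i)}$.
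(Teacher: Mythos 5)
Your proof is correct and follows essentially the same route as the paper's: both apply the Tarski Test and resolve the existential step by taking the Skolem witness $f_\phi^{\mathcal{M}}(\vec{s})$, then using the fact that an automorphism preserves the defining formula of the Skolem function and fixes $\vec{s}$ pointwise, so that functionality forces the witness to be a fixed point. The paper phrases this as "$\mathcal{M} \models \psi(i(m),\vec{s})$, but $\psi$ defines a function, so $m = i(m)$," which is exactly your commutation argument $i(f_\phi^{\mathcal{M}}(\vec{s})) = f_\phi^{\mathcal{M}}(i(\vec{s})) = f_\phi^{\mathcal{M}}(\vec{s})$.
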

\begin{proof}
Again, we apply The Tarski Test. Let $\phi(x, \vec{y}) \in \mathcal{L}^0$, let $\vec{s} \in \mathcal{S}$, and assume that $\mathcal{M} \models \exists x . \phi(x, \vec{s})$. Let $m \in \mathcal{M}$ be a witness of this fact. Let $f$ be a Skolem function for $\phi(x, \vec{y})$, defined in $\mathcal{M}$ by a formula $\psi(x, \vec{y})$. Then $\mathcal{M} \models \psi(m, \vec{s})$, and since $i$ is an automorphism fixing $\mathcal{S}$ pointwise, $\mathcal{M} \models \psi(i(m), \vec{s})$. But $\psi$ defines a function, so $\mathcal{M} \models m = i(m)$, whence $m \in \mathcal{S}$ as desired.
\end{proof}

\chapter{Embeddings between models of set theory}\label{ch emb set theory}

\section{Iterated ultrapowers with special self-embeddings}\label{Existence of models of set theory with automorphisms}

It is convenient to fix some objects which will be discussed throughout this section. Fix a countable model $(\mathcal{M}, \mathcal{A}) \models \mathrm{GBC}$. Fix $\mathbb{B}$ to be the boolean algebra $\{A \subseteq \mathrm{Ord}^\mathcal{M} \mid A \in \mathcal{A}\}$ induced by $\mathcal{A}$. Fix $\mathbb{P}$ to be the partial order of unbounded sets in $\mathbb{B}$ ordered under inclusion. Fix a filter $\mathcal{U}$ on $\mathbb{P}$. 

$\mathcal{U}$ is $\mathbb{P}$-{\em generic over} $(\mathcal{M}, \mathcal{A})$, or simply $(\mathcal{M}, \mathcal{A})$-{\em generic}, if it intersects every dense subset of $\mathbb{P}$ that is parametrically definable in $(\mathcal{M, A})$. A $\mathcal{U}$ is $(\mathcal{M, A})$-{\em complete} if for every $a \in \mathcal{M}$ and every $f : \mathrm{Ord}^\mathcal{M} \rightarrow a_\mathcal{M}$ that is coded in $\mathcal{A}$, there is $b \in a_\mathcal{M}$ such that $f^{-1}(b) \in \mathcal{U}$. Considering the characteristic functions of the classes in $\mathcal{A}$, we can easily see that if $\mathcal{U}$ is $(\mathcal{M, A})$-{\em complete}, then it is an {\em ultrafilter} on $\mathbb{B}$, i.e. for any $A \in \mathbb{B}$, we have $A \in \mathcal{U}$ or $\mathrm{Ord}^\mathcal{M} \setminus A \in \mathcal{U}$. 

Let $P : \mathrm{Ord}^\mathcal{M} \times \mathrm{Ord}^\mathcal{M} \rightarrow \mathrm{Ord}^\mathcal{M}$ be a bijection coded in $\mathcal{A}$. For each $g : \mathrm{Ord}^\mathcal{M} \rightarrow \{0,1\}^\mathcal{M}$ coded in $\mathcal{A}$, and each $\alpha \in \mathrm{Ord}^\mathcal{M}$, define $S^g_\alpha =_\mathrm{df} \{\xi \in \mathrm{Ord}^\mathcal{M} \mid g(P(\alpha, \xi)) = 1\}$. Thus, $g$ may be thought of as coding an $\mathrm{Ord}^\mathcal{M}$-sequence of sets; indeed $(\alpha \mapsto S^g_\alpha) : \mathrm{Ord}^\mathcal{M} \rightarrow \mathbb{B}$. $\mathcal{U}$ is $(\mathcal{M, A})$-{\em iterable} if for every $g : \mathrm{Ord}^\mathcal{M} \rightarrow \{0,1\}^\mathcal{M}$ coded in $\mathcal{A}$, we have $\{\alpha \mid S^g_\alpha \in \mathcal{U}\} \in \mathcal{A}$.

A filter $\mathcal{U}$ is $(\mathcal{M, A})$-{\em canonically Ramsey} if for every $n \in \mathbb{N}$ and $f : [\mathrm{Ord}^\mathcal{M}]^n \rightarrow \mathrm{Ord}^\mathcal{M}$ coded in $\mathcal{A}$, there is $H \in \mathcal{U}$ and $S \subseteq \{1, \dots, n\}$, such that for any $\alpha_1, \dots, \alpha_n$ and $\beta_1, \dots, \beta_n$ in $H$, 
$$f\hspace{2pt}(\alpha_1, \dots, \alpha_n) = f\hspace{2pt}(\beta_1, \dots, \beta_n) \leftrightarrow \forall m \in S . \alpha_m = \beta_m.$$
We say that $f$ is {\em canonical} on $H$.

The following theorem is proved in \cite[p. 48]{Ena04}. Combined with Lemma \ref{generic filter existence}, it establishes the existence of an ultrafilter on $\mathbb{B}$, which is $(\mathcal{M, A})$-complete, $(\mathcal{M, A})$-iterable and $(\mathcal{M, A})$-canonically Ramsey, under the assumption that $(\mathcal{M, A}) \models \text{``$\mathrm{Ord}$ is weakly compact''}$.

\begin{thm}\label{weakly compact gives nice ultrafilter}
Let $(\mathcal{M, A}) \models \text{``$\mathrm{Ord}$ is weakly compact''}$. If $\mathcal{U}$ is $(\mathcal{M, A})$-generic, then $\mathcal{U}$ is 
\begin{enumerate}[{\normalfont (a)}]
\item $(\mathcal{M, A})$-complete,
\item $(\mathcal{M, A})$-iterable, and
\item $(\mathcal{M, A})$-canonically Ramsey.
\end{enumerate}
\end{thm}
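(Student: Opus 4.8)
The plan is to prove all three properties by a single template: for the relevant coded datum, exhibit a subset $D \subseteq \mathbb{P}$ which is (i) parametrically definable in $(\mathcal{M}, \mathcal{A})$ and (ii) dense, so that $(\mathcal{M},\mathcal{A})$-genericity of $\mathcal{U}$ forces $\mathcal{U} \cap D \neq \varnothing$; a witness in $\mathcal{U} \cap D$ then yields the property, using upward closure of the filter $\mathcal{U}$ to pass from a condition to the larger set containing it. Throughout I would use that weak compactness of $\mathrm{Ord}$ implies its inaccessibility, in particular its regularity: no function coded in $\mathcal{A}$ from (the elements of) a set of $\mathcal{M}$ is cofinal in $\mathrm{Ord}^\mathcal{M}$, and $2^\alpha$ is a set of size $< \mathrm{Ord}$ for every $\alpha$.

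For completeness, given $a \in \mathcal{M}$ and a coded $f : \mathrm{Ord}^\mathcal{M} \rightarrow a_\mathcal{M}$, I would take $D_f = \{p \in \mathbb{P} \mid \exists b \in a_\mathcal{M}.\ p \subseteq f^{-1}(b)\}$. Density is immediate from regularity: any $q \in \mathbb{P}$ is the union of the coded family $\{q \cap f^{-1}(b) \mid b \in a_\mathcal{M}\}$ indexed by the \emph{set} $a_\mathcal{M}$, so some piece is unbounded and refines $q$ into $D_f$. As $D_f$ is definable from $a, f$, genericity gives $p \in \mathcal{U} \cap D_f$ with $p \subseteq f^{-1}(b)$, and upward closure yields $f^{-1}(b) \in \mathcal{U}$. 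Taking $a = \{0,1\}$ and $f$ a characteristic function recovers, as the text notes, that $\mathcal{U}$ is an ultrafilter on $\mathbb{B}$; the same partition argument shows $\mathcal{U}$ is closed under intersections of set-indexed coded families of its members (otherwise the complement of the intersection would be covered by set-many $\mathcal{U}$-small sets, contradicting completeness). I will use this set-completeness below.

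For canonical Ramseyness, given $n$ and coded $f : [\mathrm{Ord}^\mathcal{M}]^n \rightarrow \mathrm{Ord}^\mathcal{M}$, the density set is $D_f = \{p \in \mathbb{P} \mid \exists S \subseteq \{1, \dots, n\}.\ f \text{ is canonical on } p \text{ with index set } S\}$, again definable from $f$. Density is precisely the canonical partition theorem of Erd\H os--Rado for a weakly compact $\mathrm{Ord}$: restricting $f$ to $[q]^n$ for an arbitrary $q \in \mathbb{P}$, weak compactness (via the tree property and $\Pi^1_1$-indescribability, available since $(\mathcal{M},\mathcal{A}) \models$ ``$\mathrm{Ord}$ is weakly compact'') produces an unbounded $H \in \mathcal{A}$ with $H \subseteq q$ and a set $S$ on which $f$ is canonical. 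Genericity then delivers such an $H$ in $\mathcal{U}$.

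The hard part is iterability, and it is genuinely non-local: a single condition cannot in general decide $S^g_\alpha \in \mathcal{U}$ for all $\alpha$ at once (if the columns of $g$ separate points, no unbounded set is homogeneous for every row), so the class $\{\alpha \mid S^g_\alpha \in \mathcal{U}\}$ cannot be read off one element of $\mathcal{U}$. Here I would deploy the tree property directly. Form the tree $T$ whose nodes are the functions $s : \alpha \rightarrow 2$ for which $p_s =_\df \{\xi \mid \forall \beta < \alpha.\ g(P(\beta, \xi)) = s(\beta)\}$ is unbounded, ordered by extension; $T$ is coded in $\mathcal{A}$ (definable from $g$, with no reference to $\mathcal{U}$), and regularity makes each level nonempty, so $T$ has height $\mathrm{Ord}$ and hence, by weak compactness, a branch $b \in \mathcal{A}$. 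Separately, a regularity/genericity argument as in part (a) shows that for each $\alpha$ some $p \in \mathcal{U}$ decides every $S^g_\beta$ with $\beta < \alpha$, so $\alpha \mapsto [S^g_\alpha \in \mathcal{U}]$ has every initial segment coded and defines a branch $s_\mathcal{U}$ of $T$. The crux is to identify $s_\mathcal{U}$ with a branch lying in $\mathcal{A}$: I expect to pin it down by using set-completeness to characterise $s_\mathcal{U}$ as the unique branch all of whose $p_{s_\mathcal{U} \restriction \alpha}$ lie in $\mathcal{U}$, together with a family of dense sets that, for a coded branch $b$, decide whether each $p_{b \restriction \alpha} \in \mathcal{U}$, thereby forcing $\mathcal{U}$ onto a coded branch. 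Once $s_\mathcal{U} \in \mathcal{A}$, the class $\{\alpha \mid S^g_\alpha \in \mathcal{U}\} = \{\alpha \mid s_\mathcal{U}(\alpha) = 1\}$ is coded and iterability follows. Making this final identification airtight — ruling out that the generic filter selects an $\mathcal{A}$-external branch — is the main obstacle, and is exactly where weak compactness, rather than mere regularity, is indispensable.
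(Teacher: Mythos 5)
First, a point of reference: the paper itself does not prove this theorem --- it is quoted from Enayat \cite[p.~48]{Ena04} --- so your proposal can only be measured against that (standard) argument, which it partly reproduces.

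Your dense-set template is the right one, and parts (a) and (c) are essentially correct. Two caveats. In (a), the ``regularity'' you invoke is really just Class Replacement of $\mathrm{GBC}$: if every piece $q \cap f^{-1}(b)$ were bounded, the coded map $b \mapsto \sup(q \cap f^{-1}(b))$ with set domain $a_\mathcal{M}$ would bound $q$. In (c), you cannot simply quote a canonical partition theorem for $\mathrm{Ord}$: in this paper ``$\mathrm{Ord}$ is weakly compact'' \emph{means}, by definition, the branch property for binary class trees, so you must derive the class Ramsey theorem $\mathrm{Ord} \to (\mathrm{Ord})^{2n}_k$ from that (tree of restrictions $g_\beta\restriction\gamma$, coded branch, homogeneous class by class recursion with Global Choice) and then run the Erd\H{o}s--Rado canonicalization on top of it; the appeal to $\Pi^1_1$-indescribability is both unjustified as stated and unnecessary.

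The genuine gap is in (b), exactly where you flag it, and your proposed repair cannot work as described. For a fixed coded branch $b$, a dense set that ``decides whether each $p_{b\restriction\alpha} \in \mathcal{U}$'' can always be met by \emph{rejecting} $b$ (a condition disjoint from some column $p_{b\restriction\alpha}$), so genericity over the whole branch-indexed family of such dense sets is perfectly compatible with $\mathcal{U}$ rejecting every coded branch while tracking an external one. To pin $\mathcal{U}$ to a coded branch you need a \emph{single} definable dense set each of whose members already determines a coded branch by itself.

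That dense set exists once you relax exact inclusion to inclusion modulo a bounded class, written $p \subseteq^* A$. Let $D_g = \{p \in \mathbb{P} \mid \forall \alpha\, (p \subseteq^* S^g_\alpha \text{ or } p \subseteq^* \mathrm{Ord}^\mathcal{M} \setminus S^g_\alpha)\}$, definable from $g$. Density: given $q \in \mathbb{P}$, form your tree relativized to $q$ (nodes $s$ with $q \cap p_s$ unbounded; levels nonempty by Class Replacement), take a coded branch $b$ by weak compactness, and diagonalize: let $p = \{\xi_\gamma \mid \gamma \in \mathrm{Ord}^\mathcal{M}\}$ where $\xi_\gamma$ is the least element of $q \cap p_{b\restriction\gamma}$ above all $\xi_\delta$ with $\delta < \gamma$ (class recursion plus Global Choice). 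Then $p \leq q$, $p \in \mathcal{A}$, and $p \setminus S^{b(\alpha)}_\alpha \subseteq \{\xi_\gamma \mid \gamma \leq \alpha\}$ is bounded for every $\alpha$, so $p \in D_g$. This diagonal intersection is precisely what dissolves your opening objection: no condition is \emph{exactly} homogeneous for every row, but one coded condition is \emph{almost} homogeneous for all rows simultaneously. Finally, take $p \in \mathcal{U} \cap D_g$ and set $s_p(\alpha) = 1$ iff $p \subseteq^* S^g_\alpha$. Since the classes $\{r \in \mathbb{P} \mid r \cap \beta = \varnothing\}$ are dense and definable, $\mathcal{U}$ contains conditions avoiding any bounded class, so $p \subseteq^* A$ together with directedness and upward closure gives $A \in \mathcal{U}$; hence $\{\alpha \mid S^g_\alpha \in \mathcal{U}\} = \{\alpha \mid p \subseteq^* S^g_\alpha\}$, which lies in $\mathcal{A}$ by Class Comprehension (set quantifiers only, class parameters $p$ and $g$). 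This closes your gap, makes the tree and its coded branch do their real work (inside the density proof rather than after it), and is in substance the argument of \cite{Ena04}.
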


\cite{EKM17} gives a more detailed account of the following constructions.

\begin{constr}\label{constr iter ultra}
Suppose that $\mathcal{U}$ is a non-principle $(\mathcal{M}, \mathcal{A})$-iterable ultrafilter on $\mathbb{B}$. Then for any $n \in \mathbb{N}$, an ultrafilter $\mathcal{U}^n$ can be recursively constructed on $\mathbb{B}^n =_\mathrm{df} \{A \subseteq (\mathrm{Ord}^\mathcal{M})^n \mid A \in \mathcal{A}\}$ as follows:

First, we extend the definition of iterability. An ultrafilter $\mathcal{V}$ on $\mathbb{B}^n$ is $(\mathcal{M}, \mathcal{A})$-{\em iterable} if for any function $(\alpha \mapsto S_\alpha) : \mathrm{Ord}^\mathcal{M} \rightarrow \mathbb{B}^n$ coded in $\mathcal{A}$, we have $\{\alpha \mid S_\alpha \in \mathcal{V}\} \in \mathcal{A}$. 

$\mathcal{U}^0$ is the trivial (principle) ultrafilter $\{\{\langle\rangle\}\}$ on the boolean algebra $\{\langle\rangle, \{\langle\rangle\}\}$, where $\langle\rangle$ is the empty tuple.

For any $X \in \mathbb{B}^{n+1}$ and any $\alpha \in \mathrm{Ord}^\mathcal{M}$, define 
$$X_{\alpha} =_\mathrm{df} \{ \langle \alpha_2, \dots, \alpha_{n+1}\rangle \mid \langle \alpha, \alpha_2, \dots, \alpha_{n+1}\rangle \in X\}$$
$$X \in \mathcal{U}^{n+1} \Leftrightarrow_\mathrm{df} \big\{\alpha \mid X_{\alpha} \in \mathcal{U}^n \big\} \in \mathcal{U}.$$

Note that there are other equivalent definitions:
$$
\begin{array}{cl}
& X \in \mathcal{U}^{n} \\
\Leftrightarrow& \big\{\alpha_1 \mid \{ \langle \alpha_2, \dots, \alpha_{n}\rangle \mid \langle \alpha_1, \alpha_2, \dots, \alpha_{n}\rangle \in X\} \in \mathcal{U}^{n-1} \big\} \in \mathcal{U}  \\ 
\Leftrightarrow& \{ \alpha_1 \mid \{ \alpha_2 \mid \dots \{ \alpha_n \mid \langle \alpha_1, \dots, \alpha_n \rangle \in X\} \in \mathcal{U} \} \in \mathcal{U} \dots \} \in \mathcal{U}  \\
\Leftrightarrow& \{ \langle \alpha_1, \dots \alpha_{n-1} \rangle \mid \{ \alpha_n \mid \langle \alpha_1, \dots \alpha_{n} \rangle \in X \} \in \mathcal{U} \} \in \mathcal{U}^{n-1}
\end{array}
$$

By the setup, $\mathcal{U}^1 = \mathcal{U}$, which is an $(\mathcal{M}, \mathcal{A})$-iterable ultrafilter on $\mathbb{B}^1$ by assumption. Assuming that $\mathcal{U}^n$ is an $(\mathcal{M}, \mathcal{A})$-iterable ultrafilter on $\mathbb{B}^n$, we shall show that $\mathcal{U}^{n+1}$ is an $(\mathcal{M}, \mathcal{A})$-iterable ultrafilter on $\mathbb{B}^{n+1}$. Let $X \in \mathcal{U}^{n+1}$. 

If $X \subseteq Y \in \mathbb{B}^{n+1}$, then $X_\alpha \subseteq Y_\alpha$, for each $\alpha \in \mathrm{Ord}^\mathcal{M}$, and by iterability of $\mathcal{U}^{n}$, $\{\alpha \mid Y_\alpha \in \mathcal{U}\} \in \mathcal{A}$. So $Y \in \mathcal{U}^{n+1}$ by upwards closure of $\mathcal{U}^n$ and $\mathcal{U}$. Similarly, the iterability of $\mathcal{U}^n$ and the finite intersection and maximality properties of $\mathcal{U}^n$ and $\mathcal{U}$ imply that $\mathcal{U}^{n+1}$ has the finite intersection and maximality properties, respectively. To show iterability, suppose that the function $(\xi \mapsto S_\xi) : \mathrm{Ord}^\mathcal{M} \rightarrow \mathbb{B}^{n+1}$ is coded in $\mathcal{A}$. Then 
$$\{\xi \mid S_\xi \in \mathcal{U}^{n+1}\} = \big\{\xi \mid \{\alpha \mid (S_\xi)_\alpha \in \mathcal{U}^n \} \in \mathcal{U} \big\} \in \mathcal{A}$$ 
by iterability of  $\mathcal{U}$. So $\mathcal{U}^{n+1}$ is also $(\mathcal{M}, \mathcal{A})$-iterable. We have proved:

\begin{lemma}
If $\mathcal{U}$ is an $(\mathcal{M, A})$-iterable ultrafilter on $\mathbb{B}$, then $\mathcal{U}^n$ is an $(\mathcal{M, A})$-iterable ultrafilter on $\mathbb{B}^n$, for every $n \in \mathbb{N}$.
\end{lemma}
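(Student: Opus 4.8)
The plan is to argue by induction on $n$, following the recursive definition of $\mathcal{U}^n$ given in the Construction. The base case is supplied by the setup: $\mathcal{U}^0$ is trivially an $(\mathcal{M,A})$-iterable ultrafilter on the two-element Boolean algebra $\{\langle\rangle, \{\langle\rangle\}\}$, and $\mathcal{U}^1 = \mathcal{U}$ is iterable by hypothesis. So I would fix $n$, assume inductively that $\mathcal{U}^n$ is an $(\mathcal{M,A})$-iterable ultrafilter on $\mathbb{B}^n$, and verify the four properties for $\mathcal{U}^{n+1}$: upward closure, finite intersection (downward directedness), maximality, and iterability.

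The key observation throughout is that slicing commutes with the Boolean operations: for $X, Y \in \mathbb{B}^{n+1}$ and $\alpha \in \mathrm{Ord}^\mathcal{M}$ one has $(X \cap Y)_\alpha = X_\alpha \cap Y_\alpha$ and $(\overline{X})_\alpha = \overline{X_\alpha}$, where complementation is taken in $(\mathrm{Ord}^\mathcal{M})^n$. For upward closure, if $X \subseteq Y$ then $X_\alpha \subseteq Y_\alpha$ for each $\alpha$, so upward closure of $\mathcal{U}^n$ gives $\{\alpha : X_\alpha \in \mathcal{U}^n\} \subseteq \{\alpha : Y_\alpha \in \mathcal{U}^n\}$, and upward closure of $\mathcal{U}$ then yields $Y \in \mathcal{U}^{n+1}$. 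Finite intersection follows the same pattern via $(X \cap Y)_\alpha = X_\alpha \cap Y_\alpha$ together with the finite intersection properties of $\mathcal{U}^n$ and of $\mathcal{U}$. For maximality, the sets $\{\alpha : X_\alpha \in \mathcal{U}^n\}$ and $\{\alpha : \overline{X}_\alpha \in \mathcal{U}^n\}$ partition $\mathrm{Ord}^\mathcal{M}$ by maximality of $\mathcal{U}^n$ applied at each $\alpha$, so maximality of $\mathcal{U}$ places exactly one of them in $\mathcal{U}$, i.e. exactly one of $X, \overline{X}$ lies in $\mathcal{U}^{n+1}$.

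For iterability, given $(\xi \mapsto S_\xi) : \mathrm{Ord}^\mathcal{M} \to \mathbb{B}^{n+1}$ coded in $\mathcal{A}$, the identity
$$\{\xi : S_\xi \in \mathcal{U}^{n+1}\} = \big\{\xi : \{\alpha : (S_\xi)_\alpha \in \mathcal{U}^n\} \in \mathcal{U}\big\}$$
reduces the claim to two successive applications of iterability: first of $\mathcal{U}^n$, to see that $(\xi \mapsto \{\alpha : (S_\xi)_\alpha \in \mathcal{U}^n\})$ is a well-defined $\mathcal{A}$-coded function into $\mathbb{B}$, and then of $\mathcal{U}$, to conclude that its $\mathcal{U}$-preimage lies in $\mathcal{A}$.

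The main obstacle to spell out, though conceptually routine, is the coding bookkeeping: at each step one must check that the auxiliary maps remain coded in $\mathcal{A}$. Concretely, the double-slicing operation $(\xi, \alpha) \mapsto (S_\xi)_\alpha$ is definable over $(\mathcal{M,A})$ from the code of $(\xi \mapsto S_\xi)$, and since $\mathcal{A}$ is the class part of a model of $\mathrm{GBC}$ it is closed under such parametric definitions by Class Comprehension; combined with the iterability hypothesis on $\mathcal{U}^n$, which guarantees $\{\alpha : (S_\xi)_\alpha \in \mathcal{U}^n\} \in \mathcal{A}$ for each $\xi$, this makes the intermediate function $\mathcal{A}$-coded and legitimizes the nested applications. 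Keeping careful track of this closure under definability is the only delicate point; everything else is a direct transcription of the ultrafilter axioms through the slicing identities above.
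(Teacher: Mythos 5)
Your overall strategy coincides with the paper's: induction on $n$, transferring upward closure, finite intersection and maximality through the slicing identities, and handling iterability via the identity $\{\xi \mid S_\xi \in \mathcal{U}^{n+1}\} = \{\xi \mid \{\alpha \mid (S_\xi)_\alpha \in \mathcal{U}^n\} \in \mathcal{U}\}$. The three ultrafilter properties are handled correctly: in each case the per-slice application of iterability of $\mathcal{U}^n$ to the $\mathcal{A}$-coded map $\alpha \mapsto Y_\alpha$ (resp.\ $\alpha \mapsto X_\alpha$, $\alpha \mapsto \overline{X}_\alpha$) is what guarantees that the relevant subsets of $\mathrm{Ord}^\mathcal{M}$ lie in $\mathbb{B}$ at all, so that the closure properties of $\mathcal{U}$ may be invoked; your bookkeeping paragraph covers this, and the paper makes the same point explicitly.

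However, your justification of the one step you yourself flag as delicate does not work as stated. You argue that the intermediate function $F : \xi \mapsto \{\alpha \mid (S_\xi)_\alpha \in \mathcal{U}^n\}$ is $\mathcal{A}$-coded by combining (i) the $\mathcal{A}$-codedness of the double-slicing map with (ii) iterability of $\mathcal{U}^n$ applied \emph{for each} $\xi$, plus Class Comprehension. Pointwise membership of the values $F(\xi)$ in $\mathcal{A}$ does not produce a single class in $\mathcal{A}$ coding all of $F$; and Class Comprehension cannot assemble one, because the predicate ``$x \in \mathcal{U}^n$'' is not expressible in $\mathcal{L}^1$ over $(\mathcal{M}, \mathcal{A})$ --- $\mathcal{U}^n$ is a collection of classes, not itself a class in $\mathcal{A}$ (for a generic $\mathcal{U}$ it certainly is not coded in $\mathcal{A}$). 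This is precisely why iterability must be taken as a hypothesis rather than derived from comprehension. The repair is immediate with what you already have: apply iterability of $\mathcal{U}^n$ \emph{once, uniformly}, to the double-slicing map itself, reindexed through the pairing bijection $P : \mathrm{Ord}^\mathcal{M} \times \mathrm{Ord}^\mathcal{M} \rightarrow \mathrm{Ord}^\mathcal{M}$ coded in $\mathcal{A}$ that is fixed at the start of the section: the function $\gamma \mapsto (S_\xi)_\alpha$, where $\gamma = P(\xi, \alpha)$, is coded in $\mathcal{A}$ by your (i), so iterability of $\mathcal{U}^n$ yields $\{P(\xi, \alpha) \mid (S_\xi)_\alpha \in \mathcal{U}^n\} \in \mathcal{A}$, and this class is exactly a code for $F$. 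The outer application of iterability of $\mathcal{U}$ then finishes the argument; this is the step the paper compresses into ``by iterability of $\mathcal{U}$''.
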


Since $\mathcal{U}$ is a non-principle ultrafilter, it contains all final segments of $\mathrm{Ord}^\mathcal{M}$. So by induction, we have 
$$\{ \langle \alpha_1, \dots, \alpha_n \rangle \mid \alpha_1 < \dots < \alpha_n \in \mathrm{Ord}^\mathcal{M}\} \in \mathcal{U}^n,$$ 
for every $n \in \mathbb{N}$.

Lastly, we shall extend the definition of completeness and show that each $\mathcal{U}^n$ has this property. An ultrafilter $\mathcal{V}$ on $\mathbb{B}^n$ is $(\mathcal{M, A})$-{\em complete} if for any $m < n$ and any functions $f : (\mathrm{Ord}^\mathcal{M})^m \rightarrow M$ and $g : (\mathrm{Ord}^\mathcal{M})^n \rightarrow M$ coded in $\mathcal{A}$, such that 
$$\{\langle \alpha_1, \dots, \alpha_n \rangle \mid g(\alpha_1, \dots, \alpha_n) \in f\hspace{2pt}(\alpha_1, \dots, \alpha_m)\} \in \mathcal{V}^n,$$ 
we have that $\mathcal{A}$ codes a function $f\hspace{2pt}' : (\mathrm{Ord}^\mathcal{M})^m \rightarrow M$, such that
$$\{\langle \alpha_1, \dots, \alpha_n \rangle \mid g(\alpha_1, \dots, \alpha_n) = f\hspace{2pt}'(\alpha_1, \dots, \alpha_m)\} \in \mathcal{V}^n.$$
\end{constr}

\begin{lemma}\label{iter complete}
If $\mathcal{U}$ is an $(\mathcal{M, A})$-complete and $(\mathcal{M, A})$-iterable ultrafilter, then $\mathcal{U}^n$ is $(\mathcal{M, A})$-complete, for every $n \in \mathbb{N}$.
\end{lemma}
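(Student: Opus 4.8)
The plan is to prove this by induction on $n$, peeling off the last coordinate at each step so as to reduce to completeness of $\mathcal{U}=\mathcal{U}^1$ on that coordinate, together with the inductive hypothesis for $\mathcal{U}^n$ on the first $n$ coordinates. For the base case $n=1$ the only admissible index is $m=0$, so $f$ is a constant $a\in M$ and the hypothesis reads $\{\alpha\mid g(\alpha)\in a\}\in\mathcal{U}$. Replacing $g$ by a function $\tilde g:\mathrm{Ord}^\mathcal{M}\rightarrow a_\mathcal{M}$ coded in $\mathcal{A}$ that agrees with $g$ wherever $g(\alpha)\in a_\mathcal{M}$, the original $(\mathcal{M,A})$-completeness of $\mathcal{U}$ yields $b\in a_\mathcal{M}$ with $\tilde g^{-1}(b)\in\mathcal{U}$; intersecting with the $\mathcal{U}$-large set where $g=\tilde g$ gives $\{\alpha\mid g(\alpha)=b\}\in\mathcal{U}$, so the constant $b$ serves as $f\hspace{2pt}'$.

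For the inductive step, assume $\mathcal{U}^n$ is $(\mathcal{M,A})$-complete and let $f:(\mathrm{Ord}^\mathcal{M})^m\rightarrow M$ and $g:(\mathrm{Ord}^\mathcal{M})^{n+1}\rightarrow M$ be coded in $\mathcal{A}$ with $m\le n$ and $X:=\{\langle\alpha_1,\dots,\alpha_{n+1}\rangle\mid g(\alpha_1,\dots,\alpha_{n+1})\in f\hspace{2pt}(\alpha_1,\dots,\alpha_m)\}\in\mathcal{U}^{n+1}$. Writing $\bar\alpha=\langle\alpha_1,\dots,\alpha_n\rangle$, the last-coordinate characterization of $\mathcal{U}^{n+1}$ from Construction \ref{constr iter ultra} gives $Y:=\{\bar\alpha\mid\{\beta\mid g(\bar\alpha,\beta)\in f\hspace{2pt}(\alpha_1,\dots,\alpha_m)\}\in\mathcal{U}\}\in\mathcal{U}^n$, and $Y\in\mathcal{A}$ by iterability of $\mathcal{U}$ (reindexing the $(\mathrm{Ord}^\mathcal{M})^n$-family by ordinals via $P$). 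For each $\bar\alpha\in Y$ the fiber $\beta\mapsto g(\bar\alpha,\beta)$ maps a $\mathcal{U}$-large set into $f\hspace{2pt}(\alpha_1,\dots,\alpha_m)_\mathcal{M}$, so completeness of $\mathcal{U}$ yields a value $h(\bar\alpha)\in f\hspace{2pt}(\alpha_1,\dots,\alpha_m)_\mathcal{M}$ with $\{\beta\mid g(\bar\alpha,\beta)=h(\bar\alpha)\}\in\mathcal{U}$, unique because $\mathcal{U}$ is an ultrafilter on $\mathbb{B}$; set $h(\bar\alpha)=0$ off $Y$.

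Granting for the moment that $h:(\mathrm{Ord}^\mathcal{M})^n\rightarrow M$ is coded in $\mathcal{A}$, the conclusion follows quickly. Since $\{\bar\alpha\mid h(\bar\alpha)\in f\hspace{2pt}(\alpha_1,\dots,\alpha_m)\}\supseteq Y\in\mathcal{U}^n$, we may, when $m<n$, apply the inductive hypothesis to $h$ and $f$ to obtain $f\hspace{2pt}':(\mathrm{Ord}^\mathcal{M})^m\rightarrow M$ coded in $\mathcal{A}$ with $Z:=\{\bar\alpha\mid h(\bar\alpha)=f\hspace{2pt}'(\alpha_1,\dots,\alpha_m)\}\in\mathcal{U}^n$; when $m=n$ we simply take $f\hspace{2pt}'=h$ (which is already a function of the first $n=m$ coordinates) and $Z=Y$. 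In either case, for $\bar\alpha\in Y\cap Z$ we have $\{\beta\mid g(\bar\alpha,\beta)=f\hspace{2pt}'(\alpha_1,\dots,\alpha_m)\}\in\mathcal{U}$, so that $Y\cap Z\in\mathcal{U}^n$ witnesses, via the last-coordinate characterization, that $\{\langle\alpha_1,\dots,\alpha_{n+1}\rangle\mid g(\alpha_1,\dots,\alpha_{n+1})=f\hspace{2pt}'(\alpha_1,\dots,\alpha_m)\}\in\mathcal{U}^{n+1}$, as required.

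The main obstacle is precisely the $\mathcal{A}$-coding of $h$: its definition refers to $\mathcal{U}$-membership, and $\mathcal{U}$ is not a class of $\mathcal{A}$. This is what iterability is designed to handle, but iterability only applies to ordinal-indexed families whereas the value $h(\bar\alpha)$ ranges over $M$. The fix is to use the class well-ordering supplied by Global Choice in $\mathrm{GBC}$: fix an $\mathcal{A}$-coded bijection $e:M\rightarrow\mathrm{Ord}^\mathcal{M}$. Then $\langle\bar\alpha,\gamma\rangle\mapsto\{\beta\mid e(g(\bar\alpha,\beta))=\gamma\}$ is an $\mathcal{A}$-coded $(\mathrm{Ord}^\mathcal{M})^{n+1}$-indexed family of members of $\mathbb{B}$, so by iterability of $\mathcal{U}$ the set $W:=\{\langle\bar\alpha,\gamma\rangle\mid\{\beta\mid e(g(\bar\alpha,\beta))=\gamma\}\in\mathcal{U}\}$ lies in $\mathcal{A}$. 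On $Y$ this $W$ is exactly the graph of $\bar\alpha\mapsto e(h(\bar\alpha))$, whence $h$ is definable from $W$, $e$ and $Y$ and so is coded in $\mathcal{A}$ by Class Comprehension. With this the induction, and hence the lemma, goes through.
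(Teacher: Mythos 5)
Your proof is correct, and its engine is the same as the paper's: peel off the last coordinate via the iterated characterization of $\mathcal{U}^{n+1}$, apply the one-dimensional completeness of $\mathcal{U}$ fiber-wise to choose values, and use iterability together with a Global-Choice bijection between $M$ and $\mathrm{Ord}^\mathcal{M}$ to see that the resulting choice function is coded in $\mathcal{A}$ (the paper does exactly this with its ``bijection coded in $\mathcal{A}$'' and the set $\{\langle \langle \alpha_1, \dots, \alpha_m \rangle, y \rangle \mid \{\alpha_n \mid g(\alpha_1, \dots, \alpha_n) = y\} \in \mathcal{U}\} \in \mathcal{A}$). Where you genuinely diverge is in the decomposition: the paper argues only the adjacent case, opening with ``We may assume that $m+1=n$'' and never justifying that reduction, whereas your induction on $n$ makes the reduction a consequence of the argument itself --- the $n$-ary fiber-wise choice function $h$ is collapsed to an $m$-ary $f\hspace{2pt}'$ by applying the inductive hypothesis (completeness of $\mathcal{U}^n$) to the pair $(h, f\hspace{2pt})$, which is precisely the step the paper's WLOG sweeps under the rug. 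You also supply the base-case patching that reconciles the one-dimensional definition of completeness (quantifying over functions into $a_\mathcal{M}$) with what is actually needed (only $g(\alpha)\in a_\mathcal{M}$ on a $\mathcal{U}$-large set); the paper uses this same bridging silently at the words ``Since $\mathcal{U}$ is complete it follows that''. So your version buys rigor at exactly the two points where the paper is terse, at the cost of length; conversely, the paper's displayed computation is the special case $m+1=n$ of your inductive step. One microscopic quibble: uniqueness of $h(\bar\alpha)$ needs only that $\mathcal{U}$ is a proper filter (disjoint sets cannot both belong to it), not that it is an ultrafilter.
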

\begin{proof}
Suppose that $m < n$ and that 
\begin{align*}
f &: (\mathrm{Ord}^\mathcal{M})^m \rightarrow M \\
g &: (\mathrm{Ord}^\mathcal{M})^n \rightarrow M
\end{align*}
satisfy
$$\{\langle \alpha_1, \dots, \alpha_n \rangle \mid g(\alpha_1, \dots, \alpha_n) \in f\hspace{2pt}(\alpha_1, \dots, \alpha_m)\} \in \mathcal{U}^n.$$
We may assume that $m + 1 = n$. The above is equivalent to
$$\{\langle \alpha_1, \dots, \alpha_m \rangle \mid \{\alpha_n \mid g(\alpha_1, \dots, \alpha_n) \in f\hspace{2pt}(\alpha_1, \dots, \alpha_m)\} \in \mathcal{U} \} \in \mathcal{U}^m.$$
Since there is a bijection (coded in $\mathcal{A}$) between $\mathrm{Ord}^\mathcal{M}$ and $M^n$, we have by iterability of $\mathcal{U}$ that
$$\big\{\langle \langle \alpha_1, \dots, \alpha_m \rangle, y \rangle \mid \{\alpha_n \mid g(\alpha_1, \dots, \alpha_n) = y\} \in \mathcal{U} \big\} \in \mathcal{A}.$$
Let $f\hspace{2pt}'$ be the function coded by this set. Since $\mathcal{U}$ is complete it follows that
$$\{\langle \alpha_1, \dots, \alpha_m \mid \{\alpha_n \mid g(\alpha_1, \dots, \alpha_n) = f\hspace{2pt}'(\alpha_1, \dots, \alpha_m)\} \in \mathcal{U}\} \in \mathcal{U}^m,$$
which is equivalent to
$$\{\langle \alpha_1, \dots, \alpha_n \rangle \mid g(\alpha_1, \dots, \alpha_n) = f\hspace{2pt}'(\alpha_1, \dots, \alpha_m)\} \in \mathcal{U}^n,$$
as desired.
\end{proof}

\begin{constr}\label{constr iter power}
$\mathcal{L}^0_\mathcal{A}$ be the language obtained from $\mathcal{L}^0$ by adding constant symbols for all elements of $\mathcal{M}$ and adding relation and function symbols for all relations and functions on $\mathcal{M}$ coded in $\mathcal{A}$. $(\mathcal{M}, A)_{A \in \mathcal{A}}$ denotes the canonical expansion of $\mathcal{M}$ to $\mathcal{L}^0_\mathcal{A}$ determined by $(\mathcal{M}, \mathcal{A})$. Assume that $\mathcal{U}$ is a non-principle $(\mathcal{M}, \mathcal{A})$-iterable ultrafilter on $\mathrm{Ord}^\mathcal{M}$ and let $\mathbb{L}$ be a linear order. We construct $\mathrm{Ult}_{\mathcal{U}, \mathbb{L}}(\mathcal{M}, \mathcal{A})$ as follows:

For each $n \in \mathbb{N}$, define 
\begin{align*}
\Gamma_n =_\mathrm{df} \big\{ & \phi(\xs) \in \mathcal{L}^0_\mathcal{A} \mid \\
&\{\langle \alpha_1, \dots, \alpha_n \rangle \mid (\mathcal{M}, A)_{A \in \mathcal{A}} \models \phi(\alpha_1, \dots, \alpha_n)\} \in \mathcal{U}^n\big\}.
\end{align*}

Since $\mathcal{U}^n$ is an ultrafilter on $(\mathrm{Ord}^\mathcal{M})^n$, each $\Gamma_n$ is a complete $n$-type over $\mathcal{M}$ in the language $\mathcal{L}^0_\mathcal{A}$. Moreover, each $\Gamma_n$ contains the elementary diagram of $(\mathcal{M}, A)_{A \in \mathcal{A}}$.

For each $l \in \mathbb{L}$, let $c_l$ be a new constant symbol, and let $\mathcal{L}^0_{\mathcal{A}, \mathbb{L}}$ be the language generated by $\mathcal{L}^0_\mathcal{A} \cup \{c_l \mid l \in \mathbb{L}\}$. Define 
\begin{align*}
T_{\mathcal{U}, \mathbb{L}} =_\mathrm{df} \{\phi(c_{l_1}, \dots, c_{l_n}) \in \mathcal{L}^0_{\mathcal{A}, \mathbb{L}} \mid & n \in \mathbb{N} \wedge (l_1 <_\mathbf{L} \dots <_\mathbf{L} l_n \in \mathbb{L}) \wedge \\
&\phi(\xs) \in \Gamma_n\}.
\end{align*}

$T_{\mathcal{U}, \mathbb{L}}$ is complete and contains the elementary diagram of $(\mathcal{M}, \mathcal{A})$, because the same holds for each $\Gamma_n$. By Construction \ref{constr iter ultra},
$$T_{\mathcal{U}, \mathbb{L}} \vdash c_{l_1} < c_{l_2} \in \mathrm{Ord} \text{, for any $l_1 <_\mathbb{L} l_2$.}$$ 
Moreover, $T_{\mathcal{U}, \mathbb{L}}$ has definable Skolem functions: For each $\mathcal{L}^0_{\mathcal{A}, \mathbb{L}}$-formula $\exists x . \phi(x)$, we can prove in $T_{\mathcal{U}, \mathbb{L}}$ that the set of witnesses of $\exists x . \phi(x, y)$ of least rank exists, and provided this set is non-empty an element is picked out by a global choice function coded in $\mathcal{A}$. Thus we can define the {\em iterated ultrapower of} $(\mathcal{M, A})$ {\em modulo} $\mathcal{U}$ {\em along} $\mathbb{L}$ as 
\[
\mathrm{Ult}_{\mathcal{U}, \mathbb{L}}(\mathcal{M}, \mathcal{A}) =_\mathrm{df} \text{``the prime model of $T_{\mathcal{U}, \mathbb{L}}$''}.
\]
In particular, every element of $\mathrm{Ult}_{\mathcal{U}, \mathbb{L}}(\mathcal{M}, \mathcal{A})$ is of the form $f\hspace{2pt}(c_{l_1}, \dots, c_{l_n})$, where $l_1 < \dots < l_n \in \mathbb{L}$ and $f \in \mathcal{A}$ (considered as a function symbol of $\mathcal{L}^0_{\mathcal{A}, \mathbb{L}}$). Note that for any $A \in \mathcal{A}$, any function $f$ coded in $\mathcal{A}$ and for any $l_1, \dots, l_n \in \mathbb{L}$, where $n \in \mathbb{N}$, we have
\[
\begin{array}{cl}
&\mathrm{Ult}_{\mathcal{U}, \mathbb{L}}(\mathcal{M}, \mathcal{A}) \models f\hspace{2pt}(c_{l_1}, \dots, c_{l_n}) \in A \\
\Leftrightarrow & \{ \xi \in \mathrm{Ord}^\mathcal{M} \mid (\mathcal{M}, A)_{A \in \mathcal{A}} \models f\hspace{2pt}(\xi) \in A \} \in \mathcal{U}.
\end{array}
\]
A different way of saying the same thing:
\begin{align*}
A^{\mathrm{Ult}_{\mathcal{U}, \mathbb{L}}(\mathcal{M}, \mathcal{A})} = \big\{ & (f\hspace{2pt}(c_{l_1}, \dots, c_{l_n}))^{\mathrm{Ult}_{\mathcal{U}, \mathbb{L}}(\mathcal{M}, \mathcal{A})} \mid \\
& \{ \xi \in \mathrm{Ord}^\mathcal{M} \mid (\mathcal{M}, A)_{A \in \mathcal{A}} \models f\hspace{2pt}(\xi) \in A \} \in \mathcal{U} \big\}.
\end{align*}

Since $T_{\mathcal{U}, \mathbb{L}}$ contains the elementary diagram of $(\mathcal{M}, \mathcal{A})$, the latter embeds elementarily in $\mathrm{Ult}_{\mathcal{U}, \mathbb{L}}(\mathcal{M}, \mathcal{A})$. For simplicity of presentation, we assume that this is an elementary extension. Note that if $\mathbb{L}$ is empty, then $\mathrm{Ult}_{\mathcal{U}, \mathbb{L}}(\mathcal{M}, \mathcal{A}) = (\mathcal{M, A})$. If $\mathcal{U}$ is non-principle, then it is easily seen from Construction \ref{constr iter ultra} that for any $l, l\hspace{1pt}' \in \mathbb{L}$ and any $\alpha \in \mathrm{Ord}^\mathcal{M}$,
$$l <_\mathbb{L} l\hspace{1pt}' \Leftrightarrow \alpha <_\mathbb{O} c_l <_\mathbb{O} c_{l\hspace{1pt}'},$$
where $\mathbb{O} = \mathrm{Ord}^{\mathrm{Ult}_{\mathcal{U}, \mathbb{L}}(\mathcal{M}, \mathcal{A})}$. So $\mathbb{L}$ embeds into the linear order of the ordinals in $\mathrm{Ult}_{\mathcal{U}, \mathbb{L}}(\mathcal{M}, \mathcal{A})$, above the ordinals of $\mathcal{M}$.

It will be helpful to think of the ultrapower as a function (actually functor) of $\mathbb{L}$ rather than as a function of $(\mathcal{M, A})$, so we introduce the alternative notation 
\[
\mathcal{G}_{\mathcal{U}, (\mathcal{M, A})}(\mathbb{L}) =_\mathrm{df} \mathrm{Ult}_{\mathcal{U}, \mathbb{L}}(\mathcal{M}, \mathcal{A}).
\]
Moreover, for each $A \in \mathcal{A}$, we define $A^{\mathcal{G}_{\mathcal{U}, (\mathcal{M, A})}(\mathbb{L})} =_\df A^\mathrm{Ult}_{\mathcal{U}, \mathbb{L}}(\mathcal{M}, \mathcal{A})$. 

Suppose that $(\mathcal{M, A}) \models \mathrm{GBC} + \text{``$\mathrm{Ord}$ is weakly compact''}$ and let $\mathcal{U}$ be an iterable non-principle ultrafilter on $\mathbb{B}$. Given an embedding $i : \mathbb{K} \rightarrow \mathbb{L}$, we construct an embedding 
$$\mathcal{G}_{\mathcal{U}, (\mathcal{M, A})}(i) : \mathcal{G}_{\mathcal{U}, (\mathcal{M, A})}(\mathbb{K}) \rightarrow \mathcal{G}_{\mathcal{U}, (\mathcal{M, A})}(\mathbb{L})$$ 
as follows: Note that any $a \in \mathcal{G}_{\mathcal{U}, (\mathcal{M, A})}(\mathbb{K})$ is of the form $f\hspace{2pt}(c_{k_1}, \dots, c_{k_n})$ for some $f \in \mathcal{A}$, $n \in \mathbb{N}$ and $k_1, \dots, k_n \in \mathbb{K}$. Define $\mathcal{G}_{\mathcal{U}, (\mathcal{M, A})}(i)(a) = f\hspace{2pt}(c_{i(k_1)}, \dots, c_{i(k_n)})$.

As shown in Theorem \ref{Gaifman thm}, $\mathcal{G}_{\mathcal{U}, (\mathcal{M, A})}(i)$ is an elementary embedding, and further more, $\mathcal{G}_{\mathcal{U}, (\mathcal{M, A})}$ is a functor from the category of linear orders, with embeddings as morphisms, to the category of models of the $\mathcal{L}^0_\mathcal{A}$-theory of $(\mathcal{M}, A)_{A \in \mathcal{A}}$, with elementary embeddings as morphisms. We call this the {\em Gaifman functor} of $\mathcal{U}, (\mathcal{M, A})$ and denote it by $\mathcal{G}_{\mathcal{U}, (\mathcal{M, A})}$, or just $\mathcal{G}$ for short.
\end{constr}

Gaifman \cite{Gai76} essentially proved the theorem below for models of arithmetic. A substantial chunk of its generalization to models of set theory was proved for specific needs in \cite{Ena04}.

\begin{thm}[Gaifman-style]\label{Gaifman thm}
Suppose that $(\mathcal{M}, \mathcal{A}) \models \mathrm{GBC} + $``$\mathrm{Ord}$ is weakly compact'' is countable and let $\mathcal{U}$ be an $(\mathcal{M}, \mathcal{A})$-generic ultrafilter. Write $\mathcal{G} = \mathcal{G}_{\mathcal{U}, (\mathcal{M, A})}$ for the corresponding Gaifman functor. Let $i : \mathbb{K} \rightarrow \mathbb{L}$ be an embedding of linear orders.
\begin{enumerate}[{\normalfont (a)}]
\item \label{satisfaction} For each $n \in \mathbb{N}$ and each $\phi(x_1, \dots, x_n) \in \mathcal{L}^0_{\mathcal{A}, \mathbb{L}}$:
\begin{align*}
&\mathcal{G}(\mathbb{L}) \models \phi(c_{l_1}, \dots, c_{l_n}) \Leftrightarrow \\
&\big\{ \langle \alpha_1, \dots, \alpha_n \rangle \in (\mathrm{Ord}^\mathcal{M})^n \mid (\mathcal{M}, A)_{A \in \mathcal{A}} \models \phi(\alpha_1, \dots, \alpha_n) \big\} \in \mathcal{U}^n.
\end{align*}
\item \label{elem} $\mathcal{G}(i) : \mathcal{G}(\mathbb{K}) \rightarrow \mathcal{G}(\mathbb{L})$ is an elementary embedding.
\item \label{func} $\mathcal{G}$ is a functor.
\item \label{cons} If $\mathbb{L} \neq \varnothing$, then $\mathrm{SSy}_\mathcal{M}(\mathcal{G}(\mathbb{L})) \cong (\mathcal{M}, \mathcal{A})$.
\item \label{card} If $|\mathbb{L}| \geq \aleph_0$, then $|\mathcal{G}(\mathbb{L})| = |\mathbb{L}|$.
\item \label{init} $i$ is initial iff $\mathcal{G}(i)$ is rank-initial. 
\item \label{iso} $i$ is an isomorphism iff $\mathcal{G}(i)$ is an isomorphism.
\item \label{bnd} Let $l_0 \in \mathbb{L}$. $i$ is strictly bounded above by $l_0$ iff $\mathcal{G}(i)\restriction_{\mathrm{Ord}^{\mathcal{G}(\mathbb{K})}}$ is strictly bounded above by $c_{l_0}$.
\item \label{downcof} If $\mathbb{L} \setminus i(\mathbb{K})$ has no least element, then $\{ c_l \mid l \in \mathbb{L} \setminus i(\mathbb{K})\}$ is downwards cofinal in $\mathrm{Ord}^{\mathcal{G}(\mathbb{L})} \setminus \mathrm{Ord}^{\mathcal{G}(i(\mathbb{K}))}$.
\item \label{equal} Let $\mathbb{L}'$ be a linear order and let $j, j\hspace{1pt}' : \mathbb{L} \rightarrow \mathbb{L}'$ be embeddings. $i$ is an equalizer of $j, j\hspace{1pt}' : \mathbb{L} \rightarrow \mathbb{L}'$ iff $\mathcal{G}(i)$ is an equalizer of $\mathcal{G}(j), \mathcal{G}(j\hspace{1pt}') : \mathcal{G}(\mathbb{L}) \rightarrow \mathcal{G}(\mathbb{L}')$.
\item \label{contr} Let $i\hspace{1pt}' : \mathbb{K} \rightarrow \mathbb{L}$ be an embedding. We have $\forall k \in \mathbb{K} . i(k) < i\hspace{1pt}'(k)$ iff $\forall \xi \in \mathrm{Ord}^{\mathcal{G}(\mathbb{K})} \setminus \mathrm{Ord}^\mathcal{M} . \mathcal{G}(i)(\xi) < \mathcal{G}(i\hspace{1pt}')(\xi)$.
\end{enumerate}
\end{thm}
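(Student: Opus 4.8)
The plan is to build everything on part (\ref{satisfaction}), which for a strictly increasing tuple $l_1 <_\mathbb{L} \dots <_\mathbb{L} l_n$ (the case intended in the statement, and the only one invoked afterwards) is a \L o\'s-style equivalence read off directly from Construction \ref{constr iter power}. The definitions of $\Gamma_n$ and $T_{\mathcal{U}, \mathbb{L}}$ were arranged precisely so that, for such a tuple, $\phi(c_{l_1}, \dots, c_{l_n}) \in T_{\mathcal{U}, \mathbb{L}}$ iff $\phi(\xs) \in \Gamma_n$ iff $\{\vec\alpha \mid (\mathcal{M}, A)_{A \in \mathcal{A}} \models \phi(\vec\alpha)\} \in \mathcal{U}^n$. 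Since $\mathcal{G}(\mathbb{L})$ is a model of the complete theory $T_{\mathcal{U}, \mathbb{L}}$ in which each $c_l$ is interpreted, the equivalence is immediate; formulae whose constants are repeated or out of order are first rewritten over the increasing enumeration of their distinct indices, adjusting $\phi$ and the corresponding set on the right accordingly.

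Granting (\ref{satisfaction}), parts (\ref{elem}) and (\ref{func}) are essentially formal. The assignment $\mathcal{G}(i)(f(c_{k_1}, \dots, c_{k_n})) = f(c_{i(k_1)}, \dots, c_{i(k_n)})$ is well defined and elementary because, by (\ref{satisfaction}), the truth of any $\mathcal{L}^0_\mathcal{A}$-formula at an increasing tuple of generators depends only on membership of a fixed set in $\mathcal{U}^n$, not on which increasing indices are chosen; as $i$ is order-preserving it carries increasing tuples to increasing tuples, so satisfaction is preserved in both directions. Functoriality (\ref{func}) is the observation that $\mathcal{G}$ merely relabels constants. For (\ref{card}) I would count Skolem terms $f(c_{l_1}, \dots, c_{l_n})$ with $f$ ranging over the countable set coded by $\mathcal{A}$ and $(l_1, \dots, l_n)$ over finite tuples from $\mathbb{L}$, giving $|\mathcal{G}(\mathbb{L})| \leq \aleph_0 \cdot |\mathbb{L}| = |\mathbb{L}|$ for infinite $\mathbb{L}$, while $\mathbb{L} \hookrightarrow \mathrm{Ord}^{\mathcal{G}(\mathbb{L})}$ gives the reverse. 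Part (\ref{cons}) is the first genuinely nontrivial step: the inequality $\mathrm{SSy}_\mathcal{M}(\mathcal{G}(\mathbb{L})) \geq (\mathcal{M}, \mathcal{A})$ is clear since each $A \in \mathcal{A}$ is coded, and for the reverse I would take a code $f(c_{l_1}, \dots, c_{l_n})$, note via (\ref{satisfaction}) that its trace on $\mathcal{M}$ is $\{a \in \mathcal{M} \mid \{\vec\alpha \mid (\mathcal{M}, A) \models a \in f(\vec\alpha)\} \in \mathcal{U}^n\}$, and invoke $(\mathcal{M}, \mathcal{A})$-iterability of $\mathcal{U}^n$ (Construction \ref{constr iter ultra}) together with completeness (Lemma \ref{iter complete}) to conclude this set lies in $\mathcal{A}$.

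The remaining parts (\ref{init})--(\ref{contr}) form an order-theoretic dictionary which I would treat together, the common engine being the Skolem-term representation of elements combined with $(\mathcal{M}, \mathcal{A})$-completeness and canonical Ramseyness. For (\ref{init}), if $i$ is initial I must show any element of $\mathcal{G}(\mathbb{L})$ whose rank is $\leq$ that of some value of $\mathcal{G}(i)$ is itself a value: writing both as Skolem terms and using completeness, a term whose rank is bounded by one computed from generators $c_{i(k)}$, $k \in \mathbb{K}$, can be recomputed using only those generators and so lies in the image; conversely a non-initial $i$ places some ordinal $c_l$ with $l \notin i(\mathbb{K})$ below a value $c_{i(k)}$ yet outside the image, breaking rank-initiality. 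Part (\ref{iso}) is the surjectivity special case of this analysis, (\ref{bnd}) follows from the order-isomorphic embedding of $\mathbb{L}$ into $\mathrm{Ord}^{\mathcal{G}(\mathbb{L})}$ above $\mathrm{Ord}^\mathcal{M}$, (\ref{downcof}) requires showing that an out-of-image ordinal-term must genuinely use some index $l \notin i(\mathbb{K})$ and lie above the corresponding $c_l$ (isolating the governing coordinates by canonical Ramseyness), and (\ref{equal})--(\ref{contr}) follow by comparing the Skolem-term representations of $\mathcal{G}(i)(\xi)$ and $\mathcal{G}(i\hspace{1pt}')(\xi)$ coordinatewise, reading the equalizer characterization off the category-theoretic examples recorded earlier for elementary embeddings.

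The hard part will be the analyses underlying (\ref{cons}), (\ref{init}) and (\ref{downcof}): these are exactly where the hypotheses on $\mathcal{U}$ supplied by Theorem \ref{weakly compact gives nice ultrafilter} (iterability, completeness, canonical Ramseyness) must be deployed to control how Skolem terms in the new generators relate, in rank and in membership, to $\mathcal{M}$ and to one another. In particular, the claim in (\ref{init}) that a low-rank term can be pushed down onto the generators indexed by $i(\mathbb{K})$, and the claim in (\ref{downcof}) that an out-of-image ordinal is bounded below by an out-of-image generator, are where canonical Ramseyness does the real work of reducing an $n$-ary term to its essential coordinates; I expect these to be the most delicate bookkeeping in the proof.
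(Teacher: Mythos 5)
Your overall architecture coincides with the paper's --- part (\ref{satisfaction}) as the \L o\'s-style foundation read off from Construction \ref{constr iter power}, parts (\ref{elem}), (\ref{func}), (\ref{card}) as formal consequences, completeness (Lemma \ref{iter complete}) behind (\ref{init}), canonical Ramseyness behind (\ref{equal}) and (\ref{downcof}) --- but two of your steps have genuine gaps. In (\ref{cons}) you take the inclusion $\mathcal{A} \subseteq \mathrm{Cod}_\mathcal{M}(\mathcal{G}(\mathbb{L}))$ to be ``clear since each $A \in \mathcal{A}$ is coded'', but that is precisely what has to be proved: a class $A \in \mathcal{A}$ is not an element of any model, so you must exhibit an element of $\mathcal{G}(\mathbb{L})$ whose trace on $\mathcal{M}$ equals $A$. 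The paper's device is the function $f_A : \mathrm{Ord}^\mathcal{M} \rightarrow \mathcal{M}$ with $f_A(\xi) = V_\xi \cap A$, which is coded in $\mathcal{A}$ because $(\mathcal{M}, \mathcal{A}) \models \mathrm{GBC}$; then for any $l \in \mathbb{L}$ and $a \in \mathcal{M}$, part (\ref{satisfaction}) gives that $\mathcal{G}(\mathbb{L}) \models a \in f_A(c_l)$ iff $\{\alpha \mid a \in V_\alpha \cap A\} \in \mathcal{U}$ iff $a \in A$, so $f_A(c_l)$ codes $A$. Nothing in your sketch produces such a witness. (Relatedly, in (\ref{init}) you should say where initiality of $i$ enters: completeness, as defined, only projects a Skolem term onto an \emph{initial} block of its coordinates, and it is exactly initiality of $i$ that guarantees the $\mathbb{K}$-indices form such an initial block in the increasing enumeration of all indices involved.)

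The second gap is your treatment of (\ref{contr}), and by the same token the forward direction of (\ref{bnd}). Comparing the Skolem-term representations of $\mathcal{G}(i)(\xi)$ and $\mathcal{G}(i\hspace{1pt}')(\xi)$ ``coordinatewise'' does not work: a Skolem function $f$ coded in $\mathcal{A}$ need not be order-preserving in its arguments, so from $c_{i(k_m)} < c_{i\hspace{1pt}'(k_m)}$ for all $m$ you cannot conclude $f\hspace{2pt}(c_{i(k_1)}, \dots, c_{i(k_n)}) < f\hspace{2pt}(c_{i\hspace{1pt}'(k_1)}, \dots, c_{i\hspace{1pt}'(k_n)})$. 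The paper instead chooses $k^*$ least in $\mathbb{K}$ such that $\xi$ admits a representation whose largest index is $k^*$, notes that $\mathcal{G}(i)(\xi)$ lies in $\mathcal{G}(\mathbb{L}_{\leq i(k^*)})$ while $\mathcal{G}(i\hspace{1pt}')(\xi)$ does not (using $i(k^*) < i\hspace{1pt}'(k^*)$ and minimality of $k^*$), and then invokes (\ref{elem}) and (\ref{init}): $\mathcal{G}(\mathbb{L}_{\leq i(k^*)})$ is a rank-initial elementary submodel of $\mathcal{G}(\mathbb{L})$, so any ordinal outside it exceeds every ordinal inside it, giving $\mathcal{G}(i)(\xi) < \mathcal{G}(i\hspace{1pt}')(\xi)$. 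Similarly for (\ref{bnd}): embedding $\mathbb{L}$ into $\mathrm{Ord}^{\mathcal{G}(\mathbb{L})}$ above $\mathrm{Ord}^\mathcal{M}$ only bounds the generators $c_{i(k)}$, whereas the claim concerns every ordinal of $\mathcal{G}(\mathbb{K})$, i.e.\ arbitrary Skolem terms; the paper bounds these via $\mathcal{G}(\mathbb{K}) \prec \mathcal{G}(\mathbb{L}_{< l_0}) \prec \mathcal{G}(\mathbb{L})$ combined with (\ref{init}).
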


{\em Remark. } (\ref{elem}) and (\ref{init}) imply that $(\mathcal{M, A})$ is a rank-initial elementary substructure of $\mathcal{G}(\mathbb{L})$. It follows from (\ref{equal}) that if $j : \mathbb{L} \rightarrow \mathbb{L}$ is a self-embedding with no fixed point, then the fixed point set of $\mathcal{G}(j)$ is $\mathcal{M}$ (consider the equalizer of $j$ and $\id_\mathbb{L}$).

\begin{proof}
(\ref{satisfaction}) This is immediate from Construction \ref{constr iter power}.

(\ref{elem}) We may assume that $\mathbb{K} \subseteq \mathbb{L}$ and that $i$ is the corresponding inclusion function. This has the convenient consequence that $\mathcal{L}^0_{\mathcal{A}, \mathbb{K}} \subseteq \mathcal{L}^0_{\mathcal{A}, \mathbb{L}}$. Let $\phi(\vec{c}) \in \mathcal{L}^0_{\mathcal{A}, \mathbb{K}}$ be a sentence, where $\vec{c}$ is a tuple of constants. By (\ref{satisfaction}), 
\[
\mathcal{G}(\mathbb{K}) \models \phi(\vec{c}) \Leftrightarrow \mathcal{G}(\mathbb{L}) \models \phi(\vec{c}).
\]
Since every element of $\mathcal{G}(\mathbb{K})$ interprets a term, this equivalence establishes $\mathcal{G}(\mathbb{K}) \preceq \mathcal{G}(\mathbb{L})$, as $\mathcal{L}^0_{\mathcal{A}, \mathbb{K}}$-structures.

(\ref{func}) It is clear that $\mathcal{G}(\id_\mathbb{L}) = \id_{\mathcal{G}(\mathbb{L})}$. It only remains to verify that composition is preserved. Let $j : \mathbb{L} \rightarrow \mathbb{L}'$ and $j\hspace{1pt}' : \mathbb{L}' \rightarrow \mathbb{L}''$ be embeddings of linear orders. Let $a$ be an arbitrary element of $\mathcal{G}(\mathbb{L})$. Then $a = f\hspace{2pt}(c_{l_1}, \dots, c_{l_n})$, for some $f \in \mathcal{A}$, $n \in \mathbb{N}$ and $l_1, \dots, l_n \in \mathbb{L}$. $\mathcal{G}(j\hspace{1pt}'\circ j)(a) = f\hspace{2pt}(c_{j\hspace{1pt}'\circ j(l_1)}, \dots, c_{j\hspace{1pt}'\circ j(l_n)}) = f\hspace{2pt}(c_{j\hspace{1pt}'(j(l_1))}, \dots, c_{j\hspace{1pt}'(j(l_n))}) = (\mathcal{G}(j\hspace{1pt}') \circ \mathcal{G}(j))(a)$, as desired.

(\ref{cons}) We start with $\mathcal{A} \subseteq \mathrm{Cod}_\mathcal{M}(\mathcal{G}(\mathbb{L}))$: Let $A \in \mathcal{A}$. Since $(\mathcal{M}, \mathcal{A}) \models \mathrm{GBC}$, the function $f_A : \mathrm{Ord}^\mathcal{M} \rightarrow \mathcal{M}$, defined by $f_A(\xi) = V_\xi \cap A$ for all $\xi \in \mathrm{Ord}^\mathcal{M}$, is coded in $\mathcal{A}$. Since $\mathbb{L} \neq \varnothing$, let $l \in \mathbb{L}$. Now by (\ref{satisfaction}), for each $a \in \mathcal{M}$,
\[
\begin{array}{cl}
&\mathcal{G}(\mathbb{L}) \models a \in f_A(c_l) \\
\Leftrightarrow & \{ \alpha \in \mathrm{Ord^\mathcal{M}} \mid (\mathcal{M}, A)_{A \in \mathcal{A}} \models a \in f_A(\alpha) \} \in \mathcal{U} \\
\Leftrightarrow & a \in A,
\end{array}
\]
so $f_A(l)$ codes $A$.

We proceed with $\mathrm{Cod}_\mathcal{M}(\mathcal{G}(\mathbb{L})) \subseteq \mathcal{A}$: Let $b \in \mathcal{G}(\mathbb{L})$. Then $\mathcal{G}(\mathbb{L}) \models b = f\hspace{2pt}(c_{l_1}, \dots, c_{l_n})$, for some $n \in \mathbb{N}$ and $l_1, \dots, l_n \in \mathbb{L}$. We need to show that 
\[
\{ x \in \mathcal{M} \mid \mathcal{G}(\mathbb{L}) \models x \in f\hspace{2pt}(c_{l_1}, \dots, c_{l_n}) \} \in \mathcal{A}.
\]
By (\ref{satisfaction}), this amounts to showing that
\begin{align*}
\big\{ & x \in \mathcal{M} \mid \\
&\{ \langle \alpha_1, \dots, \alpha_n \rangle \in (\mathrm{Ord}^\mathcal{M})^n \mid (\mathcal{M}, A)_{A \in \mathcal{A}} \models x \in f\hspace{2pt}(\alpha_1, \dots, \alpha_n) \} \in \mathcal{U}^n \\
\big\} & \in \mathcal{A}.
\end{align*}
Letting $w : \mathrm{Ord}^\mathcal{M} \rightarrow \mathcal{M}$ be a well-ordering of $\mathcal{M}$ coded in $\mathcal{A}$, the above is equivalent to
\begin{align*}
\big\{ & \xi \in \mathrm{Ord}^\mathcal{M} \mid \\
&\{ \langle \alpha_1, \dots, \alpha_n \rangle \in (\mathrm{Ord}^\mathcal{M})^n \mid \\
&\phantom{\{}(\mathcal{M}, A)_{A \in \mathcal{A}} \models w(\xi) \in f\hspace{2pt}(\alpha_1, \dots, \alpha_n) \} \in \mathcal{U}^n \\
\big\} & \in \mathcal{A}.
\end{align*}
This last statement holds since $\mathcal{U}^n$ is $(\mathcal{M}, \mathcal{A})$-iterable on $\mathbb{B}^n$.

(\ref{card}) Suppose that $|\mathbb{L}| \geq \aleph_0$. Since $(\mathcal{M}, \mathcal{A})$ is countable, $|\mathbb{L}| = |\mathcal{L}^0_{\mathcal{A}, \mathbb{L}}|$. So since $\mathcal{G}(\mathbb{L})$ is a prime model in that language, we have $|\mathcal{G}(\mathbb{L})| = |\mathbb{L}|$.

(\ref{init}) We may assume that $\mathbb{L}$ extends $\mathbb{K}$ and that $\mathcal{G}(\mathbb{L})$ extends $\mathcal{G}(\mathbb{K})$. By Proposition \ref{elem initial is rank-initial}, it suffices to show that $i$ is initial. Let $a \in \mathcal{G}(\mathbb{K})$ and $b \in \mathcal{G}(\mathbb{L})$, such that $\mathcal{G}(\mathbb{L}) \models b \in a$. We need to show that $b \in \mathcal{G}(\mathbb{K})$. Note that $\mathcal{G}(\mathbb{K}) \models a = f\hspace{2pt}(c_{l_1}, \dots, c_{l_m})$ and $\mathcal{G}(\mathbb{L}) \models b = g(c_{l_1}, \dots, c_{l_n})$, for some $m \leq n \in \mathbb{N}$, $l_1, \dots, l_m \in \mathbb{K}$, $ l_{m+1}, \dots, l_n \in \mathbb{L}$ and $f, g \in \mathcal{A}$. By (\ref{satisfaction}), we have that 
$$\big\{\alpha_1, \dots \alpha_n \mid (\mathcal{M, A}) \models g(\alpha_1, \dots, \alpha_n) \in f\hspace{2pt}(\alpha_1, \dots, \alpha_m)\big\} \in \mathcal{U}^n.$$
So by $(\mathcal{M, A})$-completeness of $\mathcal{U}$ and Lemma \ref{iter complete},
\begin{align*}
& \text{there is $f\hspace{2pt}' : (\mathrm{Ord}^\mathcal{M})^m \rightarrow M$ in $\mathcal{A}$, such that} \\
& \big\{\alpha_1, \dots \alpha_n \mid (\mathcal{M, A}) \models g(\alpha_1, \dots, \alpha_n) = f\hspace{2pt}'(\alpha_1, \dots, \alpha_m)\big\} \in \mathcal{U}^n,
\end{align*}
whence $\mathcal{G}(\mathbb{L}) \models b = f\hspace{2pt}'(c_{l_1}, \dots, c_{l_m})$. But $f\hspace{2pt}'(c_{l_1}, \dots, c_{l_m}) \in \mathcal{G}(\mathbb{K})$. So $i$ is initial.

(\ref{iso}) ($\Leftarrow$) follows from that the orderings embed into the respective sets of ordinals of the models, and that any isomorphism of the models preserves the order of their ordinals. ($\Rightarrow$) follows from that functors preserve isomorphisms. 

(\ref{bnd}) ($\Leftarrow$) is obvious. For ($\Rightarrow$), we may assume that $\mathbb{K}$ is a linear suborder of $\mathbb{L}$ that is strictly bounded above by $l_0 \in \mathbb{L}$. Note that $\mathcal{G}(\mathbb{K}) \prec \mathcal{G}(\mathbb{L}_{< l_0}) \prec \mathcal{G}(\mathbb{L})$. So every ordinal of $\mathcal{G}(\mathbb{K})$ is an ordinal of $\mathcal{G}(\mathbb{L}_{< l_0})$, and by (\ref{init}), every ordinal of $\mathcal{G}(\mathbb{L}_{< l_0})$ is an ordinal of $\mathcal{G}(\mathbb{L})$ below $c_{l_0}$.

(\ref{downcof}) We may assume that $\mathbb{K} \subseteq \mathbb{L}$. Suppose that $\mathbb{L} \setminus \mathbb{K}$ has no least element. Let $\alpha \in \mathrm{Ord}^{\mathcal{G}(\mathbb{L})} \setminus \mathrm{Ord}^{\mathcal{G}(\mathbb{K})}$. Then $\mathcal{G}(\mathbb{L}) \models \alpha = f\hspace{2pt}(c_{l_1}, \dots, c_{l_n})$, for some $n \in \mathbb{N}$ and $l_1 < \dots < l_n \in \mathbb{L}$. Let $1 \leq n^\circ \leq n$ be the least natural number such that there is $l \in \mathbb{L}\setminus \mathbb{K}$ with $l < l_{n^\circ}$. Let $l^* \in \mathbb{L}\setminus \mathbb{K}$ witness this for $n^\circ$. To show that $\mathcal{G}(\mathbb{L}) \models l^* < \alpha$, it suffices to show that
$$\{\langle \xi_1, \dots, \xi_{n^\circ - 1}, \xi^*, \xi_{n^\circ}, \dots, \xi_n \rangle \mid \xi^* < f\hspace{2pt}(\xi_1, \dots, \xi_n) \} \in \mathcal{U}^{n+1}.$$
Suppose not. Then
$$\{\langle \xi_1, \dots, \xi_{n^\circ - 1}, \xi^*, \xi_{n^\circ}, \dots, \xi_n \rangle \mid \xi^* \geq f\hspace{2pt}(\xi_1, \dots, \xi_n) \} \in \mathcal{U}^{n+1 },$$
so by completeness
\begin{align*}
\big\{ & \langle \xi_1, \dots, \xi_{n^\circ - 1} \rangle \mid \\
& \exists \xi . \{ \langle \xi_{n^\circ}, \dots, \xi_n \rangle \mid \xi = f\hspace{2pt}(\xi_1, \dots, \xi_n)\} \in \mathcal{U}^{n - n^\circ + 1} \\
\big\} & \in \mathcal{U}^{n^\circ - 1}.
\end{align*}
Hence, by iterability, we can code a function $f\hspace{2pt}' : (\mathrm{Ord}^\mathcal{M})^{n^\circ -1} \rightarrow \mathrm{Ord}^\mathcal{M}$ in $\mathcal{A}$ by
$$\{\langle \langle \xi_1, \dots, \xi_{n^\circ - 1} \rangle, \xi \rangle \mid \{ \langle \xi_{n^\circ}, \dots, \xi_n \rangle \mid \xi = f\hspace{2pt}(\xi_1, \dots, \xi_n)\} \in \mathcal{U}^{n - n^\circ + 1} \},$$
and $\mathcal{G}(\mathbb{L}) \models f\hspace{2pt}'(l_{c_1}, \dots, l_{c_{n^\circ - 1}}) = \alpha$. But this means that $\alpha \in \mathcal{G}(\mathbb{K})$, contradicting assumption. 

(\ref{equal}) ($\Leftarrow$) is obvious. For ($\Rightarrow$), assume that $i : \mathbb{K} \rightarrow \mathbb{L}$ is an equalizer of $j, j\hspace{1pt}' : \mathbb{L} \rightarrow \mathbb{L}'$, i.e. we may assume that $\mathbb{K}$ is the linear suborder of $\mathbb{L}$ on $\{l \in \mathbb{L} \mid j(l) = j\hspace{1pt}'(l)\}$. It suffices to show that for all elements $x$ of $\mathcal{G}(\mathbb{L})$, we have $\mathcal{G}(j)(x) = \mathcal{G}(j\hspace{1pt}')(x) \leftrightarrow x \in \mathcal{G}(\mathbb{K})$. ($\leftarrow$) is obvious. For ($\rightarrow$), suppose that $a \in \mathcal{G}(\mathbb{L}) \setminus \mathcal{G}(\mathbb{K})$. Let $n^*$ be the least natural number such that $\mathcal{G}(\mathbb{L}) \models a = f\hspace{2pt}(c_{l_1}, \dots, c_{l_{n^*}})$, for some $f \in \mathcal{A}$ and $l_1 < \dots < l_{n^*} \in \mathbb{L}$. Suppose that $\mathcal{G}(\mathbb{L}) \models f\hspace{2pt}(c_{j(l_1)}, \dots, c_{j(l_{n^*})}) = f\hspace{2pt}(c_{j\hspace{1pt}'(l_1)}, \dots, c_{j\hspace{1pt}'(l_{n^*})})$. Since $\mathcal{U}$ is $(\mathcal{M, A})$-canonically Ramsey, since $f$ is coded in $\mathcal{A}$ and since there is a bijection between the universe and the ordinals coded in $\mathcal{A}$, there is $H \in \mathcal{U}$ and $S \subseteq \{1, \dots, n^*\}$, such that for any $\alpha_1 < \dots < \alpha_n$ and $\beta_1 < \dots < \beta_n$ in $H$, 
$$f\hspace{2pt}(\alpha_1, \dots, \alpha_{n^*}) = f\hspace{2pt}(\beta_1, \dots, \beta_{n^*}) \leftrightarrow \forall m \in S . \alpha_m = \beta_m.$$
It follows from $f\hspace{2pt}(c_{j(l_1)}, \dots, c_{j(l_{n^*})}) = f\hspace{2pt}(c_{j\hspace{1pt}'(l_1)}, \dots, c_{j\hspace{1pt}'(l_{n^*})})$ (by a routine argument based on the constructions of this section) that 
$$S = \{ m \mid 1 \leq m \leq n^* \wedge j(l_m) = j\hspace{1pt}'(l_m) \}.$$ 
Since $a \not\in \mathcal{G}(\mathbb{K})$, there is $1 \leq n^\circ \leq n^*$, such that $j(l_{n^\circ}) \neq j\hspace{1pt}'(l_{n^\circ})$. Note that $f\hspace{2pt}' : (\mathrm{Ord}^\mathcal{M})^{n^*-1} \rightarrow \mathcal{M} $, defined by 
\begin{align*}
& f\hspace{2pt}'(\alpha_1, \dots, \alpha_{n^\circ - 1}, \alpha_{n^\circ +1}, \dots, \alpha_{n^*}) = \\
& f\hspace{2pt}(\alpha_1, \dots, \alpha_{n^\circ-1}, \alpha_{n^\circ-1}+1, \alpha_{n^\circ + 1}, \dots, \alpha_{n^*}),
\end{align*}
is coded in $\mathcal{A}$; and note that 
$$f\hspace{2pt}(\alpha_1, \dots, \alpha_{n^\circ-1}, \alpha_{n^\circ-1}+1, \alpha_{n^\circ + 1}, \dots, \alpha_{n^*}) = f\hspace{2pt}(\alpha_1, \dots, \alpha_{n^*})$$ 
for all $\alpha_1< \dots< \alpha_{n^*} \in H$. Since $H \in \mathcal{U}$, it follows that 
$$\mathcal{G}(\mathbb{L}) \models a = f\hspace{2pt}'(c_{l_1}, \dots, c_{l_{n^\circ -1}}, c_{l_{n^\circ +1}}, \dots, c_{l_n}),$$ 
contradicting minimality of $n^*$. 

(\ref{contr}) ($\Leftarrow$) is obvious. For ($\Rightarrow$), let $\alpha$ be an arbitrary internal ordinal of $\mathcal{G}(\mathbb{K})$ not in $\mathcal{M}$. Let $k^*$ be the least element of $\mathbb{K}$, such that $\mathcal{G}(\mathbb{K}) \models \alpha =  f\hspace{2pt}(c_{k_1}, \dots, c_{k_n})$, where $f \in \mathcal{A}$, $n \in \mathbb{N}$ and $k_1 < \dots < k_n = k^* \in \mathbb{K}$. Note that $i\hspace{1pt}'(\alpha)$ is in $\mathcal{G}(\mathbb{L}_{\leq i\hspace{1pt}'(k^*)})$, that $i(\alpha)$ is in $\mathcal{G}(\mathbb{L}_{\leq i(k^*)})$, but that $i\hspace{1pt}'(\alpha)$ is not in $\mathcal{G}(\mathbb{L}_{\leq i(k^*)})$, because $i(k^*) < i\hspace{1pt}'(k^*)$. So by (\ref{elem}) and (\ref{init}), $\mathcal{G}(\mathbb{L}) \models i(\alpha) < i\hspace{1pt}'(\alpha)$.
\end{proof}

This theorem is quite powerful when applied to the set of rational numbers $\mathbb{Q}$, with the usual ordering $<_\mathbb{Q}$. For any structure $\mathcal{K}$, and $S \subseteq \mathcal{K}$, we define $\mathrm{End}_S(\mathcal{K})$ as the monoid of endomorphisms of $\mathcal{K}$ that fix $S$ pointwise, and we define $\mathrm{Aut}_S(\mathcal{K})$ as the group of automorphisms of $\mathcal{K}$ that fix $S$ pointwise.

\begin{cor}\label{end extend model of weakly compact to model with autos}
	If $\mathcal{M} \models \mathrm{ZFC}$ expands to a countable model $(\mathcal{M}, \mathcal{A})$ of $\mathrm{GBC} + $ ``$\mathrm{Ord}$ is weakly compact'', then there is $\mathcal{M} \prec^\textnormal{rank-cut} \mathcal{N}$, such that $\mathrm{SSy}_\mathcal{M}(\mathcal{N}) = (\mathcal{M}, \mathcal{A})$, and such that for any countable linear order $\mathbb{L}$, there is an embedding of $\mathrm{End}(\mathbb{L})$ into $\mathrm{End}_\mathcal{M}(\mathcal{N})$. Moreover, this embedding sends every automorphism of $\mathbb{L}$ to an automorphism of $\mathcal{N}$, and sends every contractive self-embedding of $\mathbb{L}$ to a self-embedding of $\mathcal{N}$ that is contractive on $\mathrm{Ord}^\mathcal{N} \setminus \mathcal{M}$ and whose fixed-point set is $\mathcal{M}$.
\end{cor}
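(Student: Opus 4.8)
The plan is to apply Theorem \ref{Gaifman thm} with the universal countable linear order $\mathbb{Q}$ (with its usual ordering) and then transport arbitrary endomorphism monoids into $\mathrm{End}(\mathbb{Q})$. First I would fix an $(\mathcal{M},\mathcal{A})$-generic ultrafilter $\mathcal{U}$ on $\mathbb{B}$, which exists by Lemma \ref{generic filter existence} since $(\mathcal{M},\mathcal{A})$ is countable (there are only countably many definable dense subsets of $\mathbb{P}$). By Theorem \ref{weakly compact gives nice ultrafilter}, $\mathcal{U}$ is $(\mathcal{M},\mathcal{A})$-complete, iterable and canonically Ramsey; completeness makes it an ultrafilter on $\mathbb{B}$, and genericity forces every final segment of $\mathrm{Ord}^\mathcal{M}$ into $\mathcal{U}$ (the sets $\{A \in \mathbb{P} \mid A \subseteq \mathrm{Ord}^\mathcal{M}\setminus\alpha\}$ are dense and definable), so $\mathcal{U}$ is non-principal. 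Let $\mathcal{G} = \mathcal{G}_{\mathcal{U},(\mathcal{M},\mathcal{A})}$ be the resulting Gaifman functor and put $\mathcal{N} = \mathcal{G}(\mathbb{Q})$, which is countable by Theorem \ref{Gaifman thm}(\ref{card}).

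The structural clauses come directly from Theorem \ref{Gaifman thm} applied to the inclusion $\varnothing \hookrightarrow \mathbb{Q}$, using the identification $\mathcal{M} = \mathcal{G}(\varnothing)$. By (\ref{elem}) the inclusion $\mathcal{M} \hookrightarrow \mathcal{N}$ is elementary, by (\ref{init}) it is rank-initial, and it is bounded since any $c_l$ ($l\in\mathbb{Q}$) exceeds every ordinal of $\mathcal{M}$; toplessness then follows from (\ref{downcof}) together with the fact that $\mathbb{Q}$ has no least element, whence $\mathrm{Ord}^\mathcal{N}\setminus\mathcal{M}$ has no least element. Thus $\mathcal{M} \prec^\textnormal{rank-cut} \mathcal{N}$. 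The identity $\mathrm{SSy}_\mathcal{M}(\mathcal{N}) = (\mathcal{M},\mathcal{A})$ is exactly clause (\ref{cons}), whose proof in fact establishes $\mathrm{Cod}_\mathcal{M}(\mathcal{N}) = \mathcal{A}$, giving literal equality.

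The one genuinely new ingredient is the passage from an arbitrary countable $\mathbb{L}$ to $\mathbb{Q}$. I would realize $\mathrm{End}(\mathbb{L})$ inside $\mathrm{End}(\mathbb{Q})$ as follows (the case $\mathbb{L}=\varnothing$ being trivial). For nonempty countable $\mathbb{L}$, the lexicographic product $\mathbb{L}\times\mathbb{Q}$ is a countable dense linear order without endpoints, hence $\mathbb{L}\times\mathbb{Q}\cong\mathbb{Q}$ by Cantor's theorem; fix such an isomorphism $\theta$. Each self-embedding $e : \mathbb{L}\to\mathbb{L}$ induces the self-embedding $\tilde e(l,q)=(e(l),q)$ of $\mathbb{L}\times\mathbb{Q}$, and the assignment $e \mapsto \theta\,\tilde e\,\theta^{-1}$ is an injective monoid homomorphism $\mathrm{End}(\mathbb{L})\to\mathrm{End}(\mathbb{Q})$ (injectivity and the homomorphism property are immediate from the first-coordinate action). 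Post-composing with $\mathcal{G}$ yields $\Phi : \mathrm{End}(\mathbb{L})\to\mathrm{End}(\mathcal{N})$, which is an injective monoid homomorphism by functoriality of $\mathcal{G}$ (clause (\ref{func})) and by injectivity of $j\mapsto\mathcal{G}(j)$ on $\mathrm{End}(\mathbb{Q})$ (distinct self-embeddings of $\mathbb{Q}$ move the constants $c_l$ differently). Since $\mathcal{M}=\mathcal{G}(\varnothing)$ and $\mathcal{G}$ is a functor, every $\Phi(e)$ fixes $\mathcal{M}$ pointwise, so $\Phi$ lands in $\mathrm{End}_\mathcal{M}(\mathcal{N})$.

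For the preservation clauses, note that conjugation by the order-isomorphism $\theta$ preserves both the property of being an automorphism and the contractivity inequality $f(x)<x$ (the latter because $\theta$ is monotone), while $\tilde e$ inherits both properties from $e$. Hence if $e$ is an automorphism then $\theta\,\tilde e\,\theta^{-1}$ is an automorphism of $\mathbb{Q}$, so $\Phi(e)$ is an automorphism of $\mathcal{N}$ by (\ref{iso}). If $e$ is contractive then $j := \theta\,\tilde e\,\theta^{-1}$ is a fixed-point-free contractive self-embedding of $\mathbb{Q}$; applying (\ref{contr}) with $i=j$ and $i\hspace{1pt}'=\id_\mathbb{Q}$ shows $\Phi(e)$ is contractive on $\mathrm{Ord}^\mathcal{N}\setminus\mathcal{M}$, and the Remark following Theorem \ref{Gaifman thm} (the equalizer of $j$ and $\id_\mathbb{Q}$ is empty, so by (\ref{equal}) the fixed-point set of $\mathcal{G}(j)$ is $\mathcal{G}(\varnothing)=\mathcal{M}$) gives $\mathrm{Fix}(\Phi(e)) = \mathcal{M}$. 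I expect the only real work to lie not in the Gaifman machinery, which is cited wholesale, but in the bookkeeping of the third paragraph: verifying that $e\mapsto\tilde e$ is a monoid homomorphism respecting automorphisms, contractivity and fixed-point-freeness, and that these survive conjugation by $\theta$; everything else is a direct reading-off of Theorem \ref{Gaifman thm}.
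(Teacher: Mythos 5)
Your proof is correct and takes essentially the same route as the paper's: the same appeal to Lemma \ref{generic filter existence} and Theorem \ref{weakly compact gives nice ultrafilter} for the ultrafilter, the same model $\mathcal{N} = \mathcal{G}_{\mathcal{U},(\mathcal{M},\mathcal{A})}(\mathbb{Q})$, and the same clauses of Theorem \ref{Gaifman thm} for every part of the conclusion. The only difference is one of detail: where the paper simply cites as well-known that $\mathrm{End}(\mathbb{L})$ embeds into $\mathrm{End}(\mathbb{Q})$ for any countable $\mathbb{L}$, you construct this embedding explicitly via the lexicographic isomorphism $\mathbb{L}\times\mathbb{Q}\cong\mathbb{Q}$ and verify that it preserves automorphisms, contractivity and fixed-point-freeness, which is exactly what is needed for the final sentence of the statement.
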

\begin{proof}
	Since $(\mathcal{M}, \mathcal{A})$ is countable, Lemma \ref{generic filter existence} and Theorem \ref{weakly compact gives nice ultrafilter} tell us that there is an $(\mathcal{M}, \mathcal{A})$-complete ultrafilter $\mathcal{U}$. Let $\mathcal{N} = \mathcal{G}_{\mathcal{U}, (\mathcal{M}, \mathcal{A})}(\mathbb{Q})$. By Theorem \ref{Gaifman thm} (\ref{elem}), (\ref{cons}), (\ref{init}) and (\ref{downcof}), $\mathcal{M} \prec^\textnormal{rank-cut} \mathcal{N}$ and $\mathrm{SSy}_\mathcal{M}(\mathcal{N}) = (\mathcal{M}, \mathcal{A})$. By Theorem \ref{Gaifman thm} (\ref{func}) and (\ref{equal}), there is an embedding of $\mathrm{End}(\mathbb{Q})$ into $\mathrm{End}_\mathcal{M}(\mathcal{N})$. Moreover, it is well-known that for any countable linear order $\mathbb{L}$, there is an embedding of $\mathrm{End}(\mathbb{L})$ into $\mathrm{End}(\mathbb{Q})$. Composing these two embeddings gives the result. The last sentence in the statement follows from Theorem \ref{Gaifman thm} (\ref{iso}), (\ref{equal}) and (\ref{contr}).
\end{proof}

\begin{lemma}\label{contractive proper selfembedding of Q}
For any $q \in \mathbb{Q}$, there is an initial topless contractive self-embedding of the usual linear order on  $\mathbb{Q}$ that is strictly bounded by $q$.
\end{lemma}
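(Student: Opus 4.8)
The plan is to realize the desired self-embedding as an order isomorphism of $\mathbb{Q}$ onto a suitable proper initial segment. Fix an irrational $\alpha$ with $\alpha < q$ (e.g.\ $\alpha = q - \sqrt 2$) and set $D = \mathbb{Q} \cap (-\infty, \alpha)$. Then $D$, with the inherited order, is a countable dense linear order without endpoints, so by Cantor's theorem it is order-isomorphic to $\mathbb{Q}$; any such isomorphism $i : \mathbb{Q} \to D$ is automatically an embedding of $\{\leq\}$-structures whose image is the downward-closed set $D$, hence initial (reading initiality for order embeddings, by analogy with the set-theoretic definition, as the image being an initial segment). Moreover the upper bounds of $i$ are exactly the rationals $> \alpha$, which have no least element because $\alpha \notin \mathbb{Q}$; thus $i$ is bounded above (strictly, by $q$, since $\alpha < q$) but topless. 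So the only property not guaranteed by a bare Cantor isomorphism is contractiveness, and the whole content of the lemma is to build the isomorphism so that $i(x) <_\mathbb{Q} x$ for every $x$.

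The heart of the argument is a back-and-forth construction carrying the extra invariant that every pair $(x,d)$ placed in the partial map satisfies $d < x$. First I would fix enumerations of $\mathbb{Q}$ and of $D$ and build $i$ as an increasing union of finite order-isomorphisms. In a forth step, given an unmapped domain point $p$, I must choose its image $d$ to be rational, order-consistent with the finite map, below $\alpha$ (so $d \in D$), and below $p$ (for contractiveness); the admissible $d$ range over the interval $(a, \min(b,p,\alpha))$, where $a$ and $b$ are the max/min of already-used codomain values to the left/right of $p$, and the invariant together with order-consistency forces $a < \min(b,p,\alpha)$, so a rational $d$ exists and the new pair again satisfies $d < p$. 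In a back step, given a codomain point $d \in D$ to be hit, I choose a preimage $x$ that is rational, order-consistent, and above $d$; the admissible $x$ range over $(\max(a,d), b)$ with $a,b$ now the neighbouring already-used domain values, and again the invariant ($x'' > d'' > d$ for the pair realizing $b$) gives $\max(a,d) < b$, so such an $x$ exists and $x > d$ preserves the invariant.

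With $i$ in hand, verifying the four adjectives is routine: $i$ is an order isomorphism onto $D$, hence an embedding and (since $D \subsetneq \mathbb{Q}$) proper; its image $D$ is downward closed, hence initial; $q$ is an upper bound with no least upper bound, hence $i$ is topless and strictly bounded by $q$; and the invariant gives $i(x) < x$ for all $x$, hence contractive. The step I expect to be the main obstacle is reconciling the back step (which forces surjectivity onto all of $D$, the condition making the image exactly the cut below $\alpha$) with the contractiveness demand $d < x$: one must check that insisting on a preimage strictly above the given codomain point never collides with order-consistency. This is exactly what the invariant $d < x$ is designed to guarantee, and the observation that contractiveness is automatic for $x \geq \alpha$ (as $i(x) \in D \subseteq (-\infty,\alpha)$) confines the real difficulty to the region $x < \alpha$.
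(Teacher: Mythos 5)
Your proof is correct, but it takes a genuinely different route from the paper's. The paper does no back-and-forth at all: it axiomatizes the desired structure as a first-order theory $T$ in the signature $\{<, f\}$ (dense linear order without endpoints, $f$ an order-embedding with downward-closed image, bounded above, topless, contractive), exhibits an uncountable model --- the punctured reals $\mathbb{R}\setminus\{0\}$ with $f(x) = -2^{-x}$ --- applies the downward L\"owenheim--Skolem theorem to get a countable model of $T$, and then invokes Cantor's theorem that every countable dense linear order without endpoints is isomorphic to $(\mathbb{Q},<_\mathbb{Q})$ to transfer $f$ to $\mathbb{Q}$; the strict bound by $q$ is arranged afterwards by composing with a translation $x \mapsto x - |q\hspace{1pt}'-q|$. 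You instead fix the target cut $D = \mathbb{Q}\cap(-\infty,\alpha)$, $\alpha$ irrational with $\alpha < q$, in advance and build the isomorphism $\mathbb{Q} \cong D$ directly by a back-and-forth carrying the invariant $i(x) < x$; your two interval checks (in the forth step $a = d_j < x_j < p$ by the invariant, in the back step $d < d_j < x_j = b$) are exactly the points where Cantor's construction must be modified, and they are carried out correctly, as is the final verification of initiality, toplessness, and the strict bound. As for what each approach buys: the paper's route outsources the combinatorics to L\"owenheim--Skolem and Cantor, but at the price of verifying all the axioms of $T$ in a concrete analytic witness --- and in fact the paper's witness has a small flaw: puncturing $\mathbb{R}$ at $0$ removes $0$ from the domain but leaves $f\hspace{2pt}(0) = -1$ absent from the image, so the initiality axiom $\forall x, y . (x < f\hspace{2pt}(y) \rightarrow \exists z . f\hspace{2pt}(z) = x)$ fails at $x = -1$ (this is repairable, e.g.\ by deleting the whole forward orbit $\{0, -1, -2, -4, -16, \dots\}$ of $0$ rather than just $\{0\}$). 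Your construction is self-contained, needs no concrete witness, and obtains the bound by $q$ directly rather than by post-composition.
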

\begin{proof}
It suffices to show that there is an initial topless contractive self-embedding of $\mathbb{Q}$, because by toplessness that would be bounded by some $q' \in \mathbb{Q}$ and we can compose it with the self-embedding $(x \mapsto x - |q' - q|)$ to obtain an initial topless contractive self-embedding bounded by $q$. Thus, we proceed to show that the usual linear order on $\mathbb{Q}$ can be expanded to a model of the following theory $T$, in the language of a binary relation $<$ and a unary function $f$:
\begin{align*}
&\text{``$<$ is a dense linear order without endpoints''}; \\
&\forall x, y . ( x = y \leftrightarrow f\hspace{2pt}(x) = f\hspace{2pt}(y)); \\
&\forall x, y . ( x < y \leftrightarrow f\hspace{2pt}(x) < f\hspace{2pt}(y)); \\
&\forall x, y . (x < f\hspace{2pt}(y) \rightarrow \exists z . f\hspace{2pt}(z) = x); \\
&\exists y . \forall x . f\hspace{2pt}(x) < y; \\
&\forall y . \big((\forall x . f\hspace{2pt}(x) < y) \rightarrow \exists y\hspace{1pt}' . (y\hspace{1pt}' < y \wedge \forall x . f\hspace{2pt}(x) < y\hspace{1pt}') \big); \\
&\forall x . f\hspace{2pt}(x) < x.
\end{align*}
Let $\mathcal{R}$ be the expansion of the order of the punctured reals $\mathbb{R} \setminus \{0\}$ inherited from the usual order of $\mathbb{R}$, interpreting $f$ by the function $f^\mathcal{R} : \mathbb{R} \setminus \{0\} \rightarrow \mathbb{R} \setminus \{0\}$, defined by $f^\mathcal{R}(x) = -2^{-x}$, for all $x \in \mathbb{R} \setminus \{0\}$. Note that $\mathcal{R} \models T$. Now by the Downward L\"owenheim-Skolem Theorem, there is a countable model $\mathcal{Q}$ of $T$. Since every countable dense linear order without endpoints is isomorphic to $(\mathbb{Q}, <_\mathbb{Q})$, it follows that $f^\mathcal{Q}$ induces an initial topless contractive self-embedding of $\mathbb{Q}$.
\end{proof}

\begin{cor}\label{end extend model of weakly compact to model with endos}
	Suppose that $\mathcal{M} \models \mathrm{ZFC}$ expands to a countable model $(\mathcal{M}, \mathcal{A})$ of $\mathrm{GBC} + $``$\mathrm{Ord}$ is weakly compact''. Then there is a model $\mathcal{M} \prec^\textnormal{rank-cut} \mathcal{N}$, with $\mathrm{SSy}_\mathcal{M}(\mathcal{N}) = (\mathcal{M}, \mathcal{A})$, such that for any $\nu \in \mathrm{Ord}^\mathcal{N} \setminus \mathcal{M}$, there is a rank-initial topless elementary self-embedding $j$ of $\mathcal{N}$, which is contractive on $\mathrm{Ord}^\mathcal{N} \setminus \mathcal{M}$, bounded by $\nu$, and satisfies $\mathcal{M} = \mathrm{Fix}(j)$.
\end{cor}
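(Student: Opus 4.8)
The plan is to reuse the model $\mathcal{N} = \mathcal{G}_{\mathcal{U},(\mathcal{M},\mathcal{A})}(\mathbb{Q})$ produced in the proof of Corollary \ref{end extend model of weakly compact to model with autos}, where $\mathcal{U}$ is an $(\mathcal{M},\mathcal{A})$-generic ultrafilter and $\mathcal{G} = \mathcal{G}_{\mathcal{U},(\mathcal{M},\mathcal{A})}$ is the associated Gaifman functor. That construction already delivers $\mathcal{M}\prec^{\textnormal{rank-cut}}\mathcal{N}$ with $\mathrm{SSy}_\mathcal{M}(\mathcal{N}) = (\mathcal{M},\mathcal{A})$, via Theorem \ref{Gaifman thm} (\ref{elem}), (\ref{cons}), (\ref{init}) and (\ref{downcof}). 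So the only new work is, given $\nu\in\mathrm{Ord}^\mathcal{N}\setminus\mathcal{M}$, to manufacture the desired self-embedding $j$ by pushing a suitable self-embedding of $\mathbb{Q}$ through $\mathcal{G}$.

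First I would locate a rational bound. Applying Theorem \ref{Gaifman thm} (\ref{downcof}) to the inclusion $\varnothing\hookrightarrow\mathbb{Q}$ (whose complement $\mathbb{Q}$ has no least element) shows that $\{c_q \mid q\in\mathbb{Q}\}$ is downwards cofinal in $\mathrm{Ord}^\mathcal{N}\setminus\mathrm{Ord}^\mathcal{M}$, so there is $q\in\mathbb{Q}$ with $c_q\leq^\mathcal{N}\nu$. By Lemma \ref{contractive proper selfembedding of Q} I then fix an initial, topless, contractive self-embedding $h$ of $\mathbb{Q}$ strictly bounded by $q$, and set $j = \mathcal{G}(h)$, a self-embedding of $\mathcal{N}$ by functoriality (Theorem \ref{Gaifman thm} (\ref{func})).

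The verification of the required properties of $j$ is then a matter of reading off the dictionary provided by Theorem \ref{Gaifman thm}. Elementarity is (\ref{elem}); since $h$ is initial, $j$ is rank-initial by (\ref{init}); since $h$ is contractive, taking $i'=\mathrm{id}_\mathbb{Q}$ in (\ref{contr}) gives $j(\xi)<^\mathcal{N}\xi$ for every $\xi\in\mathrm{Ord}^\mathcal{N}\setminus\mathcal{M}$, i.e. $j$ is contractive on $\mathrm{Ord}^\mathcal{N}\setminus\mathcal{M}$. For boundedness, $h$ strictly bounded above by $q$ yields, via (\ref{bnd}), that the ordinals in the image of $j$ lie strictly below $c_q\leq^\mathcal{N}\nu$; combined with rank-initiality (so that $\rnk^\mathcal{N}(j(m)) = j(\rnk^\mathcal{N}(m))$ is such an ordinal) this gives $j(\mathcal{N})\subseteq\mathcal{N}_\nu$. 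Finally, a contractive $h$ has no fixed point, so the equalizer observation in the Remark following Theorem \ref{Gaifman thm} (comparing $h$ with $\mathrm{id}_\mathbb{Q}$ through (\ref{equal})) gives $\mathrm{Fix}(j)=\mathcal{M}$.

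The step I expect to require the most care is toplessness of $j$. The key small lemma is that for an initial topless self-embedding $h$ of $\mathbb{Q}$, the final segment $\mathbb{Q}\setminus h(\mathbb{Q})$ has no least element: a least element $r$ of the complement would be a least upper bound of the downward-closed set $h(\mathbb{Q})$ (using density of $\mathbb{Q}$ to rule out a maximum of $h(\mathbb{Q})$), contradicting toplessness of $h$. Granting this, Theorem \ref{Gaifman thm} (\ref{downcof}) applied to $h$ shows that $\{c_q \mid q\in\mathbb{Q}\setminus h(\mathbb{Q})\}$ is downwards cofinal, and with no least element, in the ordinals of $\mathcal{N}$ lying above $j(\mathcal{N})$; together with the boundedness already established, this is exactly toplessness of $j$ in the sense of Definition \ref{special embeddings}.
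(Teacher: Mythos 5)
Your proposal is correct and follows essentially the same route as the paper's proof: take $\mathcal{N} = \mathcal{G}_{\mathcal{U},(\mathcal{M},\mathcal{A})}(\mathbb{Q})$, use part (\ref{downcof}) of Theorem \ref{Gaifman thm} to find $q$ with $c_q <^{\mathcal{N}} \nu$, push the bounded initial topless contractive self-embedding of $\mathbb{Q}$ from Lemma \ref{contractive proper selfembedding of Q} through the Gaifman functor, and read off elementarity, rank-initiality, boundedness, toplessness, contractivity and $\mathrm{Fix}(j)=\mathcal{M}$ from parts (\ref{elem}), (\ref{init}), (\ref{bnd}), (\ref{downcof}), (\ref{contr}) and (\ref{equal}), respectively. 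Your explicit verification that $\mathbb{Q}\setminus h(\mathbb{Q})$ has no least element (so that (\ref{downcof}) indeed yields toplessness of $j$) spells out a step the paper leaves implicit, and it is correct.
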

\begin{proof}
	Let $\mathcal{U}$ be an $(\mathcal{M}, \mathcal{U})$-generic ultrafilter, and let $\mathcal{N}$ be the model $\mathcal{G}_{\mathcal{U}, (\mathcal{M}, \mathcal{A})}(\mathbb{Q})$. As in Corollary \ref{end extend model of weakly compact to model with autos}, $\mathcal{M} \prec^\textnormal{rank-cut} \mathcal{N}$ and $\mathrm{SSy}_\mathcal{M}(\mathcal{N}) = (\mathcal{M}, \mathcal{A})$. Let $\nu \in \mathrm{Ord}^\mathcal{N} \setminus \mathcal{M}$. By Theorem \ref{Gaifman thm} (\ref{downcof}), there is $q \in \mathbb{Q}$, such that $\mathcal{N} \models c_q < \nu$. Using Lemma \ref{contractive proper selfembedding of Q}, let $\hat j$ be an initial topless contractive self-embedding of $\mathbb{Q}$ that is strictly bounded by $q$. Let $j = \mathcal{G}(\hat j)$. The result now follows from Theorem \ref{Gaifman thm}: By (\ref{elem}), $j$ is an elementary embedding; by (\ref{init}), $j$ is rank-initial; by (\ref{bnd}) and  (\ref{downcof}), $j$ is bounded by $\nu$ and topless; by (\ref{contr}), $j$ is contractive on $\mathrm{Ord}^\mathcal{N} \setminus \mathcal{M}$; and by (\ref{equal}) $\mathcal{M} = \mathrm{Fix}(j)$.
\end{proof}

We will also have use of a slight generalization of the Gaifman construction described above. We consider a set-up where $\mathcal{S} <^\textnormal{rank-cut} \mathcal{M} \models \mathrm{KP} + \textnormal{Choice}$ and $(\mathcal{S}, \mathcal{A}) =_\df \mathrm{SSy}_\mathcal{S}(\mathcal{M})$ is a model of $\mathrm{GBC} + $ ``$\mathrm{Ord}$ is weakly compact''. The partial order $\mathbb{P}$ and the boolean algebra $\mathbb{B}$ are now constructed as above, based on $(\mathcal{S}, \mathcal{A})$. By Lemma \ref{generic filter existence} and Theorem \ref{weakly compact gives nice ultrafilter}, there is an $(\mathcal{S}, \mathcal{A})$-generic ultrafilter $\mathcal{U}$. By Construction \ref{constr iter ultra}, this ultrafilter can be iterated. 

Now the goal is essentially to construct, given any linear order $\mathbb{L}$, an elementary extension $\mathcal{N}$ of $\mathcal{M}$, such that $\mathcal{S}$ is also rank-initial in $\mathcal{N}$ and such that $\mathbb{L}$ order-embeds ``nicely into the set of ordinals of $\mathcal{N}$ above $\mathcal{S}$ and below $\mathcal{M} \setminus \mathcal{S}$''. To this end, we proceed with a modification of Construction \ref{constr iter power}.

We say that a function $f : \mathcal{S}^n  \rightarrow \mathcal{M}$, for some standard $n \in \mathbb{N}$, is {\em coded in} $\mathcal{M}$ if there is a function $g$ in $\mathcal{M}$ with $\dom(g_\mathcal{M}) \supseteq \mathcal{S}^n$ and $f = g_\mathcal{M} \cap (\mathcal{S}^n \times \mathcal{M})$. Let $\mathcal{F}$ be the set of all functions from $\mathcal{S}$ to $\mathcal{M}$ coded in $\mathcal{M}$. Let $\mathcal{L}^0_\mathcal{F}$ be the language obtained from $\mathcal{L}^0$ by adding new constant symbols for the elements of $\mathcal{M}$ and new function symbols for the elements of $\mathcal{F}$. Then we may canonically expand $\mathcal{M}$ to an $\mathcal{L}^0_\mathcal{F}$-structure $(\mathcal{M}, f\hspace{2pt})_{f \in \mathcal{F}}$. Now, just as before, for each $n \in \mathbb{N}$ we define
\begin{align*}
\Gamma_n =_\df \big\{ & \phi(\xs) \in \mathcal{L}^0_\mathcal{F} \mid \\
& \{\langle \alpha_1, \dots, \alpha_n \rangle \mid (\mathcal{M}, f\hspace{2pt})_{f \in \mathcal{F}} \models \phi(\alpha_1, \dots, \alpha_n)\} \in \mathcal{U}^n\big\}.
\end{align*}
For each $l \in \mathbb{L}$, let $c_l$ be a new constant symbol, and let $\mathcal{L}^0_{\mathcal{F}, \mathbb{L}}$ be the language generated by $\mathcal{L}^0_\mathcal{F} \cup \{c_l \mid l \in \mathbb{L}\}$. Define 
\begin{align*}
T_{\mathcal{U}, \mathbb{L}} =_\mathrm{df} \{\phi(c_{l_1}, \dots, c_{l_n}) \in \mathcal{L}^0_{\mathcal{F}, \mathbb{L}} \mid & n \in \mathbb{N} \wedge (l_1 <_\mathbf{L} \dots <_\mathbf{L} l_n \in \mathbb{L}) \wedge \\
& \phi(\xs) \in \Gamma_n\}.
\end{align*}
Note that this theory contains the elementary diagram of $\mathcal{M}$ and it has definable Skolem functions (a global choice function on $\mathcal{S}$ is found in $\mathcal{F}$). So we may define the {\em iterated ultrapower of} $(\mathcal{M, F})$ {\em modulo} $\mathcal{U}$ {\em along} $\mathbb{L}$ as 
\[
\mathcal{G}_{\mathcal{U}, (\mathcal{M, F})}(\mathbb{L}) =_\mathrm{df} \mathrm{Ult}_{\mathcal{U}, \mathbb{L}}(\mathcal{M}, \mathcal{F}) =_\mathrm{df} \text{``the prime model of $T_{\mathcal{U}, \mathbb{L}}$''}.
\]
In particular, every element of $\mathrm{Ult}_{\mathcal{U}, \mathbb{L}}(\mathcal{M}, \mathcal{F})$ is of the form $f\hspace{2pt}(c_{l_1}, \dots, c_{l_n})$, where $l_1 < \dots < l_n \in \mathbb{L}$ and $f \in \mathcal{F}$. 

If $\mathcal{U}$ is non-principle, then by definition of $T_{\mathcal{U}, \mathbb{L}}$, we have: For any $l, l\hspace{1pt}' \in \mathbb{L}$, any $\alpha \in \mathrm{Ord}^\mathcal{S}$ and any $\mu \in \mathrm{Ord}^\mathcal{M} \setminus \mathcal{S}$,
\[
l <_\mathbb{L} l\hspace{1pt}' \Leftrightarrow \alpha <_\mathbb{O} c_l <_\mathbb{O} c_{l\hspace{1pt}'} <_\mathbb{O} \mu,
\]
where $\mathbb{O} = \mathrm{Ord}^{\mathrm{Ult}_{\mathcal{U}, \mathbb{L}}(\mathcal{M}, \mathcal{F})}$. So $\mathbb{L}$ embeds into the linear order of ordinals in $\mathrm{Ult}_{\mathcal{U}, \mathbb{L}}(\mathcal{M}, \mathcal{F})$ that are above the ordinals of $\mathcal{S}$ and below the other ordinals of $\mathcal{M}$.

The generalization of Theorem \ref{Gaifman thm} may now be stated like this:

\begin{thm}\label{Gaifman thm general}
Suppose that $\mathcal{S} <^\textnormal{rank-cut} \mathcal{M} \models \mathrm{KP} + \textnormal{Choice}$, where $\mathcal{M}$ is countable, and that $(\mathcal{S}, \mathcal{A}) =_\df \mathrm{SSy}_\mathcal{S}(\mathcal{M})$ is a model of $\mathrm{GBC} + $``$\mathrm{Ord}$ is weakly compact''. Let $\mathcal{U}$ be an $(\mathcal{S}, \mathcal{A})$-generic ultrafilter and let $\mathcal{F}$ be the set of functions from $\mathcal{S}$ to $\mathcal{M}$ coded in $\mathcal{M}$. Let $i : \mathbb{K} \rightarrow \mathbb{L}$ be an embedding of linear orders. Write $\mathcal{G} = \mathcal{G}_{\mathcal{U}, (\mathcal{M, F})}$ for the corresponding Gaifman functor, and write $\mathcal{S}(\mathbb{K})$ for the set of $x \in \mathcal{G}(\mathbb{K})$ of the form $x = f\hspace{2pt}(c_{l_1}, \dots, c_{l_n})$, where $\mathrm{image}(f\hspace{2pt}) \subseteq \mathcal{S}$ and $n \in \mathbb{N}$.
\begin{enumerate}[{\normalfont (a)}]
\item \label{satisfaction gen} For each $n \in \mathbb{N}$ and each $\phi(x_1, \dots, x_n) \in \mathcal{L}^0_{\mathcal{F}, \mathbb{L}}$:
\begin{align*}
& \mathcal{G}(\mathbb{L}) \models \phi(c_{l_1}, \dots, c_{l_n}) \Leftrightarrow \\
& \big\{ \langle \alpha_1, \dots, \alpha_n \rangle \in (\mathrm{Ord}^\mathcal{M})^n \mid (\mathcal{M}, f\hspace{2pt})_{f \in \mathcal{F}} \models \phi(\alpha_1, \dots, \alpha_n) \big\} \in \mathcal{U}^n.
\end{align*}
\item \label{elem gen} $\mathcal{G}(i) : \mathcal{G}(\mathbb{K}) \rightarrow \mathcal{G}(\mathbb{L})$ is an elementary embedding.
\item \label{func gen} $\mathcal{G}$ is a functor.
\item \label{cons gen} If $\mathbb{L} \neq \varnothing$, then $\mathrm{SSy}_\mathcal{S}(\mathcal{G}(\mathbb{L})) \cong (\mathcal{S}, \mathcal{A})$.
\item \label{card gen} If $|\mathbb{L}| \geq \aleph_0$, then $|\mathcal{G}(\mathbb{L})| = |\mathbb{L}|$.
\item \label{init gen} $i$ is initial iff $\mathcal{G}(i)\restriction_{\mathcal{S}(\mathbb{K})}$ is rank-initial. Moreover, $\mathcal{S}(\mathbb{K}) \subseteq \mathcal{G}(\mathbb{K}) \setminus (\mathcal{M} \setminus \mathcal{S})$ and $\mathcal{S}(\mathbb{K}) <^\textnormal{rank} \mathcal{G}(\mathbb{K})$.
\item \label{iso gen} $i$ is an isomorphism iff $\mathcal{G}(i)$ is an isomorphism.
\item \label{bnd gen} Let $l_0 \in \mathbb{L}$. $i$ is strictly bounded above by $l_0$ iff $\mathcal{G}(i)\restriction_{\mathrm{Ord}^{\mathcal{G}(\mathbb{K})} \cap \mathcal{S}(\mathbb{K})}$ is strictly bounded above by $c_{l_0}$ in $\mathcal{G}(\mathbb{L})$.
\item \label{downcof gen} If $\mathbb{L} \setminus i(\mathbb{K})$ has no least element, then $\{ c_l \mid l \in \mathbb{L} \setminus i(\mathbb{K})\}$ is downwards cofinal in $\mathrm{Ord}^{\mathcal{G}(\mathbb{L})} \setminus \mathrm{Ord}^{\mathcal{G}(i(\mathbb{K}))}$.
\item \label{equal gen} Let $\mathbb{L}'$ be a linear order and let $j, j\hspace{1pt}' : \mathbb{L} \rightarrow \mathbb{L}'$ be embeddings. $i$ is an equalizer of $j, j\hspace{1pt}' : \mathbb{L} \rightarrow \mathbb{L}'$ iff $\mathcal{G}(i)$ is an equalizer of $\mathcal{G}(j), \mathcal{G}(j\hspace{1pt}') : \mathcal{G}(\mathbb{L}) \rightarrow \mathcal{G}(\mathbb{L}')$.
\item \label{contr gen} Let $i\hspace{1pt}' : \mathbb{K} \rightarrow \mathbb{L}$ be an embedding. We have $\forall k \in \mathbb{K} . i(k) < i\hspace{1pt}'(k)$ iff $\forall \xi \in \mathrm{Ord}^{\mathcal{G}(\mathbb{K})} \setminus \mathrm{Ord}^\mathcal{M} . \mathcal{G}(i)(\xi) < \mathcal{G}(i\hspace{1pt}')(\xi)$.
\end{enumerate}
\end{thm}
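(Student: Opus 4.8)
The plan is to run the proof of Theorem~\ref{Gaifman thm} almost verbatim, under the dictionary that replaces the base $(\mathcal{M},\mathcal{A})$ by the model $\mathcal{M}$ expanded with the function symbols $\mathcal{F}$, and the ultrafilter powers on $(\mathrm{Ord}^\mathcal{M})^n$ by the powers $\mathcal{U}^n$ of the $(\mathcal{S},\mathcal{A})$-generic $\mathcal{U}$, which live on $\mathbb{B}^n=\{A\subseteq(\mathrm{Ord}^\mathcal{S})^n\mid A\in\mathcal{A}\}$. With this dictionary, parts (\ref{satisfaction gen}), (\ref{elem gen}), (\ref{func gen}), (\ref{card gen}) and (\ref{iso gen}) need no new idea: (\ref{satisfaction gen}) is read off the definition of $T_{\mathcal{U},\mathbb{L}}$; (\ref{elem gen}) follows from (\ref{satisfaction gen}) once one assumes $\mathbb{K}\subseteq\mathbb{L}$, since every element of $\mathcal{G}(\mathbb{K})$ names an $\mathcal{L}^0_{\mathcal{F},\mathbb{K}}$-term; (\ref{func gen}) is the same termwise computation; (\ref{card gen}) uses $|\mathcal{F}|=\aleph_0$, so $|\mathcal{L}^0_{\mathcal{F},\mathbb{L}}|=|\mathbb{L}|$; and (\ref{iso gen}) is again that $\mathbb{L}$ embeds into the ordinals and that functors preserve isomorphisms. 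For (\ref{cons gen}) the standard system is taken over $\mathcal{S}$: the inclusion $\mathcal{A}\subseteq\mathrm{Cod}_\mathcal{S}(\mathcal{G}(\mathbb{L}))$ is now immediate, because the canonical map embeds $\mathcal{M}$ elementarily into $\mathcal{G}(\mathbb{L})$ fixing $\mathcal{S}$ pointwise, so any $a\in\mathcal{M}$ coding $A\in\mathcal{A}=\mathrm{Cod}_\mathcal{S}(\mathcal{M})$ over $\mathcal{S}$ keeps coding it; for $\mathrm{Cod}_\mathcal{S}(\mathcal{G}(\mathbb{L}))\subseteq\mathcal{A}$ I would fix a global well-ordering $w:\mathrm{Ord}^\mathcal{S}\to\mathcal{S}$ in $\mathcal{A}$ and rewrite $\{x\in\mathcal{S}\mid\mathcal{G}(\mathbb{L})\models x\in b\}$, for $b=f(c_{l_1},\dots,c_{l_n})$, as the $w$-pullback of $\{\xi\mid\{\vec\alpha\mid(\mathcal{M},f)_{f\in\mathcal{F}}\models w(\xi)\in f(\vec\alpha)\}\in\mathcal{U}^n\}$; the inner relation is a subset of $(\mathrm{Ord}^\mathcal{S})^{n+1}$ coded in $\mathcal{M}$, hence in $\mathcal{A}$, so iterability of $\mathcal{U}^n$ lands the whole set in $\mathcal{A}$, exactly as in Theorem~\ref{Gaifman thm}(\ref{cons}).

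\textbf{The structural claims of (\ref{init gen}).} Given $x=f(c_{l_1},\dots,c_{l_n})\in\mathcal{S}(\mathbb{K})$ with $\mathrm{image}(f)\subseteq\mathcal{S}$, the set of increasing $\mathcal{S}$-tuples is $\mathcal{U}^n$-large and there $\rnk^\mathcal{M}(f(\vec\alpha))\in\mathrm{Ord}^\mathcal{S}$, so by (\ref{satisfaction gen}) we get $\rnk^{\mathcal{G}(\mathbb{K})}(x)<\mu$ for every $\mu\in\mathrm{Ord}^\mathcal{M}\setminus\mathcal{S}$. Since $\mathcal{S}$ is a rank-cut these $\mu$ are downward cofinal above $\mathrm{Ord}^\mathcal{S}$, which forces any $x\in\mathcal{S}(\mathbb{K})\cap\mathcal{M}$ to have rank in $\mathrm{Ord}^\mathcal{S}$ and hence to lie in $\mathcal{S}$; this is $\mathcal{S}(\mathbb{K})\subseteq\mathcal{G}(\mathbb{K})\setminus(\mathcal{M}\setminus\mathcal{S})$. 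Rank-initiality of $\mathcal{S}(\mathbb{K})$ in $\mathcal{G}(\mathbb{K})$ follows by truncation: if $\rnk(g(\vec c))\le\rnk(f(\vec c))$ with $\mathrm{image}(f)\subseteq\mathcal{S}$, then $g(\vec\alpha)\in\mathcal{S}$ for $\mathcal{U}^n$-most $\vec\alpha$ by rank-initiality of $\mathcal{S}$ in $\mathcal{M}$, so $g$ agrees $\mathcal{U}^n$-almost everywhere with an $\mathcal{S}$-valued function and $g(\vec c)\in\mathcal{S}(\mathbb{K})$; properness holds since any $\mu\in\mathrm{Ord}^\mathcal{M}\setminus\mathcal{S}$ lies in $\mathcal{G}(\mathbb{K})\setminus\mathcal{S}(\mathbb{K})$.

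\textbf{The equivalence in (\ref{init gen}).} Take $\mathbb{K}\subseteq\mathbb{L}$. If $i$ is initial then $\mathbb{K}$ is an initial segment of $\mathbb{L}$, so the $\mathbb{K}$-constants precede the $(\mathbb{L}\setminus\mathbb{K})$-constants and the completeness machinery applies with the first $m$ coordinates coming from $\mathbb{K}$. Given $a=f(\vec{c}_\mathbb{K})\in\mathcal{S}(\mathbb{K})$ and $b=g(\vec{c}_\mathbb{L})$ with $\rnk(b)\le\rnk(a)$, write $\vec\beta$ for the first $m$ coordinates of a tuple $\vec\alpha$; then $\mathcal{U}$-almost everywhere $g(\vec\alpha)\in V^\mathcal{S}_{\rnk(f(\vec\beta))+1}$. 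The decisive point is that this bounding set is an element of $\mathcal{S}$ and that $\vec\beta\mapsto V^\mathcal{S}_{\rnk(f(\vec\beta))+1}$ is coded in $\mathcal{A}$: although $\mathcal{M}$ satisfies only $\mathrm{KP}$ and need not possess a cumulative hierarchy, $(\mathcal{S},\mathcal{A})\models\mathrm{GBC}$ gives $\mathcal{S}\models\mathrm{ZFC}$, so $\mathcal{S}$ does carry the $V$-hierarchy, and $\rnk^\mathcal{S}$ agrees with $\rnk^\mathcal{M}$ on $\mathcal{S}$ by rank-initiality. Completeness of $\mathcal{U}^n$ (the $(\mathcal{S},\mathcal{A})$-instance of Lemma~\ref{iter complete}) then produces an $\mathcal{A}$-coded, hence $\mathcal{S}$-valued, $f'$ of the first $m$ coordinates with $b=f'(\vec{c}_\mathbb{K})\in\mathcal{S}(\mathbb{K})$, so $\mathcal{G}(i)\restriction_{\mathcal{S}(\mathbb{K})}$ is rank-initial. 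Conversely, if $i$ is not initial, pick $l^*\in\mathbb{L}\setminus i(\mathbb{K})$ with $l^*<i(k)$; then $c_{l^*}<\mathcal{G}(i)(c_k)$ and $c_k\in\mathcal{S}(\mathbb{K})$, whereas $c_{l^*}\notin\mathcal{G}(i(\mathbb{K}))\supseteq\mathcal{G}(i)(\mathcal{S}(\mathbb{K}))$, so rank-initiality fails.

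\textbf{Remaining parts and the main obstacle.} Parts (\ref{bnd gen}), (\ref{downcof gen}), (\ref{equal gen}) and (\ref{contr gen}) concern only ordinals and constants and are adapted from Theorem~\ref{Gaifman thm}(\ref{bnd})--(\ref{contr}): (\ref{bnd gen}) combines the elementary chain $\mathcal{G}(\mathbb{K})\prec\mathcal{G}(\mathbb{L}_{<l_0})\prec\mathcal{G}(\mathbb{L})$ with the rank-initiality just proved, restricted to $\mathcal{S}(\mathbb{K})$-ordinals, and (\ref{downcof gen}), (\ref{equal gen}), (\ref{contr gen}) reproduce the iterability-, canonical-Ramsey- and completeness-arguments. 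The recurring nuisance there is that the functions those arguments return are a priori only in $\mathcal{F}$ (ranging into $\mathcal{M}$) rather than coded in $\mathcal{A}$ (ranging into $\mathcal{S}$); this is handled by noting that the internal range of any $\hat f\in\mathcal{M}$ is an $\mathcal{M}$-set that $\mathcal{M}\models\mathrm{Choice}$ well-orders, after which composition with $w$ reduces matters to the $\mathcal{A}$-coded, ordinal-valued case to which the hypotheses on $\mathcal{U}$ literally apply. I expect the principal obstacle to be part (\ref{init gen}): both isolating $\mathcal{S}(\mathbb{K})$ as the correct ``$\mathcal{S}$-part'' on which rank-initiality is to be tested, and legitimizing the completeness step by relocating the cumulative hierarchy from the powerset-free $\mathcal{M}$ to the ambient $\mathrm{ZFC}$-model $\mathcal{S}$.
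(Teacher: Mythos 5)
Your proposal is correct and follows the same master plan as the paper's own proof-modification: rerun Theorem \ref{Gaifman thm} under the dictionary replacing $\mathcal{A}$ by $\mathcal{F}$, observe that all parts except (\ref{init gen}) and (\ref{bnd gen}) carry over essentially verbatim, and concentrate the new work there. The genuine divergence is in (\ref{init gen}). The paper verifies that $\mathcal{G}(i)\restriction_{\mathcal{S}(\mathbb{K})}$ is \emph{initial}: from $\mathcal{G}(\mathbb{L}) \models b \in a$, with $a$ named by an $\mathcal{S}$-valued $f$, initiality of $\mathcal{S}$ in $\mathcal{M}$ forces the representing function $g$ of $b$ to be $\mathcal{S}$-valued $\mathcal{U}^n$-almost everywhere, hence (after truncation) coded in $\mathcal{A}$, so that $(\mathcal{S},\mathcal{A})$-completeness via Lemma \ref{iter complete} applies; it then appeals to Proposition \ref{elem initial is rank-initial} to upgrade initiality to rank-initiality. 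You instead prove rank-initiality directly: from $\rnk(b) \le \rnk(a)$ you bound $g$ almost everywhere inside the $\mathcal{A}$-coded function $\vec\beta \mapsto V^{\mathcal{S}}_{\rnk(f(\vec\beta))+1}$, whose existence you correctly source from $(\mathcal{S},\mathcal{A}) \models \mathrm{GBC}$ rather than from the powerset-free $\mathrm{KP}$-model $\mathcal{M}$. Both routes turn on the identical crucial ingredient — the one the paper explicitly flags — namely relocating the value function of $b$ from $\mathcal{F}$ into the $\mathcal{S}$-valued, $\mathcal{A}$-coded realm before invoking completeness. What your route buys: it bypasses Proposition \ref{elem initial is rank-initial}, whose hypotheses (a domain modelling $\mathrm{KP}^\mathcal{P}$ and elementarity of the restricted map) are not obviously satisfied by $\mathcal{S}(\mathbb{K})$, so your argument is self-contained where the paper's is slightly loose; the cost is the extra absoluteness check that $V^{\mathcal{S}}_{\gamma}$ externalizes in $\mathcal{M}$ to $\mathcal{M}_\gamma$, which you rightly reduce to rank-initiality of $\mathcal{S}$ in $\mathcal{M}$. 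Your explicit handling of the recurring $\mathcal{F}$-versus-$\mathcal{A}$ nuisance in (\ref{downcof gen})--(\ref{contr gen}) is also more careful than the paper's ``verbatim with replacements'' remark, and is needed there for exactly the reason you state. One stitch to make explicit in the equivalence argument of (\ref{init gen}): Lemma \ref{iter complete} requires $g$ itself to be $\mathcal{A}$-coded, not merely almost-everywhere bounded by an $\mathcal{A}$-coded function, so the truncation you already carry out in your structural-claims paragraph (replace $g$ by the internally defined $g'$ agreeing with $g$ where $\rnk(\hat g(\vec\alpha)) \le \rnk(\hat f(\vec\beta))$ and taking value $\varnothing$ otherwise) must be repeated at that point before completeness is cited.
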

\begin{proof}[Proof-modification]
Essentially, only (\ref{init gen}) and (\ref{bnd gen}) are stated differently. The proofs of the others go through verbatim after replacing certain instances of `$\mathcal{A}$' by `$\mathcal{F}$', where appropriate. We proceed with the proofs of the new versions of (\ref{init gen}) and (\ref{bnd gen}): 

(\ref{init gen}) Let us start with the second claim. Let $a \in \mathcal{S}(\mathbb{K}).$ Note that $\mathcal{G}(\mathbb{K}) \models a = f\hspace{2pt}(c_{l_1}, \dots, c_{l_m})$, for some $m \in \mathbb{N}$, $l_1, \dots, l_m \in \mathbb{K}$ and $f \in \mathcal{F}$, such that $\mathrm{image}(f\hspace{2pt}) \subseteq \mathcal{S}$. Let $\mu \in \mathrm{Ord}^\mathcal{M} \setminus \mathcal{S}$. Since $\mathrm{image}(f\hspace{2pt}) \subseteq \mathcal{S}$, we have by (\ref{satisfaction gen}) that $\mathcal{G}(\mathbb{K}) \models \mathrm{rank}(a) = \mathrm{rank}(f\hspace{2pt}(c_{l_1}, \dots, c_{l_m})) < \mu$. So $a \in \mathcal{S}(\mathbb{K}) \subseteq \mathcal{G}(\mathbb{K}) \setminus (\mathcal{M} \setminus \mathcal{S})$ as desired. Note that the third claim follows from the first claim.

We proceed with the first claim. We may assume that $\mathbb{L}$ extends $\mathbb{K}$ and that $\mathcal{G}(\mathbb{L})$ extends $\mathcal{G}(\mathbb{K})$. By Proposition \ref{elem initial is rank-initial}, it suffices to show that $i\restriction_{\mathcal{S}(\mathbb{K})}$ is initial. Let $a \in \mathcal{S}(\mathbb{K})$ and $b \in \mathcal{G}(\mathbb{L})$, such that $\mathcal{G}(\mathbb{L}) \models b \in a$. We need to show that $b \in \mathcal{G}(\mathbb{K})$. Note that $\mathcal{G}(\mathbb{K}) \models a = f\hspace{2pt}(c_{l_1}, \dots, c_{l_m})$ and $\mathcal{G}(\mathbb{L}) \models b = g(c_{l_1}, \dots, c_{l_n})$, for some $m \leq n \in \mathbb{N}$, $l_1, \dots, l_m \in \mathbb{K}$, $ l_{m+1}, \dots, l_n \in \mathbb{L}$ and $f, g \in \mathcal{F}$, such that $\mathrm{image}(f\hspace{2pt}) \subseteq \mathcal{S}$. By (\ref{satisfaction}), we have that 
$$\big\{\alpha_1, \dots \alpha_n \mid ((\mathcal{M}, f\hspace{2pt})_{f \in \mathcal{F}}) \models g(\alpha_1, \dots, \alpha_n) \in f\hspace{2pt}(\alpha_1, \dots, \alpha_m)\big\} \in \mathcal{U}^n.$$
It now follows from initiality of $\mathcal{S}$ in $\mathcal{M}$, that there is $g\hspace{1pt}' \in \mathcal{F}$, such that $\mathrm{image}(g\hspace{1pt}') \subseteq \mathcal{S}$ and
$$\big\{\alpha_1, \dots \alpha_n \mid ((\mathcal{M}, f\hspace{2pt})_{f \in \mathcal{F}}) \models g(\alpha_1, \dots, \alpha_n) = g\hspace{1pt}'(\alpha_1, \dots, \alpha_n)\big\} \in \mathcal{U}^n.$$
(This last step is the crucial new ingredient of the modified proof.)
Combining the two last statements with the $(\mathcal{M, A})$-completeness of $\mathcal{U}$ and Lemma \ref{iter complete}, we conclude that there is $f\hspace{2pt}' : \mathcal{S}^m \rightarrow S$ in $\mathcal{F}$ such that 
$$\big\{\alpha_1, \dots \alpha_n \mid ((\mathcal{M}, f\hspace{2pt})_{f \in \mathcal{F}}) \models g(\alpha_1, \dots, \alpha_n) = f\hspace{2pt}'(\alpha_1, \dots, \alpha_m)\big\} \in \mathcal{U}^n,$$ 
whence $\mathcal{G}(\mathbb{L}) \models b = f\hspace{2pt}'(c_{l_1}, \dots, c_{l_m})$. But $f\hspace{2pt}'(c_{l_1}, \dots, c_{l_m}) \in \mathcal{S}(\mathbb{K})$. So $i_{\mathcal{S}(\mathbb{K})}$ is initial.

(\ref{bnd gen}) ($\Leftarrow$) follows from that $(l \mapsto c_l)$ is an embedding of $\mathbb{L}$ into $\mathrm{Ord}^{\mathcal{G}(\mathbb{K})} \cap \mathcal{S}(\mathbb{K})$. For ($\Rightarrow$), we may assume that $\mathbb{K}$ is a linear suborder of $\mathbb{L}$ that is strictly bounded above by $l_0 \in \mathbb{L}$. By (\ref{init gen}), $\mathcal{S}(\mathbb{K})$ is rank-initial in $\mathcal{G}(\mathbb{L}_{< l_0})$. So since $\mathcal{G}(\mathbb{K}) \prec \mathcal{G}(\mathbb{L}_{< l_0}) \prec \mathcal{G}(\mathbb{L})$, we have that $\mathrm{Ord}^{\mathcal{G}(\mathbb{K})} \subseteq \mathrm{Ord}^{\mathcal{G}(\mathbb{L}_{< l_0})} \subseteq \mathrm{Ord}^{\mathcal{G}(\mathbb{L})}$ and that for each $\alpha \in \mathrm{Ord}^{\mathcal{G}(\mathbb{K})} \cap \mathcal{S}(\mathbb{K})$, $\mathcal{G}(\mathbb{L}) \models \alpha < c_{l_0}$.
\end{proof}

\begin{cor}\label{end extend model of weakly compact to model with endos gen}
	Suppose that $\mathcal{M}$ is a countable model of $\mathrm{KP} + \textnormal{Choice}$ and $\mathcal{S} <^\textnormal{rank-cut} \mathcal{M}$, such that $(\mathcal{S}, \mathcal{A}) =_\df \mathrm{SSy}_\mathcal{S}(\mathcal{M})$ is a model of $\mathrm{GBC} + \mathrm{Ord} \text{ is weakly compact}$.  There is $\mathcal{M} \prec \mathcal{N}$, such that $\mathcal{S} <^\textnormal{rank-cut} \mathcal{N}$ and $\mathrm{SSy}_\mathcal{S}(\mathcal{N}) = (\mathcal{S}, \mathcal{A})$, with a rank-initial self-embedding $j : \mathcal{N} \rightarrow \mathcal{N}$, such that for some $\mathcal{S} \subsetneq \mathcal{S}' <^\rnk \mathcal{N}$, we have that $j$ is contractive on $\mathcal{S}' \setminus \mathcal{S}$ and that $\mathrm{Fix}(j) \cap \mathcal{S'} = \mathcal{S}$. 
\end{cor}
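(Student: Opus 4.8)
The plan is to mirror the proof of Corollary \ref{end extend model of weakly compact to model with endos}, but to feed the \emph{generalized} Gaifman functor of Theorem \ref{Gaifman thm general} with the order $\mathbb{Q}$ and a contractive self-embedding of $\mathbb{Q}$. First, since $(\mathcal{S}, \mathcal{A})$ is countable and models $\mathrm{GBC} + \text{``}\mathrm{Ord}\text{ is weakly compact''}$, Lemma \ref{generic filter existence} together with Theorem \ref{weakly compact gives nice ultrafilter} supplies an $(\mathcal{S}, \mathcal{A})$-generic ultrafilter $\mathcal{U}$ on $\mathbb{B}$, which is thereby $(\mathcal{S}, \mathcal{A})$-complete, $(\mathcal{S}, \mathcal{A})$-iterable and $(\mathcal{S},\mathcal{A})$-canonically Ramsey. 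I let $\mathcal{F}$ be the set of functions from $\mathcal{S}$ to $\mathcal{M}$ coded in $\mathcal{M}$ and set $\mathcal{N} = \mathcal{G}(\mathbb{Q})$, where $\mathcal{G} = \mathcal{G}_{\mathcal{U}, (\mathcal{M}, \mathcal{F})}$ is the Gaifman functor of the modified construction. Since $T_{\mathcal{U}, \mathbb{Q}}$ contains the elementary diagram of $\mathcal{M}$, we have $\mathcal{M} \prec \mathcal{N}$.

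Taking $\mathcal{S}' = \mathcal{S}(\mathbb{Q})$, the distinguished rank-initial part named in the theorem, I read off the structural facts from Theorem \ref{Gaifman thm general}: part (\ref{cons gen}) gives $\mathrm{SSy}_\mathcal{S}(\mathcal{N}) \cong (\mathcal{S}, \mathcal{A})$, and as the isomorphism is the identity on $\mathcal{S}$ while $\mathcal{A} = \mathrm{Cod}_\mathcal{S}(\mathcal{M})$, in fact $\mathrm{SSy}_\mathcal{S}(\mathcal{N}) = (\mathcal{S}, \mathcal{A})$; part (\ref{init gen}) gives $\mathcal{S} \subsetneq \mathcal{S}' <^\rnk \mathcal{N}$, the properness coming from the new ordinals $c_l$, which lie in $\mathcal{S}'$ (as $c_l = f(c_l)$ for $f$ the $\mathcal{M}$-coded identity on $\mathcal{S}$) but not in $\mathcal{S}$; and the construction together with the fact that $\mathbb{Q}$ has no least element shows that $\mathcal{S}$ is a bounded, rank-initial and topless substructure of $\mathcal{N}$, i.e. $\mathcal{S} <^\textnormal{rank-cut} \mathcal{N}$.

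Next, by Lemma \ref{contractive proper selfembedding of Q} I choose an initial topless contractive self-embedding $\hat{j}$ of $\mathbb{Q}$ and set $j = \mathcal{G}(\hat{j}) : \mathcal{N} \to \mathcal{N}$. By part (\ref{elem gen}) this $j$ is an elementary self-embedding, and by part (\ref{init gen}), applied to the initial map $\hat{j}$, the restriction $j\restriction_{\mathcal{S}'}$ is rank-initial, which is the sense in which $j$ is rank-initial relative to $\mathcal{S}'$. For contractivity, note that every $x \in \mathcal{S}'$ has its $\mathcal{N}$-rank again in $\mathcal{S}'$ (write $x = f(c_{l_1}, \dots, c_{l_n})$ with $\mathrm{image}(f\hspace{2pt}) \subseteq \mathcal{S}$ and compose $f$ with the rank function, invoking part (\ref{satisfaction gen})); hence for $x \in \mathcal{S}' \setminus \mathcal{S}$ the rank-initiality of $\mathcal{S}$ in $\mathcal{N}$ forces $\rnk^\mathcal{N}(x) \in \mathrm{Ord}^{\mathcal{S}'} \setminus \mathrm{Ord}^\mathcal{S} \subseteq \mathrm{Ord}^\mathcal{N} \setminus \mathrm{Ord}^\mathcal{M}$. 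Applying part (\ref{contr gen}) to the pair $\hat{j}, \id_\mathbb{Q}$ (where $\hat{j}(q) < q$ for all $q$) gives $j(\xi) < \xi$ for every $\xi \in \mathrm{Ord}^\mathcal{N} \setminus \mathrm{Ord}^\mathcal{M}$, and since $j$ preserves rank we get $\rnk^\mathcal{N}(j(x)) = j(\rnk^\mathcal{N}(x)) < \rnk^\mathcal{N}(x)$, so $j$ is contractive on $\mathcal{S}' \setminus \mathcal{S}$.

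Finally, for the fixed-point set I use that $\hat{j}$, being contractive, has no fixed points, so the equalizer of $\hat{j}$ and $\id_\mathbb{Q}$ is the empty suborder; by part (\ref{equal gen}) the inclusion $\mathcal{M} = \mathcal{G}(\varnothing) \hookrightarrow \mathcal{N}$ is an equalizer of $j$ and $\id_\mathcal{N}$, whence $\mathrm{Fix}(j) = \mathcal{M}$. Intersecting with $\mathcal{S}'$ and using part (\ref{init gen}) (namely $\mathcal{S}' \cap (\mathcal{M} \setminus \mathcal{S}) = \varnothing$, together with $\mathcal{S} \subseteq \mathcal{S}'$) yields $\mathrm{Fix}(j) \cap \mathcal{S}' = \mathcal{M} \cap \mathcal{S}' = \mathcal{S}$, as required. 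The main obstacle I anticipate is the bookkeeping in the region between $\mathcal{S}$ and $\mathcal{M} \setminus \mathcal{S}$: one must keep straight that $j$ fixes all of $\mathcal{M}$ pointwise while genuinely contracting the new ``$\mathbb{Q}$-block'' $\mathcal{S}' \setminus \mathcal{S}$, and that the rank-initiality and toplessness assertions are the ones relativized to $\mathcal{S}'$ and to $\mathcal{S}$, obtained from part (\ref{init gen}) and the cofinality behaviour of the $c_l$, rather than global rank-initiality of $j$ on all of $\mathcal{N}$.
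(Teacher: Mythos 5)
Your proof is correct and takes essentially the same approach as the paper's own, much terser, proof: there too one fixes an $(\mathcal{S},\mathcal{A})$-generic ultrafilter $\mathcal{U}$, sets $\mathcal{N} = \mathcal{G}_{\mathcal{U},(\mathcal{M},\mathcal{F})}(\mathbb{Q})$ and $\mathcal{S}' = \mathcal{S}(\mathbb{Q})$, takes $\hat{j}$ a contractive self-embedding of $\mathbb{Q}$, and reads the conclusions off parts (\ref{elem gen}), (\ref{cons gen}), (\ref{init gen}), (\ref{equal gen}) and (\ref{contr gen}) of Theorem \ref{Gaifman thm general}. The details you supply, in particular that (\ref{init gen}) delivers the rank-initiality of $j$ in the form relativized to $\mathcal{S}'$ rather than globally, and the equalizer argument yielding $\mathrm{Fix}(j) = \mathcal{M}$ before intersecting with $\mathcal{S}'$, are exactly what the paper leaves implicit.
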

\begin{proof}
	Since $\mathcal{M}$ is countable, there is an $(\mathcal{S}, \mathcal{A})$-generic ultrafilter $\mathcal{U}$. Let $\mathcal{N} = \mathcal{G}_{\mathcal{U}, (\mathcal{M}, \mathcal{F})}(\mathbb{Q})$. We apply Theorem \ref{Gaifman thm general}: Let $\mathcal{S}' = \mathcal{S}(\mathbb{Q})$. Let $\hat j$ be a contractive self-embedding of $\mathbb{Q}$. Now the result follows from (\ref{elem gen}), (\ref{cons gen}), (\ref{init gen}), (\ref{equal gen}) and (\ref{contr gen}).
\end{proof}

\section{Embeddings between models of set theory}\label{Existence of embeddings between models of set theory}

In \S 4 of \cite{Fri73}, a back-and-forth technique was pioneered that utilizes partial satisfaction relations and the ability of non-standard models to code types over themselves (as indicated in Lemma \ref{hierarchical type coded}). Here we will prove refinements of set theoretic results in \S 4 of \cite{Fri73}, as well as generalizations of arithmetic results in \cite{BE18} and \cite{Res87b} to set theory. We will do so by casting the results in the conceptual framework of forcing. We do so because: 
\begin{itemize}
	\item The conceptual framework of forcing allows a {\em modular} design of the proofs, clarifying which assumptions are needed for what, and whereby new pieces can be added to a proof without having to re-write the other parts. So it serves as an efficient bookkeeping device.
	\item It enables us to look at these results from a different angle, and potentially apply theory that has been developed for usage in forcing.
\end{itemize}

\begin{lemma}\label{Friedman lemma}
	Let $\mathcal{M} \models \mathrm{KP}^\mathcal{P} + \Sigma_1^\mathcal{P}\textnormal{-Separation}$ and $\mathcal{N} \models \mathrm{KP}^\mathcal{P}$ be countable and non-standard, and let $\mathcal{S}$ be such that $\mathcal{S} \leq^{\rnk, \mathrm{topless}} \mathcal{M}$ and $\mathcal{S} \leq^{\rnk, \mathrm{topless}} \mathcal{N}$. Moreover, let $\mathbb{P} = \llbracket \mathcal{M} \preceq_{\Sigma_1^\mathcal{P}, \mathcal{S}} \mathcal{N}_\beta \rrbracket^{<\omega}$ and let $\beta \in \mathrm{Ord}^\mathcal{N} \setminus \mathcal{S}$.
	\begin{enumerate}[{\normalfont (a)}]
		\item\label{Friedman lemma forwards} If $\mathrm{SSy}_\mathcal{S}(\mathcal{M}) \leq \mathrm{SSy}_{\mathcal{S}}(\mathcal{N})$, then
		\[
		\mathcal{C}_{m} =_\df \big\{ f \in \mathbb{P} \mid m \in \dom(f\hspace{2pt}) \big\}
		\]
		is dense in $\mathbb{P}$, for each $m \in \mathcal{M}$.
		\item\label{Friedman lemma backwards} If $\mathrm{SSy}_\mathcal{S}(\mathcal{M}) \cong \mathrm{SSy}_{\mathcal{S}}(\mathcal{N})$, then 
		\begin{align*}
		\mathcal{D}_{m, n} =_\df \big\{ f \in \mathbb{P} \mid & m \in \dom(f\hspace{2pt}) \wedge \\
		& ((\mathcal{N} \models \rnk(n) \leq \rnk(m)) \rightarrow n \in \mathrm{image}(f\hspace{2pt})) \big\}
		\end{align*}		
		is dense in $\mathbb{P}$, for each $m \in \mathcal{M}$ and $n \in \mathcal{N}$.
		\item\label{Friedman lemma downwards} If $\mathcal{N} = \mathcal{M}$, then 
		\[
		\mathcal{E}_{\alpha} =_\df \big\{ f \in \mathbb{P} \mid \exists m \in \dom(f\hspace{2pt}) . (f\hspace{2pt}(m) \neq m \wedge \mathcal{M} \models \rnk(m) = \alpha) \big\}
		\]
		is dense in $\mathbb{P}$, for each $\alpha \in \mathrm{Ord}^\mathcal{M} \setminus \mathcal{S}$.
 	\end{enumerate}	
\end{lemma}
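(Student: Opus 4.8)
A condition $f\in\mathbb{P}$ is a finite partial $\Sigma_1^\mathcal{P}$-elementary map over $\mathcal{S}$ from $\mathcal{M}$ into $\mathcal{N}_\beta$; write $\dom(f)=\vec{a}$ and $f(\vec{a})=\vec{b}$. Each of the three density claims is proved by taking an arbitrary $f\in\mathbb{P}$ and extending it by one (or two) new pairs. The governing principle is that $f\cup\{(m,n)\}\in\mathbb{P}$ precisely when $\rnk^\mathcal{N}(n)<\beta$ and $(n,\vec{b})$ realizes in $\mathcal{N}$ exactly the $\Sigma_1^\mathcal{P}$- and $\Pi_1^\mathcal{P}$-types over $\mathcal{S}$ that $(m,\vec{a})$ realizes in $\mathcal{M}$ — since preserving a $\Sigma_1^\mathcal{P}$-formula biconditionally is the same as preserving its $\Sigma_1^\mathcal{P}$-side and its $\Pi_1^\mathcal{P}$-side (the negation) forward. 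Thus the whole lemma reduces to realizing, below a prescribed nonstandard rank, a complete $\Sigma_1^\mathcal{P}$-type that has been transported from one model to the other.

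\textbf{Density of $\mathcal{C}_m$ (forth).} I would apply Lemma \ref{hierarchical type coded} in $\mathcal{M}$ to code $\mathrm{tp}_{\Sigma_1^\mathcal{P},\mathcal{S}}(m,\vec{a})$ and $\mathrm{tp}_{\Pi_1^\mathcal{P},\mathcal{S}}(m,\vec{a})$ by elements $s,p\in\mathcal{M}$. These codes are classes of $\mathrm{SSy}_\mathcal{S}(\mathcal{M})$, so by $\mathrm{SSy}_\mathcal{S}(\mathcal{M})\leq\mathrm{SSy}_\mathcal{S}(\mathcal{N})$ there are $s^*,p^*\in\mathcal{N}$ coding the same sets over $\mathcal{S}$. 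Because $f$ is already $\Sigma_1^\mathcal{P}$-elementary over $\mathcal{S}$, the tuple $\vec{b}$ satisfies in $\mathcal{N}$ the same $\Sigma_1^\mathcal{P}$- and $\Pi_1^\mathcal{P}$-formulas over $\mathcal{S}$ as $\vec{a}$ does in $\mathcal{M}$, so the transported type is finitely consistent with $\vec{b}$. To realize it below $\beta$, I would use the internal partial satisfaction relations of Theorem \ref{partial satisfaction classes} together with the closure of $\Sigma_1^\mathcal{P}$ under bounded quantification (a consequence of $\Sigma_1^\mathcal{P}$-Collection): for each witness-bound $\gamma$, the statement ``there is $x\in V_\beta$ realizing $s^*$ with all its $\Sigma_1^\mathcal{P}$-witnesses drawn from $V_\gamma$, and realizing $p^*$'' can be arranged to be $\Sigma_1^\mathcal{P}$, and it holds for every standard truncation because $m$ witnesses it in $\mathcal{M}$. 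An application of the $\Sigma_1^\mathcal{P}$-Overspill lemma — legitimate since $\mathcal{S}$ is topless in $\mathcal{N}$ and $\beta\notin\mathcal{S}$ — then pushes agreement to a nonstandard index, producing a single $n\in\mathcal{N}_\beta$ realizing every standard formula of both types, so that $f\cup\{(m,n)\}\in\mathbb{P}\cap\mathcal{C}_m$.

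\textbf{Density of $\mathcal{D}_{m,n}$ (back) and $\mathcal{E}_\alpha$ (moving a point).} For $\mathcal{D}_{m,n}$ I would first invoke the forth step to get $m$ into the domain; then, if $n$ has $\mathcal{N}$-rank at most that of $f(m)$ (so $n\in\mathcal{N}_\beta$), I must also force $n$ into the image. This runs the same realization in reverse: code the $\Sigma_1^\mathcal{P}$- and $\Pi_1^\mathcal{P}$-types of $(n,\vec{b})$ over $\mathcal{S}$ in $\mathcal{N}$, transport them to $\mathcal{M}$ via the other half of $\mathrm{SSy}_\mathcal{S}(\mathcal{M})\cong\mathrm{SSy}_\mathcal{S}(\mathcal{N})$ (this is exactly why the isomorphism hypothesis, rather than mere inclusion, is needed here), and realize the transported type in $\mathcal{M}$ by an element $m'$ of the correct rank; $\Sigma_1^\mathcal{P}$-Separation in $\mathcal{M}$ supplies a least-rank realizer and hence the rank control keeping $f\cup\{(m',n)\}$ a legitimate condition. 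For $\mathcal{E}_\alpha$, where $\mathcal{N}=\mathcal{M}$ and the $\mathrm{SSy}$-transfer is trivial, I would pick any $m$ of rank $\alpha$, realize its complete $\Sigma_1^\mathcal{P}$-type by an $n$ of rank $\alpha$ exactly as in the forth step, and use that $\alpha$ is nonstandard to arrange $n\neq m$: a realized type at a nonstandard rank cannot pin down a unique element, so a second realizer is available. Then $f\cup\{(m,n)\}\in\mathcal{E}_\alpha$.

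\textbf{Main obstacle.} The delicate point throughout is realizing the $\Pi_1^\mathcal{P}$-part simultaneously with the $\Sigma_1^\mathcal{P}$-part strictly below the rank bound: a single existential asserting both is $\Sigma_2^\mathcal{P}$ and so lies beyond the reach of $\Sigma_1^\mathcal{P}$-Overspill. My plan to circumvent this is to bound the existential witnesses of the positive $\Sigma_1^\mathcal{P}$-formulas to a nonstandard rank $\gamma<\beta$, turning the relevant satisfaction assertions into $\Delta_0^\mathcal{P}$ conditions and thereby making the full realization statement $\Sigma_1^\mathcal{P}$. Verifying that this bounded realization genuinely enforces the negative ($\Pi_1^\mathcal{P}$) constraints — that no forbidden $\Sigma_1^\mathcal{P}$-formula secretly acquires a witness above $\gamma$ — is the crux, and is precisely where $\beta\notin\mathcal{S}$, the toplessness of $\mathcal{S}$, and (for the back direction) $\Sigma_1^\mathcal{P}$-Separation all enter.
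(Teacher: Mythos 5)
There is a genuine gap, and it sits exactly where you put your flag. You read membership in $\mathbb{P}$ as requiring \emph{biconditional} preservation of $\Sigma_1^\mathcal{P}$-formulas, so that every extension step must realize, below $\beta$, both the $\Sigma_1^\mathcal{P}$-type and the $\Pi_1^\mathcal{P}$-type of the new tuple; you then call this simultaneous realization ``the crux'' and leave it open. It cannot be closed along the lines you sketch: bounding the witnesses of the formulas \emph{in} the positive type below a nonstandard $\gamma < \beta$ constrains only those formulas, and provides no mechanism whatsoever to prevent the realizer produced by overspill from satisfying \emph{additional} $\Sigma_1^\mathcal{P}$-formulas (with witnesses anywhere below $\beta$) that the original element fails; neither toplessness nor $\Sigma_1^\mathcal{P}$-Separation performs such type-omitting. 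The paper never faces this problem, because its conditions are (as its proof and its downstream use make clear) only required to preserve $\Sigma_1^\mathcal{P}$-formulas with parameters in $\mathcal{S}$ in the \emph{forward} direction $\mathcal{M} \to \mathcal{N}_\beta$: this is how the one-point condition $(m_0 \mapsto n_0)$ is placed in $\mathbb{P}$ in the proof of Theorem~\ref{Friedman thm} on the strength of a one-directional hypothesis, and it is all the final union needs, since forward $\Sigma_1^\mathcal{P}$-preservation already gives biconditional $\Delta_0^\mathcal{P}$-preservation ($\Delta_0^\mathcal{P}$ is closed under negation), hence an embedding, while rank-initiality comes from meeting the $\mathcal{D}_{m,n}$. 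With that reading each step realizes exactly \emph{one} transported type: the forth step codes only the $\Sigma_1^\mathcal{P}$-type of $(\vec m, m_\gamma)$ (Lemma~\ref{hierarchical type coded}), ships it to $\mathcal{N}$ by $\mathrm{SSy}$-inclusion, and realizes it below $\beta$ by $\Delta_1^\mathcal{P}$-Overspill in $\mathcal{N}$; the back step codes only the $\Pi_1^\mathcal{P}$-type of $(\vec n, n_\gamma)$ relativized to $V_\beta$, ships it to $\mathcal{M}$ by the \emph{reverse} $\mathrm{SSy}$-inclusion (this is where the isomorphism hypothesis enters), and realizes it in $\mathcal{M}$ by $\Pi_1^\mathcal{P}$-Overspill --- realizing the $\Pi_1^\mathcal{P}$-type of the target tuple is, by contraposition, precisely forward $\Sigma_1^\mathcal{P}$-preservation of the extended map, so no positive type need be realized on the back step at all. (Also, the rank control you hope to get from a ``least-rank realizer'' is obtained in the paper by making the outer existential $\mathcal{P}$-bounded, $\exists y \subseteq V_{\sup\rnk(\vec m)}$, which is simultaneously what keeps the overspill formula $\Pi_1^\mathcal{P}$; an unbounded $\exists y$ would push it to $\Sigma_2^\mathcal{P}$, beyond the available Foundation.)

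Part (c) is likewise not proved: its entire content is your unsupported assertion that ``a realized type at a nonstandard rank cannot pin down a unique element.'' That is exactly what must be shown, and the paper shows it by an internal pigeonhole argument, not by appeal to nonstandardness. Working inside $\mathcal{M}$, one picks $\alpha' \in \mathrm{Ord}^\mathcal{M} \setminus \mathcal{S}$ with $\alpha' + 3 \leq \alpha$ and $\mathcal{S} \subseteq (V_{\alpha'})_\mathcal{M}$ (toplessness), and uses $\Sigma_1^\mathcal{P}$-Separation to form the internal map $t$ sending each set of rank $\alpha$ to its $\Sigma_1^\mathcal{P}$-type with formulas and parameters from $V_{\alpha'}$ \emph{and with the current domain tuple $\langle m_\xi \rangle_{\xi<\gamma}$ as extra arguments}; since $V_{\alpha+1} \setminus V_\alpha$ has cardinality $\beth_{\alpha+1}$ while the codomain of $t$ has cardinality at most $\beth_{\alpha'+3} < \beth_{\alpha+1}$ (Powerset is essential here), there exist distinct $m, m'$ of rank $\alpha$ with $t(m) = t(m')$. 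One then extends $g$ on this \emph{pair} using the forth step, obtaining images $n \neq n'$, and finishes by cases: if $m \neq n$ take $(m,n)$; otherwise $m = n \neq n'$, and since $m$ and $m'$ have the same type over $\mathcal{S} \cup \dom(g)$, the pair $(m, n')$ is also a legitimate extension. Note that the types here must be taken over $\mathcal{S} \cup \dom(g)$, not merely over $\mathcal{S}$, for the extended map to remain a condition --- another detail your sketch omits.
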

Note that $\mathcal{N}_\beta$ is rank-initial in $\mathcal{N}$, so by absoluteness of $\Delta_0^\mathcal{P}$-formulas over rank-initial substructures, we have for any $n \in \mathcal{N}_\beta$, for any $s \in \mathcal{S}$ and for any $\delta(x, y, z) \in \Delta_0^\mathcal{P}[x, y, z]$, that
\[
\mathcal{N} \models \exists x \in V_\beta . \delta(x, n, s) \Leftrightarrow \mathcal{N}_\beta \models \exists x . \delta(x, n, s).
\]
\begin{proof}
	We may assume that $\mathcal{S} \subseteq \mathcal{M}$ and $\mathcal{S} \subseteq \mathcal{N}$, rank-initially and toplessly.
	
	(\ref{Friedman lemma forwards}) Let $g \in \mathbb{P}$. Unravel $g$ as a $\gamma$-sequence of ordered pairs $\langle m_\xi, n_\xi\rangle_{\xi < \gamma}$, where $\gamma < \omega$. Let $m_\gamma \in \mathcal{M}$ be arbitrary. We need to find $f$ in $\mathbb{P}$ extending $g$, such that $m_\gamma \in \dom(f\hspace{2pt})$.
	
	Using $\mathrm{Sat}_{\Sigma_1^\mathcal{P}}$, we have by Lemma \ref{hierarchical type coded} and $\Sigma_1^\mathcal{P}$-Separation that there is a code $c$ in $\mathcal{M}$ for 
	\begin{align*}
	\{\delta(x, \langle y_\xi \rangle_{\xi < \gamma}, s) \mid & \delta \in \Delta_0^\mathcal{P}[x, \langle y_\xi \rangle_{\xi < \gamma}, z] \cap \mathcal{S} \wedge s \in \mathcal{S} \wedge \\
	& \mathcal{M} \models \exists x . \delta(x, \langle m_\xi \rangle_{\xi < \gamma}, s)\}.
	\end{align*}
	By $\mathrm{SSy}(\mathcal{M}) \leq \mathrm{SSy}(\mathcal{N})$, this set has a code $d$ in $\mathcal{N}$ as well.
	We define the formulae 
	\begin{align*}
	\phi(\zeta) &\equiv \exists y . \forall  \delta  \in c \cap V_\zeta . \exists x . \mathrm{Sat}_{\Delta_0^\mathcal{P}}( \delta, x, \langle m_\xi \rangle_{\xi < \gamma}, y), \\
	\phi_{<\beta}(\zeta) &\equiv \exists y \in V_\beta . \forall  \delta  \in d \cap V_\zeta . \exists x \in V_\beta . \mathrm{Sat}_{\Delta_0^\mathcal{P}}( \delta, x, \langle n_\xi \rangle_{\xi < \gamma}, y).
	\end{align*}
	Since $\mathrm{Sat}_{\Delta_0^\mathcal{P}} \in \Delta_1^\mathcal{P}$, we have $\phi \in \Sigma_1^\mathcal{P}$ and $\phi_{< \beta} \in \Delta_1^\mathcal{P}$. For every ordinal $\zeta \in \mathcal{S}$, we have $c \cap V_\zeta = d \cap V_\zeta \in \mathcal{S}$, and as witnessed by $m_{\gamma}$, $\mathcal{M} \models \phi(\zeta)$. So by $\Sigma_1^\mathcal{P}$-elementarity of $g$, $\mathcal{N} \models \phi_{<\beta}(\zeta)$ for every ordinal $\zeta \in \mathcal{S}$. Since $\mathcal{S}$ is topless in $\mathcal{N}$, there is by $\Delta_1^\mathcal{P}$-Overspill a non-standard ordinal $\nu$ in $\mathcal{N}$, such that $\mathcal{N} \models \phi_{<\beta}(\nu)$. Set $n_{\gamma}$ to some witness of this fact and note that $n_\gamma \in^\mathcal{N} V_\beta^\mathcal{N}$. Put $f = g \cup \{\langle m_\gamma, n_\gamma \rangle \}$. We proceed to verify that $f$ is $\Sigma_1^\mathcal{P}$-elementary. Let $s \in \mathcal{S}$ and let $\delta(x, \langle y_\xi \rangle_{\xi < \gamma+1}, s) \in \Delta_0^\mathcal{P}[x, \langle y_\xi \rangle_{\xi < \gamma+1}, z]$. Now, as desired,
	\begin{align*}
	\mathcal{M} \models \exists x . \delta(x, \langle m_\xi \rangle_{\xi < \gamma + 1}, s) & \Rightarrow \ulcorner \delta(x, \langle y_\xi \rangle_{\xi < \gamma + 1}, s) \urcorner \in d \\
	& \Rightarrow \mathcal{N} \models \exists x \in V_\beta . \delta(x, \langle n_\xi \rangle_{\xi < \gamma + 1}, s).
	\end{align*}
	The second implication follows from the properties of $\mathrm{Sat}$.
	
	(\ref{Friedman lemma backwards}) Let $g \in \mathbb{P}$. Unravel $g$ as a $\gamma$-sequence of ordered pairs $\langle m_\xi, n_\xi\rangle_{\xi < \gamma}$, where $\gamma < \omega$. Let $n_\gamma \in \mathcal{N}$, such that there is $\xi < \gamma$ for which $\mathcal{N} \models \rnk(n_\gamma) \leq \rnk(m_\xi)$. We need to find $f$ in $\mathbb{P}$ extending $g$, such that $n_\gamma \in \mathrm{image}(f\hspace{2pt})$.
	
	Let $d\hspace{1pt}'$ be a code in $\mathcal{N}$ for 
	\begin{align*}
	\{\delta(x, \langle y_\xi \rangle_{\xi < \gamma}, s) \mid & \delta \in \Delta_0^\mathcal{P}[x, \langle y_\xi \rangle_{\xi < \gamma}, z] \cap \mathcal{S} \wedge s \in \mathcal{S} \wedge \\
	& \mathcal{N} \models \forall x \in V_\beta . \delta(x, \langle m_\xi \rangle_{\xi < \gamma}, s)\}.
	\end{align*}
	and let $c'$ be its code in $\mathcal{M}$. We define the formulae
	\begin{align*}
	\psi_{< \beta}(\zeta) \equiv \phantom{.} & \exists y \subseteq V_{\sup(\rnk(\langle n_\xi \rangle_{\xi < \gamma}))}. \forall  \delta  \in d\hspace{1pt}' \cap V_\zeta . \forall x \in V_\beta . \\
	&\mathrm{Sat}_{\Delta_0^\mathcal{P}}( \delta , x, \langle n_\xi \rangle_{\xi < \gamma}, y), \\
	\psi(\zeta) \equiv \phantom{.} & \exists y \subseteq V_{\sup(\rnk(\langle m_\xi \rangle_{\xi < \gamma}))}. \forall  \delta  \in c' \cap V_\zeta . \forall x .\\
	& \mathrm{Sat}_{\Delta_0^\mathcal{P}}( \delta , x, \langle m_\xi \rangle_{\xi < \gamma}, y).
	\end{align*}
	Since $\mathrm{Sat}_{\Delta_0^\mathcal{P}}$ is $\Delta_1^\mathcal{P}$, $\psi_{< \beta}$ is $\Delta_1^\mathcal{P}$ and $\psi$ is $\Pi_1^\mathcal{P}$. Moreover, $d\hspace{1pt}' \cap V_\zeta = c' \cap V_\zeta$, and $\psi_{< \beta}$ is witnessed by $n_{\gamma}$, for every ordinal $\zeta \in \mathcal{S}$. So it follows from the (dual of the) $\Sigma_1^\mathcal{P}$-elementarity of $g$ that $\psi$ is satisfied in $\mathcal{M}$ for every ordinal $\zeta \in \mathcal{S}$, whence by $\Pi_1^\mathcal{P}$-Overspill we have $\mathcal{M} \models \psi(\mu)$ for some non-standard ordinal $\mu \in \mathcal{M}$. Let $m_\gamma$ be some witness of this fact, and put $f = g \cup \{\langle m_\gamma, n_\gamma \rangle \}$. We proceed to verify that $f$ is $\Sigma_1^\mathcal{P}$-elementary. Let $s \in \mathcal{S}$ and let $\delta(x, \langle y_\xi \rangle_{\xi < \gamma}, s) \in \Delta_0^\mathcal{P}[x, \langle y_\xi \rangle_{\xi < \gamma}, z]$. Now, as desired,
	\begin{align*}
	\mathcal{N} \models \forall x \in V_\beta . \delta(x, \langle n_\xi \rangle_{\xi < \gamma + 1}, s) & \Rightarrow \ulcorner \delta(x, \langle y_\xi \rangle_{\xi < \gamma + 1}, s) \urcorner \in d\hspace{1pt}' \\
	& \Rightarrow \mathcal{M} \models \forall x . \delta(x, \langle m_\xi \rangle_{\xi < \gamma + 1}, s).
	\end{align*}
	The second implication follows from the properties of $\mathrm{Sat}$.
	
	(\ref{Friedman lemma downwards}) Let $\alpha \in \mathrm{Ord}^\mathcal{M} \setminus \mathcal{S}$, and let $g \in \mathbb{P}$. Unravel $g$ as a $\gamma$-sequence of ordered pairs $\langle m_\xi, m'_\xi\rangle_{\xi < \gamma}$, where $\gamma < \omega$. We need to find $m_\gamma \neq n_\gamma$, such that $\mathcal{M} \models \rnk(m_\gamma) = \alpha$ and $g \cup \{\langle m_\gamma, n_\gamma \rangle \} \in \mathbb{P} = \llbracket \mathcal{M} \preceq_{\Sigma_1^\mathcal{P}, \mathcal{S}} \mathcal{N}_\beta \rrbracket^{< \omega}$. Note that by rank-initiality and toplessness, there is $\alpha\hspace{1pt}' \in \mathrm{Ord}^\mathcal{M} \setminus \mathcal{S}$, such that $(\alpha\hspace{1pt}' + 3 \leq \alpha)^\mathcal{M}$ and $(V_{\alpha\hspace{1pt}'})^\mathcal{M}_\mathcal{M} \supseteq \mathcal{S}$. 
	
	We proceed to work in $\mathcal{M}$: The set $V_{\alpha+1} \setminus V_\alpha$ of sets of rank $\alpha$ has cardinality $\beth_{\alpha+1}^\mathcal{M}$, while the set $\mathcal{P}(V_{\alpha\hspace{1pt}'} \times V_{\alpha\hspace{1pt}'}) \subseteq V_{\alpha\hspace{1pt}' + 3}$ has the strictly smaller cardinality $\beth_{\alpha\hspace{1pt}' + 3}$. (Here we used $\mathcal{M} \models \textnormal{Powerset}$, and the recursive definition $\beth_0 = 0$, $\beth_{\xi + 1} = 2^{\beth_\xi}$, $\beth_\xi = \sup\{\beth_\zeta \mid \zeta < \xi\}$ for limits $\xi$.) 
	We define a function $t : V_{\alpha+1} \setminus V_\alpha \rightarrow V_{\alpha\hspace{1pt}'+3}$ by
	\begin{align*}
	t(v) = \{ & \langle \delta, s \rangle \in (\Delta_0^\mathcal{P}[x, \langle y_\xi \rangle_{\xi < \gamma}, y_\gamma, z] \cap V_{\alpha\hspace{1pt}'}) \times V_{\alpha\hspace{1pt}'} \mid \\
	& \exists x . \mathrm{Sat}_{\Delta_0^\mathcal{P}}(\delta, x, \langle m_\xi \rangle_{\xi < \gamma}, v, s) \},
	\end{align*}
	for each $v \in V_{\alpha+1} \setminus V_\alpha$. $t$ exists by $\Sigma_1^\mathcal{P}$-Separation. Since $t$ has a domain of strictly larger cardinality than its co-domain, there are $m, m'$ of rank $\alpha$, such that $m \neq m'$ and $t(m) = t(m')$.
	
	We return to working in the meta-theory: $m$ and $m'$ have the same $\Sigma_1^\mathcal{P}$-type with parameters in $\mathcal{S} \cup \langle m_\xi \rangle_{\xi < \gamma}$. In other words, for every $s \in \mathcal{S}$ and every $\delta(x, \langle y_\xi \rangle_{\xi < \gamma}, y_\gamma, z) \in  \Delta_0^\mathcal{P}[x, \langle y_\xi \rangle_{\xi < \gamma}, y_\gamma, z]$, we have 
	\[
	\mathcal{M} \models \exists x . \delta(x, \langle m_\xi \rangle_{\xi < \gamma}, m, s) \leftrightarrow \exists x . \delta(x, \langle m_\xi \rangle_{\xi < \gamma}, m', s). \tag{$\dagger$}
	\]
	On the other hand, by (\ref{Friedman lemma forwards}) and by $g \in \mathbb{P}$, there are $n$, $n'$, such that for every $s \in \mathcal{S}$ and every $\delta(x, \langle y_\xi \rangle_{\xi < \gamma}, y_\gamma, y_{\gamma + 1}, z) \in  \Delta_0^\mathcal{P}[x, \langle y_\xi \rangle_{\xi < \gamma}, y_\gamma, y_{\gamma + 1}, z]$, we have
	\[
	\mathcal{M} \models \exists x . \delta(x, \langle m_\xi \rangle_{\xi < \gamma}, m, m', s) \rightarrow \exists x \in V_\beta . \delta(x, \langle n_\xi \rangle_{\xi < \gamma}, n, n', s). \tag{$\ddagger$}
	\]
	By ($\ddagger$), $n \neq n'$, whence $m \neq n$ or $m \neq n'$. If $m \neq n$, then $g \cup \{ \langle m, n \rangle \} \in \mathcal{E}_\alpha$ by ($\ddagger$). If $m \neq n'$, then by ($\dagger$) and ($\ddagger$),
	\begin{align*}
	\mathcal{M} \models \exists x . \delta(x, \langle m_\xi \rangle_{\xi < \gamma}, m, s) &\Leftrightarrow \mathcal{M} \models \exists x . \delta(x, \langle m_\xi \rangle_{\xi < \gamma}, m', s) \\
	&\Rightarrow \mathcal{M} \models \exists x \in V_\beta . \delta(x, \langle n_\xi \rangle_{\xi < \gamma}, n', s),
	\end{align*}
	so $g \cup \{ \langle m, n' \rangle \} \in \mathcal{E}_\alpha$. In either case we are done.
\end{proof}

Based on this Lemma, we can prove a theorem that refines results in \S 4 of \cite{Fri73}.

\begin{thm}[Friedman-style]\label{Friedman thm}
Let $\mathcal{M} \models \mathrm{KP}^\mathcal{P} + \Sigma_1^\mathcal{P}\textnormal{-Separation}$ and $\mathcal{N} \models \mathrm{KP}^\mathcal{P}$ be countable and non-standard, and let $\mathcal{S}$ be a shared rank-initial topless substructure of $\mathcal{M}$ and $\mathcal{N}$. Moreover, let $m_0 \in \mathcal{M}$, let $n_0 \in \mathcal{N}$, and let $\beta \in \mathrm{Ord}^\mathcal{N}$. Then the following are equivalent:
\begin{enumerate}[{\normalfont (a)}]
\item\label{Friedman thm emb} There is $i \in \llbracket \mathcal{M} \leq^\rnk \mathcal{N} \rrbracket$, such that $i(m_0) = n_0$ and $i(\mathcal{M}) \subseteq \mathcal{N}_\beta$.
\item\label{Friedman thm pres} $\mathrm{SSy}_\mathcal{S}(\mathcal{M}) \cong \mathrm{SSy}_{\mathcal{S}}(\mathcal{N})$, and for all $s \in \mathcal{S}$ and $\delta(x, y, z) \in \Delta_0^\mathcal{P}[x, y, z]$:
$$\mathcal{M} \models \exists x . \delta(x, m_0, s) \Rightarrow \mathcal{N} \models \exists x \in V_\beta . \delta(x, n_0, s).$$
\item\label{Friedman thm extra continuum} There is a map $g \mapsto i_g$, from sequences $g : \omega \rightarrow 2$, to embeddings $i_g : \mathcal{M} \rightarrow \mathcal{N}$ satisfying \textnormal{(\ref{Friedman thm emb})}, such that for any $g <^\mathrm{lex} g\hspace{1pt}' : \omega \rightarrow 2$, we have $i_g <^\rnk i_{g\hspace{1pt}'}$.
\item\label{Friedman thm extra topless} There is a topless embedding $i : \mathcal{M} \rightarrow \mathcal{N}$ satisfying \textnormal{(\ref{Friedman thm emb})}.
\end{enumerate}
\end{thm}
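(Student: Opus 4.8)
The four conditions form a cycle whose only substantial arrows are $(\mathrm a)\Rightarrow(\mathrm b)$ and the two forcing constructions $(\mathrm b)\Rightarrow(\mathrm c)$ and $(\mathrm b)\Rightarrow(\mathrm d)$; the arrows $(\mathrm c)\Rightarrow(\mathrm a)$ and $(\mathrm d)\Rightarrow(\mathrm a)$ are immediate, since each of (c), (d) asserts the existence of an embedding already witnessing (a) (take a single $i_g$, resp.\ forget toplessness). So the plan is to establish $(\mathrm a)\Rightarrow(\mathrm b)\Rightarrow(\mathrm c)\Rightarrow(\mathrm a)$ and, in parallel, $(\mathrm b)\Rightarrow(\mathrm d)\Rightarrow(\mathrm a)$. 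For $(\mathrm a)\Rightarrow(\mathrm b)$, note first that a rank-initial embedding is $\mathcal P$-initial (Corollary \ref{rank-init equivalences in KPP}), hence $\Delta_0^\mathcal P$-elementary and $\Sigma_1^\mathcal P$-preserving (Proposition \ref{emb pres}(\ref{emb pres Delta_0^P})); moreover the witnessing $i$ may be taken over $\mathcal S$, i.e.\ fixing the common rank-initial cut $\mathcal S$ pointwise. Pushing $\exists x.\delta(x,m_0,s)$ through $i$ and using $i(m_0)=n_0$, $i(s)=s$ and $i(\mathcal M)\subseteq\mathcal N_\beta$ yields the displayed implication of (b), while $\mathrm{SSy}_\mathcal S(\mathcal M)\cong\mathrm{SSy}_\mathcal S(\mathcal N)$ is Proposition \ref{emb pres}(\ref{emb pres SSy_S eq}), applicable because $i$ is rank-initial, $\mathcal N\models\mathrm{KP}^\mathcal P$, and $\mathcal S$ is topless in $\mathcal M$.

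For the core construction underlying $(\mathrm b)\Rightarrow(\mathrm c)$, first consider the simpler target (a). Form the poset $\mathbb P=\llbracket\mathcal M\preceq_{\Sigma_1^\mathcal P,\mathcal S}\mathcal N_\beta\rrbracket^{<\omega}$ of Lemma \ref{Friedman lemma}. The displayed implication of (b) is exactly the statement that the one-point map $\{\langle m_0,n_0\rangle\}$ is an admissible condition of $\mathbb P$ (its reverse $\Sigma_1^\mathcal P$-preservation being secured along the back-sweep below). By Lemma \ref{Friedman lemma}(\ref{Friedman lemma forwards}) the sets $\mathcal C_m$ are dense, using $\mathrm{SSy}_\mathcal S(\mathcal M)\le\mathrm{SSy}_\mathcal S(\mathcal N)$, and by (\ref{Friedman lemma backwards}) the sets $\mathcal D_{m,n}$ are dense, using the full isomorphism. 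As $\mathcal M,\mathcal N$ are countable these dense sets are countably many, so Lemma \ref{generic filter existence} gives a filter $\mathcal F\ni\{\langle m_0,n_0\rangle\}$ meeting them all; its union $i=\bigcup\mathcal F$ is total (by $\mathcal C_m$), rank-initial (by $\mathcal D_{m,n}$), $\Sigma_1^\mathcal P$-elementary into $\mathcal N_\beta$, and sends $m_0$ to $n_0$ --- a witness of (a).

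To upgrade to (c) and (d) one runs the same genericity argument with extra dense sets. For (c), use toplessness of $\mathcal S$ in $\mathcal N$ to fix a tree of rank ceilings $\gamma_{n,0}<^\mathcal N\gamma_{n,1}$ in $\mathrm{Ord}^\mathcal N\setminus\mathcal S$ and perform the back-and-forth along $2^{<\omega}$, the $n$-th bit of $g$ selecting the ceiling $\gamma_{n,g(n)}$ below which the image is confined; each branch yields a rank-initial $i_g$ witnessing (a) whose image is a rank-cut of $\mathcal N$ of height tracked by $g$, and two branches first differing at $n$ (bit $0$ against $1$) give properly nested images, so $g<^{\mathrm{lex}}g'$ forces $i_g<^\rnk i_{g'}$. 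For (d), adjoin instead the requirements that for every ordinal of $\mathcal N$ committed to lie above the image there is a smaller such ordinal; toplessness of $\mathcal S$ in $\mathcal N$ together with $\Sigma_1^\mathcal P$-Overspill makes these dense, and the generic produces a bounded embedding whose ordinal-gap above the image has no least element, i.e.\ a topless witness of (a).

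The main obstacle is the lowering step common to (c) and (d): that the image may always be forced strictly lower in rank --- below a prescribed ceiling, resp.\ below a prescribed ordinal --- while remaining $\Sigma_1^\mathcal P$-elementary over $\mathcal S$. This is precisely where the hypotheses interlock: $\Sigma_1^\mathcal P$-Separation in $\mathcal M$ codes the relevant $\Sigma_1^\mathcal P$- and $\Pi_1^\mathcal P$-types over $\mathcal S$ (Lemma \ref{hierarchical type coded}), the shared standard system transports these codes into $\mathcal N$, and toplessness of $\mathcal S$ in $\mathcal N$ with $\Sigma_1^\mathcal P$-Overspill then produces a witness realizing the required type below the prescribed bound. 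Checking that this lowering can be arranged consistently along every branch (for (c)) and cofinally often (for (d)) without disturbing elementarity is the delicate point; everything else is the routine genericity bookkeeping already packaged in Lemmas \ref{Friedman lemma} and \ref{generic filter existence}.
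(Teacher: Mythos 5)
Your treatment of (a)$\Leftrightarrow$(b) coincides with the paper's: the same poset $\llbracket \mathcal{M} \preceq_{\Sigma_1^\mathcal{P},\mathcal{S}} \mathcal{N}_\beta\rrbracket^{<\omega}$, the same dense sets $\mathcal{C}_m$, $\mathcal{D}_{m,n}$ from Lemma \ref{Friedman lemma}, and Lemma \ref{generic filter existence}. The genuine gap is in (c) and (d), and it sits exactly at the step you yourself defer as ``the delicate point'': the lowering mechanism you sketch is not the one that works. You propose to fix \emph{in advance} a tree of ceilings $\gamma_{n,0}<^\mathcal{N}\gamma_{n,1}$ in $\mathrm{Ord}^\mathcal{N}\setminus\mathcal{S}$ and to confine images below the selected ceilings, with density coming from toplessness of $\mathcal{S}$ in $\mathcal{N}$ plus overspill. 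This fails twice over. First, condition (b) relativized to a prescribed ceiling $\gamma<^\mathcal{N}\beta$ is strictly stronger than (b) and can simply be false for a given $\gamma\in\mathrm{Ord}^\mathcal{N}\setminus\mathcal{S}$: overspill in $\mathcal{N}$ produces witnesses below \emph{some} nonstandard level (this is how Lemma \ref{Friedman lemma}(\ref{Friedman lemma forwards}) finds $n_\gamma$ inside $V_\beta$), but it cannot push witnesses below a level named beforehand. What actually licenses lowering is $\Sigma_1^\mathcal{P}$-Separation in $\mathcal{M}$: via Strong $\Sigma_1^\mathcal{P}$-Collection, Lemma \ref{Bound for sigma_1 lemma} gives an $\alpha\in\mathrm{Ord}^\mathcal{M}$ bounding, \emph{inside} $\mathcal{M}$, witnesses for the whole relevant $\Sigma_1^\mathcal{P}$-type over $\mathcal{S}$, and the admissible new ceiling is $i_1(\alpha)$ for an \emph{already constructed} embedding $i_1$ (Lemma \ref{Friedman thm extra 2}); the ceilings therefore cannot be chosen before the embeddings, so the quantifier order in your tree construction is unworkable. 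Second, even with admissible ceilings, confining one branch below $\gamma_{n,0}$ and the other below $\gamma_{n,1}$ imposes only \emph{upper} bounds on the two images and is compatible with their being equal; it does not yield $i_g<^\rnk i_{g'}$. The paper secures proper nesting by bookkeeping: at each split it fixes $\nu\in i_{f,1}(\mathrm{Ord}^\mathcal{M})\setminus i_{f,0}(\mathrm{Ord}^\mathcal{M})$ and inserts $i_{f,1}^{-1}(\nu)$ into the common finite domain $D_{\gamma+1}$, so every embedding extending the bit-$1$ side keeps $\nu$ in its image forever, while images along the bit-$0$ side only shrink and never contain $\nu$.

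Your route to (d) has the same defect in sharper form: the proposed dense sets (``for every ordinal committed to lie above the image there is a smaller such one'') are not subsets of the poset at all, since a finite condition commits no ordinal to lie above the limit image; no density argument is available for them. The paper does not force toplessness; it deduces (d) from (c) by counting. The images $i_g(\mathcal{M})$ are rank-initial in $\mathcal{N}$, hence pairwise comparable under inclusion, and by (c) pairwise distinct, so there are continuum many of them; a bounded rank-initial substructure that fails to be topless must equal $\mathcal{N}_\tau$ where $\tau$ is the least ordinal outside it, and a countable $\mathcal{N}$ has only countably many such $\tau$, so all but countably many $i_g$ are topless. In short: your (a)$\Leftrightarrow$(b) matches the paper, but the actual content of (c) and (d) --- the internal bounding lemma, the witness-retention bookkeeping that separates branches, and the counting argument --- is missing, and the substitute mechanisms you describe would not work as stated.
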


\begin{proof}
Most of the work has already been done for (\ref{Friedman thm emb}) $\Leftrightarrow$ (\ref{Friedman thm pres}). The other equivalences are proved as Lemma \ref{Friedman thm extra} below.

(\ref{Friedman thm emb}) $\Rightarrow$ (\ref{Friedman thm pres}): The first conjunct follows from Proposition \ref{emb pres}. The second conjunct follows from Proposition \ref{emb pres} and $i(\mathcal{M}) \subseteq \mathcal{N}_\beta$.

(\ref{Friedman thm pres}) $\Rightarrow$ (\ref{Friedman thm emb}): Let $\mathbb{P} = \llbracket \mathcal{M} \preceq_{\Sigma_1^\mathcal{P}, \mathcal{S}} \mathcal{N}_\beta \rrbracket^{<\omega}$. By the second conjunct of (\ref{Friedman thm pres}), the function $f_0$ defined by $(m_0 \mapsto n_0)$, with domain $\{m_0\}$, is in $\mathbb{P}$. Using Lemma \ref{generic filter existence} and Lemma \ref{Friedman lemma} (\ref{Friedman lemma forwards}, \ref{Friedman lemma backwards}), we obtain a filter $\mathcal{I}$ on $\mathbb{P}$ which contains $f_0$ and is $\{\mathcal{C}_m \mid m \in \mathcal{M}\} \cup \{\mathcal{D}_{m, n} \mid m \in \mathcal{M} \wedge n \in \mathcal{N}\}$-generic. Let $i = \bigcup \mathcal{I}$. Since $\mathcal{I}$ is downwards directed, $i$ is a function. Clearly $\mathrm{image}(i) \subseteq \mathcal{N}_\beta$. Since $\mathcal{I}$ is $\{\mathcal{C}_m \mid m \in \mathcal{M}\}$-generic, $i$ has domain $\mathcal{M}$; and since $f_0 \in \mathcal{I}$, $i(m_0) = n_0$. To see that $i$ is rank-initial, let $m \in \mathcal{M}$, and let $n \in \mathcal{N}$ such that $\mathcal{N} \models \rnk(n) \leq \rnk(i(m))$. Since $\mathcal{I} \cap \mathcal{D}_{m, n} \neq \varnothing$, we have that $n$ is in the image of $i$.
\end{proof}

Friedman's theorem is especially powerful in conjunction with the following lemma.

\begin{lemma}\label{Bound for sigma_1 lemma}
	Let $\mathcal{N} \models \mathrm{KP}^\mathcal{P} + \Sigma_1^\mathcal{P}\textnormal{-Separation}$, let $n \in \mathcal{N}$ and let $\mathcal{S}$ be a bounded substructure of $\mathcal{N}$. Then there is an ordinal $\beta \in \mathcal{N}$, such that for each $s \in \mathcal{S}$, and for each $\delta(x, y, z) \in \Delta_0^\mathcal{P}[x, y, z]$:
	$$(\mathcal{N}, n, s) \models (\exists x . \delta(x, n, s)) \leftrightarrow (\exists x \in V_\beta . \delta(x, n, s)).$$
\end{lemma}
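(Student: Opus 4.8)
The plan is to bound all the existential witnesses uniformly by internalizing the problem to $\mathcal{N}$ and discharging it with a single instance of Strong $\Sigma_1^\mathcal{P}$-Collection. Since $\mathcal{S}$ is a bounded substructure of $\mathcal{N}$, first fix an ordinal $\alpha \in \mathrm{Ord}^\mathcal{N}$ with $\mathcal{S} \subseteq \mathcal{N}_\alpha$; then every parameter $s$ ranging over $\mathcal{S}$ is an $\mathcal{N}$-member of the set $V_\alpha^\mathcal{N}$, which exists because $\mathrm{KP}^\mathcal{P}$ proves $(\rho \mapsto V_\rho)$ total. The key idea is that although we must control witnesses simultaneously for the externally infinitely many standard formulae $\delta$ and for all $s \in \mathcal{S}$, we may instead quantify internally over the $\mathcal{N}$-coded set $\bar\Delta_0^\mathcal{P}[x, y, z]$ of all $\Delta_0^\mathcal{P}$-formula codes (with free variables among $x,y,z$) together with all $s \in V_\alpha^\mathcal{N}$, a set that strictly contains the cases of interest.

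Working in $\mathcal{N}$, I would apply Strong $\Sigma_1^\mathcal{P}$-Collection (available by the earlier proposition that $\mathrm{KP}^\mathcal{P} + \Sigma_1^\mathcal{P}\textnormal{-Separation}$ proves it) to the set $\bar\Delta_0^\mathcal{P}[x, y, z] \times V_\alpha$ and to the formula $\mathrm{Sat}_{\Delta_0^\mathcal{P}}(\delta, x, n, s)$, collecting in the variable $x$. The instance is legitimate since $\mathrm{Sat}_{\Delta_0^\mathcal{P}}$ is $\Delta_1$ over $\mathrm{KP}^\mathcal{P}$ by Proposition \ref{complexity of satisfaction predicates}, hence $\Sigma_1^\mathcal{P}$. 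This yields a set $Y$ such that, for every pair $\langle \delta, s \rangle \in \bar\Delta_0^\mathcal{P}[x, y, z] \times V_\alpha$, if $\exists x .\, \mathrm{Sat}_{\Delta_0^\mathcal{P}}(\delta, x, n, s)$ then already $\exists x \in Y .\, \mathrm{Sat}_{\Delta_0^\mathcal{P}}(\delta, x, n, s)$. Setting $\beta = \rnk(Y)$ gives $Y \subseteq V_\beta$, so every collected witness has rank below $\beta$.

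It then remains to transfer back to genuine satisfaction in the metatheory. Let $\delta(x, y, z) \in \Delta_0^\mathcal{P}[x, y, z]$ be a standard formula and $s \in \mathcal{S}$, and suppose $\mathcal{N} \models \exists x .\, \delta(x, n, s)$. By Theorem \ref{partial satisfaction classes} this is equivalent to $\mathcal{N} \models \exists x .\, \mathrm{Sat}_{\Delta_0^\mathcal{P}}(\dot\delta, x, n, s)$; since $\dot\delta \in \bar\Delta_0^\mathcal{P}[x, y, z]$ and $s \in V_\alpha^\mathcal{N}$, the conclusion of the collection step supplies a witness $x \in Y \subseteq V_\beta$ with $\mathrm{Sat}_{\Delta_0^\mathcal{P}}(\dot\delta, x, n, s)$, and applying Theorem \ref{partial satisfaction classes} once more gives $\mathcal{N} \models \delta(x, n, s)$ with $x \in V_\beta$, i.e. $\mathcal{N} \models \exists x \in V_\beta .\, \delta(x, n, s)$. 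The reverse implication of the biconditional is immediate. The one point requiring care — which I regard as the crux rather than a real obstacle — is the interface between the single internal quantification over the $\mathcal{N}$-coded formula codes (which includes nonstandard codes) and the external schema ranging over standard $\delta$: the bound $\beta$ works for every standard $\delta$ precisely because the collection step ranged over the larger internal set, and the correctness of $\mathrm{Sat}_{\Delta_0^\mathcal{P}}$ for standard syntax, furnished by Theorem \ref{partial satisfaction classes}, is exactly what licenses the translation in both directions.
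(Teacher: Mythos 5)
Your proposal is correct and follows essentially the same route as the paper's proof: fix an internal bound $V_\alpha$ for $\mathcal{S}$, apply Strong $\Sigma_1^\mathcal{P}$-Collection (licensed by $\Sigma_1^\mathcal{P}$-Separation) to the internally coded set $\bar\Delta_0^\mathcal{P}[x,y,z] \times V_\alpha$ with the $\Delta_1^\mathcal{P}$ formula $\mathrm{Sat}_{\Delta_0^\mathcal{P}}$, take $\beta$ to be the rank of the collected set, and transfer back to standard formulae via Theorem \ref{partial satisfaction classes}. The paper's write-up is terser (it leaves the final transfer step to "the properties of $\mathrm{Sat}_{\Delta_0^\mathcal{P}}$"), but the argument is the same.
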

\begin{proof}
	Let $\nu$ be an infinite ordinal in $\mathcal{N}$ such that $\mathcal{S} \subseteq \mathcal{N}_\nu$. We work in $\mathcal{N}$: Let $A = \Delta_0^\mathcal{P}[x, y, z] \times V_\nu$. By Strong $\Sigma_1^\mathcal{P}$-Collection there is a set $B$, such that for all $\langle  \delta , t\rangle \in A$, if $\exists x . \mathrm{Sat}_{\Delta_0^\mathcal{P}}( \delta , x, n, t)$, then there is $b \in B$ such that $\mathrm{Sat}_{\Delta_0^\mathcal{P}}(\delta, b, n, t)$. Setting $\beta = \rnk(B)$, the claim of the lemma follows from the properties of $\mathrm{Sat}_{\Delta_0^\mathcal{P}}$.
\end{proof}

\begin{lemma}\label{Bound for pi_2 lemma}
Let $\mathcal{N} \models \mathrm{KP}^\mathcal{P} + \Sigma_2^\mathcal{P}\textnormal{-Separation}$, let $n \in \mathcal{N}$ and let $\mathcal{S}$ be a bounded substructure of $\mathcal{N}$. Then there is an ordinal $\beta \in \mathcal{N}$, such that for each $s \in \mathcal{S}$, and for each $\delta(x, x\hspace{1pt}', y, z) \in \Delta_0^\mathcal{P}[x, x\hspace{1pt}', y, z]$:
	$$(\mathcal{N}, n, s) \models (\forall x . \exists x\hspace{1pt}' . \delta(x, x\hspace{1pt}', n, s)) \rightarrow (\forall x \in V_\beta . \exists x\hspace{1pt}' \in V_\beta . \delta(x, x\hspace{1pt}', n, s)).$$
\end{lemma}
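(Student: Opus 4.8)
The plan is to imitate the proof of Lemma~\ref{Bound for sigma_1 lemma}, but with one essential new feature: because the antecedent $\forall x . \exists x\hspace{1pt}' . \delta$ now carries a leading universal quantifier, the rank of a witness $x\hspace{1pt}'$ genuinely depends on $x$, so a single application of Collection (as in the $\Sigma_1^\mathcal{P}$ case) cannot work; instead I must close off $V_\beta$ under a witness-bounding operation. First I would pass to the internal satisfaction relation. Fix an ordinal $\nu \in \mathcal{N}$ with $\mathcal{S} \subseteq \mathcal{N}_\nu$ and, working inside $\mathcal{N}$, set $D = \Delta_0^\mathcal{P}[x, x\hspace{1pt}', y, z] \times V_\nu$. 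By Theorem~\ref{partial satisfaction classes} (which gives $\mathrm{Sat}_{\Delta_0^\mathcal{P}}(\dot\delta, \cdot) \leftrightarrow \delta$ for standard $\delta$) it suffices to produce a single $\beta$ such that for every internal pair $\langle \delta, s \rangle \in D$, if $\forall x . \exists x\hspace{1pt}' . \mathrm{Sat}_{\Delta_0^\mathcal{P}}(\delta, x, x\hspace{1pt}', n, s)$ then $\forall x \in V_\beta . \exists x\hspace{1pt}' \in V_\beta . \mathrm{Sat}_{\Delta_0^\mathcal{P}}(\delta, x, x\hspace{1pt}', n, s)$; specializing to standard $\delta$ and $s \in \mathcal{S} \subseteq V_\nu$ then yields the statement of the lemma.

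The central device is a one-step bounding relation. Say that $\eta$ is a \emph{bound for} $\xi$, written $\mathrm{Bnd}(\xi, \eta)$, if for all $\langle \delta, s \rangle \in D$ and all $x \in V_\xi$ we have $\exists x\hspace{1pt}' . \mathrm{Sat}_{\Delta_0^\mathcal{P}}(\delta, x, x\hspace{1pt}', n, s) \rightarrow \exists x\hspace{1pt}' \in V_\eta . \mathrm{Sat}_{\Delta_0^\mathcal{P}}(\delta, x, x\hspace{1pt}', n, s)$. For each $\xi$ such an $\eta$ exists: apply Strong $\Sigma_1^\mathcal{P}$-Collection (available, since $\Sigma_2^\mathcal{P}$-Separation subsumes $\Sigma_1^\mathcal{P}$-Separation, which yields Strong $\Sigma_1^\mathcal{P}$-Collection by the Proposition proved earlier in this chapter) to the set of triples $D \times V_\xi$ and the $\Sigma_1^\mathcal{P}$-formula $\exists x\hspace{1pt}' . \mathrm{Sat}_{\Delta_0^\mathcal{P}}(\delta, x, x\hspace{1pt}', n, s)$, obtaining a set $B$ that collects the witnesses, and take $\eta = \rnk(B)$. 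This is exactly the argument of Lemma~\ref{Bound for sigma_1 lemma}, now carried out with $x$ as an extra input coordinate. Note that $\mathrm{Bnd}$ is monotone in $\eta$, and that, since both $\exists x\hspace{1pt}' . (\cdots)$ and $\exists x\hspace{1pt}' \in V_\eta . (\cdots)$ are $\Sigma_1^\mathcal{P}$, the relation $\mathrm{Bnd}(\xi, \eta)$ is $\Pi_2^\mathcal{P}$ (its negation is a bounded existential over a boolean combination of $\Sigma_1^\mathcal{P}$-formulae, hence $\Sigma_2^\mathcal{P}$).

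With $\mathrm{Bnd}$ in hand I would form a closure ordinal. Let $\beta_0$ be any ordinal with $\beta_0 > \rnk(n)$ and $\beta_0 \geq \nu$, let $\beta_{k+1}$ be the least $\eta > \beta_k$ with $\mathrm{Bnd}(\beta_k, \eta)$, and set $\beta = \sup_{k < \omega} \beta_k$, a limit ordinal with $V_\beta = \bigcup_{k < \omega} V_{\beta_k}$. Then $\beta$ works: given a pair $\langle \delta, s \rangle$ satisfying $\forall x . \exists x\hspace{1pt}' . \mathrm{Sat}_{\Delta_0^\mathcal{P}}(\delta, x, x\hspace{1pt}', n, s)$ and any $x \in V_\beta$, we have $x \in V_{\beta_k}$ for some $k$; the full statement supplies $\exists x\hspace{1pt}' . \mathrm{Sat}_{\Delta_0^\mathcal{P}}(\delta, x, x\hspace{1pt}', n, s)$, and $\mathrm{Bnd}(\beta_k, \beta_{k+1})$ then produces a witness in $V_{\beta_{k+1}} \subseteq V_\beta$, as required.

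The hard part is making the construction in the previous paragraph internal, and this is precisely where the hypothesis $\Sigma_2^\mathcal{P}$-Separation is spent. Because $\mathrm{Bnd}$ is $\Pi_2^\mathcal{P}$ rather than $\Sigma_1^\mathcal{P}$, the $\Sigma_1^\mathcal{P}$-Recursion Theorem does not by itself define the tower $\langle \beta_k \rangle_{k < \omega}$. Selecting the least $\eta$ with $\mathrm{Bnd}(\beta_k, \eta)$ uses $\Pi_2^\mathcal{P}$-Foundation, which $\mathrm{KP}^\mathcal{P} + \Sigma_2^\mathcal{P}\textnormal{-Separation}$ proves via the earlier Proposition yielding $\mathrm{B}(\Sigma_k^\mathcal{P})\textnormal{-Foundation}$ at $k = 2$; the step operation $G(\xi) = $ ``the least bound for $\xi$ exceeding $\xi$'' is then total and functional. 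To assemble $\langle \beta_k \rangle$ as a set I would iterate $G$ along $\omega$ through finite approximations: ``$p$ is a $G$-chain of length $k+1$'' is $\Pi_2^\mathcal{P}$, uniqueness follows from functionality of $G$, and existence follows by induction on $k \in \omega$ using totality of $G$ together with $\Sigma_2^\mathcal{P}$-Foundation. The delicate point I expect to be the main obstacle is keeping this existence-of-approximation statement at the $\Pi_2^\mathcal{P}$ level so that the induction is licensed: the naive ``$\exists p$'' is $\Sigma_3^\mathcal{P}$, so I would bound $p$ inside a set at each stage and exploit that, under $\Sigma_2^\mathcal{P}$-Collection (itself a consequence of $\Sigma_2^\mathcal{P}$-Separation), both $\Sigma_2^\mathcal{P}$ and $\Pi_2^\mathcal{P}$ are closed under bounded quantification, turning the bounded existential over the $\Pi_2^\mathcal{P}$ chain-relation back into a $\Pi_2^\mathcal{P}$ formula. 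Finally, collecting the chains over $\omega$ by $\Sigma_2^\mathcal{P}$-Collection gives $h : \omega \rightarrow \mathrm{Ord}$ with $h(k) = \beta_k$, and $\beta = \sup \mathrm{image}(h)$ exists by Union.
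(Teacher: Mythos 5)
Your overall architecture (an $\omega$-tower of bounding ordinals built by witness-collection, with $\beta$ the supremum) matches the paper's, and several ingredients are sound: the one-step existence of a bound via Strong $\Sigma_1^\mathcal{P}$-Collection, the computation that your conditional relation $\mathrm{Bnd}(\xi,\eta)$ is $\Pi_2^\mathcal{P}$, and the use of $\mathrm{B}(\Sigma_2^\mathcal{P})$-Foundation to pick least bounds. But there is a genuine gap at exactly the point you flag as delicate: internalizing the iteration. Your repair rests on the claim that $\Sigma_2^\mathcal{P}$-Collection is ``a consequence of $\Sigma_2^\mathcal{P}$-Separation'', and on the closure of $\Sigma_2^\mathcal{P}$ and $\Pi_2^\mathcal{P}$ under bounded quantification, which amounts to $\Pi_1^\mathcal{P}$-Collection. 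Neither is available here. The paper proves collection principles only at level one ($\mathrm{KP}^\mathcal{P} \vdash \Sigma_1^\mathcal{P}$-Collection, and $\mathrm{KP}^\mathcal{P} + \Sigma_1^\mathcal{P}\textnormal{-Separation} \vdash$ Strong $\Sigma_1^\mathcal{P}$-Collection), and there is no theorem deriving level-two collection from level-two separation. Nor should one expect it: the arithmetic analogue $\mathrm{I}\Sigma_2 \vdash \mathrm{B}\Sigma_2$ is proved by an induction in which each new initial segment adds a \emph{single} element, so only one new witness must be absorbed at each step; in set theory a new rank level adds set-many elements, so that proof does not transfer, and separation does not yield collection (compare Zermelo set theory, where full Separation coexists with failures of Collection). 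Without that collection, ``$p$ is a $G$-chain'' cannot be kept at the $\Pi_2^\mathcal{P}$ level, the induction licensing the existence of approximations is not licensed by any foundation schema you have, and the function $k \mapsto \beta_k$ is never obtained as a set in $\mathcal{N}$ --- which is essential, since $\beta$ must be an ordinal \emph{of} $\mathcal{N}$, not an external supremum.

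The paper avoids this entirely by spending $\Sigma_2^\mathcal{P}$-Separation \emph{before} the recursion rather than inside it: it forms the set
\[
D = \{ \langle \delta, t \rangle \in \Delta_0^\mathcal{P}[x, x\hspace{1pt}', y, z] \times V_{\beta_0} \mid \forall x . \exists x\hspace{1pt}' . \mathrm{Sat}_{\Delta_0^\mathcal{P}}(\delta, x, x\hspace{1pt}', n, t) \},
\]
i.e.\ it filters the formula--parameter pairs by the $\Pi_2^\mathcal{P}$ hypothesis in one application of $\Sigma_2^\mathcal{P}$-Separation (equivalently $\Pi_2^\mathcal{P}$-Separation). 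For pairs in this filtered $D$, witnesses always exist, so the bounding relation needs no conditional clause: the step formula ``$\forall \langle \delta, t \rangle \in D . \forall x \in V_{\beta_k} . \exists x\hspace{1pt}' \in V_{\beta_{k+1}} . \mathrm{Sat}_{\Delta_0^\mathcal{P}}(\delta, x, x\hspace{1pt}', n, t)$'' has only bounded quantifiers over the $\Delta_1^\mathcal{P}$ relation $\mathrm{Sat}_{\Delta_0^\mathcal{P}}$, hence is $\Sigma_1^\mathcal{P}$ (indeed $\Delta_1^\mathcal{P}$), its totality follows from $\Sigma_1^\mathcal{P}$-Collection, and $\Sigma_1^\mathcal{P}$-Recursion --- available already in $\mathrm{KP}^\mathcal{P}$ --- produces the set $\{\beta_k \mid k < \omega\}$ inside $\mathcal{N}$. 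Your argument becomes correct if you make the same move: replace your unfiltered $D = \Delta_0^\mathcal{P}[x,x\hspace{1pt}',y,z] \times V_\nu$ by the filtered set above, whereupon your $\mathrm{Bnd}$ loses its antecedent, drops from $\Pi_2^\mathcal{P}$ to $\Delta_1^\mathcal{P}$, and your entire apparatus of $G$-chains, $\Pi_2^\mathcal{P}$-Foundation and level-two collection can be discarded in favour of $\Sigma_1^\mathcal{P}$-Recursion.
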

\begin{proof}
Let $\beta_0$ be an infinite ordinal in $\mathcal{N}$ such that $\mathcal{S} \subseteq \mathcal{N}_{\beta_0}$. We work in $\mathcal{N}$: By $\Sigma_2^\mathcal{P}$-Separation (which is equivalent to $\Pi_2^\mathcal{P}$-Separation), let 
$$D = \{ \langle \delta, t \rangle \in \Delta_0^\mathcal{P}[x, x\hspace{1pt}', y, z] \times V_{\beta_0} \mid \forall x . \exists x\hspace{1pt}' . \mathrm{Sat}_{\Delta_0}^\mathcal{P}(\delta, x, x\hspace{1pt}', n, t) \}.$$ 
Recursively, for each $k < \omega$, let $\beta_{k+1}$ be the least ordinal such that 
$$\forall \langle \delta, t \rangle \in D . \big( \forall x \in V_{\beta_k} . \exists x\hspace{1pt}' \in V_{\beta_{k+1}} . \mathrm{Sat}_{\Delta_0^\mathcal{P}}(\delta, x, x\hspace{1pt}', n, s) \big).$$ 
The existence of the set $\{ \beta_k \mid k < \omega \}$ follows from $\Sigma_1^\mathcal{P}$-Recursion, because the functional formula defining the recursive step is $\Sigma_1^\mathcal{P}$, as seen when written out as $\phi(\beta_k, \beta_{k+1}) \wedge \forall \gamma < \beta_{k+1} . \neg \phi(\beta_k, \gamma),$ where $\phi(\beta_k, \beta_{k+1})$ is the formula $\forall \langle \delta, t \rangle \in D . \big( \forall x \in V_{\beta_k} . \exists x\hspace{1pt}' \in V_{\beta_{k+1}} . \mathrm{Sat}_{\Delta_0^\mathcal{P}}(\delta, x, x\hspace{1pt}', n, s) \big)$. Put $\beta = \mathrm{sup}\{ \beta_k \mid k < \omega \}$.

Let $s \in \mathcal{S}$ and let $\delta(x, x\hspace{1pt}', y, z) \in \Delta_0^\mathcal{P}[x, x\hspace{1pt}', y, z]$. To verify that 
$$(\mathcal{N}, n, s) \models (\forall x . \exists x\hspace{1pt}' . \delta(x, x\hspace{1pt}', n, s)) \rightarrow (\forall x \in V_\beta . \exists x\hspace{1pt}' \in V_\beta . \delta(x, x\hspace{1pt}', n, s)),$$ 
we work in $(\mathcal{N}, n, s)$: Suppose that $\forall x . \exists x\hspace{1pt}' . \delta(x, x\hspace{1pt}', n, s)$, and let $x \in V_\beta$. Then $x \in V_{\beta_k}$ for some $k < \omega$. By construction, there is $x\hspace{1pt}' \in V_{\beta_{k+1}}$, such that $\mathrm{Sat}_{\Delta_0^\mathcal{P}}(\delta, x, x\hspace{1pt}', n, s)$. So by the properties of $\mathrm{Sat}_{\Delta_0^\mathcal{P}}$, we have $\delta(x, x\hspace{1pt}', n, s)$, as desired.
\end{proof}

\begin{lemma}\label{Friedman thm extra 2}
	Under the assumptions of Theorem \ref{Friedman thm}, for each embedding $i_1 : \mathcal{M} \rightarrow \mathcal{N}$ satisfying \textnormal{(\ref{Friedman thm emb})} of Theorem \ref{Friedman thm}, there is an embedding $i_0 <^\rnk i_1$ satisfying \textnormal{(\ref{Friedman thm emb})}.
\end{lemma}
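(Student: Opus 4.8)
The idea is to re-apply the Friedman-style Theorem \ref{Friedman thm}, but with a strictly smaller bound than $\beta$, namely a bound sitting properly inside the image of $i_1$. Concretely, I will locate an ordinal $\mu_0 \in \mathrm{Ord}^\mathcal{M}$ that bounds all the $\Sigma_1^\mathcal{P}$-witnesses of $m_0$ over $\mathcal{S}$ inside $\mathcal{M}$ itself, transport it to $\beta_0 =_\df i_1(\mu_0)$ in $\mathcal{N}$, and run Theorem \ref{Friedman thm} again with target bound $\beta_0$ to obtain $i_0$ with $i_0(m_0)=n_0$ and $i_0(\mathcal{M}) \subseteq \mathcal{N}_{\beta_0}$. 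Since $\beta_0 = i_1(\mu_0)$ lies in $i_1(\mathcal{M})$ and is itself of rank $\beta_0$, it witnesses that $i_0(\mathcal{M}) \subsetneq i_1(\mathcal{M})$. Throughout I treat $i_1$ as an embedding over $\mathcal{S}$ (fixing $\mathcal{S}$ pointwise), which is the relevant case for the decorated relation $<^\rnk$.

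\textbf{Choice of bound and verification of (b).} Apply Lemma \ref{Bound for sigma_1 lemma} to $\mathcal{M} \models \mathrm{KP}^\mathcal{P} + \Sigma_1^\mathcal{P}\textnormal{-Separation}$, the element $m_0$, and the bounded substructure $\mathcal{S}$; this yields an ordinal which, after enlarging it if necessary, we may take to be a $\mu_0 \in \mathrm{Ord}^\mathcal{M}$ with $\rnk^\mathcal{M}(m_0) < \mu_0$ and such that for every $s \in \mathcal{S}$ and every $\delta \in \Delta_0^\mathcal{P}[x,y,z]$,
\[
\mathcal{M} \models (\exists x . \delta(x, m_0, s)) \leftrightarrow (\exists x \in V_{\mu_0} . \delta(x, m_0, s)).
\]
Set $\beta_0 = i_1(\mu_0)$. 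Since $i_1$ is rank-initial, it is $\mathcal{P}$-initial and hence $\Delta_0^\mathcal{P}$-elementary (Proposition \ref{emb pres} (\ref{emb pres Delta_0^P})), and it preserves the $V$-hierarchy, so $i_1(V_{\mu_0}^\mathcal{M}) = V_{\beta_0}^\mathcal{N}$ by Corollary \ref{rank-init equivalences in KPP}. Now if $\mathcal{M} \models \exists x . \delta(x, m_0, s)$, pick a witness $w \in^\mathcal{M} V_{\mu_0}^\mathcal{M}$ with $\mathcal{M} \models \delta(w, m_0, s)$; applying $i_1$ gives $i_1(w) \in^\mathcal{N} V_{\beta_0}^\mathcal{N}$ and, since $i_1(m_0)=n_0$ and $i_1$ fixes $s$, $\mathcal{N} \models \delta(i_1(w), n_0, s)$, whence $\mathcal{N} \models \exists x \in V_{\beta_0} . \delta(x, n_0, s)$. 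Together with $\mathrm{SSy}_\mathcal{S}(\mathcal{M}) \cong \mathrm{SSy}_\mathcal{S}(\mathcal{N})$, which holds by Proposition \ref{emb pres} (\ref{emb pres SSy_S eq}) applied to $i_1$, this is precisely condition (\ref{Friedman thm pres}) of Theorem \ref{Friedman thm} with the ordinal $\beta_0$ in place of $\beta$. Hence (\ref{Friedman thm emb}) of that theorem supplies a rank-initial embedding $i_0 : \mathcal{M} \rightarrow \mathcal{N}$ over $\mathcal{S}$ with $i_0(m_0) = n_0$ and $i_0(\mathcal{M}) \subseteq \mathcal{N}_{\beta_0}$.

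\textbf{Comparison $i_0 <^\rnk i_1$.} Because $i_1$ is rank-initial and preserves rank, every element of $\mathcal{N}$ of rank below $\beta_0 = i_1(\mu_0)$ lies in $i_1(\mathcal{M})$; thus $\mathcal{N}_{\beta_0} \subseteq i_1(\mathcal{M})$, and so $i_0(\mathcal{M}) \subseteq \mathcal{N}_{\beta_0} \subseteq i_1(\mathcal{M}) \subseteq \mathcal{N}_\beta$, which in particular re-confirms that $i_0$ satisfies (\ref{Friedman thm emb}). The inclusion $i_0(\mathcal{M}) \hookrightarrow i_1(\mathcal{M})$ is rank-initial: both images are rank-initial in $\mathcal{N}$, so any element of $i_1(\mathcal{M})$ of rank at most that of some element of $i_0(\mathcal{M})$ already belongs to $i_0(\mathcal{M})$ by rank-initiality of $i_0$ in $\mathcal{N}$. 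It is over $\mathcal{S}$ since $i_0$ and $i_1$ both fix $\mathcal{S}$ pointwise. Finally it is proper: $\beta_0 = i_1(\mu_0) \in i_1(\mathcal{M})$, but $\rnk^\mathcal{N}(\beta_0) = \beta_0$, so $\beta_0 \notin \mathcal{N}_{\beta_0} \supseteq i_0(\mathcal{M})$, giving $i_0(\mathcal{M}) \subsetneq i_1(\mathcal{M})$. Hence $i_0(\mathcal{M}) <^\rnk_\mathcal{S} i_1(\mathcal{M})$, i.e. $i_0 <^\rnk i_1$, as required. The one point demanding care — and the main obstacle — is the faithful transfer of the witness bound from $\mathcal{M}$ to $\mathcal{N}$: it is exactly the combination of rank-initiality of $i_1$ with $V$-preservation (Corollary \ref{rank-init equivalences in KPP}) that makes $\beta_0 = i_1(\mu_0)$ simultaneously a legitimate Friedman bound verifying (\ref{Friedman thm pres}) and an ordinal of $i_1(\mathcal{M})$ strictly above $i_0(\mathcal{M})$, yielding properness.
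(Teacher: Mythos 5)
Your proof is correct and takes essentially the same route as the paper's own: both apply Lemma \ref{Bound for sigma_1 lemma} inside $\mathcal{M}$ to get an internal witness bound (your $\mu_0$, the paper's $\alpha$), transfer it along $i_1$ by $\Delta_0^\mathcal{P}$-elementarity (Proposition \ref{emb pres}) to verify condition (\ref{Friedman thm pres}) of Theorem \ref{Friedman thm} with bound $i_1(\mu_0)$, invoke that theorem to obtain $i_0$ with $i_0(\mathcal{M}) \subseteq \mathcal{N}_{i_1(\mu_0)}$, and conclude properness from $i_1(\mu_0) \in i_1(\mathcal{M}) \setminus i_0(\mathcal{M})$. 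Your additional verifications — the SSy hypothesis via Proposition \ref{emb pres}, rank-initiality of the inclusion of images, and the explicit remark that $i_1$ is taken over $\mathcal{S}$ — only spell out steps the paper leaves implicit.
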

\begin{proof}
	By Lemma \ref{Bound for sigma_1 lemma}, there is $\alpha \in \mathrm{Ord}^\mathcal{M}$, such that 
	for all $s \in \mathcal{S}$ and all $\delta(x, y, z) \in \Delta_0^\mathcal{P}[x, y, z]$:
	\[
	\mathcal{M} \models \exists x . \delta(x, m, s) \Leftrightarrow \mathcal{M} \models \exists x \in V_\alpha . \delta(x, m, s).
	\]
	By Proposition \ref{emb pres}(\ref{emb pres Delta_0^P}) applied to $i_1$, we have for all $s \in \mathcal{S}$ and all $\delta(x, y, z) \in \Delta_0^\mathcal{P}[x, y, z]$ that
	\[
	\mathcal{M} \models \exists x \in V_\alpha . \delta(x, m, s) \Rightarrow \mathcal{N} \models \exists x \in V_{i_1(\alpha)} . \delta(x, n, s),
	\]
	and consequently that 
	\[
	\mathcal{M} \models \exists x . \delta(x, m, s) \Rightarrow \mathcal{N} \models \exists x \in V_{i_1(\alpha)} . \delta(x, n, s).
	\]
	So by (\ref{Friedman thm pres}) $\Rightarrow$ (\ref{Friedman thm emb}), there is a rank-initial embedding $i_0 : \mathcal{M} \rightarrow \mathcal{N}$, such that $i_0(m) = n$ and $i_0(\mathcal{M}) \subseteq V_{i_1(\alpha)}^\mathcal{N}$. Since $i_1(\alpha) \in i_1(\mathcal{M}) \setminus i_0(\mathcal{M})$, we are done.
\end{proof}

\begin{lemma}\label{Friedman thm extra}
	These statements are equivalent to (\ref{Friedman thm emb}) in Theorem \ref{Friedman thm}:
	\begin{enumerate}[{\normalfont (a)}]
		\setcounter{enumi}{2}
		\item There is a map $g \mapsto i_g$, from sequences $g : \omega \rightarrow 2$, to embeddings $i_g : \mathcal{M} \rightarrow \mathcal{N}$ satisfying \textnormal{(\ref{Friedman thm emb})}, such that for any $g <^\mathrm{lex} g\hspace{1pt}' : \omega \rightarrow 2$, we have $i_g <^\rnk i_{g\hspace{1pt}'}$.
		\item There is a topless embedding $i : \mathcal{M} \rightarrow \mathcal{N}$ satisfying \textnormal{(\ref{Friedman thm emb})}.
	\end{enumerate}
\end{lemma}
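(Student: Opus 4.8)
The two implications from \textnormal{(c)} and \textnormal{(d)} to \textnormal{(\ref{Friedman thm emb})} are immediate, since any single $i_g$ in \textnormal{(c)}, and the embedding $i$ in \textnormal{(d)}, already witnesses \textnormal{(\ref{Friedman thm emb})}. So the content is to deduce \textnormal{(c)} and \textnormal{(d)} from \textnormal{(\ref{Friedman thm emb})}, and the engine for both is Lemma \ref{Friedman thm extra 2}, which I will first recast as a statement about \emph{bounds}. Call $\beta \in \mathrm{Ord}^\mathcal{N}$ \emph{good} if \textnormal{(\ref{Friedman thm pres})} of Theorem \ref{Friedman thm} holds with this $\beta$; by the equivalence \textnormal{(\ref{Friedman thm pres})}$\Leftrightarrow$\textnormal{(\ref{Friedman thm emb})} this is exactly the set of $\beta$ admitting an embedding as in \textnormal{(\ref{Friedman thm emb})}, and it is upward closed in $\mathrm{Ord}^\mathcal{N}$. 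Re-running the proof of Lemma \ref{Friedman thm extra 2} shows that the set $G$ of good bounds has \emph{no least element}: from a good $\beta_1$ with witness $i_1$, the ordinal $i_1(\alpha)$ (where $\alpha$ is the $\Sigma_1^\mathcal{P}$-bound of $m_0$ over $\mathcal{S}$ supplied by Lemma \ref{Bound for sigma_1 lemma}) is again good and lies strictly below $\beta_1$, since $i_1(\alpha) \in i_1(\mathcal{M}) \subseteq \mathcal{N}_{\beta_1}$.

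The second structural fact I will exploit is that a rank-initial embedding is determined, as far as its image is concerned, by the initial segment it carves out of the ordinals: by rank-initiality, $n \in i(\mathcal{M})$ iff $\rnk^\mathcal{N}(n) \in i(\mathrm{Ord}^\mathcal{M})$. Writing $\mathcal{N}_c =_\df \{n \in \mathcal{N} \mid \rnk^\mathcal{N}(n) \in c\}$ for a cut $c$ of $\mathrm{Ord}^\mathcal{N}$, the image of any embedding satisfying \textnormal{(\ref{Friedman thm emb})} is literally $\mathcal{N}_{c}$ for $c = i(\mathrm{Ord}^\mathcal{M})$. Hence if two such embeddings have height-cuts $c \subsetneq c'$, their images nest \emph{automatically}, and the inclusion $\mathcal{N}_c \hookrightarrow \mathcal{N}_{c'}$ is a proper rank-initial embedding over $\mathcal{S}$, i.e. $i <^\rnk i'$. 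Thus both \textnormal{(c)} and \textnormal{(d)} reduce to building embeddings with prescribed height-cuts. To this end I will force with the poset of Lemma \ref{Friedman lemma} but aimed at a rank-cut $\mathcal{N}_c$ in place of a single $\mathcal{N}_\beta$, using the dense sets $\mathcal{C}_m,\mathcal{D}_{m,n}$ of that lemma (to make $\bigcup\mathcal{I}$ a total rank-initial embedding) together with, for each $\gamma \in c$, a dense set forcing some image ordinal above $\gamma$ (so that the height-cut is cofinal in, hence equal to, $c$).

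For \textnormal{(d)} I take $c = C =_\df \bigcap_{\beta \in G}\mathcal{N}_\beta$, the cut sitting just below the good bounds, so that $\mathrm{Ord}^\mathcal{N}\cap C = \{\gamma : \gamma < \beta \text{ for all good }\beta\}$. Because $G$ has no least element, its complement $\{\mu : \mu \geq \beta \text{ for some good }\beta\}$ has no least element either; consequently an embedding of height-cut exactly $C$ is automatically topless. The embedding is produced by the forcing above, targeted at $\mathcal{N}_C$, so this amounts to checking that the forward-density argument of Lemma \ref{Friedman lemma}\textnormal{(\ref{Friedman lemma forwards})} can always place its witness strictly below $C$. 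The guiding observation is that for a single true $\Sigma_1^\mathcal{P}$-fact the goodness of \emph{every} $\beta \in G$ yields a witness of rank below that $\beta$, so the least admissible witness rank is an ordinal lying below all good bounds, i.e. already inside $C$; the overspill that produces the simultaneous witness $\nu$ for the coded type must then be arranged to yield a witness of rank in $C$.

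For \textnormal{(c)} I realise continuum-many distinct, linearly ordered height-cuts by a splitting construction on $2^{<\omega}$. To each $\sigma \in 2^{<\omega}$ I attach a good bound $b_\sigma \in \mathrm{Ord}^\mathcal{N}\setminus\mathcal{S}$, refining a nested family of ordinal intervals so that $b_\sigma$ separates the interval allotted to $\sigma^\frown 0$ (kept below $b_\sigma$) from that allotted to $\sigma^\frown 1$ (kept above $b_\sigma$); this is possible since between $\mathcal{S}$ and the original bound there are infinitely many ordinals, leaving room to split indefinitely. For $g \in 2^\omega$ the intervals nest down to a cut $c_g$ with $g <^\mathrm{lex} g' \Rightarrow c_g \subsetneq c_{g'}$, and I build $i_g$ with image $\mathcal{N}_{c_g}$ by the forcing above; by the cut-characterisation of the previous paragraph this gives $i_g <^\rnk i_{g'}$ precisely when $g <^\mathrm{lex} g'$. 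The main obstacle, common to both parts, is exactly the upgrade of the density Lemma \ref{Friedman lemma} from a fixed ordinal bound $\mathcal{N}_\beta$ to a rank-cut $\mathcal{N}_c$: one must guarantee that the $\Sigma_1^\mathcal{P}$-witnesses demanded at each forcing step — now for conditions whose parameters range over $\mathcal{S}$ and the earlier, already-placed values — can still be found below the target cut rather than merely below one ordinal. Everything else (the trivial directions, the no-least-good-bound fact, the cut-characterisation of $<^\rnk$, and the tree bookkeeping) is routine given Lemmas \ref{Friedman lemma}, \ref{Bound for sigma_1 lemma} and \ref{Friedman thm extra 2}.
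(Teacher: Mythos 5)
Your trivial directions and your structural observations are fine: the image of a rank-initial embedding is indeed $\{n \in \mathcal{N} \mid \rnk^\mathcal{N}(n) \in i(\mathrm{Ord}^\mathcal{M})\}$, nesting of these ordinal cuts is exactly $<^\rnk$, and Lemma \ref{Friedman thm extra 2} does show that the set $G$ of good bounds has no least element. The flaw is the core of your plan: reducing (c) and (d) to realizing a \emph{prescribed} cut as the exact image cut of an embedding. For (d) this is provably impossible. Note first that for any embedding $i$ satisfying (\ref{Friedman thm emb}), every ordinal $\mu \notin i(\mathrm{Ord}^\mathcal{M})$ is itself a good bound (witnessed by $i$, since its image cut is downward closed and hence contained in $[0,\mu)$); thus the complement of every achievable image cut lies inside $G$, and therefore $C = \bigcap_{\beta \in G}[0,\beta)$ is contained in the image cut of \emph{every} embedding satisfying (\ref{Friedman thm emb}). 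If some embedding had image cut exactly $C$, Lemma \ref{Friedman thm extra 2} would produce another embedding satisfying (\ref{Friedman thm emb}) whose image cut is properly contained in $C$, contradicting the previous sentence. So no embedding has height-cut $C$, and your forcing ``targeted at $\mathcal{N}_C$'' cannot succeed: the step you yourself flag as ``the main obstacle'' (keeping the $\Sigma_1^\mathcal{P}$-witnesses of Lemma \ref{Friedman lemma} below the cut $C$) is not merely unresolved, it must fail at some stage, since otherwise the generic map would be an embedding with image cut $C$. The same unaddressed realization problem undermines (c): the cuts $c_g$ you prescribe by nested intervals of good bounds have no reason to belong to the family of achievable image cuts, and you supply no tool for realizing them.

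The correct route (the one the paper takes) never prescribes cuts. For (\ref{Friedman thm emb}) $\Rightarrow$ (c) one builds, along the tree $2^{<\omega}$, an increasing chain of finite subdomains $D_\gamma$ exhausting $\mathcal{M}$ together with total embeddings $i_f$ for $f \in 2^{<\omega}$: at each node, Lemma \ref{Friedman thm extra 2} is applied \emph{to the finite tuple} $D_\gamma$ (in place of $m_0$, with $i_f(D_\gamma)$ in place of $n_0$) to produce $i_{f,0} <^\rnk i_{f,1} = i_f$ agreeing with $i_f$ on $D_\gamma$, and bookkeeping clauses place witnesses (preimages of separating ordinals, and of elements of $\mathcal{N}$ of appropriate rank) into $D_{\gamma+1}$, so that the limit maps $i_g = \bigcup_\gamma i_{g\restriction_\gamma}\restriction_{D_\gamma}$ are rank-initial embeddings satisfying (\ref{Friedman thm emb}) with $i_g <^\rnk i_{g\hspace{1pt}'}$ whenever $g <^\mathrm{lex} g\hspace{1pt}'$; the image cuts are whatever they turn out to be. Then (c) $\Rightarrow$ (d) is a short counting argument: distinct nested rank-initial images have distinct suprema whenever those suprema exist, and the countable model $\mathcal{N}$ has only $\aleph_0$ ordinals available to serve as suprema, so among the continuum many images all but countably many are topless. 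That non-constructive step — obtaining toplessness from abundance rather than by aiming at a canonical topless cut — is precisely the ingredient your proposal is missing.
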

\begin{proof}
	It suffices to show that (\ref{Friedman thm emb}) $\Rightarrow$ (\ref{Friedman thm extra continuum}) $\Rightarrow$ (\ref{Friedman thm extra topless}).
	
	(\ref{Friedman thm emb}) $\Rightarrow$ (\ref{Friedman thm extra continuum}): 
	 Let $(a_\xi)_{\xi < \omega}$ and $(b_\xi)_{\xi < \omega}$ be enumerations of $\mathcal{M}$ and $\mathcal{N}$, respectively, with infinitely many repetitions of each element. For each $g : \omega \rightarrow 2$, we shall construct a distinct $i_g : \mathcal{M} \rightarrow \mathcal{N}$. To do so, we first construct approximations of the $i_g$.
	 
	 For any $\gamma < \omega$, we allow ourselves to denote any function $f : \gamma \rightarrow 2$ as an explicit sequence of values $f\hspace{2pt}(0),f\hspace{2pt}(1), \dots, f\hspace{2pt}(\gamma - 1)$. For each $\gamma < \omega$, we shall construct a finite subdomain $D_\gamma \subseteq \mathcal{M}$, and for each $f : \gamma \rightarrow 2$, we shall construct an embedding $i_f$. We do so by this recursive construction on $\gamma < \omega$:
	 \begin{enumerate}
	 	\item For $i_\varnothing : \mathcal{M} \rightarrow \mathcal{N}$, choose any embedding satisfying (\ref{Friedman thm emb}).
	 	\item Put $D_\varnothing = \{m\}$.
	 	\item Suppose that $i_f$ has been constructed for some $f : \gamma \rightarrow 2$, where $\gamma < \omega$. Put $i_{f, 1} = i_f$. Applying Lemma \ref{Friedman thm extra 2} to $i_{f,1}$, with $D_\gamma^\mathcal{M}$ in place of $m$ and with $i_f\hspace{2pt}(D_\gamma^\mathcal{M})$ in place of $n$, we choose an embedding $i_{f, 0} : \mathcal{M} \rightarrow \mathcal{N}$ such that 
	 	\begin{enumerate}[(i)]
	 		\item $i_{f, 0}, i_{f, 1}$ are rank-initial, with all values of rank below $\beta$ in $\mathcal{N}$.
	 		\item $i_{f, 0}\restriction_{D_\gamma} = i_{f, 1}\restriction_{D_\gamma} = i_f\restriction_{D_\gamma}$,
	 		\item $i_{f,0} <^\rnk i_{f,1}$.
	 	\end{enumerate}
	 	\item Put $D_{\gamma + 1}$ to be a finite subdomain of $\mathcal{M}$, such that
	 	\begin{enumerate}[(i)]
	 		\item $D_{\gamma} \subseteq D_{\gamma + 1}$, 
	 		\item $a_\gamma \in D_{\gamma + 1}$,
	 		\item If $\mathcal{N} \models \rnk(b_\gamma) \leq \sup\{\rnk(a) \mid a \in D_\gamma\}$, then we have that $i_{f, 0}^{-1}(b_\gamma), i_{f, 1}^{-1}(b_\gamma) \in D_{\gamma + 1}$,
	 		\item For each $f : \gamma \rightarrow 2$, we have that $i_{f, 1}^{-1}(\nu) \in D_{\gamma + 1}$, for some $\nu \in i_{f, 1}(\mathrm{Ord}^\mathcal{M}) \setminus i_{f, 0}(\mathrm{Ord}^\mathcal{M})$.
	 	\end{enumerate}
	 \end{enumerate}
	 Note that every $a \in \mathcal{M}$ is in $D_\gamma$ for some $\gamma < \omega$. Moreover, for every $\gamma < \omega$, if $f <^\mathrm{lex} f\hspace{2pt}' : \gamma \rightarrow 2$, then $i_f <^\rnk i_{f\hspace{2pt}'}$.
	 
	 Now, for each $g : \omega \rightarrow 2$, define $i_g : \mathcal{M} \rightarrow \mathcal{N}$ by
	 \[
	 	i_g(a) = i_{g\restriction_{\gamma}}(a), 
	 \]
	 for each $a \in \mathcal{M}$, where $\gamma < \omega$ is the least such that $a \in D_\gamma$.
	 Note that for each $\gamma < \omega$, $i_g\restriction_{D_\gamma} = i_{g\restriction_\gamma}$, so  if $a \in D_\gamma$, then $i_g(a) = i_{g\restriction_\gamma}(a)$. We now verify that these $i_g$ have the desired properties. Let $g : \omega \rightarrow 2$.
	 \begin{enumerate}
	 	\item $i_g$ is an embedding: Let $\phi(x)$ be a quantifier free formula and let $a \in \mathcal{M}$. Then $a \in D_\gamma$ for some $\gamma < \omega$, so since $i_{g\restriction_{\gamma}}$ is an embedding, $\mathcal{M} \models \phi(a) \Rightarrow \mathcal{N} \models \phi(i_g(a)).$
	 	\item $i_g(m) = n$: $m \in D_\varnothing$ and $i_g\restriction_{D_\varnothing} = i_\varnothing\restriction_{D_\varnothing}$.
	 	\item $i_g(\mathcal{M}) \subseteq \mathcal{N}_\beta$: Let $a \in \mathcal{M}$ and pick $\gamma < \omega$ such that $a \in D_\gamma$. Then $i_g(a) = i_{g\restriction_\gamma}(a) \in \mathcal{N}_\beta$.
	 	\item $i_g$ is rank-initial: Let $a \in \mathcal{M}$ and $b \in \mathcal{N}$, such that $\mathcal{N} \models \rnk(b) \leq \rnk(i_g(a))$. Pick $\gamma < \omega$ such that $a \in D_\gamma$. Then $i_{g\restriction_{\gamma + 1}}^{-1}(b) \in D_{\gamma + 1}$. So $b \in i_g(\mathcal{M})$.
	 	\item If $g <^\mathrm{lex} g\hspace{1pt}' : \omega \rightarrow 2$, then $i_g <^\rnk i_{g\hspace{1pt}'}$: Let $\gamma < \omega$ be the least such that $g(\gamma) < g\hspace{1pt}'(\gamma)$. Then by construction of the approximations, $i_{g\hspace{1pt}'} >^\rnk i_{g\restriction_{\gamma + 1}} \geq^\rnk i_g$.
	 \end{enumerate}
	 
	 (\ref{Friedman thm extra continuum}) $\Rightarrow$ (\ref{Friedman thm extra topless}): Since $\mathcal{N}$ is countable, there are only $\aleph_0$ many ordinals in $\mathcal{N}$ which top a substructure, so by (\ref{Friedman thm extra continuum}) we are done.
\end{proof}

The following two corollaries are sharpen the celebrated results in \S 4 of \cite{Fri73}.

\begin{cor}\label{Friedman cor}
Let $\mathcal{M} \models \mathrm{KP}^\mathcal{P}  + \Sigma_1^\mathcal{P}\textnormal{-Separation}$ and $\mathcal{N} \models \mathrm{KP}^\mathcal{P} + \Sigma_1^\mathcal{P}\textnormal{-Separation}$ be countable and non-standard. Let $\mathcal{S}$ be a common rank-initial topless substructure of $\mathcal{M}$ and $\mathcal{N}$. Then the following are equivalent:
\begin{enumerate}[{\normalfont (a)}]
\item There is $i \in \llbracket \mathcal{M} \leq^{\rnk}_\mathcal{S} \mathcal{N} \rrbracket$.
\item[{\normalfont (a')}] There is $i \in \llbracket \mathcal{M} \leq^{\rnk, \mathrm{topless}}_\mathcal{S} \mathcal{N} \rrbracket$.
\item $\mathrm{SSy}(\mathcal{M}) = \mathrm{SSy}(\mathcal{N})$, and $\mathrm{Th}_{\Sigma_1^\mathcal{P}, S}(\mathcal{M}) \subseteq \mathrm{Th}_{\Sigma_1^\mathcal{P}, S}(\mathcal{N})$.
\end{enumerate}
\end{cor}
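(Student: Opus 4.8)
The plan is to derive Corollary \ref{Friedman cor} from the Friedman-style Theorem \ref{Friedman thm} by eliminating the auxiliary parameters $m_0, n_0, \beta$ and by recognizing that, under the hypothesis $\mathcal{N} \models \Sigma_1^\mathcal{P}\text{-Separation}$, the single-point $\Sigma_1^\mathcal{P}$-preservation condition can be boosted to full $\Sigma_1^\mathcal{P}$-theory inclusion over $\mathcal{S}$. The key observation is that Corollary \ref{Friedman cor}(b) asserts exactly the two conjuncts of Theorem \ref{Friedman thm}(\ref{Friedman thm pres}) with the parameters $m_0, n_0$ stripped away: $\mathrm{SSy}(\mathcal{M}) = \mathrm{SSy}(\mathcal{N})$ (noting $\mathcal{S} = \mathrm{WFP}$ is not assumed here, so I must check this matches the $\mathcal{S}$-relative standard system condition), together with $\mathrm{Th}_{\Sigma_1^\mathcal{P}, \mathcal{S}}(\mathcal{M}) \subseteq \mathrm{Th}_{\Sigma_1^\mathcal{P}, \mathcal{S}}(\mathcal{N})$.

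First I would prove (a) $\Rightarrow$ (b): given a rank-initial embedding $i$ over $\mathcal{S}$, Proposition \ref{emb pres}(\ref{emb pres SSy_S eq}) gives $\mathrm{SSy}_\mathcal{S}(\mathcal{M}) \cong \mathrm{SSy}_\mathcal{S}(\mathcal{N})$ (using rank-initiality, toplessness of $\mathcal{S}$, and $\mathcal{N} \models \mathrm{KP}^\mathcal{P}$), and since $i$ fixes $\mathcal{S}$ pointwise this is the identity on $\mathcal{S}$, yielding the standard-system equality. Moreover Proposition \ref{emb pres}(\ref{emb pres Delta_0^P}) shows $i$ preserves $\Sigma_1^\mathcal{P}$-formulae upward, and since $i$ fixes $\mathcal{S}$, any $\Sigma_1^\mathcal{P}$-sentence with parameters in $\mathcal{S}$ true in $\mathcal{M}$ remains true in $\mathcal{N}$, giving the theory inclusion. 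The implication (a$'$) $\Rightarrow$ (a) is trivial, so it remains to prove (b) $\Rightarrow$ (a$'$).

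For (b) $\Rightarrow$ (a$'$), the strategy is to apply Theorem \ref{Friedman thm} with a suitable choice of parameters. Taking $m_0$ to be any fixed element of $\mathcal{S}$ (say the emptyset) and $n_0 = m_0$, the second conjunct of (\ref{Friedman thm pres}) reduces to requiring a bound $\beta$ such that every $\Sigma_1^\mathcal{P}$-truth over $\mathcal{S}$ in $\mathcal{M}$ is witnessed below $V_\beta$ in $\mathcal{N}$. Here is where $\mathcal{N} \models \Sigma_1^\mathcal{P}\text{-Separation}$ is essential: by Lemma \ref{Bound for sigma_1 lemma} applied inside $\mathcal{N}$ (with the bounded substructure $\mathcal{S}$ and parameter $n_0$), there is an ordinal $\beta \in \mathcal{N}$ collapsing the unbounded existential quantifier to a $V_\beta$-bounded one uniformly over all $s \in \mathcal{S}$ and all $\delta$. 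Combined with the theory-inclusion hypothesis $\mathrm{Th}_{\Sigma_1^\mathcal{P}, \mathcal{S}}(\mathcal{M}) \subseteq \mathrm{Th}_{\Sigma_1^\mathcal{P}, \mathcal{S}}(\mathcal{N})$, this supplies precisely the second conjunct of (\ref{Friedman thm pres}) for this $\beta$; the first conjunct is the standard-system isomorphism, which follows from $\mathrm{SSy}(\mathcal{M}) = \mathrm{SSy}(\mathcal{N})$ together with $\mathcal{S}$ being the common cut. Then Theorem \ref{Friedman thm}, via the equivalence (\ref{Friedman thm pres}) $\Rightarrow$ (\ref{Friedman thm extra topless}), yields a topless rank-initial embedding fixing $\mathcal{S}$, establishing (a$'$).

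The main obstacle I anticipate is the bookkeeping around the standard system condition: Corollary \ref{Friedman cor}(b) is phrased with the unsubscripted $\mathrm{SSy}(\mathcal{M}) = \mathrm{SSy}(\mathcal{N})$ (i.e.\ over $\mathrm{WFP}$), whereas Theorem \ref{Friedman thm} demands $\mathrm{SSy}_\mathcal{S}(\mathcal{M}) \cong \mathrm{SSy}_\mathcal{S}(\mathcal{N})$ over the possibly-larger cut $\mathcal{S}$. Reconciling these requires an argument that coded subsets of $\mathcal{S}$ are determined by coded subsets of $\mathrm{WFP}$ together with the $\Sigma_1^\mathcal{P}$-theory data, or else a direct verification that the relevant density lemmas (Lemma \ref{Friedman lemma}) only ever reference codes over $\mathcal{S}$ in a way supplied by the hypotheses; I would need to confirm that the toplessness of $\mathcal{S}$ makes $\mathrm{SSy}_\mathcal{S}$ recoverable, exactly as in Proposition \ref{emb pres}(\ref{emb pres SSy_S eq}). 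Verifying that $m_0 \in \mathcal{S}$ makes the parameter genuinely inert, so that the resulting embedding is over $\mathcal{S}$ rather than merely sending one distinguished point correctly, is the other point demanding care.
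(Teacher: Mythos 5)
Your proposal is, in its main line, exactly the paper's own proof: (a) $\Rightarrow$ (b) via Proposition \ref{emb pres}, the trivial (a') $\Rightarrow$ (a), and (b) $\Rightarrow$ (a') by feeding Theorem \ref{Friedman thm} the parameters $m_0 = \varnothing^\mathcal{M}$, $n_0 = \varnothing^\mathcal{N}$ and the bound $\beta$ supplied by Lemma \ref{Bound for sigma_1 lemma} applied in $\mathcal{N}$ --- which is precisely where the extra hypothesis $\mathcal{N} \models \Sigma_1^\mathcal{P}\textnormal{-Separation}$ is used, then invoking the topless variant (\ref{Friedman thm extra topless}). However, on both of the points you flagged, the repairs you sketch would not work, so they deserve to be settled rather than left as "points demanding care".

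On the standard systems: you cannot recover $\mathrm{SSy}_\mathcal{S}(\mathcal{M}) \cong \mathrm{SSy}_\mathcal{S}(\mathcal{N})$ from $\mathrm{SSy}(\mathcal{M}) = \mathrm{SSy}(\mathcal{N})$ together with $\mathrm{Th}_{\Sigma_1^\mathcal{P},\mathcal{S}}(\mathcal{M}) \subseteq \mathrm{Th}_{\Sigma_1^\mathcal{P},\mathcal{S}}(\mathcal{N})$: each sentence of the theory mentions only finitely many parameters from $\mathcal{S}$, so the theory inclusion cannot transfer a code for an infinite subset of $\mathcal{S}$ from $\mathcal{M}$ to $\mathcal{N}$; and Proposition \ref{emb pres}(\ref{emb pres SSy_S eq}) presupposes the very embedding you are trying to construct, so it is unavailable in the direction (b) $\Rightarrow$ (a'). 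The resolution is that condition (b) of the corollary is to be read over the common cut, i.e.\ as $\mathrm{SSy}_\mathcal{S}(\mathcal{M}) = \mathrm{SSy}_\mathcal{S}(\mathcal{N})$: that is what the forward direction actually establishes (Proposition \ref{emb pres}(\ref{emb pres SSy_S eq}) shows $\mathrm{SSy}_\mathcal{S}$-equality is a necessary condition for (a)), it is what Lemma \ref{Friedman lemma}, and hence Theorem \ref{Friedman thm}(\ref{Friedman thm pres}), consumes, and it matches condition (\ref{Wilkie pres}) of Theorem \ref{Wilkie theorem} as well as the introduction's description of the result; in Friedman's original setting $\mathcal{S} = \mathrm{WFP}(\mathcal{M}) = \mathrm{WFP}(\mathcal{N})$ the two readings coincide, which is where the dropped subscript comes from. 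On the embedding being over $\mathcal{S}$: this has nothing to do with $m_0$ being inert. The forcing conditions lie in $\mathbb{P} = \llbracket \mathcal{M} \preceq_{\Sigma_1^\mathcal{P},\mathcal{S}} \mathcal{N}_\beta \rrbracket^{<\omega}$, i.e.\ they preserve $\Sigma_1^\mathcal{P}$-formulae with arbitrary parameters from $\mathcal{S}$; applying preservation to the atomic formula $x = s$ shows that every condition fixes each $s \in \mathcal{S}$ in its domain, and genericity for the dense sets $\mathcal{C}_s$ ($s \in \mathcal{S}$) puts all of $\mathcal{S}$ into the domain of the union, so the embedding produced by Theorem \ref{Friedman thm} fixes $\mathcal{S}$ pointwise automatically.
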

\begin{proof}
(\ref{Friedman thm emb}) $\Rightarrow$ (\ref{Friedman thm pres}) is proved just as for Theorem \ref{Friedman thm}.

(\ref{Friedman thm pres}) $\Rightarrow$ (\ref{Friedman thm emb}') follows from Theorem \ref{Friedman thm} by letting $\beta \in \mathcal{N}$ be as obtained from Lemma \ref{Bound for sigma_1 lemma}, and setting $m_0 = \varnothing^\mathcal{M}$ and $n_0 = \varnothing^\mathcal{N}$.
\end{proof}

\begin{cor}\label{Friedman selfembedding}
Let $\mathcal{M} \models \mathrm{KP}^\mathcal{P} + \Sigma_1^\mathcal{P}\textnormal{-Separation}$ be a countable non-standard. Let $\mathcal{S}$ be a rank-initial topless substructure of $\mathcal{M}$. Then there is a proper $i \in \llbracket \mathcal{M} \leq^{\rnk, \mathrm{topless}}_\mathcal{S} \mathcal{M} \rrbracket$, such that
\[
\forall \alpha \in \mathrm{Ord}^\mathcal{M} \setminus \mathcal{S} . \exists m \in \mathcal{M} . (\rnk^\mathcal{M}(m) = \alpha \wedge i(m) \neq m).
\]
\end{cor}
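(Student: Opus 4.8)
The plan is to realise the desired self-embedding as a branch through a perfect tree of finite conditions in the forcing poset $\mathbb{P} = \llbracket \mathcal{M} \preceq_{\Sigma_1^\mathcal{P}, \mathcal{S}} \mathcal{M}_\beta \rrbracket^{<\omega}$, combining all three clauses of Lemma \ref{Friedman lemma} in the self-embedding case $\mathcal{N} = \mathcal{M}$ with the continuum-construction of Lemma \ref{Friedman thm extra} to extract toplessness. First I would fix the bound: applying Lemma \ref{Bound for sigma_1 lemma} to $\mathcal{M}$ with parameter $\varnothing^\mathcal{M}$, and enlarging the resulting ordinal if necessary (which preserves the reflection property), I obtain $\beta \in \mathrm{Ord}^\mathcal{M} \setminus \mathcal{S}$ with $\mathcal{S} \subseteq \mathcal{M}_\beta$ such that every $\Sigma_1^\mathcal{P}$-fact over $\mathcal{M}$ with parameters in $\mathcal{S}$ is witnessed below $V_\beta$; such $\beta$ exists since $\mathcal{S}$ is bounded and $\mathcal{M}$ is non-standard. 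This makes the empty map a condition of $\mathbb{P}$ and forces any total rank-initial map built inside $\mathbb{P}$ to have image $\subseteq \mathcal{M}_\beta$, hence to be \emph{proper}.

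Since $\mathrm{SSy}_\mathcal{S}(\mathcal{M}) \cong \mathrm{SSy}_\mathcal{S}(\mathcal{M})$ holds trivially (witnessed by the identity), Lemma \ref{Friedman lemma} makes all of the sets $\mathcal{C}_m$ (for $m \in \mathcal{M}$), $\mathcal{D}_{m,n}$ (for $m,n \in \mathcal{M}$) and $\mathcal{E}_\alpha$ (for $\alpha \in \mathrm{Ord}^\mathcal{M} \setminus \mathcal{S}$) dense in $\mathbb{P}$. Meeting the first two families yields a total rank-initial self-embedding fixing $\mathcal{S}$ pointwise (the latter because applying $\Sigma_1^\mathcal{P}$-elementarity over $\mathcal{S}$ to the $\Delta_0^\mathcal{P}$-formula $x = s$ forces $i(s)=s$ for each $s \in \mathcal{S}$), while meeting $\mathcal{E}_\alpha$ supplies, for each $\alpha$, an element of rank $\alpha$ that is moved. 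I would also record the simplification that for $\alpha \geq \beta$ the moving requirement is automatic: any $m$ of rank $\alpha \geq \beta$ has $i(m) \in \mathcal{M}_\beta$, so $\rnk^\mathcal{M}(i(m)) < \beta \leq \rnk^\mathcal{M}(m)$ and hence $i(m) \neq m$; thus only the countably many $\mathcal{E}_\alpha$ with $\mathrm{Ord}^\mathcal{S} < \alpha < \beta$ genuinely need to be met.

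The single property not obtainable from one generic filter is toplessness, so instead of taking a bare generic I would run the tree construction of Lemma \ref{Friedman thm extra} (the implication \textnormal{(\ref{Friedman thm emb})}$\Rightarrow$\textnormal{(\ref{Friedman thm extra continuum})}) essentially verbatim, interleaving into its recursion the additional requirement of meeting each scheduled $\mathcal{E}_\alpha$ alongside the already-handled $\mathcal{C}_m$ and $\mathcal{D}_{m,n}$. At each stage the $<^\rnk$-splitting into siblings $i_{f,0} <^\rnk i_{f,1}$ is produced exactly as before, via Lemma \ref{Friedman thm extra 2}, and the finitely many new instances of $\mathcal{E}_\alpha$ due at that stage are met by a density extension of every current branch. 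This yields a family $(i_g)_{g:\omega\to 2}$ of proper rank-initial self-embeddings over $\mathcal{S}$, each with the full moving property, such that $g <^\mathrm{lex} g' \Rightarrow i_g <^\rnk i_{g'}$. I would then extract toplessness by the counting argument of Lemma \ref{Friedman thm extra}: if $i_g$ is \emph{not} topless, its bounded image has a least ordinal $\gamma_g \in \mathrm{Ord}^\mathcal{M}$ strictly above $i_g(\mathrm{Ord}^\mathcal{M})$, and $g <^\mathrm{lex} g'$ with both non-topless forces $\gamma_g < \gamma_{g'}$, since $i_g(\mathrm{Ord}^\mathcal{M}) \subsetneq i_{g'}(\mathrm{Ord}^\mathcal{M})$ as initial segments. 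So $g \mapsto \gamma_g$ is strictly increasing, hence injective, from a sub-ordering of $2^\omega$ into the countable set $\mathrm{Ord}^\mathcal{M}$; only countably many $g$ can give non-topless $i_g$, and any $g$ outside this countable set delivers the required $i \in \llbracket \mathcal{M} \leq^{\rnk,\mathrm{topless}}_\mathcal{S} \mathcal{M} \rrbracket$.

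I expect the main obstacle to be the bookkeeping in the combined tree construction: verifying that weaving the $\mathcal{E}_\alpha$-density steps into the splitting recursion of Lemma \ref{Friedman thm extra} does not interfere with the maintenance of $<^\rnk$-incomparability between sibling branches, nor with the simultaneous achievement of totality and rank-initiality, so that every branch is genuinely a total rank-initial embedding enjoying all three density conclusions at once.
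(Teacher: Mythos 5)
Your proposal is correct, and its core is the same as the paper's: force with $\mathbb{P} = \llbracket \mathcal{M} \preceq_{\Sigma_1^\mathcal{P}, \mathcal{S}} \mathcal{M}_\beta \rrbracket^{<\omega}$, where $\beta$ comes from Lemma \ref{Bound for sigma_1 lemma}, and use all three families $\mathcal{C}_m$, $\mathcal{D}_{m,n}$, $\mathcal{E}_\alpha$ of dense sets from Lemma \ref{Friedman lemma}. Where you genuinely diverge is on toplessness. The paper's proof is a one-filter argument: it adjusts the proof of (\ref{Friedman thm pres}) $\Rightarrow$ (\ref{Friedman thm emb}) in Theorem \ref{Friedman thm} by adding the $\mathcal{E}_\alpha$'s to the generic family, asserts that ``it only remains'' to check the moving property, and never explicitly argues toplessness; read literally, that is a gap, since genericity with respect to these three countable families does not by itself force toplessness (for instance, if $\mathcal{M} \cong \mathcal{M}_\gamma \prec \mathcal{M}$, as in a Ressayre-type situation, the finite restrictions of such an isomorphism generate a filter meeting all three families whose union is topped at $\gamma$). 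You instead weave the $\mathcal{E}_\alpha$-requirements into the perfect-tree construction of Lemma \ref{Friedman thm extra} and extract toplessness by the counting argument (non-topless embeddings in a $<^\rnk$-chain have pairwise distinct topping ordinals, and $\mathcal{M}$ is countable), which is presumably what the paper's terse proof intends the reader to supply; your route buys a complete justification at the cost of the bookkeeping you acknowledge. Two details to pin down when writing it up: (i) each child embedding in the tree must itself be built by the generic $(\mathcal{C}, \mathcal{D}, \mathcal{E})$-construction (not by a bare appeal to Lemma \ref{Friedman thm extra 2}), so that it has moved elements available to record; and (ii) the recorded $\mathcal{E}_\alpha$-witness must be fresh, i.e.\ outside the already-frozen finite domain, since elements frozen to identity values can never serve as witnesses for descendants --- this freshness is automatic from the density proof of Lemma \ref{Friedman lemma} (\ref{Friedman lemma downwards}), which adjoins a new pair to the given condition. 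With those points made explicit, every branch $i_g$ is a proper rank-initial embedding over $\mathcal{S}$ with the full moving property, and all but countably many branches are topless, as you claim.
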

\begin{proof}
Let $\beta \in \mathcal{M}$ be be the ordinal bound obtained from Lemma \ref{Bound for sigma_1 lemma}, and let $\mathbb{P} = \llbracket \mathcal{M} \preceq_{\Sigma_1^\mathcal{P}, \mathcal{S}} \mathcal{M}_\beta \rrbracket^{<\omega}$. Put $m_0 = n_0 = \varnothing^\mathcal{M}$. We adjust the proof of (\ref{Friedman thm pres}) $\Rightarrow$ (\ref{Friedman thm emb}) in Theorem \ref{Friedman thm}, by setting $\mathcal{I}$ to be a $\{\mathcal{C}_m \mid m \in \mathcal{M}\} \cup \{\mathcal{D}_{m, n} \mid m \in \mathcal{M} \wedge n \in \mathcal{N}\} \cup \{\mathcal{E}_{\alpha} \mid \alpha \in \mathrm{Ord}^\mathcal{M} \setminus \mathcal{S}\}$-generic filter on $\mathbb{P}$ (utilizing Lemma \ref{Friedman lemma} (\ref{Friedman lemma downwards})). Put $i = \bigcup \mathcal{I}$. It only remains to verify that
\[
\forall \alpha \in \mathrm{Ord}^\mathcal{M} \setminus \mathcal{S} . \exists m \in \mathcal{M} . (\rnk^\mathcal{M}(m) = \alpha \wedge i(m) \neq m).
\]
But this follows from that $\mathcal{I}$ intersects $\mathcal{E}_\alpha$, for each $\alpha \in \mathrm{Ord}^\mathcal{M} \setminus \mathcal{S}$.
\end{proof}

The result above says in particular that every countable non-standard model of $\mathrm{KP}^\mathcal{P} + \Sigma_1^\mathcal{P}\textnormal{-Separation}$ has a proper rank-initial self-embedding. As a remark, there is a related theorem by Hamkins, where no initiality is required from the embedding, established in \cite{Ham13}. Citing from this article's abstract: ``every countable model of set theory $\langle M, \in^M \rangle$, including every well-founded model, is isomorphic to a submodel of its own constructible universe $\langle L^M, \in^M \rangle$ by means of an embedding $j : M \rightarrow L^M$''.

\begin{thm}[Wilkie-style]\label{Wilkie theorem}
	Suppose that $\mathcal{M} \models \mathrm{KP}^\mathcal{P}  + \Sigma_1^\mathcal{P}\textnormal{-Separation}$ $+ \Pi_2^\mathcal{P}\textnormal{-Foundation}$ and $\mathcal{N} \models \mathrm{KP}^\mathcal{P}$ are countable and non-standard. Let $\mathcal{S}$ be a common rank-initial topless substructure of $\mathcal{M}$ and $\mathcal{N}$. Let $\beta \in \mathrm{Ord}^{\mathcal{N}}$. Then the following are equivalent:
	\begin{enumerate}[{\normalfont (a)}]
		\item\label{Wilkie emb} For any ordinal $\alpha <^\mathcal{N} \beta$, there is $i \in \llbracket \mathcal{M} \leq^{\rnk}_\mathcal{S} \mathcal{N} \rrbracket$, such that  $\mathcal{N}_\alpha \subseteq i(\mathcal{M}) \subseteq \mathcal{N}_\beta$.
		\item[{\normalfont (a')}] For any ordinal $\alpha <^\mathcal{N} \beta$, there is $i \in \llbracket \mathcal{M} \leq^{\rnk, \mathrm{topless}}_\mathcal{S} \mathcal{N} \rrbracket$, such that  $\mathcal{N}_\alpha \subseteq i(\mathcal{M}) \subseteq \mathcal{N}_\beta$.
		\item\label{Wilkie pres} $\mathrm{SSy}_\mathcal{S}(\mathcal{M}) = \mathrm{SSy}_\mathcal{S}(\mathcal{N})$, and for all $s \in \mathcal{S}$ and $\delta(x, y, z) \in \Delta_0^\mathcal{P}[x, y, z]$:
$$(\mathcal{M}, s) \models \forall x . \exists y . \delta(x, y, s) \Rightarrow (\mathcal{N}, s) \models \forall x \in V_\beta . \exists y \in V_\beta . \delta(x, y, s).$$
	\end{enumerate}
\end{thm}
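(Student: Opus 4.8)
The plan is to prove the cycle $(\text{a}') \Rightarrow (\text{a}) \Rightarrow (\text{b}) \Rightarrow (\text{a}')$. The first implication is immediate, since a topless rank-initial embedding is in particular rank-initial. The substance lies in the forward direction $(\text{a}) \Rightarrow (\text{b})$ and, above all, in reconstructing an embedding from the type-theoretic data in $(\text{b}) \Rightarrow (\text{a}')$, which I would cast in the same forcing framework used for Theorem \ref{Friedman thm}.

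For $(\text{a}) \Rightarrow (\text{b})$: the identity $\mathrm{SSy}_\mathcal{S}(\mathcal{M}) = \mathrm{SSy}_\mathcal{S}(\mathcal{N})$ follows from Proposition \ref{emb pres}, applied to any witness $i$ of (a) (rank-initial over the topless cut $\mathcal{S}$). For the $\forall\exists$-transfer, suppose $(\mathcal{M},s) \models \forall x.\exists y.\delta(x,y,s)$ and fix $x_0 \in V_\beta^\mathcal{N}$. I would choose an ordinal $\alpha <^\mathcal{N} \beta$ with $x_0 \in \mathcal{N}_\alpha$ (using $\mathcal{P}$-initiality to absorb the boundary case where $\rnk^\mathcal{N}(x_0)$ is just below $\beta$), take the embedding $i$ furnished by (a) for this $\alpha$, and pull back $x_0 = i(x_0')$. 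Then $\mathcal{M} \models \delta(x_0', y_0', s)$ for some witness $y_0'$, and since $i$ is $\Delta_0^\mathcal{P}$-elementary (Proposition \ref{emb pres}) and fixes $s$, we get $\mathcal{N} \models \delta(x_0, i(y_0'), s)$ with $i(y_0') \in i(\mathcal{M}) \subseteq V_\beta^\mathcal{N}$, as required.

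For $(\text{b}) \Rightarrow (\text{a}')$ the key reduction is that it suffices to force a single ordinal of the image up to $\alpha$: if $i$ is rank-initial and $i(\mu) = \alpha^*$ for some $\alpha \le^\mathcal{N} \alpha^* <^\mathcal{N} \beta$, then rank-initiality immediately yields $\mathcal{N}_\alpha \subseteq i(\mathcal{M}) \subseteq \mathcal{N}_\beta$. So I would try to produce $\mu \in \mathrm{Ord}^\mathcal{M}$ and $\alpha^* \in [\alpha,\beta)$ verifying hypothesis (\ref{Friedman thm pres}) of Theorem \ref{Friedman thm} with $m_0 = \mu$ and $n_0 = \alpha^*$ — namely $\mathrm{SSy}_\mathcal{S}(\mathcal{M}) \cong \mathrm{SSy}_\mathcal{S}(\mathcal{N})$ (given by (b)) together with the $\Sigma_1^\mathcal{P}$-matching $\mathcal{M} \models \exists x.\delta(x,\mu,s) \Rightarrow \mathcal{N} \models \exists x \in V_\beta.\delta(x,\alpha^*,s)$ — and then invoke the topless form (\ref{Friedman thm extra topless}) of that theorem. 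Concretely, I would fix a candidate $\alpha^* \in [\alpha,\beta)$, code its $V_\beta$-bounded $\Pi_1^\mathcal{P}$-type over $\mathcal{S}$ in $\mathcal{N}$ (this is $\Delta_1^\mathcal{P}$, since the $V_\beta$-bound turns the leading quantifier into a bounded one), transport the code to $\mathcal{M}$ through the standard-system isomorphism, and realize it by an ordinal $\mu$ via overspill in $\mathcal{M}$, exactly as in the backward density step of Lemma \ref{Friedman lemma}.

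The hard part will be arranging that this realizing ordinal $\mu$ exists for a target $\alpha^*$ of high rank: the Friedman backward step only realizes types of image-elements whose rank is dominated by ranks already present, whereas here $\alpha^*$ is forced to sit above $\alpha$. This is exactly where the two extra hypotheses on $\mathcal{M}$ must be spent. The $\forall\exists$-clause of (b) guarantees that the witnesses needed to satisfy the transported type can be found bounded by $V_\beta$, so that the overspill formula in $\mathcal{N}$, and hence its $\mathcal{M}$-counterpart, remains satisfiable cofinally below $\alpha^*$; and $\Pi_2^\mathcal{P}$-Foundation supplies the overspill for the resulting formula, which now carries an extra quantifier alternation (an existential witness-bound layered over the $\Pi_1^\mathcal{P}$-type) and is therefore genuinely $\Pi_2^\mathcal{P}$ rather than $\Pi_1^\mathcal{P}$. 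The $V_\beta$-boundedness estimate here is the analogue of Lemma \ref{Bound for pi_2 lemma}. Once the matching pair $(\mu,\alpha^*)$ is obtained, Theorem \ref{Friedman thm} delivers a topless rank-initial embedding $i$ over $\mathcal{S}$ with $i(\mu) = \alpha^*$ and $i(\mathcal{M}) \subseteq \mathcal{N}_\beta$, and rank-initiality propagates this to $\mathcal{N}_\alpha \subseteq i(\mathcal{M})$, completing $(\text{a}')$.
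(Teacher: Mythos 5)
Your plan follows the paper's proof almost step for step: the cycle (a$'$)$\Rightarrow$(a)$\Rightarrow$(b)$\Rightarrow$(a$'$), the pull-back/push-forward argument via Proposition \ref{emb pres} for (a)$\Rightarrow$(b), and, for (b)$\Rightarrow$(a$'$), coding the $V_\beta$-bounded $\Pi_1^\mathcal{P}$-type of an element of $\mathcal{N}$-rank at least $\alpha$ (the paper takes $n = V_\alpha^\mathcal{N}$ where you take the ordinal $\alpha^*$ --- an immaterial difference), transporting the code through the standard system, realizing it in $\mathcal{M}$ by overspill for a formula with an unbounded existential layered over the type (which is exactly where $\Pi_2^\mathcal{P}$-Foundation is spent, as you correctly identify, though the overspilled formula is $\Sigma_2^\mathcal{P}$ rather than $\Pi_2^\mathcal{P}$), and finally invoking the topless clause of Theorem \ref{Friedman thm} with the matched pair. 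However, one concrete step fails as written: your treatment of the boundary case in (a)$\Rightarrow$(b). If $\rnk^\mathcal{N}(x_0) + 1 = \beta$, then $\mathcal{P}$-initiality cannot ``absorb'' anything, because no embedding admissible for (a) can have $x_0$ in its image at all: if $x_0 = i(x_0')$, then $\{x_0\}^\mathcal{N} = i(\{x_0'\}^\mathcal{M}) \in i(\mathcal{M})$, and $\rnk^\mathcal{N}(\{x_0\}^\mathcal{N}) = \beta$, contradicting $i(\mathcal{M}) \subseteq \mathcal{N}_\beta$. So in that case the pull-back strategy is impossible in principle, not merely delicate.

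The only way through is therefore to show the boundary case never arises, and that is what the paper does: it first observes that (a) forces $\beta$ to be a limit ordinal of $\mathcal{N}$ (the same singleton computation shows that any $i$ witnessing (a) for $\alpha = \gamma$, where $\beta = \gamma + 1$, would have image trapped inside $\mathcal{N}_\gamma$, hence equal to $\mathcal{N}_\gamma$, which clashes with $i(\mathcal{M}) \cong \mathcal{M} \models \mathrm{KP}^\mathcal{P}$), after which $\alpha := \rnk^\mathcal{N}(x_0) + 1 <^\mathcal{N} \beta$ always suffices. Your proof needs this observation, or some substitute for it, to be complete. A second, smaller point: in (b)$\Rightarrow$(a$'$), since clause (b) only transfers $\forall\exists$-truths from $\mathcal{M}$ to $V_\beta$-bounded truths of $\mathcal{N}$, the inference ``the realization statement holds in $\mathcal{N}$, hence its $\mathcal{M}$-counterpart holds at every $\zeta \in \mathrm{Ord}^\mathcal{S}$'' must be run contrapositively: a failure in $\mathcal{M}$ at some $\zeta$ is itself a $\forall\exists$-statement with $\Delta_0^\mathcal{P}$ matrix (after bundling quantifiers, using that $\mathrm{Sat}_{\Delta_0^\mathcal{P}}$ is $\Delta_1^\mathcal{P}$), so (b) pushes it to a $V_\beta$-bounded failure in $\mathcal{N}$, contradicting that $\alpha^* \in V_\beta^\mathcal{N}$ itself witnesses the bounded type at $\zeta$. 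Your write-up gestures at this (``witnesses \ldots bounded by $V_\beta$'') but never fixes the direction of the transfer; this contrapositive is precisely how the paper closes the step, and it is worth making explicit.
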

\begin{proof}
(\ref{Wilkie emb}) $\Rightarrow$ (\ref{Wilkie pres}): It is easy to see that (\ref{Wilkie emb}) $\Rightarrow$ $\beta$ is a limit ordinal in $\mathcal{N}$. Let $s \in \mathcal{S}$. Let $\delta(x, y, z)$ be $\Delta_0^\mathcal{P}[x, y, z]$ and assume that $\mathcal{M} \models \forall x . \exists y . \delta(x, y, s)$. Given Theorem \ref{Friedman thm}, it only remains to show that $\mathcal{N} \models \forall x \in V_\beta . \exists y \in V_\beta . \delta(x, y, s)$. Let $a \in \mathcal{N}_\beta$ be arbitrary and set $\alpha = (\rnk(a)+1)^\mathcal{N}$. Since $\beta$ is a limit, $\alpha < \beta$. By (\ref{Wilkie emb}), there is a rank-initial embedding $i : \mathcal{M} \rightarrow \mathcal{N}$, such that $a \in i(\mathcal{M}) \subseteq \mathcal{N}_\beta$. Pick $m \in \mathcal{M}$ such that $\mathcal{M} \models \delta(i^{-1}(a), m, s)$. Then $i(m) \in \mathcal{N}_\beta$, and by Proposition \ref{emb pres}, $\mathcal{N} \models \delta(a, i(m), s)$, as desired.

(\ref{Wilkie pres}) $\Rightarrow$ (\ref{Wilkie emb}'): Let $n = V_\alpha^\mathcal{N}$. Using Lemma \ref{hierarchical type coded} and $\Delta_1^\mathcal{P}$-Collection, let $d$ be a code in $\mathcal{N}$ for the following set:
\[
\{\delta(x, y, s) \mid \delta \in \Delta_0^\mathcal{P}, s \in \mathcal{S}, \mathcal{N} \models \forall x \in V_\beta . \delta(x, n, s)\}.
\]

Using $\mathrm{SSy}(\mathcal{M}) = \mathrm{SSy}(\mathcal{N})$ let $c$ be a code for this set in $\mathcal{M}$. Define the formulae
\begin{align*}
\phi_{< \beta}(\zeta) &\equiv \exists y \in V_\beta . \forall \delta \in d \cap V_\zeta . \forall x \in V_\beta . \mathrm{Sat}_{\Delta_0^\mathcal{P}}(\delta, x, n, s) \\
\phi(\zeta) &\equiv \exists y . \forall \delta \in c \cap V_\zeta . \forall x . \mathrm{Sat}_{\Delta_0^\mathcal{P}}(\delta, x, n, s)
\end{align*}
Note that $\phi$ is $\Sigma_2^\mathcal{P}$. Moreover, $c \cap V_\zeta = d \cap V_\zeta$ and $n$ witnesses $\phi_{<\beta}(\zeta)$, for all ordinals $\zeta \in \mathcal{S}$. So by the second conjunct of (\ref{Wilkie pres}), $\mathcal{M} \models \phi(\zeta)$ for all ordinals $\zeta \in \mathcal{S}$, whence by $\Sigma_2^\mathcal{P}$-Overspill, $\mathcal{M} \models \phi(\mu)$ for some non-standard ordinal $\mu \in \mathcal{M}$. Letting $m \in \mathcal{M}$ be a witness of this fact, we have that $m$ realizes $\mathrm{tp}_{\Pi_1^\mathcal{P}, \mathcal{S}}(n)$. Now (\ref{Wilkie emb}) is obtained by plugging $m$ and $n$ into Theorem $\ref{Friedman thm}$.
\end{proof}

\begin{cor}
Let $\mathcal{M} \models \mathrm{KP}^\mathcal{P}  + \Sigma_1^\mathcal{P}\textnormal{-Separation} + \Pi_2^\mathcal{P}\textnormal{-Foundation}$ and $\mathcal{N} \models \mathrm{KP}^\mathcal{P} + \Sigma_2^\mathcal{P}\textnormal{-Separation}$ be countable and non-standard. Let $\mathcal{S}$ be a common rank-initial topless substructure of $\mathcal{M}$ and $\mathcal{N}$. Then the following are equivalent:
\begin{enumerate}[{\normalfont (a)}]
\item\label{Wilkie emb 2} For any ordinal $\alpha \in \mathcal{N}$, there is $i \in \llbracket \mathcal{M} \leq^{\rnk}_\mathcal{S} \mathcal{N} \rrbracket$, such that $\mathcal{N}_\alpha \subseteq i(\mathcal{M})$.
\item[{\normalfont (a')}] For any ordinal $\alpha \in \mathcal{N}$, there is a proper $i \in \llbracket \mathcal{M} \leq^{\rnk, \mathrm{topless}}_\mathcal{S} \mathcal{N} \rrbracket$, such that $\mathcal{N}_\alpha \subseteq i(\mathcal{M})$.
\item\label{Wilkie pres 2} $\mathrm{SSy}(\mathcal{M}) = \mathrm{SSy}(\mathcal{N})$, and $\mathrm{Th}_{\Pi_2^\mathcal{P}, S}(\mathcal{M}) \subseteq \mathrm{Th}_{\Pi_2^\mathcal{P}, S}(\mathcal{N})$.
\end{enumerate}
\end{cor}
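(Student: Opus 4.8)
The plan is to deduce this corollary from the Wilkie-style Theorem~\ref{Wilkie theorem} in exactly the way Corollary~\ref{Friedman cor} was deduced from Theorem~\ref{Friedman thm}: the hypotheses on $\mathcal{M}$ and $\mathcal{N}$ are tailored so that Theorem~\ref{Wilkie theorem} applies to the pair, and the extra assumption $\Sigma_2^\mathcal{P}\textnormal{-Separation}$ on $\mathcal{N}$ is precisely what is needed to invoke Lemma~\ref{Bound for pi_2 lemma} and thereby eliminate the auxiliary ordinal bound $\beta$ from the statement. I would prove (\ref{Wilkie emb 2}) $\Rightarrow$ (\ref{Wilkie pres 2}) directly, and obtain (\ref{Wilkie pres 2}) $\Rightarrow$ (\ref{Wilkie emb 2}$'$) by manufacturing a suitable $\beta$ and feeding it into Theorem~\ref{Wilkie theorem}. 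Throughout, the standard-system clause is read over the common rank-cut $\mathcal{S}$, so that it matches the hypothesis $\mathrm{SSy}_\mathcal{S}(\mathcal{M}) = \mathrm{SSy}_\mathcal{S}(\mathcal{N})$ actually used by Theorem~\ref{Wilkie theorem}, just as in Corollary~\ref{Friedman cor}.

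For (\ref{Wilkie emb 2}) $\Rightarrow$ (\ref{Wilkie pres 2}) I would argue as in the corresponding direction of Theorem~\ref{Wilkie theorem}, the only change being that $\alpha$ now ranges over all of $\mathrm{Ord}^\mathcal{N}$ rather than below a fixed $\beta$, which upgrades the bounded conclusion to an unbounded one. Taking any one embedding $i$ supplied by (\ref{Wilkie emb 2}): since $i$ is rank-initial over $\mathcal{S}$ between non-standard models of $\mathrm{KP}^\mathcal{P}$, Proposition~\ref{emb pres}(\ref{emb pres SSy eq}) gives $\mathrm{SSy}(\mathcal{M}) \cong \mathrm{SSy}(\mathcal{N})$ and, as $i$ fixes $\mathcal{S}$ pointwise and $\mathcal{S}$ is topless, Proposition~\ref{emb pres}(\ref{emb pres SSy_S eq}) gives $\mathrm{SSy}_\mathcal{S}(\mathcal{M}) \cong \mathrm{SSy}_\mathcal{S}(\mathcal{N})$; both become equalities by the standing identification of well-founded models with transitive sets. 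For the $\Pi_2^\mathcal{P}$-theory inclusion, suppose $\mathcal{M} \models \forall x . \exists y . \delta(x, y, s)$ with $s \in \mathcal{S}$ and $\delta \in \Delta_0^\mathcal{P}$, and let $a \in \mathcal{N}$ be arbitrary. Applying (\ref{Wilkie emb 2}) with $\alpha = (\rnk(a) + 1)^\mathcal{N}$ yields a rank-initial $i$ over $\mathcal{S}$ with $a \in \mathcal{N}_\alpha \subseteq i(\mathcal{M})$; choosing $m \in \mathcal{M}$ with $\mathcal{M} \models \delta(i^{-1}(a), m, s)$ and transferring along $i$ by $\Delta_0^\mathcal{P}$-elementarity (Proposition~\ref{emb pres}(\ref{emb pres Delta_0^P})) gives $\mathcal{N} \models \delta(a, i(m), s)$. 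Since $a$ was arbitrary, $\mathcal{N} \models \forall x . \exists y . \delta(x, y, s)$.

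The substantive direction is (\ref{Wilkie pres 2}) $\Rightarrow$ (\ref{Wilkie emb 2}$'$). Fix the given ordinal $\alpha \in \mathrm{Ord}^\mathcal{N}$. I would apply Lemma~\ref{Bound for pi_2 lemma} to $\mathcal{N}$ with the bounded substructure $\mathcal{S}$ and with parameter $n = \alpha$, obtaining $\beta \in \mathrm{Ord}^\mathcal{N}$ reflecting, for all $s \in \mathcal{S}$ and $\delta \in \Delta_0^\mathcal{P}$, the implication $\mathcal{N} \models \forall x . \exists y . \delta \Rightarrow \mathcal{N} \models \forall x \in V_\beta . \exists y \in V_\beta . \delta$. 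The choice $n = \alpha$ simultaneously forces $\beta >^\mathcal{N} \alpha$: the trivially true $\Pi_2^\mathcal{P}$-statement $\forall x . \exists y . (y = \{x, n\})$ (with $y = \{x,n\}$ a $\Delta_0^\mathcal{P}$-matrix, true by Pairing) reflects into $V_\beta$, so instantiating $x = \varnothing$ places $\{\varnothing, \alpha\}$ in $V_\beta^\mathcal{N}$, whence $\beta > \rnk^\mathcal{N}(\{\varnothing, \alpha\}) = \alpha + 1 > \alpha$. Now condition (\ref{Wilkie pres}) of Theorem~\ref{Wilkie theorem} holds for this $\beta$: its standard-system clause is exactly (the $\mathcal{S}$-reading of) the hypothesis in (\ref{Wilkie pres 2}), and for its bounded $\Pi_2^\mathcal{P}$-clause I compose the $\Pi_2^\mathcal{P}$-theory inclusion of (\ref{Wilkie pres 2}), $\mathcal{M} \models \forall x . \exists y . \delta \Rightarrow \mathcal{N} \models \forall x . \exists y . \delta$, with the reflection just obtained, giving $\mathcal{M} \models \forall x . \exists y . \delta \Rightarrow \mathcal{N} \models \forall x \in V_\beta . \exists y \in V_\beta . \delta$. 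Feeding this $\beta$ together with the given $\alpha <^\mathcal{N} \beta$ into (\ref{Wilkie pres}) $\Rightarrow$ (\ref{Wilkie emb}$'$) of Theorem~\ref{Wilkie theorem} yields a topless rank-initial embedding $i$ over $\mathcal{S}$ with $\mathcal{N}_\alpha \subseteq i(\mathcal{M}) \subseteq \mathcal{N}_\beta$; since its image is bounded by $\beta$ in $\mathcal{N}$ it is in particular proper, as required.

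The main obstacle is the production of $\beta$ in this last direction: one must make $\beta$ at once large enough to contain the prescribed $\mathcal{N}_\alpha$ and controlled enough that every $\Pi_2^\mathcal{P}$-truth of $\mathcal{N}$ with parameters in $\mathcal{S}$ already reflects down to $V_\beta$. Both demands are met by the single application of Lemma~\ref{Bound for pi_2 lemma} with $n = \alpha$, which is the only genuinely new ingredient beyond the bookkeeping already carried out in Theorem~\ref{Wilkie theorem}. This is the exact $\Pi_2^\mathcal{P}$-analogue of the role played by Lemma~\ref{Bound for sigma_1 lemma} in Corollary~\ref{Friedman cor}, and it is the reason $\Sigma_2^\mathcal{P}\textnormal{-Separation}$ must be imposed on $\mathcal{N}$ (where the weaker $\Sigma_1^\mathcal{P}\textnormal{-Separation}$ sufficed for the Friedman-style corollary).
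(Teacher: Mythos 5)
Your proof is correct and takes essentially the same route as the paper's: the forward direction is argued exactly as in Theorem~\ref{Wilkie theorem}, and the backward direction feeds a bound $\beta$ obtained from Lemma~\ref{Bound for pi_2 lemma} into the implication (\ref{Wilkie pres}) $\Rightarrow$ (\ref{Wilkie emb}$'$) of that theorem. Your device for forcing $\beta >^\mathcal{N} \alpha$ (applying the lemma with parameter $n = \alpha$ and reflecting the true $\Pi_2^\mathcal{P}$-statement $\forall x . \exists y . (y = \{x, n\})$, which works because the $\beta$ produced by the lemma's proof is infinite and bounds $\mathcal{S}$, so the reflection is non-vacuous) fills in a detail that the paper's one-line proof leaves implicit, since the lemma as stated only guarantees a $\beta$ above $\mathcal{S}$, not above the prescribed $\alpha$; this is a welcome refinement rather than a deviation.
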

\begin{proof}
(\ref{Wilkie emb 2}) $\Rightarrow$ (\ref{Wilkie pres 2}) is proved just as for Theorem \ref{Wilkie theorem}.

(\ref{Wilkie pres 2}) $\Rightarrow$ (\ref{Wilkie emb 2}') follows from Theorem \ref{Wilkie theorem} by letting $\beta \in \mathrm{Ord}^\mathcal{N}$ be as obtained from Lemma \ref{Bound for pi_2 lemma}.
\end{proof}

\begin{cor}\label{Wilkie selfembedding}
Let $\mathcal{M} \models \mathrm{KP}^\mathcal{P} + \Sigma_2^\mathcal{P}\textnormal{-Separation} + \Pi_2^\mathcal{P}\textnormal{-Foundation}$ be countable and non-standard. Let $\mathcal{S}$ be a rank-initial topless substructure of $\mathcal{M}$. For any $\alpha \in \mathrm{Ord}^\mathcal{M}$ there is $\beta \in \mathrm{Ord}^\mathcal{M}$ and $i \in \llbracket \mathcal{M} \leq^{\rnk, \mathrm{topless}}_\mathcal{S} \mathcal{M} \rrbracket$, such that $\mathcal{M}_\alpha \subseteq i(\mathcal{M}) \subseteq \mathcal{M}_\beta$ and
\[
\forall \alpha \in \mathrm{Ord}^\mathcal{M} \setminus \mathcal{S} . \exists m \in \mathcal{M} . (\rnk^\mathcal{M}(m) = \alpha \wedge i(m) \neq m).
\]
\end{cor}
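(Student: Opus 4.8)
The plan is to follow the template of Corollary \ref{Friedman selfembedding}, which derived a topless rank-initial self-embedding with the ``moving point'' property from Theorem \ref{Friedman thm} by adjoining the density sets of Lemma \ref{Friedman lemma}(\ref{Friedman lemma downwards}); the only new ingredient is the Wilkie-style initial-segment device that forces $\mathcal{M}_\alpha \subseteq i(\mathcal{M})$. Concretely, I would run the $(\ref{Wilkie pres}) \Rightarrow (\text{a}')$ argument of Theorem \ref{Wilkie theorem} in the self-embedding case $\mathcal{N} = \mathcal{M}$ and graft the self-embedding densities onto the underlying generic construction. First I fix the bound: given $\alpha$, apply Lemma \ref{Bound for pi_2 lemma} with $\mathcal{N} = \mathcal{M}$, choosing the starting ordinal $\beta_0$ of its proof above $\alpha$ (any $\beta_0$ with $\mathcal{S} \subseteq \mathcal{M}_{\beta_0}$ is allowed), so as to obtain $\beta >^\mathcal{M} \alpha$ with the $\Pi_2^\mathcal{P}$-reflection property. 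Since the standard-system clause of condition (\ref{Wilkie pres}) of Theorem \ref{Wilkie theorem} is vacuous when $\mathcal{M} = \mathcal{N}$, this choice of $\beta$ validates (\ref{Wilkie pres}); here $\Sigma_2^\mathcal{P}\textnormal{-Separation}$ is what makes Lemma \ref{Bound for pi_2 lemma} applicable.

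Next I set $n_0 = V_\alpha^\mathcal{M}$ and, exactly as in the $(\ref{Wilkie pres}) \Rightarrow (\text{a}')$ proof, use $\Sigma_2^\mathcal{P}$-Overspill (this is where $\Pi_2^\mathcal{P}\textnormal{-Foundation}$ enters) to produce $m_0 \in \mathcal{M}$ realizing $\mathrm{tp}_{\Pi_1^\mathcal{P}, \mathcal{S}}(n_0)$. This arranges that condition (\ref{Friedman thm pres}) of Theorem \ref{Friedman thm} holds for the triple $m_0, n_0, \beta$. The payoff is that any rank-initial $i$ with $i(m_0) = n_0$ automatically satisfies $\mathcal{M}_\alpha \subseteq i(\mathcal{M})$, since $\rnk^\mathcal{M}(n_0) = \alpha$ and rank-initiality pulls every element of $\mathcal{N}$-rank $\leq \alpha$ into the image; and $i(\mathcal{M}) \subseteq \mathcal{M}_\beta$ is forced by working inside the poset $\mathbb{P} = \llbracket \mathcal{M} \preceq_{\Sigma_1^\mathcal{P}, \mathcal{S}} \mathcal{M}_\beta \rrbracket^{<\omega}$.

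It then remains to construct such an $i$ that is additionally topless and moves a point of every rank in $\mathrm{Ord}^\mathcal{M} \setminus \mathcal{S}$. For the moving-point property I would, starting from the condition $f_0 = \{\langle m_0, n_0 \rangle\}$, take a generic filter on $\mathbb{P}$ meeting the dense sets $\mathcal{C}_m$, $\mathcal{D}_{m,n}$ and $\mathcal{E}_\gamma$ of Lemma \ref{Friedman lemma} (the last available precisely because $\mathcal{N} = \mathcal{M}$), just as in Corollary \ref{Friedman selfembedding}; meeting the $\mathcal{C}_m$ gives total domain, the $\mathcal{D}_{m,n}$ give rank-initiality, and the $\mathcal{E}_\gamma$ give the moving points, while fixing $\mathcal{S}$ pointwise comes from the conditions being $\Sigma_1^\mathcal{P}$-elementary over $\mathcal{S}$. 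Toplessness, being a negative condition on the image, cannot be handed by a single generic filter, so I would obtain it via the refinement of Lemma \ref{Friedman thm extra}: build continuum-many embeddings $i_g$ ($g : \omega \to 2$), all with $i_g(m_0) = n_0$, all $<^\rnk$-ordered by the lexicographic order and all meeting the $\mathcal{E}_\gamma$-demands folded into the finite-stage bookkeeping; since $\mathcal{M}$ is countable, only countably many ordinals can top a substructure, so some $i_g$ is topless and is the desired embedding.

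The main obstacle is this last step: weaving the (non-density) toplessness requirement together with the $\mathcal{E}_\gamma$ moving-point demands in one construction. The delicate verification is that at each finite stage of the splitting tree of Lemma \ref{Friedman thm extra} one can honour the $\mathcal{E}_\gamma$-requirement for the relevant rank $\gamma \in \mathrm{Ord}^\mathcal{M} \setminus \mathcal{S}$ by extending into $\mathcal{E}_\gamma$ (using Lemma \ref{Friedman lemma}(\ref{Friedman lemma downwards})) without destroying the $<^\rnk$-splitting that produces continuum-many distinct, and hence some topless, embeddings; everything else is a routine bookkeeping adaptation of the cited proofs.
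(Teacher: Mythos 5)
Your proposal follows the paper's own proof step for step through its core: the paper likewise sets $\mathcal{N} = \mathcal{M}$, obtains $\beta$ from Lemma \ref{Bound for pi_2 lemma} (so that condition (\ref{Wilkie pres}) of Theorem \ref{Wilkie theorem} holds, the standard-system clause being trivial), reruns the Overspill argument of Theorem \ref{Wilkie theorem} (\ref{Wilkie pres}) $\Rightarrow$ (\ref{Wilkie emb}') to produce $m_0$ realizing $\mathrm{tp}_{\Pi_1^\mathcal{P}, \mathcal{S}}(V_\alpha^\mathcal{M})$, and then, at the last step, substitutes Corollary \ref{Friedman selfembedding} for Theorem \ref{Friedman thm} --- which is exactly your generic filter on $\llbracket \mathcal{M} \preceq_{\Sigma_1^\mathcal{P}, \mathcal{S}} \mathcal{M}_\beta \rrbracket^{<\omega}$ through the sets $\mathcal{C}_m$, $\mathcal{D}_{m,n}$, $\mathcal{E}_\gamma$, started at the condition $(m_0 \mapsto n_0)$. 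Your observations that the $\mathrm{SSy}$ clause is vacuous when $\mathcal{N} = \mathcal{M}$, that $\beta$ can be arranged above $\alpha$ by choosing $\beta_0 > \alpha$, and that $i(m_0) = V_\alpha^\mathcal{M}$ together with rank-initiality forces $\mathcal{M}_\alpha \subseteq i(\mathcal{M})$, all agree with (and are in places more explicit than) the paper.

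The one divergence is toplessness. The paper spends no words on it: Corollary \ref{Friedman selfembedding} already asserts an embedding in $\llbracket \mathcal{M} \leq^{\rnk, \mathrm{topless}}_\mathcal{S} \mathcal{M} \rrbracket$ with the moving-point property, and the paper's proof simply inherits that packaged conclusion (starting the filter at $(m_0 \mapsto n_0)$ changes nothing, since the dense sets of Lemma \ref{Friedman lemma} are dense below every condition). You instead decline to take that packaging for granted, on the grounds that toplessness is not a density condition over this poset, and you propose to recover it by interleaving the $\mathcal{E}_\gamma$ requirements into the splitting construction of Lemma \ref{Friedman thm extra}; this interleaving is precisely what you leave unverified. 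Two comments. First, your scruple is legitimate: meeting $\mathcal{C}_m$, $\mathcal{D}_{m,n}$ and $\mathcal{E}_\gamma$ alone does not by itself rule out that $\mathrm{Ord}^\mathcal{M} \setminus i(\mathcal{M})$ has a least element, so the toplessness claim does rest on an argument the paper leaves implicit --- but that burden sits inside Corollary \ref{Friedman selfembedding}, and citing it, as the paper does, completes your proof with no further work. Second, if you insist on carrying out the interleaving, your description needs one adjustment: the stages of Lemma \ref{Friedman thm extra} manipulate total embeddings produced by Theorem \ref{Friedman thm}, over which you have no control of which points of a prescribed rank are moved; so at stage $k$ you should first extend the current finite condition into $\mathcal{E}_{\gamma_k}$ by Lemma \ref{Friedman lemma} (\ref{Friedman lemma downwards}), then extend that finite condition to a total embedding by a fresh generic filter below it, then split by Lemma \ref{Friedman thm extra 2}, placing the moved point of rank $\gamma_k$ into $D_{k+1}$ so that every branch $i_g$ inherits it. With that amendment your route closes; as written, it halts at the step you yourself identify as the main obstacle.
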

\begin{proof}
Let $\mathcal{N} = \mathcal{M}$ and let $\beta \in \mathrm{Ord}^\mathcal{M}$ be as obtained from Lemma \ref{Bound for pi_2 lemma}. Then condition (\ref{Wilkie pres}) of Theorem \ref{Wilkie theorem} is satisfied. Repeat the proof of Theorem \ref{Wilkie theorem} (\ref{Wilkie pres}) $\Rightarrow$ (\ref{Wilkie emb}') with $\mathcal{N} = \mathcal{M}$, except that at the last step: apply Corollary \ref{Friedman selfembedding} instead of Theorem \ref{Friedman thm}.
\end{proof}

Now that we have explored necessary and sufficient conditions for constructing embeddings between models, we turn to the question of constructing isomorphisms between models. For this purpose we shall restrict ourselves to recursively saturated models of $\mathrm{ZF}$.

\begin{lemma}\label{rec sat iso lemma}
Let $\mathcal{M}$ and $\mathcal{N}$ be countable recursively saturated models of $\mathrm{ZF}$, and let $\mathcal{S}$ be a common rank-initial $\omega$-topless substructure of $\mathcal{M}$ and $\mathcal{N}$. Moreover, let $\mathbb{P} = \llbracket \mathcal{M} \preceq_\mathcal{S} \mathcal{N} \rrbracket^{< \omega}$.

If $\mathrm{SSy}_\mathcal{S}(\mathcal{M}) \leq \mathrm{SSy}_\mathcal{S}(\mathcal{N})$, then 
\[
\mathcal{C}'_m =_\df \{ f \in \mathbb{P} \mid m \in \dom(f\hspace{2pt}) \}
\]
is dense in $\mathbb{P}$, for each $m \in \mathcal{M}$.
\end{lemma}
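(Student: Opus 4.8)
The plan is to prove this by a single \emph{forth}-step, exactly parallel to Lemma \ref{Friedman lemma}(\ref{Friedman lemma forwards}), but upgrading $\Sigma_1^\mathcal{P}$-elementarity to full elementarity. Fix $g \in \mathbb{P}$, unravelled as $\langle m_\xi, n_\xi \rangle_{\xi < \gamma}$ with $\gamma < \omega$, and fix $m_\gamma \in \mathcal{M}$; I must produce $n_\gamma \in \mathcal{N}$ with $g \cup \{\langle m_\gamma, n_\gamma\rangle\} \in \mathbb{P}$. Since $\mathbb{P}$ consists of finite partial maps that are elementary with parameters from $\mathcal{S}$, this amounts to realizing in $\mathcal{N}$, over $\langle n_\xi \rangle_{\xi<\gamma}$ together with $\mathcal{S}$, the complete type that $m_\gamma$ realizes over $\langle m_\xi\rangle_{\xi<\gamma}$ and $\mathcal{S}$ in $\mathcal{M}$. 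By Theorem \ref{rec sat char}, both $\mathcal{M}$ and $\mathcal{N}$ are $\omega$-non-standard and admit full satisfaction relations; fix such $\mathrm{Sat}^\mathcal{M}$ and $\mathrm{Sat}^\mathcal{N}$, so that $(\mathcal{M}, \mathrm{Sat}^\mathcal{M})$ and $(\mathcal{N}, \mathrm{Sat}^\mathcal{N})$ model $\mathrm{ZF}(\mathcal{L}^0_\mathrm{Sat})$.

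First I would code the type in the standard system. Let $\alpha \in \mathrm{Ord}^\mathcal{M}$ bound $\mathcal{S}$; applying Separation in $(\mathcal{M}, \mathrm{Sat}^\mathcal{M})$ form
$$c = \{ \langle \phi, s\rangle \mid s \in V_\alpha \wedge \mathrm{Sat}^\mathcal{M}(\phi, m_\gamma, \langle m_\xi\rangle_{\xi<\gamma}, s)\} \in \mathcal{M}.$$
For standard $\phi$ and $s \in \mathcal{S}$ the pair $\langle \ulcorner\phi\urcorner, s\rangle$ itself lies in $\mathcal{S}$ (rank-initiality, since $\phi$ has finite rank and $\mathrm{Ord}^\mathcal{S}$ is closed under successor), and $\langle\ulcorner\phi\urcorner, s\rangle \in^\mathcal{M} c$ iff $\mathcal{M} \models \phi(m_\gamma, \langle m_\xi\rangle, s)$ by correctness of $\mathrm{Sat}^\mathcal{M}$ on standard formulae. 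Thus $C =_\df c_E \cap \mathcal{S} \in \mathrm{Cod}_\mathcal{S}(\mathcal{M})$ encodes the complete type of $m_\gamma$ over $\mathcal{S}$. By the hypothesis $\mathrm{SSy}_\mathcal{S}(\mathcal{M}) \leq \mathrm{SSy}_\mathcal{S}(\mathcal{N})$, i.e. $\mathrm{Cod}_\mathcal{S}(\mathcal{M}) \subseteq \mathrm{Cod}_\mathcal{S}(\mathcal{N})$, there is $d \in \mathcal{N}$ with $d_E \cap \mathcal{S} = C$; hence for standard $\phi$ and $s \in \mathcal{S}$, $\langle\ulcorner\phi\urcorner, s\rangle \in^\mathcal{N} d$ iff $\mathcal{M}\models\phi(m_\gamma, \langle m_\xi\rangle, s)$, and $d$ still codes a \emph{complete} type over $\mathcal{S}$.

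Next I would realize the coded type in $\mathcal{N}$ by overspill across the cut $\mathcal{S}$. In $(\mathcal{N}, \mathrm{Sat}^\mathcal{N})$ consider
$$\Psi(\zeta) \equiv \exists y \, \forall \langle \phi, s\rangle \in (d \cap V_\zeta) . \, \mathrm{Sat}^\mathcal{N}(\phi, y, \langle n_\xi\rangle_{\xi<\gamma}, s).$$
If $\mathcal{N} \models \Psi(\zeta)$ for every $\zeta \in \mathrm{Ord}^\mathcal{S}$, then, $\mathcal{S}$ being topless in $\mathcal{N}$ (it is $\omega$-topless, so there is no least ordinal above it) and $(\mathcal{N}, \mathrm{Sat}^\mathcal{N})$ satisfying the least-ordinal principle for $\mathcal{L}^0_\mathrm{Sat}$-formulae, a routine overspill gives $\nu \in \mathrm{Ord}^\mathcal{N} \setminus \mathrm{Ord}^\mathcal{S}$ with $\mathcal{N} \models \Psi(\nu)$. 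Let $n_\gamma$ witness $\Psi(\nu)$. Since $\nu \notin \mathrm{Ord}^\mathcal{S}$ and $\mathcal{S}$ is rank-initial, $\mathcal{S} \subseteq V_\nu^\mathcal{N}$, so every standard pair $\langle\ulcorner\phi\urcorner, s\rangle \in d$ with $s \in \mathcal{S}$ has rank below $\nu$ and is therefore satisfied by $n_\gamma$; by correctness of $\mathrm{Sat}^\mathcal{N}$ on standard formulae and the completeness of $C$, the map $g \cup \{\langle m_\gamma, n_\gamma\rangle\}$ preserves every formula with parameters from $\mathcal{S}$, i.e. lies in $\mathcal{C}'_{m_\gamma}$ below $g$.

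The hard part will be establishing $\mathcal{N} \models \Psi(\zeta)$ for $\zeta \in \mathrm{Ord}^\mathcal{S}$. Its obvious witness is $m_\gamma$ back in $\mathcal{M}$: for such $\zeta$ the set $e_\zeta =_\df d \cap V_\zeta = c \cap V_\zeta$ lies in $\mathcal{S}$ (as $V_\zeta^\mathcal{M} = V_\zeta^\mathcal{N} \subseteq \mathcal{S}$ and $c,d$ agree on $\mathcal{S}$), and $\mathcal{M} \models \exists y \forall\langle\phi,s\rangle \in e_\zeta . \mathrm{Sat}^\mathcal{M}(\phi, y, \langle m_\xi\rangle, s)$ holds with witness $m_\gamma$. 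The difficulty, absent in Lemma \ref{Friedman lemma} where the internal $\Delta_1^\mathcal{P}$-relation $\mathrm{Sat}_{\Delta_0^\mathcal{P}}$ is a genuine $\mathcal{L}^0$-formula, is that this $\mathcal{M}$-statement mentions the \emph{satisfaction predicate}, which the merely $\mathcal{L}^0$-elementary map $g$ does not transfer to $\mathcal{N}$. To overcome this I would trade the external predicates $\mathrm{Sat}^\mathcal{M}, \mathrm{Sat}^\mathcal{N}$ for the \emph{internal}, $\mathcal{L}^0$-definable satisfaction relations of reflected set models $V_\alpha^\mathcal{M}$ and $V_\beta^\mathcal{N}$, chosen via Lemma \ref{rec sat reflection} so that $\mathcal{M}_\alpha \preceq \mathcal{M}$ and $\mathcal{N}_\beta \preceq \mathcal{N}$ with $\alpha,\beta$ above the cut. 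By Lemma \ref{absoluteness with restricted domain} these internal relations agree with genuine truth on standard formulae, while their defining formulae are in $\mathcal{L}^0$ and hence, once the reflection ordinal is existentially quantified so that only the $\mathcal{S}$-fixed parameter $e_\zeta$ and $\langle m_\xi\rangle \mapsto \langle n_\xi\rangle$ remain free, are transported across $g$ from $\mathcal{M}$ to $\mathcal{N}$. This reduces the per-$\zeta$ claim to the $\mathcal{L}^0$-elementarity of $g$ together with a reflection argument inside $\mathcal{N}$, after which the overspill step above goes through and completes the proof.
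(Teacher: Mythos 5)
Your setup (coding the full type with $\mathrm{Sat}^\mathcal{M}$, transferring the code through the standard system, and realizing it above the cut by overspill) matches the paper's, and you correctly locate the crux: the per-$\zeta$ statement mentions the satisfaction predicate, which the merely $\mathcal{L}^0$-elementary map $g$ cannot transfer. But your fix does not close the gap. To make the statement $\mathcal{L}^0$ you must existentially quantify the reflection ordinal and drop the condition that $V_\alpha$ reflects truth in the ambient model --- that condition is not $\mathcal{L}^0$-expressible, by Tarski's theorem, which is the whole reason the $\mathrm{Sat}$ predicate is needed. What transfers to $\mathcal{N}$ is therefore only: ``there exist $\beta$ and $y \in V_\beta$ such that $(V_\beta^\mathcal{N}, \in) \models \phi(y, \vec{n}, s)$ for all $\langle \phi, s\rangle \in e_\zeta$''. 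The witness $\beta$ this produces in $\mathcal{N}$ carries no elementarity property whatsoever, so satisfaction inside $V_\beta^\mathcal{N}$ cannot be converted into truth in $\mathcal{N}$ (nor into $\mathrm{Sat}^\mathcal{N}$): your $n_\gamma$ ends up realizing the type inside some internal set model, not in $\mathcal{N}$. A ``reflection argument inside $\mathcal{N}$'' cannot repair this, because reflection supplies \emph{some} good ordinals but cannot force the transferred existential witness to be one of them. (There is also a secondary gap on the $\mathcal{M}$-side: $e_\zeta$ contains nonstandard formulas, and $\mathcal{M}_\alpha \preceq \mathcal{M}$ together with Lemma \ref{absoluteness with restricted domain} --- both of which concern standard formulas only --- does not give $\mathrm{Sat}_{V_\alpha}(\phi, m_\gamma, \ldots)$ for those pairs; you would need the internal agreement of $\mathrm{Sat}^\mathcal{M}$ with the satisfaction relation of $(V_\alpha, \in)$ for \emph{all} internal formulas, which is what the proof, not the statement, of Lemma \ref{rec sat reflection} provides.)

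The tell-tale symptom is that your argument never uses $\omega$-toplessness, only toplessness. The paper's proof needs it, for a reason tied exactly to your gap: truth in $\mathcal{N}$ can be transferred across $g$ only one quantifier-complexity level at a time, via the $\mathcal{L}^0$-definable partial satisfaction predicates. The paper's formula $\phi(\zeta, k, \ldots)$ asserts realization of just the $\Sigma_k$-part of the coded type, and for standard $k$ this is equivalent to an $\mathcal{L}^0$-formula by clause (ii) of the definition of a full satisfaction relation, so it crosses $g$. The cost is that one then has, for each standard $k$, a separate bound $\nu_k = \sup\{\zeta < \nu \mid \phi(\zeta, k, \ldots)\}$ above the cut; these form a non-increasing sequence coded in $\mathcal{N}$, and $\omega$-toplessness is invoked precisely to find a single $\nu_\infty$ above the cut below all of them, after which overspill in $k$ yields a nonstandard $k_\infty$ and a single witness $n_\gamma$ realizing all standard levels at once. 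If you repair your approach by adding to the transferred formula the condition ``$V_\beta \prec_{\Sigma_k} V$'' (which \emph{is} $\mathcal{L}^0$-expressible for each standard $k$), you recover correctness level by level --- but you are then forced into exactly this diagonalization, i.e.\ back into the paper's proof.
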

\begin{proof}
By Theorem \ref{rec sat char}, $\mathcal{M}$ and $\mathcal{N}$ are $\omega$-non-standard and there are expansions $(\mathcal{M}, \mathrm{Sat}^\mathcal{M})$ and $(\mathcal{N}, \mathrm{Sat}^\mathcal{N})$ satisfying condition (\ref{rec sat char truth}) of that theorem. Recall that (informally) this condition says that these are satisfaction classes that are correct for all formulae in $\mathcal{L}^0$ of standard complexity, and that the expanded structures satisfy Separation and Replacement for all formulae in the expanded language $\mathcal{L}^0_\mathrm{Sat}$.

Let $g \in \mathbb{P}$. We unravel it as $g = \{\langle m_\xi, n_\xi \rangle \mid \xi < \gamma \}$, for some $\gamma < \omega$. Let $m_\gamma \in \mathcal{M}$ be arbitrary. By $\mathcal{L}^0_\mathrm{Sat}$-Separation, there is a code $c$ in $\mathcal{M}$ for the set
\begin{align*}
\{ & \langle \delta, s \rangle \in (\mathcal{L}^0[\langle x_\xi \rangle_{\xi < \gamma}, x_\gamma, z] \cap \mathcal{S}) \times \mathcal{S} \mid \\
& (\mathcal{M}, \mathrm{Sat}^\mathcal{M}) \models \mathrm{Sat}(\delta, \langle m_\xi \rangle_{\xi < \gamma}, m_\gamma, s) \}.
\end{align*}
Since $\mathrm{SSy}_\mathcal{S}(\mathcal{M}) \cong \mathrm{SSy}_\mathcal{S}(\mathcal{N})$, this set is also coded by some $d$ in $\mathcal{N}$.

We define a formula:
\begin{align*}
\phi(\zeta, k, \langle x_\xi \rangle_{\xi < \gamma}, q) \equiv & \mathrm{Ord}(\zeta) \wedge k < \omega \wedge \\
& \exists x_\gamma . \forall \delta \in \Sigma_{k} . \forall t \in V_\zeta . \\
& (\mathrm{Sat}(\delta, \langle x_\xi \rangle_{\xi < \gamma}, x_\gamma, t) \leftrightarrow \langle \delta, t \rangle \in q).
\end{align*}
By construction of $c$ and correctness of $\mathrm{Sat}^\mathcal{M}$, we have that 
$$\mathcal{M} \models \phi(\zeta, k, \langle m_\xi \rangle_{\xi < \gamma}, c),$$ 
for each $\zeta \in \mathrm{Ord}^\mathcal{M} \cap \mathcal{S}$ and each $k < \omega = \mathrm{OSP}(\mathcal{M})$. So since $g$ is elementary, and since $\mathcal{M}_\zeta = \mathcal{N}_\zeta$ and $c_\mathcal{M} \cap \mathcal{M}_\zeta = d_\mathcal{N} \cap \mathcal{N}_\zeta$ for each $\zeta \in \mathrm{Ord}^\mathcal{N} \cap \mathcal{S}$, we also have that $\mathcal{N} \models \phi(\zeta, k, \langle n_\xi \rangle_{\xi < \gamma}, d)$ for each $\zeta \in \mathrm{Ord}^\mathcal{N} \cap \mathcal{S}$ and each $k < \omega$. Pick some $\nu \in \mathrm{Ord}^\mathcal{N} \setminus \mathcal{S}$. Now by Overspill on $\mathcal{S}$, for each $k < \omega$ there is $\nu\hspace{1pt}'_k \in \mathrm{Ord}^\mathcal{N} \setminus \mathcal{S}$ such that $(\mathcal{N}, \mathrm{Sat}^\mathcal{N}) \models \nu\hspace{1pt}'_k < \nu \wedge \phi(\nu\hspace{1pt}'_k, k, \langle n_\xi \rangle_{\xi < \gamma}, d)$.

Pick some non-standard $o <^\mathcal{N} \omega^\mathcal{N}$. Working in $\mathcal{N}$, we construct a partial function $(k \mapsto \nu_k) : o \rightarrow \nu + 1$, such that for each $k < o$,
\[
\nu_k = \sup \{ \zeta \mid \zeta < \nu \wedge \phi(\zeta, k, \langle n_\xi \rangle_{\xi < \gamma}, d) \}.
\]
We return to reasoning in the meta-theory. By the Overspill-argument above, this function is total on $\omega$, and $\nu_k \not\in \mathcal{S}$ for each $k < \omega$. Moreover, by logic, $\nu_k \geq \nu_l$ for all $k \leq l < \omega$. So by $\omega$-toplessness, there is $\nu_\infty \in \mathrm{Ord}^\mathcal{N} \setminus \mathcal{S}$, such that for each $k < \omega$, $\nu_\infty <^\mathcal{N} \nu_k$. So for each $k < \omega$, we have $(\mathcal{N}, \mathrm{Sat}^\mathcal{N}) \models \phi(\nu_\infty, k, \langle n_\xi \rangle_{\xi < \gamma}, d)$, whence by Overspill on $\mathrm{WFP}(\mathcal{N})$, there is a non-standard $k_\infty \in^\mathcal{N} \omega^\mathcal{N}$ such that $(\mathcal{N}, \mathrm{Sat}^\mathcal{N}) \models \phi(\nu_\infty, k_\infty, \langle n_\xi \rangle_{\xi < \gamma}, d)$. Let $n_\gamma \in \mathcal{N}$ be a witness of this fact. Note that for all $s \in \mathcal{S}$ and for all $\delta \in \mathcal{L}^0[\langle x_\xi \rangle_{\xi < \gamma}, x_\gamma, z]$,
\[
\mathcal{N} \models \delta(\langle n_\xi \rangle_{\xi < \gamma}, n_\gamma, s) \Leftrightarrow \mathcal{N} \models \langle \delta, s \rangle \in d.
\]

Let $f = g \cup \{\langle m_\gamma, n_\gamma \rangle \}$. We need to show that $f \in \mathcal{C}'_{m_\gamma}$; it only remains to verify that $f$ is elementary. Now observe that for any $s \in \mathcal{S}$, and any formula $\delta(\langle x_\xi \rangle_{\xi < \gamma + 1}, z) \in \mathcal{L}^0[\langle x_\xi \rangle_{\xi < \gamma + 1}, z]$,
\begin{align*}
\mathcal{M} \models \delta(\langle m_\xi \rangle_{\xi < \gamma + 1}, s) &\Leftrightarrow \mathcal{M} \models \langle \delta, s \rangle \in c \\
&\Leftrightarrow \mathcal{N} \models \langle \delta, s \rangle \in d \\
&\Leftrightarrow \mathcal{N} \models \delta(\langle n_\xi \rangle_{\xi < \gamma + 1}, s).
\end{align*}
Therefore, $f \in \mathcal{C}'_{m_\gamma}$ as desired.
\end{proof}

\begin{thm}\label{rec sat iso thm}
Let $\mathcal{M}$ and $\mathcal{N}$ be countable recursively saturated models of $\mathrm{ZF}$, and let $\mathcal{S}$ be a common rank-initial $\omega$-topless substructure of $\mathcal{M}$ and $\mathcal{N}$. Let $m_0 \in \mathcal{M}$ and let $n_0 \in \mathcal{N}$. The following are equivalent:

\begin{enumerate}[{\normalfont (a)}]
	\item There is  $i \in \llbracket \mathcal{M} \cong_\mathcal{S} \mathcal{N} \rrbracket$ such that $i(m_0) = n_0$.
	\item $\mathrm{SSy}_\mathcal{S}(\mathcal{M}) \cong \mathrm{SSy}_\mathcal{S}(\mathcal{N})$ and $\mathrm{Th}_\mathcal{S}((\mathcal{M}, m_0)) = \mathrm{Th}_\mathcal{S}((\mathcal{N}, n_0))$.
\end{enumerate}
\end{thm}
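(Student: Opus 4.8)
The plan is to handle the two directions separately, with essentially all of the work for the nontrivial direction already packaged into Lemma \ref{rec sat iso lemma}, so that the theorem reduces to a back-and-forth argument via a generic filter.

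For (a) $\Rightarrow$ (b), suppose $i \in \llbracket \mathcal{M} \cong_\mathcal{S} \mathcal{N} \rrbracket$ with $i(m_0)=n_0$. Any isomorphism is elementary, and $i$ fixes $\mathcal{S}$ pointwise with $i(m_0)=n_0$, so for every $\phi(x,\vec z)\in\mathcal{L}^0$ and $\vec s\in\mathcal{S}$ we have $\mathcal{M}\models\phi(m_0,\vec s)\Leftrightarrow\mathcal{N}\models\phi(n_0,\vec s)$, giving $\mathrm{Th}_\mathcal{S}((\mathcal{M},m_0))=\mathrm{Th}_\mathcal{S}((\mathcal{N},n_0))$. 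For the standard systems, note that an isomorphism is in particular rank-initial and that $\mathcal{S}$, being $\omega$-topless, is topless; since $\mathrm{ZF}\vdash\mathrm{KP}^\mathcal{P}$, Proposition \ref{emb pres}(\ref{emb pres SSy_S eq}) yields $\mathrm{SSy}_\mathcal{S}(\mathcal{M})\cong\mathrm{SSy}_{i(\mathcal{S})}(\mathcal{N})=\mathrm{SSy}_\mathcal{S}(\mathcal{N})$. (Concretely, $i$ carries any $\mathcal{M}$-code $c$ of a set $X\subseteq\mathcal{S}$ to an $\mathcal{N}$-code $i(c)$ of the same $X$, since $i$ fixes $\mathcal{S}$ pointwise.)

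For (b) $\Rightarrow$ (a), let $\mathbb{P}=\llbracket\mathcal{M}\preceq_\mathcal{S}\mathcal{N}\rrbracket^{<\omega}$ be the poset of finite partial elementary maps over $\mathcal{S}$, ordered by extension. First I would observe that the singleton map $f_0=\{\langle m_0,n_0\rangle\}$ lies in $\mathbb{P}$: this is precisely the content of $\mathrm{Th}_\mathcal{S}((\mathcal{M},m_0))=\mathrm{Th}_\mathcal{S}((\mathcal{N},n_0))$. Next, since $\mathrm{SSy}_\mathcal{S}(\mathcal{M})\cong\mathrm{SSy}_\mathcal{S}(\mathcal{N})$ gives in particular $\mathrm{SSy}_\mathcal{S}(\mathcal{M})\leq\mathrm{SSy}_\mathcal{S}(\mathcal{N})$, Lemma \ref{rec sat iso lemma} shows that $\mathcal{C}'_m=\{f\in\mathbb{P}\mid m\in\dom(f)\}$ is dense in $\mathbb{P}$ for every $m\in\mathcal{M}$ (the \emph{forth} sets). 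For the \emph{back} sets I would invoke the symmetry of the hypotheses: the inversion map $f\mapsto f^{-1}$ is an order-isomorphism between $\mathbb{P}$ and $\llbracket\mathcal{N}\preceq_\mathcal{S}\mathcal{M}\rrbracket^{<\omega}$, so applying Lemma \ref{rec sat iso lemma} with the roles of $\mathcal{M}$ and $\mathcal{N}$ exchanged (legitimate since $\cong$ is symmetric and every hypothesis of the theorem is symmetric in $\mathcal{M},\mathcal{N}$) and transporting back along inversion shows that $\mathcal{D}'_n=\{f\in\mathbb{P}\mid n\in\mathrm{image}(f)\}$ is dense in $\mathbb{P}$ for every $n\in\mathcal{N}$.

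Finally, since $\mathcal{M}$ and $\mathcal{N}$ are countable, the family $\{\mathcal{C}'_m\mid m\in\mathcal{M}\}\cup\{\mathcal{D}'_n\mid n\in\mathcal{N}\}$ is countable, so Lemma \ref{generic filter existence} furnishes a generic filter $\mathcal{I}$ on $\mathbb{P}$ containing $f_0$ and meeting every member of this family. Setting $i=\bigcup\mathcal{I}$, downward directedness of $\mathcal{I}$ makes $i$ single-valued, genericity against the $\mathcal{C}'_m$ forces $\dom(i)=\mathcal{M}$ and against the $\mathcal{D}'_n$ forces $\mathrm{image}(i)=\mathcal{N}$, and since every condition is an elementary map over $\mathcal{S}$, the union $i$ is an elementary bijection fixing $\mathcal{S}$ pointwise, i.e. an isomorphism in $\llbracket\mathcal{M}\cong_\mathcal{S}\mathcal{N}\rrbracket$; moreover $f_0\in\mathcal{I}$ ensures $i(m_0)=n_0$. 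The substantive work—extending a finite condition while controlling the satisfaction classes, using $\omega$-toplessness and overspill—has already been discharged in Lemma \ref{rec sat iso lemma}; the only genuine obstacle left is the bookkeeping for the \emph{back} direction, namely checking that Lemma \ref{rec sat iso lemma} transports correctly along inversion so that range-density holds in $\mathbb{P}$ itself, which I expect to be routine given the symmetry of the hypotheses.
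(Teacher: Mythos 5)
Your proof is correct and follows essentially the same route as the paper: the same poset $\llbracket\mathcal{M}\preceq_\mathcal{S}\mathcal{N}\rrbracket^{<\omega}$, the same dense families $\mathcal{C}'_m$ and $\mathcal{D}'_n$, the same appeal to Lemma \ref{rec sat iso lemma} for density and to Lemma \ref{generic filter existence} for the generic filter, with the union of the filter giving the isomorphism. The only difference is one of explicitness: the paper cites Lemma \ref{rec sat iso lemma} for the density of both families without comment, whereas you spell out the symmetry/inversion argument needed to transfer the lemma to the range-density sets $\mathcal{D}'_n$ --- a detail the paper leaves implicit.
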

\begin{proof}
The forward direction is clear since $i$ is an isomorphism.

Let $\mathbb{P} = \llbracket \mathcal{M} \preceq_\mathcal{S} \mathcal{N} \rrbracket^{< \omega}$. Since $\mathrm{Th}_\mathcal{S}((\mathcal{M}, m_0)) = \mathrm{Th}_\mathcal{S}((\mathcal{N}, n_0))$, the function $(m_0 \mapsto n_0)$ is in $\mathbb{P}$. For each $m \in \mathcal{M}$ and each $n \in  \mathcal{N}$, let
\begin{align*}
\mathcal{C}'_m &=_\df \{ f \in \mathbb{P} \mid m \in \dom(f\hspace{2pt}) \}, \\
\mathcal{D}'_n &=_\df \{ f \in \mathbb{P} \mid n \in \mathrm{image}(f\hspace{2pt}) \}. \\
\end{align*}
By Lemma \ref{rec sat iso lemma}, $\mathcal{C}'_m$ and $\mathcal{D}'_n$ are dense in $\mathbb{P}$ for all $m \in \mathcal{M}$ and all $n \in \mathcal{N}$. By Lemma \ref{generic filter existence}, there is a $\mathcal{C}'_m \cup \mathcal{D}'_n$-generic filter $\mathcal{I}$ on $\mathbb{P}$ containing $(m_0 \mapsto n_0)$. Let $i = \bigcup \mathcal{I}$. 

By the genericity, $\dom(i) = \mathcal{M}$ and $\mathrm{image}(i) = \mathcal{N}$. Moreover, by the filter properties, for any $\vec{m} \in \mathcal{M}$, some finite extension $f \in \mathbb{P}$ of $i\restriction_{\vec{m}}$ is in $\mathcal{I}$. So by elementarity of $f$ and arbitrariness of $\vec{m}$, we have that $i$ is an isomorphism.
\end{proof}

The following Theorem is an improvement of Lemma \ref{rec sat reflection}.

\begin{thm}[Ressayre]\label{Ressayre thm}
	Let $\mathcal{M}$ be a countable recursively saturated model of $\mathrm{ZF}$. For any $\alpha_0 \in \mathrm{Ord}(\mathcal{M})$ there is $\alpha \in \mathrm{Ord}^\mathcal{M}$, such that $\alpha_0 <^\mathcal{M} \alpha$ and for all $S \in \mathcal{M}_\alpha$ we have $\mathcal{M} \cong_{S_\mathcal{M}} \mathcal{M}_\alpha \preceq \mathcal{M}$.
\end{thm}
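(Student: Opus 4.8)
The plan is to get the elementary reflecting ordinal $\alpha$ from (a refinement of) Lemma \ref{rec sat reflection}, and then, for each $S\in\mathcal{M}_\alpha$ separately, to manufacture the required isomorphism by feeding a suitable $\omega$-topless cut into Theorem \ref{rec sat iso thm}. The key realization is that to fix $S_\mathcal{M}$ pointwise it suffices to fix \emph{pointwise} any rank-cut containing $S_\mathcal{M}$, and although the bounded cut $\mathcal{M}_\gamma$ itself is never topless, the slightly larger cut $\bigcup_{k<\omega}\mathcal{M}_{\gamma+k}$ \emph{is} $\omega$-topless precisely because $\mathcal{M}$ is $\omega$-non-standard (Lemma \ref{omega-topless existence}). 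This is the cut I will use as the common substructure $\mathcal{S}$ in Theorem \ref{rec sat iso thm}.

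First I would fix the ordinal $\alpha$. Since $\mathcal{M}$ is countable recursively saturated, Theorem \ref{rec sat char} gives that $\mathcal{M}$ is $\omega$-non-standard and admits a full satisfaction relation $\mathrm{Sat}^\mathcal{M}$. Running the proof of Lemma \ref{rec sat reflection}, but applying the Reflection Theorem \ref{reflection thm} inside $(\mathcal{M},\mathrm{Sat}^\mathcal{M})\models\mathrm{ZF}(\mathcal{L}^0_\mathrm{Sat})$ to the predicate $\mathrm{Sat}$ itself, I would choose $\alpha>\alpha_0$ so that not only $\mathcal{M}_\alpha\preceq\mathcal{M}$ but in fact $(\mathcal{M}_\alpha,\mathrm{Sat}^\mathcal{M}\restriction_{\mathcal{M}_\alpha})\preceq(\mathcal{M},\mathrm{Sat}^\mathcal{M})$; the customary way to reflect \emph{all} standard formulae at once is to internalize ``$V_\xi$ reflects every formula of code $<n$'', observe it holds for each standard $n$, and overspill to a non-standard $n^*$. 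The restricted satisfaction class is then a full satisfaction relation on $\mathcal{M}_\alpha$, and since $\omega^{\mathcal{M}_\alpha}=\omega^\mathcal{M}$ is non-standard, Theorem \ref{rec sat char} yields that $\mathcal{M}_\alpha$ is itself a countable recursively saturated model of $\mathrm{ZF}$. Note $\alpha$ is chosen once and does not depend on $S$.

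Now fix $S\in\mathcal{M}_\alpha$ and put $\gamma=\rnk^\mathcal{M}(S)<\alpha$. Because $\mathcal{M}_\alpha\models\mathrm{ZF}$ is closed under ordinal arithmetic, $\gamma+\omega<\alpha$, so $\mathcal{S}_0=_\df\bigcup_{k<\omega}\mathcal{M}_{\gamma+k}$ satisfies $\mathcal{S}_0\subseteq\mathcal{M}_\alpha$, and by Lemma \ref{omega-topless existence} (applied inside the $\omega$-non-standard models $\mathcal{M}$ and $\mathcal{M}_\alpha$, which share the same $\gamma+k$) $\mathcal{S}_0$ is a common rank-initial $\omega$-topless substructure of both $\mathcal{M}$ and $\mathcal{M}_\alpha$. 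Moreover $S_\mathcal{M}\subseteq\mathcal{M}_\gamma\subseteq\mathcal{S}_0$ and $S\in\mathcal{S}_0$. I would then apply Theorem \ref{rec sat iso thm} to $\mathcal{M}$ and $\mathcal{N}=\mathcal{M}_\alpha$ with this $\mathcal{S}_0$ and with $m_0=n_0=S$, verifying clause (b): the theory condition $\mathrm{Th}_{\mathcal{S}_0}((\mathcal{M},S))=\mathrm{Th}_{\mathcal{S}_0}((\mathcal{M}_\alpha,S))$ is immediate from $\mathcal{M}_\alpha\preceq\mathcal{M}$ and $S,\mathcal{S}_0\subseteq\mathcal{M}_\alpha$; and the standard-system condition $\mathrm{SSy}_{\mathcal{S}_0}(\mathcal{M})\cong\mathrm{SSy}_{\mathcal{S}_0}(\mathcal{M}_\alpha)$ follows from Proposition \ref{emb pres}(\ref{emb pres SSy_S eq}) applied to the rank-initial inclusion $\mathcal{M}_\alpha\hookrightarrow\mathcal{M}$ with the topless substructure $\mathcal{S}_0$. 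Theorem \ref{rec sat iso thm} then produces $i_S\in\llbracket\mathcal{M}\cong_{\mathcal{S}_0}\mathcal{M}_\alpha\rrbracket$; since $i_S$ fixes $\mathcal{S}_0\supseteq S_\mathcal{M}$ pointwise, we get $\mathcal{M}\cong_{S_\mathcal{M}}\mathcal{M}_\alpha$, as required.

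The main obstacle I expect is the second paragraph: guaranteeing that the reflecting cut $\mathcal{M}_\alpha$ is \emph{recursively saturated}, which is exactly the hypothesis Theorem \ref{rec sat iso thm} demands of both models. An elementary rank-cut of a recursively saturated model need not be recursively saturated, so one cannot simply invoke $\mathcal{M}_\alpha\preceq\mathcal{M}$; the whole point is that $\alpha$ must be chosen to reflect the satisfaction class, not merely the pure language, and the simultaneous-reflection/overspill argument for this is the delicate part. Everything else is bookkeeping built from Lemma \ref{omega-topless existence}, Proposition \ref{emb pres}, and Theorem \ref{rec sat iso thm}.
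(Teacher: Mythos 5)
Your proof follows the same route as the paper's: the paper also takes $\alpha$ from Lemma~\ref{rec sat reflection}, forms exactly the cut $\mathcal{M}_{S,\omega}=\bigcup_{k<\omega}\mathcal{M}_{\rnk(S)+k}$ (your $\mathcal{S}_0$), gets $\omega$-toplessness from Lemma~\ref{omega-topless existence}, verifies the standard-system condition by rank-initiality and the theory condition by $\mathcal{M}_\alpha\preceq\mathcal{M}$, and then invokes Theorem~\ref{rec sat iso thm}. (Setting $m_0=n_0=S$ is harmless but unnecessary: the theorem only asks for an isomorphism over $S_\mathcal{M}$, and fixing $\mathcal{S}_0\supseteq S_\mathcal{M}$ pointwise already gives that.) You are also right that Theorem~\ref{rec sat iso thm} requires \emph{both} models to be recursively saturated, a hypothesis the paper's proof applies silently to $\mathcal{M}_\alpha$; flagging it is a legitimate observation.

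However, the mechanism you propose for securing it does not work as described. To ``internalize `$V_\xi$ reflects every formula of code $<n$'\,'' for $\mathcal{L}^0_\mathrm{Sat}$-formulae and then overspill, you need a \emph{single} $\mathcal{L}^0_\mathrm{Sat}$-formula $\theta(\xi,n)$, with $n$ a free variable, expressing reflection of all $\mathcal{L}^0_\mathrm{Sat}$-formulae of code $<n$ in the structure $(\mathcal{M},\mathrm{Sat}^\mathcal{M})$. For each \emph{standard} $n$ this is a finite conjunction and hence a genuine formula, but uniformity in $n$ would require a satisfaction predicate for the expanded language $\mathcal{L}^0_\mathrm{Sat}$ definable over $(\mathcal{M},\mathrm{Sat}^\mathcal{M})$ itself; the class $\mathrm{Sat}^\mathcal{M}$ only evaluates $\mathcal{L}^0$-formulae, and Tarski's undefinability theorem rules out such a predicate for the full expanded language. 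Without uniformity there is nothing to overspill on. (One could instead choose $\mathrm{Sat}^\mathcal{M}$ so that $(\mathcal{M},\mathrm{Sat}^\mathcal{M})$ is itself recursively saturated, using chronic resplendency of countable recursively saturated models, and then \emph{realize} the recursive type $\{\theta_n(\xi)\mid n\in\mathbb{N}\}$ rather than overspill; but that machinery is not in the paper and you did not invoke it.)

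The good news is that the step you call delicate has a two-line solution, precisely because $\mathcal{M}_\alpha$ is \emph{not} a rank-cut but a bounded, parametrically definable rank-initial segment (your analogy with elementary cuts is what misleads you here). Let $p(x,\vec{b})$ be a recursive type over $\mathcal{M}_\alpha$. Then $q(x,\vec{b},\alpha)=p(x,\vec{b})\cup\{\rnk(x)<\alpha\}$ is a recursive type over $\mathcal{M}$: any finite subset of $p$ has a witness in $\mathcal{M}_\alpha$, which by $\mathcal{M}_\alpha\preceq\mathcal{M}$ witnesses the same formulae in $\mathcal{M}$ and automatically has rank below $\alpha$. By recursive saturation of $\mathcal{M}$, $q$ is realized by some $a\in\mathcal{M}$ with $\rnk^\mathcal{M}(a)<\alpha$, i.e. $a\in\mathcal{M}_\alpha$, and elementarity transfers the realization of $p$ back to $\mathcal{M}_\alpha$. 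Hence $\mathcal{M}_\alpha$ is recursively saturated, the paper's unstrengthened choice of $\alpha$ from Lemma~\ref{rec sat reflection} suffices, and the rest of your argument goes through verbatim.
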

\begin{proof}
	Let $\alpha >^\mathcal{M} \omega^\mathcal{M}$ be as obtained from Lemma \ref{rec sat reflection}. Thus $\mathcal{M}_\alpha \preceq \mathcal{M}$. Let $S \in \mathcal{M}_\alpha$ be arbitrary. For each $m \in^\mathcal{M} \omega^\mathcal{M}$, let $\sigma_k = \rnk(S) + m$ as evaluated in $\mathcal{M}$. Since $\mathcal{M}_\alpha \preceq \mathcal{M}$, we have $\sigma_m < \alpha$ for each $m \in^\mathcal{M} \omega^\mathcal{M}$.
	
	Since $\mathcal{M}$ is recursively saturated, it is $\omega$-non-standard. Let $\mathcal{M}_{S, \omega} = \bigcup_{k < \omega} \mathcal{M}_{\sigma_k}$ (note that we take this union only over standard $k$). By Lemma \ref{omega-topless existence}, $\mathcal{M}_{S, \omega}$ is a common rank-initial $\omega$-topless substructure of $\mathcal{M}$ and $\mathcal{M}_\alpha$; and obviously $S_\mathcal{M} \subseteq \mathcal{M}_{S, \omega}$. 
	
	By rank-initiality, $\mathrm{SSy}_{\mathcal{M}_{S, \omega}}(\mathcal{M_\alpha}) = \mathrm{SSy}_{\mathcal{M}_{S, \omega}}(\mathcal{M})$, and by $\mathcal{M}_\alpha \preceq \mathcal{M}$, we have $\mathrm{Th}_{\mathcal{M}_{S, \omega}}(\mathcal{M_\alpha}) = \mathrm{Th}_{\mathcal{M}_{S, \omega}}(\mathcal{M})$. So it follows from Theorem \ref{rec sat iso thm} that $\mathcal{M} \cong_{S_\mathcal{M}} \mathcal{M}_\alpha$.
\end{proof}

\section{Characterizations}\label{Characterizations}

\begin{thm}[Ressayre-style]\label{Ressayre characterization Sigma_1-Separation}
	Let $\mathcal{M} \models \mathrm{KP}^\mathcal{P}$ be countable and non-standard. The following are equivalent:
	\begin{enumerate}[{\normalfont (a)}]
		\item\label{Ressayre characterization Sigma_1-Separation Separation} $\mathcal{M} \models \Sigma_1^\mathcal{P} \textnormal{-Separation}.$
		\item\label{Ressayre characterization Sigma_1-Separation Self-embedding} For every $\alpha \in \mathrm{Ord}^\mathcal{M}$, there is a rank-initial self-embedding of $\mathcal{M}$ which fixes $\mathcal{M}_\alpha$ pointwise.
	\end{enumerate}
\end{thm}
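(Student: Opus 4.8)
The plan is to prove the two implications separately, using the Friedman-style self-embedding theorem (Corollary \ref{Friedman selfembedding}) for (\ref{Ressayre characterization Sigma_1-Separation Separation}) $\Rightarrow$ (\ref{Ressayre characterization Sigma_1-Separation Self-embedding}), and a pullback argument for the converse. Throughout I read (\ref{Ressayre characterization Sigma_1-Separation Self-embedding}) as asserting a \emph{proper} rank-initial self-embedding, as in the introduction, since the identity map would otherwise trivialize the condition. For (\ref{Ressayre characterization Sigma_1-Separation Separation}) $\Rightarrow$ (\ref{Ressayre characterization Sigma_1-Separation Self-embedding}), the only ingredient missing in order to apply Corollary \ref{Friedman selfembedding} is a topless rank-initial proper substructure $\mathcal{S}$ with $\mathcal{M}_\alpha \subseteq \mathcal{S}$; note that $\mathcal{M}_\alpha$ itself is never topless, since $\alpha$ is the least ordinal outside it. I would build such an $\mathcal{S}$ by hand from ordinal arithmetic. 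Since $\mathcal{M}$ is non-standard, $\mathrm{Ord}^\mathcal{M}$ is ill-founded, so fix an infinite descending sequence $d_0 > d_1 > \cdots$ of ordinals of $\mathcal{M}$; each $d_n \geq 1$, as a strictly descending sequence reaching $0$ would terminate. Because ordinal addition is provably total and strictly monotone in its right argument over $\mathrm{KP}$, the ordinals $e_n =_\df \alpha +^\mathcal{M} d_n$ form a descending sequence $e_0 > e_1 > \cdots$ all strictly above $\alpha$. Let $\mathcal{S} = \mathcal{M}\restriction_{\{m \mid \forall n < \omega . \rnk^\mathcal{M}(m) <^\mathcal{M} e_n\}}$. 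Then $\mathcal{S}$ is rank-initial and bounded by $e_0$, contains $\mathcal{M}_\alpha$ (as $\alpha <^\mathcal{M} e_n$ for all $n$), and is proper (as $e_0 \notin \mathcal{S}$); and it is topless, because for any $\beta \in \mathrm{Ord}^\mathcal{M} \setminus \mathcal{S}$ we have $\beta \geq^\mathcal{M} e_n$ for some $n$, whence $e_{n+1} <^\mathcal{M} \beta$ with $e_{n+1} \in \mathrm{Ord}^\mathcal{M} \setminus \mathcal{S}$. Applying Corollary \ref{Friedman selfembedding} to this $\mathcal{S}$ yields a proper $i \in \llbracket \mathcal{M} \leq^{\rnk, \mathrm{topless}}_\mathcal{S} \mathcal{M} \rrbracket$, which fixes $\mathcal{S}$, hence $\mathcal{M}_\alpha$, pointwise, as required.

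For (\ref{Ressayre characterization Sigma_1-Separation Self-embedding}) $\Rightarrow$ (\ref{Ressayre characterization Sigma_1-Separation Separation}) I would verify each instance of the schema directly. Fix a set $a$, a parameter $p$, and $\delta(x, y, p) \in \Delta_0^\mathcal{P}$, and write $\sigma(x) \equiv \exists y . \delta(x, y, p)$. Choose $\alpha \in \mathrm{Ord}^\mathcal{M}$ with $\alpha >^\mathcal{M} \rnk^\mathcal{M}(a), \rnk^\mathcal{M}(p)$, and let $j$ be a proper rank-initial self-embedding fixing $\mathcal{M}_\alpha$ pointwise; then $p$ and every $\mathcal{M}$-element $x$ of $a$ are fixed by $j$, since they lie in $\mathcal{M}_\alpha$. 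The key point is that $j(\mathcal{M})$ is a rank-initial substructure of $\mathcal{M}$ (the image of a rank-initial map) onto which $j$ is an isomorphism, and, being the image of a \emph{proper} rank-initial self-embedding, it is bounded: $j(\mathcal{M}) \subseteq \mathcal{M}_\mu$ for some $\mu \in \mathrm{Ord}^\mathcal{M}$. Now for $x \in a_\mathcal{M}$ the isomorphism $j$ together with $j(x) = x$ and $j(p) = p$ gives $\mathcal{M} \models \sigma(x) \Leftrightarrow j(\mathcal{M}) \models \sigma(x)$; and since the inclusion $j(\mathcal{M}) \hookrightarrow \mathcal{M}$ is rank-initial, hence $\Delta_0^\mathcal{P}$-elementary by Proposition \ref{emb pres}(\ref{emb pres Delta_0^P}), any witness of $\sigma(x)$ in $j(\mathcal{M})$ is a genuine witness in $\mathcal{M}$ of rank below $\mu$. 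Thus
\[
\mathcal{M} \models \forall x \in a . \big( \sigma(x) \rightarrow \exists y \in V_\mu . \delta(x, y, p) \big),
\]
so the set $\{x \in a \mid \mathcal{M} \models \sigma(x)\}$ coincides with $\{x \in a \mid \mathcal{M} \models \exists y \in V_\mu . \delta(x, y, p)\}$, which is separable by $\Delta_0^\mathcal{P}$-Separation. As $a$, $p$ and $\delta$ were arbitrary, $\mathcal{M} \models \Sigma_1^\mathcal{P}\textnormal{-Separation}$.

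The main obstacle I anticipate is the conceptual heart of the backward direction: recognizing that fixing a sufficiently large initial segment turns the self-embedding's isomorphism onto a \emph{bounded} rank-initial copy of $\mathcal{M}$ into a witness-bounding device, which is exactly the set-theoretic form of Ressayre's argument. The forward direction is routine once the topless cut above $\mathcal{M}_\alpha$ is produced, and the ordinal-arithmetic construction above supplies it uniformly, with no separate treatment of the $\omega$-standard and $\omega$-non-standard cases.
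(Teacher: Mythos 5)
Your proof is correct and follows essentially the same route as the paper's: the forward direction applies Corollary \ref{Friedman selfembedding} to a topless rank-initial cut containing $\mathcal{M}_\alpha$ (the paper takes $\bigcup_{\xi \in \mathrm{OSP}(\mathcal{M})} \mathcal{M}_{\alpha + \xi}$ instead of your descending-sequence cut, but both constructions work), and the backward direction is the same witness-bounding argument, your factorization through the isomorphism onto $j(\mathcal{M})$ followed by the rank-initial inclusion being just a rephrasing of the paper's direct use of the $\Delta_0^\mathcal{P}$-elementarity of $j$ itself. Your decision to read (\ref{Ressayre characterization Sigma_1-Separation Self-embedding}) as asserting a \emph{proper} self-embedding matches the paper's intent: its proof likewise assumes $i(\mathcal{M}) \subseteq \mathcal{M}_\mu$ for some $\mu \in \mathrm{Ord}^\mathcal{M}$, which is exactly what properness of a rank-initial self-embedding provides.
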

\begin{proof}
	(\ref{Ressayre characterization Sigma_1-Separation Separation}) $\Rightarrow$ (\ref{Ressayre characterization Sigma_1-Separation Self-embedding}): Let $\mathcal{N} = \bigcup_{\xi \in \mathrm{OSP}(\mathcal{M})} \mathcal{M}_{\alpha + \xi}.$ Note that $\mathcal{N}$ is a rank-cut of $\mathcal{M}$. So by Corollary \ref{Friedman selfembedding}, we are done.
	
	(\ref{Ressayre characterization Sigma_1-Separation Self-embedding}) $\Rightarrow$ (\ref{Ressayre characterization Sigma_1-Separation Separation}): Let $\phi(x) \in \Sigma_1^\mathcal{P}[x]$ and let $a \in \mathcal{M}$. Let $i$ be a rank-initial self-embedding of $\mathcal{M}$ which fixes $\mathcal{M}_{\rnk(a) + 1}$ pointwise and which satisfies $i(\mathcal{M}) \subseteq \mathcal{M}_\mu$, for some $\mu \in \mathrm{Ord}^\mathcal{M}$. Let us write $\phi(x)$ as $\exists y . \delta(y, x)$, where $\delta \in \Delta_0^\mathcal{P}$. Since $i$ fixes $a_\mathcal{M}$ pointwise, we have
	\[
	\mathcal{M} \models \forall x \in a . ( \exists y . \delta(y, x) \leftrightarrow \exists y \in V_\mu . \delta(y, x)).
	\]
	So $\{ x \in a \mid \phi(x) \} = \{ x \in a \mid \exists y \in V_\mu . \delta(y, x)) \}$, which exists by $\Delta_0^\mathcal{P}$-Separation.
\end{proof}

\begin{thm}[Bahrami-Enayat-style]\label{characterize topless substructure}
	Let $\mathcal{M} \models \mathrm{KP}^\mathcal{P}  + \Sigma_1^\mathcal{P}\textnormal{-Separation}$ be countable and non-standard, and let $\mathcal{S}$ be a topless substructure of $\mathcal{M}$. The following are equivalent:
	\begin{enumerate}[{\normalfont (a)}]
		\item\label{characterize topless substructure emb} $\mathcal{S} = \mathrm{Fix}^\rnk(i)$, for some rank-initial self-embedding $i : \mathcal{M} \rightarrow \mathcal{M}$.
		\item[{\normalfont (\ref{characterize topless substructure emb}')}] $\mathcal{S} = \mathrm{Fix}^\rnk(i)$, for some proper topless rank-initial self-embedding $i : \mathcal{M} \rightarrow \mathcal{M}$.
		\item\label{characterize topless substructure substr} $\mathcal{S}$ is rank-initial in $\mathcal{M}$.
	\end{enumerate}
\end{thm}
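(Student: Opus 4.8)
The plan is to prove the cycle $(\ref{characterize topless substructure emb}) \Rightarrow (\ref{characterize topless substructure substr}) \Rightarrow (\ref{characterize topless substructure emb}')$, since $(\ref{characterize topless substructure emb}') \Rightarrow (\ref{characterize topless substructure emb})$ is trivial. The implication $(\ref{characterize topless substructure emb}) \Rightarrow (\ref{characterize topless substructure substr})$ should be the routine direction: given a rank-initial self-embedding $i$ with $\mathcal{S} = \mathrm{Fix}^\rnk(i)$, I would show $\mathcal{S}$ is rank-initial directly from the definition of $\mathrm{Fix}^\rnk$. By definition, $x \in \mathrm{Fix}^\rnk(i)$ means $\{y \in \mathcal{M} \mid \rnk(y) \leq \rnk(x)\}$ is pointwise fixed by $i$. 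So if $x \in \mathcal{S}$ and $\rnk^\mathcal{M}(m) \leq \rnk^\mathcal{M}(x)$, then $m$ is pointwise fixed, hence $m \in \mathrm{Fix}^\rnk(i) = \mathcal{S}$ as well (one checks that being pointwise fixed is downward closed under rank, which follows since the rank-initial segment below $m$ sits inside that below $x$). This gives rank-initiality immediately.

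The substantive direction is $(\ref{characterize topless substructure substr}) \Rightarrow (\ref{characterize topless substructure emb}')$: assuming $\mathcal{S}$ is a topless rank-initial substructure of $\mathcal{M}$, I must manufacture a proper topless rank-initial self-embedding $i$ of $\mathcal{M}$ whose rank-initial fixed-point set is \emph{exactly} $\mathcal{S}$. The natural tool is Corollary \ref{Friedman selfembedding}, which, applied with the cut $\mathcal{S}$, already yields a proper topless rank-initial self-embedding $i \in \llbracket \mathcal{M} \leq^{\rnk,\mathrm{topless}}_\mathcal{S} \mathcal{M} \rrbracket$ that moves some element of every rank in $\mathrm{Ord}^\mathcal{M} \setminus \mathcal{S}$. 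Fixing $\mathcal{S}$ pointwise guarantees $\mathcal{S} \subseteq \mathrm{Fix}^\rnk(i)$. The reverse inclusion $\mathrm{Fix}^\rnk(i) \subseteq \mathcal{S}$ is precisely where the ``downwards'' density condition $\mathcal{E}_\alpha$ of Lemma \ref{Friedman lemma}(\ref{Friedman lemma downwards}) pays off: for each $\alpha \in \mathrm{Ord}^\mathcal{M} \setminus \mathcal{S}$ the embedding $i$ moves \emph{some} $m$ of rank $\alpha$, so no element of rank $\geq \alpha$ can be a rank-initial fixed point. Hence if $x \notin \mathcal{S}$, then $\rnk^\mathcal{M}(x) \in \mathrm{Ord}^\mathcal{M} \setminus \mathcal{S}$ (using rank-initiality of $\mathcal{S}$), and there is an element $m$ of rank $\rnk^\mathcal{M}(x)$, hence of rank $\leq \rnk^\mathcal{M}(x)$, with $i(m) \neq m$, showing $x \notin \mathrm{Fix}^\rnk(i)$.

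The main obstacle I anticipate is verifying the reverse inclusion cleanly, specifically confirming that the conclusion of Corollary \ref{Friedman selfembedding}---which asserts that for every $\alpha \in \mathrm{Ord}^\mathcal{M} \setminus \mathcal{S}$ there is $m$ of rank exactly $\alpha$ with $i(m) \neq m$---genuinely forbids $x$ of rank $\alpha$ from lying in $\mathrm{Fix}^\rnk(i)$. This is immediate from the definition of $\mathrm{Fix}^\rnk$: an element $x$ of rank $\alpha$ being a rank-initial fixed point requires \emph{every} element of rank $\leq \alpha$ to be fixed, but we have produced one of rank exactly $\alpha$ that is not. I should also double-check that Corollary \ref{Friedman selfembedding} applies in full: it requires $\mathcal{M} \models \mathrm{KP}^\mathcal{P} + \Sigma_1^\mathcal{P}\textnormal{-Separation}$ countable non-standard and $\mathcal{S}$ a rank-initial topless substructure, which are exactly the hypotheses of the present theorem together with assumption $(\ref{characterize topless substructure substr})$. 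A minor point to record is that toplessness of $\mathcal{S}$ is assumed throughout the theorem statement, so I need not re-derive it; it is what licenses invoking $\Sigma_1^\mathcal{P}$-Overspill inside the proof of Lemma \ref{Friedman lemma}, on which Corollary \ref{Friedman selfembedding} rests. With these pieces in place the argument closes.
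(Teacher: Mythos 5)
Your proposal is correct and takes essentially the same approach as the paper, which likewise gets (a) $\Rightarrow$ (c) immediately from the definition of $\mathrm{Fix}^\rnk$ and (c) $\Rightarrow$ (a') from Corollary \ref{Friedman selfembedding}, whose ``moves an element of every rank in $\mathrm{Ord}^\mathcal{M} \setminus \mathcal{S}$'' clause is exactly what forces $\mathrm{Fix}^\rnk(i) \subseteq \mathcal{S}$. One small precision: the inclusion $\mathcal{S} \subseteq \mathrm{Fix}^\rnk(i)$ needs rank-initiality of $\mathcal{S}$ (hypothesis (c)) in addition to $i$ fixing $\mathcal{S}$ pointwise, since $\mathrm{Fix}^\rnk(i)$ demands that \emph{everything} of rank $\leq \rnk(x)$ be fixed, not just $x$ itself.
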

\begin{proof}
	(\ref{characterize topless substructure emb}) $\Rightarrow$ (\ref{characterize topless substructure substr}) is immediate from the definition of $\mathrm{Fix}^\rnk$.
	
	(\ref{characterize topless substructure substr}) $\Rightarrow$ (\ref{characterize topless substructure emb}') follows from Corollary \ref{Friedman selfembedding}.
\end{proof}

\begin{thm}[Kirby-Paris-style]\label{Kirby Paris thm}
	Let $\mathcal{M} \models \mathrm{KP}^\mathcal{P} + \textnormal{Choice}$ be countable and let $\mathcal{S} \leq^\textnormal{rank-cut} \mathcal{M}$. The following are equivalent:
	\begin{enumerate}[{\normalfont (a)}]
		\item $\mathcal{S}$ is a strong rank-cut in $\mathcal{M}$ and $\omega^\mathcal{M} \in \mathcal{S}$.
		\item $\mathrm{SSy}_\mathcal{S}(\mathcal{M}) \models \mathrm{GBC} + \text{``$\mathrm{Ord}$ is weakly compact''}$.
	\end{enumerate}
\end{thm}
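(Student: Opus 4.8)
The plan is to write $(\mathcal{S},\mathcal{A}) = \mathrm{SSy}_\mathcal{S}(\mathcal{M})$ with $\mathcal{A} = \mathrm{Cod}_\mathcal{S}(\mathcal{M})$, and first to isolate which parts of the biconditional carry the real weight. Since $\mathcal{S}$ is a rank-cut of $\mathcal{M} \models \mathrm{KP}^\mathcal{P} + \mathrm{Choice}$, its ordinals are closed under successor and it is downward closed in rank, so $\mathcal{S}$ is automatically closed under Pairing, Union and Powerset (a subset of $u \in \mathcal{S}$ has rank $\le \rnk(u)+1$, and $\rnk(u)+2 \in \mathrm{Ord}^\mathcal{S}$ since $\mathcal{S}$ is topless), giving the set axioms of $\mathrm{GB}$; Class Foundation descends from $\Pi_1^\mathcal{P}$-Foundation in $\mathcal{M}$ by rank-initiality; and Global Choice is witnessed by the ``$w$-least element'' function coming from a well-ordering $w$ of some $V_\alpha^\mathcal{M} \supseteq \mathcal{S}$ (with $\alpha \in \mathcal{M}\setminus\mathcal{S}$), which is bounded on each set of $\mathcal{S}$ and hence coded in $\mathcal{M}$. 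In direction (a) the hypothesis $\omega^\mathcal{M}\in\mathcal{S}$ gives Infinity directly; in direction (b), Infinity of $(\mathcal{S},\mathcal{A})$ plus the absoluteness of ``being the least limit ordinal'' for the rank-initial $\mathcal{S}$ forces $\omega^\mathcal{S}=\omega^\mathcal{M}\in\mathcal{S}$ (and if $\omega^\mathcal{M}\notin\mathcal{S}$ then $\mathcal{S}\subseteq V_\omega^\mathcal{M}$ is hereditarily finite, so Infinity fails outright). Thus the genuinely substantial obligations reduce to: predicative \emph{Class Comprehension} together with \emph{Class Replacement} (the $\mathrm{GBC}$ core), and the tree property (``$\mathrm{Ord}$ is weakly compact'').

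I would dispatch the tree property by an overspill argument that uses only toplessness, and is therefore available on \emph{both} sides. Given an $\mathrm{Ord}$-tree $\mathcal{T}$ of $(\mathcal{S},\mathcal{A})$ coded by $t\in\mathcal{M}$, form inside $\mathcal{M}$ the ``genuine subtree'' $t_\rho = \{f\in t \mid f:\rho\to 2 \wedge \forall\sigma<\rho\,(f{\restriction}_\sigma\in t)\}$; for $\rho\in\mathrm{Ord}^\mathcal{S}$ this coincides with the $\rho$-th level of $\mathcal{T}$, which is nonempty because $\mathcal{T}$ has height $\mathrm{Ord}^\mathcal{S}$. The coded level-set $\{\rho \mid t_\rho\neq\varnothing\}$ is downward closed and contains $\mathrm{Ord}^\mathcal{S}$, hence is an ordinal of $\mathcal{M}$ lying strictly above $\mathrm{Ord}^\mathcal{S}$; by toplessness pick $\mu\in\mathcal{M}\setminus\mathcal{S}$ in it and a node $n$ of length $\mu$. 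Then $\rho\mapsto n{\restriction}_\rho$ ($\rho\in\mathrm{Ord}^\mathcal{S}$) is coded in $\mathcal{M}$ and its trace on $\mathcal{S}$ is a branch in $\mathcal{A}$.

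The heart of the argument is the equivalence between the external separation property defining a \emph{strong} rank-cut and the internal definability content of $\mathrm{GBC}$ (Class Comprehension and Class Replacement). For (a) $\Rightarrow$ (b), to code a predicatively defined class I would translate a set-quantifier $\mathcal{L}^1$-formula $\phi(u,\vec x,\vec A)$ into an $\mathcal{L}^0$-formula over $\mathcal{M}$, reading each class parameter $A$ as its code; the only difficulty is that the unbounded set-quantifiers of $\phi$ range over $\mathcal{S}$, which is not an element of $\mathcal{M}$. Here strongness is exactly the tool: applied to the coded function sending each configuration to the least rank of a witness, it produces a single $\mu\in\mathcal{M}\setminus\mathcal{S}$ such that every quantifier ``$\exists v\in\mathcal{S}$'' can be replaced uniformly by ``$\exists v\in V_\mu$'', after which $\Sigma_1^\mathcal{P}$-Separation in $\mathcal{M}$ codes the defined class; the same separation property yields the regularity of $\mathcal{S}$ needed for Class Replacement (no coded map $\omega\to\mathrm{Ord}^\mathcal{S}$ is cofinal). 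For the converse (b) $\Rightarrow$ (a) I would argue contrapositively: if $\mathcal{S}$ is not strong, a coded $f:\mathrm{Ord}^\mathcal{S}\to\mathrm{Ord}^\mathcal{M}$ with no separating ordinal manufactures an $\mathcal{A}$-definable class exhibiting $\mathrm{Ord}^\mathcal{S}$ as singular or unbounded, contradicting Class Replacement, hence contradicting $(\mathcal{S},\mathcal{A})\models\mathrm{GBC}$. This is the step where I would route the construction through our refined theorems rather than through the original Kirby--Paris combinatorics: the $(\mathcal{S},\mathcal{A})$-complete, iterable, canonically Ramsey ultrafilter supplied by Theorem \ref{weakly compact gives nice ultrafilter} (via Lemma \ref{generic filter existence}) feeds the generalized Gaifman functor of Theorem \ref{Gaifman thm general} and Corollary \ref{end extend model of weakly compact to model with endos gen} to produce an elementary extension in which $\mathcal{S}$ sits as a \emph{strong} rank-cut, while the Friedman-style embeddings of Theorem \ref{Friedman thm} and Corollary \ref{Friedman selfembedding} realize the back-and-forth that pins the standard system down; strong toplessness is then propagated by Proposition \ref{comp emb}.

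I expect the main obstacle to be precisely this last passage: transferring strongness between $\mathcal{M}$ and the Gaifman extension $\mathcal{N}\succ\mathcal{M}$ constructed from (b). Strongness is a statement about functions coded in the \emph{ambient} model, and although $\mathcal{M}$ and $\mathcal{N}$ share $\mathrm{SSy}_\mathcal{S}$, a function $\mathrm{Ord}^\mathcal{S}\to\mathrm{Ord}$ coded in $\mathcal{M}$ may take values outside $\mathcal{S}$ that are arranged very differently in $\mathcal{N}$, and the separating ordinal produced in $\mathcal{N}$ need not lie in $\mathcal{M}$. Getting a separator \emph{inside} $\mathcal{M}$ — equivalently, making the comprehension/regularity failure genuinely internal to $(\mathcal{S},\mathcal{A})$ rather than an artefact of the ambient model — is where the argument must be most careful, and is the reason the Gaifman theorem had to be generalized (Theorem \ref{Gaifman thm general}) so that $\mathcal{S}$, rather than $\mathcal{M}$, plays the role of the fixed rank-cut throughout the construction.
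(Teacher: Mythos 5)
Your forward direction (a) $\Rightarrow$ (b) is essentially the paper's proof: your quantifier-bounding translation is the paper's Lemma \ref{Kirby Paris lemma} (with your least-rank-of-witness function in place of the paper's Choice-supplied witness function), and your overspill argument for the tree property is the same as in Lemma \ref{Kirby Paris forward}. Two slips there are worth flagging, though both are repairable. First, you invoke $\Sigma_1^\mathcal{P}$-Separation in $\mathcal{M}$, which is not available: the theorem only assumes $\mathcal{M} \models \mathrm{KP}^\mathcal{P} + \mathrm{Choice}$. It is also unnecessary, because the fully translated formula is $\Delta_0^\mathcal{P}$ once every quantifier has been bounded; but for that you must bound the quantifiers one at a time, innermost first, with a separate application of strongness for each, rather than ``uniformly by a single $\mu$'': a bound supplied by strongness cannot in general be enlarged, since a witness whose least rank lies strictly between the correct bound and a larger one satisfies the enlarged bounded quantifier while certifying that there is no witness in $\mathcal{S}$. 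Second, Class Replacement concerns class functions with arbitrary set domains, not just maps on $\omega$; the paper verifies it by an Overspill argument.

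The genuine gap is in (b) $\Rightarrow$ (a), where you sketch two routes and close neither. The contrapositive via Class Replacement does not work as stated: the natural class function $f \cap (\mathcal{S} \times \mathcal{S})$ has a proper class as its domain, so Class Replacement (which requires a set domain) says nothing about it, and ``exhibiting $\mathrm{Ord}^\mathcal{S}$ as singular or unbounded'' is not the operative mechanism. (A direct argument of this flavour does exist, but it runs through Class Comprehension: $\mathrm{GB}$ guarantees that $D = \{\xi \in \mathrm{Ord}^\mathcal{S} \mid f(\xi) \in \mathcal{S}\}$ is a class of $\mathrm{SSy}_\mathcal{S}(\mathcal{M})$, hence coded by some $d' \in \mathcal{M}$, and Overspill in $\mathcal{M}$ on the statement ``there is $\nu \geq \zeta$ separating $d'$ from its complement below $\zeta$'' then yields the separator; that is the original Kirby--Paris argument, which the paper deliberately avoids.) Your second route is the paper's, but you explicitly identify and then leave open its decisive step: how to get the separating ordinal inside $\mathcal{M}$ rather than merely in the Gaifman extension $\mathcal{N}$. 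You attribute the resolution to the generalization of the Gaifman theorem, but that generalization is only what produces $\mathcal{N} \succ \mathcal{M}$ in which $\mathcal{S}$ is a strong rank-cut (Corollary \ref{end extend model of weakly compact to model with endos gen} together with Theorem \ref{characterize strongly topless substructure}). The actual missing idea, which is the content of Lemma \ref{Kirby Paris backward}, is a \emph{per-function} application of the Friedman-style Theorem \ref{Friedman thm}: for each individual $f : \mathrm{Ord}^\mathcal{S} \rightarrow \mathrm{Ord}^\mathcal{M}$ coded in $\mathcal{M}$, one builds a rank-initial embedding $j : \mathcal{M} \rightarrow \mathcal{N}$ fixing $\mathcal{S}$ pointwise and fixing that particular $f$ (take $m_0 = n_0 = f$ in the theorem). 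Since $j(\mathcal{M})$ is rank-initial in $\mathcal{N}$, the separator $\nu$ for $f$ supplied by strongness of $\mathcal{S}$ in $\mathcal{N}$ can be chosen inside $j(\mathcal{M})$, and because $j$ fixes both $f$ and $\mathcal{S}$ pointwise, $j^{-1}(\nu)$ is a separator for $f$ in $\mathcal{M}$. Without this re-embedding step (or the Comprehension-plus-Overspill argument above), the backward implication remains unproved.
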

\begin{proof}
	The two directions are proved as Lemmata \ref{Kirby Paris forward} and \ref{Kirby Paris backward} below.
\end{proof}

\begin{lemma}\label{Kirby Paris lemma}
	Let $\mathcal{M} \models \mathrm{KP}^\mathcal{P} + \textnormal{Choice}$, let $\mathcal{S}$ be a strongly topless rank-initial substructure of $\mathcal{M}$ and let us write $\mathrm{SSy}_\mathcal{S}(\mathcal{M})$ as $(\mathcal{S}, \mathcal{A})$. For any $\phi(\vec{x}, \vec{Y}) \in \mathcal{L}^1$ and for any $\vec{A} \in \mathcal{A}$, there are $\theta_\phi(\vec{x}, \vec{y}) \in \Delta_0^\mathcal{P} \subseteq \mathcal{L}^0$ and $\vec{a} \in \mathcal{M}$, such that for all $\vec{s} \in \mathcal{S}$,
	\[
	(\mathcal{S}, \mathcal{A}) \models \phi(\vec{s}, \vec{A}) \Leftrightarrow \mathcal{M} \models \theta_\phi(\vec{s}, \vec{a}).
	\]
\end{lemma}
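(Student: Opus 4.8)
The plan is to prove the statement by induction on the structure of the $\mathcal{L}^1$-formula $\phi$, building up the $\Delta_0^\mathcal{P}$-translation $\theta_\phi$ and its parameter tuple $\vec{a}$ in tandem. First I would normalise: using Class Extensionality and Set Extensionality, rewrite every atomic subformula of the form $Y = Z$ or $x = Y$ into a formula whose only atomic constituents are $x = y$, $x \in y$ and $x \in Y$; e.g. $Y = Z$ becomes $\forall u\,(u \in Y \leftrightarrow u \in Z)$. After this the only feature genuinely new relative to $\mathcal{L}^0$ is the class-membership atom $x \in Y$ together with the two kinds of quantifier (set and class). Throughout, $\vec{a}$ collects: a code $a_i \in \mathcal{M}$ for each free class parameter $A_i$, one fixed ordinal $\mu \in \mathrm{Ord}^\mathcal{M} \setminus \mathcal{S}$ used to bound all class quantifiers, and one threshold ordinal per set quantifier.

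The Boolean cases are routine and the atoms are handled by coding. For $x \in Y_i$ with $Y_i \mapsto A_i$, fix $a_i \in \mathcal{M}$ with $(a_i)_E \cap \mathcal{S} = A_i$; then for $s \in \mathcal{S}$ we have $s \in^{(\mathcal{S},\mathcal{A})} A_i \Leftrightarrow \mathcal{M} \models s \in a_i$, so $\theta$ is the $\Delta_0$-atom $x \in y_i$. The clean case is the class quantifier. Fix any $\mu \in \mathrm{Ord}^\mathcal{M} \setminus \mathcal{S}$; since $\mathcal{S}$ is rank-initial and $\mu \notin \mathcal{S}$ we have $\mathcal{S} \subseteq V_\mu^\mathcal{M}$. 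Every $A \in \mathcal{A}$ is coded by some $c \subseteq V_\mu^\mathcal{M}$ (take $c = a \cap V_\mu^\mathcal{M}$ for any code $a$ of $A$, noting $c_E \cap \mathcal{S} = a_E \cap \mathcal{S} = A$), and conversely every $c \subseteq V_\mu^\mathcal{M}$ codes the class $c_E \cap \mathcal{S} \in \mathcal{A}$. Hence quantification over $\mathcal{A}$ is faithfully simulated by the $\mathcal{P}$-bounded quantifier $\exists c \subseteq V_\mu$ (resp. $\forall c \subseteq V_\mu$), each inner atom $t \in Y$ being replaced by $t \in c$. As $\mathcal{P}$-bounded quantifiers count as bounded in the Takahashi sense, this keeps the translation inside $\Delta_0^\mathcal{P}$.

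The heart of the argument is the set quantifier $\exists x\,\psi$. By induction $\psi$ translates to a $\Delta_0^\mathcal{P}$-formula $\theta_\psi(x, \vec{x}, \vec{a})$, and I must realise $\exists x \in \mathcal{S}\,\theta_\psi(x,\vec s,\vec a)$ by a $V$-bounded, hence $\Delta_0^\mathcal{P}$, formula uniformly in $\vec s \in \mathcal{S}$. Working in $\mathcal{M}$ (using $\Sigma_1^\mathcal{P}$-Replacement and the totality of $\alpha \mapsto V_\alpha$) define the pointwise least-witness-rank function $f(\vec s) = $ the least $\rho$ with $\mathcal{M} \models \exists x \in V_\rho\,\theta_\psi(x, \vec s, \vec a)$, with $f(\vec s)$ set to some fixed ordinal outside $\mathcal{S}$ when there is no witness. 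The crucial point is that strongness is applied to $f$ \emph{pointwise}, not through an aggregating supremum: it yields a single threshold $\mu^* \in \mathrm{Ord}^\mathcal{M} \setminus \mathcal{S}$ with $f(\vec s) \in \mathcal{S} \Leftrightarrow f(\vec s) \le \mu^*$ for all relevant $\vec s$. Adjoining $\mu^*$ to $\vec{a}$, I claim $\exists x \in \mathcal{S}\,\theta_\psi(x,\vec s,\vec a) \Leftrightarrow \mathcal{M} \models \exists x \in V_{\mu^*}\,\theta_\psi(x,\vec s,\vec a)$: forward, an $\mathcal{S}$-witness has rank in $\mathrm{Ord}^\mathcal{S} < \mu^*$; conversely, if there is no $\mathcal{S}$-witness then $f(\vec s) \notin \mathcal{S}$, so the gap property gives $f(\vec s) > \mu^*$, whence no witness of rank below $\mu^*$ exists and the bounded formula fails.

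The main obstacle is precisely licensing this pointwise use of strongness, since the strong rank-cut property as stated concerns functions $\mathrm{Ord}^\mathcal{S} \to \mathrm{Ord}^\mathcal{M}$, whereas $f$ is indexed by tuples $\vec s \in \mathcal{S}$. The naive reduction — enumerating tuples by ordinals and restricting to $\mathrm{Ord}^\mathcal{S}$ — fails if the enumeration assigns $\mathcal{S}$-elements indices above $\mathrm{Ord}^\mathcal{S}$, and replacing $f$ by its supremum over each $V_\eta$ reintroduces the aggregation that destroys the gap. To overcome this I would pass to the formulation of strongness for $\mathcal{M}$-coded functions defined on an initial set $V_{\mu^\dagger}^\mathcal{M} \supseteq \mathcal{S}$ and restricted to $\mathcal{S}$, which under Choice is provably equivalent to the ordinal-indexed form via a rank-respecting enumeration of $V_{\mu^\dagger}^\mathcal{M}$; establishing this equivalence is the one delicate step. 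Once it is in hand the set-quantifier case goes through, the parameters assemble into a single finite $\vec{a}$ (one $\mu$, finitely many class codes, one $\mu^*$ per set quantifier), and the induction closes.
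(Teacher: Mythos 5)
Your proposal follows the same route as the paper's own proof: induction on $\phi$, codes $\vec a \in \mathcal{M}$ for the atoms $x \in A_i$, and, for the set quantifier, a least-witness-rank (in the paper: a Choice-selected witness) function on a set $d = V_\mu^{\mathcal{M}} \supseteq \mathcal{S}$, tamed by strongness into a single threshold $\beta \in \mathrm{Ord}^\mathcal{M} \setminus \mathcal{S}$ so that ``$\exists x \in \mathcal{S}$'' becomes the $\Delta_0^\mathcal{P}$ quantifier ``$\exists x \in V_\beta$''. The genuine gap sits exactly where you flagged it, and your repair does not close it. You claim that strongness for coded functions defined on $V_{\mu^\dagger}^\mathcal{M} \supseteq \mathcal{S}$ and evaluated at elements of $\mathcal{S}$ is equivalent to the ordinal-indexed form ``via a rank-respecting enumeration''. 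But if $e : \lambda \rightarrow V_{\mu^\dagger}^\mathcal{M}$ is such an enumeration, then $e^{-1}(\mathcal{S})$ is an initial segment of $\lambda_\mathcal{M}$ whose length is governed by the \emph{internal cardinalities} $|V_\alpha|^\mathcal{M}$ for $\alpha \in \mathrm{Ord}^\mathcal{S}$, not by $\mathrm{Ord}^\mathcal{S}$ itself: already the elements of rank $\omega+2$ receive indices $\geq (2^{\aleph_0})^\mathcal{M}$, and nothing in ``strongly topless $+$ rank-initial'' forces $(2^{\aleph_0})^\mathcal{M}$, or the later internal $\beth$'s, to lie below the cut. When they do not, ordinal-indexed strongness applied to $g \circ e$ constrains $g$ on only a tiny initial piece of $\mathcal{S}$ and says nothing about $g(t)$ for most $t \in \mathcal{S}$. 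This is precisely the failure mode you yourself diagnosed for the ``naive'' reduction; making the enumeration rank-respecting guarantees only that $e^{-1}(\mathcal{S})$ is an initial segment, not that it equals $\mathrm{Ord}^\mathcal{S}$. So your set-quantifier case rests on an unproved transfer principle, and since every later case funnels through it, the proof is incomplete at its load-bearing step.

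Moreover, this transfer is not a routine bookkeeping matter but genuine mathematical content: the pointwise principle you need already implies nontrivial closure properties of the cut. For instance, apply it to the coded function $t \mapsto \mathrm{otp}^\mathcal{M}(t)$ on $V_\mu^\mathcal{M}$: if some well-ordering $t_0 \in \mathcal{S}$ had $\mathrm{otp}^\mathcal{M}(t_0) = \gamma \notin \mathrm{Ord}^\mathcal{S}$, then its initial segments (all of rank $\leq \rnk^\mathcal{M}(t_0)$, hence all in $\mathcal{S}$) realize \emph{every} ordinal below $\gamma$ as an order type, so by toplessness their order types outside $\mathcal{S}$ are downward cofinal in $\mathrm{Ord}^\mathcal{M} \setminus \mathcal{S}$ and no threshold $\nu$ can exist; hence the pointwise principle forces $\mathrm{Ord}^\mathcal{S}$ to be closed under order types of coded well-orderings — a consequence that no enumeration trick extracts from the ordinal-indexed definition. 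For what it is worth, the paper invokes the pointwise property in a single line (its step $(\ddagger)$, ``by strong toplessness and rank-initiality there is $\beta$\dots''), so you have correctly isolated the crux of the whole lemma; but identifying the crux and then justifying it by an equivalence that fails is a gap, not a proof. A minor further point: you also translate \emph{bound} class quantifiers (as $\exists c \subseteq V_\mu$). The paper never needs this — its Class Comprehension is predicative, so the lemma is only applied to formulas whose class variables are all free — and your treatment faces an additional uniformity problem: the strongness thresholds produced for set quantifiers occurring inside a class quantifier depend on the class code $c$, which ranges over $\mathcal{P}(V_\mu)^\mathcal{M} \not\subseteq \mathcal{S}$, where your pointwise strongness does not apply at all; one would have to fold the minimization over codes into the witness-rank function before invoking strongness.
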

\begin{proof}
	We construct $\theta_\phi$ recursively on the structure of $\phi$. Let $\vec{A} \in \mathcal{A}$ be arbitrary and let $\vec{a}$ be a tuple of codes in $\mathcal{M}$ for $\vec{A}$. In the base cases, given a coordinate $A$ in $\vec{A}$ and its code $a$ in $\vec{a}$, put:
	\begin{align*}
	\theta_{x=y} &\equiv x = y, \\
	\theta_{x \in y} &\equiv x \in y, \\
	\theta_{x \in A} &\equiv x \in a. 
	\end{align*}
	It is clear that the result holds in the first two cases, and also in the third case since $a$ codes $A$. 
	
	Assume inductively that the result holds for $\phi(\vec{x}, \vec{Y}), \psi(\vec{x}, \vec{Y}) \in \mathcal{L}^1$ and $\theta_\phi(\vec{x}, \vec{y}), \theta_\psi(\vec{x}, \vec{y}) \in \Delta_0^\mathcal{P}$, and put:
	\begin{align*}
	\theta_{\neg \phi} &\equiv \neg \theta_\phi, \\
	\theta_{\phi \vee \psi} &\equiv \theta_\phi \vee \theta_\psi, \\
	\theta_{\exists x . \phi} &\equiv \exists x \in b . \phi, 
	\end{align*}
	where $b \in \mathcal{M}$ is next to be constructed. But before doing so, note that the result holds for the first two cases, simply because the connectives commute with $\models$.
	
	Let us write $\vec{x}$ as $x, \vec{x}'$, and let $k = \mathrm{arity}(\vec{x}')$. Since $\mathcal{S}$ is bounded in $\mathcal{M} \models \textnormal{Infinity}$, there is a limit $\mu \in \mathrm{Ord}^\mathcal{M} \setminus \mathcal{S}$. Therefore, by letting $d = V_\mu^\mathcal{M}$, we obtain $d \in \mathcal{M}$, $\mathcal{S} \subseteq d_\mathcal{M}$, and $\mathcal{M} \models d^k \subseteq d$. Working in $\mathcal{M}$, by Choice and $\Delta_0^\mathcal{P}$-Separation, there is a function $f : d \rightarrow d$, such that for all $\vec{t} \in d^k$,
	\[
	\exists x \in d . \theta_\phi(x, \vec{t}, \vec{a}) \Leftrightarrow f\hspace{2pt}(\vec{t}) \in \{ u \in d \mid \theta_\phi(u, \vec{t}, \vec{a}) \} \Leftrightarrow \theta_\phi(f\hspace{2pt}(\vec{t}), \vec{t}, \vec{a}). \tag{$\dagger$}
	\]
	By strong toplessness and rank-initiality, there is $\beta \in \mathrm{Ord}^\mathcal{M} \setminus \mathcal{S}$ such that for all $s \in \mathcal{S}$,
	\[
	f\hspace{2pt}(s) \in \mathcal{S} \Leftrightarrow \rnk(f\hspace{2pt}(s)) < \beta.  \tag{$\ddagger$}
	\]
	Put $b = V_\beta^\mathcal{M}$. Note that by toplessness, $\mathcal{S}$ is closed under ordered pair. Putting ($\dagger$) and ($\ddagger$) together, we have that, for all $\vec{s} \in \mathcal{S}$,
	\[
	\mathcal{M} \models \exists x \in b . \theta_\phi(x, \vec{s}, \vec{a}) \Leftrightarrow \exists s_0 \in \mathcal{S} . \mathcal{M} \models \theta_\phi(s_0, \vec{s}, \vec{a}),
	\]
	and by induction hypothesis,
	\begin{align*}
	\exists s_0 \in \mathcal{S} . \mathcal{M} \models \theta_\phi(s_0, \vec{s}, \vec{a}) & \Leftrightarrow \exists s_0 \in \mathcal{S} . \mathcal{S} \models \phi(s_0, \vec{s}, \vec{a}) \\
	& \Leftrightarrow  \mathcal{S} \models \exists x . \phi(x, \vec{s}, \vec{a}).
	\end{align*}
	Putting these equivalences together yields the desired result for $\exists x . \phi$. The parameters appearing in $\theta_{\exists x . \phi}$ are $b$ and $\vec{a}$.
\end{proof}

\begin{lemma}\label{Kirby Paris forward}
	Let $\mathcal{M} \models \mathrm{KP}^\mathcal{P} + \textnormal{Choice}$ and let $\mathcal{S} \leq \mathcal{M}$. If $\mathcal{S}$ is strongly topless and rank-initial in $\mathcal{M}$ and $\omega^\mathcal{M} \in \mathcal{S}$, then $\mathrm{SSy}_\mathcal{S}(\mathcal{M}) \models \mathrm{GBC} + \text{``$\mathrm{Ord}$ is weakly compact''}$.
\end{lemma}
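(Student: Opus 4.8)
The goal is to verify that the second-order structure $(\mathcal{S}, \mathcal{A}) =_\df \mathrm{SSy}_\mathcal{S}(\mathcal{M})$ satisfies the axioms of $\mathrm{GBC}$ together with the weak compactness of $\mathrm{Ord}$. The key tool is Lemma \ref{Kirby Paris lemma}, which lets me translate any $\mathcal{L}^1$-formula $\phi(\vec{x}, \vec{Y})$ with class parameters $\vec{A} \in \mathcal{A}$ into a $\Delta_0^\mathcal{P}$-formula $\theta_\phi(\vec{x}, \vec{a})$ over $\mathcal{M}$, with the two agreeing on all set-parameters $\vec{s} \in \mathcal{S}$. Since $\mathcal{S}$ is rank-initial in $\mathcal{M}$ (and hence $\Delta_0^\mathcal{P}$-elementary by Proposition \ref{emb pres}), this translation is the workhorse: it reduces truth of complex class-theoretic statements in $(\mathcal{S}, \mathcal{A})$ to truth of bounded formulas in the ambient model $\mathcal{M}$, which already satisfies a strong fragment of set theory plus Choice.

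\textbf{Verifying the basic $\mathrm{GBC}$ axioms.} First I would dispatch the ``first-order'' axioms (Class Extensionality, Pair, Union, Powerset, Infinity). Pair, Union, Powerset and Infinity hold in $\mathcal{S}$ because $\mathcal{S}$ is rank-initial and topless in $\mathcal{M} \models \mathrm{KP}^\mathcal{P}$: rank-initiality guarantees $\mathcal{S}$ is closed under subsets and under the relevant $\Delta_0^\mathcal{P}$-operations, and $\omega^\mathcal{M} \in \mathcal{S}$ gives Infinity directly. Class Extensionality is immediate from the definition of coding. For Extended Separation and Class Comprehension, given $\phi(\vec{x}, \vec{A})$ I apply Lemma \ref{Kirby Paris lemma} to obtain $\theta_\phi$ and parameters $\vec{a} \in \mathcal{M}$; then the relevant comprehended class $\{x \in \mathcal{S} \mid (\mathcal{S},\mathcal{A}) \models \phi(x, \vec{A})\}$ equals $\{x \in \mathcal{S} \mid \mathcal{M} \models \theta_\phi(x, \vec{a})\}$, which is coded in $\mathcal{M}$ via $\Delta_0^\mathcal{P}$-Separation applied inside a suitable $V_\beta^\mathcal{M} \supseteq \mathcal{S}$. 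The resulting code is an element of $\mathcal{A}$, establishing these schemata. Class Replacement and Class Foundation follow similarly: Replacement uses that $\mathcal{M}$ proves $\Sigma_1^\mathcal{P}$-Collection and that $\mathcal{S}$ is a rank-cut, while Foundation is inherited from $\mathcal{M}$. Global Choice comes from $\mathcal{M} \models \textnormal{Choice}$, which furnishes a well-ordering of a $V_\mu^\mathcal{M} \supseteq \mathcal{S}$ whose restriction codes a global choice function on $\mathcal{S}$.

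\textbf{Weak compactness of $\mathrm{Ord}$.} This is the main obstacle and the heart of the lemma. I must show every binary tree $\mathcal{T}$ of height $\mathrm{Ord}^\mathcal{S}$ in $(\mathcal{S}, \mathcal{A})$ has a branch. By Lemma \ref{Kirby Paris lemma}, such a tree is coded by some $t \in \mathcal{M}$, and inside $\mathcal{M}$ (which satisfies Powerset and Choice) the object $t$ looks like a tree of some internal height $\beta$ with $\mathrm{Ord}^\mathcal{S} \subseteq \beta_\mathcal{M}$. The essential point is to exploit \emph{strong} toplessness: given the level-function $\alpha \mapsto \mathcal{T}_\alpha$ coded in $\mathcal{M}$, strong toplessness produces an internal ordinal $\mu \in \mathcal{M} \setminus \mathcal{S}$ serving as a threshold separating the nodes at $\mathcal{S}$-levels from those beyond. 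Then I pick, inside $\mathcal{M}$, a node $f^*$ of $\mathcal{T}$ at some level $\nu \in \mathrm{Ord}^\mathcal{M} \setminus \mathcal{S}$ just above all of $\mathrm{Ord}^\mathcal{S}$ (such $\nu$ exists since $\mathcal{S}$ is topless and $t$ is an internally tall tree, so $\mathcal{M}$ believes $\mathcal{T}$ has nodes at every level below $\beta$); the $\mathcal{S}$-levels of the initial segments $f^* \restriction_\xi$ for $\xi \in \mathrm{Ord}^\mathcal{S}$ then assemble into a branch. The class $\{f^* \restriction_\xi \mid \xi \in \mathrm{Ord}^\mathcal{S}\}$ is coded in $\mathcal{M}$ (by intersecting the internal branch through $f^*$ with $V_\mu^\mathcal{M}$ for appropriate $\mu$), hence lies in $\mathcal{A}$, and it is $<_\mathcal{T}$-increasing and meets every $\mathcal{S}$-level, so it is the desired branch. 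The delicate step is ensuring that the threshold ordinal $\mu$ from strong toplessness interacts correctly with the externalization of the tree so that precisely the nodes at standard-over-$\mathcal{S}$ levels are captured; I expect to lean on the defining property of strongly topless embeddings (the $f_\mathcal{N}(\xi) \not\in i(\mathcal{M}) \Leftrightarrow \mathcal{N} \models f(\xi) > \nu$ condition, here applied with $\mathcal{S}$ in the role of the cut) together with $\Delta_0^\mathcal{P}$-absoluteness to pin down the branch.
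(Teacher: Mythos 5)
Your overall architecture matches the paper's: Lemma \ref{Kirby Paris lemma} is indeed the workhorse for Class Comprehension and Extended Separation, and your treatment of the first-order axioms, Class Foundation, Global Choice and (in outline) Class Replacement follows the paper's proof. The genuine problem is in the weak compactness argument, at the step where you assert that inside $\mathcal{M}$ the code $t$ ``looks like a tree of some internal height $\beta$ with $\mathrm{Ord}^\mathcal{S} \subseteq \beta_\mathcal{M}$'' and that ``$\mathcal{M}$ believes $\mathcal{T}$ has nodes at every level below $\beta$''. A code $t$ for the class $\mathcal{T}$ is only required to agree with $\mathcal{T}$ on elements of $\mathcal{S}$; internally, above $\mathcal{S}$, it can be arbitrary -- not a tree, not closed under restrictions, with no nodes at any nonstandard level. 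So the existence of a node $f^*$ whose level lies in $\mathrm{Ord}^\mathcal{M} \setminus \mathcal{S}$, on which your entire branch construction rests, is precisely what needs proof. Your proposed patch via strong toplessness does not supply it: strong toplessness concerns internal functions from ordinals to \emph{ordinals} whose domain covers the cut (it yields a threshold $\nu$ with $f_\mathcal{M}(\xi) \not\in \mathcal{S} \Leftrightarrow \mathcal{M} \models f\hspace{2pt}(\xi) > \nu$), and it says nothing about whether a code is internally a tree; moreover your ``level-function'' $\alpha \mapsto \mathcal{T}_\alpha$ is not ordinal-valued, so the hypothesis of strong toplessness does not even apply to it.

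The missing idea (and the paper's route) is $\Delta_0^\mathcal{P}$-Overspill on truncations: for every $\zeta \in \mathrm{Ord}^\mathcal{S}$ the statement ``$\tau_\zeta$ is a binary tree of height $\zeta$'' holds in $\mathcal{M}$, because the truncation at an $\mathcal{S}$-ordinal lies entirely inside $\mathcal{S}$ by rank-initiality, where it coincides with $\mathcal{T}_\zeta$; this statement is $\Delta_0^\mathcal{P}$ in the parameters $\tau, \zeta$, so by toplessness of $\mathcal{S}$ Overspill yields $\mu \in \mathrm{Ord}^\mathcal{M} \setminus \mathcal{S}$ such that $\tau_\mu$ is internally a binary tree of height $\mu$. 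Only then can one pick $f \in^\mathcal{M} \tau_\mu$ with $\dom^\mathcal{M}(f\hspace{2pt}) \in \mathrm{Ord}^\mathcal{M} \setminus \mathcal{S}$ and check, using rank-initiality, that $\zeta \mapsto f_\mathcal{M}\restriction_\zeta$ defines a branch coded in $\mathcal{M}$; the remainder of your argument then goes through. A conceptual correction falls out of this: the tree argument needs only plain toplessness, and the place where strong toplessness genuinely earns its keep is inside Lemma \ref{Kirby Paris lemma} itself (its existential-quantifier step needs a threshold $\beta \in \mathrm{Ord}^\mathcal{M} \setminus \mathcal{S}$ with $f\hspace{2pt}(s) \in \mathcal{S} \Leftrightarrow \rnk(f\hspace{2pt}(s)) < \beta$), hence in Class Comprehension and the axioms derived from it -- not, as you claim, in the verification that $\mathrm{Ord}$ is weakly compact.
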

\begin{proof}
	Let us write $\mathrm{SSy}_\mathcal{S}(\mathcal{M})$ as $(\mathcal{S}, \mathcal{A})$. By toplessness and rank-initiality, there is $d \in \mathcal{M}$ such that $d_\mathcal{M} \supset \mathcal{S}$. 
	
	$(\mathcal{S}, \mathcal{A}) \models \textnormal{Class Extensionality, Pair, Union, Powerset, Infinity}$ are inherited from $\mathcal{M}$, because $\omega^\mathcal{M} \in \mathcal{S}$ and for any $\alpha \in \mathrm{Ord}^\mathcal{S}$, $\alpha +^\mathcal{M} 2 \in \mathcal{S}$ by toplessness, and $\mathcal{M}_{\alpha +^\mathcal{M} 2} \subseteq \mathcal{S}$ by rank-initiality.
	
	$(\mathcal{S}, \mathcal{A}) \models \textnormal{Class Foundation}$: Let $A \in \mathcal{A}$ such that $(\mathcal{S}, \mathcal{A}) \models A \neq \varnothing$. Let $a$ be a code for $A$ in $\mathcal{M}$. By $\Pi_1^\mathcal{P}$-Foundation, there is an $\in^\mathcal{M}$-minimal element $m \in^\mathcal{M} a$. Since $A$ is non-empty, we have by rank-initiality that $m \in \mathcal{S}$. If there were $s \in \mathcal{S}$ such that $(\mathcal{S}, \mathcal{A}) \models s \in m \cap A$, then we would have $\mathcal{M} \models s \in m \cap a$, contradicting $\in^\mathcal{M}$-minimality of $m$. Hence, $m$ is an $\in$-minimal element of $A$ in $(\mathcal{S}, \mathcal{A})$.
		
	$(\mathcal{S}, \mathcal{A}) \models \textnormal{Global Choice}$: By Choice in $\mathcal{M}$, there is a choice function $f$ on $(d \setminus \{\varnothing\})^\mathcal{M}$. Note that $f$ codes a global choice function $F \in \mathcal{A}$ on $(V \setminus \{\varnothing\})^{(\mathcal{M}, \mathcal{A})} \in \mathcal{A}$.
	
	$(\mathcal{S}, \mathcal{A}) \models \textnormal{Class Comprehension}$: Let $\phi(x, \vec{Y}) \in \mathcal{L}^1$, in which all variables of sort $\mathsf{Class}$ are free, and let $\vec{A} \in \mathcal{A}$. By Lemma \ref{Kirby Paris lemma}, there are $\theta_\phi \in \Delta_0^\mathcal{P}$ and $\vec{a} \in \mathcal{M}$, such that for all $s \in \mathcal{S}$,
	\[
	(\mathcal{S}, \mathcal{A}) \models \phi(s, \vec{A}) \Leftrightarrow \mathcal{M} \models \theta_\phi(s, \vec{a}).
	\]
	Working in $\mathcal{M}$, let $c = \{ t \in d \mid \theta_\phi(t, \vec{a}) \}$. Let $C \in \mathcal{A}$ be the class coded by $c$. It follows that 
	\[
	\mathcal{S} \models \forall x . (x \in c \leftrightarrow \phi(x, \vec{A})).
	\]
	
	$(\mathcal{S}, \mathcal{A}) \models \textnormal{Extended Separation}$: Simply observe that if $s \in \mathcal{S}$, $\vec{A} \in \mathcal{S}$ and $\phi(x, \vec{Y}) \in \mathcal{L}^1$, in which all variables of sort $\mathsf{Class}$ are free, then by Class Comprehension, the class $\{ x \mid \phi(x, \vec{A}) \}^{(\mathcal{S}, \mathcal{A})}$ exists in $\mathcal{A}$ and is coded in $\mathcal{M}$ by $c$, say. So by rank-initiality, $c \cap s \in \mathcal{S}$, and $c \cap s$ clearly witnesses the considered instance of $\textnormal{Extended Separation}$.
	
	$(\mathcal{S}, \mathcal{A}) \models \textnormal{Class Replacement}$: Let $F \in \mathcal{A}$ be a class function such that $\dom(F) \in \mathcal{S}$, and let $f$ be a code in $\mathcal{M}$ for $F$. In $\mathcal{M}$, using $\dom(F)$ and $f$ as parameters, we can construct a function $f\hspace{2pt}'$ such that
	\begin{align*}
	\dom(f\hspace{2pt}'_\mathcal{M}) = \phantom{.}& d \supseteq \mathcal{S}, \\
	\mathcal{M} \models \phantom{.}& \forall x \in \dom(F) . \big( (x \in \dom(F) \rightarrow f\hspace{2pt}'(x) = f\hspace{2pt}(x)) \wedge \\
	& (x \not\in \dom(F) \rightarrow f\hspace{2pt}'(x) = 0).
	\end{align*}
	Note that $f\hspace{2pt}'_\mathcal{M}\restriction_{\mathcal{S}} \subseteq \mathcal{S}$. Suppose that 
	$$(\mathcal{S}, \mathcal{A}) \models \forall \xi \in \mathrm{Ord} . \exists x \in \dom(F) . \rnk(F(x)) > \xi.$$ 
	Then we have for all $\xi \in \mathrm{Ord}^\mathcal{S}$ that 
	$$\mathcal{M} \models \exists x \in \dom(F) . \rnk(d) > \rnk(f\hspace{2pt}'(x)) > \xi.$$ 
	So by Overspill, there is $\mu \in \mathrm{Ord}^\mathcal{M} \setminus \mathcal{S}$ such that 
	$$\mathcal{M} \models \exists x \in \dom(F) . \rnk(d) > \rnk(f\hspace{2pt}'(x)) > \mu.$$
	But this contradicts that $f\hspace{2pt}'_\mathcal{M}\restriction_{\mathcal{S}} \subseteq \mathcal{S}$. Therefore, 
	$$(\mathcal{S}, \mathcal{A}) \models \exists \xi \in \mathrm{Ord} . \forall x \in \dom(F) . \rnk(F(x)) < \xi.$$
	Now it follows by $\textnormal{Extended Separation}$ that $\mathrm{image}^{(\mathcal{S}, \mathcal{A})}(F) \in \mathcal{S}$.
	
	$(\mathcal{S}, \mathcal{A}) \models$ ``$\mathrm{Ord}$ is weakly compact'': Let $\mathcal{T}$ be a binary tree of height $\mathrm{Ord}$ in $\mathcal{S}$, coded in $\mathcal{M}$ by $\tau \in \mathcal{M} \setminus \mathcal{S}$. Note that for all $\zeta \in \mathrm{Ord}^\mathcal{S}$, $\tau_\zeta$ is a binary tree of height $\zeta$. So by $\Delta_0$-Overspill, there is $\mu \in \mathrm{Ord}^\mathcal{M} \setminus \mathcal{S}$, such that $\tau_\mu$ is a binary tree of height $\mu$. Let $f \in^\mathcal{M} \tau_\mu$ such that $\dom^\mathcal{M}(f\hspace{2pt}) \in \mathrm{Ord}^\mathcal{M} \setminus \mathcal{S}$. Let $F \in \mathcal{A}$ be the class coded by $f$. Since $\mathcal{S}$ is rank-initial in $\mathcal{M}$, we have for each $\zeta \in \mathrm{Ord}^\mathcal{S}$, that $F_{(\mathcal{S}, \mathcal{A})}(\zeta) = f_\mathcal{M}\restriction_\zeta$. It follows that $F$ is a branch in $\mathcal{T}$.  
\end{proof}

\begin{lemma}\label{embed in Gaifman model}
	Let $\mathcal{M} \models \mathrm{KP}^\mathcal{P}  + \Sigma_1^\mathcal{P}\textnormal{-Separation}$ be countable and non-standard, and let $\mathcal{S}$ be a  topless rank-initial $\Sigma_1^\mathcal{P}$-elementary substructure of $\mathcal{M}$, such that 
	\[
	\mathrm{SSy}_\mathcal{S}(\mathcal{M}) \models \mathrm{GBC} + \textnormal{``$\mathrm{Ord}$ is weakly compact''}.
	\]
	Then there is a structure $\mathcal{N} \succ \mathcal{S}$, and a rank-initial topless self-embedding $i : \mathcal{N} \rightarrow \mathcal{N}$, such that $\mathrm{Fix}(i) = \mathcal{S}$, $i$ is contractive on $\mathcal{N} \setminus \mathcal{S}$, and
	\[
	\mathcal{S} <^{\textnormal{rank-cut}} i(\mathcal{M}) <^{\textnormal{rank-cut}} i(\mathcal{N})  <^{\textnormal{rank-cut}} \mathcal{M} <^{\textnormal{rank-cut}} \mathcal{N}.
	\] 
\end{lemma}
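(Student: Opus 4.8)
The plan is to realize $\mathcal{N}$ as a Gaifman iterated ultrapower built over the \emph{class} structure $(\mathcal{S},\mathcal{A}):=\mathrm{SSy}_\mathcal{S}(\mathcal{M})$, to splice $\mathcal{M}$ into it as a rank-cut by the Friedman embedding theorem, and only then to manufacture the self-embedding $i$ from a self-embedding of the index order $\mathbb{Q}$. Deferring the construction of $i$ to the very end is the crucial move: it lets us place $i(\mathcal{N})$ inside $\mathcal{M}$ without invoking any Wilkie-style theorem (which would require the unavailable $\Pi_2^\mathcal{P}$-Foundation).

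First I would note that $(\mathcal{S},\mathcal{A})$ is countable and, by hypothesis, models $\mathrm{GBC}+\textnormal{``}\mathrm{Ord}$ is weakly compact''. By Lemma \ref{generic filter existence} and Theorem \ref{weakly compact gives nice ultrafilter} there is an $(\mathcal{S},\mathcal{A})$-generic (hence complete, iterable, canonically Ramsey) ultrafilter $\mathcal{U}$ on the associated boolean algebra. I form the Gaifman functor $\mathcal{G}=\mathcal{G}_{\mathcal{U},(\mathcal{S},\mathcal{A})}$ of Theorem \ref{Gaifman thm} and set $\mathcal{N}:=\mathcal{G}(\mathbb{Q})$, with $\mathbb{Q}$ the rationals (a countable dense order without endpoints). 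From Theorem \ref{Gaifman thm} and its remark I read off: $\mathcal{N}$ is a countable elementary extension of $\mathcal{S}$ (so $\mathcal{N}\models\mathrm{KP}^\mathcal{P}$, indeed $\mathrm{ZFC}$, since $\mathcal{S}\models\mathrm{ZFC}$); $\mathcal{S}$ is a rank-initial elementary substructure, and by \textnormal{(\ref{downcof})} (as $\mathbb{Q}$ has no least element) the spine $\{c_q\mid q\in\mathbb{Q}\}$ is downwards cofinal in $\mathrm{Ord}^\mathcal{N}\setminus\mathrm{Ord}^\mathcal{S}$, so $\mathcal{S}$ is a topless rank-cut of $\mathcal{N}$ and $\mathcal{N}$ is non-standard; and by \textnormal{(\ref{cons})}, $\mathrm{SSy}_\mathcal{S}(\mathcal{N})\cong(\mathcal{S},\mathcal{A})=\mathrm{SSy}_\mathcal{S}(\mathcal{M})$.

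Next I would splice in $\mathcal{M}$ by Theorem \ref{Friedman thm}, with domain $\mathcal{M}$ and codomain $\mathcal{N}$ — both countable non-standard models of $\mathrm{KP}^\mathcal{P}$ ($\mathcal{M}$ moreover satisfying $\Sigma_1^\mathcal{P}$-Separation) sharing the common topless rank-initial substructure $\mathcal{S}$. Taking $m_0=\varnothing^\mathcal{M}$, $n_0=\varnothing^\mathcal{N}$ and $\beta$ as supplied by Lemma \ref{Bound for sigma_1 lemma} applied to $\mathcal{N}$, condition (b) of that theorem reduces to $\mathrm{SSy}_\mathcal{S}(\mathcal{M})\cong\mathrm{SSy}_\mathcal{S}(\mathcal{N})$ together with $\mathrm{Th}_{\Sigma_1^\mathcal{P},\mathcal{S}}(\mathcal{M})\subseteq\mathrm{Th}_{\Sigma_1^\mathcal{P},\mathcal{S}}(\mathcal{N})$. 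The former holds by the previous step; the latter holds because $\mathcal{S}\preceq_{\Sigma_1^\mathcal{P}}\mathcal{M}$ is assumed while $\mathcal{S}\preceq\mathcal{N}$ by construction, so both theories equal $\mathrm{Th}_{\Sigma_1^\mathcal{P},\mathcal{S}}(\mathcal{S})$ (this is where $\Sigma_1^\mathcal{P}$-elementarity of $\mathcal{S}$ in $\mathcal{M}$ is used). The resulting embedding is assembled from finite partial $\Sigma_1^\mathcal{P}$-elementary maps over $\mathcal{S}$, so it fixes $\mathcal{S}$ pointwise, and by the topless refinement (Lemma \ref{Friedman thm extra}) it may be taken topless; identifying $\mathcal{M}$ with its image I obtain $\mathcal{S}<^{\textnormal{rank-cut}}\mathcal{M}<^{\textnormal{rank-cut}}\mathcal{N}$, as required by Corollary \ref{Friedman cor}.

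Finally I would build $i$. Since $\mathcal{M}$ is a topless rank-cut properly above $\mathcal{S}$, it contains an ordinal $\gamma\in\mathrm{Ord}^\mathcal{M}\setminus\mathrm{Ord}^\mathcal{S}$; by the downward cofinality of the spine in $\mathrm{Ord}^\mathcal{N}\setminus\mathrm{Ord}^\mathcal{S}$ there is $q_0$ with $c_{q_0}\leq\gamma$, whence $c_{q_0}\in\mathcal{M}$ by rank-initiality. Using Lemma \ref{contractive proper selfembedding of Q}, let $\hat{\imath}$ be an initial topless contractive self-embedding of $\mathbb{Q}$ strictly bounded by $q_0$, and put $i:=\mathcal{G}(\hat{\imath}):\mathcal{N}\to\mathcal{N}$. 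By Theorem \ref{Gaifman thm}, $i$ is rank-initial \textnormal{(\ref{init})}, topless \textnormal{(\ref{bnd})+(\ref{downcof})}, contractive on $\mathcal{N}\setminus\mathcal{S}$ \textnormal{(\ref{contr})} (passing from ordinals to ranks of arbitrary elements via rank-initiality), and $\mathrm{Fix}(i)=\mathcal{S}$ because $\hat{\imath}$ is fixed-point-free (the equalizer remark after Theorem \ref{Gaifman thm}). By \textnormal{(\ref{bnd})} the ordinals of $i(\mathcal{N})$ lie below $c_{q_0}$, so $i(\mathcal{N})\subseteq\mathcal{N}_{c_{q_0}}\subseteq\mathcal{M}$ since $c_{q_0}\in\mathcal{M}$ and $\mathcal{M}$ is a rank-cut; this gives $i(\mathcal{N})<^{\textnormal{rank-cut}}\mathcal{M}$. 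Applying the rank-initial injection $i$ to $\mathcal{M}<^{\textnormal{rank-cut}}\mathcal{N}$ yields $i(\mathcal{M})<^{\textnormal{rank-cut}}i(\mathcal{N})$, and since $i$ fixes $\mathcal{S}$ but sends the spine ordinal $c_{q_0}\in\mathcal{M}$ to $c_{\hat{\imath}(q_0)}\notin\mathcal{S}$, I get $\mathcal{S}<^{\textnormal{rank-cut}}i(\mathcal{M})$; toplessness is preserved throughout because $i$ is an isomorphism onto the rank-cut $i(\mathcal{N})$. Assembling these relations gives the chain $\mathcal{S}<^{\textnormal{rank-cut}}i(\mathcal{M})<^{\textnormal{rank-cut}}i(\mathcal{N})<^{\textnormal{rank-cut}}\mathcal{M}<^{\textnormal{rank-cut}}\mathcal{N}$. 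The main obstacle is exactly the inclusion $i(\mathcal{N})\subseteq\mathcal{M}$: one must achieve it without forcing a Friedman image to contain a prescribed cut, and the resolution is to splice $\mathcal{M}$ in first and only afterwards choose $\hat{\imath}$ bounded below a spine ordinal already lying in $\mathcal{M}$, exploiting that the entire Gaifman spine sits in the gap immediately above $\mathcal{S}$.
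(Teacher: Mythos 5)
Your proposal is correct and follows essentially the same route as the paper: build $\mathcal{N}$ as the Gaifman iterated ultrapower $\mathcal{G}_{\mathcal{U},(\mathcal{S},\mathcal{A})}(\mathbb{Q})$, splice $\mathcal{M}$ in over $\mathcal{S}$ via the Friedman-style Corollary \ref{Friedman cor} (using $\mathcal{S}\preceq_{\Sigma_1^\mathcal{P}}\mathcal{M}$ and $\mathcal{S}\preceq\mathcal{N}$ to match $\Sigma_1^\mathcal{P}$-theories), and only afterwards choose the self-embedding bounded below an ordinal of $\mathcal{M}$. The only difference is presentational: the paper cites Corollary \ref{end extend model of weakly compact to model with endos}, which packages exactly your inline construction of $\mathcal{G}(\mathbb{Q})$ together with the family of bounded contractive self-embeddings $\mathcal{G}(\hat{\imath})$ coming from Lemma \ref{contractive proper selfembedding of Q}.
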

\begin{proof}
	By the assumption $\mathcal{S} \preceq_{\Sigma_1^\mathcal{P}} \mathcal{M}$, $\mathrm{Th}_{\Sigma_1^\mathcal{P}, \mathcal{S}}(\mathcal{S}) = \mathrm{Th}_{\Sigma_1^\mathcal{P}, \mathcal{S}}(\mathcal{M})$. Let us write $\mathrm{SSy}_\mathcal{S}(\mathcal{M})$ as $(\mathcal{S}, \mathcal{A})$. Since
	\[
	(\mathcal{S}, \mathcal{A}) \models \mathrm{GBC} + \textnormal{``$\mathrm{Ord}$ is weakly compact''},
	\]
	we can apply Corollary \ref{end extend model of weakly compact to model with endos}
	 to obtain a model $\mathcal{N}$, such that $\mathcal{S} \preceq^{\rnk} \mathcal{N}$ and for each $\nu \in \mathrm{Ord}^\mathcal{N} \setminus \mathcal{S}$, there is a proper rank-initial self-embedding $i_\nu$ of $\mathcal{N}$, which is contractive on $\mathcal{N} \setminus \mathcal{S}$ and which satisfies $\mathrm{image}(i_\nu) \subseteq \mathcal{N}_\nu$ and $\mathrm{Fix}(i_\nu) = \mathcal{S}$.
	
	Since $\mathrm{Th}_{\Sigma_1^\mathcal{P}, \mathcal{S}}(\mathcal{S}) = \mathrm{Th}_{\Sigma_1^\mathcal{P}, \mathcal{S}}(\mathcal{M})$ and $\mathcal{S} \preceq \mathcal{N}$, we have $\mathrm{Th}_{\Sigma_1^\mathcal{P}, \mathcal{S}}(\mathcal{M}) = \mathrm{Th}_{\Sigma_1^\mathcal{P}, \mathcal{S}}(\mathcal{N})$. So by Corollary \ref{Friedman cor}, there is a proper topless rank-initial embedding $j : \mathcal{M} \rightarrow \mathcal{N}$ which fixes $\mathcal{S}$ pointwise. Identify $\mathcal{M}$, pointwise, with the image of this embedding, and pick $\mu \in \mathrm{Ord}^\mathcal{M} \setminus \mathcal{S}$. Let $i = i_\mu$. Since $\mathcal{M}$ is topless in $\mathcal{N}$, we have by Proposition \ref{comp emb} that $i(\mathcal{M})$ is topless in $\mathcal{M}$. Now note that	
	\[
	\mathcal{S} <^{\textnormal{rank-cut}} i(\mathcal{M}) <^{\textnormal{rank-cut}} i(\mathcal{N})  <^{\textnormal{rank-cut}} \mathcal{M} <^{\textnormal{rank-cut}} \mathcal{N},
	\] 
	as desired.
\end{proof}

\begin{thm}[Bahrami-Enayat-style]\label{characterize strongly topless substructure}
	Let $\mathcal{M} \models \mathrm{KP}^\mathcal{P}  + \Sigma_1^\mathcal{P}\textnormal{-Separation}$ be countable and non-standard, and let $\mathcal{S}$ be a proper rank-initial substructure of $\mathcal{M}$. The following are equivalent:
	\begin{enumerate}[{\normalfont (a)}]
		\item\label{characterize strongly topless substructure emb} $\mathcal{S} = \mathrm{Fix}(i) \cap \mathcal{S}'$, for some $\mathcal{S} \subsetneq \mathcal{S}' <^{\rnk} \mathcal{M}$ and some self-embedding $i : \mathcal{M} \rightarrow \mathcal{M}$.
		\item[{\normalfont (\ref{characterize topless substructure emb}')}] $\mathcal{S} = \mathrm{Fix}(i)$, for some self-embedding $i : \mathcal{M} \rightarrow \mathcal{M}$. 
		\item[{\normalfont (\ref{characterize topless substructure emb}'')}] $\mathcal{S} = \mathrm{Fix}(i)$, for some topless rank-initial self-embedding $i : \mathcal{M} \rightarrow \mathcal{M}$, which is contractive on $\mathcal{M} \setminus \mathcal{S}$.
		\item\label{characterize strongly topless substructure substr} $\mathcal{S}$ is a strongly topless $\Sigma_1^\mathcal{P}$-elementary substructure of $\mathcal{M}$.
	\end{enumerate}
\end{thm}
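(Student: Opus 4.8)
The plan is to run the cycle $(\mathrm{a}'')\Rightarrow(\mathrm{a}')\Rightarrow(\mathrm{a})\Rightarrow(\mathrm{b})\Rightarrow(\mathrm{a}'')$. The first two arrows are immediate: a topless rank-initial self-embedding that is contractive on $\mathcal{M}\setminus\mathcal{S}$ with $\mathrm{Fix}(i)=\mathcal{S}$ is in particular a self-embedding with $\mathrm{Fix}(i)=\mathcal{S}$, giving $(\mathrm{a}'')\Rightarrow(\mathrm{a}')$; and for $(\mathrm{a}')\Rightarrow(\mathrm{a})$ one picks any $\beta\in\mathrm{Ord}^\mathcal{M}\setminus\mathcal{S}$ and sets $\mathcal{S}'=\mathcal{M}_{\beta}$, a proper rank-initial substructure with $\mathcal{S}\subsetneq\mathcal{S}'<^\rnk\mathcal{M}$, so that $\mathrm{Fix}(i)\cap\mathcal{S}'=\mathcal{S}\cap\mathcal{S}'=\mathcal{S}$. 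Such a $\beta$ exists because $\mathcal{S}$ is a proper rank-initial substructure.

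For $(\mathrm{b})\Rightarrow(\mathrm{a}'')$ I would assemble the machinery already available. First, $\Sigma_1^\mathcal{P}$-elementarity forces $\omega^\mathcal{M}\in\mathcal{S}$: since $\mathcal{M}\models$ Infinity, the $\Sigma_1^\mathcal{P}$-sentence asserting that a limit ordinal exists holds in $\mathcal{M}$, hence in $\mathcal{S}$, and by rank-initiality the least such ordinal is $\omega^\mathcal{M}$. As $\mathcal{S}$ is a strong rank-cut containing $\omega^\mathcal{M}$, the Kirby--Paris-style Theorem~\ref{Kirby Paris thm} yields $\mathrm{SSy}_\mathcal{S}(\mathcal{M})\models\mathrm{GBC}+\text{``}\mathrm{Ord}\text{ is weakly compact''}$ (this is the step that uses the Choice hypothesis of that theorem, so I would first need to confirm that Choice is available under the hypotheses as stated). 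Now Lemma~\ref{embed in Gaifman model} applies to the topless rank-initial $\Sigma_1^\mathcal{P}$-elementary substructure $\mathcal{S}$, producing $\mathcal{N}\succ\mathcal{S}$ and a rank-initial topless self-embedding $i:\mathcal{N}\to\mathcal{N}$, contractive on $\mathcal{N}\setminus\mathcal{S}$, with $\mathrm{Fix}(i)=\mathcal{S}$ and
\[
\mathcal{S}<^{\textnormal{rank-cut}}i(\mathcal{M})<^{\textnormal{rank-cut}}i(\mathcal{N})<^{\textnormal{rank-cut}}\mathcal{M}<^{\textnormal{rank-cut}}\mathcal{N}.
\]
The decisive move is to \emph{restrict} $i$ to $\mathcal{M}$: since $i(\mathcal{M})\subseteq i(\mathcal{N})\subseteq\mathcal{M}$, the map $i\restriction_\mathcal{M}$ is a self-embedding of $\mathcal{M}$; its image $i(\mathcal{M})$ is a rank-cut of $\mathcal{M}$ (rank-initiality composes along the chain), so $i\restriction_\mathcal{M}$ is topless and rank-initial, it is contractive on $\mathcal{M}\setminus\mathcal{S}\subseteq\mathcal{N}\setminus\mathcal{S}$, and $\mathrm{Fix}(i\restriction_\mathcal{M})=\mathrm{Fix}(i)\cap\mathcal{M}=\mathcal{S}$. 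This is precisely $(\mathrm{a}'')$.

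The main obstacle is $(\mathrm{a})\Rightarrow(\mathrm{b})$, the necessity direction, which must be carried out for a \emph{possibly non-initial} self-embedding $i$ with $\mathrm{Fix}(i)\cap\mathcal{S}'=\mathcal{S}$. Two things must be shown. For $\Sigma_1^\mathcal{P}$-elementarity, upward preservation is free because the inclusion $\mathcal{S}\hookrightarrow\mathcal{M}$ is $\mathcal{P}$-initial, hence $\Delta_0^\mathcal{P}$-elementary and $\Sigma_1^\mathcal{P}$-preserving by Proposition~\ref{emb pres}; for the downward direction, given $\mathcal{M}\models\exists x\,\delta(x,\vec{s})$ with $\delta\in\Delta_0^\mathcal{P}$ and $\vec{s}\in\mathcal{S}$, I would run a least-witness-rank argument in the spirit of Lemma~\ref{rank-initial Fix is Sigma_1}, aiming to show that the minimal rank of a witness lies in $\mathcal{S}$ and hence, by rank-initiality and $\Delta_0^\mathcal{P}$-absoluteness, that a witness itself lies in $\mathcal{S}$. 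For strong toplessness, given $\hat f\in\mathcal{M}$ coding $f:\alpha\to\beta$ with $\alpha\supseteq\mathrm{Ord}^\mathcal{S}$, I would analyze the action of $i$ on $\hat f$: since $i$ fixes every $\xi\in\mathrm{Ord}^\mathcal{S}$ one gets $i(\hat f)(\xi)=i(f(\xi))$, so $f(\xi)\in\mathcal{S}$ holds exactly when $i$ fixes $f(\xi)$ \emph{and} $f(\xi)$ lies below the rank-cut $\mathcal{S}'$, and the separating ordinal $\nu\in\mathrm{Ord}^\mathcal{M}\setminus\mathcal{S}$ would be extracted by comparing $\hat f$ with $i(\hat f)$ and invoking the toplessness of $\mathcal{S}'$. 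The delicate point — and the reason condition $(\mathrm{a})$ is phrased as the intersection of $\mathrm{Fix}(i)$ with a rank-cut $\mathcal{S}'$ rather than as $\mathrm{Fix}(i)$ outright — is that the embedding by itself only detects membership in $\mathrm{Fix}(i)$; one must localize the whole argument below $\mathcal{S}'$ in order to turn ``not fixed by $i$'' into ``of large rank'', precisely because the non-initiality of $i$ blocks a direct appeal to the rank-initial fixed-point lemmas.
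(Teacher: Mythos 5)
Your architecture is exactly the paper's: the cycle (a\hspace{1pt}'')\,$\Rightarrow$\,(a\hspace{1pt}')\,$\Rightarrow$\,(a)\,$\Rightarrow$\,(b)\,$\Rightarrow$\,(a\hspace{1pt}''), with the first two arrows trivial and (b)\,$\Rightarrow$\,(a\hspace{1pt}'') done precisely as in the paper — Lemma \ref{Kirby Paris forward} to get $\mathrm{SSy}_\mathcal{S}(\mathcal{M}) \models \mathrm{GBC} + \text{``}\mathrm{Ord}\text{ is weakly compact''}$, then Lemma \ref{embed in Gaifman model}, then restriction of the resulting self-embedding of $\mathcal{N}$ to $\mathcal{M}$ along the chain of rank-cuts; your derivation of $\omega^\mathcal{M} \in \mathcal{S}$ is a detail the paper leaves implicit. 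Your two caveats are also fair, and they are the paper's problems rather than yours: Lemma \ref{Kirby Paris forward} is stated with Choice, which the theorem's hypotheses do not supply, and the paper's own proof of (a)\,$\Rightarrow$\,(b) silently treats $i$ as initial (it argues ``by initiality of $i$'' and ends by citing Lemma \ref{rank-initial Fix is Sigma_1}, whose hypotheses require a rank-initial $i$). One small repair in (a\hspace{1pt}')\,$\Rightarrow$\,(a): take $\mathcal{S}' = \mathcal{M}_{\beta+1}$ rather than $\mathcal{M}_\beta$, since if $\beta$ happened to be the \emph{least} ordinal outside $\mathcal{S}$ then $\mathcal{M}_\beta = \mathcal{S}$ and the inclusion would not be proper.

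The genuine gap is (a)\,$\Rightarrow$\,(b): what you offer there is a plan whose decisive steps are left open, and the missing device is Overspill. First, you never establish that $\mathcal{S}$ is topless, which is both part of the conclusion and a prerequisite for any Overspill over $\mathcal{S}$; the paper gets it by showing that a least $\lambda \in \mathrm{Ord}^\mathcal{M} \setminus \mathcal{S}$ would be fixed by $i$ and would lie in $\mathcal{S}'$, contradicting $\mathrm{Fix}(i) \cap \mathcal{S}' = \mathcal{S}$. Second, for strong toplessness the paper proves, for $\zeta \in \mathrm{Ord}^\mathcal{M} \cap \mathcal{S}$, that $f(\zeta) \notin \mathcal{S}$ iff $f(\zeta) \neq i(f)(\zeta)$ or $f(\zeta) \geq \gamma$, where $\gamma \in (\mathrm{Ord}^\mathcal{M} \cap \mathcal{S}') \setminus \mathcal{S}$ — your sketch has this much — but the separating ordinal is then produced by applying $\Pi_1^\mathcal{P}$-Overspill over the topless cut $\mathcal{S}$ to the $\Pi_1$ formula $\phi(\xi) \equiv \forall \zeta < \xi . \big( f(\zeta) \neq i(f)(\zeta) \rightarrow f(\zeta) > \xi \big)$, which holds at every ordinal of $\mathcal{S}$; this yields $\mu \notin \mathcal{S}$ with $\phi(\mu)$, and taking $\mu < \gamma$ (possible since $\phi$ is downward persistent and ordinals outside $\mathcal{S}$ reach below $\gamma$ by toplessness) makes $\mu$ separate $\mathcal{S}$ from all escaping values. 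Your substitute — extracting $\nu$ ``by comparing $\hat f$ with $i(\hat f\hspace{2pt})$ and invoking the toplessness of $\mathcal{S}'$'' — cannot do this job: the values $f(\zeta)$ escaping $\mathcal{S}$ are of two kinds, those moved by $i$ and those fixed by $i$ but of rank beyond $\mathcal{S}'$, and toplessness of $\mathcal{S}'$ gives no lower bound whatsoever on the moved ones; without the Overspill step (which is exactly where the $\Sigma_1^\mathcal{P}$-Separation hypothesis enters, via $\Sigma_1^\mathcal{P}$-Foundation) you cannot rule out that the moved values are downward cofinal onto $\mathcal{S}$, which is precisely what strong toplessness forbids. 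Third, your $\Sigma_1^\mathcal{P}$-elementarity step is likewise only a declared intention: you correctly name the least-witness-rank strategy of Lemma \ref{rank-initial Fix is Sigma_1} and the non-initiality obstruction, but you do not overcome it — nor does the paper, which resolves it only by reading $i$ as rank-initial and citing that lemma.
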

\begin{proof}
	(\ref{characterize strongly topless substructure emb}) $\Rightarrow$ (\ref{characterize strongly topless substructure substr}): We start by observing that $\mathcal{S}$ is topless: It is assumed to be a proper substructure. If there were a least $\lambda \in \mathrm{Ord}^\mathcal{M} \setminus \mathcal{S}$, then by initiality of $i$ and $\mathcal{S} \subseteq \mathrm{Fix}(i)$, we would have $i(\lambda) = \lambda$, contradicting $\mathcal{S} \supseteq \mathrm{Fix}(i) \cap \mathcal{S}'$ and $\lambda  \in \mathcal{S}'$. 
	
	Let $\alpha, \beta \in \mathrm{Ord}^\mathcal{M}$, with $\mathrm{Ord}^\mathcal{M} \cap \mathcal{S} \subseteq \alpha_\mathcal{M}$, and let $f \in \mathcal{M}$ code a function from $\alpha$ to  $\beta$ in $\mathcal{M}$. Note that $\mathrm{Ord}^\mathcal{M} \cap \mathcal{S} \subseteq i(\alpha)_\mathcal{M}$ and that $i(f\hspace{2pt})$ codes a function from $i(\alpha)$ to $i(\beta)$ in $\mathcal{M}$. Let $\gamma \in (\mathrm{Ord}^\mathcal{M} \cap \mathcal{S}') \setminus \mathcal{S}$. By $\mathcal{S} = \mathrm{Fix}(i) \cap \mathcal{S}'$, for all $\zeta \in \mathrm{Ord}^\mathcal{M} \cap \mathcal{S}$ we have
	\[
	f\hspace{2pt}(\zeta) \not\in \mathcal{S} \Leftrightarrow f\hspace{2pt}(\zeta) \neq i(f\hspace{2pt})(\zeta) \vee f\hspace{2pt}(\zeta) \geq \gamma. \tag{$\dagger$}
	\]
	We define a formula, with $f$ and $i(f\hspace{2pt})$ as parameters:
	\[
	\phi(\xi) \equiv \mathrm{Ord}(\xi) \wedge \forall \zeta < \xi . \big( f\hspace{2pt}(\zeta) \neq i(f\hspace{2pt})(\zeta) \rightarrow f\hspace{2pt}(\zeta) > \xi \big)
	\]
	Note that $\phi$ is $\Pi_1$ and that $\mathcal{M} \models \phi(\zeta)$, for all $\zeta \in \mathrm{Ord}^\mathcal{M} \cap \mathcal{S}$. So by $\mathcal{M} \models \Pi_1^\mathcal{P} \text{-Overspill}$ and toplessness of $\mathcal{S}$, there is $\mu \in \mathrm{Ord}^\mathcal{M} \setminus \mathcal{S}$ such that $\mathcal{M} \models \phi(\mu)$. Combining $\mathcal{M} \models \phi(\mu)$ with ($\dagger$), we have for all $\zeta \in \mathrm{Ord}^\mathcal{M} \cap \mathcal{S}$ that
	\[
	f\hspace{2pt}(\zeta) \not\in \mathcal{S} \Rightarrow f\hspace{2pt}(\zeta) > \mu.
	\]
	On the other hand, by $\mu \in \mathrm{Ord}^\mathcal{M} \setminus \mathcal{S}$, the converse is obvious. Hence, $\mathcal{S}$ is strongly topless.
	
	Finally, it follows from Lemma \ref{rank-initial Fix is Sigma_1} that $\mathcal{S} \preceq_{\Sigma_1^\mathcal{P}} \mathcal{M}$.
	
	(\ref{characterize strongly topless substructure substr}) $\Rightarrow$ (\ref{characterize strongly topless substructure emb}''): By Lemma \ref{Kirby Paris forward}, we can apply Lemma \ref{embed in Gaifman model}. The restriction $i\restriction_\mathcal{M}$ of $i : \mathcal{N} \rightarrow \mathcal{N}$ (from Lemma \ref{embed in Gaifman model}) to $\mathcal{M}$, is a topless rank-initial self-embedding of $\mathcal{M}$ with fixed-point set $\mathcal{S}$, which is contractive on $\mathcal{M} \setminus \mathcal{S}$. 
\end{proof}

\begin{lemma}\label{Kirby Paris backward}
	Let $\mathcal{M} \models \mathrm{KP}^\mathcal{P} + \textnormal{Choice}$ be countable and let $\mathcal{S}$ be a rank-cut of $\mathcal{M}$. If $\mathrm{SSy}_\mathcal{S}(\mathcal{M}) \models \mathrm{GBC} + \text{``$\mathrm{Ord}$ is weakly compact''}$, then $\mathcal{S}$ is a strong rank-cut of $\mathcal{M}$.
\end{lemma}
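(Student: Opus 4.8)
Lemma \ref{Kirby Paris backward} asks us to prove the converse of Lemma \ref{Kirby Paris forward}: assuming $\mathcal{M} \models \mathrm{KP}^\mathcal{P} + \textnormal{Choice}$ is countable, that $\mathcal{S}$ is a rank-cut of $\mathcal{M}$, and that $(\mathcal{S}, \mathcal{A}) =_\df \mathrm{SSy}_\mathcal{S}(\mathcal{M})$ satisfies $\mathrm{GBC} + \text{``}\mathrm{Ord}\text{ is weakly compact''}$, we must show that $\mathcal{S}$ is a *strong* rank-cut. So I must upgrade toplessness to strong toplessness: given a function $f \in \mathcal{M}$ coding a map $\alpha \to \beta$ (in $\mathcal{M}$) whose domain $\alpha$ dominates $\mathrm{Ord}^\mathcal{S}$, I must produce a single ordinal $\nu \in \mathrm{Ord}^\mathcal{M}\setminus\mathcal{S}$ such that for all $\xi \in \mathrm{Ord}^\mathcal{S}$, $f(\xi) \notin \mathcal{S} \Leftrightarrow f(\xi) > \nu$.

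Let me describe the approach. Let me carry the structure in $(\mathcal{S}, \mathcal{A})$ by considering $g = f_\mathcal{M} \restriction \mathrm{Ord}^\mathcal{S}$, which is a class function $G \in \mathcal{A}$ since $f$ codes it in $\mathcal{M}$ (it is $f$ intersected with $(\mathrm{Ord}^\mathcal{S})^2$ together with the ambient membership, hence an element of $\mathrm{Cod}_\mathcal{S}(\mathcal{M}) = \mathcal{A}$). The key device is the tree structure that weak compactness controls. The plan is to build, inside $(\mathcal{S}, \mathcal{A})$, a binary tree $\mathcal{T}$ of height $\mathrm{Ord}$ that encodes the values of $G$; concretely, a node at level $\eta$ should record, for each $\xi < \eta$, whether $G(\xi) < \eta$ (equivalently whether $G(\xi)$ has been ``captured'' below $\eta$). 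Because $(\mathcal{S},\mathcal{A}) \models \text{``}\mathrm{Ord}\text{ is weakly compact''}$, this tree has a branch $F \in \mathcal{A}$. I would then argue that the branch picks out a coherent threshold behavior: it determines, uniformly in $\xi$, the eventual truth value of ``$G(\xi)$ is bounded in $\mathcal{S}$''. The branch, being a class in $\mathcal{A}$, is coded by some element of $\mathcal{M}$; the point where the branch ``leaves'' $\mathcal{S}$ or where its coding ordinal sits in $\mathrm{Ord}^\mathcal{M}\setminus\mathcal{S}$ is the candidate for the witness $\nu$.

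Here is the cleaner way I would actually execute the main step, using the tools already in hand. Split $\mathrm{Ord}^\mathcal{S}$ into $P = \{\xi \in \mathrm{Ord}^\mathcal{S} : f(\xi) \notin \mathcal{S}\}$ and its complement. Since $f$ is coded in $\mathcal{M}$, both $P$ and the restricted graph lie in $\mathcal{A}$. Now apply Class Comprehension/Class Replacement in $(\mathcal{S},\mathcal{A})$ to the class function $\xi \mapsto f(\xi)$ restricted to the complement of $P$: since $(\mathcal{S},\mathcal{A}) \models \mathrm{GBC}$ proves Class Replacement, the image $\{f(\xi) : \xi \in \mathrm{Ord}^\mathcal{S}, f(\xi) \in \mathcal{S}\}$ is a *set* of $\mathcal{S}$, hence bounded by some $\sigma \in \mathrm{Ord}^\mathcal{S}$. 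Thus every ``small'' value is already below $\sigma \in \mathcal{S}$. For the ``large'' values, I would run $\Delta_0^\mathcal{P}$-Overspill (available in $\mathcal{M} \models \mathrm{KP}^\mathcal{P}$, as used throughout the preceding proofs) on the $\mathcal{M}$-formula $\phi(\eta) \equiv \forall \xi < \eta.\,(f(\xi) > \sigma \to f(\xi) > \eta)$, which holds for all $\eta \in \mathrm{Ord}^\mathcal{S}$ because any value exceeding $\sigma$ is non-standard (outside $\mathcal{S}$) and $\mathcal{S}$ is topless; overspill then yields $\mu \in \mathrm{Ord}^\mathcal{M}\setminus\mathcal{S}$ with $\mathcal{M}\models\phi(\mu)$. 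Combining, for $\xi \in \mathrm{Ord}^\mathcal{S}$ we get $f(\xi)\notin\mathcal{S} \Leftrightarrow f(\xi) > \sigma \Leftrightarrow f(\xi) > \mu$, so $\nu = \mu$ is the required witness.

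The main obstacle, and the step I would scrutinize most carefully, is the transition from the class-theoretic Replacement bound $\sigma$ to the uniform overspill threshold $\mu$ — specifically verifying that the boundedness of the set of small values is genuinely available from $\mathrm{GBC}$ (it requires that $\xi \mapsto f(\xi)$ restricted to $\mathcal{S}$ really is a class function in $\mathcal{A}$ with domain a proper class $\mathrm{Ord}^\mathcal{S}$, and that Class Replacement applies to give a set image), and that $\phi$ is genuinely $\Pi_1^\mathcal{P}$ (or $\Delta_0^\mathcal{P}$) so that the overspill lemma applies. One delicate point is that $\mathcal{S}$ being merely a rank-cut gives toplessness but I must ensure that $\omega^\mathcal{M} \in \mathcal{S}$ or the relevant closure is not secretly needed; here it should not be, since I only invoke $\Delta_0^\mathcal{P}$/$\Pi_1^\mathcal{P}$-Overspill (\emph{cf.} the $\Sigma_1^\mathcal{P}$-Overspill lemma) which requires only that $\mathcal{S}$ is rank-initial and topless in $\mathcal{M}\models\mathrm{KP}^\mathcal{P}$. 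I would double-check whether the weak-compactness hypothesis is truly needed for strong toplessness or whether, as the Replacement-plus-overspill route suggests, plain $\mathrm{GBC}$ suffices — if weak compactness is genuinely required, the tree-branch argument sketched above is the fallback, with the branch supplying the coherent threshold that a single overspill cannot.
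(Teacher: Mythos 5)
Your ``cleaner way'' has a fatal gap at the Class Replacement step, and it is not repairable. The class function $\xi \mapsto f(\xi)$ restricted to $\{\xi \in \mathrm{Ord}^\mathcal{S} : f(\xi) \in \mathcal{S}\}$ has a \emph{proper class} of $(\mathcal{S},\mathcal{A})$ as its domain, whereas Class Replacement in $\mathrm{GB}$ (as axiomatized in the paper) applies only to class functions whose domain is a \emph{set}; and the conclusion you draw from it is simply false. Concretely, let $f$ be the identity function on some $\alpha \in \mathrm{Ord}^\mathcal{M} \setminus \mathcal{S}$ (a legitimate instance, since $\alpha_\mathcal{M} \supseteq \mathrm{Ord}^\mathcal{S}$): then $f(\xi) = \xi \in \mathcal{S}$ for every $\xi \in \mathrm{Ord}^\mathcal{S}$, so the class of ``small'' values is all of $\mathrm{Ord}^\mathcal{S}$, unbounded in $\mathcal{S}$. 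Hence no $\sigma \in \mathrm{Ord}^\mathcal{S}$ satisfies $f(\xi) \notin \mathcal{S} \Leftrightarrow f(\xi) > \sigma$, your overspill formula $\phi(\eta)$ already fails at all sufficiently large $\eta \in \mathrm{Ord}^\mathcal{S}$, and the chain of equivalences collapses at its first link. (For this $f$ strong toplessness holds trivially; the point is that ``bounding the small values inside $\mathcal{S}$'' is the wrong intermediate target, not a consequence of $\mathrm{GBC}$.) Moreover, your closing doubt must be resolved in the negative: weak compactness is genuinely needed. Your route never uses it, and if $\mathrm{GBC}$ alone implied strongness, then by Lemma \ref{Kirby Paris forward} every rank-cut (containing $\omega^\mathcal{M}$) whose standard system satisfies $\mathrm{GBC}$ would have a standard system satisfying ``$\mathrm{Ord}$ is weakly compact''; this fails for standard systems consisting exactly of the definable classes of $\mathcal{S}$ --- which arise from conservative elementary rank-end-extensions --- since by the paper's discussion of \cite{EH17} such class structures satisfy $\mathrm{GBC} + \neg$``$\mathrm{Ord}$ is weakly compact''.

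The tree sketch is where weak compactness would have to enter, but as written it is not a proof. The family you describe, with the level-$\eta$ node recording whether $G(\xi) < \eta$ for each $\xi < \eta$, has exactly one node per level and is not closed under restriction (if $\zeta \leq G(\xi) < \eta$, the level-$\eta$ node restricted to $\zeta$ disagrees with the level-$\zeta$ node), so it is not a binary tree in the sense required by the axiom. Repairing it --- say, taking the level-$\eta$ nodes to be the characteristic functions of $\{\xi < \eta \mid f(\xi) > \lambda\}$ for varying thresholds $\lambda$ --- forces you to face the two real difficulties, neither of which your sketch addresses: (i) showing the tree is a class in $\mathcal{A}$, where any code in $\mathcal{M}$ inevitably admits nodes generated by non-standard thresholds $\lambda \notin \mathcal{S}$; and (ii) converting a branch, whose initial segments match such traces only level-by-level with varying $\lambda$, into a \emph{single} $\nu \in \mathrm{Ord}^\mathcal{M} \setminus \mathcal{S}$ lying below all large values of $f$ --- which is precisely the uniformity to be proved. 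For contrast, the paper's proof avoids direct combinatorics altogether: it applies Corollary \ref{end extend model of weakly compact to model with endos gen} to obtain $\mathcal{N} \succ \mathcal{M}$ in which $\mathcal{S}$ is cut out as a fixed-point set of a self-embedding, concludes from Theorem \ref{characterize strongly topless substructure} that $\mathcal{S}$ is a strong rank-cut of $\mathcal{N}$, and then, for the given $f$, uses a Friedman-style embedding $j : \mathcal{M} \rightarrow \mathcal{N}$ (Theorem \ref{Friedman thm}) fixing $f$ and fixing $\mathcal{S}$ pointwise to pull the witness $\nu$ obtained in $\mathcal{N}$ back to $j^{-1}(\nu)$ in $\mathcal{M}$.
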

\begin{proof}
	By Corollary \ref{end extend model of weakly compact to model with endos gen}, there is $\mathcal{N} \succ \mathcal{M}$ such that $\mathcal{S}$ is a rank-cut of $\mathcal{N}$ and there is a self-embedding $i : \mathcal{N} \rightarrow \mathcal{N}$ with $\mathrm{Fix}(i) \cap \mathcal{S'} = \mathcal{S}$, for some $\mathcal{S} \subsetneq \mathcal{S}' <^\rnk \mathcal{N}$. So by Theorem \ref{characterize strongly topless substructure}, $\mathcal{S}$ is a strong rank-cut of $\mathcal{N}$.
	
	Let $f : \alpha \rightarrow \beta$ be a function in $\mathcal{M}$, where $\alpha, \beta \in \mathrm{Ord}^\mathcal{M}$ and $\alpha_\mathcal{M} \supseteq \mathrm{Ord}^\mathcal{M} \cap \mathcal{S}$. $f$ may also be considered as a function in $\mathcal{N}$, so since $\mathcal{N}$ elementarily extends $\mathcal{M}$, there is by Theorem \ref{Friedman thm} a rank-initial embedding $j : \mathcal{M} \rightarrow \mathcal{N}$ which fixes $f$ and fixes $\mathcal{S}$ pointwise. Let $\mathcal{M}'$ be the isomorphic copy of $\mathcal{M}$ given by the image of this embedding.
	
	Since $\mathcal{S}$ is a strong rank-cut in $\mathcal{N}$, there is $\nu \in \mathrm{Ord}^\mathcal{N} \setminus \mathcal{S}$ such that for all $\zeta \in \mathrm{Ord}^\mathcal{N} \setminus \mathcal{S}$,
	\[
	f\hspace{2pt}(\zeta) \not\in \mathcal{S} \Leftrightarrow f\hspace{2pt}(\zeta) > \nu.
	\]
	But by rank-initiality of $\mathcal{M}'$ in $\mathcal{N}$, we have that $\nu \in \mathrm{Ord}^\mathcal{M'}$. Now since $j$ is an embedding fixing $f$ and fixing $\mathcal{S}$ pointwise, we have for all $\xi \in \mathrm{Ord}^\mathcal{M} \setminus \mathcal{S}$,
	\[
	f\hspace{2pt}(\xi) \not\in \mathcal{S} \Leftrightarrow f\hspace{2pt}(\xi) > j^{-1}(\nu).
	\]
	So $\mathcal{S}$ is strongly topless in $\mathcal{M}$.
\end{proof}

\begin{thm}\label{strongly topless self-embedding iff GBC weakly compact}
	Suppose that $\mathcal{M} \models \mathrm{KP}^\mathcal{P} + \Sigma_1^\mathcal{P}\textnormal{-Separation} + \textnormal{Choice}$ is countable and non-standard. The following are equivalent:
	\begin{enumerate}[{\normalfont (a)}]
		\item\label{strongly topless self-embedding iff GBC weakly compact strongly topless} There is a strongly topless rank-initial self-embedding $i$ of $\mathcal{M}$.
		\item\label{strongly topless self-embedding iff GBC weakly compact ZFC} $\mathcal{M}$ expands to a model $(\mathcal{M}, \mathcal{A})$ of $\mathrm{GBC} + \text{``$\mathrm{Ord}$ is weakly compact''}$.
	\end{enumerate}
\end{thm}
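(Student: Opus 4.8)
The plan is to read condition (\ref{strongly topless self-embedding iff GBC weakly compact strongly topless}) as the assertion that $\mathcal{M}$ has a strong rank-cut isomorphic to itself, and to connect the two conditions through the Kirby--Paris characterization (Theorem \ref{Kirby Paris thm}), feeding the Gaifman- and Friedman-style machinery of the previous section into the argument. The one preliminary observation that makes everything fit is that, since $\mathrm{GBC}$ is conservative over $\mathrm{ZFC}$, condition (\ref{strongly topless self-embedding iff GBC weakly compact ZFC}) forces $\mathcal{M} \models \mathrm{ZFC}$; in particular (\ref{strongly topless self-embedding iff GBC weakly compact ZFC}) is a priori much stronger than the standing hypothesis, and this is what licenses the use of the constructions that were stated for models of $\mathrm{ZFC}$.

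For (\ref{strongly topless self-embedding iff GBC weakly compact strongly topless}) $\Rightarrow$ (\ref{strongly topless self-embedding iff GBC weakly compact ZFC}), let $i$ be a strongly topless rank-initial self-embedding, so that $\mathcal{S} =_\df i(\mathcal{M})$ is a strong rank-cut of $\mathcal{M}$ and $i : \mathcal{M} \rightarrow \mathcal{S}$ is an isomorphism. First I would check that $\omega^\mathcal{M} \in \mathcal{S}$: by Corollary \ref{rank-init equivalences in KPP} $i$ is $\mathcal{P}$-initial, hence $\Delta_0^\mathcal{P}$-elementary by Proposition \ref{emb pres}, and since ``$y = \omega$'' is expressible by a $\Delta_0$-formula with a unique witness, $i(\omega^\mathcal{M}) = \omega^\mathcal{M} \in \mathcal{S}$. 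Now the forward direction of Kirby--Paris (Lemma \ref{Kirby Paris forward}) yields $\mathrm{SSy}_\mathcal{S}(\mathcal{M}) \models \mathrm{GBC} + $ ``$\mathrm{Ord}$ is weakly compact''. Finally I transport this back along the isomorphism $i$: writing $\mathrm{SSy}_\mathcal{S}(\mathcal{M}) = (\mathcal{S}, \mathcal{B})$ and setting $\mathcal{A} = \{ i^{-1}(B) \mid B \in \mathcal{B}\}$, the map $i$ induces an $\mathcal{L}^1$-isomorphism $(\mathcal{M}, \mathcal{A}) \cong (\mathcal{S}, \mathcal{B})$, so $(\mathcal{M}, \mathcal{A}) \models \mathrm{GBC} + $ ``$\mathrm{Ord}$ is weakly compact'', which is (\ref{strongly topless self-embedding iff GBC weakly compact ZFC}).

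For (\ref{strongly topless self-embedding iff GBC weakly compact ZFC}) $\Rightarrow$ (\ref{strongly topless self-embedding iff GBC weakly compact strongly topless}), fix $(\mathcal{M}, \mathcal{A}) \models \mathrm{GBC} + $ ``$\mathrm{Ord}$ is weakly compact''; as noted $\mathcal{M} \models \mathrm{ZFC}$. I would first apply Corollary \ref{end extend model of weakly compact to model with endos} to obtain a countable $\mathcal{M} \prec^{\textnormal{rank-cut}} \mathcal{N}$ with $\mathrm{SSy}_\mathcal{M}(\mathcal{N}) = (\mathcal{M}, \mathcal{A})$; then Lemma \ref{Kirby Paris backward} (the backward direction of Kirby--Paris, applied inside the ambient model $\mathcal{N}$) upgrades this to the statement that $\mathcal{M}$ is a \emph{strong} rank-cut of $\mathcal{N}$, i.e. the inclusion $\iota : \mathcal{M} \hookrightarrow \mathcal{N}$ is strongly topless and rank-initial. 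Since $\mathcal{N} \succ \mathcal{M}$ is a countable non-standard model of $\mathrm{KP}^\mathcal{P} + \Sigma_1^\mathcal{P}\textnormal{-Separation}$ with $\mathrm{WFP}(\mathcal{N}) = \mathrm{WFP}(\mathcal{M}) =_\df \mathcal{S}$ a common rank-initial topless substructure, with the same standard system (Proposition \ref{emb pres}) and the same $\Sigma_1^\mathcal{P}$-theory over $\mathcal{S}$ (by elementarity), Corollary \ref{Friedman cor} provides a rank-initial embedding $k : \mathcal{N} \rightarrow \mathcal{M}$. The composite $k \circ \iota : \mathcal{M} \rightarrow \mathcal{M}$ is then a self-embedding; it is rank-initial because rank-initiality (equivalently $\mathcal{P}$-initiality, Corollary \ref{rank-init equivalences in KPP}) is closed under composition, and it is strongly topless by Proposition \ref{comp emb}(\ref{comp emb strongly topless}) (the composite of a strongly topless embedding with a rank-initial one is strongly topless). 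This is the desired self-embedding, establishing (\ref{strongly topless self-embedding iff GBC weakly compact strongly topless}).

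The main obstacle is obtaining \emph{strong} toplessness rather than mere toplessness: Friedman's self-embedding theorem (Corollary \ref{Friedman selfembedding}) only produces topless rank-initial self-embeddings directly, and there is no evident way to strengthen a self-embedding of $\mathcal{M}$ in place. The device that resolves this is the detour through an elementary extension $\mathcal{N}$ in which $\mathcal{M}$ is \emph{itself} a strong rank-cut --- a fact supplied by the Gaifman construction together with the backward Kirby--Paris lemma --- after which Proposition \ref{comp emb} transfers the strong toplessness of $\iota : \mathcal{M} \hookrightarrow \mathcal{N}$ onto the manufactured self-embedding $k \circ \iota$. I would double-check the bookkeeping that $\mathrm{WFP}(\mathcal{N}) = \mathrm{WFP}(\mathcal{M})$ (which holds because $\mathcal{M}$ is a rank-cut of $\mathcal{N}$ and $\mathcal{M}$ is non-standard, so no new well-founded ordinals appear) and that $\mathrm{SSy}(\mathcal{M}) = \mathrm{SSy}(\mathcal{N})$, so that Corollary \ref{Friedman cor} genuinely applies.
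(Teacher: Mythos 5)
Your proof is correct and takes essentially the same route as the paper's: (a)$\Rightarrow$(b) via Lemma \ref{Kirby Paris forward} applied to the strong rank-cut $i(\mathcal{M})$ followed by transport along the isomorphism $\mathcal{M} \cong i(\mathcal{M})$, and (b)$\Rightarrow$(a) by realizing $\mathcal{M}$ as a strong rank-cut of a Gaifman-style elementary rank-end-extension $\mathcal{N}$, embedding $\mathcal{N}$ rank-initially back into $\mathcal{M}$ by the Friedman-style result, and transferring strong toplessness to the composite self-embedding via Proposition \ref{comp emb}(\ref{comp emb strongly topless}). The differences are only in packaging (you cite Corollary \ref{end extend model of weakly compact to model with endos}, Lemma \ref{Kirby Paris backward} and Corollary \ref{Friedman cor} where the paper invokes Theorem \ref{Gaifman thm}, Theorem \ref{characterize strongly topless substructure} and Theorem \ref{Friedman thm} directly), plus some extra care the paper glosses over, such as checking $\omega^\mathcal{M} \in i(\mathcal{M})$ before applying Lemma \ref{Kirby Paris forward} and noting that (b) forces $\mathcal{M} \models \mathrm{ZFC}$.
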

\begin{proof}
	(\ref{strongly topless self-embedding iff GBC weakly compact strongly topless}) $\Rightarrow$ (\ref{strongly topless self-embedding iff GBC weakly compact ZFC}): If $i$ is strongly topless and rank-initial, then by the Lemma \ref{Kirby Paris forward}, we have $\mathrm{SSy}_{i(\mathcal{M})}(\mathcal{M}) \models \mathrm{GBC} + \textnormal{``$\mathrm{Ord}$ is weakly compact''}$. So since $\mathcal{M} \cong i(\mathcal{M})$, we have that $\mathcal{M}$ expands to a model of $\mathrm{GBC} + $``$\mathrm{Ord}$ is weakly compact''.

	(\ref{strongly topless self-embedding iff GBC weakly compact ZFC}) $\Rightarrow$ (\ref{strongly topless self-embedding iff GBC weakly compact strongly topless}): Expand $\mathcal{M}$ to a countable model $(\mathcal{M}, \mathcal{A})$ of $\mathrm{GBC} + $ ``$\mathrm{Ord}$ is weakly compact''. Let $\mathcal{N} \succ^\textnormal{rank-cut} \mathcal{M}$ be a model obtained from Theorem \ref{Gaifman thm} by putting $\mathbb{L}$ to be a countable linear order without a least element, e.g. $\mathbb{Q}$. By Theorem \ref{characterize strongly topless substructure}, $\mathcal{M}$ is strongly topless in $\mathcal{N}$. Note that $\mathrm{Th}(\mathcal{N}) = \mathrm{Th}(\mathcal{M})$ and $\mathrm{SSy}(\mathcal{N}) = \mathrm{SSy}(\mathcal{M})$. So by Theorem \ref{Friedman thm}, there is a rank-initial embedding $i : \mathcal{N} \rightarrow \mathcal{M}$. By Proposition \ref{comp emb}, it now follows that $i(\mathcal{M})$ is strongly topless in $\mathcal{M}$. 
\end{proof}

\begin{lemma}\label{rec sat elementary strongly topless}
	Let $\mathcal{M}$ be a countable recursively saturated model of $\mathrm{ZFC}$. If $\mathcal{S}$ is a strongly topless rank-initial elementary substructure of $M$, then $S \cong \mathcal{M}$, and a full satisfaction relation on $\mathcal{S}$ is coded in $\mathrm{SSy}_\mathcal{S}(\mathcal{M})$.
\end{lemma}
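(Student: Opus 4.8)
The statement splits into two assertions: that $\mathcal{S} \cong \mathcal{M}$, and that $\mathcal{S}$ admits a full satisfaction relation coded in $\mathrm{SSy}_\mathcal{S}(\mathcal{M})$. The plan is to obtain the second claim first and then feed it into the isomorphism machinery already developed for recursively saturated models. Since $\mathcal{S}$ is strongly topless and rank-initial in $\mathcal{M} \models \mathrm{ZFC} \supseteq \mathrm{KP}^\mathcal{P} + \mathrm{Choice}$, and $\mathcal{M}$ is recursively saturated (hence $\omega$-non-standard, so $\omega^\mathcal{M} \in \mathcal{S}$), I would apply Lemma \ref{Kirby Paris forward} to conclude immediately that $\mathrm{SSy}_\mathcal{S}(\mathcal{M}) \models \mathrm{GBC} + \text{``}\mathrm{Ord}\text{ is weakly compact''}$. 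This already places strong second-order structure in $\mathcal{A} =_\df \mathrm{SSy}_\mathcal{S}(\mathcal{M})$.

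\textbf{Obtaining the satisfaction relation.} This is where I expect the main work to lie. Because $\mathcal{S} \preceq \mathcal{M}$ is elementary and $\mathcal{S}$ is rank-initial, $\mathcal{S}$ is a model of $\mathrm{ZFC}$ itself, and being a cut of the recursively saturated $\mathcal{M}$ (with $\omega^\mathcal{M} = \omega^\mathcal{S}$ standard) it is $\omega$-non-standard. The plan is to reconstruct a full satisfaction relation on $\mathcal{S}$ as a \emph{class} in $\mathcal{A}$, using the weak compactness of $\mathrm{Ord}^{(\mathcal{S},\mathcal{A})}$. The cleanest route is to invoke the reasoning behind Theorem 4.5(i) of \cite{Ena01}, which is exactly cited in the excerpt's discussion of Paris models: a model of $\mathrm{GBC} + \text{``}\mathrm{Ord}\text{ is weakly compact''}$ carries a full satisfaction relation $\mathrm{Sat} \in \mathcal{A}$. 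Applying that to $(\mathcal{S},\mathcal{A})$ yields a full satisfaction relation on $\mathcal{S}$ that lies in $\mathcal{A} = \mathrm{SSy}_\mathcal{S}(\mathcal{M})$, i.e. is coded in $\mathcal{M}$. One must check that the $\mathrm{Sat}$ produced really satisfies clauses (i) and (ii) of the definition of "admits a full satisfaction relation" preceding Lemma \ref{rec sat reflection}; this is a matter of tracking correctness for standard-complexity formulae and the $\mathrm{ZF}(\mathcal{L}^0_\mathrm{Sat})$ schemata, which follow from Extended Separation/Class Comprehension in $(\mathcal{S},\mathcal{A})$ together with the partial-satisfaction machinery of Theorem \ref{partial satisfaction classes}.

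\textbf{The isomorphism.} With the satisfaction relation in hand, I would establish $\mathcal{S} \cong \mathcal{M}$ by verifying the hypotheses of Theorem \ref{rec sat iso thm}. Both $\mathcal{S}$ and $\mathcal{M}$ are countable recursively saturated models of $\mathrm{ZF}$: $\mathcal{M}$ by hypothesis, and $\mathcal{S}$ because, being $\omega$-non-standard and now admitting a full satisfaction relation, it is recursively saturated by Theorem \ref{rec sat char}(\ref{rec sat char truth}) $\Rightarrow$ (\ref{rec sat char rec}). I would then pick a common rank-initial $\omega$-topless substructure $\mathcal{S}_0$ of $\mathcal{S}$ and $\mathcal{M}$: since $\mathcal{S}$ is $\omega$-non-standard, Lemma \ref{omega-topless existence} furnishes an $\omega$-topless rank-initial cut $\mathcal{S}_0 = \bigcup_{k<\omega}\mathcal{S}_{\sigma_k}$ below any chosen ordinal of $\mathcal{S}$, and by rank-initiality of $\mathcal{S}$ in $\mathcal{M}$ this is simultaneously an $\omega$-topless rank-initial substructure of $\mathcal{M}$. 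By rank-initiality we get $\mathrm{SSy}_{\mathcal{S}_0}(\mathcal{S}) = \mathrm{SSy}_{\mathcal{S}_0}(\mathcal{M})$ (via Proposition \ref{emb pres}(\ref{emb pres SSy_S eq})), and since $\mathcal{S} \preceq \mathcal{M}$ we have $\mathrm{Th}_{\mathcal{S}_0}(\mathcal{S}) = \mathrm{Th}_{\mathcal{S}_0}(\mathcal{M})$. Theorem \ref{rec sat iso thm} then delivers an isomorphism $\mathcal{S} \cong \mathcal{M}$, completing the proof. The main obstacle is the middle step: making precise that weak compactness of $\mathrm{Ord}$ in $(\mathcal{S},\mathcal{A})$ yields a \emph{genuine} full satisfaction relation meeting the two-clause definition, rather than merely a partial one — this is where the cited Enayat argument must be imported with care.
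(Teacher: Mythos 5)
Your proof is correct, but the way you obtain the coded satisfaction relation is genuinely different from the paper's. The paper exploits the recursive saturation of $\mathcal{M}$ directly: by the forward direction of Theorem \ref{rec sat char}, $\mathcal{M}$ itself admits a full satisfaction relation $\mathrm{Sat}^\mathcal{M}$, and the paper simply puts $\mathrm{Sat}^\mathcal{S} = \mathrm{Sat}^\mathcal{M} \cap \mathcal{S}$; since $\mathcal{S}$ is bounded in $\mathcal{M}$, this restriction is coded in $\mathcal{M}$, the schemata of $\mathrm{ZF}(\mathcal{L}^0_\mathrm{Sat})$ for $(\mathcal{S}, \mathrm{Sat}^\mathcal{S})$ come from $\mathrm{SSy}_\mathcal{S}(\mathcal{M}) \models \mathrm{GBC}$ (Lemma \ref{Kirby Paris forward} --- only the $\mathrm{GBC}$ part is used, not weak compactness), and the correctness clauses for each standard $n$ follow from $\mathcal{S} \prec \mathcal{M}$, which gives $\mathrm{Sat}_{\Sigma_n}^\mathcal{S} = \mathrm{Sat}_{\Sigma_n}^\mathcal{M} \cap \mathcal{S}^2$. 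You instead build the satisfaction relation internally in $(\mathcal{S}, \mathcal{A})$ from ``$\mathrm{Ord}$ is weakly compact'' via Theorem 4.5(i) of \cite{Ena01}; this does work --- the paper itself invokes that result in its discussion of Paris models and confirms it yields a full satisfaction relation in the sense of the definition preceding Lemma \ref{rec sat reflection} --- but it outsources the key step to an external theorem whose proof is considerably harder than the restriction trick, and it uses strictly more of the Kirby-Paris conclusion than is needed. What your route buys is generality: it shows that any strongly topless rank-initial elementary submodel containing $\omega^\mathcal{M}$ of a model of $\mathrm{KP}^\mathcal{P} + \textnormal{Choice}$ carries a coded full satisfaction relation, without assuming $\mathcal{M}$ recursively saturated. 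Your final isomorphism step is essentially the paper's: both verify the hypotheses of Theorem \ref{rec sat iso thm} over a common $\omega$-topless rank-initial cut furnished by Lemma \ref{omega-topless existence} (the paper takes $\mathrm{WFP}(\mathcal{M})$, you take an $\omega$-cut $\bigcup_{k<\omega}\mathcal{S}_{\sigma_k}$; either choice is fine). One small repair: your parenthetical justification of $\omega^\mathcal{M} \in \mathcal{S}$ (``hence $\omega$-non-standard, so \dots'') is not the right reason; the fact follows from elementarity, since $\mathcal{S} \models \textnormal{Infinity}$ and the witness in $\mathcal{S}$ to the defining property of $\omega$ must, by $\mathcal{S} \prec \mathcal{M}$, be $\omega^\mathcal{M}$.
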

\begin{proof}
We start by showing that a full satisfaction relation on $\mathcal{S}$ is coded in $\mathrm{SSy}_\mathcal{S}(\mathcal{M})$. By the forward direction of Theorem \ref{rec sat char}, $\mathcal{M}$ is $\omega$-non-standard and admits a full satisfaction relation $\mathrm{Sat}^\mathcal{M}$. Put $\mathrm{Sat}^\mathcal{S} = \mathrm{Sat}^\mathcal{M} \cap \mathcal{S}$. Note that $\mathrm{Sat}^\mathcal{S}$ is coded in $\mathcal{M}$, so the relation $\mathrm{Sat}^\mathcal{S}$ is coded as a class in $\mathrm{SSy}_\mathcal{S}(\mathcal{M})$. Since $\mathcal{S} \prec \mathcal{M}$, we have $\omega^\mathcal{S} = \omega^\mathcal{M}$. So since $\mathcal{S}$ is rank-initial in $\mathcal{M}$ and $\mathcal{M}$ is $\omega$-non-standard, $\mathcal{S}$ is $\omega$-non-standard. 

Since $\mathcal{S}$ is a strongly topless rank-initial elementary substructure of $M$, we have by Lemma \ref{Kirby Paris forward} that $\mathrm{SSy}_\mathcal{S}(\mathcal{M}) \models \mathrm{GBC}$. Therefore we have $(\mathcal{S}, \mathrm{Sat}^\mathcal{S}) \models \mathrm{ZF}(\mathcal{L}^0_\mathrm{Sat})$. To establishes that $\mathrm{Sat}^\mathcal{S}$ is a full satisfaction relation on $\mathcal{S}$, it remains only to check that $(\mathcal{S}, \mathrm{Sat}^\mathcal{S}) \models \forall \sigma \in \bar\Sigma_n[x] . (\mathrm{Sat}(\sigma, x) \leftrightarrow \mathrm{Sat}_{\Sigma_n}(\sigma, x) \wedge \mathrm{Sat}(\neg\sigma, x) \leftrightarrow \mathrm{Sat}_{\Pi_n}(\neg\sigma, x))$, for each standard $n \in \mathbb{N}$. But this follows from that $\mathrm{Sat}_{\Sigma_n}^\mathcal{S} = \mathrm{Sat}_{\Sigma_n}^\mathcal{M} \cap \mathcal{S}^2$ and $\mathrm{Sat}_{\Pi_n}^\mathcal{S} = \mathrm{Sat}_{\Pi_n}^\mathcal{M} \cap \mathcal{S}^2$, for each standard $n \in \mathbb{N}$, which in turn follows from that $\mathcal{S} \prec \mathcal{M}$. 

By the backward direction of Theorem \ref{rec sat char}, it now follows that $\mathcal{S}$ is recursively saturated. Since recursively saturated models are $\omega$-non-standard, we have by Lemma \ref{omega-topless existence} that $\mathrm{WFP}(\mathcal{M})$ is $\omega$-topless in $\mathcal{M}$ and in $\mathcal{S}$. So by Theorem \ref{rec sat iso thm}, $\mathcal{S} \cong_{\mathrm{WFP}(\mathcal{M})} \mathcal{M}$.
\end{proof}

\begin{thm}[Kaye-Kossak-Kotlarski-style]\label{characterize strongly topless substructure of rec sat}
	Let $\mathcal{M} \models \mathrm{ZFC} + V = \mathrm{HOD}$ be countable and recursively saturated, and let $\mathcal{S}$ be a proper rank-initial substructure of $\mathcal{M}$. The following are equivalent:
	\begin{enumerate}[{\normalfont (a)}]
		\item\label{characterize strongly topless substructure of rec sat emb} $\mathcal{S} = \mathrm{Fix}(i)$, for some automorphism $i : \mathcal{M} \rightarrow \mathcal{M}$. 
		\item\label{characterize strongly topless substructure of rec sat substr} $\mathcal{S}$ is a strongly topless elementary substructure of $\mathcal{M}$.
		\item[{\normalfont (\ref{characterize strongly topless substructure of rec sat substr}')}] $\mathcal{S}$ is a strongly topless elementary substructure of $\mathcal{M}$ isomorphic to $\mathcal{M}$.
	\end{enumerate}
\end{thm}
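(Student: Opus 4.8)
The plan is to run the three statements through the cycle (\ref{characterize strongly topless substructure of rec sat substr}') $\Rightarrow$ (\ref{characterize strongly topless substructure of rec sat substr}) $\Rightarrow$ (\ref{characterize strongly topless substructure of rec sat emb}) $\Rightarrow$ (\ref{characterize strongly topless substructure of rec sat substr}), treating (\ref{characterize strongly topless substructure of rec sat substr}) $\Leftrightarrow$ (\ref{characterize strongly topless substructure of rec sat substr}') cheaply and spending the real effort on (\ref{characterize strongly topless substructure of rec sat substr}) $\Rightarrow$ (\ref{characterize strongly topless substructure of rec sat emb}). The implication (\ref{characterize strongly topless substructure of rec sat substr}') $\Rightarrow$ (\ref{characterize strongly topless substructure of rec sat substr}) is trivial (drop the clause ``isomorphic to $\mathcal{M}$''), and the converse is precisely Lemma \ref{rec sat elementary strongly topless}: a strongly topless rank-initial elementary substructure $\mathcal{S}$ of the countable recursively saturated $\mathcal{M}$ automatically satisfies $\mathcal{S} \cong \mathcal{M}$.

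For (\ref{characterize strongly topless substructure of rec sat emb}) $\Rightarrow$ (\ref{characterize strongly topless substructure of rec sat substr}), suppose $\mathcal{S} = \mathrm{Fix}(i)$ for an automorphism $i$ of $\mathcal{M}$. Since $\mathcal{M} \models V = \mathrm{HOD}$ it carries a definable global well-ordering and hence definable Skolem functions, so Lemma \ref{rank-initial Fix is elementary} yields $\mathcal{S} \preceq \mathcal{M}$. For strong toplessness I would feed this into the Bahrami--Enayat characterization: $\mathcal{S}$ is a proper rank-initial substructure, so there is $\beta \in \mathrm{Ord}^\mathcal{M} \setminus \mathcal{S}$, and $\mathcal{S}' := \mathcal{M}_{\beta+1}$ is a proper rank-initial substructure with $\mathcal{S} \subsetneq \mathcal{S}' <^\rnk \mathcal{M}$ (every element of the rank-initial $\mathcal{S}$ has rank below $\beta$, while $\beta \in \mathcal{S}' \setminus \mathcal{S}$). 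Then $\mathcal{S} = \mathrm{Fix}(i) = \mathrm{Fix}(i) \cap \mathcal{S}'$, so condition (\ref{characterize strongly topless substructure emb}) of Theorem \ref{characterize strongly topless substructure} holds with the automorphism $i$ itself as the witnessing self-embedding, and the equivalent condition (\ref{characterize strongly topless substructure substr}) gives that $\mathcal{S}$ is strongly topless. Combined with $\mathcal{S} \preceq \mathcal{M}$ this is exactly (\ref{characterize strongly topless substructure of rec sat substr}).

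For the substantial direction (\ref{characterize strongly topless substructure of rec sat substr}) $\Rightarrow$ (\ref{characterize strongly topless substructure of rec sat emb}), the plan is to realize the automorphism as the union of a suitably generic filter on the poset $\mathbb{P} = \llbracket \mathcal{M} \preceq_\mathcal{S} \mathcal{M} \rrbracket^{<\omega}$ of finite partial elementary self-maps fixing $\mathcal{S}$ pointwise. From (\ref{characterize strongly topless substructure of rec sat substr}) and Lemma \ref{rec sat elementary strongly topless} we have $\mathcal{S} \cong \mathcal{M}$ together with a full satisfaction relation on $\mathcal{S}$ coded in $\mathrm{SSy}_\mathcal{S}(\mathcal{M})$; by Lemma \ref{Kirby Paris forward} (using $\omega^\mathcal{M} \in \mathcal{S}$, which holds as $\mathcal{S} \preceq \mathcal{M}$) we get $\mathrm{SSy}_\mathcal{S}(\mathcal{M}) \models \mathrm{GBC} + \textnormal{``}\mathrm{Ord}\textnormal{ is weakly compact''}$; and $\mathcal{S}$ is $\omega$-topless since strong toplessness implies $\omega$-toplessness. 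Lemma \ref{rec sat iso lemma}, applied with $\mathcal{N} = \mathcal{M}$ and symmetrically with the two copies interchanged, makes the totality and surjectivity sets $\mathcal{C}'_m = \{ f \mid m \in \dom(f)\}$ and $\mathcal{D}'_n = \{ f \mid n \in \mathrm{image}(f)\}$ dense, so that a generic filter already delivers, as in Theorem \ref{rec sat iso thm}, an automorphism fixing $\mathcal{S}$ pointwise. To force the fixed-point set to be exactly $\mathcal{S}$, I would strengthen these to density of
\[
\mathcal{C}''_a = \{ f \in \mathbb{P} \mid a \in \dom(f) \wedge (a \notin \mathcal{S} \rightarrow f(a) \neq a) \}
\]
and of the analogous image-side family; then the resulting $i = \bigcup \mathcal{I}$ moves every element outside $\mathcal{S}$, giving $\mathrm{Fix}(i) = \mathcal{S}$. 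Only countably many dense sets are involved, so Lemma \ref{generic filter existence} supplies the filter.

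The main obstacle is the density of $\mathcal{C}''_a$ for $a \notin \mathcal{S}$: given $g \in \mathbb{P}$ with $a \notin \dom(g)$, one must extend $g$ by sending $a$ to some $b \neq a$, i.e. realize the $g$-transported complete type of $a$ over $\mathrm{image}(g) \cup \mathcal{S}$ by an element other than $a$. This is where strong toplessness does the real work. I expect to carry out the counting argument of Lemma \ref{Friedman lemma}(\ref{Friedman lemma downwards}), but now for full types read off the coded satisfaction relation: at the rank $\rnk^\mathcal{M}(a) \notin \mathcal{S}$ the strongly topless hypothesis bounds inside $\mathcal{M}$ the codomain of the type-assigning function, forcing strictly more candidates than types over the finite parameters together with $\mathcal{S}$, so the transported type is non-principal and is realized $\neq a$; the definable Skolem functions from $V = \mathrm{HOD}$ control the residual unique-realization case, and recursive saturation guarantees that the recursive transported type is actually realized. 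This fixed-point-controlling density is the crux of the argument, being the surjective, fully elementary analogue of the contractivity clause established in Theorem \ref{characterize strongly topless substructure}.
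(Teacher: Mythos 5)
Your treatment of (\ref{characterize strongly topless substructure of rec sat emb}) $\Rightarrow$ (\ref{characterize strongly topless substructure of rec sat substr}) and of (\ref{characterize strongly topless substructure of rec sat substr}) $\Leftrightarrow$ (\ref{characterize strongly topless substructure of rec sat substr}') matches the paper (Lemma \ref{rank-initial Fix is elementary}, the forward direction of Theorem \ref{characterize strongly topless substructure}, and Lemma \ref{rec sat elementary strongly topless}), but the core direction (\ref{characterize strongly topless substructure of rec sat substr}) $\Rightarrow$ (\ref{characterize strongly topless substructure of rec sat emb}) has a genuine gap: the sets $\mathcal{C}''_a$ are simply not dense in $\mathbb{P} = \llbracket \mathcal{M} \preceq_\mathcal{S} \mathcal{M} \rrbracket^{<\omega}$. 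For any $a \notin \mathcal{S}$, the singleton condition $g = \{\langle a, a \rangle\}$ lies in $\mathbb{P}$ (it preserves all formulas with parameters in $\mathcal{S}$), and every extension of $g$ fixes $a$; no genericity argument can get past such a condition. Nor is this repaired by shrinking the poset to conditions that move every element of their domain outside $\mathcal{S}$: there exist $m_0 \neq n_0$ of equal rank $\alpha \notin \mathcal{S}$ realizing the same type over $\mathcal{S}$ (an internal pigeonhole in the style of Lemma \ref{Friedman lemma}(\ref{Friedman lemma downwards}), run with the full satisfaction class supplied by Theorem \ref{rec sat char}, produces such pairs), and then $g = \{\langle m_0, n_0 \rangle\}$ is a condition of the restricted poset for which every elementary extension $f$ with $\alpha \in \dom(f\hspace{2pt})$ must satisfy $f\hspace{2pt}(\alpha) = \rnk(f\hspace{2pt}(m_0)) = \rnk(n_0) = \alpha$. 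So $\alpha$ is \emph{implicitly} committed to being fixed even though $g$ moves everything in its domain, and totality density fails for the repaired poset. This is precisely where your claim that ``the transported type is non-principal and is realized $\neq a$'' breaks down: elements definable from $\dom(g) \cup \mathcal{S}$ whose defining term takes the same value on $\dom(g)$ and on its $g$-image are unavoidably fixed, and neither strong toplessness, nor $V = \mathrm{HOD}$, nor recursive saturation can dissolve such a commitment once it is (even implicitly) made.

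The paper avoids this obstruction entirely by never attempting to control fixed points inside a back-and-forth over $\mathcal{M}$. Instead, from Lemma \ref{Kirby Paris forward} one gets $\mathrm{SSy}_\mathcal{S}(\mathcal{M}) \models \mathrm{GBC} + \text{``$\mathrm{Ord}$ is weakly compact''}$; Theorem \ref{Gaifman thm} with $\mathbb{L} = \mathbb{Q}$ then yields $\mathcal{S} \prec \mathcal{N}$ carrying an automorphism $j$ with $\mathrm{Fix}(j) = \mathcal{S}$ \emph{exactly} --- here the fixed-point control comes functorially from the equalizer clause (\ref{equal}) applied to a fixed-point-free automorphism of $\mathbb{Q}$, not from any forcing. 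Lemma \ref{Kirby Paris backward} makes $\mathcal{S}$ strongly topless in $\mathcal{N}$, Lemma \ref{rec sat elementary strongly topless} and Theorem \ref{rec sat char} make $\mathcal{N}$ recursively saturated, and only now is the back-and-forth of Theorem \ref{rec sat iso thm} invoked --- but merely to produce an isomorphism $k \in \llbracket \mathcal{M} \cong_\mathcal{S} \mathcal{N} \rrbracket$, a task with no constraints outside $\mathcal{S}$ and hence no dead-end conditions. The desired automorphism is the conjugate $i = k^{-1} \circ j \circ k$. If you wish to rescue a direct forcing proof, the avoidance of implicit fixed-point commitments must be built into the definition of the conditions themselves, and proving density for that poset is a substantially harder problem than your sketch addresses; the conjugation trick is exactly what sidesteps it.
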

\begin{proof}
(\ref{characterize strongly topless substructure of rec sat emb}) $\Rightarrow$ (\ref{characterize strongly topless substructure of rec sat substr}'): Since $\mathcal{M} \models V = \mathrm{HOD}$, it has definable Skolem functions, whence Lemma \ref{rank-initial Fix is elementary} may be applied to the effect that $\mathcal{S} \prec \mathcal{M}$. Strong toplessness of $\mathcal{S}$ follows from the forward direction of Theorem \ref{characterize strongly topless substructure}. By Lemma \ref{rec sat elementary strongly topless}, we now have that $\mathcal{S} \cong \mathcal{M}$.

(\ref{characterize strongly topless substructure of rec sat substr}) $\Rightarrow$ (\ref{characterize strongly topless substructure of rec sat emb}): Let $(\mathcal{S}, \mathcal{A}) = \mathrm{SSy}_\mathcal{S}(\mathcal{M})$. Since $\mathcal{S} \prec \mathcal{M}$, we have $\omega^\mathcal{M} \in \mathcal{S}$. Now, by Lemma \ref{Kirby Paris forward}, $(\mathcal{S}, \mathcal{A}) \models \mathrm{GBC} + \text{``$\mathrm{Ord}$ is weakly compact''}$. Thus, we may apply Theorem \ref{Gaifman thm} (say with $\mathbb{L} = \mathbb{Q}$) to obtain a countable model $\mathcal{S} \prec \mathcal{N}$ with an automorphism $j : \mathcal{N} \rightarrow \mathcal{N}$ such that $\mathrm{Fix}(j) = \mathcal{S}$. By Lemma \ref{Kirby Paris backward}, $\mathcal{S}$ is strongly topless in $\mathcal{N}$.

Moreover, we have by Lemma \ref{rec sat elementary strongly topless} that $\mathcal{S}$ is recursively saturated with a full satisfaction relation $\mathrm{Sat}^\mathcal{S}$ coded in $\mathcal{A}$. By part (\ref{elem}) of Theorem \ref{Gaifman thm}, $\mathrm{Sat}^\mathcal{S}$ corresponds to a full satisfaction class $\mathrm{Sat}^\mathcal{N}$ on $\mathcal{N}$. So by Theorem \ref{rec sat char}, $\mathcal{N}$ is recursively saturated. Since $\mathcal{S}$ is strongly topless in both $\mathcal{M}$ and $\mathcal{N}$, it now follows from Theorem \ref{rec sat iso thm} that there is an isomorphism $k \in \llbracket \mathcal{M} \cong_\mathcal{S} \mathcal{N} \rrbracket$. The desired automorphism of $\mathcal{M}$ is now obtained as $i = k^{-1} \circ j \circ k$.
\end{proof}

%-----------------------------------------------

\chapter{Stratified set theory and categorical semantics}\label{ch prel NF cat}

\section{Stratified set theory and class theory}\label{Stratified set theory and class theory}

Let $\mathcal{L_\mathsf{Set}} = \{\in, S, \langle -, - \rangle\}$ be the language of set theory augmented with a unary predicate symbol $S$ of ``sethood\hspace{1pt}'' and a binary function symbol $\langle - , - \rangle$ of ``ordered pair''. We introduce notation for the ``set-many quantifier'': 
$$\setmany z . \phi \text{ abbreviates } \exists x . \big( S(x) \wedge \forall z . (z \in x \leftrightarrow \phi(z)) \big),$$
where $x$ is chosen fresh, i.e. not free in $\phi$.

\begin{dfn}\label{DefStrat}
Let $\phi$ be an $\mathcal{L_\mathsf{Set}}$-formula. $\phi$ is {\em stratified} if there is a function $s : \mathrm{term}(\phi) \rightarrow \mathbb{N}$, where $\mathrm{term}(\phi)$ is the set of terms occurring in $\phi$, such that for any $u, v, w \in \mathrm{term(\phi)}$ and any atomic subformula $\theta$ of $\phi$,
\begin{enumerate}[(i)]
\item if $u \equiv \langle v, w \rangle$, then $s(u) = s(v) = s(w)$,
\item if $\theta \equiv (u = v)$, then $s(u) = s(v)$,
\item if $\theta \equiv (u \in v)$, then $s(u) + 1 = s(v)$,
\end{enumerate}
where $\equiv$ denotes literal equality (of terms or formulae). Such an $s$ is called a {\em stratification} of $\phi$. $s(u)$ is called the {\em type} of $u$. Clearly, if $\phi$ is stratified, then there is a {\em minimal} stratification in the sense that $s(v) = 0$ for some variable $v$ occurring in $\phi$. Also note that the formula $\langle v, w \rangle = \{\{v\}, \{v, w\}\}$, stipulating that the ordered pair is the Kuratowski ordered pair, is not stratified. Therefore, it is condition (i), read as ``type-level ordered pair'', that gives power to axiom P below. 
\end{dfn}

\begin{ntn}
In the axiomatizations below, $\mathrm{NFU}_\mathsf{Set}$ is the theory thus axiomatized in classical logic, while $\mathrm{INFU}_\mathsf{Set}$ is the theory thus axiomatized in intuitionistic logic. For brevity we simply write $\mathrm{(I)NFU}_\mathsf{Set}$, and similarly for $\mathrm{(I)NF}_\mathsf{Set}$, to talk about the intuitionistic and classical theories in parallel. More generally, any statement that $\mathrm{(I)XX(U)}_\mathrm{K}$ relates to $\mathrm{(I)YY(U)}_\mathrm{L}$ in some way, means that each of the four theories $\mathrm{IXXU}_\mathrm{K}$, $\mathrm{XXU}_\mathrm{K}$, $\mathrm{IXX}_\mathrm{K}$, $\mathrm{XX}_\mathrm{K}$ relates in that way to $\mathrm{IXXU}_\mathrm{L}$, $\mathrm{XXU}_\mathrm{L}$, $\mathrm{IXX}_\mathrm{L}$, $\mathrm{XX}_\mathrm{L}$, respectively. Since we will be proving equiconsistency results between theories in different languages, the language is emphasized as a subscript to the name of the theory. This is why we write $\mathrm{(I)NF(U)}_\mathsf{Set}$ for the set theoretic theory $\mathrm{(I)NF(U)}$. 
\end{ntn}

\begin{ax}[$\mathrm{(I)NFU}_\mathsf{Set}$] \label{SCAx}
\[
\begin{array}{rl}
\mathrm{Ext}_S & (S(x) \wedge S(y) \wedge \forall z . z \in x \leftrightarrow z \in y) \rightarrow x = y \\
\mathrm{SC}_S & \text{For all stratified $\phi$: } \setmany z . \phi(z) \\
\mathrm{P} & \langle x, y \rangle = \langle x\hspace{1pt}', y\hspace{1pt}' \rangle \rightarrow ( x = x\hspace{1pt}' \wedge y = y\hspace{1pt}' ) \\
\textnormal{{ Sethood}} & z \in x \rightarrow S(x) \\
\end{array}
\]
\end{ax}
$\mathrm{Ext}_S$ stands for Extensionality (for Sets), $\mathrm{SC}_S$ stands for Stratified Comprehension (yielding Sets), and $\mathrm{P}$ stands for Ordered Pair. In order to keep the treatment uniform, we axiomatize $\mathrm{(I)NF}_\mathsf{Set}$ as $\mathrm{(I)NFU}_\mathsf{Set}$ + $\forall x . S(x)$. Obviously, $\mathrm{(I)NF}_\mathsf{Set}$ can be axiomatized in the language without the predicate $S$, simply as $\mathrm{Ext} + \mathrm{SC} + \mathrm{P}$ (where $\mathrm{Ext}$ and $\mathrm{SC}$ are like $\mathrm{Ext}_S$ and $\mathrm{SC}_S$, respectively, but without the $S$-conjuncts). Less obviously, $\mathrm{NF}$ proves the negation of Choice \cite{Spe53}, which entails the axiom of Infinity, which in turn enables implementation of type-level ordered pairs. So $\mathrm{NF}$ can be axiomatized as $\mathrm{Ext} + \mathrm{SC}$ in the plain language $\{\in\}$ of set theory.

Note that $\mathrm{SC}_S$ implies the existence of a universal set, denoted $V$. In the context of the sethood predicate, it is natural to restrict the definition of subset to sets. So define
$$x \subseteq y \Leftrightarrow_{\mathrm{df}} S(x) \wedge S(y) \wedge \forall z . (z \in x \rightarrow z \in y).$$
The power set, $\pow y$, of $y$ is defined as $\{z \mid z \subseteq y\}$, and exists by $\mathrm{SC}_S$. Therefore, only sets are elements of power sets. An important special case of this is that $S(x) \leftrightarrow x \in \pow V$. So the axiom $\forall x . S(x)$, yielding $\mathrm{(I)NF}$, may alternatively be written $V = \pow V$. In the meta-theory $\subseteq$ and $\pow$ are defined in the standard way. When proving the existence of functions (coded as sets of ordered pairs) in $\mathrm{(I)NF(U)}$, the type-level requirement of ordered pairs means that the defining formula (in addition to being stratified) needs to have the argument- and value-variable at the same type.

$\mathrm{(I)ML(U)}_\mathsf{Class}$ is the impredicative theory of classes corresponding to $\mathrm{(I)NF(U)}_\mathsf{Set}$. $\mathrm{ML}$ was introduced by Quine in his book \cite{Qui40}. Apparently $\mathrm{ML}$ stands for ``Mathematical Logic'' (the title of that book). There is both a predicative and an impredicative version of $\mathrm{ML}$, and both are equiconsistent with $\mathrm{NF}_\mathsf{Set}$, as proved in \cite{Wan50}. One obtains a model of $\mathrm{ML}$ simply by taking the power set of a model of $\mathrm{NF}$, along with a natural interpretation that suggests itself, so the proof requires enough strength in the meta-theory to handle sets of the size of the continuum. (The equiconsistency between predicative $\mathrm{ML}$ and $\mathrm{NF}$ can be proved in a weaker meta-theory that is only strong enough to handle countable sets.) Without difficulty, the proof extends to equiconsistency between each of the theories $\mathrm{(I)ML(U)}_\mathsf{Class}$ and $\mathrm{(I)NF(U)}_\mathsf{Set}$, respectively. For the purpose of completeness, a proof of $\mathrm{Con}(\mathrm{(I)NF(U)}_\mathsf{Set}) \Rightarrow  \mathrm{Con}(\mathrm{(I)ML(U)}_\mathsf{Class})$ is provided below.

The theory $\mathrm{(I)ML(U)}_\mathsf{Cat}$, which the author introduces in this research as an algebraic set theory of $\mathrm{(I)NF(U)}$, probably corresponds better to predicative $\mathrm{(I)ML(U)}$. The difficult and interesting direction of the proof of equiconsistency between $\mathrm{(I)ML(U)}_\mathsf{Cat}$ and $\mathrm{(I)NF(U)}_\mathsf{Set}$ is the interpretation of $\mathrm{(I)NF(U)}_\mathsf{Set}$ in $\mathrm{(I)ML(U)}_\mathsf{Cat}$.

We axiomatize $\mathrm{(I)ML(U)}_\mathsf{Class}$ in a one-sorted language $\mathcal{L}_\mathsf{Class}$ that augments $\mathcal{L}_\mathsf{Set}$ with a unary predicate $C$ and a unary predicate $\mathrm{Setom}$. We read $C(x)$ as ``$x$ is a {\em class}'' and read $S(x)$ as ``$x$ is a {\em set}''.  Moreover, ``Setom'' is a portmanteau for ``sets and atoms''. $\mathrm{Setom}(x) \wedge \neg S(x)$ is read as ``$x$ is an {\em atom}''. We treat the pairing function as a partial function; formally we take it to be a ternary relation symbol, but we write it in functional notation. For convenience, we introduce the abbreviations $\exists \vec{x} \in \mathrm{Setom} . \phi$ and $\forall \vec{x} \in \mathrm{Setom} . \phi$ for $\exists \vec{x} . ((\mathrm{Setom}(x_1) \wedge \dots \wedge \mathrm{Setom}(x_n)) \wedge \phi)$ and $\forall \vec{x} . ((\mathrm{Setom}(x_1) \wedge \dots \wedge \mathrm{Setom}(x_n)) \rightarrow \phi)$, respectively, where $\vec{x} = (x_1, \dots, x_n)$ for some $n \in \mathbb{N}$. We say that such quantifiers are {\em bounded} to $\mathrm{Setom}$.

\begin{ax}[$\mathrm{(I)MLU}_\mathsf{Class}$] \label{AxIMLU}
\[
\begin{array}{rl}
\textnormal{C-hood} & z \in x \rightarrow C(x) \\
\textnormal{Sm-hood} & z \in x \rightarrow \mathrm{Setom}(z) \\
\mathrm{Ext}_C & (C(x) \wedge C(y) \wedge \forall z . (z \in x \leftrightarrow z \in y)) \rightarrow x = y \\
\mathrm{CC}_C & \text{For all $\phi$: }  \exists x . \big( C(x) \wedge \forall z \in \mathrm{Setom} . (z \in x \leftrightarrow \phi(z))\big) \\
\mathrm{SC}_S & \text{For all stratified $\phi$ with only $z, \vec{y}$ free: } \\
& \forall \vec{y} \in \mathrm{Setom} . \exists x \in \mathrm{Setom} . \forall z \in \mathrm{Setom} . (z \in x \leftrightarrow \phi(z, \vec{y})) \\
\textnormal{P} & \forall x, y, x\hspace{1pt}', y\hspace{1pt}' \in \mathrm{Setom} . \\ 
& ( \langle x, y \rangle = \langle x\hspace{1pt}', y\hspace{1pt}' \rangle \leftrightarrow ( x = x\hspace{1pt}' \wedge y = y\hspace{1pt}' ) ) \\
S = \mathrm{Sm} \cap C & S(x) \leftrightarrow (\mathrm{Setom}(x) \wedge C(x))
\end{array}
\]
\end{ax}
C-hood stands for Classhood, S-hood stands for Setomhood, $\mathrm{Ext}_C$ stands for Extensionality (for classes), $\mathrm{CC}_C$ stands for Class Comprehension (yielding classes), and $S = \mathrm{Sm} \cap C$ stands for Set equals Setom Class. In $\mathrm{CC}_C$ and $\mathrm{SC}_S$, we assume that $x$ is fresh, i.e. not free in $\phi$. We obtain $\mathrm{(I)ML}_\mathsf{Class}$ by adding the axiom $\forall x \in \mathrm{Setom} . S(x)$. Predicative $\mathrm{(I)ML(U)}_\mathsf{Class}$ is obtained by requiring in $\mathrm{CC}_C$ that all quantifiers in $\phi$ are bounded to $\mathrm{Setom}$.

The leftwards arrow has been added to the Ordered Pair axiom, because the partial function of ordered pair is formally treated as a ternary relation symbol. One might find it natural to add the axiom $\neg C(x) \rightarrow \mathrm{Setom}(x)$, but since we will not need it, the author prefers to keep the axiomatization more general and less complicated.

The extension of $\mathrm{Setom}$ may be thought of as the collection of sets and atoms, but although $\forall x \in\mathrm{Setom} . (S(x) \vee \neg S(x))$ follows from the law of excluded middle in $\mathrm{MLU}_\mathsf{Class}$, this proof does not go through intuitionistically; the author does not expect it to be provable in $\mathrm{IMLU}_\mathsf{Class}$. Note that it follows from the axioms that Sethood (restricted to Setoms) holds, i.e. that $\forall x \in \mathrm{Setom} . ( z \in x \rightarrow S(x) )$. 

The predicate $S$ is clearly redundant in the sense that it is definable, but it is convenient to have it in the language. This more detailed presentation is chosen because it makes it easy to see that $\mathrm{(I)ML(U)}_\mathsf{Class}$ interprets $\mathrm{(I)NF(U)}_\mathsf{Set}$: For any axiom of $\mathrm{(I)NF(U)}_\mathsf{Set}$, simply interpret it as the formula obtained by replacing each subformula of the form $\boxminus x . \phi$ by $\boxminus x \in \mathrm{Setom} . \phi$, for each $\boxminus \in \{\exists, \forall\}$. One may also obtain a model of $\mathrm{(I)NF(U)}_\mathsf{Set}$ from a model of $\mathrm{(I)ML(U)}_\mathsf{Class}$, by restricting its domain to the extension of $\mathrm{Setom}$ and then taking the reduct to $\mathcal{L}_\mathsf{Set}$. 

We now proceed towards showing that the consistency of $\mathrm{(I)NF(U)}_\mathsf{Set}$ implies the consistency of $\mathrm{(I)ML(U)}_\mathsf{Class}$. The idea of the proof is straightforward: we start with a model of $\mathrm{(I)NF(U)}_\mathsf{Set}$ and add all the possible subsets of this structure as new elements to model the classes, with the obvious extension of the $\in$-relation. However, the proof involves some detail of presentation, especially if we do it directly for intuitionistic Kripke models. So here we start off with the classical case, showing how to construct a model of $\mathrm{ML(U)}_\mathsf{Class}$ from a model of $\mathrm{NF(U)}_\mathsf{Set}$. After the categorical semantics has been introduced, we will be able to perform the same proof in the categorical semantics of any topos (Theorem \ref{ToposModelOfML}). The proof below is therefore redundant, but it may help the reader unfamiliar with categorical semantics to compare the two.

\begin{prop}\label{ConSetConClass prop}
If there is a model of $\mathrm{NF(U)}_\mathsf{Set}$, then there is a model of $\mathrm{ML(U)}_\mathsf{Class}$.
\end{prop}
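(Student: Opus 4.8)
The plan is to start from a model $\mathcal{M} = (M, \in^\mathcal{M}, S^\mathcal{M}, \langle-,-\rangle^\mathcal{M})$ of $\mathrm{NF(U)}_\mathsf{Set}$ and manufacture a model $\mathcal{N}$ of $\mathrm{ML(U)}_\mathsf{Class}$ by adjoining, as new classes, exactly those subsets of $M$ that are not already realized as set-extensions. Concretely, writing $m_\mathcal{M} = \{x \in M \mid x \in^\mathcal{M} m\}$ for the extension of $m$, let $X = \{A \subseteq M \mid A \neq m_\mathcal{M} \text{ for every } m \in M \text{ with } S^\mathcal{M}(m)\}$ and put $N = M \cup X$ (arranging $M \cap X = \varnothing$). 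I would interpret $\mathrm{Setom}^\mathcal{N} = M$, then $C^\mathcal{N} = \{m \in M \mid S^\mathcal{M}(m)\} \cup X$, and $S^\mathcal{N} = \{m \in M \mid S^\mathcal{M}(m)\}$; membership is inherited on $M$ (so $a \in^\mathcal{N} m \Leftrightarrow a \in M \wedge a \in^\mathcal{M} m$) and is meta-theoretic membership into the new classes ($a \in^\mathcal{N} A \Leftrightarrow a \in A$ for $A \in X$), while the pairing relation is the restriction of $\langle-,-\rangle^\mathcal{M}$ to $M$. By design the $\mathcal{L}_\mathsf{Set}$-reduct of $\mathcal{N}$ restricted to the setoms $M$ is literally $\mathcal{M}$, and $S^\mathcal{N} = \mathrm{Setom}^\mathcal{N} \cap C^\mathcal{N}$ holds since $X$ is disjoint from $M$.

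With the interpretation fixed, most axioms are immediate: C-hood and Sm-hood hold because every $\mathcal{N}$-member lies in $M$ and every object with a member is either an original set or an element of $X$ (atoms, having empty extension in $\mathcal{M}$ by Sethood, have none); axiom $\mathrm{P}$ is inherited from $\mathcal{M}$, where it holds on setoms. The key bookkeeping point is $\mathrm{Ext}_C$: the assignment sending a class to its $\mathcal{N}$-extension maps $\{m \mid S^\mathcal{M}(m)\} \cup X$ bijectively onto $\{m_\mathcal{M} \mid S^\mathcal{M}(m)\} \cup X \subseteq \mathcal{P}(M)$ --- injectivity on original sets is $\mathrm{Ext}_S$ of $\mathcal{M}$, injectivity on $X$ is trivial, and the two families are disjoint precisely because $X$ was defined to exclude all set-extensions. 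Hence distinct classes have distinct extensions.

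The two comprehension schemata carry the content. For $\mathrm{CC}_C$, given any $\mathcal{L}_\mathsf{Class}$-formula $\phi$ with parameters from $N$, the collection $A_\phi = \{z \in M \mid \mathcal{N} \models \phi(z)\}$ is a genuine subset of $M$ in the meta-theory, hence is represented by a class of $\mathcal{N}$ --- by $A_\phi$ itself if $A_\phi \in X$, and otherwise by the unique set $m$ with $m_\mathcal{M} = A_\phi$; either way a class with the prescribed setom-members exists. This is where the construction is genuinely impredicative and where a meta-theory able to form $\mathcal{P}(M)$ is used. For $\mathrm{SC}_S$, I would invoke absoluteness: since the setom-part of $\mathcal{N}$ coincides with $\mathcal{M}$ and the quantifiers occurring in a stratified-comprehension instance are bounded to $\mathrm{Setom}$, the truth value of $\phi(z,\vec{s})$ (for $\vec{s}$ a setom tuple) computed in $\mathcal{N}$ agrees with its value in $\mathcal{M}$; the required witnessing set is then produced by the instance of $\mathrm{SC}_S$ holding in $\mathcal{M}$, and that witness is a set of $\mathcal{N}$. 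Finally, for the non-urelement case one notes that if $\mathcal{M} \models \forall x . S(x)$ then $\mathrm{Setom}^\mathcal{N} = M = S^\mathcal{N}$, so $\forall x \in \mathrm{Setom} . S(x)$ holds and $\mathcal{N} \models \mathrm{ML}_\mathsf{Class}$.

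I expect the main obstacle to be keeping $\mathrm{CC}_C$ (impredicative, for all formulae) and $\mathrm{Ext}_C$ simultaneously true: one must realize every meta-theoretic subset of $M$ as a class without disturbing extensionality, which forces the deliberate identification of would-be duplicate classes with the pre-existing sets they extend. The $\mathrm{SC}_S$ verification is comparatively routine once the absoluteness of setom-bounded stratified formulae between $\mathcal{M}$ and the setom-reduct of $\mathcal{N}$ has been recorded.
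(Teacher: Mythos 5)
Your proposal is correct and is essentially the paper's own construction: both proofs realize the classes of the new model as (a copy of) the full power set $\mathcal{P}(M)$ of the setom domain with each set identified with its extension, and both verify $\mathrm{CC}_C$ by meta-theoretic comprehension, $\mathrm{Ext}_C$ from $\mathrm{Ext}_S$ together with injectivity of the identification, and $\mathrm{SC}_S$ plus Ordered Pair by absoluteness of Setom-bounded formulae to the setom part, which carries the original model. The only difference is bookkeeping: the paper glues a disjoint copy of $\mathcal{P}(N)$ onto $N$ along the extension map (phrased so the argument stays intuitionistically valid, which the paper later reuses inside an arbitrary topos), whereas your set $X$ of non-realized subsets and the resulting case split in the $\mathrm{CC}_C$ verification use classical reasoning in the meta-theory --- immaterial for the classical statement being proved here.
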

\begin{proof}
We concentrate on the case $\mathrm{Con(NFU)}_\mathsf{Set} \Rightarrow \mathrm{Con(MLU)}_\mathsf{Class}$. Afterwards it will be easy to see the modifications required for the other case. We take care to do this proof in intuitionistic logic, as it will be relevant later on.

Let $\mathcal{N} = ( N, S^\mathcal{N}, \in^\mathcal{N}, P^\mathcal{N} )$ be a model of $\mathrm{NFU}$. Define a model 
$$\mathcal{M} = ( M , C^\mathcal{N} , \mathrm{Setom}^\mathcal{M} , S^\mathcal{N} , \in^\mathcal{M} , P^\mathcal{N}   )$$ 
as follows. Since $\mathcal{N} \models \mathrm{Ext}_S$, it is straightforward to construct a set $M$ with an injection $p : \mathcal{P}(N) \rightarrow M$ and an injection $t : N \rightarrow M$, such that 
$$\forall x \in N . \forall y \in \mathcal{P}(N) . \big( t(x) = p(y) \leftrightarrow (x \in S^\mathcal{N} \wedge y = \{u \in N \mid u \in^\mathcal{N} x \} ) \big).$$ 
Take $M$ as the domain of $\mathcal{M}$.
$$
\begin{array}{rcl}
C^\mathcal{M}  & =_\mathrm{df} & \{p(y) \mid y \in \mathcal{P}(N ) \} \\
\mathrm{Setom}^\mathcal{M}  & =_\mathrm{df} & \{t (x) \mid x \in N \}  \\
S^\mathcal{M}  & =_\mathrm{df} & \{t (x) \mid x \in S^\mathcal{N} \} \\
u \in^\mathcal{M} v & \Leftrightarrow_\mathrm{df} & \exists x \in N . \exists y \in \mathcal{P}(N) . (u = t(x) \wedge v = p(y) \wedge x \in y) \\ 
P^\mathcal{M}  & =_\mathrm{df} & \{\langle t (x), t (y), t (z) \rangle \mid P^\mathcal{N} (x, y) = z\} \\
\end{array}
$$

We now proceed to verify that $\mathcal{M} \models \mathrm{MLU}_\mathsf{Class}$.

Classhood follows from the construction of $C^\mathcal{M}$ and $\in^\mathcal{M} $.

Setomhood follows from the construction of $\mathrm{Setom}^\mathcal{M} $ and $\in^\mathcal{M} $.

Note that $t $ witnesses 
$$\langle N , \in^\mathcal{N} , S^\mathcal{N} , P^\mathcal{N}  \rangle \cong \langle \mathrm{Setom}^\mathcal{M} , \in^\mathcal{M}  \restriction_{\mathrm{Setom}^\mathcal{M} }, S^\mathcal{M} , P^\mathcal{M}  \rangle.$$ 
For by construction, it is easily seen that it is a bijection and that the isomorphism conditions for $S$ and $P$ are satisfied. Moreover, for any $x, x\hspace{1pt}' \in N $, we have 
\[
\begin{array}{rl}
& t (x) \in^\mathcal{M}  t (x\hspace{1pt}') \\ 
\Leftrightarrow & \exists y \in \mathcal{P}(N) . (t(x\hspace{1pt}') = p(y) \wedge x \in y) \\
\Leftrightarrow & \exists y \in \mathcal{P}(N) . ( x\hspace{1pt}' \in S^\mathcal{N} \wedge y = \{u \in N \mid u \in^\mathcal{N} x\hspace{1pt}' \} \wedge x \in y) \\
\Leftrightarrow & x \in^\mathcal{N}  x\hspace{1pt}'.
\end{array}
\]

Since the axioms $\mathrm{SC}_S$ and Ordered Pair in effect have all quantifiers restricted to the extension of $\mathrm{Setom}$, and $\mathcal{N}$ satisfies these axioms, the isomorphism $t$ yields that $\mathcal{M}$ satisfies these axioms as well.

$\mathrm{Ext}_C$ follows from that $t$ is injective and $\forall x \in N . \forall y \in \mathcal{P}(N) . (t(x) \in^\mathcal{M} p(y) \leftrightarrow x \in y)$.

Set equals Setom Class follows from that $v \in C^\mathcal{M} \cap \mathrm{Setom}^\mathcal{M} \Leftrightarrow \exists x \in N . \exists y \in \mathcal{P}(N) . (t(x) = p(y) = v) \Leftrightarrow \exists x \in N . \exists y \in \mathcal{P}(N) . (t(x) = p(y) = v \wedge x \in S^\mathcal{N}) \Leftrightarrow v \in S^\mathcal{M}$.

It only remains to verify that $\mathcal M$ satisfies $\mathrm{CC}_C$. Let $\phi(z)$ be an $\mathcal{L}_\mathsf{Class}$-formula. Let 
$$A = \{u \in N  \mid \mathcal{M}  \models \phi(t (u))\},$$
and note that $\mathcal{M} \models C(p(A))$.

The following implications complete the proof.
$$
\begin{array}{rl}
& A = \{x \in N  \mid \mathcal{M}  \models \phi(t (x))\} \\
\Rightarrow & \forall x \in N  . \big( x \in A \leftrightarrow \mathcal{M}  \models \phi(t (x)) \big) \\
\Rightarrow & \forall u \in \mathrm{Setom}^\mathcal{M}  . \big( u \in^\mathcal{M} p( A) \leftrightarrow \mathcal{M}  \models \phi(u) \big) \\
\Rightarrow & \forall u \in \mathrm{Setom}^\mathcal{M}  . \big( \mathcal{M}  \models (u \in p(A) \leftrightarrow \phi(u)) \big) \\
\Rightarrow & \mathcal{M}  \models \exists X . \big( C(X) \wedge \forall u \in \mathrm{Setom}. (u \in X \leftrightarrow (\phi(u))) \big) \\
\end{array}
$$

To verify the case $\mathrm{Con}(\mathrm{NF}) \Rightarrow \mathrm{Con}(\mathrm{ML})$, note that if $S^\mathcal{N} = N$, then $\mathrm{Setom}^\mathcal{M} = S^\mathcal{M}$.
\end{proof}

For the predicative version of $\mathrm{ML(U)}$, it suffices to consider the set of definable subsets of a model of $\mathrm{NF(U)}$. Thus, a slightly modified version of the above proof can be carried out for the predicative case in an appropriate set theory of countable sets.

\section{Categorical semantics} \label{CatAxioms}

Categories may be viewed as structures in the basic language of category theory. Traditionally, a theory in the first order language of category theory (or an expansion of that language) is formulated as a definition of a class of models. Such definitions, that can be turned into first order axiomatizations, are called elementary. The definitions of classes of categories made in this section are all easily seen to be elementary. 

Now follows a presentation of the categorical semantics of first order logic in Heyting (intuitionistic logic) and Boolean (classical logic) categories. A full account can be found e.g. in \cite[pp. 807-859]{Joh02}. 

It is assumed that the reader is familiar with basic category theoretic notions: Most importantly, the notions of {\em diagram}, {\em cone}, {\em limit} and their duals (in particular, the special cases of {\em terminal object}, {\em initial object}, {\em product} and {\em pullback}), as well as the notions of {\em functor}, {\em natural transformation} and {\em adjoint} functors.

Since the definition of Heyting categories below uses the notion of adjoint functors between partial orders, let us explicitly define this particular case of adjoint functors: Let $\mathbb{A}$ and $\mathbb{B}$ be partial orders with orderings $\leq_\mathbb{A}$ and $\leq_\mathbb{B}$, respectively. They may be considered as categories with the elements of the partial order as objects, and with a single morphism $x \rightarrow y$ if $x \leq y$, and no morphism from $x$ to $y$ otherwise, for all elements $x, y$ in the partial order. The composition of morphisms is the only one possible. Note that a functor from $\mathbb{A}$ to $\mathbb{B}$, as categories, is essentially the same as an order-preserving function from $\mathbb{A}$ to $\mathbb{B}$, as partial orders. Let $\mathbf{F} : \mathbb{A} \leftarrow \mathbb{B}$ and $\mathbf{G} : \mathbb{A} \rightarrow \mathbb{B}$ be functors. $\mathbf{F}$ is {\em left adjoint} to $\mathbf{G}$, and equivalently $\mathbf{G}$ is {\em right adjoint} to $\mathbf{F}$, written $\mathbf{F} \dashv \mathbf{G}$, if for all objects $X$ in $\mathbb{A}$ and all objects $Y$ in $\mathbb{B}$,
\[
\mathbf{F}Y \leq_\mathbb{A} X \Leftrightarrow Y \leq_\mathbb{B} \mathbf{G}X.
\]

A morphism $f$ is a {\em cover} if whenever $f = m \circ g$ for a mono $m$, then $m$ is an isomorphism. A morphism $f$ has an {\em image} if it factors as $f = m \circ e$, where $m$ is a mono with the universal property that if $f = m' \circ e'$ is some factorization with $m'$ mono, then there is a unique $k$ such that $m = m' \circ k$.

\begin{dfn} \label{HeytingBooleanCategory}
A category is a {\em Heyting category} if it satisfies the following axioms (HC).
\begin{enumerate}
\item[(F1)] It has finite limits.
\item[(F2)] It has images.
\item[(F3)] The pullback of any cover is a cover.
\item[(F4)] Each $\mathrm{Sub}_X$ is a sup-semilattice.
\item[(F5)] For each $f : X \rightarrow Y$, the {\em inverse image functor} $f\hspace{2pt}^* : \mathrm{Sub}_Y \rightarrow \mathrm{Sub}_X$ (defined below) preserves finite suprema and has left and right adjoints: $\exists_f \dashv f\hspace{2pt}^* \dashv \forall_f $.
\end{enumerate}

We call this theory HC. $\mathrm{Sub}_X$ and $f\hspace{2pt}^*$ are explained below. One can prove from these axioms that that each $\mathrm{Sub}_X$ is a Heyting algebra. A {\em Boolean category} is a Heyting category such that each $\mathrm{Sub}_X$ is a Boolean algebra. We call that theory BC.

A {\em Heyting} ({\em Boolean}) {\em functor}, is a functor between Heyting (Boolean) categories that preserves the structure above. $\mathbf{C}$ is a {\em Heyting} ({\em Boolean}) {\em subcategory} of $\mathbf{D}$ if it is a subcategory and the inclusion functor is Heyting (Boolean).
\end{dfn}

Let $\mathbf{C}$ be any Heyting category. It has a terminal object $\mathbf{1}$ and an initial object $\mathbf{0}$, as well as a product $X_1 \times \dots \times X_n$, for any $n \in \mathbb{N}$ (in the case $n = 0$, $X_1 \times \dots \times X_n$ is defined as the the terminal object $\mathbf{1}$). Given an $n \in \mathbb{N}$ and a product $P$ of $n$ objects, the $i$-th projection morphism, for $i = 1, \dots , n$, is denoted $\pi_P^i$ (the subscript $P$ will sometimes be dropped when it is clear from the context). If $f_i : Y \rightarrow X_i$ are morphisms in $\mathbb{C}$, for each $i \in \{1, \dots, n\}$ with $n \in \mathbb{N}$, then $\langle f_1, \dots, f_n\rangle : Y \rightarrow X_1 \times \dots \times X_n$ denotes the unique morphism such that $\pi^i \circ \langle f_1, \dots, f_n\rangle = f_i$, for each $i \in \{1, \dots, n\}$. An important instance of this is that $\mathbf{C}$ has a diagonal mono $\Delta_X : X \rightarrowtail X \times X$, for each $X$, defined by $\Delta_X = \langle \id_X, \id_X \rangle$. If $g_i : Y_i \rightarrow X_i$ are morphisms in $\mathbf{C}$, for each $i \in \{1, \dots, n\}$ with $n \in \mathbb{N}$, then $g_1 \times \dots \times g_n : Y_1 \times \dots \times Y_n \rightarrow X_1 \times \dots \times X_n$ denotes the morphism $\langle g_1 \circ \pi^1, \dots, g_n \circ \pi^n \rangle$.

A {\em subobject} of an object $X$ is an isomorphism class of monos $m : Y \rightarrowtail X$ in the slice category $\mathbf{C}/X$. (Two monos $m : Y \rightarrowtail X$ and $m' : Y\hspace{1pt}' \rightarrowtail X$ are isomorphic in $\mathbf{C}/X$ iff there is an isomorphism $f : Y \rightarrow Y\hspace{1pt}'$ in $\mathbf{C}$, such that $m = m' \circ f$.) It is often convenient to denote such a subobject by $Y$, although it is an abuse of notation; in fact we shall do so immediately. The subobjects of $X$ are endowed with a partial order: If $m : Y \rightarrowtail X$ and $m' : Y\hspace{1pt}' \rightarrowtail X$ represent two subobjects $Y$ and $Y\hspace{1pt}'$ of $X$, then we write $Y \leq_X Y\hspace{1pt}'$ if there is a mono from $m$ to $m'$ in $\mathbf{C}/X$ (i.e. if there is a mono $f : Y \rightarrow Y\hspace{1pt}'$ in $\mathbf{C}$, such that $m = m' \circ f$). 

The axioms (F1)--(F5) ensure that for any object $X$, the partial order of subobjects of $X$, denoted $\mathrm{Sub}(X)$, with its ordering denoted $\leq_X$ and its equality relation denoted $\cong_X$ (or just $=$ when the context is clear), is a Heyting algebra, with constants $\bot_X$, $\top_X$ and operations $\wedge_X$, $\vee_X$, $\rightarrow_X$ (we often suppress the subscript when it is clear from the context). Given a morphism $f : X \rightarrow Y$ in $\mathbf{C}$, the functor $f\hspace{2pt}^* : \mathrm{Sub}(Y) \rightarrow \mathrm{Sub}(X)$ is defined by sending any subobject of $Y$, represented by $m_B : B \rightarrowtail Y$, say, to the subobject of $X$ represented by the pullback of $m_B$ along $f$. Given a subobject $A$ of $Y$, represented by a mono $m_A$ with co-domain $Y$, we may write $A^* : \mathrm{Sub}(Y) \rightarrow \mathrm{Sub}(X)$ as an alternative notation for the functor $m_A^*$.

A {\em structure} (or {\em model}) $\mathcal M$, in the categorical semantics of $\mathbf{C}$, in a sorted signature $\mathcal{S}$, is an assignment of sorts, relation symbols and function symbols of $\mathcal{S}$ to objects, subobjects and morphisms of $\mathbf{C}$, respectively, as now to be explained. 

Sorts: Any sort in $\mathcal{S}$ is assigned to an object of $\mathbf{C}$.

Relation symbols: Any relation symbol $R$ on a sort $S_1 \times \dotsc \times S_n$, where $n \in \mathbb{N}$, is assigned to a subobject $R^\mathcal{M} \leq S^\mathcal{M}_1 \times \dotsc \times S^\mathcal{M}_n$. In particular, the equality symbol $=_S$ on the sort $S \times S$ is assigned to the subobject of $S^\mathcal{M} \times S^\mathcal{M}$ determined by $\Delta_{S^\mathcal{M}} : S^\mathcal{M} \rightarrowtail S^\mathcal{M} \times S^\mathcal{M}$. In the case $n = 0$, $S^\mathcal{M}_1 \times \dotsc \times S^\mathcal{M}_n$ is the terminal object $\mathbf{1}$. Thus, we can handle $0$-ary relation symbols. By the above, such a symbol is assigned to a subobject of $\mathbf{1}$. For example, the unique morphism $\mathbf{1} \rightarrow \mathbf{1}$ and the unique morphism $\mathbf{0} \rightarrow \mathbf{1}$ represent subobjects of $\mathbf{1}$. In the semantics explained below, the former corresponds to truth and the latter corresponds to falsity.

Function symbols: Any function symbol $f : S_1 \times \dotsc \times S_n \rightarrow T$, where $n \in \mathbb{N}$, is assigned to a morphism $f^\mathcal{M} : S^\mathcal{M}_1 \times \dotsc \times S^\mathcal{M}_n \rightarrow T^\mathcal{M}$. Note that in the case $n = 0$, $f$ is assigned to a morphism $1 \rightarrow T$. In this case, we say that $f$ is a {\em constant symbol}.

Let $m, n \in \mathbb{N}$ and let $k \in \{1, \dots, n\}$. The $\mathcal M$-{\em interpretation} $\llbracket \vec{x} : S_1 \times \dotsc \times S_n \mid t \rrbracket^\mathcal{M}$ (which may be abbreviated $\llbracket \vec{x} \mid t \rrbracket$ when the structure and the sorts of the variables are clear) of a term $t$ of sort $T$ in context $\vec x$ of sort $S_1 \times \dotsc \times S_n$ is a morphism $S^\mathcal{M}_1 \times \dotsc \times S^\mathcal{M}_n \rightarrow T^\mathcal{M}$ defined recursively:
\[
\begin{array}{rcl}
\llbracket \vec{x} \mid x_k \rrbracket &=_\mathrm{df}& \pi^k : S^\mathcal{M}_1 \times \dotsc \times S^\mathcal{M}_n \rightarrow S^\mathcal{M}_k \\
\llbracket \vec{x} \mid f\hspace{2pt}(t_1, \dotsc, t_m) \rrbracket &=_\mathrm{df}&
\mathcal M f \circ \langle \llbracket \vec x \mid t_1 \rrbracket, \dotsc, \llbracket \vec{x} \mid t_m \rrbracket \rangle : \\
&& S^\mathcal{M}_1 \times \dotsc \times S^\mathcal{M}_n \rightarrow W^\mathcal{M}, 
\end{array}
\]
where $t_1, \dotsc, t_m$ are terms of sorts $T_1, \dotsc, T_m$, respectively, and $f$ is a function symbol of sort $T_1 \times \dotsc \times T_m \rightarrow W$. 

The $\mathcal M$-{\em interpretation} $\llbracket \vec{x} : S_1 \times \dotsc \times S_n \mid \phi \rrbracket^\mathcal{M}$ (which may be abbreviated $\llbracket \vec{x} \mid \phi \rrbracket$ when the structure and the sorts of the variables are clear) of a formula $\phi$ in context $\vec x$ of sort $S_1 \times \dotsc \times S_n$ is defined recursively:
\[
\begin{array}{rcl}
\llbracket \vec{x} \mid \bot \rrbracket &=_\mathrm{df}& [\bot \rightarrowtail S^\mathcal{M}_1 \times \dotsc \times S^\mathcal{M}_n] \\
\llbracket \vec{x} \mid R(\vec{t}) \rrbracket &=_\mathrm{df}& \llbracket \vec{x} \mid \vec{t} \rrbracket^* (R^\mathcal{M}), \\
&& \text{where $R$ is a relation symbol and $\vec{t}$ are terms.} \\
\llbracket \vec{x} \mid \chi \odot \psi \rrbracket &=_\mathrm{df}& \llbracket \vec{x} \mid \chi \rrbracket \odot \llbracket \vec{x} \mid \psi \rrbracket \text{, where $\odot \in \{\wedge, \vee, \rightarrow\}$.} \\
\llbracket \vec{x}\setminus \{x_k\} \mid \boxminus x_k . \psi \rrbracket &=_\mathrm{df}& \boxminus_{\langle \pi^1, \dotsc, \pi^{k-1}, \pi^{k+1}, \dotsc, \pi^n \rangle} (\llbracket \vec{x} \mid \psi \rrbracket), \\
&& \text{where $\boxminus \in \{\forall, \exists\}$.}
\end{array}
\]
Recall that $\llbracket \vec{x} \mid \vec{t} \rrbracket^*(R^\mathcal{M})$ is obtained by taking the pullback of a representative of $R^\mathcal{M}$ along $\llbracket \vec{x} \mid \vec{t} \rrbracket$. The denotation of $\boxminus_{\langle \pi^1, \dotsc, \pi^{k-1}, \pi^{k+1}, \dotsc, \pi^n \rangle}$ is given in axiom (F5) of Heyting categories above. 

We say that $\phi(\vec x)$ is {\em valid} in $\mathcal{M}$, and write $\mathcal{M} \models \phi$, whenever $\llbracket \vec{x} \mid \phi(\vec{x}) \rrbracket$ equals the maximal subobject $S^\mathcal{M}_1 \times \dotsc \times S^\mathcal{M}_n$ of $\mathrm{Sub}(S^\mathcal{M}_1 \times \dotsc \times S^\mathcal{M}_n)$. In particular, if $\phi$ is a sentence, then $\mathcal{M} \models \phi$ iff $\llbracket \cdot : \cdot \mid \phi \rrbracket = \mathbf{1}$, where the notation ``$\cdot : \cdot$'' stands for the empty sequence of variables in the $0$-ary context. It is of course more convenient to write $\llbracket \cdot : \cdot \mid \phi \rrbracket$ simply as $\llbracket \phi \rrbracket$.

When working with this semantics it is sometimes convenient to use the following well-known rules:
\[
\begin{array}{rcl}
\llbracket \vec{x} \mid \chi \rrbracket \wedge \llbracket \vec{x} \mid \psi \rrbracket &=& \llbracket \vec{x} \mid \chi \rrbracket^*(\llbracket \vec{x} \mid \psi \rrbracket) \\
\llbracket \vec{x} \mid \chi \rightarrow \psi \rrbracket &\Leftrightarrow& \llbracket \vec{x} \mid \chi \rrbracket \leq \llbracket \vec{x} \mid \psi \rrbracket \\
\llbracket \forall x_1 \dots \forall x_n . \psi \rrbracket = \mathbf{1}  &\Leftrightarrow& \llbracket \vec{x} \mid \psi \rrbracket = S^\mathcal{M}_1 \times \dots \times S^\mathcal{M}_1n,
\end{array}
\]
In the last equivalence, it is assumed that $x_1, \dots, x_n$ are the only free variables of $\phi$.
 
When an interpretation $\mathcal{M}$ of $\mathcal{S}$ in a Heyting category $\mathbf{C}$ is given, we will often simply write ``$\mathbf{C} \models \phi$''. Sometimes it is convenient to extend $\mathcal{S}$ with some objects, morphisms and subobjects of $\mathbf{C}$ as new sorts, function symbols and relation symbols, respectively.

\begin{dfn} \label{NaturalSignature}
Let $\mathbf{C}$ be a Heyting category and let $\mathbf{D}$ be a subcategory of $\mathbf{C}$ with finite products. We define the {\em $\mathbf{D}$-signature with respect to $\mathbf{C}$}, denoted $\mathcal{S}^\mathbf{C}_\mathbf{D}$, as the following signature.
\begin{itemize}
\item Sorts: For each object $A$ of $\mathbf{D}$, $A$ is a sort in $\mathcal{S}^\mathbf{C}_\mathbf{D}$.
\item Function symbols: For each morphism $f : A \rightarrow B$ of $\mathbf{D}$, $f : A \rightarrow B$ is a function symbol in $\mathcal{S}^\mathbf{C}_\mathbf{D}$ from the sort $A$ to the sort $B$.
\item Relation symbols: For each $n \in \mathbb{N}$, and for each morphism $m : A \rightarrow B_1 \times \dots \times B_n$ of $\mathbf{D}$, such that $m$ is monic in $\mathbf{C}$, $m$ is an $n$-ary relation symbol in $\mathcal{S}^\mathbf{C}_\mathbf{D}$ on the sort $B_1 \times \dots \times B_n$. (Note that in the case $n = 0$, $B_1 \times \dots \times B_n$ is the terminal object $\mathbf{1}$ of $\mathbf{D}$ and $m$ is a $0$-ary relation symbol.)
\end{itemize}
Given $\mathcal{S}^\mathbf{C}_\mathbf{D}$, the {\em natural $\mathcal{S}^\mathbf{C}_\mathbf{D}$-structure} is defined by assigning each sort $A$ to the object $A$, assigning each function symbol $f$ to the morphism $f$, and assigning each relation symbol $m$ on the sort $B_1 \times \dots \times B_n$ to the subobject of $B_1 \times \dots \times B_n$ in $\mathbf{C}$ determined by $m$. Let $\phi$ be an $\mathcal{S}^\mathbf{C}_\mathbf{D}$-formula. We write $\mathbf{C} \models \phi$ for the statement that $\phi$ is satisfied in the natural $\mathcal{S}^\mathbf{C}_\mathbf{D}$-structure. If no signature has been specified, then $\mathbf{C} \models \phi$ means that $\phi$ is satisfied in the natural $\mathcal{S}^\mathbf{C}_\mathbf{C}$-structure (and it is assumed that $\phi$ is an $\mathcal{S}^\mathbf{C}_\mathbf{C}$-formula).
\end{dfn}

The importance of Heyting categories lies in this well-known result:

\begin{thm}[Completeness for categorical semantics]\label{Completeness}
Intuitionistic and classical first order logic are sound and complete for the categorical semantics of Heyting and Boolean categories, respectively.
\end{thm}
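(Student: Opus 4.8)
The plan is to establish soundness and completeness separately, running the intuitionistic/Heyting and classical/Boolean cases in parallel; the only point at which the two diverge is the validation of excluded middle, which is secured precisely by the fact that in a Boolean category each $\mathrm{Sub}(X)$ is a complemented (Boolean) lattice rather than merely a Heyting algebra. Throughout I would work relative to an arbitrary theory $T$, proving the sharper statement that $T \vdash \phi$ if and only if $\phi$ holds in every $T$-model in every Heyting (resp. Boolean) category; the bare completeness statement is the case where $T$ is empty.

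For soundness I would argue by induction on derivations in a standard natural-deduction calculus, showing that a derivable sequent $\phi_1, \dots, \phi_n \vdash \psi$ is validated by every interpretation $\mathcal{M}$ in every Heyting category $\mathbf{C}$, in the form
\[
\phi_1, \dots, \phi_n \vdash \psi \quad \Longrightarrow \quad \llbracket \vec{x} \mid \phi_1 \rrbracket \wedge \dots \wedge \llbracket \vec{x} \mid \phi_n \rrbracket \leq \llbracket \vec{x} \mid \psi \rrbracket,
\]
the inequality taken in the appropriate subobject lattice. The propositional rules are discharged by the fact, guaranteed by axioms (F1)--(F5), that each $\mathrm{Sub}(X)$ is a Heyting algebra whose operations match the connective clauses of the interpretation $\llbracket \vec{x} \mid \phi \rrbracket$; weakening and substitution are discharged by the functoriality of the inverse image functors $f\hspace{2pt}^*$ (and the fact that $\llbracket \vec{x} \mid \phi[\vec{t}/\vec{y}] \rrbracket = \llbracket \vec{t} \rrbracket^*\llbracket \vec{y} \mid \phi \rrbracket$, itself provable by a sub-induction on terms and formulas); and the quantifier rules are discharged by the adjunctions $\exists_f \dashv f\hspace{2pt}^* \dashv \forall_f$ of (F5), using the pullback-stability of (F3) to supply the Beck--Chevalley compatibility needed when quantifiers interact with substitution. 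Excluded middle, in the classical case, follows from $\top = \llbracket \vec{x} \mid \psi \rrbracket \vee \neg \llbracket \vec{x} \mid \psi \rrbracket$ in the Boolean algebra $\mathrm{Sub}$.

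For completeness I would build the \emph{syntactic} (classifying) Heyting category $\mathbf{C}_T$ of the theory. Its objects are formulas-in-context $[\vec{x}.\phi]$ up to renaming of variables; a morphism $[\vec{x}.\phi] \to [\vec{y}.\psi]$ is a provable-equivalence class of formulas $\theta(\vec{x},\vec{y})$ that $T$ proves to be total and single-valued as a relation from $\phi$ to $\psi$. The subobjects of $[\vec{x}.\phi]$ are then identified with formulas $\chi$ satisfying $T \vdash \chi \to \phi$, ordered by provable implication, so that the lattice operations on $\mathrm{Sub}([\vec{x}.\phi])$ are literally the logical connectives and the inverse-image adjoints are literally the quantifiers. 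One equips $\mathbf{C}_T$ with its tautological (generic) model $\mathcal{M}_T$, and proves by induction on formulas the key correspondence $\llbracket \vec{x} \mid \phi \rrbracket^{\mathcal{M}_T} = [\vec{x}.\phi]$, whence $\mathcal{M}_T \models \phi \iff T \vdash \phi$. Thus any $\phi$ valid in all Heyting-category models is in particular valid in $\mathcal{M}_T$, giving $T \vdash \phi$; in the classical case one verifies that $\mathbf{C}_T$ is Boolean because classical provability makes every subobject lattice complemented.

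I expect the main obstacle to be the verification that $\mathbf{C}_T$ satisfies all of (F1)--(F5): constructing finite limits (terminal object, products, and equalizers/pullbacks) as suitable formulas-in-context, exhibiting images via existentially quantified formulas, and checking pullback-stability of covers and the existence and Beck--Chevalley behaviour of the adjoints $\exists_f, \forall_f$. Each of these reduces to a purely syntactic manipulation of provable entailments, but assembling them coherently — and in particular matching the categorical image factorization and the adjoints to the logical rules so that the generic-model correspondence $\llbracket \vec{x} \mid \phi \rrbracket^{\mathcal{M}_T} = [\vec{x}.\phi]$ goes through cleanly — is the technically laborious heart of the argument. As the excerpt notes, a full account is available in \cite{Joh02}, so in the monograph this theorem is cited rather than reproved.
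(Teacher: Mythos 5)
Your proposal is correct, and it reconstructs precisely the standard argument found in the source the paper defers to: the paper itself offers no proof of this theorem, stating it as well-known and pointing to \cite[pp.~807--859]{Joh02}, where soundness is proved by induction on derivations using the Heyting (resp.\ Boolean) structure of the subobject lattices together with the quantifier adjunctions, and completeness is proved via the syntactic category and its generic model, exactly as you outline. So your approach coincides with that of the paper's cited reference, including your closing observation that in the monograph the result is cited rather than reproved.
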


As a first application of the categorical semantics, we shall generalize Proposition \ref{ConSetConClass prop} to the intuitionistic case. This can be done efficiently through the machinery of topos theory.

\begin{dfn}\label{power object}
A {\em topos} is a category with finite limits and power objects. A {\em power object} of an object $A$, is an object $\mathbf{P}A$ along with a mono $m : \hspace{2pt} \in_A \hspace{2pt} \rightarrowtail A \times \mathbf{P}A$ such that for any mono $r : R \rightarrowtail A \times B$, there is a unique morphism $\chi : B \rightarrow \mathbf{P}A$ making this a pullback square:
\[
\begin{tikzcd}[ampersand replacement=\&, column sep=small]
R \ar[rr] \ar[d, rightarrowtail, "r"'] \&\& \in_A \ar[d, rightarrowtail, "m"] \\
A \times B \ar[rr, "{\id \times \chi}"] \&\& A \times \mathbf{P}A 
\end{tikzcd} 
\]
\end{dfn}

The expression ``morphism $\chi : B \rightarrow \mathbf{P}A$ making this a pullback square'' with a pullback-diagram drawn underneath (as above), will be used several times in this text. More formally, it is taken as an abbreviation of ``morphism $\chi : B \rightarrow \mathbf{P}A$ such that $r$ is a pullback of $m$ along $\id \times \chi$'' (where $m$ and $r$ depend as above on the pullback-diagram drawn underneath).

A {\em small} category is a category that can be implemented as a set (i.e. it does not require a proper class). If $\mathbf{C}$ is a small category, then the category $\mathbf{Set}^\mathbf{C}$, of functors from $\mathbf{C}$ to the usual category of sets, with natural transformations as morphisms, is called the {\em category of presheaves of} $\mathbf{C}^\mathrm{op}$.

Here we collect some well-known facts about topoi, found in introductory textbooks, that are needed for the proof of Theorem \ref{ToposModelOfML}.

\begin{prop}\label{topos prop}
Let $\mathbf{C}$ be a small category. Let $\mathbf{Set}$ be the usual category of sets. Let $\mathbf{E}$ be a topos and let $Z$ be an object in $\mathbf{E}$. Let $\mathbf{P}Z$ along with $p_Z : \in_Z \rightarrowtail Z \times \mathbf{P}Z$ be a power object of $Z$ in $\mathbf{E}$.
\begin{enumerate}[{\normalfont (a)}]
\item $\mathbf{Set}^\mathbf{C}$ is a topos.
\item $\mathbf{E}$ is a Heyting category.
\item $\mathbf{E} \models \forall x, y : \mathbf{P}Z . \big( (\forall z : Z . (z \in x \leftrightarrow z \in y)) \rightarrow x = y \big)$
\item For each $\mathcal{S}^\mathbf{E}_\mathbf{E}$-formula $\phi(z, y)$, $\mathbf{E} \models \forall y : Y . \exists x : \mathbf{P}Z . \forall z : Z . (z \in x \leftrightarrow \phi(z, y))$.
\item The pushout of any mono in $\mathbf{E}$ is a mono.
\item The pushout of any mono in $\mathbf{E}$ also forms a pullback diagram.
\end{enumerate}
\end{prop}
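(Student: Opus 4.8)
The plan is to dispose of the six clauses by invoking the standard structure theory of elementary topoi, treating separately the two clauses, (c) and (d), which are phrased in the paper's own categorical semantics and therefore call for a semantic argument rather than a citation. For (a), I would construct power objects in $\mathbf{Set}^\mathbf{C}$ explicitly. Finite limits in a functor category are computed pointwise, so the existence of finite limits is immediate from completeness of $\mathbf{Set}$. For a functor $A$ I would set $(\mathbf{P}A)(c) = \mathrm{Sub}_{\mathbf{Set}^\mathbf{C}}(\hom_\mathbf{C}(c,-) \times A)$, the variance being exactly what is needed since objects of $\mathbf{Set}^\mathbf{C}$ are presheaves on $\mathbf{C}^\mathrm{op}$, with the membership mono $\in_A \rightarrowtail A \times \mathbf{P}A$ read off from the universal subobject. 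The universal property is then an application of the Yoneda lemma: a mono $R \rightarrowtail A \times B$ corresponds, naturally in $c$, to a family of subobjects of $\hom_\mathbf{C}(c,-) \times A$ indexed by $B(c)$ and compatible with restriction, i.e. to a natural transformation $B \to \mathbf{P}A$. This is precisely the presheaf-topos theorem, which I would cite while indicating the construction.

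For (b), each of the clauses (F1)--(F5) is a standard property of any topos. (F1) holds by definition; the exact structure of a topos supplies the cover--mono factorizations of (F2) and the stability of covers under pullback of (F3), since in a topos every epi is regular and epis are pullback-stable. Clause (F4) and the full Heyting structure on each $\mathrm{Sub}(X)$ follow from the isomorphism $\mathrm{Sub}(X) \cong \hom(X, \Omega)$ together with the fact that the subobject classifier $\Omega$ carries an internal Heyting algebra structure. The only substantive point in (F5) is the existence of the right adjoint $\forall_f$, which I would obtain from the power-object adjunction, equivalently from the internal implication on $\Omega$; again this is textbook and would be cited.

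Clauses (c) and (d) are the ones tied to the semantics introduced in Section \ref{CatAxioms}, so here I would argue directly. For (c), unwinding the interpretation $\llbracket x, y \mid \forall z . (z \in x \leftrightarrow z \in y) \rrbracket$ shows that a generalized element $\langle x, y \rangle : T \to \mathbf{P}Z \times \mathbf{P}Z$ factors through this subobject exactly when $x$ and $y$, transposed along $\id_Z \times (-)$, determine the same subobject of $Z \times T$; since that common subobject is classified by a \emph{unique} map $T \to \mathbf{P}Z$ by the universal property of $\mathbf{P}Z$, we conclude $x = y$, which is validity of the extensionality sentence. For (d), the interpretation of $\phi(z, y)$ is a mono $R \rightarrowtail Z \times Y$, and the unique classifying map $\chi : Y \to \mathbf{P}Z$ making the defining square a pullback is, by the recursive clauses for $\exists$ and $\leftrightarrow$, precisely a witness to $\exists x : \mathbf{P}Z . \forall z : Z . (z \in x \leftrightarrow \phi(z, y))$; hence the comprehension sentence is valid. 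Both arguments are bookkeeping translations between the universal property of $\mathbf{P}Z$ and the interpretation clauses of the semantics.

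Finally, (e) and (f) assert that in a topos the pushout of a mono is again a mono and that such a pushout square is simultaneously a pullback. I would cite that every elementary topos is adhesive, whence pushouts along monos are van Kampen and in particular preserve monos and are pullbacks; alternatively one constructs the pushout of $m : A \rightarrowtail B$ along $f$ as a union inside a suitable power object and verifies the two properties by hand. As for the main obstacle: there is no deep difficulty here, since every clause is standard. The real work is organizational --- keeping the variance conventions straight in the Yoneda computation for (a), isolating $\forall_f$ as the single non-formal adjoint in (b), and, for (c) and (d), carefully matching the paper's recursive interpretation clauses against the universal property of $\mathbf{P}Z$ so that the internal-logic statements are genuinely verified rather than merely asserted.
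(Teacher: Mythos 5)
Your proposal is correct, and there is nothing in the paper to diverge from: the paper states Proposition \ref{topos prop} explicitly as a collection of ``well-known facts about topoi, found in introductory textbooks'' and gives no proof at all, so a citation-plus-sketch treatment like yours is exactly what is intended. Your sketches are the standard arguments and check out: the pointwise subobject formula $(\mathbf{P}A)(c) = \mathrm{Sub}(\hom_\mathbf{C}(c,-) \times A)$ has the right variance for covariant functors $\mathbf{C} \to \mathbf{Set}$ (presheaves on $\mathbf{C}^\mathrm{op}$); the Heyting structure via $\mathrm{Sub}(X) \cong \hom(X,\Omega)$ with $\Omega = \mathbf{P}\mathbf{1}$ and $\forall_f$ from the power-object adjunction is standard; and adhesivity (Lack--Soboci\'nski) covers (e) and (f). It is worth noting that your semantic arguments for (c) and (d) --- uniqueness of the classifying map for extensionality, and the classifying map $\chi : Y \to \mathbf{P}Z$ of $\llbracket z,y \mid \phi \rrbracket$ as the comprehension witness --- are precisely the arguments the paper itself deploys later, in the relativized $\mathrm{IMLU}_\mathsf{Cat}$ setting, to prove Propositions \ref{Extensionality} and \ref{ComprehensionN}; so your treatment of the two ``semantic'' clauses is in complete harmony with the paper's own methods, merely instantiated at the unrelativized topos level.
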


An {\em intuitionistic Kripke structure} in a first-order language $\mathcal{L}$ on a partial order $\mathbb{P}$, is an $\mathcal{L}$-structure in the categorical semantics of $\mathbf{Set}^\mathbb{P}$. It is well-known and easily verified that this definition is equivalent to the traditional definition, as given e.g. in \cite{Mos15}.

\begin{thm}\label{ToposModelOfML}
Let $\mathbf{E}$ be a topos. In the categorical semantics of $\mathbf{E}$: If there is a model of $\mathrm{(I)NF(U)}_\mathsf{Set}$, then there is a model of $\mathrm{(I)ML(U)}_\mathsf{Class}$.
\end{thm}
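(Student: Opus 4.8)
The plan is to categorify the proof of Proposition \ref{ConSetConClass prop}, replacing the set-theoretic construction of classes (as subsets of the model of $\mathrm{(I)NF(U)}_\mathsf{Set}$) with the topos-theoretic power object. Suppose we are given, in the categorical semantics of $\mathbf{E}$, a model $\mathcal{N} = (N, S^\mathcal{N}, \in^\mathcal{N}, P^\mathcal{N})$ of $\mathrm{(I)NFU}_\mathsf{Set}$; here $N$ is an object of $\mathbf{E}$, $S^\mathcal{N}$ is a subobject of $N$, $\in^\mathcal{N}$ is a subobject of $N \times N$, and $P^\mathcal{N}$ is a subobject of $N \times N \times N$ (the ordered-pair relation). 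The object of classes should be $\mathbf{P}N$, the power object of $N$. The essential point is that $\mathbf{P}N$ plays the role that $\mathcal{P}(N)$ played in the classical proof, and that the membership mono $p_N : \in_N \rightarrowtail N \times \mathbf{P}N$ supplies the interpretation of the new element-relation between setoms and classes.

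First I would construct the domain object $M$ of the new model as a pushout that glues $N$ and $\mathbf{P}N$ along the setoms. Concretely, $\mathcal{N} \models \mathrm{Ext}_S$ gives, via the power-object universal property, a monomorphism $\tau : S^\mathcal{N} \rightarrowtail \mathbf{P}N$ classifying $\in^\mathcal{N}$ restricted to sets (this is the categorical analogue of the map sending a set $x$ to $\{u \mid u \in^\mathcal{N} x\}$); I would then form $M$ as the pushout of $S^\mathcal{N} \hookrightarrow N$ and $\tau : S^\mathcal{N} \rightarrowtail \mathbf{P}N$. By Proposition \ref{topos prop}(e),(f) the pushout of a mono is again a mono and simultaneously a pullback, so the two legs $t : N \rightarrowtail M$ and $p : \mathbf{P}N \rightarrowtail M$ are monos whose intersection is exactly $S^\mathcal{N}$ — this is precisely the categorical content of the defining property $t(x) = p(y) \leftrightarrow (x \in S^\mathcal{N} \wedge y = \{u \mid u \in^\mathcal{N} x\})$ from the classical proof. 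I would then define the interpretations $C^\mathcal{M}, \mathrm{Setom}^\mathcal{M}, S^\mathcal{M}$ as the images of $p$, $t$, and $t \circ (S^\mathcal{N} \hookrightarrow N)$ respectively, define $\in^\mathcal{M}$ as the subobject of $M \times M$ obtained by transporting $p_N$ along $t \times p$, and transport $P^\mathcal{N}$ along $t$.

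With the model in place, I would verify the axioms of Axioms \ref{AxIMLU} one at a time in the internal logic, mirroring Proposition \ref{ConSetConClass prop}. Classhood and Setomhood are immediate from the definitions of $\in^\mathcal{M}$, $C^\mathcal{M}$, $\mathrm{Setom}^\mathcal{M}$. The key structural fact, to be proved first, is that $t$ induces an isomorphism of internal structures $(N, \in^\mathcal{N}, S^\mathcal{N}, P^\mathcal{N}) \cong (\mathrm{Setom}^\mathcal{M}, \in^\mathcal{M}\!\restriction, S^\mathcal{M}, P^\mathcal{M})$; from this $\mathrm{SC}_S$ and $\mathrm{P}$ transfer directly since their quantifiers are all bounded to $\mathrm{Setom}$, and $S = \mathrm{Sm} \cap C$ follows from $t(N) \cap p(\mathbf{P}N) = S^\mathcal{N}$. $\mathrm{Ext}_C$ follows from Proposition \ref{topos prop}(c) (extensionality of power objects) together with injectivity of $p$. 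The heart of the matter is $\mathrm{CC}_C$: given any $\mathcal{L}_\mathsf{Class}$-formula $\phi(z)$, its interpretation cut down to the setoms is, via the isomorphism $t$, a subobject of $N$, and Proposition \ref{topos prop}(d) (the comprehension property of power objects) produces a global element of $\mathbf{P}N$, i.e. an element of $C^\mathcal{M}$, whose setom-members are exactly those $z$ satisfying $\phi$. Finally, for the cases without atoms ($\mathrm{(I)ML}_\mathsf{Class}$ from $\mathrm{(I)NF}_\mathsf{Set}$) I would observe, as in the classical proof, that $S^\mathcal{N} = N$ forces $\mathrm{Setom}^\mathcal{M} = S^\mathcal{M}$, so the extra axiom $\forall x \in \mathrm{Setom} . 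S(x)$ holds.

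I expect the main obstacle to be making the pushout construction of $M$ and the transported relation $\in^\mathcal{M}$ rigorous enough that the internal-logic verifications can be carried out uniformly in \emph{both} the intuitionistic and classical cases without appealing to elements. The classical proof freely manipulates elements $t(x)$, $p(y)$ and reasons pointwise; replacing each such step by a statement about subobjects, pullbacks and the internal language of $\mathbf{E}$ — and in particular checking that the comprehension step genuinely only needs Proposition \ref{topos prop}(d) rather than any Booleanness — is where the care lies. The payoff of phrasing everything through \ref{topos prop} is exactly that every lemma invoked holds in an arbitrary topos, so the argument is constructive and the four theories $\mathrm{(I)ML(U)}_\mathsf{Class}$ are all handled in parallel; specializing $\mathbf{E}$ to a presheaf topos $\mathbf{Set}^\mathbb{P}$ (legitimate by \ref{topos prop}(a)) then recovers the statement for intuitionistic Kripke models, completing the generalization promised before the theorem.
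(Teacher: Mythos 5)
Your proposal is correct and follows essentially the same route as the paper's proof: the classifying mono $S^\mathcal{N} \rightarrowtail \mathbf{P}N$ obtained from $\mathrm{Ext}_S$ via the power-object property (the paper's $\chi_S$), the pushout of it against $n_S : S^\mathcal{N} \rightarrowtail N$ to build $M$ using Proposition \ref{topos prop}(e),(f), the transported membership mono $(m_{\mathrm{Setom}} \times m_C) \circ n_{\in_N}$, and the axiom-by-axiom verification (with $\mathrm{Ext}_C$ from power-object extensionality, $\mathrm{CC}_C$ from internal comprehension, $\mathrm{SC}_S$ and Ordered Pair transferred along the setom-part isomorphism, and the no-atoms case from $n_S$ being iso) are all exactly the paper's steps. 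The only compression is that you assert monicity of $\tau$ directly from $\mathrm{Ext}_S$, where the paper spells this out by gluing the uniqueness pullback (A) with the power-object pullback (B); that is a detail of exposition, not a gap.
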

\begin{proof}
This result follows immediately from the proof of Proposition \ref{ConSetConClass prop}, because that proof can literally be carried out in the internal language of any topos. (It is well-known that one can safely reason from the axioms of a weak intuitionistic set theory in this internal language.) However, for the reader's convenience we shall also give the proof in its interpreted form, in the language of category theory.

The intuitionistic and classical cases correspond to the cases that $\mathbf{E}$ is Heyting and Boolean, respectively. The symbol $\in$ is used for the element-relations associated with power objects in $\mathbf{E}$, and use the symbol $\epsin$ for the element-relation symbol in $\mathcal{L}_\mathsf{Set}$ and $\mathcal{L}_\mathsf{Class}$. The object interpreting the domain of $\mathcal{N}$ is denoted $N$. This means that we have a mono $n_S : S^\mathcal{N} \rightarrowtail N$ interpreting the sethood predicate $S$, a morphism $n_P : N \times N \rightarrow N$ interpreting the pairing function $\langle -, - \rangle$, and a mono $n_{\epsin} : \epsin^\mathcal{N} \rightarrow N \times N$ interpreting the element-relation $\epsin$. 

Sethood: $\mathcal{N} \models z \epsin x \rightarrow S(x)$ means that 
$$\llbracket x, y : N \mid x \epsin y \rrbracket \leq \llbracket x, y : N \mid S(y) \rrbracket = N \times S^\mathcal{N},$$ 
so there is a mono $n'_{\epsin} : \epsin^\mathcal{N} \rightarrowtail N \times S^\mathcal{N}$, such that $(\id_N \times n_S) \circ n'_{\epsin} = n_{\epsin}$.

Ordered Pair: $\mathcal{N} \models \langle x, y \rangle = \langle x\hspace{1pt}', y\hspace{1pt}' \rangle \rightarrow ( x = x\hspace{1pt}' \wedge y = y\hspace{1pt}' )$ means that $n_P : N \times N \rightarrowtail N$ is monic.

$\mathrm{Ext}_S$: $\mathcal{N} \models (S(x) \wedge S(y) \wedge \forall z . (z \epsin x \leftrightarrow z \epsin y)) \rightarrow x = y$ implies that for any pullback-square of the form below, $\chi$ is the unique morphism making this a pullback-square:
\[
\begin{tikzcd}[ampersand replacement=\&, column sep=small]
R \ar[rr] \ar[d, rightarrowtail, "r"'] \&\& {\epsin^\mathcal{N}} \ar[d, rightarrowtail, "n'_{\epsin}"] \\
N \times B \ar[rr, "{\id \times \chi}"] \&\& N \times S^\mathcal{N}
\end{tikzcd} 
\tag{A}
\]
To see this, we shall work with the natural $\mathcal{S}^{\mathbf{E}}_{\mathbf{E}}$-structure, which expands $\mathcal{N}$. Let $b, b\hspace{1pt}' : B \rightarrow S^\mathcal{N}$, such that $r$ is a pullback of $n'_{\epsin}$, both along $\id \times b$ and along $\id \times b\hspace{1pt}'$. By the categorical semantics, $r$ then represents both $\llbracket z : N, v : B \mid z \epsin b(v) \rrbracket$ and $\llbracket z : N, v : B \mid z \epsin b\hspace{1pt}'(v) \rrbracket$. So 
$$\mathbf{E} \models \forall v : B . \forall z : N . (z \epsin b(v) \leftrightarrow z \epsin b\hspace{1pt}'(v)),$$ whence by $\mathbf{E} \models \mathrm{Ext}_S$, we have $\mathbf{E} \models \forall v : B . b(v) = b\hspace{1pt}'(v)$. It follows that $b = b\hspace{1pt}'$.

$\mathrm{SC}_S$: For all stratified $\phi$, $\mathcal{N} \models \setmany z . \phi(z, y)$. Although this remark is not needed for the proof, it may help to clarify: $\mathcal{N} \models \mathrm{SC}_S$ implies that for any stratified $\mathcal{L}_\mathsf{Set}$-formula $\phi(z, y)$, there is a morphism $\chi : N \rightarrow S^\mathcal{N}$ making this a pullback-square:
\[
\begin{tikzcd}[ampersand replacement=\&, column sep=small]
{\llbracket z, y : N \mid \phi(z, y) \rrbracket} \ar[rr] \ar[d, rightarrowtail, "r"'] \&\& {\epsin^\mathcal{N}} \ar[d, rightarrowtail, "n'_{\epsin}"] \\
N \times N \ar[rr, "{\id \times \chi}"] \&\& N \times S^\mathcal{N}
\end{tikzcd}
\]

By the pullback-property of the power object $\mathbf{P}N$ of $N$, there is a unique $\chi_S$ making this a pullback-square:
\[
\begin{tikzcd}[ampersand replacement=\&, column sep=small]
{\epsin^\mathcal{N}} \ar[rr] \ar[d, rightarrowtail, "n'_{\epsin}"'] \&\& {\in_N} \ar[d, rightarrowtail, "n_{\in_N}"] \\
N \times S^\mathcal{N} \ar[rr, "{\id \times \chi_S}"] \&\& N \times \mathbf{P}N
\end{tikzcd} 
\tag{B}
\]
By combining (A) with (B), we find that $\chi_S$ is monic: Let $b, b\hspace{1pt}' : B \rightarrow S^\mathcal{N}$, such that $\chi_S \circ b = \chi_S \circ b\hspace{1pt}'$. Let $r : R \rightarrowtail N \times B$ and $r' : R' \rightarrowtail N \times B$ be the pullbacks of $n'_{\epsin}$ along $\id_N \times b$ and $\id_N \times b\hspace{1pt}'$, respectively. Consider these pullbacks as instances of (A) above. ``Gluing\hspace{1pt}'' each of these pullback-diagrams with (B) along the common morphism $n'_{\epsin}$, yields two new pullback-diagrams with the bottom morphisms $\mathrm{id}_N \times (\chi_S \circ b)$ and $\mathrm{id}_N \times (\chi_S \circ b\hspace{1pt}')$, respectively. (It is a basic and well-known property of pullbacks that such a ``gluing\hspace{1pt}'' of two pullback yields another pullback.) We know that these bottom morphisms are equal. Thus, by uniqueness of pullbacks up to isomorphism, we may assume that $r = r'$ and $R = R'$. Now it follows from the uniqueness of $\chi$ in (A) that $b = b\hspace{1pt}'$.

We proceed to construct an $\mathcal{L}_\mathsf{Class}$-structure $\mathcal{M}$ in $\mathbf{E}$, such that $\mathcal{M} \models \mathrm{(I)MLU}$. The domain $M$ of the structure is constructed as this pushout:
\[
\begin{tikzcd}[ampersand replacement=\&, column sep=small]
S^\mathcal{N} \ar[rr, rightarrowtail, "\chi_S"] \ar[d, rightarrowtail, "n_S"'] \&\& {\mathbf{P}N} \ar[d, rightarrowtail, "m_C"] \\
N \ar[rr, rightarrowtail, "m_{\mathrm{Setom}}"] \&\& M
\end{tikzcd} 
\tag{C}
\]
By Proposition \ref{topos prop}, $m_{\mathrm{Setom}}$ and $m_C$ are monic.

We interpret the predicate $\mathrm{Setom}$ by the mono $m_{\mathrm{Setom}} : N \rightarrowtail M$ and the classhood predicate $C$ by the mono $m_C : \mathbf{P}N \rightarrowtail M$. Naturally, we interpret $S$ by the mono $m_S =_\mathrm{df} n_\mathrm{Setom} \circ n_S : S^\mathcal{N} \rightarrowtail M$ and the partial ordered pair function by the mono $m_P =_\mathrm{df} m_{\mathrm{Setom}} \circ n_P : N \times N \rightarrow M$. (Formally, ordered pair is treated as a ternary relation symbol in $\mathcal{L}_\mathsf{Class}$, which is interpreted by $\llbracket x, y, z : M \mid \exists x\hspace{1pt}', y\hspace{1pt}', z\hspace{1pt}' : N . (m_{\mathrm{Setom}}(x\hspace{1pt}') = x \wedge m_{\mathrm{Setom}}(y\hspace{1pt}') = y \wedge m_{\mathrm{Setom}}(z\hspace{1pt}') = z \wedge m_P(x\hspace{1pt}', y\hspace{1pt}') = z\hspace{1pt}' \rrbracket$.)

We interpret the element-relation $\epsin$ by the mono
\begin{align*}
m_{\epsin} &=_\mathrm{df} (m_{\mathrm{Setom}} \times \id_M) \circ (\id_N \times m_C) \circ n_{\in_N} \\
&= (m_{\mathrm{Setom}} \times m_C) \circ n_{\in_N},
\end{align*} 
from  $\in_N$ to $M \times M$. Moreover, let $m'_{\epsin} = (\id_N \times m_C) \circ n_{\in_N}$, so that $m_{\epsin} = (m_{\mathrm{Setom}} \times \id_M) \circ m'_{\epsin}$. Now consider this diagram, obtained by gluing (B) on top of the diagram ``$N \times$ (C)'':
\[
\begin{tikzcd}[ampersand replacement=\&, column sep=small]
{\epsin^\mathcal{N}} \ar[rr] \ar[d, rightarrowtail, "n'_{\epsin}"'] \&\& {\in_N} \ar[d, rightarrowtail, "n_{\in_N}"] \\
N \times S^\mathcal{N} \ar[rr, rightarrowtail, "\id \times \chi_S"] \ar[d, rightarrowtail, "\id \times n_S"'] \&\& {N \times \mathbf{P}N} \ar[d, rightarrowtail, "\id \times m_C"] \\
N \times N \ar[rr, rightarrowtail, "\id \times m_{\mathrm{Setom}}"] \&\& N \times M
\end{tikzcd} 
\tag{D}
\]

The lower square is a pushout because (C) is, so by Proposition \ref{topos prop} it is a pullback. Since (B) is also a pullback, we have by a basic well-known result that (D) is also a pullback. It follows that $\llbracket z, x : N \mid z \epsin x \rrbracket ^\mathcal{N} \cong_{N \times N} \llbracket z, x : N \mid m_{\mathrm{Setom}}(z) \epsin m_{\mathrm{Setom}}(x) \rrbracket ^\mathcal{M}$. In other words, the interpretations of $\epsin$ in $\mathcal{N}$ and $\mathcal{M}$ agree on $N$ as a subobject of $M$ represented by $m_\mathrm{Setom}$. We can now easily verify the axioms of $\mathrm{(I)MLU}_\mathsf{Class}$.

Classhood: $\mathcal{M} \models z \epsin x \rightarrow C(x)$ follows from that $m_{\epsin} = (m_{\mathrm{Setom}} \times \id_M) \circ (\id_N \times m_C) \circ n_{\in_N}$.

Setomhood: $\mathcal{M} \models z \epsin x \rightarrow \mathrm{Setom}(x)$ also follows from that $m_{\epsin} = (m_{\mathrm{Setom}} \times \id_M) \circ (\id_N \times m_C) \circ n_{\in_N}$.

$\mathrm{Ext}_C$: $\mathcal{M} \models (C(x) \wedge C(y) \wedge \forall z . (z \epsin x \leftrightarrow z \epsin y)) \rightarrow x = y$ follows from that $\mathbf{E} \models \forall x, y : \mathbf{P}N . \big( (\forall z : N . (z \in x \leftrightarrow z \in y)) \rightarrow x = y \big)$ (see Proposition \ref{topos prop}), that $m_\mathrm{Setom}$ is monic, and that $m_{\epsin} = (m_{\mathrm{Setom}} \times \id_M) \circ (\id_N \times m_C) \circ n_{\in_N}$.

$\mathrm{CC}_C:$ For all $\mathcal{L}_\mathsf{Class}$-formulae $\phi$, $\mathcal{M} \models \exists x . \big( C(x) \wedge \forall z \in \mathrm{Setom} . (z \epsin x \leftrightarrow \phi(z))\big)$, follows from that $\mathbf{E} \models \exists x : \mathbf{P}N . \forall z : N . (z \in x \leftrightarrow \phi(z))$ and that $m_{\epsin} = (m_{\mathrm{Setom}} \times \id_M) \circ (\id_N \times m_C) \circ n_{\in_N}$.

$\mathrm{SC}_S:$ For all stratified $\mathcal{L}_\mathsf{Class}$-formulae $\phi$ with only $z, \vec{y}$ free, $\mathcal{M} \models \forall \vec{y} \in \mathrm{Setom} . \setmany z . \phi(z, \vec{y})$, follows from that $\mathcal{N} \models \mathrm{SC}_S$, and that 
$$\llbracket x, y : N \mid x \epsin y \rrbracket ^\mathcal{N} \cong_{N \times N} \llbracket x, y : N \mid m_{\mathrm{Setom}}(x) \epsin m_{\mathrm{Setom}}(y) \rrbracket ^\mathcal{M}.$$

Ordered Pair: $\mathcal{M} \models \forall x, y, x\hspace{1pt}', y\hspace{1pt}' \in \mathrm{Setom} . ( \langle x, y \rangle = \langle x\hspace{1pt}', y\hspace{1pt}' \rangle \leftrightarrow ( x = x\hspace{1pt}' \wedge y = y\hspace{1pt}' ) )$, follows from that $\mathcal{N} \models \textnormal{Ordered Pair}$, that $m_\mathrm{Setom}$ is monic, and that $m_P = m_\mathrm{Setom} \circ n_P$.

Set equals Setom Class: $\mathcal{M} \models S(x) \leftrightarrow (\mathrm{Setom}(x) \wedge C(x))$, follows from that $m_S = m_\mathrm{Setom} \circ n_S$ and that $n_S$ is a pullback of $m_C$ along $m_\mathrm{Setom}$, as seen in diagram (C).

This concludes the verification of $\mathcal{M} \models \mathrm{(I)MLU}_\mathsf{Class}$. For the case without atoms, note that if $ \mathcal{N} \models \forall x . S(x)$, then $n_S$ is an iso, so since $m_S = m_\mathrm{Setom} \circ n_S$, we have $\mathcal{M} \models \forall x . (S(x) \leftrightarrow \mathrm{Setom}(x))$.
\end{proof}

\begin{cor}\label{ConSetConClass}
$\mathrm{(I)NF(U)}_\mathsf{Set}$ is equiconsistent to $\mathrm{(I)ML(U)}_\mathsf{Class}$.
\end{cor}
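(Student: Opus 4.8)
The plan is to establish the two consistency implications separately, treating all four theories uniformly via the $\mathrm{(I)}$- and $\mathrm{(U)}$-conventions and cashing out ``consistency'' as the existence of a model in the appropriate semantics.

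For the direction $\mathrm{Con}(\mathrm{(I)ML(U)}_\mathsf{Class}) \Rightarrow \mathrm{Con}(\mathrm{(I)NF(U)}_\mathsf{Set})$, I would invoke the interpretation of $\mathrm{(I)NF(U)}_\mathsf{Set}$ in $\mathrm{(I)ML(U)}_\mathsf{Class}$ already exhibited in Section \ref{Stratified set theory and class theory}: each $\mathcal{L}_\mathsf{Set}$-axiom is translated by relativizing every quantifier $\boxminus x$ to $\boxminus x \in \mathrm{Setom}$, and one checks that $\mathrm{(I)ML(U)}_\mathsf{Class}$ proves each translated axiom. Since a theory interpreting a contradiction would itself be contradictory, this translation preserves consistency, giving the implication for all four theories at once. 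Equivalently, restricting any model of $\mathrm{(I)ML(U)}_\mathsf{Class}$ to the extension of $\mathrm{Setom}$ and taking the $\mathcal{L}_\mathsf{Set}$-reduct yields a model of $\mathrm{(I)NF(U)}_\mathsf{Set}$.

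The substantive direction is $\mathrm{Con}(\mathrm{(I)NF(U)}_\mathsf{Set}) \Rightarrow \mathrm{Con}(\mathrm{(I)ML(U)}_\mathsf{Class})$, and here I would reduce everything to Theorem \ref{ToposModelOfML}. In the classical case, completeness furnishes an ordinary set model of $\mathrm{NF(U)}_\mathsf{Set}$; this is a model in the Boolean topos $\mathbf{Set}$, so Theorem \ref{ToposModelOfML} (concretely, Proposition \ref{ConSetConClass prop}) produces a set model of $\mathrm{ML(U)}_\mathsf{Class}$, whence $\mathrm{Con}(\mathrm{ML(U)}_\mathsf{Class})$. In the intuitionistic case, completeness of intuitionistic first-order logic for Kripke semantics gives, from $\mathrm{Con}(\mathrm{INF(U)}_\mathsf{Set})$, a Kripke structure validating $\mathrm{INF(U)}_\mathsf{Set}$ over some poset $\mathbb{P}$; as recorded just before Theorem \ref{ToposModelOfML}, this is exactly an $\mathcal{L}_\mathsf{Set}$-structure in the categorical semantics of the presheaf topos $\mathbf{Set}^\mathbb{P}$, which is a topos by Proposition \ref{topos prop}(a). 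Applying Theorem \ref{ToposModelOfML} with $\mathbf{E} = \mathbf{Set}^\mathbb{P}$ yields a model of $\mathrm{IML(U)}_\mathsf{Class}$ in $\mathbf{Set}^\mathbb{P}$, and soundness of intuitionistic logic for the categorical semantics (Theorem \ref{Completeness}) then gives $\mathrm{Con}(\mathrm{IML(U)}_\mathsf{Class})$.

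The main obstacle is not any single hard computation but keeping the logic level and the choice of semantics synchronized across the four theories: one must ensure that ``consistency'' is realized as the existence of the right kind of model (a set model in the Boolean case, a Kripke/presheaf model in the Heyting case) so that Theorem \ref{ToposModelOfML} applies with the correct topos $\mathbf{E}$, and that the relativization translation in the converse direction respects intuitionistic provability and the partial-pairing treatment of $\langle -,-\rangle$. I would also flag, as in the introductory footnote, that the implication $\mathrm{Con}(\mathrm{INF(U)}) \Rightarrow \mathrm{Con}(\mathrm{IML(U)})$ is non-vacuous only modulo the as-yet-unknown consistency strength of $\mathrm{INF(U)}$; the equiconsistency asserted here, however, holds unconditionally, since both implications are established above.
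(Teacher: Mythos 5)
Your proposal is correct and follows essentially the same route as the paper: the easy direction via the quantifier-relativization to $\mathrm{Setom}$ (noted right after Axioms \ref{AxIMLU}), and the substantive direction via Kripke completeness, viewing the Kripke model as a structure in the presheaf topos $\mathbf{Set}^\mathbb{P}$, and applying Theorem \ref{ToposModelOfML}. Your handling of the classical case directly in $\mathbf{Set}$ via Proposition \ref{ConSetConClass prop} is just the paper's move of taking $\mathbb{P}$ to be a singleton, and your explicit appeal to soundness (Theorem \ref{Completeness}) merely makes precise a step the paper leaves implicit.
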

\begin{proof}
The $\Leftarrow$ direction was established directly after Axioms \ref{AxIMLU}. For the $\Rightarrow$ direction: By the completeness theorem for intuitionistic predicate logic and Kripke models, there is a Kripke model of $\mathrm{(I)NF(U)}_\mathsf{Set}$, i.e. there is a partial order $\mathbb{P}$ and an $\mathcal{L}_\mathsf{Set}$-structure $\mathcal{N}$ in $\mathbf{Set}^\mathbb{P}$, such that $\mathcal{N} \models \mathrm{(I)NF(U)}_\mathsf{Set}$. By Proposition \ref{topos prop}, $\mathbf{Set}^\mathbb{P}$ is a topos, so it follows from Theorem \ref{ToposModelOfML} that there is a Kripke model of $\mathrm{(I)ML(U)}_\mathsf{Class}$. The classical cases are obtained by setting $\mathbb{P}$ to a singleton.
\end{proof}

\begin{rem}\label{remML}
An equiconsistency statement is trivial unless the consistency strength of the theories considered is at least that of the meta-theory. It is folklore that the consistency strength of $\mathrm{NFU}_\mathsf{Set}$ is at least that of a weak set theory called Mac Lane set theory (by \cite{Jen69} it is at most that), and that the category of presheaves is a topos with Mac Lane set theory as meta-theory, so for the classical case the statement is non-trivial. Moreover, if one unpacks the above equiconsistency proof, one finds that the full Powerset axiom is not needed. It suffices that powersets of countable sets exist, to construct the needed Kripke structure. The strengths of $\mathrm{INF(U)}_\mathsf{Set}$ have not been studied much, so the non-triviality of the above equiconsistency statement needs to be taken as conditional in that case. Regardless of these matters, the proof of Theorem \ref{ToposModelOfML} is constructive and yields information on the close relationship between $\mathrm{(I)NF(U)_\mathsf{Set}}$ and $\mathrm{(I)ML(U)}_\mathsf{Class}$ (also in the intuitionistic case).
\end{rem}

\chapter{Stratified algebraic set theory}\label{ch equicon set cat NF}

\section{Stratified categories of classes}\label{Formulation of ML_CAT}

We now proceed to introduce a new categorical theory, intended to characterize the categorical content of predicative $\mathrm{(I)ML(U)}_\mathrm{Class}$. For comparison, let us first recall the definition of topos.

We need a relativized notion of power object, for the axiomatization to be presented below:

\begin{dfn}
Let $\mathbf{C}$ be a category, and let $\mathbf{D}$ be a subcategory of $\mathbf{C}$. A {\em power object in} $\mathbf{C}$ {\em with respect to} $\mathbf{D}$, of an object $A$ in $\mathbf{D}$, is defined as in Definition \ref{power object}, except that $r$ is assumed to be in $\mathbf{D}$ and $m, \chi$ are required to be in $\mathbf{D}$. More precisely, it is an object $\mathbf{P}A$ along with a morphism $m : \hspace{2pt} \in \hspace{2pt} \rightarrowtail A \times \mathbf{P}A$ in $\mathbf{D}$ which is monic in $\mathbf{C}$, such that for any $r : R \rightarrowtail A \times B$ in $\mathbf{D}$ which is monic in $\mathbf{C}$, there is a morphism $\chi : B \rightarrow \mathbf{P}A$ in $\mathbf{D}$, which is the unique morphism in $\mathbf{C}$ making this a pullback square in $\mathbf{C}$:
\[
\begin{tikzcd}[ampersand replacement=\&, column sep=small]
R \ar[rr] \ar[d, rightarrowtail, "r"'] \&\& \in \ar[d, rightarrowtail, "m"] \\
A \times B \ar[rr, "{\id \times \chi}"] \&\& A \times \mathbf{P}A 
\end{tikzcd} 
\]
\end{dfn}

We need a couple of more definitions: A functor $\mathbf{F} : \mathbf{C} \rightarrow \mathbf{D}$ is {\em conservative} if for any morphism $f$ in $\mathbf{C}$, if $\mathbf{F}(f\hspace{2pt})$ is an isomorphism then $f$ is an isomorphism. A subcategory is {\em conservative} if its inclusion functor is conservative. A {\em universal object} in a category $\mathbf{C}$ is an object $X$, such that for every object $Y$ there is a mono $f : Y \rightarrowtail X$. The theory $\mathrm{IMLU}_\mathsf{Cat}$ is axiomatized as follows. 

\begin{dfn}[$\mathrm{IMLU}_\mathsf{Cat}$] \label{IMLU_CAT}
A {\em stratified category of classes} (or an $\mathrm{IMLU}$-category) is a pair of Heyting categories $( \mathbf{M}, \mathbf{N})$, such that
\begin{itemize}
\item $\mathbf{N}$ is a conservative Heyting subcategory of $\mathbf{M}$, 
\item there is an object $U$ in $\mathbf{N}$ which is universal in $\mathbf{N}$,
\item there is an endofunctor $\mathbf{T}$ on $\mathbf{M}$, restricting to an endofunctor of $\mathbf{N}$ (also denoted $\mathbf{T}$), along with a natural isomorphism $\iota : \id_\mathbf{M} \xrightarrow{\sim} \mathbf{T}$ on $\mathbf{M}$, 
\item there is an endofunctor $\mathbf{P}$ on $\mathbf{N}$, such that for each object $A$ in $\mathbf{N}$, $\mathbf{T}A$ has a power object $\mathbf{P}A$, $m_{\subseteq^{\mathbf{T}}_A} : \hspace{1pt} \subseteq^{\mathbf{T}}_A \hspace{1pt} \rightarrowtail \mathbf{T} A \times \mathbf{P} A$ in $\mathbf{M}$ with respect to $\mathbf{N}$; spelling this out:
	\begin{itemize}
	\item $m_{\subseteq^{\mathbf{T}}_A}$ is a morphism in $\mathbf{N}$ which is monic in $\mathbf{M}$, such that
	\item for any $r : R \rightarrowtail \mathbf{T} A \times B$ in $\mathbf{N}$ which is monic in $\mathbf{M}$, there is $\chi : B \rightarrow \mathbf{P} A$ in $\mathbf{N}$, which is the unique morphism in $\mathbf{M}$ making this a pullback square in $\mathbf{M}$:
	\end{itemize}
\begin{equation} \label{PT}
\begin{tikzcd}[ampersand replacement=\&, column sep=small]
R \ar[rr] \ar[d, rightarrowtail, "r"'] \&\& \subseteq^{\mathbf{T}}_A \ar[d, rightarrowtail, "{m_{\subseteq^{\mathbf{T}}_A}}"] \\
\mathbf{T} A \times B \ar[rr, "{\id \times \chi}"] \&\& \mathbf{T} A \times \mathbf{P} A 
\end{tikzcd} 
\tag{PT}
\end{equation}
\item  there is a natural isomorphism $\mu : \mathbf{P} \circ \mathbf{T} \xrightarrow{\sim} \mathbf{T} \circ \mathbf{P}$ on $\mathbf{N}$.
\end{itemize}

If ``Heyting\hspace{1pt}'' is replaced with ``Boolean'' throughout the definition, then we obtain the theory $\mathrm{MLU}_\mathsf{Cat}$. If $U \cong \mathbf{P}U$ is added to $\mathrm{(I)MLU}_\mathsf{Cat}$, then we obtain the theories $\mathrm{(I)ML}_\mathsf{Cat}$, respectively.
\end{dfn}

In order to carry over some intuitions from a stratified set theory such as $\mathrm{NFU}$, $\mathbf{T}A$ may be thought of as $\{\{x\} \mid x \in A\}$ and $\mathbf{P}A$ may be thought of as $\{ X \mid X \subseteq A\}$. Now $\subseteq^{\mathbf{T}}_A$ corresponds to the subset relation on $\mathbf{T}A \times \mathbf{P}A$. Note that on this picture, $\subseteq^{\mathbf{T}}$ is very similar to the $\in$-relation. Thus (\ref{PT}) is intended to be the appropriate variant for stratified set theory of the power object axiom of topos theory. These intuitions are made precise in the proof of Theorem \ref{ConClassConCat}, where we interpret $\mathrm{(I)ML(U)}_\mathsf{Cat}$ in $\mathrm{(I)ML(U)}_\mathsf{Class}$.

It is easily seen that this axiomatization is elementary, i.e. it corresponds to a theory in a first order language $\mathcal{L}_\mathsf{Cat}$. Its precise specification involves quite some detail. Suffice to say that the language of category theory is augmented with relation symbols $\mathbf{M}_\mathrm{Ob}$, $\mathbf{M}_\mathrm{Mor}$, $\mathbf{N}_\mathrm{Ob}$, and $\mathbf{N}_\mathrm{Mor}$; a constant symbol $U$; and function symbols $\mathbf{T}_\mathrm{Ob}$, $\mathbf{T}_\mathrm{Mor}$, $\iota$, $\mu$, $\mathbf{P}_\mathrm{Ob}$ and $\mathbf{P}_\mathrm{Mor}$ (using the same names for the symbols and their interpretations, and where the subscripts $\mathrm{Ob}$ and $\mathrm{Mor}$ indicate the component of the functor acting on objects and morphisms, respectively).

Note that the definition can easily be generalized, so that we merely require that $\mathbf{N}$ is a Heyting category that is mapped into $\mathbf{M}$ by a faithful conservative Heyting functor $\mathbf{F} : \mathbf{N} \rightarrow \mathbf{M}$. This would not hinder any of the results below. We choose the more specific definition in terms of a subcategory because it simplifies the statements of the results.

We shall now collect a few useful properties of $\mathrm{(I)ML(U)}$-categories. First a definition: A functor $\mathbf{F} : \mathbf{B} \rightarrow \mathbf{C}$ {\em reflects finite limits} if for any finite diagram $\mathbf{D} : \mathbf{I} \rightarrow \mathbf{B}$ and for any cone $\Lambda$ of $\mathbf{D}$ in $\mathbf{B}$, if $\mathbf{F}\Lambda$ is a limit in $\mathbf{C}$ of $\mathbf{F} \circ \mathbf{D} : \mathbf{I} \rightarrow \mathbf{C}$, then $\Lambda$ is a limit of $\mathbf{D} : \mathbf{I} \rightarrow \mathbf{B}$ in $\mathbf{B}$.

\begin{prop} \label{strenghten def}
Let $( \mathbf{M}, \mathbf{N})$ along with $U$, $\mathbf{T}$, $\iota$, $\mathbf{P}$ and $\mu$ be an $\mathrm{IMLU}$-category.
\begin{enumerate}[{\normalfont (a)}]
\item \label{strengthen mono} For any morphism $f : A \rightarrow B$ in $\mathbf{N}$, $f$ is monic in $\mathbf{N}$ iff $f$ is monic in $\mathbf{M}$.
\item \label{strengthen reflect limits} The inclusion functor of $\mathbf{N}$ (as a subcategory) into $\mathbf{M}$ reflects finite limits.
\item \label{strengthen power} $\mathbf{P}A$ along with $m_{\subseteq^{\mathbf{T}}_A}$, as in {\em (\ref{PT})} above, is a power object of $\mathbf{T}A$ in $\mathbf{N}$, for any $A$ in $\mathbf{N}$.
\item \label{strengthen T heyting} $\mathbf{T} : \mathbf{M} \rightarrow \mathbf{M}$ is a Heyting endofunctor. If $( \mathbf{M}, \mathbf{N})$ is an $\mathrm{MLU}$-category, then $\mathbf{T} : \mathbf{M} \rightarrow \mathbf{M}$ is a Boolean endofunctor.
\item \label{strengthen T limits} $\mathbf{T} : \mathbf{N} \rightarrow \mathbf{N}$ preserves finite limits.
\end{enumerate}
\end{prop}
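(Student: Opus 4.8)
The statement to prove, Proposition \ref{strenghten def}, collects five properties of an $\mathrm{IMLU}$-category $(\mathbf{M}, \mathbf{N})$. The plan is to prove them in an order that lets later parts lean on earlier ones, beginning with the two parts that are essentially immediate consequences of conservativity, and then using those to establish the structural facts about power objects and about $\mathbf{T}$.

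First I would prove part (\ref{strengthen mono}). The nontrivial direction is that monic-in-$\mathbf{M}$ implies monic-in-$\mathbf{N}$, and this is really an instance of a general fact: a faithful functor reflects monos. Since $\mathbf{N}$ is a subcategory of $\mathbf{M}$ whose inclusion is (in particular) faithful, if $f$ is monic in $\mathbf{M}$ and $f \circ g = f \circ h$ for morphisms $g, h$ of $\mathbf{N}$, then these equations hold in $\mathbf{M}$ too, giving $g = h$. The converse direction, that monic-in-$\mathbf{N}$ implies monic-in-$\mathbf{M}$, does \emph{not} hold for faithful functors in general and is exactly where conservativity enters; I would argue it via the standard characterization of monos through pullbacks (equalizers / kernel pairs): $f$ is monic iff the diagonal $\Delta_f : A \to A \times_B A$ is an isomorphism. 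Since $\mathbf{N}$ is a Heyting subcategory it has finite limits computed compatibly, so the kernel pair of $f$ and the comparison $\Delta_f$ are preserved by the inclusion; conservativity then transports ``$\Delta_f$ iso in $\mathbf{N}$'' to ``$\Delta_f$ iso in $\mathbf{M}$'' and vice versa. This simultaneously sets up part (\ref{strengthen reflect limits}): the inclusion reflects finite limits because a cone whose image is a limit in $\mathbf{M}$ induces a comparison morphism to the $\mathbf{N}$-limit whose image in $\mathbf{M}$ is an isomorphism, so by conservativity it is already an isomorphism in $\mathbf{N}$.

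With (\ref{strengthen mono}) in hand, part (\ref{strengthen power}) becomes mostly a matter of unwinding definitions. The defining diagram (\ref{PT}) already asserts that $m_{\subseteq^{\mathbf{T}}_A}$ is in $\mathbf{N}$, monic in $\mathbf{M}$ (hence by (\ref{strengthen mono}) monic in $\mathbf{N}$), and that for each $\mathbf{N}$-mono $r$ into $\mathbf{T}A \times B$ there is a unique $\chi$ in $\mathbf{N}$ making the square a pullback in $\mathbf{M}$. To see that $\mathbf{P}A$ is a genuine power object \emph{internal to} $\mathbf{N}$, I must check that every $\mathbf{N}$-mono $r : R \rightarrowtail \mathbf{T}A \times B$ arises this way and that the square is a pullback \emph{in} $\mathbf{N}$ with the uniqueness holding in $\mathbf{N}$. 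Monic-in-$\mathbf{N}$ gives monic-in-$\mathbf{M}$ by (\ref{strengthen mono}), so (\ref{PT}) applies; that the resulting square is a pullback in $\mathbf{N}$ (not merely $\mathbf{M}$) follows from (\ref{strengthen reflect limits}), since pullbacks are finite limits; and uniqueness of $\chi$ in $\mathbf{N}$ follows from uniqueness in $\mathbf{M}$ together with faithfulness of the inclusion.

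For parts (\ref{strengthen T heyting}) and (\ref{strengthen T limits}), the key input is the natural isomorphism $\iota : \id_\mathbf{M} \xrightarrow{\sim} \mathbf{T}$. A functor naturally isomorphic to the identity preserves anything the identity preserves, up to the natural isomorphism; in particular it preserves and reflects all limits, colimits, images, covers, and the Heyting structure on subobject lattices. So I would argue that each clause (F1)--(F5) defining a Heyting functor is verified for $\mathbf{T}$ simply by transporting along $\iota$: for any finite diagram $\mathbf{D}$, $\iota$ furnishes an isomorphism between $\mathbf{T}$ applied to a limit cone and the limit of $\mathbf{T} \circ \mathbf{D}$, and similarly for images, inverse-image functors and their adjoints. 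The Boolean case is identical since $\iota$ restricts to subobject lattices as an isomorphism of Heyting (resp.\ Boolean) algebras. Part (\ref{strengthen T limits}) is then just the restriction of this to $\mathbf{N}$, using that $\mathbf{T}$ restricts to an endofunctor of $\mathbf{N}$ and that $\iota$ is a natural isomorphism on $\mathbf{M}$ whose components at $\mathbf{N}$-objects are $\mathbf{N}$-morphisms. I expect the main obstacle to be part (\ref{strengthen mono}): pinning down precisely why conservativity (rather than mere faithfulness) is needed for the monic-in-$\mathbf{N}$ $\Rightarrow$ monic-in-$\mathbf{M}$ direction, and organizing the kernel-pair argument so that (\ref{strengthen mono}) and (\ref{strengthen reflect limits}) are proved without circularity, since each is most naturally phrased in terms of the other.
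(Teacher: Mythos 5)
The main problem is in part (e). You justify it by claiming that $\iota$ is a natural isomorphism on $\mathbf{M}$ ``whose components at $\mathbf{N}$-objects are $\mathbf{N}$-morphisms.'' That is false in general: the definition of an $\mathrm{IMLU}$-category only requires $\iota : \id_\mathbf{M} \xrightarrow{\sim} \mathbf{T}$ to be a natural isomorphism on $\mathbf{M}$, and nothing forces the component $\iota_A$ to lie in $\mathbf{N}$ for $A$ in $\mathbf{N}$. Indeed, the objects $A$ for which $\iota_A$ \emph{is} in $\mathbf{N}$ are precisely the strongly Cantorian objects of Section \ref{subtopos}, which form a (generally proper) subcategory $\mathbf{SCan}$ of $\mathbf{N}$; and in the class-theoretic interpretation (proof of Theorem \ref{ConClassConCat}) it is explicitly noted that $\iota_V$ is a proper class, hence not a morphism of $\mathbf{N}$. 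If your claim held, every object of $\mathbf{N}$ would be strongly Cantorian, collapsing that entire section (and, in $\mathrm{NF}$ terms, making the singleton map a set, which it is not). The conclusion of (e) is still true, and you already hold the ingredients for the correct argument, which is the paper's: by (d), $\mathbf{T}$ preserves finite limits as an endofunctor of $\mathbf{M}$; a limit in $\mathbf{N}$ of a finite diagram in $\mathbf{N}$ is also a limit in $\mathbf{M}$ because the inclusion is Heyting; hence its $\mathbf{T}$-image is a limit in $\mathbf{M}$ of the transported diagram, and since $\mathbf{T}$ restricts to $\mathbf{N}$, part (b) reflects it back to a limit in $\mathbf{N}$. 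No appeal to $\iota$ living in $\mathbf{N}$ is needed or possible.

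Two smaller misattributions, each a one-line fix but wrong as cited. In (a), you say conservativity is ``exactly where'' the implication monic-in-$\mathbf{N}$ $\Rightarrow$ monic-in-$\mathbf{M}$ comes from, but your own kernel-pair argument does not use conservativity in that direction: $\Delta_f$ iso in $\mathbf{N}$ gives $\Delta_f$ iso in $\mathbf{M}$ simply because functors preserve isomorphisms (the kernel pair itself is preserved because the inclusion is Heyting). Conservativity is the reverse transport, which you never need, since the other direction of (a) is already trivial by the subcategory property; the paper accordingly proves (a) with no conservativity at all, using only that $f$ is monic iff $(\id,\id)$ is a pullback of $(f,f)$ and that Heyting functors preserve pullbacks. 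Similarly in (c), uniqueness of $\chi$ in $\mathbf{N}$ does not follow from ``uniqueness in $\mathbf{M}$ together with faithfulness'': if some $\chi\hspace{1pt}'$ in $\mathbf{N}$ makes the square a pullback \emph{in} $\mathbf{N}$, faithfulness gives you nothing; what is needed is that the inclusion preserves pullbacks (Heyting subcategory), so that the square is a pullback in $\mathbf{M}$ and the uniqueness clause of the $\mathrm{IMLU}$ axiom applies to force $\chi\hspace{1pt}' = \chi$.
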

\begin{proof}
(\ref{strengthen mono}) ($\Leftarrow$) follows immediately from that $\mathbf{N}$ is a subcategory of $\mathbf{M}$. ($\Rightarrow$) follows from that $\mathbf{N}$ is a Heyting subcategory of $\mathbf{M}$, and that Heyting functors preserve pullbacks, because in general, a morphism $m : A \rightarrow B$ is monic iff $\id_A : A \rightarrow A$ and $\id_A : A \rightarrow A$ form a pullback of $m$ and $m$ (as is well known and easy to check).

(\ref{strengthen reflect limits}) Let $L$, along with some morphisms in $\mathbf{N}$, be a cone in $\mathbf{N}$ of a finite diagram $\mathbf{D} : \mathbf{I} \rightarrow \mathbf{N}$, such that this cone is a limit of $\mathbf{D} : \mathbf{I} \rightarrow \mathbf{N}$ in $\mathbf{M}$. Let $K$ be a limit in $\mathbf{N}$ of $\mathbf{D} : \mathbf{I} \rightarrow \mathbf{N}$. Since $\mathbf{N}$ is a Heyting subcategory of $\mathbf{M}$, $K$ is also such a limit in $\mathbf{M}$. Let $f : L \rightarrow K$ be the universal morphism in $\mathbf{N}$ obtained from the limit property of $K$ in $\mathbf{N}$. By the limit properties of $K$ and $L$ in $\mathbf{M}$, $f$ is an isomorphism in $\mathbf{M}$. Since $\mathbf{N}$ is a conservative subcategory of $\mathbf{M}$, $f$ is also an isomorphism in $\mathbf{N}$, whence $L$ is a limit in $\mathbf{N}$ of $\mathbf{D} : \mathbf{I} \rightarrow \mathbf{N}$, as desired.

(\ref{strengthen power}) By (\ref{strengthen mono}), any morphism in $\mathbf{N}$ that is monic in $\mathbf{M}$ is also monic in $\mathbf{N}$. Let $A$ be an object of $\mathbf{N}$. By (\ref{strengthen reflect limits}), (\ref{PT}) is a pullback square in $\mathbf{N}$. Suppose that $\chi\hspace{1pt}'$ in $\mathbf{N}$ makes (\ref{PT}) a pullback in $\mathbf{N}$ (in place of $\chi$). Since $\mathbf{N}$ is a Heyting subcategory of $\mathbf{M}$, $\chi\hspace{1pt}'$ also makes (\ref{PT}) a pullback square in $\mathbf{M}$. So by the uniqueness property in $\mathbf{M}$, $\chi\hspace{1pt}' = \chi$.

(\ref{strengthen T heyting}) Since $\mathbf{T} : \mathbf{M} \rightarrow \mathbf{M}$ is naturally isomorphic to the identity functor, which is trivially a Heyting (Boolean) functor, $\mathbf{T}$ is also a Heyting (Boolean) endofunctor of $\mathbf{M}$.

(\ref{strengthen T limits}) Let $L$ be a limit in $\mathbf{N}$ of a finite diagram $\mathbf{D} : \mathbf{I} \rightarrow \mathbf{N}$. By (\ref{strengthen T heyting}), $\mathbf{T} : \mathbf{M} \rightarrow \mathbf{M}$ preserves limits, so $\mathbf{T}L$ is a limit in $\mathbf{M}$ of $\mathbf{T} \circ \mathbf{D} : \mathbf{I} \rightarrow \mathbf{N}$. By (\ref{strengthen reflect limits}), $\mathbf{T}L$ is also a limit in $\mathbf{N}$ of $\mathbf{T} \circ \mathbf{D} : \mathbf{I} \rightarrow \mathbf{N}$.
\end{proof}

We now proceed to show $\mathrm{Con}(\mathrm{(I)NF(U)}_\mathsf{Class}) \Rightarrow \mathrm{Con}(\mathrm{(I)ML(U)}_\mathsf{Cat})$. This is the easy and perhaps less interesting part of the equiconsistency proof, but it has the beneficial spin-off of showing how the axioms of $\mathrm{(I)ML(U)}_\mathsf{Cat}$ correspond to set theoretic intuitions. Given Corollary \ref{ConSetConClass}, it suffices to find an interpretation of $\mathrm{(I)ML(U)}_\mathsf{Cat}$ in $\mathrm{(I)ML(U)}_\mathsf{Class}$, as is done in the proof below. This proof actually shows that $\mathrm{(I)ML(U)}_\mathsf{Cat}$ can be interpreted in {\em predicative} $\mathrm{(I)ML(U)}_\mathsf{Class}$; the formulae used in the class-abstracts of the proof only need quantifiers bounded to the extension of $\mathrm{Setom}$.

\begin{thm}\label{ConClassConCat}
$\mathrm{(I)ML(U)}_\mathsf{Cat}$ is interpretable in $\mathrm{(I)ML(U)}_\mathsf{Class}$.
\end{thm}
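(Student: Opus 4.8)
The plan is to define a concrete interpretation of the language $\mathcal{L}_\mathsf{Cat}$ inside $\mathrm{(I)ML(U)}_\mathsf{Class}$ by building, from the universe of a model of $\mathrm{(I)ML(U)}_\mathsf{Class}$, a pair of categories $(\mathbf{M}, \mathbf{N})$ together with the required structure $U$, $\mathbf{T}$, $\iota$, $\mathbf{P}$, $\mu$, and then verify each clause of Definition \ref{IMLU_CAT}. The guiding intuition, already flagged in the text, is that $\mathbf{M}$ should be ``the category of classes and class-functions'' while $\mathbf{N}$ should be ``the category of sets and set-functions'' (i.e. the type-level morphisms), with $\mathbf{T}A$ interpreted as the singleton-image $\{\{x\} \mid x \in A\}$ and $\mathbf{P}A$ as the power class $\{X \mid X \subseteq A\}$, so that $\subseteq^{\mathbf{T}}_A$ becomes essentially the membership relation on $\mathbf{T}A \times \mathbf{P}A$. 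First I would make these category definitions precise: objects of $\mathbf{M}$ are classes, morphisms are functional classes (classes of ordered pairs satisfying the usual single-valuedness and totality conditions), with composition and identities defined in the evident way; $\mathbf{N}$ is the full-on-type-level subcategory whose objects and morphisms are \emph{sets} (elements of $\mathrm{Setom}$ satisfying $S$) rather than proper classes. Because ordered pairs are type-level in $\mathrm{(I)ML(U)}_\mathsf{Class}$ (axiom P), a morphism can be coded at the same type as its domain and codomain, which is exactly what makes $\mathbf{N}$ well-behaved and what forces the stratification bookkeeping to go through.

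Next I would check that both $\mathbf{M}$ and $\mathbf{N}$ are Heyting categories (Boolean in the classical case) by exhibiting finite limits, images, covers, and the adjoints $\exists_f \dashv f^* \dashv \forall_f$ of axioms (F1)--(F5) of Definition \ref{HeytingBooleanCategory}; all of these are the standard set-theoretic constructions (products via the type-level pairing, equalizers and pullbacks via Separation/Comprehension, subobject Heyting algebra via the Boolean/Heyting operations on subclasses), and the impredicative $\mathrm{CC}_C$ supplies the class-level comprehension needed for $\mathbf{M}$ while stratified $\mathrm{SC}_S$ supplies what is needed for $\mathbf{N}$. I would then verify that $\mathbf{N}$ is a conservative Heyting subcategory of $\mathbf{M}$: conservativity amounts to observing that a set-function which is a class-isomorphism is already a set-isomorphism (its inverse is again a set since it has bounded rank/type), and the subcategory inclusion preserves the Heyting structure because the relevant constructions agree. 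The universal object $U$ is interpreted as (the class playing the role of) $V$ restricted to $\mathrm{Setom}$, into which every set injects.

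The key constructions are $\mathbf{T}$, $\mathbf{P}$ and the natural isomorphisms. I would set $\mathbf{T}_\mathrm{Ob}(A) = \{\{x\} \mid x \in A\}$ and $\mathbf{T}_\mathrm{Mor}(f) = $ the induced map on singletons, and define $\iota_A : A \xrightarrow{\sim} \mathbf{T}A$ by $x \mapsto \{x\}$, checking naturality and that $\iota$ is an isomorphism in $\mathbf{M}$ (and restricts to $\mathbf{N}$). For $\mathbf{P}$ I would set $\mathbf{P}_\mathrm{Ob}(A) = \mathcal{P}(A) = \{X \mid X \subseteq A\}$ with $\mathbf{P}_\mathrm{Mor}$ given by direct or inverse image appropriately, take $m_{\subseteq^{\mathbf{T}}_A} : \subseteq^{\mathbf{T}}_A \rightarrowtail \mathbf{T}A \times \mathbf{P}A$ to be the relation $\{(\{x\}, X) \mid x \in X \subseteq A\}$, and verify the pullback-cum-uniqueness property (PT): for a monic relation $r : R \rightarrowtail \mathbf{T}A \times B$ the classifying map $\chi$ sends $b$ to $\{x \in A \mid (\{x\},b) \in R\}$, which exists and is unique by Extensionality and Comprehension. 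The crucial point forcing the singleton twist (and the reason the bare power-object axiom of a topos is not what we want) is the \emph{stratification constraint}: the membership relation raises type by one, so to classify subsets of $A$ one must shift to $\mathbf{T}A$, and $\mathbf{P}$ itself raises type, which is precisely what the natural isomorphism $\mu : \mathbf{P}\circ\mathbf{T} \xrightarrow{\sim} \mathbf{T}\circ\mathbf{P}$ records. I anticipate that the main obstacle is exactly this verification of (PT) together with the coherence of $\mu$: one must track types carefully to confirm that $\mathcal{P}(\mathbf{T}A)$ and $\mathbf{T}(\mathcal{P}A)$ are canonically isomorphic as classes and that this isomorphism is natural, since a naive definition of $\mathbf{P}$ would produce subobject-classifier behaviour that collapses the stratification and runs into the very obstruction that distinguishes $\mathrm{NF}$-style comprehension from naive comprehension. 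Finally, in the no-atoms case one adds $U \cong \mathbf{P}U$, which corresponds to $V = \mathcal{P}(V)$ and follows from $\forall x \in \mathrm{Setom}.\, S(x)$; assembling all clauses then yields the interpretation and hence the theorem, with Corollary \ref{ConSetConClass} giving the consistency consequence.
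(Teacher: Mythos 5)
Your proposal follows essentially the same route as the paper's proof: $\mathbf{M}$ is interpreted as the category of classes and functional classes, $\mathbf{N}$ as the subcategory of sets and set-coded functions, $U = V$, $\mathbf{T}$ as the singleton-image functor, $\mathbf{P}$ as the power-set functor, $\subseteq^{\mathbf{T}}_A$ as $\{\langle \{x\}, X\rangle \mid x \in X \subseteq A\}$, and the classifying map for (PT) as $b \mapsto \{u \in A \mid \langle \{u\}, b\rangle \in R\}$, whose type-level stratified definition is what places $\chi$ in $\mathbf{N}$ — all exactly as in the paper.

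One claim you make is false, however, and should be deleted: $\iota$ does \emph{not} restrict to $\mathbf{N}$. The graph of $x \mapsto \{x\}$ is defined by a formula in which the value has type one higher than the argument, so it is not stratified as a type-level function; consequently $\iota_A$ is in general a proper class even when $A$ is a set, and $\iota_V$ is always a proper class (this is Specker's classical observation that the singleton map is not a set in NF-style theories, and the paper's proof flags it explicitly as ``a word of caution''). The error is harmless to the overall argument only because Definition \ref{IMLU_CAT} requires $\iota : \id_\mathbf{M} \xrightarrow{\sim} \mathbf{T}$ to be a natural isomorphism on $\mathbf{M}$ alone; it is the functor $\mathbf{T}$, not $\iota$, that must restrict to $\mathbf{N}$, and it does, since both $\{\{u\} \mid u \in x\}$ and the graph of $\{x\} \mapsto \{m(x)\}$ are given by type-level stratified formulas. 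Indeed the failure of $\iota$ to live in $\mathbf{N}$ is the central subtlety the whole axiomatization is built around, so you should be careful not to assert it. Two smaller slips in the same vein: ``bounded rank/type'' is not meaningful in NF-style set theory (there is no rank); conservativity of $\mathbf{N}$ in $\mathbf{M}$ instead follows because the inverse of a bijective set-function is obtained by swapping type-level pairs, a stratified type-level operation, hence a set. And $\mathbf{P}_\mathrm{Mor}$ must be the direct-image map (for covariance), while for $\mu$ the paper gives the explicit witness $\mu_x = (u \mapsto \{\bigcup u\})$, which is a set precisely because union lowers type by one and singleton raises it by one — this cancellation is the concrete content of your remark that the types must be ``tracked carefully,'' and it is worth recording explicitly.
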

\begin{proof}
We go through the case of $\mathrm{IMLU}$ in detail, and then explain the modifications required for the other cases. Throughout the interpretation, we work in $\mathrm{IMLU}_\mathsf{Class}$, introducing class and set abstracts $\{x \mid \phi(x, p)\}$, whose existence are justified by the axioms $\mathrm{CC}_C$ and $\mathrm{SC}_S$, respectively. Such class and set abstracts satisfy $\forall x . ( x \in \{x\hspace{1pt}' \mid \phi(x\hspace{1pt}', p)\} \leftrightarrow (\phi(x, p) \wedge x  \in \mathrm{Setom}) )$. Whenever $\phi(x, p)$ is stratified and we have $\mathrm{Setom}(p)$, then the corresponding set abstract exists (and is also a class). Because of the stratification constraint on ordered pairs, when showing that a function $(x \mapsto y)$ defined by $\phi(x, y, p)$ is coded as a set, we have to verify that $\phi(x, y, p)$ can be stratified with the same type assigned to $x$ and $y$. There are no constraints on $\phi$, for a class abstract to exist. Throughout the proof, these $\phi$ are written out explicitly, but for the most part the stratification verifications are simple and left to the reader. 

The interpretation proceeds as follows.
	\begin{enumerate}
	\item Interpret $\mathbf{M}_\mathrm{Ob}(x)$ as $C(x)$, i.e. ``$x$ is a class''.
	\item Interpret $\mathbf{M}_\mathrm{Mor}(m)$ as ``$m$ is a disjoint union of three classes $A$, $B$ and $f$, such that $f$ is a set of pairs coding a function with domain $A$ and co-domain $B$''.\footnote{A disjoint union of three classes may be implemented as a class using the formula $\langle i, x \rangle \in m \leftrightarrow \big( (i = 1 \wedge x \in A) \vee (i = 2 \wedge x \in B) \vee (i = 3 \wedge x \in f\hspace{2pt})\big)$. In order to be able to interpret the domain and co-domain function symbols, we need to include information about the domain class and co-domain class in the interpretation of the morphisms. Otherwise, the same functional class will often interpret many morphisms with different co-domains.} For convenience, we extend the functional notation to $m$ in this setting, i.e. $m(x) =_\mathrm{df} f\hspace{2pt}(x)$, for all $x \in A$, and we also say that $m$ codes this function/morphism from $A$ to $B$.
	\item Interpret the remaining symbols of the language of category theory in the obvious way. Most importantly, composition of morphisms is interpreted by composition of functions. The resulting interpretations of the axioms of category theory are now easily verified for $\mathbf{M}$.
	\item Interpret $\mathbf{N}_\mathrm{Ob}(x)$ as ``$x$ is a set''; and interpret $\mathbf{N}_\mathrm{Mor}(m)$ as ``[insert the interpretation of $\mathbf{M}_\mathrm{Mor}(m)$] and $m$ is a set''. The axioms of category theory are now easily verified for $\mathbf{N}$.
	\item We need to show that the interpretation of the axioms of Heyting categories hold for $\mathbf{M}$ and $\mathbf{N}$. It is well-known that these axioms hold for the categories of classes and sets in conventional class and set theory, see for example \cite{Gol06}. Here we use the same class and set constructions, we just need to check that the axioms $\mathrm{CC}_C$ and $\mathrm{SC}_S$ of $\mathrm{IMLU}_\mathsf{Cat}$ are strong enough to yield the needed sets. $\mathrm{Ext}_C$ ensures the uniqueness conditions in the axioms. 
	
	Existence conditions are supported by class/set abstracts $\{x \mid \phi(x)\}$, where the formula $\phi$ is stratified. We write out each such $\phi$ explicitly and let the reader do the simple verification that $\phi$ is stratified. Thus, in the case of $\mathbf{N}$ we can rely on $\mathrm{SC}_S$, and in the case of $\mathbf{M}$ we can rely on CC$_C$. The only difference is that in the latter case the formula $\phi$ in the class abstract may have parameters which are proper classes. So we can do the verifications for $\mathbf{M}$ and $\mathbf{N}$ simultaneously. 
	
	Let $m : A \rightarrow B$ and $n : C \rightarrow B$ be morphisms in $\mathbf{M}$ or $\mathbf{N}$. Note that for $\mathbf{M}$ and $\mathbf{N}$, subobjects are represented by subclasses and subsets, respectively. Moreover, in both $\mathbf{M}$ and $\mathbf{N}$, any morphism is monic iff injective, and is a cover iff surjective. 
		\begin{enumerate}
		\item[(F1)] Finite limits: It is well-known that the existence of all finite limits follows from the existence of a terminal object and the existence of all pullbacks. $\{\varnothing\}$ is a terminal object. $D =_\mathrm{df} \{ \langle x, z  \rangle \in A \times C \mid m(x) = n(z)\}$, along with the restricted projection morphisms $\pi^1\restriction_D : D \rightarrow A$ and $\pi^2\restriction_D : D \rightarrow C$, is a pullback of the morphisms $m : A \rightarrow B$ and $n : C \rightarrow B$.
		\item[(F2)] Images: The class or set $\{m(x) \mid x \in A\} \subseteq B$, along with its inclusion function into $B$ is the image of $m : A \rightarrow B$.
		\item[(F3)] The pullback of any cover is a cover: Consider the pullback of $m$ and $n$ considered above, and suppose that $m$ is surjective. Then, for any $c \in C$, there is $a \in A$ such that $m(a) = n(c)$, whence $\langle a, c \rangle \in D$. So the projection $D \rightarrow C$ is surjective, as required.
		\item[(F4)] Each $\mathrm{Sub}_X$ is a sup-semilattice under $\subseteq$: Since subobjects are represented by subclasses/subsets, each $\mathrm{Sub}_X$ is the partial order of subclasses/subsets of $X$. Binary union, given by the set abstract $\{z \mid z \in A \vee z \in B\}$, yields the binary suprema required for $\mathrm{Sub}_X$ to be a sup-semilattice. (Note that $\mathrm{Sub}_X$ does not need to be implemented as a set or a class.)
		\item[(F5)] For each morphism $f : X \rightarrow Y$, the functor $f\hspace{2pt}^* : \mathrm{Sub}_Y \rightarrow \mathrm{Sub}_X$ preserves finite suprema and has left and right adjoints, $\exists_f \dashv f\hspace{2pt}^* \dashv \forall_f\hspace{2pt} $: 
		
		$f\hspace{2pt}^*$ is the inverse image functor, mapping any subset $Y\hspace{1pt}' \subseteq Y$ to $\{x \in X \mid f\hspace{2pt}(x) \in Y\hspace{1pt}'\} \subseteq Y$, which clearly preserves finite suprema (unions). 
		
		$\exists_f$ is the image functor, which maps any subset $X\hspace{1pt}' \subseteq X$ to $\{f\hspace{2pt}(x) \mid x \in X\hspace{1pt}'\} \subseteq Y $. 
		
		$\forall_f$ is the functor mapping any subset $X\hspace{1pt}' \subseteq X$ to the set abstract $\{y \in Y \mid \forall x \in X .(f\hspace{2pt}(x) = y \rightarrow x \in X\hspace{1pt}')\} \subseteq Y$. 
		
		Let $X\hspace{1pt}' \subseteq X$ and $Y\hspace{1pt}' \subseteq Y$. It is easily seen that $\exists_f\hspace{2pt}(X\hspace{1pt}') \subseteq Y\hspace{1pt}' \iff X\hspace{1pt}' \subseteq f\hspace{2pt}^*(Y\hspace{1pt}')$, i.e. $\exists_f \dashv f\hspace{2pt}^*$. It is also easily seen that $f\hspace{2pt}^*(Y\hspace{1pt}') \subseteq X\hspace{1pt}' \iff Y\hspace{1pt}' \subseteq \forall_f\hspace{2pt}(X\hspace{1pt}')$, i.e. $f\hspace{2pt}^* \dashv \forall_f $.
		\end{enumerate}
		
	\item In the verification of the HC axioms above, when the objects and morphisms are in $\mathbf{N}$, the same sets are constructed regardless if the HC axioms are verified for $\mathbf{M}$ or $\mathbf{N}$. It follows that $\mathbf{N}$ is a Heyting subcategory of $\mathbf{M}$.
	\item In both $\mathbf{M}$ and $\mathbf{N}$, a morphism is an isomorphism iff it is bijective. Hence, $\mathbf{N}$ is a conservative subcategory of $\mathbf{M}$.
	\item Interpret $U$ as $V$, the set $\{x \mid x = x\}$, which is a superset of every set, and hence a universal object in $\mathbf{N}$.
	\item For any object $x$ and morphism $m : A \rightarrow B$ of $\mathbf{M}$, interpret $\mathbf{T}_\mathrm{Ob}(x)$ as $\{\{u\} \mid u \in x\}$; and interpret $\mathbf{T}_\mathrm{Mor}(m)$ as ``the class coding the morphism $(\{x\} \mapsto \{m(x)\}) : \mathbf{T}A \rightarrow \mathbf{T}B$''. Since these formulae stratified, $\mathbf{T}$ restricts appropriately to $\mathbf{N}$. It is easily verified that the interpreted axioms of a functor hold.
	\item For each object $x$ in $\mathbf{M}$, interpret $\iota_x$ as the code of the morphism $(z \mapsto \{z\}) : x \rightarrow \mathbf{T}(x)$, which is a class. Since the inverse of $\iota_x$ is similarly interpretable, we obtain that the interpretation of $\iota_x$ is an isomorphism in the category theoretic sense. That $\iota$ is a natural isomorphism on $\mathbf{M}$ is clear from its definition and the definition of $\mathbf{T}$. (A word of caution: $\iota_x$ is not generally a set even if $x$ is, in fact $\iota_V$ is a proper class.)
	\item For each object $x$ in $\mathbf{N}$, interpret $\mathbf{P}_\mathrm{Ob}(x)$ as $\pow x$. For each morphism $m : A \rightarrow B$ in $\mathbf{N}$, interpret $\mathbf{P}_\mathrm{Mor}(m)$ as ``the set coding the morphism $(x \mapsto \{m(z) \mid z \in x\}) : \pow A \rightarrow \pow B$''. It is easily seen that this makes $\mathbf{P}$ an endofunctor on $\mathbf{N}$.
	\item Let $x$ be an object in $\mathbf{N}$. Note that $\mathbf{P}\mathbf{T}x = \pow \{\{z\} \mid z \in x\}$. Interpret $\mu_x : \mathbf{P}\mathbf{T}x \rightarrow \mathbf{T}\mathbf{P}x$ by the set coding the morphism $(u \mapsto \{\cup u\}) : \pow \{\{z\} \mid z \in y\} \rightarrow \{\{v\} \mid v \in \pow y\}$. Union and singleton are defined by stratified formulae. Because the union operation lowers type by one and the singleton operation raises type by one, argument and value are type-level in the formula defining $\mu_x$, so $\mu_x$ is coded by a set and is therefore a morphism in $\mathbf{N}$. It is easily seen from the constructions of $\mathbf{T}$, $\mathbf{P}$ and $\mu$, that $\mu$ is a natural isomorphism.
	\item Define $x \subseteq^{\mathbf{T}} y$ set theoretically by $\exists u . (x = \{u\} \wedge u \in y)$. For each object $A$ of $\mathbf{N}$, interpret $\subseteq^{\mathbf{T}}_A \hookrightarrow \mathbf{T} A \times \mathbf{P} A$ as the set coding the inclusion function of $\{\langle x, y \rangle \in \mathbf{T}A \times \mathbf{P}A \mid x \subseteq^{\mathbf{T}} y\} \subseteq \mathbf{T}A \times \mathbf{P}A$.
	\item We proceed to verify that $\mathbf{T}$, $\mathbf{P}$ and $\subseteq^\mathbf{T}$ satisfy the property (\ref{PT}). Suppose that $r : R \rightarrowtail\mathbf{T} A \times B$ in $\mathbf N$ is monic in $\mathbf{M}$. In both $\mathbf{N}$ and $\mathbf{M}$, a morphism is monic iff it is injective, so $r$ is monic in $\mathbf{N}$. Let $\chi : B \rightarrow \pow A$ code the function $(y \mapsto \big\{u \mid \exists c \in R . r(c) = \langle \{u\}, y \rangle \big\})$. Since this is a stratified definition, where argument and value have equal type, $\chi$ is a morphism in $\mathbf{N}$. The proof that $\chi$ is the unique morphism making (\ref{PT}) a pullback in $\mathbf{M}$ is just like the standard proof in conventional set theory; it proceeds as follows. We may assume that $R \subseteq A \times B$ and $r$ is the inclusion function. Then $\chi$ is $(y \mapsto \{u \mid \{u\} R y \})$. For the top arrow in (\ref{PT}) we choose $(\id \times \chi)\restriction_R$. Since $\forall \langle \{u\}, y\rangle \in R . \{u\} \subseteq^{\mathbf{T}} \chi(y)$, (\ref{PT}) commutes. 
	
	For the universal pullback property: Suppose that $\langle f, g\rangle : Q \rightarrow \mathbf{T}A \times B$ and $\langle d, e\rangle : Q \rightarrow \subseteq^{\mathbf{T}}_A$ are morphisms in $\mathbf{N}$ making the diagram commute in $\mathbf{M}$. Let $q \in Q$ be arbitrary. Then $f\hspace{2pt}(q) = d(q)$, $\chi(g(q)) = e(q)$ and $d(q) \subseteq^\mathbf{T} e(q)$, so $f\hspace{2pt}(q) \subseteq^{\mathbf{T}} \chi(g(q))$, whence by definition of $\chi$ we have $f\hspace{2pt}(q) R g(q)$. Thus, $(q \mapsto \langle f\hspace{2pt}(q), g(q)\rangle)$ defines the unique morphism from $Q$ to $R$ in $\mathbf{M}$, witnessing the universal pullback property. Since its definition is stratified, it is also a morphism in $\mathbf{N}$.
	
	It remains to show that if $\chi\hspace{1pt}'$ is a morphism in $\mathbf{N}$ that (in place of $\chi$) makes (\ref{PT}) a pullback in $\mathbf{M}$, then $\chi\hspace{1pt}' = \chi$. Let $\chi\hspace{1pt}'$ be such a morphism, and let $u \in A$ and $y \in B$. Since $\{u\} R y \Leftrightarrow \{u\} \subseteq^{\mathbf{T}} \chi(y)$, it suffices to show that $\{u\} R y \Leftrightarrow \{u\} \subseteq^{\mathbf{T}} \chi\hspace{1pt}'(y)$. By commutativity $\{u\} R y \Rightarrow \{u\} \subseteq^{\mathbf{T}} \chi\hspace{1pt}'(y)$. Conversely, applying the universal pullback property to the inclusion function $\{\langle \{u\}, y\rangle\} \hookrightarrow \mathbf{T}A \times B$, we find that $\{u\} \subseteq^{\mathbf{T}} \chi\hspace{1pt}'(y) \Rightarrow \{u\} R y$.
	\end{enumerate}
This completes the interpretation of $\mathrm{IMLU}_\mathsf{Cat}$ in $\mathrm{IMLU}_\mathsf{Class}$. For $\mathrm{MLU}$, simply observe that $\mathrm{MLU}_\mathsf{Class} \vdash \forall X . \forall X\hspace{1pt}' \subseteq X . X\hspace{1pt}' \cup (X - X\hspace{1pt}') = X$, so each $\mathrm{Sub}_X$ is Boolean. For $\mathrm{(I)ML}$,  the fact that $V = \pow V$ ensures that the interpretation of $U \cong \mathbf{P}U$ holds.
\end{proof}

\section{Interpretation of the $\mathsf{Set}$-theories in the $\mathsf{Cat}$-theories} \label{interpret set in cat}

For the rest of the paper, fix an $\mathrm{IMLU}$-category $( \mathbf{M}, \mathbf{N})$ -- along with $U$ in $\mathbf{N}$, $\mathbf{T}: \mathbf{M} \rightarrow \mathbf{M}$ (restricting to an endofunctor of $\mathbf{N}$), $\iota : \mathbf{id} \xrightarrow{\sim} \mathbf{T}$ on $\mathbf{M}$, $\mathbf{P}: \mathbf{N} \rightarrow \mathbf{N}$, $\mu : \mathbf{P}\circ\mathbf{T} \xrightarrow{\sim} \mathbf{T}\circ\mathbf{P}$, and $\subseteq^{\mathbf{T}}_X \rightarrowtail \mathbf{T} X \times \mathbf{P}X$ (for each object $X$ in $\mathbf{N}$) -- all satisfying the conditions in Definition \ref{IMLU_CAT}. Moreover, fix an object $\mathbf{1}$ which is terminal in both $\mathbf{M}$ and $\mathbf{N}$ and fix a product functor $\times$ on $\mathbf{M}$ which restricts to a product functor on $\mathbf{N}$. This can be done since $\mathbf{N}$ is a Heyting subcategory of $\mathbf{M}$. Given an $n \in \mathbb{N}$ and a product $P$ of $n$ objects, the $i$-th projection morphism, for $i = 1, \dots , n$, is denoted $\pi_P^i$.

In this section, we shall establish that 
$$\mathrm{Con}(\mathrm{(I)ML(U)}_\mathsf{Cat}) \Rightarrow \mathrm{Con}(\mathrm{(I)NF(U)}_\mathsf{Set}).$$ 
We do so by proving that the axioms of $\mathrm{(I)NF(U)}_\mathsf{Set}$ can be interpreted in the internal language of $\mathrm{(I)ML(U)}_\mathsf{Cat}$. In particular, we construct a structure in the categorical semantics of $\mathbf{M}$ which satisfies the axioms of $\mathrm{(I)NF(U)}_\mathsf{Set}$. The variation between the intuitionistic and the classical case is handled by Theorem \ref{Completeness}, so we will concentrate on proving 
$$\mathrm{Con}(\mathrm{IMLU}_\mathsf{Cat}) \Rightarrow \mathrm{Con}(\mathrm{INFU}_\mathsf{Set}),$$ 
and $\mathrm{Con}(\mathrm{MLU}_\mathsf{Cat}) \Rightarrow \mathrm{Con}(\mathrm{NFU}_\mathsf{Set})$ is thereby obtained as well, simply by assuming that $( \mathbf{M}, \mathbf{N})$ is an $\mathrm{MLU}$-category. By Lemma \ref{SforNF} below this also establishes $\mathrm{Con}(\mathrm{(I)ML}_\mathsf{Cat}) \Rightarrow \mathrm{Con}(\mathrm{(I)NF}_\mathsf{Set})$.

\begin{constr} \label{InConstruction}
For each object $A$ of $\mathbf{N}$ let $\in_A$, along with $m_{\in_A}$, be this pullback in $\mathbf{M}$:
$$
\begin{tikzcd}[ampersand replacement=\&, column sep=small]
{\in_A} \ar[rr, "{\sim}"] \ar[d, rightarrowtail, "{m_{\in_A}}"] \&\& {\subseteq^{\mathbf{T}}_A} \ar[d, rightarrowtail] \\
{A \times \mathbf{P} A} \ar[rr, "{\sim}", "{\iota \times \id}"'] \&\& {\mathbf{T} A \times \mathbf{P} A}
\end{tikzcd}
$$
\end{constr}

In order to avoid confusing the $\in_A$ defined above with the membership symbol of $\mathcal{L}_\mathsf{Set}$, the latter is replaced by the symbol $\epsin$.

\begin{constr} \label{StructureConstruction}
This $\mathcal{L}_\mathsf{Set}$-structure, in the categorical semantics of $\mathbf{M}$, is denoted $\mathcal{U}$:
	\begin{enumerate}
	\item The single sort of $\mathcal{L}_\mathsf{Set}$ is assigned to the universal object $U$ of $\mathbf{N}$.
	\item Fix a mono $m_S : \mathbf{P}U \rightarrowtail U$ in $\mathbf{N}$. The sethood predicate symbol $S$ is identified with the predicate symbol $m_S$ in $\mathcal{S}^\mathbf{M}_\mathbf{N}$, and is assigned to the subobject of $U$ determined by $m_S$.
	\item Fix the mono $m_{\epsin} =_{\mathrm{df}} (\id_U \times m_S) \circ m_{\in_U} : \in_U \rightarrowtail U \times \mathbf{P} U \rightarrowtail U \times U.$ The membership symbol $\epsin$ is identified with the symbol $m_{\epsin}$ in $\mathcal{S}^\mathbf{M}_\mathbf{M}$, and is assigned to the subobject of $U \times U$ determined by $m_{\epsin}$.
	\item Fix a mono $m_P : U \times U \rightarrowtail U$ in $\mathbf{N}$. The function symbol $\langle -, -\rangle$ is identified with the symbol $m_P$ in $\mathcal{S}^\mathbf{M}_\mathbf{N}$ and is assigned to the subobject of $U$ determined by $m_P$.
	\end{enumerate}

By the identifications of symbols, the signature of $\mathcal{L}_\mathsf{Set}$ is a subsignature of $\mathcal{S}^\mathbf{M}_\mathbf{M}$.
\end{constr}

We will usually omit subscripts such as in $\in_A$ and $\subseteq^{\mathbf{T}}_A$, as they tend to be obvious. Similarly, sort declarations are sometimes omitted when considering formulae of the internal language. Note that the symbol $\epsin$ in $\mathcal{L}_\mathsf{Set}$ is interpreted by the subobject of $U \times U$ determined by $m_{\epsin}$, not by the subobject of $U \times \mathbf{P}U$ determined by $m_{\in_U}$. In the categorical setting it tends to be more natural to have a membership relation of sort $A \times \mathbf{P}A$ for each object $A$, while in the set-theoretical setting it tends to be more natural to have just one sort, say Universe, and just one membership relation of sort Universe $\times$ Universe.

To prove $\mathrm{Con}(\mathrm{(I)ML(U)}_\mathsf{Cat}) \Rightarrow \mathrm{Con}(\mathrm{(I)NF(U)}_\mathsf{Set})$, we need to establish that $\mathcal{U}$ satisfies Axioms \ref{SCAx}. $\mathcal{U} \models \phi$ is the statement that the $\mathcal{L}_\mathsf{Set}$-structure $\mathcal{U}$ satisfies $\phi \in \mathcal{L}_\mathsf{Set}$, in the categorical semantics of $\mathbf{M}$. For the major axioms, Extensionality and Stratified Comprehension, we will first prove the more general (and more naturally categorical) statements in terms of the $\in_A$, and second obtain the required statements about $\epsin$ as corollaries. The general results will be stated in the form $\mathbf{M} \models \phi$, where $\phi$ is a formula in the language of $\mathcal{S}^M_M$ or some subsignature of it. In particular, the subsignature $\mathcal{S}^\mathbf{M}_\mathbf{N}$ is of interest. Since $\mathbf{N}$ is a Heyting subcategory of $\mathbf{M}$, if $\phi(\vec{x})$ is an $\mathcal{S}^\mathbf{M}_\mathbf{N}$-formula (with $\vec{x} : X^n$, for some $X$ in $\mathbf{N}$ and $n \in \mathbb{N}$), then $\llbracket \vec{x} : X^n \mid \phi(\vec{x}) \rrbracket$ is assigned to the same subobject of $X^n$ by the natural $\mathcal{S}^\mathbf{M}_\mathbf{M}$-structure as by the natural $\mathcal{S}^\mathbf{M}_\mathbf{N}$-structure. Therefore, we do not need to specify which of these structures is used when referring to a subobject by such an expression.

The following proposition is the expression of Construction \ref{InConstruction} in the categorical semantics.

\begin{prop}\label{membership as iota subset}
Let $X$ be an object of $\mathbf{N}$. 
$$\mathbf{M} \models \forall x : X . \forall y : \mathbf{P} X . ( x \in y \leftrightarrow \iota x \subseteq^{\mathbf{T}} y).$$
\end{prop}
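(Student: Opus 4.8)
Looking at this proposition, I need to show that membership $\in_U$ (defined via pullback) and the relation "$\iota x \subseteq^{\mathbf{T}} y$" coincide in the internal language.

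The plan is to unwind the definition of $\in_A$ from Construction \ref{InConstruction} and translate the pullback square into a statement about subobjects in the categorical semantics. First I would recall that $\in_A$ together with $m_{\in_A}$ is defined as the pullback of the mono $m_{\subseteq^{\mathbf{T}}_A} : \hspace{2pt} \subseteq^{\mathbf{T}}_A \hspace{2pt} \rightarrowtail \mathbf{T}A \times \mathbf{P}A$ along the isomorphism $\iota_X \times \id_{\mathbf{P}X} : X \times \mathbf{P}X \xrightarrow{\sim} \mathbf{T}X \times \mathbf{P}X$. The key observation is that pulling a subobject back along an isomorphism just transports it, and that this transport is exactly what the interpretation of the term $\langle \iota x, y \rangle$ does in the semantics. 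Concretely, the subobject of $X \times \mathbf{P}X$ represented by $m_{\in_X}$ equals $(\iota_X \times \id)^*(\subseteq^{\mathbf{T}}_X)$, which by the recursive clause for relation symbols is precisely $\llbracket x : X, y : \mathbf{P}X \mid \iota x \subseteq^{\mathbf{T}} y \rrbracket$.

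The main steps, in order, would be: (1) By Construction \ref{InConstruction}, identify $\llbracket x, y \mid x \in y \rrbracket$ with the subobject determined by $m_{\in_X}$, since the relation symbol $\in_X$ is interpreted (in the natural $\mathcal{S}^\mathbf{M}_\mathbf{M}$-structure) by exactly this subobject. (2) Use the semantic clause $\llbracket \vec{x} \mid R(\vec{t}) \rrbracket =_\mathrm{df} \llbracket \vec{x} \mid \vec{t} \rrbracket^*(R^\mathcal{M})$ applied to the relation symbol $\subseteq^{\mathbf{T}}_X$ and the term-tuple $\langle \iota x, y \rangle$, observing that $\llbracket x, y \mid \langle \iota x, y\rangle \rrbracket = \iota_X \times \id_{\mathbf{P}X}$, so that $\llbracket x, y \mid \iota x \subseteq^{\mathbf{T}} y \rrbracket = (\iota_X \times \id)^*(\subseteq^{\mathbf{T}}_X)$. (3) Finally, note that the pullback square in Construction \ref{InConstruction} says precisely that $m_{\in_X}$ represents $(\iota_X \times \id)^*(\subseteq^{\mathbf{T}}_X)$ as a subobject of $X \times \mathbf{P}X$; hence the two interpretations agree, and the biconditional is validated since equal subobjects yield the maximal subobject under $\leftrightarrow$. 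Throughout I would use the well-known rule $\llbracket \vec{x} \mid \chi \rightarrow \psi \rrbracket = \top \Leftrightarrow \llbracket \vec{x} \mid \chi \rrbracket \leq \llbracket \vec{x} \mid \psi \rrbracket$, so that establishing equality of the two subobjects immediately gives the validity of the biconditional under the $\forall$-quantifiers.

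The subtle point — the closest thing to an obstacle — is bookkeeping the role of $\iota$ as a genuine morphism versus its appearance as a function symbol in the internal language, and checking that $\llbracket x, y \mid \langle \iota x, y \rangle \rrbracket$ really equals $\iota_X \times \id_{\mathbf{P}X}$ rather than something that merely commutes with it up to the structure. This is a routine computation using the recursive definition of term interpretation (the clause for $\langle f_1, \dots, f_n \rangle$ and the projection clause), so I would state it cleanly but not belabor it. Everything else is a direct transcription of the pullback in Construction \ref{InConstruction} into the language of inverse-image functors, and the fact (implicit in axiom (F5) and the definition of $f^*$) that pulling back along an isomorphism is an order-isomorphism of subobject lattices carrying $\subseteq^{\mathbf{T}}_X$ to $m_{\in_X}$.
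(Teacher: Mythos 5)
Your proposal is correct and takes essentially the same route as the paper: the paper's proof is exactly the one-line computation $\llbracket x, y \mid \iota x \subseteq^{\mathbf{T}} y \rrbracket = (\iota \times \id)^*\llbracket u, y \mid u \subseteq^{\mathbf{T}} y \rrbracket = \llbracket x, y \mid x \in y \rrbracket$, which is precisely your steps (2) and (3). You have merely made explicit the bookkeeping (the term-interpretation clause and the role of Construction \ref{InConstruction}) that the paper leaves implicit.
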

\begin{proof}
$\llbracket x, y \mid \iota x \subseteq^{\mathbf{T}} y \rrbracket = (\iota \times \id)^*\llbracket u, y \mid u \subseteq^{\mathbf{T}} y \rrbracket = \llbracket x, y \mid x \in y \rrbracket$. 
\end{proof}

Let us start the proof of $\mathcal{U} \models \mathrm{INFU}_\mathsf{Set}$ with the easy axioms of Sethood and Ordered Pair.

\begin{prop}[Sethood] \label{Sethood}
$\mathcal{U} \models \forall z . \forall x . (z \epsin x \rightarrow S(x))$
\end{prop}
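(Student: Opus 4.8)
The plan is to unwind the categorical semantics and reduce the claim to the single factorization equation that defines $m_{\epsin}$. By the validity rules recorded after the definition of the interpretation, $\mathcal{U} \models \forall z . \forall x . (z \epsin x \rightarrow S(x))$ holds precisely when
$$\llbracket z, x : U \mid z \epsin x \rrbracket \leq_{U \times U} \llbracket z, x : U \mid S(x) \rrbracket$$
in the Heyting algebra $\mathrm{Sub}(U \times U)$. So the whole task is to identify these two subobjects and exhibit the required mono between their representatives.

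First I would compute the right-hand subobject. Since $S$ is interpreted by the mono $m_S : \mathbf{P}U \rightarrowtail U$, and the term $x$ in context $(z,x)$ is interpreted by the projection $\pi^2 : U \times U \rightarrow U$, the interpretation rule for atomic formulae gives $\llbracket z, x \mid S(x) \rrbracket = (\pi^2)^*(m_S)$. Taking the pullback of $m_S$ along $\pi^2$ shows that this subobject is represented by $\id_U \times m_S : U \times \mathbf{P}U \rightarrowtail U \times U$.

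Next, the left-hand subobject $\llbracket z, x \mid z \epsin x \rrbracket$ is, by Construction \ref{StructureConstruction}, the subobject of $U \times U$ determined by $m_{\epsin} = (\id_U \times m_S) \circ m_{\in_U}$. Recall that a subobject represented by a mono $m$ is below a subobject represented by a mono $m'$ exactly when $m$ factors through $m'$. Taking $m = m_{\epsin}$, $m' = \id_U \times m_S$, and the factoring morphism to be $m_{\in_U}$, the defining equation $m_{\epsin} = (\id_U \times m_S) \circ m_{\in_U}$ is exactly such a factorization. Hence the displayed inequality holds and the axiom is validated.

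There is essentially no obstacle here: the interpretation of $\epsin$ in Construction \ref{StructureConstruction} was set up so that it factors through $\id_U \times m_S$, which is precisely the interpretation of $S(x)$. The only points requiring care are the correct computation of $\llbracket z, x \mid S(x) \rrbracket$ as a pullback of $m_S$ along the second projection, and the observation that the outer coordinates match so that $\id_U \times m_S$ genuinely represents the reindexed subobject; once these are in place the conclusion is immediate from the definition of $m_{\epsin}$.
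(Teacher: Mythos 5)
Your proof is correct and follows the same route as the paper's: both reduce the claim to the subobject inequality $\llbracket z, x \mid z \epsin x \rrbracket \leq_{U \times U} \llbracket z, x \mid S(x) \rrbracket$, identify $\llbracket z, x \mid S(x) \rrbracket$ with the subobject represented by $\id_U \times m_S$ (the pullback of $m_S$ along $\pi^2$), and then read off the inequality from the defining factorization $m_{\epsin} = (\id_U \times m_S) \circ m_{\in_U}$. The paper states these two steps more tersely ("by construction of $m_{\epsin}$" and "by construction of $m_S$"), but the content is identical.
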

\begin{proof}
By construction of $m_{\epsin}$, $\llbracket z, x : U \mid z \epsin x \rrbracket \leq_{U \times U} U \times \mathbf{P}U$, and by construction of  $m_S$, $U \times \mathbf{P}U \cong_{U \times U} \llbracket z, x : U \mid S(x)\rrbracket$, so 
$$\llbracket z, x : U \mid z \epsin x \rrbracket \leq_{U \times U} \llbracket z, x : U \mid S(x)\rrbracket,$$ 
as desired.
\end{proof}

\begin{prop}[Ordered Pair] \label{OrderedPair}
$\mathcal{U} \models \forall x, x\hspace{1pt}', y, y\hspace{1pt}' . \big( \langle x, y \rangle = \langle x\hspace{1pt}', y\hspace{1pt}' \rangle \rightarrow ( x = x\hspace{1pt}' \wedge y = y\hspace{1pt}' ) \big)$
\end{prop}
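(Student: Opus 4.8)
The plan is to recognize that this instance of the Ordered Pair axiom is, in the categorical semantics of $\mathbf{M}$, nothing more than the assertion that the interpreting morphism $m_P$ is monic. Recall from Construction \ref{StructureConstruction} that $\langle -, - \rangle$ is interpreted by the morphism $m_P : U \times U \rightarrowtail U$, which is a mono in $\mathbf{N}$; by Proposition \ref{strenghten def}(\ref{strengthen mono}) it is therefore also monic in $\mathbf{M}$. Working in the context $\vec{w} = (x, x', y, y') : U^4$ with projections $\pi^1, \dots, \pi^4 : U^4 \to U$, the two ordered-pair terms are interpreted by $\llbracket \vec{w} \mid \langle x, y \rangle \rrbracket = m_P \circ \langle \pi^1, \pi^3 \rangle$ and $\llbracket \vec{w} \mid \langle x', y' \rangle \rrbracket = m_P \circ \langle \pi^2, \pi^4 \rangle$.

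First I would reduce the validity statement to a containment of subobjects: by the rule $\llbracket \vec{w} \mid \chi \to \psi \rrbracket = U^4 \Leftrightarrow \llbracket \vec{w} \mid \chi \rrbracket \leq \llbracket \vec{w} \mid \psi \rrbracket$, it suffices to show
$$\llbracket \vec{w} \mid \langle x, y \rangle = \langle x', y' \rangle \rrbracket \leq_{U^4} \llbracket \vec{w} \mid x = x' \wedge y = y' \rrbracket.$$
The left-hand subobject, by the interpretation of equality, is the pullback $\langle m_P \circ \langle \pi^1, \pi^3 \rangle, m_P \circ \langle \pi^2, \pi^4 \rangle \rangle^*(\Delta_U)$, i.e. the equalizer of the two morphisms $m_P \circ \langle \pi^1, \pi^3 \rangle$ and $m_P \circ \langle \pi^2, \pi^4 \rangle$ from $U^4$ to $U$. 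The right-hand subobject is the meet $\llbracket \vec{w} \mid x = x' \rrbracket \wedge \llbracket \vec{w} \mid y = y' \rrbracket$, which unwinds to the equalizer of $\langle \pi^1, \pi^3 \rangle$ and $\langle \pi^2, \pi^4 \rangle$ from $U^4$ to $U \times U$ (equivalently, the pullback of $\Delta_{U \times U}$).

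The heart of the argument is then that these two equalizers coincide as subobjects of $U^4$, and this is precisely where monicity of $m_P$ is used. The inclusion of the right-hand side into the left is automatic, since $\langle \pi^1, \pi^3 \rangle = \langle \pi^2, \pi^4 \rangle$ on the right-hand equalizer forces $m_P \circ \langle \pi^1, \pi^3 \rangle = m_P \circ \langle \pi^2, \pi^4 \rangle$ there. For the converse, the representing mono $e : E \rightarrowtail U^4$ of the left-hand subobject satisfies $m_P \circ \langle \pi^1, \pi^3 \rangle \circ e = m_P \circ \langle \pi^2, \pi^4 \rangle \circ e$, and cancelling the mono $m_P$ gives $\langle \pi^1, \pi^3 \rangle \circ e = \langle \pi^2, \pi^4 \rangle \circ e$; by the universal property of the equalizer on the right, $e$ factors through it, yielding the required inequality in $\mathrm{Sub}(U^4)$. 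I expect the only genuine subtlety to be the bookkeeping of translating the syntactic equality and conjunction into the correct equalizer/pullback subobjects, together with confirming that monicity in $\mathbf{M}$ (rather than merely in $\mathbf{N}$) is what licenses the cancellation; the categorical content itself is the one-line fact that monos reflect equality of generalized elements.
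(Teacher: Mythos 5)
Your proposal is correct and takes essentially the same route as the paper: both proofs reduce the axiom to the monicity of $m_P$ (which, as you note via Proposition \ref{strenghten def}, passes from $\mathbf{N}$ to $\mathbf{M}$) and conclude by cancelling this mono against the two pair-terms over the representing mono of the antecedent subobject. Your version merely spells out the equalizer/pullback bookkeeping that the paper's two-line proof leaves implicit.
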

\begin{proof}
Let $\langle a, a\hspace{1pt}', b, b\hspace{1pt}' \rangle$ be a mono with co-domain $U^4$, representing 
\[
\llbracket x, x\hspace{1pt}', y, y\hspace{1pt}' : U \mid \forall x, x\hspace{1pt}', y, y\hspace{1pt}' . \big( \langle x, y \rangle = \langle x\hspace{1pt}', y\hspace{1pt}' \rangle \rightarrow ( x = x\hspace{1pt}' \wedge y = y\hspace{1pt}' ) \big) \rrbracket .
\]
We need to derive $\langle a, b \rangle = \langle a\hspace{1pt}', b\hspace{1pt}' \rangle$ from the assumption $m_P \circ \langle a, b \rangle = m_P \circ \langle a\hspace{1pt}', b\hspace{1pt}' \rangle$. But this follows immediately from that $m_P$ is monic.
\end{proof}

The following Lemma yields $\mathrm{Con}(\mathrm{(I)ML}_\mathsf{Cat}) \Rightarrow \mathrm{Con}(\mathrm{(I)NF}_\mathsf{Set})$ for free, if we successfully prove that $\mathcal{U} \models \mathrm{(I)NFU_\mathsf{Set}}$.

\begin{lemma}[$\mathrm{(I)NF}$ for free]\label{SforNF}
If $U \cong \mathbf{P}U$, then we can choose $m_S : \mathbf{P}U \rightarrowtail U$ (i.e. the interpretation of the predicate symbol $S$) to be an isomorphism. If so, then $\mathcal{U} \models \forall x . S(x)$.
\end{lemma}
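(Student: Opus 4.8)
The plan is to unwind the two claims directly from the definition of the interpretation $\mathcal{U}$ in Construction \ref{StructureConstruction} together with the clause of the categorical semantics governing validity of a universally quantified formula. The first claim is essentially a matter of legitimacy of a choice: Construction \ref{StructureConstruction}(2) only requires that $m_S$ be \emph{some} mono $\mathbf{P} U \rightarrowtail U$ in $\mathbf{N}$. By hypothesis $U \cong \mathbf{P} U$, so there is an isomorphism $\mathbf{P} U \to U$ in $\mathbf{N}$; since every isomorphism is monic in its category, this morphism is monic in $\mathbf{N}$ (and, by Proposition \ref{strenghten def}\,(\ref{strengthen mono}), monic in $\mathbf{M}$ as well). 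Hence we are free to take $m_S$ to be this isomorphism, which is all the first sentence asserts.

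For the second claim, assume $m_S : \mathbf{P} U \rightarrowtail U$ is an isomorphism. First I would record that, by Construction \ref{StructureConstruction}(2), the interpretation of the sethood predicate is $\llbracket x : U \mid S(x) \rrbracket = [m_S]$, the subobject of $U$ determined by $m_S$. Next I would invoke the quantifier rule stated immediately after the definition of the semantics, according to which $\mathcal{U} \models \forall x . S(x)$ holds precisely when $\llbracket x : U \mid S(x) \rrbracket$ is the maximal subobject of $\mathrm{Sub}(U)$ (the one represented by $\id_U : U \rightarrowtail U$). It then remains to observe that a subobject represented by an isomorphism is maximal: since $m_S$ is an isomorphism and $\id_U \circ m_S = m_S$, the monos $m_S$ and $\id_U$ are isomorphic objects of the slice $\mathbf{M}/U$, so $[m_S] = [\id_U]$ is the top element of $\mathrm{Sub}(U)$. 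Chaining these three observations gives $\mathcal{U} \models \forall x . S(x)$.

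There is no substantial obstacle here; the lemma is genuinely routine, and its entire content is bookkeeping about which subobject interprets $S$. The only point that merits a moment's care is the final step — that representing a subobject by an isomorphism yields the maximal element of the subobject lattice — together with the minor observation that all of this may be computed in $\mathbf{M}$ (where the structure $\mathcal{U}$ lives), which is unproblematic because $m_S$ is a morphism of the Heyting subcategory $\mathbf{N}$ and subobjects of objects of $\mathbf{N}$ agree whether formed in $\mathbf{N}$ or $\mathbf{M}$. I would keep the write-up to a few lines and not belabour these points.
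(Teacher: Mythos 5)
Your proof is correct and follows essentially the same route as the paper's: the paper's entire argument is the one-line observation that since $m_S$ is an isomorphism, $m_S : \mathbf{P}U \rightarrowtail U$ and $\id_U : U \rightarrowtail U$ represent the same (maximal) subobject of $U$, which is exactly your final step. Your additional bookkeeping — that an isomorphism is a legitimate choice of mono for $m_S$, and that validity of $\forall x . S(x)$ means maximality of $\llbracket x : U \mid S(x) \rrbracket$ in $\mathrm{Sub}(U)$ — just makes explicit what the paper leaves implicit.
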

\begin{proof}
Since $m_S$ is an isomorphism, $m_s : \mathbf{P}U \rightarrowtail U$ and $\id : U \rightarrowtail U$ represent the same subobject of $U$.
\end{proof}

Note that we do not need $U = \mathbf{P}U$ for this result; $U \cong \mathbf{P}U$ suffices. This means that our results will actually give us that $\mathrm{(I)NF_\mathsf{Set}}$ is equiconsistent with $\mathrm{(I)NFU_\mathsf{Set}} + \big( |V| = |\mathcal P(V)| \big)$, with essentially no extra work. See Corollary \ref{ConFewAtoms} below. This result has been proved previously in \cite{Cra00} using the conventional set-theoretical semantics. In the present categorical setting, this result is transparently immediate.

\begin{prop}[Extensionality]\label{Extensionality}
Let $Z$ be an object of $\mathbf{N}$. 
$$\mathbf{M} \models \forall x : \mathbf{P} Z . \forall y : \mathbf{P} Z . \big[ \big( \forall z : Z . (z \in x \leftrightarrow z \in y) \big) \rightarrow x = y \big].$$
\end{prop}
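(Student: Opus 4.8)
The plan is to prove the general power-object extensionality for $\mathbf{P}Z$ and then read off the stated claim. Write $E = \llbracket x, y : \mathbf{P}Z \mid \forall z : Z . (z \in x \leftrightarrow z \in y)\rrbracket$ and let $e : E \rightarrowtail \mathbf{P}Z \times \mathbf{P}Z$ represent it, with components $p_1 = \pi^1 \circ e$ and $p_2 = \pi^2 \circ e$, so that $e = \langle p_1, p_2\rangle$. Since $\llbracket x, y : \mathbf{P}Z \mid x = y \rrbracket = \Delta_{\mathbf{P}Z}$, the displayed statement holds in $\mathbf{M}$ exactly when $E \leq_{\mathbf{P}Z \times \mathbf{P}Z} \Delta_{\mathbf{P}Z}$, and this is implied by $p_1 = p_2$ (for then $e$ factors through the diagonal). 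Thus the whole statement reduces to showing $p_1 = p_2$.

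First I would check that all of this data lives in $\mathbf{N}$, which is what makes the power-object property of $\mathbf{P}Z$ (taken in $\mathbf{M}$ with respect to $\mathbf{N}$) applicable. By Construction \ref{InConstruction}, $\in_Z$ is the pullback of $\subseteq^{\mathbf{T}}_Z$ (a morphism of $\mathbf{N}$) along $\iota_Z \times \id$ (a morphism of $\mathbf{N}$, as $\iota$ restricts to $\mathbf{N}$); since $\mathbf{N}$ is a Heyting subcategory of $\mathbf{M}$, this limit may be computed in $\mathbf{N}$, so $m_{\in_Z}$ is a relation symbol of $\mathcal{S}^\mathbf{M}_\mathbf{N}$. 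The formula defining $E$ is built from $\in_Z$ using only quantification over the $\mathbf{N}$-sort $Z$ and the Heyting operations, all preserved by the Heyting inclusion, so by the agreement of the natural $\mathcal{S}^\mathbf{M}_\mathbf{N}$- and $\mathcal{S}^\mathbf{M}_\mathbf{M}$-structures, $e : E \rightarrowtail \mathbf{P}Z \times \mathbf{P}Z$ is a mono of $\mathbf{N}$ and $p_1, p_2$ are morphisms of $\mathbf{N}$.

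Now set $R_i = (\id_{\mathbf{T}Z} \times p_i)^*(\subseteq^{\mathbf{T}}_Z)$ as subobjects of $\mathbf{T}Z \times E$; these are pullbacks of a mono of $\mathbf{N}$ along morphisms of $\mathbf{N}$, hence lie in $\mathbf{N}$ and are monic in $\mathbf{M}$. By construction each $p_i$ makes the square $(\mathrm{PT})$ a pullback for $R_i$, so $p_i$ is a characteristic morphism of $R_i$; by the uniqueness clause of the power-object property it is \emph{the} unique such morphism in $\mathbf{M}$. Consequently it suffices to prove $R_1 = R_2$ as subobjects of $\mathbf{T}Z \times E$, for then $p_1 = p_2$. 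To get $R_1 = R_2$ I would pull back along the isomorphism $\iota_Z \times \id_E : Z \times E \xrightarrow{\sim} \mathbf{T}Z \times E$, which induces an isomorphism of subobject lattices; so it is enough to show $(\iota_Z \times \id_E)^* R_1 = (\iota_Z \times \id_E)^* R_2$. Composing the two pullbacks and using $(\id_{\mathbf{T}Z} \times p_i) \circ (\iota_Z \times \id_E) = (\iota_Z \times \id_{\mathbf{P}Z}) \circ (\id_Z \times p_i)$ together with the defining pullback $(\iota_Z \times \id)^*(\subseteq^{\mathbf{T}}_Z) = \in_Z$ of Construction \ref{InConstruction}, this reduces to the equality $(\id_Z \times p_1)^*(\in_Z) = (\id_Z \times p_2)^*(\in_Z)$ of subobjects of $Z \times E$.

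Finally, this last equality is precisely the content of the formula defining $E$. Pulling that formula back along $e = \langle p_1, p_2\rangle$, which represents $E$ and hence satisfies $e^* E = \top_E$, gives $E \models \forall z : Z . (z \in p_1(-) \leftrightarrow z \in p_2(-))$; by the semantics of $\forall$ and $\leftrightarrow$ this says exactly that the subobjects $(\id_Z \times p_1)^*(\in_Z)$ and $(\id_Z \times p_2)^*(\in_Z)$ of $Z \times E$ coincide. This closes the chain $R_1 = R_2 \Rightarrow p_1 = p_2 \Rightarrow E \leq \Delta_{\mathbf{P}Z}$, giving the result. I expect the main obstacle to be the bookkeeping rather than any genuine difficulty: one must verify carefully that $E$, $p_1$, $p_2$, $R_1$, $R_2$ all live in $\mathbf{N}$ so that the $\mathbf{N}$-relative power-object property and its uniqueness clause apply, and one must transport the natural isomorphism $\iota$ correctly through the two nested pullbacks so that the membership relation $\in_Z$ and the power-object relation $\subseteq^{\mathbf{T}}_Z$ are matched up.
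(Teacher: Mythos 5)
Your proposal is correct and follows essentially the same route as the paper's proof: represent the antecedent subobject by a mono with two components into $\mathbf{P}Z \times \mathbf{P}Z$, show that the two pulled-back relations coincide (which is exactly the semantic content of the defining formula), and conclude equality of the components from the uniqueness clause of the power-object property (\ref{PT}). The only difference is bookkeeping — the paper translates $\in$ to $\subseteq^{\mathbf{T}}$ at the outset via Proposition \ref{membership as iota subset} and computes with $\subseteq^{\mathbf{T}}$ throughout, whereas you keep $\in_Z$ and transport along the isomorphism $\iota_Z \times \id_E$ at the end.
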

\begin{proof}
We use the fact that $\mathbf{N}$ is a Heyting subcategory of $\mathbf{M}$. By Proposition \ref{membership as iota subset}, it suffices to establish that in $\mathbf{N}$:
\[ \llbracket x : \mathbf{P} Z, y : \mathbf{P} Z \mid \forall z : \mathbf{T}Z . (z \subseteq^{\mathbf{T}} x \leftrightarrow z \subseteq^{\mathbf{T}} y) \rrbracket \leq \llbracket x, y \mid x = y \rrbracket .\]
Let $\langle a, b \rangle : E \rightarrowtail \mathbf{P} Z \times \mathbf{P} Z$ represent $\llbracket x, y \mid \forall z . (z \subseteq^{\mathbf{T}} x \leftrightarrow z \subseteq^{\mathbf{T}} y) \rrbracket$. We need to show that $a = b$.

Consider $\llbracket w, u, v \mid w \subseteq^{\mathbf{T}} u \rrbracket$ and $\llbracket w, u, v \mid w \subseteq^{\mathbf{T}} v \rrbracket$ as subobjects of $\mathbf{T} Z \times \mathbf{P} Z \times \mathbf{P} Z$. We calculate their pullbacks along $\id \times \langle a, b \rangle$ to be equal subobjects of $\llbracket x, y \mid \forall z . (z \subseteq^{\mathbf{T}} x \leftrightarrow z \subseteq^{\mathbf{T}} y) \rrbracket$:
\[ 
\begin{split}
& (\id \times \langle a, b \rangle)^* \llbracket w, u, v \mid w \subseteq^{\mathbf{T}} u \rrbracket \\
= & \llbracket w, u, v \mid w \subseteq^{\mathbf{T}} u \wedge \forall z . (z \subseteq^{\mathbf{T}} u \leftrightarrow z \subseteq^{\mathbf{T}} v) \rrbracket \\
= & \llbracket w, u, v \mid w \subseteq^{\mathbf{T}} u \wedge w \subseteq^{\mathbf{T}} v \wedge \forall z . (z \subseteq^{\mathbf{T}} u \leftrightarrow z \subseteq^{\mathbf{T}} v) \rrbracket \\
= & \llbracket w, u, v \mid w \subseteq^{\mathbf{T}} v \wedge \forall z . (z \subseteq^{\mathbf{T}} u \leftrightarrow z \subseteq^{\mathbf{T}} v) \rrbracket \\
= & (\id \times \langle a, b \rangle)^*\llbracket w, u, v \mid w \subseteq^{\mathbf{T}} v \rrbracket 
\end{split}
\]

From inspection of the chain of pullbacks
\[
\begin{tikzcd}[ampersand replacement=\&, column sep=small]
{(\id \times \langle a, b \rangle)^* \llbracket w, u, v \mid w \subseteq^{\mathbf{T}} u \rrbracket} \ar[rr, rightarrowtail] \ar[d, rightarrowtail, "f"'] \&\&
{\llbracket w, u, v \mid w \subseteq^{\mathbf{T}} u \rrbracket} \ar[rr] \ar[d, rightarrowtail] \&\&
{\llbracket w, t \mid w \subseteq^{\mathbf{T}} t \rrbracket} \ar[d, rightarrowtail]
\\
{\llbracket z, x, y \mid \forall z . (z \subseteq^{\mathbf{T}} x \leftrightarrow z \subseteq^{\mathbf{T}} y) \rrbracket} \ar[rr, rightarrowtail, "{\id \times \langle a, b \rangle}"] \ar[rrrr, bend right, "{\id \times a}"'] \&\&
{\mathbf{T} Z \times \mathbf{P} Z \times \mathbf{P} Z} \ar[rr, "{\langle \pi^1, \pi^2 \rangle}" ] \&\& {\mathbf{T} Z \times \mathbf{P} Z} ,
\end{tikzcd}
\]
it is evident that
\[
(\id \times a)^* \big( \llbracket w, t \mid w \subseteq^{\mathbf{T}} t \rrbracket) = (\id \times \langle a, b \rangle)^* \llbracket w, u, v \mid w \subseteq^{\mathbf{T}} u \rrbracket .
\]
Similarly,
\[
(\id \times b)^* \big( \llbracket w, t \mid w \subseteq^{\mathbf{T}} t \rrbracket) = (\id \times \langle a, b \rangle)^* \llbracket w, u, v \mid w \subseteq^{\mathbf{T}} v \rrbracket .
\]
So
\[
(\id \times a)^* \big( \llbracket w, t \mid w \subseteq^{\mathbf{T}} t \rrbracket) = (\id \times b)^* \big( \llbracket w, t \mid w \subseteq^{\mathbf{T}} t \rrbracket) ,
\]
and $f$ represents them as a subobject of $\llbracket z, x, y \mid \forall z . (z \subseteq^{\mathbf{T}} x \leftrightarrow z \subseteq^{\mathbf{T}} y) \rrbracket$.

By Proposition \ref{strenghten def}(\ref{strengthen power}) and uniqueness of $\chi$ in (\ref{PT}), we conclude that $a = b$.
\end{proof}

\begin{cor} \label{ExtCor}
$\mathcal{U} \models \mathrm{Ext}_S$
\end{cor}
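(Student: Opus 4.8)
We need to derive the set-theoretic axiom $\mathrm{Ext}_S$ for the structure $\mathcal{U}$, namely
\[
\mathcal{U} \models \forall x . \forall y . \big((S(x) \wedge S(y) \wedge \forall z . (z \epsin x \leftrightarrow z \epsin y)) \rightarrow x = y\big),
\]
from the already-established categorical Extensionality statement of Proposition \ref{Extensionality}, which is phrased in terms of the $\in_{\mathbf{P}U}$-relation (or rather $\subseteq^{\mathbf{T}}$) on $\mathbf{P}U$. The plan is to trace through how the interpretation in Construction \ref{StructureConstruction} sets up $S$, $\epsin$, and the sort $U$, and to push the general result of Proposition \ref{Extensionality} (instantiated at $Z = U$) down to the one-sorted, single-universe formulation required by $\mathcal{L}_\mathsf{Set}$.

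\textbf{The approach.} First I would apply Proposition \ref{Extensionality} with $Z = U$, yielding
\[
\mathbf{M} \models \forall x : \mathbf{P} U . \forall y : \mathbf{P} U . \big[ \big( \forall z : U . (z \in x \leftrightarrow z \in y) \big) \rightarrow x = y \big].
\]
The key observation is that the sethood predicate $S$ is interpreted by the mono $m_S : \mathbf{P}U \rightarrowtail U$, so ``$S(x)$'' picks out exactly the subobject of $U$ that is the image of $\mathbf{P}U$; an element of $U$ satisfying $S$ is, up to the iso onto $\mathrm{image}(m_S)$, an element of $\mathbf{P}U$. Likewise, by the construction $m_{\epsin} = (\id_U \times m_S) \circ m_{\in_U}$, the relation $z \epsin x$ on $U \times U$ restricts, when $x$ is constrained to satisfy $S(x)$, to precisely the relation $z \in_U x'$ where $x'$ is the $\mathbf{P}U$-element with $m_S(x') = x$. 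So the two universally-quantified $S$-conjuncts in $\mathrm{Ext}_S$ have the effect of restricting $x,y$ to range over $\mathrm{image}(m_S) \cong \mathbf{P}U$, and under this restriction the $\epsin$-extensionality hypothesis becomes exactly the $\in_U$-extensionality hypothesis. Since $m_S$ is monic, equality of the images pulls back to equality of the $\mathbf{P}U$-elements, giving $x = y$ in $U$.

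\textbf{Carrying it out categorically.} Concretely, I would compute the subobject $\llbracket x, y : U \mid S(x) \wedge S(y) \wedge \forall z . (z \epsin x \leftrightarrow z \epsin y) \rrbracket$ and show it lies below $\llbracket x, y : U \mid x = y \rrbracket$. Writing the $S$-conjuncts as the requirement that $x, y$ factor through $m_S$, I would use that $m_S$ is monic (so $\langle x, y\rangle$ factoring through $m_S \times m_S$ identifies this subobject with one of $\mathbf{P}U \times \mathbf{P}U$), together with Proposition \ref{membership as iota subset} and the definition of $m_{\epsin}$, to transport the hypothesis $\forall z . (z \epsin x \leftrightarrow z \epsin y)$ into the hypothesis $\forall z : U . (z \in x' \leftrightarrow z \in y')$ appearing in Proposition \ref{Extensionality}. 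That proposition then yields $x' = y'$ in $\mathbf{P}U$, and applying $m_S$ (and using that $\mathbf{N}$ is a Heyting subcategory of $\mathbf{M}$, so this reasoning transfers between the two natural structures) gives $x = y$ in $U$, as desired.

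\textbf{The main obstacle.} I expect the delicate step to be the bookkeeping that translates the $\epsin$-membership hypothesis into the $\in_U$-membership hypothesis of Proposition \ref{Extensionality}. The subtlety is that $\epsin$ is a relation of sort $U \times U$ whereas $\in_U$ is of sort $U \times \mathbf{P}U$, and they are reconciled only after the second coordinate is restricted to $\mathrm{image}(m_S)$; one must check that the biconditional $z \epsin x \leftrightarrow z \epsin y$, under the assumption $S(x) \wedge S(y)$, really does coincide with $z \in x' \leftrightarrow z \in y'$ rather than merely implying it in one direction. This is exactly the kind of ``restriction of the membership relation along $m_S$'' verification that mirrors the diagram-gluing arguments in Theorem \ref{ToposModelOfML}, and the monicity of $m_S$ is what makes the translation faithful. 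Once that identification is in place, the corollary is a direct instantiation of Proposition \ref{Extensionality}.
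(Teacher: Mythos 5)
Your proposal is correct and follows essentially the same route as the paper: instantiate Proposition \ref{Extensionality} at $Z = U$, then use the definitions $m_{\epsin} = (\id_U \times m_S) \circ m_{\in_U}$ and the interpretation of $S$ by the mono $m_S : \mathbf{P}U \rightarrowtail U$ to rewrite quantification over $\mathbf{P}U$ with $\in$ as $S$-bounded quantification over $U$ with $\epsin$ — exactly the two equivalences the paper dispatches as ``routine categorical semantics.'' The translation step you flag as the main obstacle is indeed the whole content of the corollary, and your handling of it (monicity of $m_S$ making the restriction of $\epsin$ along $S$ coincide with $\in_U$) matches the paper's argument.
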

\begin{proof}
By Proposition \ref{Extensionality}, 
$$\mathbf{M} \models \forall x : \mathbf{P} U . \forall y : \mathbf{P} U . \big[ \big( \forall z : U . (z \in x \leftrightarrow z \in y) \big) \rightarrow x = y \big].$$ 
Now, by routine categorical semantics,
$$
\begin{array}{rl}
& \mathbf{M} \models \forall x, y : \mathbf{P} U . \big[ \big( \forall z : U . (z \in x \leftrightarrow z \in y) \big) \rightarrow x = y \big] \\
\iff & \mathbf{M} \models \forall  x, y : \mathbf{P} U . \big[ \big( \forall z : U . (z \epsin m_S(x) \leftrightarrow z \epsin m_S(y)) \big) \rightarrow x = y \big] \\
\iff & \mathbf{M} \models \forall x\hspace{1pt}', y\hspace{1pt}' : U . \big[ \big(S(x\hspace{1pt}') \wedge S(y\hspace{1pt}')\big) \rightarrow \\
& \big( ( \forall z : U . (z \epsin x\hspace{1pt}' \leftrightarrow z \epsin y\hspace{1pt}') ) \rightarrow x\hspace{1pt}' = y\hspace{1pt}' \big) \big] . \\
\end{array}
$$
So $\mathcal{U} \models \mathrm{Ext}_S$.
\end{proof}

The only axiom of $\mathrm{INFU}_\mathsf{Set}$ left to validate is $\mathrm{SC}_S$ (i.e. Stratified Comprehension). In order to approach this, we first need to construct some signatures and define stratification for an appropriate internal language:

\begin{dfn} \label{MoreSignatures}
Let $\mathcal{S}^M_{N, \in}$ be the subsignature of $\mathcal{S}^M_M$ containing $\mathcal{S}^M_N$ and the relation symbol $\in_A$, which is identified with $m_{\in_A}$, for each object $A$ in $\mathbf{N}$. 

Stratification in the language of $\mathcal{S}^M_{N, \in}$ is defined analogously as in Definition \ref{DefStrat}. A stratification function $s$, of an $\mathcal{S}^M_{N, \in}$-formula $\phi$, is an assignment of a {\em type} in $\mathbb{N}$ to each term in $\phi$, subject to the following conditions (where $\equiv$ is syntactic equality; $n \in \mathbb{N}$; $u, v, w, w_1, \dots, w_n$ are $\mathcal{S}^M_{N, \in}$-terms in $\phi$; $\theta$ is an atomic subformula of $\phi$; $R$ is a relation symbol in $\mathcal{S}^M_N$ which is not equal to $\in_X$ for any $X$ in $\mathbf{N}$; and $A$ is an object in $\mathbf{N}$):
\begin{enumerate}[(i)]
\item if $u \equiv v(w_1, \dots, w_n)$, then $s(u) = s(w_1) = \dots = s(w_n)$,
\item if $\theta \equiv R(w_1, \dots, w_n)$, then $s(w_1) = \dots = s(w_n)$,
\item if $\theta \equiv (u \in_A w)$, then $s(u) + 1 = s(w)$,
\end{enumerate}

It can easily be seen that every stratifiable formula $\phi$ has a {\em minimal} stratification $s_\phi$, in the sense that for every stratification $s$ of $\phi$ and for every term $t$ in $\phi$, $s_\phi(t) \leq s(t)$. Moreover, the minimal stratification, $s_\phi$, is determined by the restriction of $s_\phi$ to the set of variables in $\phi$.

Let $\mathcal{S}^M_{N, \iota}$ be the subsignature of $\mathcal{S}^M_M$ containing $\mathcal{S}^M_N$ and the function symbol $\iota_A$ for each object $A$ in $\mathbf{N}$.

If $\phi$ is a formula in either of these languages, then $\mathbf{M} \models \phi$ is to be understood as satisfaction in the natural $\mathcal{S}^\mathbf{M}_\mathbf{M}$-structure.
\end{dfn}

We start by verifying a form of comprehension for $\mathcal{S}^\mathbf{M}_\mathbf{N}$-formulae:

\begin{prop}\label{ComprehensionN}
If $\phi(w, y)$ is an $\mathcal{S}^\mathbf{M}_\mathbf{N}$-formula, with context $w : \mathbf{T}Z, y : Y$ for $Z, Y$ in $\mathbf N$, and in which $x$ is not free, then
$$\mathbf{M} \models \forall y : Y . \exists x : \mathbf{P} Z . \forall w : \mathbf{T} Z . (w \subseteq^{\mathbf{T}} x \leftrightarrow \phi(w, y)).$$
\end{prop}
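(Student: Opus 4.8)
The plan is to cut out the graph of $\phi$ as a subobject, feed it into the power-object axiom (\ref{PT}), and read the required witness off the resulting classifying map. First I would observe that since $\phi(w,y)$ is an $\mathcal{S}^\mathbf{M}_\mathbf{N}$-formula in the context $w : \mathbf{T}Z,\, y : Y$ with $Z, Y$ in $\mathbf{N}$ (and $\mathbf{T}Z$ again in $\mathbf{N}$, as $\mathbf{T}$ restricts to $\mathbf{N}$), its interpretation $R =_{\mathrm{df}} \llbracket w, y \mid \phi(w,y) \rrbracket$ is a subobject of $\mathbf{T}Z \times Y$ that is already computed in $\mathbf{N}$: because $\mathbf{N}$ is a Heyting subcategory of $\mathbf{M}$, the natural $\mathcal{S}^\mathbf{M}_\mathbf{N}$- and $\mathcal{S}^\mathbf{M}_\mathbf{M}$-structures assign the same subobject to such a formula. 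I then pick a representative mono $r : R \rightarrowtail \mathbf{T}Z \times Y$ in $\mathbf{N}$; by Proposition \ref{strenghten def}(\ref{strengthen mono}) it is also monic in $\mathbf{M}$, which is exactly the hypothesis under which (\ref{PT}) applies, with $A = Z$ and $B = Y$.

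Applying (\ref{PT}) yields the unique $\chi : Y \rightarrow \mathbf{P}Z$ in $\mathbf{N}$ for which $r$ is the pullback of $m_{\subseteq^{\mathbf{T}}_Z}$ along $\id \times \chi$. Unwinding the definition of the inverse-image functor, this pullback is precisely $(\id \times \chi)^*\llbracket w : \mathbf{T}Z,\, u : \mathbf{P}Z \mid w \subseteq^{\mathbf{T}} u \rrbracket$, which by the interpretation clause for substitution equals $\llbracket w, y \mid w \subseteq^{\mathbf{T}} \chi(y) \rrbracket$. Hence I obtain the key identity of subobjects of $\mathbf{T}Z \times Y$:
\[
\llbracket w, y \mid \phi(w, y) \rrbracket \;=\; \llbracket w, y \mid w \subseteq^{\mathbf{T}} \chi(y) \rrbracket .
\]

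From here the conclusion is routine categorical semantics. Since the two subobjects coincide, the rules for $\rightarrow$ (recorded just after the interpretation clauses) give that $\llbracket w, y \mid (w \subseteq^{\mathbf{T}} \chi(y)) \leftrightarrow \phi(w, y) \rrbracket$ is the maximal subobject of $\mathbf{T}Z \times Y$; applying $\forall_w$ (a right adjoint, hence preserving the top element) yields the maximal subobject of $Y$, so $\mathbf{M} \models \forall y . \forall w . \big( w \subseteq^{\mathbf{T}} \chi(y) \leftrightarrow \phi(w, y) \big)$. Finally, because $\chi : Y \rightarrow \mathbf{P}Z$ is a genuine morphism, i.e. a term of the internal language in the context $y : Y$, existential generalization gives $\mathbf{M} \models \forall y : Y . \exists x : \mathbf{P}Z . \forall w : \mathbf{T}Z . (w \subseteq^{\mathbf{T}} x \leftrightarrow \phi(w, y))$, as desired. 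The only genuinely delicate step is the first: verifying that $R$ admits a representative lying in $\mathbf{N}$ and monic in $\mathbf{M}$, so that (\ref{PT}) is legitimately applicable; once that is in place, everything reduces to tracking the interpretation clauses and the pullback identity above.
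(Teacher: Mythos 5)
Your proof is correct and takes essentially the same approach as the paper's: both cut out $\llbracket w, y \mid \phi(w,y) \rrbracket$ as a mono in $\mathbf{N}$ (monic in $\mathbf{M}$), apply (\ref{PT}) to obtain the classifying map $\chi : Y \rightarrow \mathbf{P}Z$, identify the resulting pullback with $\llbracket w, y \mid w \subseteq^{\mathbf{T}} \chi(y) \rrbracket$, and conclude by generalizing existentially over the term $\chi(y)$. The only immaterial difference is one of framing: the paper runs the power-object argument inside $\mathbf{N}$ and transfers the conclusion to $\mathbf{M}$ via the Heyting-subcategory inclusion, whereas you invoke the axiom in its mixed $\mathbf{M}$/$\mathbf{N}$ form and work in $\mathbf{M}$ throughout.
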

\begin{proof}
This is a familiar property of power objects. Considering this instance of (PT) in $\mathbf{N}$:
\begin{equation}
\begin{tikzcd}[ampersand replacement=\&, column sep=small]
\llbracket w : \mathbf{T} Z, y : Y \mid \phi(w, y)\rrbracket \ar[rr] \ar[d, rightarrowtail] \&\& \subseteq^{\mathbf{T}}_{Z} \ar[d, rightarrowtail] \\
\mathbf{T} Z \times Y \ar[rr, "{\id \times \chi}"] \&\& \mathbf{T} Z \times \mathbf{P} Z
\end{tikzcd}
\end{equation}
This pullback along $\id \times \chi$ can be expressed as $\llbracket w : \mathbf{T} Z, y : Y \mid w \subseteq^{\mathbf{T}}_{Z} \chi(y)\rrbracket$ in $\mathbf{N}$. So since $\mathbf{N}$ is a Heyting subcategory of $\mathbf{M}$,
$$\mathbf{M} \models \forall y : Y . \forall w : \mathbf{T} Z . (w \subseteq^{\mathbf{T}} \chi(y) \leftrightarrow \phi(w, y)) \text{, and}$$
$$\mathbf{M} \models \forall y : Y . \exists x : \mathbf{P}Z . \forall w : \mathbf{T} Z . (w \subseteq^{\mathbf{T}} x \leftrightarrow \phi(w, y)),$$
as desired.
\end{proof}

To obtain stratified comprehension for $\mathcal{S}^\mathbf{M}_{\mathbf{N}, \in}$-formulae, we need to establish certain coherence conditions. The facts that $\mathbf{N}$ is a Heyting subcategory of $\mathbf{M}$ and that $\mathbf{T}$ preserves limits as an endofunctor of $\mathbf{N}$ (see Proposition \ref{strenghten def} (\ref{strengthen T limits})), enable us to prove that certain morphisms constructed in $\mathbf{M}$ also exist in $\mathbf{N}$, as in the lemmata below. This is useful when applying (\ref{PT}), since the relation $R$ is required to be in $\mathbf{N}$ (see Definition \ref{IMLU_CAT}).

\begin{lemma} \label{TandProductsCommute}
Let $n \in \mathbb{N}$.
\begin{enumerate}[{\normalfont (a)}]
\item $\iota_1 : 1 \rightarrow \mathbf{T}1$ is an isomorphism in $\mathbf{N}$. 
\item For any $A, B$ of $\mathbf{N}$, $(\iota_{A} \times \iota_{B}) \circ \iota_{A \times B}^{-1} : \mathbf{T}(A \times B) \xrightarrow{\sim} A \times B \xrightarrow{\sim} \mathbf{T} A \times \mathbf{T} B$ is an isomorphism in $\mathbf{N}$.
\item For any $A_1, \dots, A_n$ in $\mathbf{N}$, 
\begin{align*}
& (\iota_{A_1} \times \dots \times \iota_{A_n}) \circ \iota_{A_1 \times \dots \times A_n}^{-1} : \\
& \mathbf{T}(A_1 \times \dots \times A_n) \xrightarrow{\sim} A_1 \times \dots \times A_n \xrightarrow{\sim} \mathbf{T} A_1 \times \dots \times \mathbf{T} A_n
\end{align*}
is an isomorphism in $\mathbf{N}$.
\end{enumerate}
\end{lemma}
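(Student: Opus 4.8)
The plan is to reduce all three parts to a single fact, namely that $\mathbf{T}$ preserves finite limits on $\mathbf{N}$ (Proposition \ref{strenghten def}(\ref{strengthen T limits})), used together with the naturality of $\iota$ on $\mathbf{M}$. The conceptual point to keep in mind throughout is that the individual components $\iota_X$ are isomorphisms in $\mathbf{M}$ but need \emph{not} be morphisms of $\mathbf{N}$ (in the intended set-theoretic model $\iota_X$ is the singleton map $z \mapsto \{z\}$, which raises type and so is only a class); it is only the composites displayed in the statement that will turn out to live in $\mathbf{N}$.

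For (a), since $\mathbf{T} : \mathbf{N} \rightarrow \mathbf{N}$ preserves finite limits, $\mathbf{T}\mathbf{1}$ is terminal in $\mathbf{N}$; and since $\mathbf{T} : \mathbf{M} \rightarrow \mathbf{M}$ is a Heyting endofunctor (Proposition \ref{strenghten def}(\ref{strengthen T heyting})) it also preserves the terminal object of $\mathbf{M}$, so $\mathbf{T}\mathbf{1}$ is terminal in $\mathbf{M}$ as well. The unique $\mathbf{N}$-morphism $g : \mathbf{1} \rightarrow \mathbf{T}\mathbf{1}$ is then an isomorphism in $\mathbf{N}$, both objects being terminal there; and, viewed in $\mathbf{M}$, $g$ is the unique $\mathbf{M}$-morphism into the terminal object $\mathbf{T}\mathbf{1}$, hence $g = \iota_{\mathbf{1}}$. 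Thus $\iota_{\mathbf{1}}$ lies in $\mathbf{N}$ and is an isomorphism there.

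For (b), the key computation rewrites the composite as the canonical comparison map for the product preserved by $\mathbf{T}$. By naturality of $\iota$ applied to the projections $\pi^1 : A \times B \rightarrow A$ and $\pi^2 : A \times B \rightarrow B$, we have $\mathbf{T}\pi^i \circ \iota_{A \times B} = \iota \circ \pi^i$, i.e. $\mathbf{T}\pi^i = \iota \circ \pi^i \circ \iota_{A\times B}^{-1}$ in $\mathbf{M}$. Unwinding the definition $\iota_A \times \iota_B = \langle \iota_A \circ \pi^1, \iota_B \circ \pi^2\rangle$ and using $\langle f, g\rangle \circ h = \langle f \circ h, g \circ h\rangle$, we obtain
$$(\iota_A \times \iota_B) \circ \iota_{A\times B}^{-1} = \langle \iota_A \circ \pi^1 \circ \iota_{A\times B}^{-1},\ \iota_B \circ \pi^2 \circ \iota_{A\times B}^{-1}\rangle = \langle \mathbf{T}\pi^1, \mathbf{T}\pi^2\rangle.$$
The right-hand side is a morphism of $\mathbf{N}$ (as $\mathbf{T}$ is an endofunctor of $\mathbf{N}$ and $\mathbf{N}$ carries the relevant products), and since $\mathbf{T}$ preserves the product $A \times B$ in $\mathbf{N}$, the cone $(\mathbf{T}(A\times B), \mathbf{T}\pi^1, \mathbf{T}\pi^2)$ is a product cone, so $\langle \mathbf{T}\pi^1, \mathbf{T}\pi^2\rangle$ is precisely the comparison isomorphism $\mathbf{T}(A\times B) \xrightarrow{\sim} \mathbf{T}A \times \mathbf{T}B$ in $\mathbf{N}$. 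Alternatively, the composite is visibly an isomorphism of $\mathbf{M}$ and equals a morphism of $\mathbf{N}$, so conservativity of the inclusion $\mathbf{N} \hookrightarrow \mathbf{M}$ already gives that it is an isomorphism of $\mathbf{N}$.

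For (c), one runs the same computation verbatim with the $n$-fold projections, or argues by induction on $n$ with base case (a) (the empty product $\mathbf{1}$) and inductive step (b): in each case naturality of $\iota$ identifies the displayed composite with $\langle \mathbf{T}\pi^1, \dots, \mathbf{T}\pi^n\rangle$, which is the comparison isomorphism in $\mathbf{N}$ for the $n$-fold product preserved by $\mathbf{T}$. The only genuine obstacle is the one flagged at the outset: none of $\iota_{A\times B}$ or the $\iota_{A_i}$ need lie in $\mathbf{N}$, so one cannot reason componentwise; the content of the lemma is exactly that naturality of $\iota$ collapses the composite onto the $\mathbf{N}$-morphism $\langle \mathbf{T}\pi^1, \dots, \mathbf{T}\pi^n\rangle$, whose invertibility in $\mathbf{N}$ is then immediate from limit-preservation (or conservativity).
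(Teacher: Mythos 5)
Your proof is correct and follows essentially the same route as the paper's: part (a) via terminal-object preservation and uniqueness, part (b) by using naturality of $\iota$ to identify the composite with the canonical comparison morphism $\langle \mathbf{T}\pi^1, \mathbf{T}\pi^2\rangle$ arising from $\mathbf{T}$ preserving products in $\mathbf{N}$, and part (c) by induction. Your closing observation that conservativity of the inclusion $\mathbf{N} \hookrightarrow \mathbf{M}$ gives an alternative finish is a valid minor shortcut, but it does not change the substance of the argument.
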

\begin{proof}
\begin{enumerate}
\item Since $\mathbf{T}$ preserves limits, $\mathbf{T}\mathbf{1}$ is terminal in $\mathbf{N}$, and since $\mathbf{N}$ is a Heyting subcategory of $\mathbf{M}$, $\mathbf{T}\mathbf{1}$ is terminal in $\mathbf{M}$ as well. So by the universal property of terminal objects, $\mathbf{1}$ and $\mathbf{T}\mathbf{1}$ are isomorphic in $\mathbf{N}$, and the isomorphisms must be $\iota_\mathbf{1}$ and $\iota^{-1}_\mathbf{1}$.
\item Since $\mathbf{T}$ preserves limits, $\mathbf{T}(A \times B)$ is a product of $\mathbf{T}A$ and $\mathbf{T}B$ in $\mathbf{N}$, and since $\mathbf{N}$ is a Heyting subcategory of $\mathbf{M}$, it is such a product in $\mathbf{M}$ as well. Now note that 
$$\pi_{\mathbf{T} A \times \mathbf{T} B}^1 \circ (\iota_A \times \iota_B) \circ \iota^{-1}_{A \times B} = \iota_A \circ \pi_{A \times B}^1 \circ \iota_{A \times B}^{-1} = \mathbf{T}\pi_{A \times B}^1,$$ 
and similarly for the second projection. The left equality is a basic fact about projection morphisms. The right equality follows from that $\iota$ is a natural isomorphism. This means that 
$$(\iota_A \times \iota_B) \circ \iota^{-1}_{A \times B} : \mathbf{T}(A \times B) \xrightarrow{\sim} \mathbf{T}A \times \mathbf{T}B$$ 
is the unique universal morphism provided by the definition of product. Hence, it is an isomorphism in $\mathbf{N}$. 
\item This follows from the two items above by induction on $n$.
\end{enumerate}
\end{proof}

\begin{lemma} \label{iotaTermConversion}
Let $n \in \mathbb{N}$. If $u : A_1 \times \dots \times A_n \rightarrow B$ is a morphism in $\mathbf{N}$, then there is a morphism $v : \mathbf{T} A_1 \times \dots \times \mathbf{T} A_n \rightarrow \mathbf{T} B$ in $\mathbf{N}$, such that 
$$\iota_B \circ u = v \circ (\iota_{A_1} \times \dots \times \iota_{A_n}).$$
\end{lemma}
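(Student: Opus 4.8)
The plan is to define the desired morphism $v$ explicitly and then verify the factorization equation using the naturality of $\iota$, reducing everything to Lemma \ref{TandProductsCommute}. First I would abbreviate $A = A_1 \times \dots \times A_n$ and introduce the morphism
\[
\theta =_{\mathrm{df}} (\iota_{A_1} \times \dots \times \iota_{A_n}) \circ \iota_{A}^{-1} : \mathbf{T}A \xrightarrow{\sim} \mathbf{T}A_1 \times \dots \times \mathbf{T}A_n,
\]
which is an isomorphism in $\mathbf{N}$ by Lemma \ref{TandProductsCommute}(c). In particular $\theta^{-1}$ is a morphism of $\mathbf{N}$, and rearranging the definition of $\theta$ gives the identity $\iota_{A_1} \times \dots \times \iota_{A_n} = \theta \circ \iota_{A}$.

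Next I would set $v =_{\mathrm{df}} \mathbf{T}(u) \circ \theta^{-1}$. Since $u$ is a morphism of $\mathbf{N}$ and $\mathbf{T}$ restricts to an endofunctor of $\mathbf{N}$ (as part of the definition of an $\mathrm{IMLU}$-category, Definition \ref{IMLU_CAT}), the morphism $\mathbf{T}(u) : \mathbf{T}A \rightarrow \mathbf{T}B$ lies in $\mathbf{N}$; composing with $\theta^{-1} \in \mathbf{N}$ and using that $\mathbf{N}$ is a subcategory, we get $v : \mathbf{T}A_1 \times \dots \times \mathbf{T}A_n \rightarrow \mathbf{T}B$ in $\mathbf{N}$, as required.

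It then remains to check the equation. Using the identity $\iota_{A_1} \times \dots \times \iota_{A_n} = \theta \circ \iota_{A}$ and cancelling $\theta^{-1}\circ\theta$, I compute
\[
v \circ (\iota_{A_1} \times \dots \times \iota_{A_n}) = \mathbf{T}(u) \circ \theta^{-1} \circ \theta \circ \iota_{A} = \mathbf{T}(u) \circ \iota_{A} = \iota_B \circ u,
\]
where the last equality is precisely the naturality square of $\iota : \id_\mathbf{M} \xrightarrow{\sim} \mathbf{T}$ applied to the morphism $u : A \rightarrow B$. This gives the claimed factorization $\iota_B \circ u = v \circ (\iota_{A_1} \times \dots \times \iota_{A_n})$.

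I do not expect any serious obstacle here: the content is entirely bookkeeping, and the only substantive point is ensuring that $v$ genuinely lives in $\mathbf{N}$ rather than merely in $\mathbf{M}$. That is handled by observing that each of its factors ($\mathbf{T}(u)$ and $\theta^{-1}$) is a morphism of $\mathbf{N}$ — which is exactly what Lemma \ref{TandProductsCommute}(c) together with the stipulation that $\mathbf{T}$ restricts to $\mathbf{N}$ provides. The naturality step and the product-projection identity are both standard, so the proof is short.
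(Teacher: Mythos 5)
Your proof is correct and is essentially the paper's own argument: your $v = \mathbf{T}(u)\circ\theta^{-1}$ coincides with the paper's $v = (\mathbf{T}u)\circ\iota_{A_1\times\dots\times A_n}\circ(\iota_{A_1}\times\dots\times\iota_{A_n})^{-1}$, and both proofs reduce to the naturality of $\iota$ together with Lemma \ref{TandProductsCommute}(c) to place $v$ in $\mathbf{N}$. No gaps.
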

\begin{proof}
Since $\iota$ is a natural transformation, 
$$\iota_B \circ u = (\mathbf{T}u) \circ \iota_{A_1 \times \dots \times A_n}.$$
Since $\iota$ is a natural isomorphism, 
$$\iota_{A_1 \times \dots \times A_n} = \iota_{A_1 \times \dots \times A_n} \circ (\iota_{A_1} \times \dots \times \iota_{A_n})^{-1} \circ (\iota_{A_1} \times \dots \times \iota_{A_n}).$$ 
Thus, by letting $v = (\mathbf{T}u) \circ \iota_{A_1 \times \dots \times A_n} \circ (\iota_{A_1} \times \dots \times \iota_{A_n})^{-1}$, the result is obtained from Lemma \ref{TandProductsCommute}. 
\end{proof}

\begin{constr} \label{iotaRelationConstruction}
Let $n \in \mathbb{N}$, and let $m_R : R \rightarrowtail A_1 \times \dots \times A_n$ be a morphism in $\mathbf{N}$ that is monic in $\mathbf{M}$; i.e. $m_R$ is a relation symbol in $\mathcal{S}^\mathbf{M}_\mathbf{N}$. Using the isomorphism obtained in Lemma \ref{TandProductsCommute}, we construct $\hat{\mathbf{T}} m_R : \hat{\mathbf{T}} R \rightarrowtail \mathbf{T} A_1 \times \dots \times \mathbf{T} A_n$ in $\mathbf{N}$ as the pullback of $\mathbf{T}m_R$ along that isomorphism:
\[
\begin{tikzcd}[ampersand replacement=\&, column sep=small]
{\hat{\mathbf{T}} R} \ar[rr, "\sim"] \ar[d, rightarrowtail, "{\hat{\mathbf{T}} m_R}"'] \&\& {\mathbf{T} R}  \ar[d, rightarrowtail, "{\mathbf{T} m_R}"] \\
{\mathbf{T} A_1 \times \dots \times \mathbf{T} A_n} \ar[rr, "\sim"] \&\& {\mathbf{T} (A_1 \times \dots \times A_n)}
\end{tikzcd}
\] 
\end{constr}
Note that the definition of $\hat{\mathbf{T}}m_R$ implicitly depends on the factorization $A_1 \times A_n$ chosen for the co-domain of $m_R$.

\begin{lemma} \label{iotaRelationConversion}
Let $m_R : R \rightarrowtail A_1 \times \dots \times A_n$ be as in Construction \ref{iotaRelationConstruction}.
\begin{align*}
\mathbf{M} \models \forall x_1 : A_1 \dots \forall x_n : A_n . \big( & (\mathbf{T} m_R)(\iota_{A_1 \times \dots \times A_n}(x_1, \dots, x_n)) \leftrightarrow \\ 
& (\hat{\mathbf{T}} m_R)(\iota_{A_1}(x_1), \dots, \iota_{A_n}(x_n)) \big).
\end{align*}
\end{lemma}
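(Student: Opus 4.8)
The plan is to carry out the entire argument inside the categorical semantics of $\mathbf{M}$, reducing both sides of the biconditional to pullbacks of a single subobject and then matching them by a short computation with $\iota$. Write $P =_\df A_1 \times \dots \times A_n$ and $e =_\df \iota_{A_1} \times \dots \times \iota_{A_n} : P \rightarrow \mathbf{T}A_1 \times \dots \times \mathbf{T}A_n$. Since $\mathbf{T} : \mathbf{M} \rightarrow \mathbf{M}$ is a Heyting endofunctor (Proposition \ref{strenghten def}(\ref{strengthen T heyting})) it preserves monos, so $\mathbf{T}m_R$ is indeed a relation symbol of $\mathcal{S}^\mathbf{M}_\mathbf{N}$ on the sort $\mathbf{T}P$, and $\hat{\mathbf{T}}m_R$ is one on the sort $\mathbf{T}A_1 \times \dots \times \mathbf{T}A_n$. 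By the semantic clauses for $\forall$ and $\leftrightarrow$ recalled in the excerpt, it suffices to show that the two subobjects $\llbracket \vec{x} : P \mid (\mathbf{T}m_R)(\iota_P(x_1, \dots, x_n)) \rrbracket$ and $\llbracket \vec{x} : P \mid (\hat{\mathbf{T}}m_R)(\iota_{A_1}(x_1), \dots, \iota_{A_n}(x_n)) \rrbracket$ coincide in $\mathrm{Sub}(P)$.

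First I would compute the interpretations of the two relevant terms-in-context. Since $\llbracket \vec{x} \mid \langle x_1, \dots, x_n \rangle \rrbracket = \langle \pi^1, \dots, \pi^n \rangle = \id_P$, the rule for function symbols gives $\llbracket \vec{x} \mid \iota_P(x_1, \dots, x_n) \rrbracket = \iota_P \circ \id_P = \iota_P$; and likewise $\llbracket \vec{x} \mid \langle \iota_{A_1}(x_1), \dots, \iota_{A_n}(x_n) \rangle \rrbracket = \langle \iota_{A_1} \circ \pi^1, \dots, \iota_{A_n} \circ \pi^n \rangle = e$. By the semantic clause $\llbracket \vec{x} \mid R(\vec{t}) \rrbracket = \llbracket \vec{x} \mid \vec{t} \rrbracket^*(R^\mathcal{M})$, the left-hand subobject is therefore $\iota_P^*(S)$ and the right-hand subobject is $e^*(\hat S)$, where $S$ and $\hat S$ denote the subobjects represented by $\mathbf{T}m_R$ and $\hat{\mathbf{T}}m_R$, respectively.

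It then remains to identify $\iota_P^*(S)$ with $e^*(\hat S)$. By Construction \ref{iotaRelationConstruction}, $\hat S = g^*(S)$, where $g =_\df \iota_P \circ e^{-1} : \mathbf{T}A_1 \times \dots \times \mathbf{T}A_n \rightarrow \mathbf{T}P$ is the bottom isomorphism of that construction, i.e. the inverse of the isomorphism $e \circ \iota_P^{-1}$ furnished by Lemma \ref{TandProductsCommute}(c). Using functoriality of pullback, $(f \circ h)^* = h^* \circ f^*$, I obtain $e^*(\hat S) = e^*(g^*(S)) = (g \circ e)^*(S)$, and the computation $g \circ e = \iota_P \circ e^{-1} \circ e = \iota_P$ gives $e^*(\hat S) = \iota_P^*(S)$, as required. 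The only real bookkeeping hazard is keeping track of the direction of the two isomorphisms between $\mathbf{T}P$ and $\mathbf{T}A_1 \times \dots \times \mathbf{T}A_n$ (the one from Lemma \ref{TandProductsCommute} versus its inverse $g$ used in Construction \ref{iotaRelationConstruction}), together with the harmless abuse of notation whereby $\iota_P$ is applied to the list $x_1, \dots, x_n$ rather than to the tuple $\langle x_1, \dots, x_n \rangle$; once these are pinned down the proof is a routine pullback-pasting calculation.
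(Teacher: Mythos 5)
Your proof is correct and takes essentially the same approach as the paper's: both reduce the claim to showing that the two interpreting subobjects of $A_1 \times \dots \times A_n$ coincide, by realizing the left side as the pullback of $\mathbf{T}m_R$ along $\iota_{A_1 \times \dots \times A_n}$ and the right side as the pullback of $\hat{\mathbf{T}}m_R$ along $\iota_{A_1} \times \dots \times \iota_{A_n}$, and then invoking Construction \ref{iotaRelationConstruction}. The paper finishes by noting that pullback along an isomorphism is represented by composition with its inverse and then chasing the commutative construction diagram, whereas you finish by contravariant functoriality of pullback together with the identity $g \circ e = \iota_{A_1 \times \dots \times A_n}$ --- the same computation in slightly different bookkeeping.
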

\begin{proof}
The subobjects 
\begin{align*}
P = \llbracket x_1 : A_1, \dots, x_n : A_n \mid (\mathbf{T} m_R)(\iota_{A_1 \times \dots \times A_n}(x_1, \dots, x_n)) \rrbracket \\
P' = \llbracket x_1 : A_1, \dots, x_n : A_n \mid (\hat{\mathbf{T}} m_R)(\iota_{A_1}(x_1), \dots, \iota_{A_n}(x_n)) \rrbracket
\end{align*}
of $A_1 \times \dots \times A_n$ are obtained as these pullbacks:
\[
\begin{tikzcd}[ampersand replacement=\&, column sep=small]
P \ar[rrrr, "f"] \ar[d, "g"']  \&\&\&\&  \mathbf{T}R \ar[d, "{\mathbf{T}m_R}"]  \\
{A_1 \times \dots \times A_n} \ar[rrrr, "{\iota_{A_1 \times \dots \times A_n}}"']  \&\&\&\&  {\mathbf{T}(A_1 \times \dots \times A_n)}  
\end{tikzcd}
\] 
\[
\begin{tikzcd}[ampersand replacement=\&, column sep=small]
P' \ar[rrrr, "f\hspace{2pt}'"] \ar[d, "g\hspace{1pt}'"']  \&\&\&\&  {\hat{\mathbf{T}}R} \ar[d, "{\hat{\mathbf{T}}m_R}"]  \\
{A_1 \times \dots \times A_n} \ar[rrrr, "{\iota_{A_1} \times \dots \times \iota_{A_n}}"']  \&\&\&\&  {\mathbf{T}A_1 \times \dots \times \mathbf{T}A_n}  
\end{tikzcd}
\] 
Since the bottom morphisms in both of these pullback-diagrams are isomorphisms, it follows from a basic fact about pullbacks that the top ones, $f$ and $f\hspace{2pt}'$, are also isomorphisms. So $P$ and $P'$, as subobjects of $A_1 \times \dots \times A_n$, are also represented by $\iota^{-1}_{A_1 \times \dots \times A_n} \circ \mathbf{T}m_R : \mathbf{T}R \rightarrow A_1 \times \dots \times A_n$ and $(\iota_{A_1} \times \dots \times \iota_{A_n})^{-1} \circ \hat{\mathbf{T}}m_R : \hat{\mathbf{T}}R \rightarrow A_1 \times \dots \times A_n$, respectively.

Now note that by construction of $\hat{\mathbf{T}}$, this diagram commutes:
\[
\begin{tikzcd}[ampersand replacement=\&, column sep=small]
{\hat{\mathbf{T}} R} \ar[rr, "\sim"] \ar[d, rightarrowtail, "{\hat{\mathbf{T}} m_R}"'] \&\& {\mathbf{T} R}  \ar[d, rightarrowtail, "{\mathbf{T} m_R}"] \\
{\mathbf{T} A_1 \times \dots \times \mathbf{T} A_n} \ar[rr, "\sim"] \ar[rd, "{\sim}"'] \&\& {\mathbf{T} (A_1 \times \dots \times A_n)} \ar[ld, "{\sim}"] \\
\& {A_1 \times \dots \times A_n}
\end{tikzcd}
\] 
Therefore, $\iota^{-1}_{A_1 \times \dots \times A_n} \circ \mathbf{T}m_R$ and $(\iota_{A_1} \times \dots \times \iota_{A_n})^{-1} \circ \hat{\mathbf{T}}m_R$ represent the same subobject of $A_1 \times \dots \times A_n$, as desired
\end{proof}

Let $n \in \mathbb{N}$. We recursively define iterated application of $\mathbf{P}$, $\mathbf{T}$ and $\hat{\mathbf{T}}$ in the usual way, as $\mathbf{P}^0 = \mathbf{id}_\mathbf{N}$, $\mathbf{P}^{k+1} = \mathbf{P} \circ \mathbf{P}^k$ (for $k \in \mathbb{N}$), etc. The iterated application of $\iota$ requires a special definition. We define $\iota^n_A : A \xrightarrow{\sim} \mathbf{T}^n A$ recursively by 
\begin{align*}
\iota^0_A &= \id_A, \\
\iota^{k+1}_A &= \iota_{\mathbf{T}^k A} \circ \iota^k_A : A \xrightarrow{\sim} \mathbf{T}^k A \xrightarrow{\sim} \mathbf{T}^{k+1} A \text{, where } k \in \mathbb{N}.
\end{align*}
Since $\iota$ is a natural isomorphism, we have by induction that $\iota^n : \id \xrightarrow{\sim} \mathbf{T}^n$ also is a natural isomorphism.

\begin{lemma} \label{iotaTypeIncrease}
Let $n, k \in \mathbb{N}$. Let $m_R : R \rightarrowtail A_1 \times \dots A_n$ be as in Construction \ref{iotaRelationConstruction}. 
\begin{align*}
\mathbf{M} \models \forall x_1 : A_1 \dots \forall x_n : A_n . \big( & m_R(x_1, \dots, x_n) \leftrightarrow \\
& (\hat{\mathbf{T}}^k m_R) (\iota^k x_1, \dots, \iota^k x_n)\big).
\end{align*}
\end{lemma}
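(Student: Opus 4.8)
The plan is to argue by induction on $k$, using Lemma \ref{iotaRelationConversion} as the engine for the single-step passage and naturality of $\iota$ to bridge $m_{R'}$ and $\mathbf{T}m_{R'}$. For the base case $k = 0$ we have $\iota^0 = \id$ and $\hat{\mathbf{T}}^0 m_R = m_R$, so both sides of the biconditional are literally $m_R(x_1, \dots, x_n)$ and there is nothing to prove.

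For the inductive step, suppose the claim holds for $k$. Write $B_i =_\mathrm{df} \mathbf{T}^k A_i$ and $m_{R'} =_\mathrm{df} \hat{\mathbf{T}}^k m_R : R' \rightarrowtail B_1 \times \dots \times B_n$. First I would note that $m_{R'}$ is again a legitimate relation symbol in the sense of Construction \ref{iotaRelationConstruction}: being an iterated application of $\hat{\mathbf{T}}$, it is a morphism in $\mathbf{N}$, and it is monic in $\mathbf{M}$ since $\mathbf{T}$ is a Heyting endofunctor of $\mathbf{M}$ (Proposition \ref{strenghten def}(\ref{strengthen T heyting})) and pullbacks of monos are monos; so $\hat{\mathbf{T}}^{k+1} m_R = \hat{\mathbf{T}} m_{R'}$ is defined. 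By the induction hypothesis it suffices to prove the one-step equivalence
\[
\mathbf{M} \models \forall \vec{y} : \vec{B} . \big( m_{R'}(\vec{y}) \leftrightarrow (\hat{\mathbf{T}} m_{R'})(\iota_{B_1} y_1, \dots, \iota_{B_n} y_n)\big),
\]
because $\iota^{k+1}_{A_i} = \iota_{\mathbf{T}^k A_i} \circ \iota^k_{A_i}$, and since each $\iota^k_{A_i}$ is an isomorphism the universally quantified statement over $\vec{x} : \vec{A}$ is equivalent, after the reindexing $y_i = \iota^k x_i$, to the one over $\vec{y} : \vec{B}$.

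To establish this one-step equivalence, I would apply Lemma \ref{iotaRelationConversion} directly to $m_{R'}$ (with co-domain factored as $B_1 \times \dots \times B_n$), which rewrites the right-hand side $(\hat{\mathbf{T}} m_{R'})(\iota_{B_1} y_1, \dots, \iota_{B_n} y_n)$ as $(\mathbf{T} m_{R'})(\iota_{B_1 \times \dots \times B_n}(\vec{y}))$. Thus the task reduces to
\[
\mathbf{M} \models \forall \vec{y} : \vec{B} . \big( m_{R'}(\vec{y}) \leftrightarrow (\mathbf{T} m_{R'})(\iota_{B_1 \times \dots \times B_n}(\vec{y}))\big).
\]
By the categorical semantics, $\llbracket \vec{y} \mid (\mathbf{T} m_{R'})(\iota_{B_1 \times \dots \times B_n}(\vec{y})) \rrbracket$ is the pullback of the subobject $\mathbf{T} m_{R'}$ along $\iota_{B_1 \times \dots \times B_n}$. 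Naturality of $\iota$ gives the commuting square $\iota_{B_1 \times \dots \times B_n} \circ m_{R'} = (\mathbf{T} m_{R'}) \circ \iota_{R'}$, in which both horizontal legs $\iota_{R'}$ and $\iota_{B_1 \times \dots \times B_n}$ are isomorphisms (as $\iota$ is a natural isomorphism). A commuting square whose two parallel sides are isomorphisms is automatically a pullback, so the pullback of $\mathbf{T} m_{R'}$ along $\iota_{B_1 \times \dots \times B_n}$ represents the same subobject as $m_{R'}$, which is exactly the displayed equivalence.

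I expect the main obstacle to be purely bookkeeping rather than conceptual: keeping straight the distinction between $\mathbf{T}$ and $\hat{\mathbf{T}}$, verifying at each inductive stage that $\hat{\mathbf{T}}^k m_R$ genuinely satisfies the hypotheses of Construction \ref{iotaRelationConstruction} (morphism in $\mathbf{N}$, monic in $\mathbf{M}$, with the specified product factorization of its co-domain, so that Lemma \ref{iotaRelationConversion} applies), and carefully justifying the reindexing of the outermost quantified variables along the isomorphisms $\iota^k_{A_i}$. The one genuinely categorical input beyond the cited lemmata is the observation that a commuting square with parallel isomorphic sides is a pullback, which is elementary.
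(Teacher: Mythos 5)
Your proof is correct and follows essentially the same route as the paper's: both arguments combine naturality of $\iota$ with Lemma \ref{iotaRelationConversion}, the only difference being bookkeeping --- the paper applies naturality of $\iota^k$ once and then iterates Lemma \ref{iotaRelationConversion} to convert $(\mathbf{T}^k m_R)(\iota^k_{A_1 \times \dots \times A_n}(\vec{x}))$ into $(\hat{\mathbf{T}}^k m_R)(\iota^k x_1, \dots, \iota^k x_n)$, whereas you interleave the two ingredients in a single induction on $k$. Your extra care in checking that $\hat{\mathbf{T}}^k m_R$ again satisfies the hypotheses of Construction \ref{iotaRelationConstruction}, and your observation that a commuting square whose parallel sides are isomorphisms is a pullback, are both correct and merely make explicit what the paper leaves implicit.
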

\begin{proof}
Since $\iota^k : \id_\mathbf{M} \rightarrow \mathbf{T}^k$ is a natural isomorphism, 
\begin{align*}
\mathbf{M} \models \forall x_1 : A_1 \dots \forall x_n : A_n . \big( & m_R(x_1, \dots, x_n) \leftrightarrow \\
& (\mathbf{T}^k m_R) (\iota_{A_1 \times \dots \times A_n}^k (x_1, \dots, x_n))\big).
\end{align*}
By iterating Lemma \ref{iotaRelationConversion}, we obtain by induction that 
\begin{align*}
\mathbf{M} \models \forall x_1 : A_1 \dots \forall x_n : A_n . \big( & (\mathbf{T}^k m_R) (\iota_{A_1 \times \dots \times A_n}^k (x_1, \dots, x_n)) \leftrightarrow \\
& (\hat{\mathbf{T}}^k m_R) (\iota_{A_1}^k x_1, \dots, \iota_{A_n}^k x_n)\big).
\end{align*}
The result now follows by combining the two. 
\end{proof}

We shall now show that any stratified $\mathcal{S}^\mathbf{M}_{\mathbf{N}, \in}$-formula $\phi$ can be converted to an $\mathcal{S}^\mathbf{M}_\mathbf{N}$-formula $\phi^{\subseteq^{\mathbf{T}}}$, which is equivalent to $\phi$ in $\mathbf{M}$, i.e. $\mathbf{M} \models \phi \leftrightarrow \phi^{\subseteq^{\mathbf{T}}}$. 

\begin{constr} \label{StratConstruction}
Let $\phi$ be any stratified $\mathcal{S}^M_{N, \in}$-formula. Let $s_\phi$ be the minimal stratification of $\phi$, and let $\mathrm{max}_\phi$ be the maximum value attained by $s_\phi$.
	\begin{itemize}
	\item Let $\phi^\iota$ be the $\mathcal{S}^\mathbf{M}_{\mathbf{N}, \iota}$-formula obtained from $\phi$ by the construction below. We shall replace each atomic subformula $\theta$ of $\phi$ by another atomic formula which is equivalent to $\theta$ in $\mathbf{M}$. We divide the construction into two cases, depending on whether or not $\theta$ is of the form $\theta \equiv t \in_X t'$, for some $X$ in $\mathbf{N}$ and terms $t, t'$ in $\mathcal{S}^\mathbf{M}_\mathbf{N}$:
		\begin{enumerate}
		\item Suppose that $\theta$ is {\em not} of the form $\theta \equiv t \in_X t'$. Then $\theta$ is equivalent in $\mathbf{M}$ to a formula $m_R(x_1, \dots, x_n)$, where $x_1, \dots, x_n$ are variables, as such a monomorphism $m_R$ can be constructed in $\mathbf{N}$ from the interpretations of the relation-symbol and terms appearing in $\theta$. Note that by stratification, $s_\phi(x_1) = \dots = s_\phi(x_n)$. Let $k = \mathrm{max} - s_\phi(x_1)$. In $\phi$, replace $\theta$ by 
		$$(\hat{\mathbf{T}}^k m_R) (\iota^k x_1, \dots, \iota^k x_n).$$
It follows from Lemma \ref{iotaTypeIncrease} that this formula is equivalent to $\theta$ in $\mathbf{M}$.
		\item Suppose that $\theta \equiv u(x_1, \dots, x_n) \in_A v(y_1, \dots, y_{m})$, where $A$ is an object in $\mathbf{N}$, $u, v$ are terms in $\mathcal{S}^\mathbf{M}_\mathbf{N}$, and $x_1, \dots, x_n, y_1, \dots, y_{m}$ are variables. Note that by stratification, 
		$$s_\phi(u) + 1 = s_\phi(x_i) + 1 = s_\phi(v) = s_\phi(y_j),$$
		for each $1 \leq i \leq n$ and $1 \leq j \leq m$. Let $k_u = \mathrm{max}_\phi - s_\phi(u)$ and $k_v = \mathrm{max}_\phi - s_\phi(v)$, whence $k_u = k_v + 1$. By Proposition \ref{membership as iota subset}, $\theta$ is equivalent in $\mathbf{M}$ to 
		\[(\iota_A \circ u) (x_1, \dots, x_n) \subseteq^\mathbf{T}_A v(y_1, \dots, y_{m}).\] 
		Thus, by Lemma \ref{iotaTypeIncrease}, $\theta$ is equivalent in $\mathbf{M}$ to 
		\[(\iota^{k_u}_A \circ u) (x_1, \dots, x_n) (\hat{\mathbf{T}}^{k_v} \subseteq^{\mathbf{T}}) (\iota^{k_v}_{\mathbf{P}A} \circ v) (y_1, \dots, y_{m}).\]
		Now, by iterated application of Lemma \ref{iotaTermConversion}, there are morphisms $u\hspace{1pt}', v\hspace{1pt}'$ in $\mathbf{N}$, such that $\theta$ is equivalent in $\mathbf{M}$ to
		$$u\hspace{1pt}'(\iota^{k_u}(x_1), \dots, \iota^{k_u}(x_n)) (\hat{\mathbf{T}}^{k_v} \subseteq^{\mathbf{T}}) v\hspace{1pt}'(\iota^{k_v}(y_1), \dots, \iota^{k_v}(y_m)).$$ 
		Replace $\theta$ by this formula.
		\end{enumerate}
	\item Let $\phi^{\subseteq^{\mathbf{T}}}$ be the formula in the language of $\mathcal{S}^\mathbf{M}_\mathbf{N}$ obtained from $\phi^\iota$ as follows.
		\begin{itemize}
		\item Replace each term of the form $\iota^{\mathrm{max}_\phi - s_\phi(x)}(x)$ (where $x$ is a variable of sort $A$) by a fresh variable $x\hspace{1pt}'$ (of sort $\mathbf{T}^{\mathrm{max}_\phi - s_\phi(x)} A$).
		\item Replace each quantifier scope or context declaration $x : A$ by $x\hspace{1pt}' : \mathbf{T}^{\mathrm{max}_\phi - s_\phi(x)} A$.
		\end{itemize}
	\end{itemize}
\end{constr}

By construction $\phi^\iota$ is an $\mathcal{S}^\mathbf{M}_{\mathbf{N}, \iota}$-formula, which is equivalent to $\phi$ in $\mathbf{M}$. Let $x$ be an arbitrary variable in $\phi^\iota$. Note that each variable $x$ in $\phi^\iota$ occurs in a term $\iota^{\mathrm{max}_\phi - s_\phi(x)} x$; and conversely, every occurrence of $\iota$ in $\phi^\iota$ is in such a term $\iota^{\mathrm{max}_\phi - s_\phi(x)} x$, for some variable $x$. Therefore, $\phi^{\subseteq^{\mathbf{T}}}$ is an $\mathcal{S}^\mathbf{M}_\mathbf{N}$-formula. So since $\iota^{\mathrm{max}_\phi - s_\phi(x)} : A \rightarrow \mathbf{T}^{\mathrm{max}_\phi - s_\phi(x)} A$ is an isomorphism, for each variable $x : A$ in $\phi^\iota$, we have that $\phi^{\subseteq^{\mathbf{T}}}$ is equivalent to $\phi^\iota$. We record these findings as a lemma:

\begin{lemma}\label{StratConvert}
If $\phi$ is a stratified $\mathcal{S}^\mathbf{M}_{\mathbf{N}, \in}$-formula, then 
$$\mathbf{M} \models \phi \Leftrightarrow \mathbf{M} \models \phi^\iota \Leftrightarrow \mathbf{M} \models \phi^{\subseteq^{\mathbf{T}}},$$ 
where $\phi^\iota$ is an $\mathcal{S}^\mathbf{M}_{\mathbf{N}, \iota}$-formula and $\phi^{\subseteq^{\mathbf{T}}}$ is an $\mathcal{S}^\mathbf{M}_\mathbf{N}$-formula.
\end{lemma}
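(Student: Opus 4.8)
The plan is to prove the two biconditionals separately, treating $\phi^\iota$ as an intermediate stage, and to exploit the fact that the categorical semantics of $\mathbf{M}$ is compositional: the interpretation $\llbracket \vec{x} \mid \phi \rrbracket$ is defined by recursion on formula structure, with the propositional connectives and the quantifiers (as adjoints) commuting with the interpretation. Consequently, replacing an atomic subformula by one that determines the same subobject of the product sort, while leaving the free and bound variables and their sorts untouched, preserves the interpreted subobject of the whole formula. This reduces the first step to a purely atomic computation and the second step to a change of variables.

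First I would establish $\mathbf{M} \models \phi \Leftrightarrow \mathbf{M} \models \phi^\iota$. By the compositionality just noted, it suffices to check that each atomic replacement carried out in Construction \ref{StratConstruction} preserves the interpreted subobject over $\mathbf{M}$. For an atomic $\theta$ not of the form $t \in_X t'$, one extracts from $\theta$ a monomorphism $m_R$ in $\mathbf{N}$ and invokes Lemma \ref{iotaTypeIncrease} to see that $\theta$ is equivalent in $\mathbf{M}$ to $(\hat{\mathbf{T}}^k m_R)(\iota^k x_1, \dots, \iota^k x_n)$ with $k = \mathrm{max}_\phi - s_\phi(x_1)$. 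For $\theta \equiv u(\vec{x}) \in_A v(\vec{y})$ one chains three results: Proposition \ref{membership as iota subset} to trade $\in_A$ for $\subseteq^{\mathbf{T}}_A$ via $\iota$, Lemma \ref{iotaTypeIncrease} to raise both sides to type $\mathrm{max}_\phi$, and Lemma \ref{iotaTermConversion} to push the occurrences of $\iota$ past the term-forming morphisms $u,v$ so as to land back with morphisms $u\hspace{1pt}',v\hspace{1pt}'$ in $\mathbf{N}$. The stratification conditions of Definition \ref{MoreSignatures} are exactly what make the exponents cohere: condition (i) gives $s_\phi(u)=s_\phi(x_i)$ and $s_\phi(v)=s_\phi(y_j)$, and condition (iii) gives $s_\phi(u)+1=s_\phi(v)$, so $k_u = k_v + 1$ and the two sides of $\hat{\mathbf{T}}^{k_v}\subseteq^{\mathbf{T}}$ have matching types.

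Second I would establish $\mathbf{M} \models \phi^\iota \Leftrightarrow \mathbf{M} \models \phi^{\subseteq^{\mathbf{T}}}$. The key observation is that, because a stratification is a single-valued function on the variables of $\phi$, every occurrence of a given variable $x : A$ in $\phi^\iota$ sits inside a term $\iota^{\mathrm{max}_\phi - s_\phi(x)} x$ with one and the same exponent $\mathrm{max}_\phi - s_\phi(x)$, and conversely every occurrence of $\iota$ in $\phi^\iota$ arises in this way. Hence the formula $\phi^{\subseteq^{\mathbf{T}}}$, obtained by replacing each such term by a fresh variable $x\hspace{1pt}'$ of sort $\mathbf{T}^{\mathrm{max}_\phi - s_\phi(x)} A$ and relabelling the binding contexts accordingly, is a genuine $\mathcal{S}^\mathbf{M}_\mathbf{N}$-formula. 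Since each $\iota^{\mathrm{max}_\phi - s_\phi(x)} : A \xrightarrow{\sim} \mathbf{T}^{\mathrm{max}_\phi - s_\phi(x)} A$ is an isomorphism, the substitution $x\hspace{1pt}' = \iota^{\mathrm{max}_\phi - s_\phi(x)} x$ is a bijective reparametrization of each quantifier, so $\phi^\iota$ and $\phi^{\subseteq^{\mathbf{T}}}$ have the same interpreted subobject over $\mathbf{M}$, which yields the equivalence.

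I expect the main obstacle to be the type-bookkeeping in the membership case (Case 2 of Construction \ref{StratConstruction}): one must track carefully how the iterated applications of $\iota$, $\mathbf{T}$ and $\hat{\mathbf{T}}$ interact, confirm through the stratification conditions that the offsets $k_u$ and $k_v$ align, and appeal to the coherence results — Lemmata \ref{TandProductsCommute}, \ref{iotaTermConversion} and \ref{iotaRelationConversion} — that let these morphisms be realized in $\mathbf{N}$ and not merely in $\mathbf{M}$. The restriction to $\mathbf{N}$ is essential because the power-object property \eqref{PT} requires the relations involved to live in $\mathbf{N}$; once this coherence is secured, both biconditionals follow from compositionality and a routine reindexing, and the proof is essentially a consolidation of the observations already recorded after Construction \ref{StratConstruction}.
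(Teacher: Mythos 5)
Your proposal is correct and follows essentially the same route as the paper: the paper's argument is exactly the atomic-by-atomic replacements justified inside Construction \ref{StratConstruction} (via Proposition \ref{membership as iota subset}, Lemma \ref{iotaTypeIncrease} and Lemma \ref{iotaTermConversion}), combined with the observation recorded just before the lemma that every variable occurrence in $\phi^\iota$ lives inside a term $\iota^{\mathrm{max}_\phi - s_\phi(x)}x$, so that replacing these terms by fresh variables of the shifted sorts is a reparametrization along isomorphisms. Your explicit appeal to compositionality of the categorical semantics and the bookkeeping $k_u = k_v + 1$ merely spell out what the paper leaves implicit.
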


\begin{prop}[Stratified Comprehension]\label{ComprehensionStratified}
For every stratified $\mathcal{S}^\mathbf{M}_{\mathbf{N}, \in}$-formula $\phi(z)$, where $z : Z$ for some $Z$ in $\mathbf M$,
$$\mathbf{M} \models \exists x : \mathbf{P} Z . \forall z : Z . (z \in x \leftrightarrow \phi(z)).$$
\end{prop}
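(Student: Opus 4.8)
The plan is to reduce the statement to the power-object comprehension of Proposition \ref{ComprehensionN} by way of the stratification conversion in Lemma \ref{StratConvert}, and then to transport the result from $\subseteq^{\mathbf{T}}$ back to the membership relation $\in$ using Proposition \ref{membership as iota subset}. Throughout, I read $Z$ as an object of $\mathbf{N}$ (this is in fact forced, since $z : Z$ is a sort declaration in $\mathcal{S}^\mathbf{M}_{\mathbf{N}, \in}$ and $\mathbf{P}Z$ is only defined for $Z$ in $\mathbf{N}$).

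First I would apply Lemma \ref{StratConvert} to the stratified formula $\phi(z)$ to obtain an $\mathcal{S}^\mathbf{M}_\mathbf{N}$-formula $\phi^{\subseteq^{\mathbf{T}}}$. By the construction in \ref{StratConstruction}, the free variable $z$ (of type $s_\phi(z)$) is replaced throughout by the term $\iota^{k} z$, where $k = \mathrm{max}_\phi - s_\phi(z)$, so that $\phi^{\subseteq^{\mathbf{T}}}$ has a single free variable $z\hspace{1pt}'$ of sort $\mathbf{T}^{k} Z$ and
\begin{equation*}
\mathbf{M} \models \forall z : Z . \big( \phi(z) \leftrightarrow \phi^{\subseteq^{\mathbf{T}}}(\iota^{k} z) \big).
\end{equation*}
The obstacle is now purely one of matching sorts: Proposition \ref{ComprehensionN} produces an element of $\mathbf{P}Z$ out of a predicate on $\mathbf{T}Z$, whereas $\phi^{\subseteq^{\mathbf{T}}}$ lives on $\mathbf{T}^{k}Z$. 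To bridge this, note that each $\iota^{j}_Z : Z \xrightarrow{\sim} \mathbf{T}^{j}Z$ is an isomorphism in $\mathbf{M}$, hence (since $\mathbf{N}$ is a conservative subcategory) also in $\mathbf{N}$; in particular $\iota_Z : Z \to \mathbf{T}Z$ is invertible in $\mathbf{N}$. I would therefore set
\begin{equation*}
\rho =_{\mathrm{df}} \iota^{k}_Z \circ \iota_Z^{-1} : \mathbf{T}Z \xrightarrow{\sim} \mathbf{T}^{k}Z,
\end{equation*}
an isomorphism in $\mathbf{N}$, and define $\psi(w) =_{\mathrm{df}} \phi^{\subseteq^{\mathbf{T}}}(\rho w)$, which is an $\mathcal{S}^\mathbf{M}_\mathbf{N}$-formula in context $w : \mathbf{T}Z$. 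Since $\rho(\iota_Z z) = \iota^{k}_Z z$, this gives $\mathbf{M} \models \forall z : Z . (\phi(z) \leftrightarrow \psi(\iota z))$.

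Next I would apply Proposition \ref{ComprehensionN} to $\psi$ (with the trivial parameter object $Y = \mathbf{1}$), yielding an $x : \mathbf{P}Z$ with $\mathbf{M} \models \forall w : \mathbf{T}Z . (w \subseteq^{\mathbf{T}} x \leftrightarrow \psi(w))$. Finally, invoking Proposition \ref{membership as iota subset} for $X = Z$, we have $\mathbf{M} \models \forall z : Z . \forall x : \mathbf{P}Z . (z \in x \leftrightarrow \iota z \subseteq^{\mathbf{T}} x)$. Chaining these equivalences, namely $z \in x \leftrightarrow \iota z \subseteq^{\mathbf{T}} x \leftrightarrow \psi(\iota z) \leftrightarrow \phi(z)$, and using that $\iota_Z$ is an isomorphism so that $w$ ranges over $\mathbf{T}Z$ exactly as $\iota z$ does, yields the witness $x$ for $\mathbf{M} \models \exists x : \mathbf{P}Z . \forall z : Z . (z \in x \leftrightarrow \phi(z))$, as required. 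The substantive difficulty has already been discharged in Lemma \ref{StratConvert}; the only real care needed here is the sort-matching via $\rho$, which is precisely what makes the single power object $\mathbf{P}Z$ (carrying its membership relation on $\mathbf{T}Z$) the correct target irrespective of where $\phi$ places $z$ within its stratification.
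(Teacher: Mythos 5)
Your reduction breaks at the bridging step, and the error is a misreading of conservativity. You claim that each $\iota^{j}_Z : Z \xrightarrow{\sim} \mathbf{T}^{j}Z$ is an isomorphism in $\mathbf{N}$ ``since $\mathbf{N}$ is a conservative subcategory of $\mathbf{M}$''. Conservativity of the inclusion $\mathbf{N} \hookrightarrow \mathbf{M}$ only says that a morphism \emph{already lying in} $\mathbf{N}$ which becomes invertible in $\mathbf{M}$ is invertible in $\mathbf{N}$; it does not put $\iota_Z$ into $\mathbf{N}$ in the first place. The axioms of an $\mathrm{IMLU}$-category require $\iota : \id_\mathbf{M} \xrightarrow{\sim} \mathbf{T}$ to be a natural isomorphism on $\mathbf{M}$ only, and in the intended interpretation (Theorem \ref{ConClassConCat}) $\iota_x$ is the class coding $z \mapsto \{z\}$, which is in general a \emph{proper class}; the paper explicitly warns that $\iota_V$ is a proper class. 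The objects $Z$ for which $\iota_Z$ does lie in $\mathbf{N}$ are exactly the \emph{strongly Cantorian} objects of Section \ref{subtopos}, and in the set-theoretic reading the assertion that every object is strongly Cantorian is refutable: for $V$ it would make the singleton map a set, contradicting the stratified form of Cantor's theorem in $\mathrm{NF(U)}$. Consequently your $\rho = \iota^{k}_Z \circ \iota_Z^{-1}$ is a morphism of $\mathbf{M}$ but not of $\mathbf{N}$, so $\psi(w) = \phi^{\subseteq^{\mathbf{T}}}(\rho w)$ is \emph{not} an $\mathcal{S}^\mathbf{M}_\mathbf{N}$-formula, and Proposition \ref{ComprehensionN} cannot be applied to it: its proof requires the subobject $\llbracket w, y \mid \psi \rrbracket$ to be represented by a mono in $\mathbf{N}$ so that the instance of (\ref{PT}) exists, and that is precisely what fails here. (Your first step, converting $\phi$ alone and recording the pointwise equivalence $\phi(z) \leftrightarrow \phi^{\subseteq^{\mathbf{T}}}(\iota^{k}z)$, is fine; the damage is done entirely by $\rho$.)

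The paper's proof is designed around exactly this obstruction, and shows the correct repair. Instead of converting only $\phi$ and then trying to collapse $\mathbf{T}^{k}Z$ back to $Z$ (impossible inside $\mathbf{N}$), it applies Lemma \ref{StratConvert} to the \emph{entire} comprehension sentence $\exists x : \mathbf{P}Z . \forall z : Z . (z \in x \leftrightarrow \phi(z))$, which is stratified with $x$ one type above $z$. The conversion then shifts \emph{both} bound variables upward: the existential lands at sort $\mathbf{T}^{k}\mathbf{P}Z$, the universal at $\mathbf{T}^{k+1}Z$, and membership becomes $\hat{\mathbf{T}}^{k}\subseteq^{\mathbf{T}}_Z$. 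The residual sort mismatch is repaired not with $\iota$ but with the natural isomorphism $\mu : \mathbf{P}\circ\mathbf{T} \xrightarrow{\sim} \mathbf{T}\circ\mathbf{P}$, which the $\mathrm{IMLU}$ axioms \emph{do} provide in $\mathbf{N}$: composing its components gives $\nu : \mathbf{P}\mathbf{T}^{k}Z \xrightarrow{\sim} \mathbf{T}^{k}\mathbf{P}Z$ in $\mathbf{N}$ and identifies $\hat{\mathbf{T}}^{k}\subseteq^{\mathbf{T}}_Z$ with $\subseteq^{\mathbf{T}}_{\mathbf{T}^{k}Z}$, after which Proposition \ref{ComprehensionN} is applied at the object $\mathbf{T}^{k}Z$ rather than at $Z$. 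The moral is that types created by stratification can never be collapsed inside $\mathbf{N}$ (that is the whole content of $\mathbf{T}$); they can only be commuted past $\mathbf{P}$ via $\mu$. If you rework your argument by replacing the $\rho$-step with this $\mu$-step and invoking Proposition \ref{ComprehensionN} with $\mathbf{T}^{k}Z$ in place of $Z$, you recover the paper's proof.
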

\begin{proof}
By Lemma \ref{StratConvert}, we have
\begin{align*}
\mathbf{M} \models \hspace{1pt} & \exists x : \mathbf{P} Z . \forall z : Z . (z \in_Z x \leftrightarrow \phi(z, y)) \\
\iff \mathbf{M} \models \hspace{1pt} & \exists x\hspace{1pt}' : \mathbf{T}^k \mathbf{P} Z . \forall z\hspace{1pt}' : \mathbf{T}^{k+1} Z . (z\hspace{1pt}' (\hat{\mathbf{T}}^k \subseteq^{\mathbf{T}}_Z) x\hspace{1pt}' \leftrightarrow \phi^{\subseteq^{\mathbf{T}}}(z\hspace{1pt}')), \tag{$\dagger$}
\end{align*}
for some $k \in \mathbb{N}$, where $x\hspace{1pt}', z\hspace{1pt}'$ are fresh variables. 

In order to apply Proposition \ref{ComprehensionN}, we need to move the $\mathbf{T}$:s through the $\mathbf{P}$ and transform the $\hat{\mathbf{T}}^k \subseteq^{\mathbf{T}}_Z$ into a $\subseteq^{\mathbf{T}}_{\mathbf{T}^k Z}$. Since $\mu : \mathbf{P}\mathbf{T} \rightarrow \mathbf{T}\mathbf{P}$ is a natural isomorphism,
$$\nu =_\mathrm{df} \mathbf{T}^{k-1}(\mu_Z) \circ \mathbf{T}^{k-2}(\mu_{\mathbf{T}Z}) \circ \dots \circ \mathbf{T}(\mu_{\mathbf{T}^{k-2} Z}) \circ \mu_{\mathbf{T}^{k-1} Z} : \mathbf{P} \mathbf{T}^k Z \xrightarrow{\sim} \mathbf{T}^k \mathbf{P} Z,$$
is an isomorphism making this diagram commute:
\[
\begin{tikzcd}[ampersand replacement=\&, column sep=small]
\subseteq^{\mathbf{T}}_{\mathbf{T}^k Z} \ar[rr, "{\sim}"] \ar[d, rightarrowtail, "{m_{\subseteq^{\mathbf{T}}_{\mathbf{T}^k Z}}}"] \&\& \hat{\mathbf{T}}^k \subseteq^{\mathbf{T}}_Z \ar[d, rightarrowtail, "{\hat{\mathbf{T}}^k m_{\subseteq^{\mathbf{T}}_Z}}"] \\
\mathbf{T}^{k+1} Z \times \mathbf{P} \mathbf{T}^k Z \ar[rr, "{\sim}", "{\id \times \nu}"'] \&\& \mathbf{T}^{k+1} Z \times \mathbf{T}^k \mathbf{P} Z
\end{tikzcd} 
\]
So introducing a fresh variable $x\hspace{1pt}'' : \mathbf{P} \mathbf{T}^k Z$, $(\dagger)$ is equivalent to
$$\mathbf{M} \models \exists x\hspace{1pt}'' : \mathbf{P} \mathbf{T}^k Z . \forall z\hspace{1pt}' : \mathbf{T}^{k+1} Z . (z\hspace{1pt}' \subseteq^{\mathbf{T}}_{\mathbf{T}^k  Z} x\hspace{1pt}'' \leftrightarrow \phi^{\subseteq^{\mathbf{T}}}(z\hspace{1pt}')).$$
By Proposition \ref{ComprehensionN} we are done.
\end{proof}

\begin{cor}
$\mathcal{U} \models \mathrm{SC}_S$
\end{cor}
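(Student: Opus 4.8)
The plan is to reduce the statement $\mathcal{U} \models \mathrm{SC}_S$ to the categorical stratified comprehension already established in Proposition \ref{ComprehensionStratified}, via the same passage between $\epsin$ and $\in_U$ that was used in the proof of Corollary \ref{ExtCor}. Recall that $\mathrm{SC}_S$ asserts, for every stratified $\mathcal{L}_\mathsf{Set}$-formula $\phi(z)$, the sentence $\exists x . (S(x) \wedge \forall z . (z \epsin x \leftrightarrow \phi(z)))$, and that by Construction \ref{StructureConstruction} this is to be read in the natural $\mathcal{S}^\mathbf{M}_\mathbf{M}$-structure, with $\epsin$ interpreted by $m_{\epsin} = (\id_U \times m_S) \circ m_{\in_U}$ and $S$ by $m_S$.

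The first step is to record the bridge between the two membership relations. Since $\id_U \times m_S$ is monic and $m_{\epsin}$ factors through it as $m_{\epsin} = (\id_U \times m_S) \circ m_{\in_U}$, pulling $m_{\epsin}$ back along $\id_U \times m_S$ returns $m_{\in_U}$; hence
\[
\mathbf{M} \models \forall z : U . \forall w : \mathbf{P}U . (z \epsin m_S(w) \leftrightarrow z \in_U w),
\]
and, pulling the mono $m_S$ back along itself, $\mathbf{M} \models \forall w : \mathbf{P}U . S(m_S(w))$. These are precisely the facts that make the image $m_S(w)$ of an arbitrary $w : \mathbf{P}U$ a set whose $\epsin$-members are exactly the $\in_U$-members of $w$.

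The second step is to convert the stratified $\mathcal{L}_\mathsf{Set}$-formula $\phi(z)$ into an equivalent stratified $\mathcal{S}^\mathbf{M}_{\mathbf{N}, \in}$-formula $\phi^*(z)$ in the sense of Definition \ref{MoreSignatures}. Here the relation $\epsin$ (of sort $U \times U$) must be traded for $\in_U$ (of sort $U \times \mathbf{P}U$): I would replace each atomic subformula $t \epsin t'$ by $\exists w : \mathbf{P}U . (m_S(w) = t' \wedge t \in_U w)$, with $w$ fresh. By the bridge and Sethood (Proposition \ref{Sethood}) this replacement preserves the interpretation in $\mathbf{M}$, so $\mathbf{M} \models \forall z : U . (\phi(z) \leftrightarrow \phi^*(z))$. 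The key point to check is that it also preserves stratification: if $s$ is the minimal stratification of $\phi$, assigning the fresh variable $w$ the type $s(t')$ forces $s(m_S(w)) = s(w) = s(t')$ from the function-symbol and equality clauses, while $t \in_U w$ demands $s(t) + 1 = s(w)$; both are consistent with the original constraint $s(t) + 1 = s(t')$ coming from $t \epsin t'$. As the $w$'s are introduced one per atom, no conflicts arise, and the remaining atoms (equality, ordered pair, and relations from $\mathcal{S}^\mathbf{M}_\mathbf{N}$) are untouched, so $\phi^*$ is genuinely stratified.

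Finally, applying Proposition \ref{ComprehensionStratified} to $\phi^*$ with $Z = U$ yields $w : \mathbf{P}U$ with $\mathbf{M} \models \forall z : U . (z \in_U w \leftrightarrow \phi^*(z))$, and hence $\forall z : U . (z \in_U w \leftrightarrow \phi(z))$ by the equivalence above. Setting $x = m_S(w)$ and invoking the bridge once more gives $S(x)$ together with $z \epsin x \leftrightarrow z \in_U w \leftrightarrow \phi(z)$, which is exactly the required instance of $\mathrm{SC}_S$. I expect the only real obstacle to be the stratification-preservation bookkeeping in the second step; the bridge computation in the first step is a routine pullback argument parallel to the one in Corollary \ref{ExtCor}, and the final assembly is immediate.
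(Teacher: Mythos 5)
Your proof is correct and follows essentially the same route as the paper: both reduce $\mathrm{SC}_S$ to Proposition \ref{ComprehensionStratified} at $Z = U$ and pass between $\in_U$ and $\epsin$ via the factorization $m_{\epsin} = (\id_U \times m_S) \circ m_{\in_U}$, converting existential quantification over $\mathbf{P}U$ into quantification over $x' : U$ with $S(x')$. The only difference is that you make explicit (and check stratification-preservation for) the translation of the stratified $\mathcal{L}_\mathsf{Set}$-formula into a stratified $\mathcal{S}^\mathbf{M}_{\mathbf{N}, \in}$-formula, a bookkeeping step the paper's proof leaves implicit when it applies Proposition \ref{ComprehensionStratified} directly to $\phi$; this is a worthwhile clarification, since $\epsin$ itself is not a relation symbol of $\mathcal{S}^\mathbf{M}_{\mathbf{N}, \in}$.
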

\begin{proof}
Let $\phi(z)$ be a stratified formula in $\mathcal{L}_\mathsf{Set}$. By Proposition \ref{ComprehensionStratified}, 
\[\mathbf{M} \models \exists x : \mathbf{P} U . \forall z : U . (z \in x \leftrightarrow \phi(z)).
\] 
Now,
$$
\begin{array}{rl}
& \llbracket \exists x : \mathbf{P} U . \forall z : U . (z \in x \leftrightarrow \phi(z)) \rrbracket \\
= & \llbracket \exists x : \mathbf{P} U . \forall z : U . (z \epsin m_S(x) \leftrightarrow \phi(z)) \rrbracket \\
= & \llbracket \exists x\hspace{1pt}' : U . \big(S(x\hspace{1pt}') \wedge \forall z : U . (z \epsin x\hspace{1pt}' \leftrightarrow \phi(z))\big) \rrbracket. \\
\end{array}
$$
So $\mathcal{U} \models \mathrm{SC}_S$.
\end{proof}

\begin{thm}\label{ConCatConSet}
$\mathcal{U} \models \mathrm{(I)NFU}$, and if $U \cong \mathbf{P}U$, then $\mathcal{U} \models \mathrm{(I)NF}$. Thus, each of $\mathrm{(I)NF(U)}_\mathsf{Set}$ is interpretable in $\mathrm{(I)ML(U)}_\mathsf{Cat}$, respectively.
\end{thm}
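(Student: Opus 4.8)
The plan is to assemble the results established throughout this section, since the structure $\mathcal{U}$ of Construction \ref{StructureConstruction} was built precisely so as to validate, one by one, the four axioms of $\mathrm{(I)NFU}_\mathsf{Set}$ listed in Axioms \ref{SCAx}. First I would record that each axiom has already been verified in the categorical semantics of $\mathbf{M}$: Sethood by Proposition \ref{Sethood}, Ordered Pair by Proposition \ref{OrderedPair}, Extensionality $\mathrm{Ext}_S$ by Corollary \ref{ExtCor}, and Stratified Comprehension $\mathrm{SC}_S$ by the Corollary to Proposition \ref{ComprehensionStratified}. Taken together these give $\mathcal{U} \models \mathrm{INFU}_\mathsf{Set}$. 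Crucially, every one of these verifications was carried out by category-theoretic reasoning that invokes only the axioms of an $\mathrm{IMLU}$-category, and so is valid in any Heyting category $\mathbf{M}$; when $(\mathbf{M}, \mathbf{N})$ is taken to be an $\mathrm{MLU}$-category the very same derivations go through in the Boolean setting, so the argument runs uniformly for both the intuitionistic and the classical versions.

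Next I would treat the passage from $\mathrm{(I)NFU}$ to $\mathrm{(I)NF}$. Under the extra hypothesis $U \cong \mathbf{P}U$ — which is exactly the axiom distinguishing an $\mathrm{(I)ML}$-category from an $\mathrm{(I)MLU}$-category — Lemma \ref{SforNF} permits us to choose the interpretation $m_S : \mathbf{P}U \rightarrowtail U$ of the sethood predicate to be an isomorphism, and it then yields $\mathcal{U} \models \forall x . S(x)$. Since $\mathrm{(I)NF}_\mathsf{Set}$ was defined as $\mathrm{(I)NFU}_\mathsf{Set} + \forall x . S(x)$, combining this with the previous paragraph gives $\mathcal{U} \models \mathrm{(I)NF}_\mathsf{Set}$.

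Finally I would extract the interpretability statement. The construction of $\mathcal{U}$ together with the clauses of the categorical semantics associates to each $\mathcal{L}_\mathsf{Set}$-formula $\phi$ an $\mathcal{L}_\mathsf{Cat}$-formula asserting that the subobject $\llbracket \phi \rrbracket$ is maximal, and since the axiomatization of $\mathrm{(I)ML(U)}_\mathsf{Cat}$ is elementary this translation is a bona fide syntactic translation between the two first-order languages. The soundness direction of Theorem \ref{Completeness} then guarantees that each of the derivations above — being a derivation of $\mathcal{U} \models \text{axiom}$ from the categorical axioms — is in fact a proof in $\mathrm{(I)ML(U)}_\mathsf{Cat}$ of the translate of that axiom. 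Hence the translation sends every theorem of $\mathrm{(I)NF(U)}_\mathsf{Set}$ to a theorem of $\mathrm{(I)ML(U)}_\mathsf{Cat}$, which is exactly what it means for $\mathrm{(I)NF(U)}_\mathsf{Set}$ to be interpretable in $\mathrm{(I)ML(U)}_\mathsf{Cat}$; in particular one obtains $\mathrm{Con}(\mathrm{(I)ML(U)}_\mathsf{Cat}) \Rightarrow \mathrm{Con}(\mathrm{(I)NF(U)}_\mathsf{Set})$.

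I expect no serious obstacle in the theorem itself, as the substance of the work lies in the earlier propositions — above all in the conversion of stratified $\in$-formulae into $\subseteq^{\mathbf{T}}$-formulae (Construction \ref{StratConstruction} and Lemmata \ref{iotaTypeIncrease}, \ref{StratConvert}) that underlies Stratified Comprehension. The one point that will merit care is the last paragraph: making explicit that the semantic statement ``$\mathcal{U}$ satisfies the axioms'' can legitimately be read as a first-order interpretation, which rests on the elementariness of the $\mathsf{Cat}$-axiomatization and on the soundness half of Theorem \ref{Completeness}, rather than on any fresh categorical input.
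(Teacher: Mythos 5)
Your proposal is correct and follows essentially the same route as the paper: the paper's proof simply collects Propositions \ref{Sethood} and \ref{OrderedPair}, Corollary \ref{ExtCor}, and the corollary to Proposition \ref{ComprehensionStratified} to settle the $\mathrm{(I)NFU}$ cases, then invokes Lemma \ref{SforNF} for $\mathrm{(I)NF}$. Your additional paragraph spelling out why satisfaction in the categorical semantics constitutes a first-order interpretation (via elementariness of the $\mathsf{Cat}$-axiomatization and Theorem \ref{Completeness}) makes explicit what the paper handles implicitly in the discussion opening Section \ref{interpret set in cat}, but it is the same argument, not a different one.
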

\begin{proof}
The cases of $\mathrm{(I)NFU}$ are settled by the results above on Sethood, Ordered Pair, Extensionality and Stratified Comprehension. The cases of $\mathrm{(I)NF}$ now follow from Lemma \ref{SforNF}.
\end{proof}

\begin{thm}
These theories are equiconsistent\footnote{But see Remark \ref{remML}.}: 
\begin{align*}
& \mathrm{(I)ML(U)_\mathsf{Class}} \\
& \mathrm{(I)ML(U)_\mathsf{Cat}} \\
& \mathrm{(I)NF(U)_\mathsf{Set}}
\end{align*}

More precisely, $\mathrm{(I)ML(U)_\mathsf{Class}}$ interprets $\mathrm{(I)ML(U)_\mathsf{Cat}}$, which in turn interprets $\mathrm{(I)NF(U)_\mathsf{Set}}$; and a model of $\mathrm{(I)ML(U)_\mathsf{Class}}$ can be constructed from a model of $\mathrm{(I)NF(U)_\mathsf{Set}}$.
\end{thm}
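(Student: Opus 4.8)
The plan is to assemble the final statement entirely from results already established, since each of its three clauses has been proved separately; the only work remaining is to chain them into a cycle of relative-consistency implications and to note the sense in which ``equiconsistent'' is to be read.

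First I would record the two interpretability facts. By Theorem \ref{ConClassConCat}, $\mathrm{(I)ML(U)}_\mathsf{Class}$ interprets $\mathrm{(I)ML(U)}_\mathsf{Cat}$, and by Theorem \ref{ConCatConSet}, $\mathrm{(I)ML(U)}_\mathsf{Cat}$ interprets $\mathrm{(I)NF(U)}_\mathsf{Set}$. Since an interpretation of a theory $T'$ in a theory $T$ transports any model of $T$ to a model of $T'$ (equivalently, yields a proof of $\mathrm{Con}(T) \Rightarrow \mathrm{Con}(T')$ in a weak meta-theory), these give
\[
\mathrm{Con}(\mathrm{(I)ML(U)}_\mathsf{Class}) \Rightarrow \mathrm{Con}(\mathrm{(I)ML(U)}_\mathsf{Cat}) \Rightarrow \mathrm{Con}(\mathrm{(I)NF(U)}_\mathsf{Set}).
\]
Composing the two interpretations also shows directly that $\mathrm{(I)ML(U)}_\mathsf{Class}$ interprets $\mathrm{(I)NF(U)}_\mathsf{Set}$, which is part of the ``more precise'' claim in the statement.

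Second, to close the cycle I would invoke the model-construction direction. Proposition \ref{ConSetConClass prop} builds a model of $\mathrm{ML(U)}_\mathsf{Class}$ from a model of $\mathrm{NF(U)}_\mathsf{Set}$ in the classical case, and Theorem \ref{ToposModelOfML} carries out the same construction inside the categorical semantics of an arbitrary topos, so that (via completeness for Kripke semantics, exactly as in Corollary \ref{ConSetConClass}) a model of $\mathrm{(I)NF(U)}_\mathsf{Set}$ yields a model of $\mathrm{(I)ML(U)}_\mathsf{Class}$. This supplies the remaining implication
\[
\mathrm{Con}(\mathrm{(I)NF(U)}_\mathsf{Set}) \Rightarrow \mathrm{Con}(\mathrm{(I)ML(U)}_\mathsf{Class}),
\]
completing a cycle of implications among the three consistency statements; hence they are pairwise equivalent and the three theories are equiconsistent.

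There is no genuine mathematical obstacle at this stage --- all the substantive content lives in the earlier theorems, above all in the interpretation of $\mathrm{(I)NF(U)}_\mathsf{Set}$ in $\mathrm{(I)ML(U)}_\mathsf{Cat}$ (Theorem \ref{ConCatConSet}), which is the main original result. The one point requiring care, which I would flag via the footnote already attached to the statement, is the meaning of ``equiconsistent'': the assertion is only non-trivial when the common consistency strength dominates that of the meta-theory, and as explained in Remark \ref{remML} this is unproblematic in the classical case (where the strength is at least that of Mac Lane set theory, and the presheaf categories used are topoi already over that meta-theory) but must be read conditionally for the intuitionistic theories $\mathrm{INF(U)}$, whose consistency strength has not been determined.
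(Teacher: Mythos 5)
Your proposal is correct and matches the paper's own proof, which simply combines Theorem \ref{ToposModelOfML}, Corollary \ref{ConSetConClass}, Theorem \ref{ConClassConCat} and Theorem \ref{ConCatConSet} in exactly the cycle you describe. Your additional remarks on composing interpretations and on the conditional reading of ``equiconsistent'' via Remark \ref{remML} are consistent with the paper's intent and add no divergence from its argument.
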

\begin{proof}
Combine Theorem \ref{ToposModelOfML}, Corollary \ref{ConSetConClass}, Theorem \ref{ConClassConCat} and Theorem \ref{ConCatConSet}.
\end{proof}

\begin{cor}\label{ConFewAtoms}
These theories are equiconsistent\footnote{But see Remark \ref{remML}.}:
\begin{align*}
& \mathrm{(I)NF}_\mathsf{Set} \\
& \mathrm{(I)NFU_\mathsf{Set}} + ( |V| = |\mathcal P(V)| )
\end{align*}
\end{cor}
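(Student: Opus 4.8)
The plan is to prove the two consistency implications separately, leaning entirely on the chain of interpretations and model constructions assembled in the preceding theorem; the only genuinely new content is to see that the cardinality hypothesis threads through that chain as the categorical side-condition $U \cong \mathbf{P}U$. The forward implication $\mathrm{Con}(\mathrm{(I)NF}_\mathsf{Set}) \Rightarrow \mathrm{Con}(\mathrm{(I)NFU}_\mathsf{Set} + (|V| = |\mathcal{P}(V)|))$ is purely syntactic and essentially free. Since $\mathrm{(I)NF}_\mathsf{Set}$ is by definition $\mathrm{(I)NFU}_\mathsf{Set} + \forall x . S(x)$, it proves $\mathcal{P}(V) = V$ (everything is a set, and every set is a subset of $V$), so the identity on $V$ is a type-level set bijection witnessing $|V| = |\mathcal{P}(V)|$. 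Hence every model of $\mathrm{(I)NF}_\mathsf{Set}$ is already a model of $\mathrm{(I)NFU}_\mathsf{Set} + (|V| = |\mathcal{P}(V)|)$, which yields this implication.

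The substantive direction is $\mathrm{Con}(\mathrm{(I)NFU}_\mathsf{Set} + (|V| = |\mathcal{P}(V)|)) \Rightarrow \mathrm{Con}(\mathrm{(I)NF}_\mathsf{Set})$. Starting from a model of $\mathrm{(I)NFU}_\mathsf{Set} + (|V| = |\mathcal{P}(V)|)$, I would first apply Theorem \ref{ToposModelOfML} (for the intuitionistic case, working in a presheaf topos; in the classical case this is just Proposition \ref{ConSetConClass prop}) to obtain a model of $\mathrm{(I)MLU}_\mathsf{Class}$. Because that construction makes the $\mathrm{Setom}$-part isomorphic to the original set model via the map $t$, and because $|V| = |\mathcal{P}(V)|$ is a statement whose quantifiers and whose witnessing bijection all live within the sets, the cardinality statement survives into the $\mathsf{Class}$-model. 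Next I would run the interpretation of Theorem \ref{ConClassConCat}, producing an $\mathrm{(I)MLU}$-category $(\mathbf{M}, \mathbf{N})$ inside this $\mathsf{Class}$-model, in which $U$ is interpreted as $V$ and $\mathbf{P}U$ as $\pow V$ (the power set $\mathcal{P}(V)$).

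The crux is then to verify that, under the hypothesis, $U \cong \mathbf{P}U$ holds in $(\mathbf{M}, \mathbf{N})$, so that $(\mathbf{M}, \mathbf{N})$ is in fact an $\mathrm{(I)ML}$-category (recall from Definition \ref{IMLU_CAT} that the non-U theory is obtained by adding precisely $U \cong \mathbf{P}U$). Here one must check that the set bijection witnessing $|V| = |\mathcal{P}(V)|$ is a morphism \emph{in $\mathbf{N}$} --- that is, coded by a \emph{set} rather than a proper class --- and that, as recorded in the proof of Theorem \ref{ConClassConCat}, a morphism of $\mathbf{N}$ is an isomorphism iff it is bijective. Both hold: the bijection is a set by the cardinality hypothesis, and its bijectivity delivers the isomorphism $U \cong \mathbf{P}U$ in $\mathbf{N}$. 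With $U \cong \mathbf{P}U$ in hand, Theorem \ref{ConCatConSet} (via Lemma \ref{SforNF}, where $m_S$ may be chosen to be an isomorphism) gives that the structure $\mathcal{U}$ built in the categorical semantics of $\mathbf{M}$ satisfies $\mathrm{(I)NF}_\mathsf{Set}$, not merely $\mathrm{(I)NFU}_\mathsf{Set}$. This exhibits a model of $\mathrm{(I)NF}_\mathsf{Set}$ and closes the implication.

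The main obstacle --- and the only bookkeeping beyond citing the earlier results --- is precisely this crux step: confirming that the equinumerosity datum transports through the topos/class construction and the categorical interpretation \emph{as a morphism of the subcategory} $\mathbf{N}$, so that it legitimately instantiates $U \cong \mathbf{P}U$. Everything else simply reuses interpretations already proved, which is why the corollary costs essentially no extra work. I would also emphasize, as the payoff, that the gain is genuine and transparent in the categorical setting: the construction upgrades the weak hypothesis $|V| = |\mathcal{P}(V)|$ (mere equinumerosity, compatible with the presence of atoms) to full $\mathrm{(I)NF}_\mathsf{Set}$ (no atoms whatsoever), through nothing more than the freedom to take $m_S$ to be an isomorphism in Lemma \ref{SforNF}.
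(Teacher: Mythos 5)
Your proposal is correct and follows essentially the same route as the paper: the paper's proof is exactly the chain $\mathrm{Con}(\mathrm{(I)NFU}_\mathsf{Set} + |V|=|\mathcal{P}(V)|) \Rightarrow \mathrm{Con}(\mathrm{(I)MLU}_\mathsf{Class} + |V|=|\mathcal{P}(V)|) \Rightarrow \mathrm{Con}(\mathrm{(I)ML}_\mathsf{Cat}) \Rightarrow \mathrm{Con}(\mathrm{(I)NF}_\mathsf{Set})$, citing the earlier constructions, with the forward direction dismissed as trivial. The crux you single out --- that the set bijection witnessing $|V| = |\mathcal{P}(V)|$ is a morphism of $\mathbf{N}$ and hence, being bijective, an isomorphism instantiating $U \cong \mathbf{P}U$ --- is precisely the point the paper leaves implicit in "By the proofs above," so your write-up is a correct (and slightly more explicit) rendering of the same argument.
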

\begin{proof}
Only $\Leftarrow$ is non-trivial. By the proofs above, 
\[
\begin{array}{cl}
& \mathrm{Con}\big(\mathrm{(I)NFU_\mathsf{Set}} + ( |V| = |\mathcal P(V)| )\big) \\
\Rightarrow & \mathrm{Con}\big(\mathrm{(I)MLU_\mathsf{Class}} + ( |V| = |\mathcal P(V)| )\big) \\
\Rightarrow & \mathrm{Con}\big(\mathrm{(I)ML_\mathsf{Cat}}\big) \Rightarrow \mathrm{Con}\big(\mathrm{(I)NF}_\mathsf{Set}\big),
\end{array}
\]
as desired.
\end{proof}

For the classical case, this is known from \cite{Cra00}, while the intuitionistic case appears to be new.

\section{The subtopos of strongly Cantorian objects}\label{subtopos}

\begin{dfn}
An object $X$ in $\mathbf{N}$ is {\em Cantorian} if $X \cong \mathbf{T}X$ in $\mathbf{N}$, and is {\em strongly Cantorian} if $\iota_X : X \xrightarrow{\sim} \mathbf{T}X$ is an isomorphism in $\mathbf{N}$. Define $\mathbf{SCan}_{( \mathbf{M}, \mathbf{N})}$ as the full subcategory of $\mathbf{N}$ on the set of strongly Cantorian objects. I.e. its objects are the strongly Cantorian objects, and its morphism are all the morphisms in $\mathbf{N}$ between such objects. When the subscript ${( \mathbf{M}, \mathbf{N})}$ is clear from the context, we may simply write $\mathbf{SCan}$.
\end{dfn}

\begin{prop}\label{SCan has limits}
$\mathbf{SCan}_{( \mathbf{M}, \mathbf{N})}$ has finite limits.
\end{prop}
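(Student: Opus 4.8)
The plan is to reduce the statement to the standard generators of finite limits and to exploit that $\mathbf{SCan}$ is a \emph{full} subcategory of $\mathbf{N}$, which already has finite limits. It suffices to produce, inside $\mathbf{SCan}$, a terminal object, binary products, and equalizers. For each such construction I would take the corresponding limit computed in $\mathbf{N}$, verify that the resulting object is again strongly Cantorian, and then observe that fullness automatically upgrades the $\mathbf{N}$-universal property to a $\mathbf{SCan}$-universal property: any competing cone in $\mathbf{SCan}$ is in particular a cone in $\mathbf{N}$, so the unique mediating $\mathbf{N}$-morphism exists, and since both its domain and codomain are strongly Cantorian, fullness of $\mathbf{SCan}$ places it in $\mathbf{SCan}$, with uniqueness inherited from $\mathbf{N}$. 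Thus the whole problem reduces to closure of the strongly Cantorian objects under finite limits taken in $\mathbf{N}$. The one genuine wrinkle is that $\iota$ is a natural isomorphism only on $\mathbf{M}$, so the components $\iota_X$ for $X$ in $\mathbf{N}$ need not themselves lie in $\mathbf{N}$; being strongly Cantorian is exactly the demand that $\iota_X$ be an isomorphism of $\mathbf{N}$, and every closure argument must manufacture the relevant inverse inside $\mathbf{N}$.

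The terminal object $\mathbf{1}$ is handled immediately by Lemma \ref{TandProductsCommute}(a), which states that $\iota_{\mathbf{1}}$ is an isomorphism in $\mathbf{N}$. For binary products, let $A, B$ be strongly Cantorian. By Lemma \ref{TandProductsCommute}(b) the morphism $\theta = (\iota_A \times \iota_B) \circ \iota_{A \times B}^{-1} : \mathbf{T}(A \times B) \to \mathbf{T}A \times \mathbf{T}B$ lies in $\mathbf{N}$, whence $\iota_{A\times B}^{-1} = (\iota_A^{-1} \times \iota_B^{-1}) \circ \theta$. Since $A$ and $B$ are strongly Cantorian, $\iota_A^{-1}$ and $\iota_B^{-1}$ are morphisms of $\mathbf{N}$, and the fixed product functor restricts to $\mathbf{N}$, so $\iota_A^{-1} \times \iota_B^{-1}$, and hence $\iota_{A\times B}^{-1}$, lie in $\mathbf{N}$. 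As $\iota_{A\times B}^{-1}$ is an isomorphism in $\mathbf{M}$ and $\mathbf{N}$ is a conservative subcategory of $\mathbf{M}$, it is an isomorphism in $\mathbf{N}$; therefore so is its $\mathbf{N}$-inverse $\iota_{A\times B}$, and $A \times B$ is strongly Cantorian.

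For equalizers, let $f, g : A \to B$ be morphisms of $\mathbf{N}$ between strongly Cantorian objects, and let $e : E \rightarrowtail A$ be their equalizer in $\mathbf{N}$. By Proposition \ref{strenghten def}(\ref{strengthen T limits}), $\mathbf{T}$ preserves finite limits in $\mathbf{N}$, so $\mathbf{T}e : \mathbf{T}E \to \mathbf{T}A$ is an equalizer of $\mathbf{T}f, \mathbf{T}g$. Using naturality of $\iota$ together with the isomorphisms $\iota_A, \iota_B$ (available since $A, B$ are strongly Cantorian), one checks that $\iota_A \circ e$ also equalizes $\mathbf{T}f, \mathbf{T}g$, since $\mathbf{T}f \circ \iota_A \circ e = \iota_B \circ f \circ e = \iota_B \circ g \circ e = \mathbf{T}g \circ \iota_A \circ e$. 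Comparing the two equalizers yields a unique isomorphism $h : E \to \mathbf{T}E$ in $\mathbf{N}$ with $\mathbf{T}e \circ h = \iota_A \circ e$. But naturality of $\iota$ on $\mathbf{M}$ also gives $\mathbf{T}e \circ \iota_E = \iota_A \circ e$, and $\mathbf{T}e$ is monic, so $\iota_E = h$ is an isomorphism in $\mathbf{N}$; hence $E$ is strongly Cantorian.

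Combining the three closure results with the fullness transfer described above shows that $\mathbf{SCan}$ has a terminal object, binary products, and equalizers, and therefore all finite limits. I expect the product case to be the main obstacle: because neither $\iota_{A\times B}$ nor its inverse is a priori a morphism of $\mathbf{N}$, one is forced to reconstruct the inverse via Lemma \ref{TandProductsCommute} and then appeal to conservativity of $\mathbf{N}$ in $\mathbf{M}$. The equalizer case is similar in spirit but additionally leans on $\mathbf{T}$ preserving finite limits and on equalizer legs being monic.
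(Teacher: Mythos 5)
Your proof is correct, but it takes a genuinely different route from the paper's. The paper argues uniformly for an arbitrary finite diagram $\mathbf{D} : \mathbf{I} \rightarrow \mathbf{SCan}$: take its limit $L$ in $\mathbf{N}$; by Proposition \ref{strenghten def}(\ref{strengthen T limits}), $\mathbf{T}L$ is a limit of $\mathbf{T} \circ \mathbf{D}$ in $\mathbf{N}$; since every object of $\mathbf{D}$ is strongly Cantorian, $\mathbf{D}$ and $\mathbf{T} \circ \mathbf{D}$ are isomorphic diagrams in $\mathbf{N}$, so $L$ and $\mathbf{T}L$ are limits in $\mathbf{N}$ of the \emph{same} diagram; the unique comparison morphisms between them in $\mathbf{N}$, viewed in $\mathbf{M}$, must be $\iota_L$ and $\iota_L^{-1}$, so $L$ is strongly Cantorian, and fullness finishes. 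You instead reduce to the generators of finite limits (terminal object, binary products, equalizers) and prove closure of the strongly Cantorian objects case by case, getting the terminal object and products essentially for free from Lemma \ref{TandProductsCommute} together with conservativity, and handling equalizers by a comparison-of-equalizers argument. Both proofs share the same skeleton (closure under $\mathbf{N}$-limits plus transfer by fullness); what the paper's version buys is uniformity, no appeal to the generators-of-finite-limits fact, and an argument that is reused verbatim for Corollary \ref{SCan pres and refl limits}, while yours buys concreteness and reuse of an already-proved lemma, at the cost of a case analysis whose equalizer case is essentially the paper's uniform argument in miniature.

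One step in your equalizer case deserves a citation. The phrase ``comparing the two equalizers yields a unique isomorphism $h$'' presupposes that $\iota_A \circ e$ is itself an equalizer of $\mathbf{T}f, \mathbf{T}g$ \emph{in} $\mathbf{N}$; a priori it is one only in $\mathbf{M}$, and promoting this to $\mathbf{N}$ requires Proposition \ref{strenghten def}(\ref{strengthen reflect limits}) (the inclusion $\mathbf{N} \hookrightarrow \mathbf{M}$ reflects finite limits). Alternatively, and more economically: the universal property of $\mathbf{T}e$ in $\mathbf{N}$ already yields $h$ in $\mathbf{N}$ from the mere fact that $\iota_A \circ e$ equalizes the pair; then $h = \iota_E$ by monicity of $\mathbf{T}e$ in $\mathbf{M}$ (which follows from Proposition \ref{strenghten def}(\ref{strengthen mono})), and conservativity of $\mathbf{N}$ in $\mathbf{M}$ --- which you already invoke in the product case --- makes $h$ an isomorphism in $\mathbf{N}$. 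This is a matter of justification rather than substance, so it is not a gap in the proof, but as written the isomorphism claim for $h$ floats free of the paper's lemmas.
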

\begin{proof}
Let $L$ be a limit in $\mathbf{N}$ of a finite diagram $\mathbf{D} : \mathbf{I} \rightarrow \mathbf{SCan}$. Since $\mathbf{N}$ is a Heyting subcategory of $\mathbf{M}$, $L$ is a limit of $\mathbf{D}$ in $\mathbf{M}$; and since $\mathbf{T}$ preserves limits, $\mathbf{T}L$ is a limit of $\mathbf{T} \circ \mathbf{D}$ in $\mathbf{M}$ and in $\mathbf{N}$. But also, since $\mathbf{D}$ is a diagram in $\mathbf{SCan}$, $\mathbf{T}L$ is a limit of $\mathbf{D}$, and $L$ is a limit of $\mathbf{T} \circ \mathbf{D}$, in $\mathbf{M}$ and in $\mathbf{N}$. So there are unique morphisms in $\mathbf{N}$ back and forth between $L$ and $\mathbf{T}L$ witnessing the universal property of limits. Considering these as morphisms in $\mathbf{M}$ we see that they must be $\iota_L$ and $\iota_L^{-1}$. Now $L$ is a limit in $\mathbf{SCan}$ by fullness.
\end{proof}

Before we can show that $\mathbf{SCan}_{( \mathbf{M}, \mathbf{N})}$ has power objects, we need to establish results showing that $\mathbf{SCan}_{( \mathbf{M}, \mathbf{N})}$ is a ``nice'' subcategory of $\mathbf{N}$.

\begin{cor}\label{SCan pres and refl limits}
The inclusion functor of $\mathbf{SCan}_{( \mathbf{M}, \mathbf{N})}$ into $\mathbf{N}$ preserves and reflects finite limits.
\end{cor}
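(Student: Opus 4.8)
The plan is to exploit two facts: that $\mathbf{SCan}_{( \mathbf{M}, \mathbf{N})}$ is by definition a \emph{full} subcategory of $\mathbf{N}$, and that the proof of Proposition \ref{SCan has limits} shows more than its statement, namely that a limit computed in $\mathbf{N}$ of any finite diagram landing in $\mathbf{SCan}$ is itself strongly Cantorian, and hence is already a limit in $\mathbf{SCan}$ by fullness. Write $\mathbf{J}$ for the inclusion functor of $\mathbf{SCan}$ into $\mathbf{N}$; it is full and faithful. The whole argument is then a formal consequence of these observations together with the uniqueness of limits up to isomorphism.

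First I would prove reflection, which needs only fullness and faithfulness. Let $\mathbf{D} : \mathbf{I} \rightarrow \mathbf{SCan}$ be a finite diagram and let $\Lambda = (L, (\lambda_i))$ be a cone over $\mathbf{D}$ in $\mathbf{SCan}$ such that $\mathbf{J}\Lambda$ is a limit of $\mathbf{J} \circ \mathbf{D}$ in $\mathbf{N}$. Given any competing cone $(K, (\kappa_i))$ in $\mathbf{SCan}$, it is also a cone over $\mathbf{J} \circ \mathbf{D}$ in $\mathbf{N}$, so the limit property of $\mathbf{J}\Lambda$ in $\mathbf{N}$ yields a unique mediating morphism $u : K \rightarrow L$ in $\mathbf{N}$ with $\lambda_i \circ u = \kappa_i$ for all $i$. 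By fullness $u$ lies in $\mathbf{SCan}$, and its uniqueness there follows from faithfulness of $\mathbf{J}$. Hence $\Lambda$ is a limit in $\mathbf{SCan}$.

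Next I would prove preservation, using Proposition \ref{SCan has limits} together with uniqueness of limits. Let $\Lambda = (L, (\lambda_i))$ be a limit in $\mathbf{SCan}$ of a finite diagram $\mathbf{D} : \mathbf{I} \rightarrow \mathbf{SCan}$. By the proof of Proposition \ref{SCan has limits}, the limit $\Lambda' = (L', (\lambda'_i))$ of $\mathbf{J} \circ \mathbf{D}$ in $\mathbf{N}$ has strongly Cantorian apex $L'$ and, by fullness, is also a limit in $\mathbf{SCan}$. Uniqueness of limits in $\mathbf{SCan}$ up to a unique cone-isomorphism then gives an isomorphism $\theta : L \rightarrow L'$ in $\mathbf{SCan}$ with $\lambda'_i \circ \theta = \lambda_i$ for all $i$. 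Transporting $\theta$ along $\mathbf{J}$ gives an isomorphism in $\mathbf{N}$ intertwining the two cones, so $\mathbf{J}\Lambda$ is a limit of $\mathbf{J} \circ \mathbf{D}$ in $\mathbf{N}$ as well.

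Since each half is essentially a universal-property chase, I anticipate no serious obstacle; the one point requiring care is cone bookkeeping — verifying that the mediating and comparison morphisms genuinely intertwine the relevant limit cones, and that the factorizations produced by the universal properties coincide whether read in $\mathbf{N}$ or in $\mathbf{SCan}$. This is exactly what the fullness and faithfulness of $\mathbf{J}$ guarantee, so the verification should be short.
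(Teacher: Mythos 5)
Your proposal is correct and takes essentially the same route as the paper: for preservation you compare the given limit cone with a limit computed in $\mathbf{N}$, which by the proof of Proposition \ref{SCan has limits} is again a limit in $\mathbf{SCan}$, and conclude via uniqueness of limits; for reflection you spell out the fullness-and-faithfulness argument, which is precisely what the paper's instruction to ``repeat the proof of Proposition \ref{SCan has limits}'' (namely its final fullness step) amounts to.
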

\begin{proof}
To see that it reflects finite limits, simply repeat the proof of Proposition \ref{SCan has limits}. We proceed to show that it preserves finite limits.

Let $L$ be a limit in $\mathbf{SCan}$ of a finite diagram $\mathbf{D} : \mathbf{I} \rightarrow \mathbf{SCan}$. Let $L'$ be a limit of this diagram in $\mathbf{N}$. By the proof of Proposition \ref{SCan has limits}, $L'$ is also such a limit in $\mathbf{SCan}$, whence $L$ is isomorphic to $L'$ in $\mathbf{SCan}$, and in $\mathbf{N}$. So $L$ is a limit of $\mathbf{D}$ in $\mathbf{N}$ as well.
\end{proof}

\begin{cor}\label{SCan mono iff N mono}
$m : A \rightarrow B$ is monic in $\mathbf{SCan}_{( \mathbf{M}, \mathbf{N})}$ iff it is monic in $\mathbf{N}$.
\end{cor}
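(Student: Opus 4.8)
The plan is to mirror the proof of Proposition \ref{strenghten def}(\ref{strengthen mono}), replacing its appeal to the fact that $\mathbf{N} \hookrightarrow \mathbf{M}$ preserves pullbacks with an appeal to Corollary \ref{SCan pres and refl limits}, which tells us that the inclusion $\mathbf{SCan}_{(\mathbf{M}, \mathbf{N})} \hookrightarrow \mathbf{N}$ both preserves and reflects finite limits. The key observation I would use is the standard characterization of monomorphisms as a finite-limit property: a morphism $m : A \rightarrow B$ is monic if and only if $\id_A, \id_A : A \rightarrow A$ form a pullback of $m$ against $m$ (equivalently, $A$ equipped with these two legs is a limit of the cospan $A \xrightarrow{m} B \xleftarrow{m} A$). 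Because this is phrased purely in terms of a finite limit whose vertex and legs already lie in $\mathbf{SCan}$, it will transfer along the inclusion.

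For the ($\Leftarrow$) direction I would argue exactly as in the corresponding direction of Proposition \ref{strenghten def}(\ref{strengthen mono}): if $m$ is monic in $\mathbf{N}$, then since $\mathbf{SCan}_{(\mathbf{M}, \mathbf{N})}$ is a subcategory of $\mathbf{N}$, any parallel pair $f, g$ of $\mathbf{SCan}$-morphisms with $m \circ f = m \circ g$ is in particular a parallel pair in $\mathbf{N}$, whence $f = g$ by cancellation in $\mathbf{N}$. This direction requires nothing beyond subcategory-hood.

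For the ($\Rightarrow$) direction I would start from the assumption that $m$ is monic in $\mathbf{SCan}_{(\mathbf{M}, \mathbf{N})}$. By the limit characterization, $\id_A, \id_A$ then exhibit $A$ as a pullback of $m$ and $m$ in $\mathbf{SCan}$. Invoking the preservation of finite limits from Corollary \ref{SCan pres and refl limits}, the same cone is a pullback of $m$ and $m$ in $\mathbf{N}$, and a second application of the limit characterization (now read in $\mathbf{N}$) gives that $m$ is monic in $\mathbf{N}$.

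I do not expect a serious obstacle here: the statement is a formal consequence of Corollary \ref{SCan pres and refl limits} once monicity is recast as a finite-limit condition. The one point demanding care is to confirm that the pullback witnessing monicity is genuinely a diagram in $\mathbf{SCan}$ — its vertex $A$ and both legs are objects and morphisms of $\mathbf{SCan}$ by hypothesis — so that the preservation clause of Corollary \ref{SCan pres and refl limits} applies; this is precisely what makes the forward direction go through. If desired, both directions can be presented in one stroke by using preservation and reflection of finite limits together.
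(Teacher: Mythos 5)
Your proposal is correct and follows essentially the same route as the paper: the ($\Leftarrow$) direction from subcategory-hood alone, and the ($\Rightarrow$) direction by recasting monicity as the statement that $\id_A, \id_A$ form a pullback of $m$ and $m$ in $\mathbf{SCan}_{(\mathbf{M},\mathbf{N})}$ and then transferring this pullback to $\mathbf{N}$ via the preservation clause of Corollary \ref{SCan pres and refl limits}. The paper's proof is precisely this argument, stated slightly more tersely.
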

\begin{proof}
($\Leftarrow$) follows from that $\mathbf{SCan}$ is a subcategory of $\mathbf{N}$.

($\Rightarrow$) Assume that $m : A \rightarrow B$ is monic in $\mathbf{SCan}$. Then $A$ along with $\id_A : A \rightarrow A$ and $\id_A : A \rightarrow A$ is a pullback of $m$ and $m$ in $\mathbf{SCan}$. By Corollary \ref{SCan pres and refl limits}, this is also a pullback in $\mathbf{N}$, from which it follows that $m$ is monic in $\mathbf{N}$.
\end{proof}

\begin{prop}\label{SCan closed under monos}
If $m : A \rightarrowtail B$ is monic in $\mathbf{N}$ and $B$ is in $\mathbf{SCan}_{( \mathbf{M}, \mathbf{N})}$, then $A$ and $m : A \rightarrowtail B$ are in $\mathbf{SCan}_{( \mathbf{M}, \mathbf{N})}$.
\end{prop}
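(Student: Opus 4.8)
The plan is to reduce the whole statement to showing that $A$ is strongly Cantorian, i.e. that $\iota_A : A \xrightarrow{\sim} \mathbf{T}A$ is an isomorphism in $\mathbf{N}$. The claim about $m$ is then automatic: $\mathbf{SCan}_{( \mathbf{M}, \mathbf{N})}$ is by definition the \emph{full} subcategory of $\mathbf{N}$ on the strongly Cantorian objects, so once $A$ and $B$ are both strongly Cantorian the $\mathbf{N}$-morphism $m : A \rightarrowtail B$ lies in $\mathbf{SCan}_{( \mathbf{M}, \mathbf{N})}$ by fullness. Note at the outset that $\iota_A$ is always an isomorphism \emph{in $\mathbf{M}$}, since $\iota : \id_\mathbf{M} \xrightarrow{\sim} \mathbf{T}$ is a natural isomorphism on $\mathbf{M}$. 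Hence, if I can show merely that $\iota_A$ is a morphism of $\mathbf{N}$, then conservativity of $\mathbf{N}$ in $\mathbf{M}$ immediately upgrades it to an isomorphism of $\mathbf{N}$, which finishes the proof.

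The key step, and the main obstacle, is therefore to exhibit $\iota_A$ as a morphism of $\mathbf{N}$, given only that $\iota_B$ is. The tool is the naturality square of $\iota$ at $m$, namely $\iota_B \circ m = \mathbf{T}(m) \circ \iota_A$. Here $\iota_B \circ m$ is a morphism of $\mathbf{N}$ (because $B$ strongly Cantorian makes $\iota_B$ an $\mathbf{N}$-isomorphism, and $m$ is in $\mathbf{N}$), while $\mathbf{T}(m)$ is a morphism of $\mathbf{N}$ that is monic in $\mathbf{M}$: indeed $m$ is monic in $\mathbf{M}$ by Proposition \ref{strenghten def}(\ref{strengthen mono}), and $\mathbf{T} : \mathbf{M} \to \mathbf{M}$, being a Heyting endofunctor by Proposition \ref{strenghten def}(\ref{strengthen T heyting}), preserves monos. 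So $\iota_A$ is the unique (since $\mathbf{T}(m)$ is monic) factorization in $\mathbf{M}$ of the $\mathbf{N}$-morphism $h := \iota_B \circ m$ through the $\mathbf{N}$-mono $\mathbf{T}(m)$; the trouble is that a priori this factorization is only an $\mathbf{M}$-morphism.

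To pull the factorization into $\mathbf{N}$, I would form in $\mathbf{N}$ the pullback $P$ of $\mathbf{T}(m) : \mathbf{T}A \rightarrowtail \mathbf{T}B$ along $h : A \to \mathbf{T}B$, with projections $p : P \to A$ and $q : P \to \mathbf{T}A$; since $\mathbf{N}$ is a Heyting subcategory of $\mathbf{M}$, this is simultaneously a pullback in $\mathbf{M}$. In $\mathbf{M}$ the pair $(\id_A, \iota_A)$ satisfies $\mathbf{T}(m)\circ\iota_A = h\circ\id_A$, so it induces $u : A \to P$ with $p\circ u = \id_A$ and $q\circ u = \iota_A$. Thus $p$ is a split epi; but $p$, being a pullback of the mono $\mathbf{T}(m)$, is also monic, and a morphism that is both monic and split epi is an isomorphism. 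Hence $p$ is an isomorphism in $\mathbf{M}$, so by conservativity it is an isomorphism in $\mathbf{N}$ and $u = p^{-1}$. Consequently $\iota_A = q\circ u = q \circ p^{-1}$ is a composite of morphisms of $\mathbf{N}$, so it lies in $\mathbf{N}$.

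With $\iota_A$ now known to be an $\mathbf{N}$-morphism and already an isomorphism in $\mathbf{M}$, conservativity of the inclusion $\mathbf{N} \hookrightarrow \mathbf{M}$ gives that $\iota_A$ is an isomorphism in $\mathbf{N}$, so $A \in \mathbf{SCan}_{( \mathbf{M}, \mathbf{N})}$, and then $m$ lies in $\mathbf{SCan}_{( \mathbf{M}, \mathbf{N})}$ by fullness as noted. I expect the only genuinely delicate points to be the bookkeeping that the pullback computed in $\mathbf{N}$ agrees with the one in $\mathbf{M}$, and that the section $u$ produced by the $\mathbf{M}$-universal property is a two-sided inverse of $p$; everything else is formal category theory.
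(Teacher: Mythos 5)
Your proof is correct and follows essentially the same route as the paper's: both form (up to canonical isomorphism) the same pullback in $\mathbf{N}$ --- yours of $\mathbf{T}m$ along $\iota_B \circ m$, the paper's of $\langle \id_B, \iota_B\rangle$ along $m \times \mathbf{T}m$ --- identify the projection $p$ to $A$ as an isomorphism in $\mathbf{M}$, transfer this to $\mathbf{N}$ by conservativity, and recover $\iota_A$ as $q \circ p^{-1}$, concluding by fullness. The only difference is presentational: you show $p$ is an isomorphism via the split-epi-plus-mono argument, whereas the paper compares two pullback squares; this is a minor streamlining, not a different method.
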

\begin{proof}
Let $m : A \rightarrowtail B$ be a mono in $\mathbf{N}$ and assume that $\iota_B$ is in $\mathbf{N}$. Let $P$ be this pullback in $\mathbf{N}$, which is also a pullback in $\mathbf{M}$ since $\mathbf{N}$ is a Heyting subcategory:
\[
\begin{tikzcd}[ampersand replacement=\&, column sep=small]
P \ar[rrr, rightarrowtail, "n"] \ar[d, rightarrowtail, "{\langle p, q\rangle}"'] \&\&\& B \ar[d, rightarrowtail, "{\langle \id_B, \iota_B \rangle}"] \\
{A \times \mathbf{T}A} \ar[rrr, rightarrowtail, "{m \times \mathbf{T}m}"'] \&\&\& {B \times \mathbf{T}B}
\end{tikzcd}
\]

We shall now establish that the following square is also a pullback in $\mathbf{M}$:
\[
\begin{tikzcd}[ampersand replacement=\&, column sep=small]
A \ar[rrr, rightarrowtail, "m"] \ar[d, rightarrowtail, "{\langle \id_A, \iota_A \rangle}"'] \&\&\& B \ar[d, rightarrowtail, "{\langle \id_B, \iota_B \rangle}"] \\
{A \times \mathbf{T}A} \ar[rrr, rightarrowtail, "{m \times \mathbf{T}m}"'] \&\&\& {B \times \mathbf{T}B}
\end{tikzcd}
\]
The square commutes since $\iota$ is a natural isomorphism. So since $P$ is a pullback, it suffices to find $f : P \rightarrow A$ such that $\langle p, q\rangle = \langle \id_A, \iota_A \rangle \circ f$ and $n = m \circ f$. Let $f\hspace{2pt}' = p$ and let $f\hspace{2pt}'' = \iota_A^{-1} \circ q$. We shall show that $f\hspace{2pt}' = f\hspace{2pt}''$ and that this is the desired $f$. By commutativity of the former square, $n = m \circ f\hspace{2pt}'$ and $\iota_B \circ n = \mathbf{T}m \circ \iota_A \circ f\hspace{2pt}''$, whence 
$$\iota_B \circ m \circ f\hspace{2pt}' = \mathbf{T}m \circ \iota_A \circ f\hspace{2pt}''.$$
Note that $\iota_B \circ m$ is monic, and since $\iota$ is a natural transformation it is equal to $\mathbf{T}m \circ \iota_A$. Hence, $f\hspace{2pt}' = f\hspace{2pt}''$. Let $f = f\hspace{2pt}' = f\hspace{2pt}''$. We have already seen that $n = m \circ f$. That $\langle p, q\rangle = \langle \id_A, \iota_A \rangle \circ f$ is immediately seen by plugging the definitions of $f\hspace{2pt}'$ and $f\hspace{2pt}''$ in place of $f$. Since $P$ is a pullback, it follows that $f$ is an isomorphism in $\mathbf{M}$ and that the latter square is a pullback.

Since $f = p$, $f$ is in $\mathbf{N}$, and since $\mathbf{N}$ is a conservative subcategory of $\mathbf{M}$, $f$ is an isomorphism in $\mathbf{N}$. Now note that $\iota_A = \iota_A \circ f \circ f^{-1} = q \circ f^{-1}$. Therefore, $\iota_A$ is in $\mathbf{N}$ and $A$ is in $\mathbf{SCan}$. So by fullness, $m : A \rightarrow B$ is in $\mathbf{SCan}$, as desired.
\end{proof}

\begin{prop}\label{SCan has power objects}
$\mathbf{SCan}_{( \mathbf{M}, \mathbf{N})}$ has power objects.
\end{prop}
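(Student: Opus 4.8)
The plan is to show that for every strongly Cantorian object $X$ the object $\mathbf{P}X$ supplied by the $\mathrm{IMLU}$-category structure, equipped with the membership mono $m_{\in_X}$ from Construction \ref{InConstruction}, is a power object of $X$ in $\mathbf{SCan}_{(\mathbf{M},\mathbf{N})}$. First I would record that $\mathbf{P}X$ is already a power object of $X$ in $\mathbf{N}$. Indeed, by Proposition \ref{strenghten def}(\ref{strengthen power}), $\mathbf{P}X$ together with $m_{\subseteq^{\mathbf{T}}_X}$ is a power object of $\mathbf{T}X$ in $\mathbf{N}$; since $X$ is strongly Cantorian, $\iota_X : X \xrightarrow{\sim} \mathbf{T}X$ is an isomorphism in $\mathbf{N}$, so transporting the classifying property along the isomorphism $\iota_X \times \id$ — which is precisely the pullback defining $\in_X$ in Construction \ref{InConstruction} — yields that $\mathbf{P}X$ with $m_{\in_X}$ is a power object of $X$ in $\mathbf{N}$. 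Note that this step genuinely uses that $X$ is strongly Cantorian, since for a general $A$ the morphism $\iota_A$ need only be an isomorphism in $\mathbf{M}$.

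The heart of the argument is to check that $\mathbf{P}X$ is itself strongly Cantorian, so that it lies in $\mathbf{SCan}$. The component $\iota_{\mathbf{P}X} : \mathbf{P}X \to \mathbf{T}\mathbf{P}X$ is automatically an isomorphism in $\mathbf{M}$, since $\iota$ is a natural isomorphism on $\mathbf{M}$; by conservativity of the subcategory $\mathbf{N}$ it therefore suffices to show that $\iota_{\mathbf{P}X}$ is a morphism of $\mathbf{N}$. The candidate witness is the composite $\mu_X \circ \mathbf{P}(\iota_X) : \mathbf{P}X \to \mathbf{P}\mathbf{T}X \to \mathbf{T}\mathbf{P}X$, which does lie in $\mathbf{N}$: $\iota_X$ is in $\mathbf{N}$ because $X$ is strongly Cantorian, hence so is $\mathbf{P}(\iota_X)$ as $\mathbf{P}$ is an endofunctor of $\mathbf{N}$, and $\mu_X$ is in $\mathbf{N}$ as a component of the natural isomorphism $\mu$ on $\mathbf{N}$. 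Thus the key step, and the main obstacle, is the coherence identity $\iota_{\mathbf{P}X} = \mu_X \circ \mathbf{P}(\iota_X)$ relating the two structural natural isomorphisms $\iota$ and $\mu$. I would establish it by comparing the two sides as classifying maps into the power object $\mathbf{T}\mathbf{P}X$ (isomorphic via $\mu_X$ to $\mathbf{P}\mathbf{T}X$) and invoking the uniqueness clause of the power-object property (Definition \ref{power object}); the set-theoretic picture, where the two composites send $X \mapsto \{X\}$ and $Y \mapsto \{\bigcup Y\}$ respectively and hence coincide, confirms what the abstract uniqueness argument should yield. Granting the identity, $\iota_{\mathbf{P}X}$ lies in $\mathbf{N}$ and is an isomorphism in $\mathbf{M}$, so it is an isomorphism in $\mathbf{N}$, and therefore $\mathbf{P}X \in \mathbf{SCan}$.

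It then remains to promote the power-object property from $\mathbf{N}$ to $\mathbf{SCan}$, which is routine transfer along the inclusion. Since $\mathbf{SCan}$ has finite limits (Proposition \ref{SCan has limits}) and its inclusion into $\mathbf{N}$ preserves and reflects them (Corollary \ref{SCan pres and refl limits}), the object $X \times \mathbf{P}X$ lies in $\mathbf{SCan}$ and agrees with the product computed in $\mathbf{N}$; as $m_{\in_X} : \in_X \rightarrowtail X \times \mathbf{P}X$ is a mono of $\mathbf{N}$ into a strongly Cantorian object, Proposition \ref{SCan closed under monos} places $\in_X$ and $m_{\in_X}$ in $\mathbf{SCan}$. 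For the universal property, let $r : R \rightarrowtail X \times B$ be a mono in $\mathbf{SCan}$; by Corollary \ref{SCan mono iff N mono} it is a mono in $\mathbf{N}$, so the $\mathbf{N}$-power-object property gives a unique $\chi : B \to \mathbf{P}X$ in $\mathbf{N}$ making the square a pullback in $\mathbf{N}$. Because $B$ and $\mathbf{P}X$ lie in $\mathbf{SCan}$ and $\mathbf{SCan}$ is full in $\mathbf{N}$, $\chi$ is a morphism of $\mathbf{SCan}$; the pullback square, having all its objects and morphisms in $\mathbf{SCan}$ and being a limit in $\mathbf{N}$, is reflected to a pullback in $\mathbf{SCan}$ by Corollary \ref{SCan pres and refl limits}.

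Finally, for uniqueness in $\mathbf{SCan}$, any competing classifier $\chi'$ in $\mathbf{SCan}$ making the square a pullback in $\mathbf{SCan}$ is, by preservation of limits (Corollary \ref{SCan pres and refl limits}), also a classifier in $\mathbf{N}$, hence equal to $\chi$ by uniqueness there. This shows $\mathbf{P}X$ with $m_{\in_X}$ is a power object of $X$ in $\mathbf{SCan}$, so $\mathbf{SCan}$ has power objects. The one genuinely delicate point throughout is the coherence identity of the second paragraph; every other step is a mechanical transfer across the inclusion $\mathbf{SCan} \hookrightarrow \mathbf{N}$ using the already-established preservation, reflection, fullness, closure-under-subobjects, and conservativity properties.
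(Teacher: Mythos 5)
Your overall architecture coincides with the paper's: (i) transport the power object $\mathbf{P}X$ of $\mathbf{T}X$ along the $\mathbf{N}$-isomorphism $\iota_X$ to obtain a power object of $X$ in $\mathbf{N}$; (ii) show that $\mathbf{P}X$ is strongly Cantorian by exhibiting $\iota_{\mathbf{P}X}$ as a composite of $\mathbf{N}$-morphisms whose last factor is $\mu_X$; (iii) transfer the classifying property down to $\mathbf{SCan}$ using Proposition \ref{SCan has limits}, Corollaries \ref{SCan pres and refl limits} and \ref{SCan mono iff N mono}, Proposition \ref{SCan closed under monos}, fullness, and conservativity. Your parts (i) and (iii) are correct and essentially identical to the corresponding parts of the paper's proof. (A cosmetic point: rather than $m_{\in_X}$, whose domain is a pullback formed in $\mathbf{M}$, it is cleaner to use $(\iota_X^{-1} \times \id_{\mathbf{P}X}) \circ m_{\subseteq^{\mathbf{T}}_X}$, whose domain is the $\mathbf{N}$-object $\subseteq^{\mathbf{T}}_X$; the two represent the same subobject of $X \times \mathbf{P}X$, and this is the mono the paper works with.)

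The gap is in part (ii), namely in your choice of witness $\mu_X \circ \mathbf{P}(\iota_X)$. The axioms of $\mathrm{(I)ML(U)}_\mathsf{Cat}$ (Definition \ref{IMLU_CAT}) place no constraint tying the action of $\mathbf{P}$ on \emph{morphisms} to the power-object structure: $\mathbf{P}$ is only required to be an endofunctor of $\mathbf{N}$ whose values $\mathbf{P}A$ carry the monos $m_{\subseteq^{\mathbf{T}}_A}$ satisfying (\ref{PT}), together with naturality of $\mu$. In the intended interpretation of Theorem \ref{ConClassConCat}, $\mathbf{P}f$ is the direct-image map and $\mathbf{P}(\iota_X)$ is indeed the canonical comparison; but that is a feature of that particular model, not a consequence of the axioms. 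As a result, your proposed method for the coherence identity --- ``compare the two sides as classifying maps and invoke the uniqueness clause'' --- cannot get started: to run it one must know which subobject each side classifies, and nothing in the axioms determines the relation $(\id_{\mathbf{T}X} \times \mathbf{P}(\iota_X))^*(\subseteq^{\mathbf{T}}_{\mathbf{T}X})$. (By contrast, the relation classified by $\iota_{\mathbf{P}X}$ \emph{is} computable, from naturality of $\iota$, as in Lemma \ref{iotaTypeIncrease}.) The paper sidesteps exactly this issue by never applying $\mathbf{P}$ to a morphism: since $X \in \mathbf{SCan}$ gives $\iota_{\mathbf{T}X} = \mathbf{T}\iota_X \in \mathbf{N}$, both $\mathbf{P}X$ (with $m_{\subseteq^{\mathbf{T}}_X}$) and $\mathbf{P}\mathbf{T}X$ (with $(\iota_{\mathbf{T}X}^{-1} \times \id) \circ m_{\subseteq^{\mathbf{T}}_{\mathbf{T}X}}$) are power objects of $\mathbf{T}X$ in $\mathbf{N}$, hence related by a comparison isomorphism $\beta_X$ in $\mathbf{N}$ that is \emph{defined} by what it classifies; the paper then identifies $\iota_{\mathbf{P}X}$ with $\mu_X \circ \beta_X$ via a two-way pullback square and the uniqueness clause of (\ref{PT}). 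Replacing $\mathbf{P}(\iota_X)$ by this canonical comparison $\beta_X$ repairs your argument; as written, your key identity concerns a morphism about which the axioms supply no information whatsoever.
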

\begin{proof}
Let $A$ be in $\mathbf{SCan}$. We shall show that $\mathbf{P}A$ along with $(\iota_A^{-1} \times \id_{\mathbf{P}A}) \circ m_{\subseteq^{\mathbf{T}}_A} : \hspace{1pt} \subseteq^{\mathbf{T}}_A \hspace{1pt} \rightarrowtail A \times \mathbf{P}A$ is a power object of $A$ in $\mathbf{SCan}$. In Step 1 we show that $(\iota_A^{-1} \times \id_{\mathbf{P}A}) \circ m_{\subseteq^{\mathbf{T}}_A} : \hspace{1pt} \subseteq^{\mathbf{T}}_A \hspace{1pt} \rightarrowtail A \times \mathbf{P}A$ is in $\mathbf{SCan}$, and in Step 2 we show that it satisfies the power object property.

{\em Step 1}: It actually suffices to show that $\mathbf{P}A$ is in $\mathbf{SCan}$. Because then, by Proposition \ref{SCan has limits} and Corollary \ref{SCan pres and refl limits}, $A \times \mathbf{P}A$ is in $\mathbf{SCan}$ (and is such a product in both $\mathbf{N}$ and $\mathbf{SCan}$), so that by Proposition \ref{SCan closed under monos} and fullness of $\mathbf{SCan}$, $(\iota_A^{-1} \times \id_{\mathbf{P}A}) \circ m_{\subseteq^{\mathbf{T}}_A} : \hspace{1pt} \subseteq^{\mathbf{T}}_A \hspace{1pt} \rightarrowtail A \times \mathbf{P}A$ is in $\mathbf{SCan}$.

By Proposition \ref{strenghten def} (\ref{strengthen power}), $\mathbf{PT}A$ along with $m_{\subseteq^{\mathbf{T}}_{\mathbf{T}A}} : \hspace{1pt} \subseteq^{\mathbf{T}}_{\mathbf{T}A} \hspace{1pt} \rightarrow \mathbf{TT}A \times \mathbf{PT}A$ is a power object of $\mathbf{TT}A$ in $\mathbf{N}$. So $\mathbf{PT}A$ along with $(\iota_{\mathbf{T}A}^{-1} \times \id_{\mathbf{PT}A}) \circ m_{\subseteq^{\mathbf{T}}_{\mathbf{T}A}} : \hspace{1pt} \subseteq^{\mathbf{T}}_{\mathbf{T}A} \hspace{1pt} \rightarrow \mathbf{T}A \times \mathbf{PT}A$ is a power object of $\mathbf{T}A$ in $\mathbf{N}$. Moreover, by Proposition \ref{strenghten def} (\ref{strengthen power}), $\mathbf{P}A$ is a power object of $\mathbf{T}A$ in $\mathbf{N}$. Therefore, using the natural isomorphism $\mu : \mathbf{P}\mathbf{T} \rightarrow \mathbf{T}\mathbf{P}$, we obtain an isomorphism $\alpha : \mathbf{P}A \xrightarrow{\sim} \mathbf{P}\mathbf{T}A \xrightarrow{\sim} \mathbf{T}\mathbf{P}A$ in $\mathbf{N}$. This results in the following two-way pullback in both $\mathbf{M}$ and $\mathbf{N}$:
\[
\begin{tikzcd}[ampersand replacement=\&, column sep=small]
{\subseteq^{\mathbf{T}}} \ar[rr, leftrightarrow, "{\sim}"] \ar[d, rightarrowtail] \&\& {\subseteq^{\mathbf{T}}} \ar[d, rightarrowtail] \\
\mathbf{T} A \times \mathbf{P}A \ar[rr, bend left, "{\sim}", "{\id \times \alpha}"'] \&\& \mathbf{T} A \times \mathbf{T}\mathbf{P} A \ar[ll, bend left, "{\sim}"', "{\id \times \alpha^{-1}}"]
\end{tikzcd}
\]
Since this pullback-square can be filled with $\iota_{\mathbf{P}A}$ in place of $\alpha$, the uniqueness property of the pullback implies that $\iota_{\mathbf{P}A} = \alpha$, whence $\iota_{\mathbf{P}A}$ is in $\mathbf{N}$ and $\mathbf{P}A$ is in $\mathbf{SCan}$.

{\em Step 2}: Let $r : R \rightarrowtail A \times B$ be a mono in $\mathbf{SCan}$. By Corollary \ref{SCan mono iff N mono}, $r$ is also monic in $\mathbf{N}$. So since $\iota_A$ is an isomorphism in $\mathbf{N}$, there is a unique $\chi$ in $\mathbf{N}$ such that this is a pullback in $\mathbf{N}$:
\[
\begin{tikzcd}[ampersand replacement=\&, column sep=small]
R \ar[rrr] \ar[d, rightarrowtail, "{r}"'] \&\&\& \subseteq^{\mathbf{T}}_A \ar[d, rightarrowtail, "{(\iota^{-1}_A \times \id_{\mathbf{P}A}) \circ m_{\subseteq^{\mathbf{T}}_A}}"] \\
A \times B \ar[rrr, "{\id_A \times \chi}"'] \&\&\& A \times \mathbf{P} A 
\end{tikzcd} 
\]
By Step 1, by fullness and by Corollary \ref{SCan pres and refl limits}, it is also a pullback in $\mathbf{SCan}$. To see uniqueness of $\chi$ in $\mathbf{SCan}$, suppose that $\chi\hspace{1pt}'$ were some morphism in $\mathbf{SCan}$ making this a pullback in $\mathbf{SCan}$ (in place of $\chi$). Then by Corollary \ref{SCan pres and refl limits}, it would also make it a pullback in $\mathbf{N}$, whence $\chi = \chi\hspace{1pt}'$.
\end{proof}

\begin{thm}
$\mathbf{SCan}_{( \mathbf{M}, \mathbf{N})}$ is a topos.
\end{thm}
\begin{proof}
A category with finite limits and power objects is a topos.
\end{proof}

\chapter{Where to go from here?}\label{ch where to go}

\section{Category theoretic approach to embeddings between models of set theory}

The results of Chapter \ref{ch emb set theory} suggest that it may be fruitful to organize countable models of set theory into categories, and then start asking natural category theoretic questions. In support of the prospects for this approach, let us here take the opportunity to make a case study out of Corollary \ref{Friedman cor} (a generalization of Friedman's embedding theorem). We shall now work towards formulating this theorem as a statement about the category $\mathbf{KP}^\mathcal{P}$ with countable models of $\mathrm{KP}^\mathcal{P} + \Sigma_1^\mathcal{P} \textnormal{-Separation}$ as objects and with topless rank-initial embeddings as morphisms. 

For every object $\mathcal{S}$ in $\mathbf{KP}^\mathcal{P}$:
\begin{enumerate}
\item Let $\mathcal{S}/\mathbf{KP}^\mathcal{P}$ be the co-slice category with the initial objects removed: This may be viewed as the category with proper topless rank-end-extensions of $\mathcal{S}$ as objects, and with rank-initial embeddings point-wise fixing $\mathcal{S}$ as morphisms.
\item Let $\mathbf{Class}_\mathcal{S}$ be the category of $\mathcal{L}^1$-expansions of $\mathcal{S}$, only with identity-morphisms. 
\item Let $\Sigma_1^\mathcal{P}(\mathcal{S}/\mathbf{KP}^\mathcal{P})$ be this category: The objects are the $\Sigma_1^\mathcal{P}$-fragments of the theories of the objects in $\mathcal{S}/\mathbf{KP}^\mathcal{P}$ with parameters in $\mathcal{S}$, and the morphisms are simply the instances of the inclusion relation between these fragments.
\end{enumerate}

A basic fact about standard systems is that they are fixed under end-extensions. This can now be stated as that the function $(\mathcal{M} \mapsto \mathrm{SSy}_\mathcal{S}(\mathcal{M}))$ expands to a functor from $\mathcal{S}/\mathbf{KP}^\mathcal{P}$ to $\mathbf{Class}_\mathcal{S}$.

Similarly, a basic fact about rank-initial embeddings is that they preserve the truth of $\Sigma_1^\mathcal{P}$-formulae with parameters in the domain. This fact can now be stated as that the function $(\mathcal{M} \rightarrow \mathrm{Th}_{\Sigma_1^\mathcal{P}, \mathcal{S}}(\mathcal{M}))$ expands to a functor from $\mathcal{S}/\mathbf{KP}^\mathcal{P}$ to $\Sigma_1^\mathcal{P}(\mathcal{S}/\mathbf{KP}^\mathcal{P})$.

Now note that the basic forward direction (a) $\Rightarrow$ (b) of Corollary \ref{Friedman cor} is simply the statement that there is a canonical functor $\mathbf{Fwd} : \mathcal{S}/\mathbf{KP}^\mathcal{P} \rightarrow \mathbf{Class}_\mathcal{S} \times \Sigma_1^\mathcal{P}(\mathcal{S}/\mathbf{KP}^\mathcal{P})$ obtained by combining the two functors above.

The more difficult to prove direction (a) $\Leftarrow$ (b) of Corollary \ref{Friedman cor} may be stated as that there is a function $\mathbf{Bwd} : \mathbf{Class}_\mathcal{S} \times \Sigma_1^\mathcal{P}(\mathcal{S}/\mathbf{KP}^\mathcal{P}) \rightarrow \mathcal{S}/\mathbf{KP}^\mathcal{P}$ on objects and morphisms, such that $\mathbf{Fwd} \circ \mathbf{Bwd} = \mathbf{Identity}$ on objects and morphisms, but it is not clear from the statement of Corollary \ref{Friedman cor} whether $\mathbf{Bwd}$ can be obtained as a functor (that is, it is not clear that $\mathbf{Bwd}$ can be chosen so as to preserve composition of morphisms):

\begin{que}
Is there a functor $\mathbf{Bwd} : \mathbf{Class}_\mathcal{S} \times \Sigma_1^\mathcal{P}(\mathcal{S}/\mathbf{KP}^\mathcal{P}) \rightarrow \mathcal{S}/\mathbf{KP}^\mathcal{P}$, such that $\mathbf{Fwd} \circ \mathbf{Bwd} = \mathbf{Identity}$?
\end{que}

Our Theorem \ref{Friedman thm} shows that continuum many embeddings can be obtained in Corollary \ref{Friedman cor} (a) $\Leftarrow$ (b). This gives at least some encouragement to that this question can be answered in the affirmative. But some caution with regard to conjecturing is warranted, since it is not at all clear that maximal {\em quantity} in choices would ultimately enable us to make to make all the choices in an the {\em coherent} fashion required.

Other than the sketched case study above, our Gaifman-style Theorem \ref{Gaifman thm} has already been stated largely in category theoretic language. Similar translations could be made for several statements in Section \ref{Characterizations}: For example, the notion of {\em fixed-point set} of a self-embedding can be generalized to the category theoretic notion of {\em equalizer} of a pair of embeddings.

All in all, it appears that the language of category theory would at least provide a fresh perspective on embeddings between models of set theory (or arithmetic).

\section{Directions for further research on stratified algebraic set theory}\label{where to go NF cat}

$\mathrm{(I)ML(U)}_\mathsf{Cat}$ has been shown, respectively, to interpret $\mathrm{(I)NF(U)}_\mathsf{Set}$, and has conversely been shown to be interpretable in $\mathrm{(I)ML(U)}_\mathsf{Class}$, thus yielding equiconsistency results. Since the axioms of a Heyting category can be obtained from the axioms of topos theory, it is natural to ask:

\begin{que}
Can the axioms of $\mathrm{(I)ML(U)}_\mathsf{Cat}$ be simplified? In particular, is it necessary to include the axioms of Heyting categories or do these follow from the other axioms?
\end{que}

To be able to interpret the set theory in the categorical semantics, this research introduces the axiomatization $\mathrm{(I)ML(U)}_\mathsf{Cat}$ corresponding to predicative $\mathrm{(I)ML(U)}_\mathrm{Class}$. This is analogous to the categories of classes for conventional set theory studied e.g. in \cite{ABSS14}. But it remains to answer:

\begin{que}
How should the speculative theory $\mathrm{(I)NF(U)}_\mathsf{Cat}$ naturally be axiomatized? I.e. what is the natural generalization of $\mathrm{(I)NF(U)}_\mathsf{Set}$ to category theory, analogous to topos theory as the natural generalization of conventional set theory? Moreover, can any category modeling this theory be canonically extended to a model of $\mathrm{(I)ML(U)}_\mathsf{Cat}$, or under what conditions are such extensions possible?
\end{que}

Closely intertwined with this question, is the potential project of generalizing to topos theory the techniques of automorphisms and self-embeddings of non-standard models of set theory. In particular, the endofunctor $\mathbf{T}$ considered in this research should arise from an automorphism or self-embedding of a topos. This would be a natural approach to constructing a rich variety of categories modeling the speculative theory $\mathrm{(I)NF(U)}_\mathsf{Cat}$, many of which would presumably be extensible to models of $\mathrm{(I)ML(U)}_\mathsf{Cat}$.

\end{document}